\documentclass[12pt]{amsart}
\usepackage{tikz}
\usetikzlibrary{decorations.markings,calc,matrix}
\usepackage{mathtools}
\usepackage{amsfonts,amssymb,mathtools,tabmac}
\usepackage[margin=1.1in]{geometry}
\usepackage{hyperref}
\usepackage{color}

\makeatletter

\newcommand{\dashedrightarrow}{\dashrightarrow}

\newcommand{\longdashedrightarrow}[1][2.5pt]{%
  \settowidth{\@tempdima}{$\longrightarrow$}\longrightarrow
  \makebox[-\@tempdima]{\hskip-0.5ex\color{white}\rule[0.5ex]{#1}{1pt}}
    \phantom{\longrightarrow}
  \makebox[-\@tempdima]{\hskip-2.8ex\color{white}\rule[0.5ex]{#1}{1pt}}
  \phantom{\longrightarrow}
}
\makeatother
\newcommand{\sslash}{\mathbin{/\mkern-6mu/}}

\newcommand{\blackdot}[1]{\filldraw[black] #1 circle (0.1cm);}

\newcommand{\whitedot}[1]{\filldraw[white] #1 circle (0.1cm); \draw #1 circle (0.1cm);}

\def\cl{{\rm cl}}
\def\rank{{\rm rank}}
\def\SU{{\rm SU}}
\def\T{{\mathcal{T}}}
\def\Tr{{\rm Tr}}
\def\Frac{{\rm Frac}}
\def\SL{{\rm SL}}
\def\reg{{\rm reg}}
\def\pt{{\rm pt}}
\def\tS{{\tilde S}}
\def\hGr{{\rm  \hat Gr}}
\def\hPi{{\hat \Pi}}
\def\hX{{\hat X}}

\def\sl{{\mathfrak {sl}}}

\def\tf{{\tilde f}}
\def\diag{{\rm diag}}
\def\Spec{{\rm Spec}}
\newcommand{\oX}{\mathring{X}}
\newcommand{\oPi}{\mathring{\Pi}}
\def\Fl{{\rm Fl}}
\def\sign{{\rm sign}}
\def\PP{{\mathcal{P}}}
\def\FF{{\mathfrak F}}
\def\tF{{\tilde F}}
\def\Tr{{\rm Tr}}
\def\wt{{\rm wt}}
\def\Gr{{\rm Gr}}
\def\GL{{\rm GL}}
\def\Glue{{\rm Glue}}
\def\Rel{{\rm Rel}}
\def\F{{\mathcal{F}}}
\def\R{{\mathbb R}}
\def\C{{\mathbb C}}
\def\CC{{\mathcal{C}}}
\def\CP{{\mathbb {CP}}}
\def\P{{\mathbb P}}
\def\L{{\mathcal{L}}}
\def\O{{\mathcal{O}}}
\def\B{{\mathcal{B}}}
\def\oM{{\bar{ \mathcal{M}}}}
\def\M{{\mathcal{M}}}
\def\A{{\mathbb{A}}}
\def\AA{{\mathcal{A}}}

\def\G{{\mathcal{G}}}
\def\sort{{\rm sort}}
\def\I{{\mathcal{I}}}
\def\II{{\mathfrak{I}}}
\def\J{{\mathcal{J}}}
\def\Z{{\mathbb Z}}
\def\S{{\mathcal{S}}}

\def\dlog{{\rm dlog}\,}
\def\Res{{\rm Res}}
\def\P{{\mathbb P}}
\def\Mat{{\rm Mat}}
\def\X{{\mathcal{X}}}
\def\Y{{\mathcal{Y}}}
\def\id{{\rm id}}
\def\inv{{\rm inv}}
\def\Bound{\mathcal{B}}
\def\hBound{\hat{\mathcal{B}}}
\def\codim{{\rm codim}}

\def\spn{{\rm span}}
\def\rowspan{{\rm rowspan}}
\newcommand{\defn}[1]{{\it #1}}

\newtheorem{theorem}{Theorem}

\newtheorem{thm}[theorem]{Theorem}

\newtheorem{proposition}[theorem]{Proposition}

\newtheorem{problem}[theorem]{Problem}
\newtheorem{corollary}[theorem]{Corollary}

\newtheorem{conj}[theorem]{Conjecture}
\newtheorem{conjecture}[theorem]{Conjecture}
\newtheorem{lemma}[theorem]{Lemma}

\newtheorem{remark}[theorem]{Remark}
\numberwithin{theorem}{section}
\newtheorem{example}[theorem]{Example}
\newtheorem{definition}[theorem]{Definition}

\newcommand\tabll[1]{{\tableau[pY]{#1}}}
\begin{document}
\title[Totally nonnegative Grassmannian and Grassmann polytopes]{Totally nonnegative Grassmannian \\ and \\ Grassmann polytopes}
\
\author{Thomas Lam}
\address{Department of Mathematics, University of Michigan,
2074 East Hall, 530 Church Street, Ann Arbor, MI 48109-1043, USA}
\email{tfylam@umich.edu}\thanks{T.L. was supported by NSF grant DMS-1160726.}
\begin{abstract}
These are lecture notes intended to supplement my second lecture at the Current Developments in Mathematics conference in 2014.  In the first half of article, we give an introduction to the totally nonnegative Grassmannian together with a survey of some more recent work.  In the second half of the article, we give a definition of a Grassmann polytope motivated by work of physicists on the amplituhedron.  We propose to use Schubert calculus and canonical bases to replace linear algebra and convexity in the theory of polytopes.
\end{abstract}

\maketitle
\setcounter{tocdepth}{1}
\tableofcontents

This work is split into two halves.

\medskip

The first part is an introduction to the totally nonnegative Grassmannian.  Most of it should be accessible to graduate students with some background in algebra and combinatorics.  Indeed, Sections \ref{sec:TP}--\ref{sec:positroids} are an expansion of lecture notes \cite{Lamnotes} that I used in part of a graduate course in total positivity.  My approach differs from Postnikov's seminal work \cite{Pos}, in that I build the theory from scratch using the enumeration of perfect matchings (or dimer configurations) as a starting point.  Sections \ref{sec:Schub}--\ref{sec:coord} are a survey of results mostly from \cite{KLS} and \cite{Lam+}, and requires a bit more background in algebraic geometry.  Sections \ref{sec:form}--\ref{sec:relspace} contain some material that is likely somewhat familiar to experts, but the details of which have not been written down as far as I know.  

\medskip

The second part studies a notion of a Grassmann polytope, motivated by Arkani-Hamed and Trnka's definition of an amplituhedron \cite{AT}.  Our aim is to explain some phenomena in this theory via examples and counterexamples.  In Section \ref{sec:matroids}, we propose a related definition of a (realizable) Grassmann matroid.  Sections \ref{sec:trunc}--\ref{sec:sphericoid} work through techniques that allow us to compute Grassmann matroids.  In Sections \ref{sec:facets}--\ref{sec:triangulations}, we explore the face structure and the notion of triangulation for Grassmann polytopes.  In Section \ref{sec:amplitudes} we give an informal explanation of the relation to scattering amplitudes.

\section{Introduction}\label{sec:intro}

\subsection{Total positivity}
A real matrix $g \in \GL(n,\R)$ is \defn{totally nonnegative} if all of its minors are nonnegative.  This notion goes back to the works of Schoenberg \cite{Sch} and Gantmacher and Krein \cite{GaKr}, who noticed that such matrices possess remarkable spectral properties and a variation-diminishing property.

For the purpose of this article, the first main result of the totally nonnegative part $\GL(n)_{\geq 0}$ is that it has a trio of descriptions (see Section \ref{sec:TP} for details).
\begin{theorem}\label{thm:TPfirst}
Let $g \in \GL(n,\R)$.  Then the following are equivalent:
\begin{enumerate} \item $g$ is totally nonnegative;
\item $g$ is in the semigroup generated by positive Chevalley generators and positive diagonal matrices;
\item $g$ is representable by a planar network.
\end{enumerate}
\end{theorem}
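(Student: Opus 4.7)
The strategy is to prove the cyclic chain of implications (2) $\Rightarrow$ (3) $\Rightarrow$ (1) $\Rightarrow$ (2). The first two are direct and constructive; the last is the substantial content of the theorem and will be the main obstacle.

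For (2) $\Rightarrow$ (3), I would assign to each of the three families of generators an explicit planar network on $n$ horizontal strands: the Chevalley generator $e_i(t) = I + t E_{i,i+1}$ (and similarly $f_i(t)$) is represented by a network with a single diagonal edge of weight $t$ between strands $i$ and $i+1$; a positive diagonal matrix is represented by a network whose horizontal strands carry the corresponding weights. Horizontal concatenation of such networks realizes matrix multiplication and preserves planarity and positivity of weights, so any product in the semigroup is representable by a planar network.

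For (3) $\Rightarrow$ (1), I would invoke the Lindstr\"om--Gessel--Viennot lemma: for an acyclic planar network with non-negative edge weights, the minor $\Delta_{I,J}$ of the weighted path matrix equals the weighted count of non-crossing path families from the sources indexed by $I$ to the sinks indexed by $J$, which is manifestly non-negative.

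The main obstacle is the converse implication (1) $\Rightarrow$ (2). I would proceed in two steps. First, establish Whitney's theorem in the totally positive case: every $g \in \GL(n,\R)$ whose minors are all strictly positive admits a factorization $g = u \cdot d \cdot v$, where $u$ is a product of $e_i(t_k)$'s with $t_k > 0$, $v$ is a product of $f_j(s_l)$'s with $s_l > 0$, and $d$ is a positive diagonal matrix. The mechanism is a variant of Gaussian elimination in which the elimination steps are precisely multiplication by Chevalley factors on the left and right; the parameters $t_k, s_l$ are extracted as explicit ratios of minors of $g$, and strict positivity of the minors ensures that all such parameters are positive and that the residual matrix after each elimination step is again totally positive, enabling induction on the size $n$.

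Second, I would pass from the totally positive to the totally non-negative case by an approximation argument. Given a TNN matrix $g$, the perturbations $g(\epsilon) := g \cdot \exp(\epsilon z)$, for a fixed totally positive matrix $z$ and small $\epsilon > 0$, are themselves totally positive and therefore admit factorizations by the previous step. To ensure the factorizations have a common combinatorial shape, I would fix in advance a reduced word for the longest element $w_0$ of $S_n$ and seek factorizations of $g(\epsilon)$ with the Chevalley factors arranged according to this word; the parameters are then expressible as minor ratios that extend continuously to $\epsilon = 0$ with non-negative (possibly zero) limits. Taking $\epsilon \to 0^+$ yields a factorization of $g$, where any Chevalley factor with parameter $0$ is simply the identity and can be discarded. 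The main technical nuisance here is verifying that the minor ratios remain bounded; this is where fixing the reduced word in advance is essential, and is the step I expect to require the most care.
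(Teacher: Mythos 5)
Your cyclic implication chain matches the paper's logical skeleton, and your arguments for (2) $\Rightarrow$ (3) and (3) $\Rightarrow$ (1) are exactly the paper's: elementary networks for the generators together with concatenation for (2) $\Rightarrow$ (3), and Lindstr\"om--Gessel--Viennot for (3) $\Rightarrow$ (1) (these form the two halves of Theorem~\ref{thm:LGV}). Where you diverge is in the hard implication (1) $\Rightarrow$ (2). The paper's sketched proof of Theorem~\ref{thm:LW} reduces a TNN matrix $g$ \emph{directly}: one finds $i$ and $a>0$ for which $x_i(-a)g$ or $y_i(-a)g$ remains TNN and acquires strictly more zero entries, and iterates until $g$ is diagonal. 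You instead take Whitney's original two-step route: carry out elimination with Chevalley factors on the totally positive cell, where the parameters are transparent minor ratios, and then pass to the TNN boundary by a limiting argument. The paper's direct reduction avoids limits entirely, at the price of having to justify the existence of the reducing pair $(i,a)$ at each step; your route keeps the elimination formulas clean on $\GL(n)_{>0}$ but pushes the difficulty to the boundary limit. You correctly identify boundedness of the parameters as the delicate point; the quickest way around it is properness: because the entries of the product $x_{\mathbf{i}}(\mathbf{a})$ are polynomials in $\mathbf{a}$ with nonnegative coefficients, the factorization map $\R_{\geq 0}^\ell \to \GL(n)_{\geq 0}$ is proper, so its image is closed and must contain the closure of $\GL(n)_{>0}$.

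One concrete slip: $\exp(\epsilon z)$ for $z$ totally positive is generally \emph{not} totally nonnegative, so $g(\epsilon) = g\exp(\epsilon z)$ need not be a TP perturbation of $g$. Already for $n=3$, taking $z$ to be the Pascal matrix with entries $\binom{i+j-2}{i-1}$, the minor $\Delta_{\{1,2\},\{2,3\}}\bigl(\exp(\epsilon z)\bigr) = -\epsilon + O(\epsilon^2)$ is negative for small $\epsilon>0$. By Loewner's characterization of one-parameter TNN semigroups, $\exp(\epsilon z)$ is TNN for all $\epsilon \geq 0$ exactly when $z$ is tridiagonal with nonnegative off-diagonal entries; so take $z = \sum_i (e_{i,i+1} + e_{i+1,i})$ instead, which makes $\exp(\epsilon z)$ totally positive for $\epsilon>0$. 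Alternatively, right-multiply $g$ by any fixed family of TP matrices tending to the identity (e.g.\ a normalized Gaussian kernel): by Cauchy--Binet, the product of an invertible TNN matrix with a TP matrix is TP, so this also produces the desired perturbation. With that fix in place, your approximation argument goes through.
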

Thus $\GL(n)_{\geq 0}$ can be described by inequalities, as a semigroup with specified generators, and as matrices obtained by a combinatorial construction.  It is the interplay between these structures that give rise to a rich theory.  Another important feature of $\GL(n)_{\geq 0}$ is that it has a natural cell decomposition (Theorem \ref{thm:Lusstrata}).

Lusztig \cite{LusTP} used the semigroup description to generalize $\GL(n)_{\geq 0}$ to other split real reductive groups.

\subsection{The totally nonnegative Grassmannian and the dimer model}
Let $\Gr(k,n)$ denote the complex Grassmannian of $k$-planes in $\C^n$.  We review some basic facts concerning the Grassmannian in Section \ref{sec:Gr}.
Postnikov \cite{Pos} defines the \defn{totally nonnegative Grassmannian} $\Gr(k,n)_{\geq 0}$ to be the locus in the Grassmannian with nonnegative Pl\"ucker coordinates.  Lusztig \cite{LusTP} defined the totally nonnegative part of any generalized partial flag variety $G/P$ that turns out to agree (Theorem \ref{thm:same} and Remark \ref{rem:same}) with Postnikov's in this case.

The immediate goal of Sections \ref{sec:dimer}--\ref{sec:representable} is to prove an analogue of Theorem \ref{thm:TPfirst} for the totally nonnegative Grassmannian $\Gr(k,n)_{\geq 0}$.  To construct points $X \in \Gr(k,n)_{\geq 0}$ we study \defn{almost perfect matchings} (or \defn{dimer configurations}) $\Pi$ in a planar bipartite network $N$.  Theorem \ref{thm:matchingplucker} states that counting perfect matchings with particular boundary conditions one obtains quantities $\{\Delta_I(N) \mid I \in \binom{[n]}{k}\}$ that are the Pl\"ucker coordinates of a point $X(N)$ in the Grassmannian.  
The main argument for this result goes back to work of Kuo \cite{Kuo}, and we have formulated it in the more algebraic language of \defn{Temperley-Lieb invariants} \cite{Lamweb}.  

In Theorem \ref{thm:main}, we show (the much harder direction) that every point in $\Gr(k,n)_{\geq 0}$ is of the form $X(N)$ for some planar bipartite network $N$.
The argument we use is close in spirit to Whitney's result \cite{Whi} for totally nonnegative matrices.  Namely, we reduce a point $X \in \Gr(k,n)_{\geq 0}$ by repeatedly applying column operations until we obtain one of the torus-fixed points of $\Gr(k,n)$.  On the combinatorial side, the reduction procedure corresponds to adding or removing ``bridges" or ``lollipops" from a network $N$.  These bridges are the network analogue of the semigroup generators in Theorem \ref{thm:TPfirst}(2).

Section \ref{sec:flows} describes how to obtain points in $\Gr(k,n)_{\geq 0}$ from Postnikov's \defn{plabic networks} that are more general than the planar bipartite networks that we use.  We do not review Postnikov's original construction but summarize Talaska's approach \cite{Tal} via flows.  The connection to the enumeration of matchings was observed by Postnikov, Speyer, and Williams \cite{PSW}.  

Section \ref{sec:relspace} describes yet another way to obtain points in $\Gr(k,n)$ from graphs, this time using linear algebra instead of combinatorics.  The construction goes under the name of \defn{on-shell diagram} in the scattering amplitudes literature, and we call it the ``relation space" $\Rel(N) \in \Gr(k,n)$ of a network $N$.  I could not find in the literature a comparison of this construction with any of the combinatorial approaches (matchings, flows, or paths), so I have formulated and proved some basic results.   A key feature is that the relation space construction typically produces points in $\Gr(k,n)$ that are not totally nonnegative.

The connection between the totally nonnegative Grassmannian and planar networks has also found applications in certain integrable systems, see \cite{KoWi,GSV}.

\subsection{Stratification of $\Gr(k,n)_{\geq 0}$}
Each point $X \in \Gr(k,n)$ has a matroid $\M_X$, defined in \eqref{eq:matroid}.  A \defn{positroid} is the matroid of a point $X \in \Gr(k,n)_{\geq 0}$ in the totally nonnegative Grassmannian.  An important byproduct of the construction $N \mapsto X(N)$ is that it gives a stratification \cite{Pos}
\begin{equation}\label{eq:Grstratum}
\Gr(k,n)_{\geq 0} = \bigsqcup_{f \in \Bound(k,n)} \Pi_{f, >0}
\end{equation}
of $\Gr(k,n)_{\geq 0}$ into \defn{positroid cells} $\Pi_{f,>0}$, with the properties (Theorem \ref{thm:main}) that

\noindent
(a)
each stratum $\Pi_{f,>0}$ is homeomorphic to $\R_{>0}^d$ for some $d \geq 0$,

\noindent
(b)
the positroid $\M_X$ is constant on each stratum $\Pi_{f,>0}$, and distinct strata have distinct positroids,

\noindent
(c)
for each stratum $\Pi_{f,>0}$ there exists a  planar bipartite graph $G$ so that every point $X \in \Pi_{f,>0}$ is equal to $X(N)$ for a network $N$ obtained by placing edge weights on $G$.

Postnikov \cite{Pos} gave (very!) many ways to index these strata.  Our preferred indexing set is the set $\Bound(k,n)$ of $(k,n)$-bounded affine permutations $f$ (essentially equivalent to Postnikov's decorated permutations).  In \cite{KLS} (Theorem \ref{thm:partialorder} here), it is shown that the closure partial order of positroid cells is (dual to) the well-studied Bruhat order of the affine symmetric group.  In Section \ref{sec:perms}, we also describe the Grassmann necklaces and cyclic rank matrices that can be used to index the strata.

The most important result about positroids is Oh's theorem \cite{Oh} stating that a positroid $\M$ is the intersection of cyclically rotated Schubert matroids.  Our proof of this in Theorem \ref{thm:oh} appears to be new.  In Section \ref{sec:positroids}, we also state two other characterizations of positroids: (a) together with Postnikov \cite{LPmembrane}, we showed that positroids are exactly the sort-closed matroids; (b) Ardila, Rincon, and Williams \cite{ARW} characterize positroids as exactly the underlying matroids of positively orientable matroids.

The elegant combinatorics of the stratification \eqref{eq:Grstratum} is a reflection of topological properties, some proved \cite{Lusintro,PSW,RW}, and some conjectural.

\subsection{Positroid varieties}
The remarkable stratification \eqref{eq:Grstratum} of the totally nonnegative Grassmannian is the intersection of $\Gr(k,n)_{\geq 0}$ with an equally remarkable stratification of the complex Grassmannian into the \defn{positroid varieties} $\Pi_f$.  This stratification was introduced by Lusztig \cite{Luspartial} for a generalized partial flag manifold, and systematically studied in the Grassmannian case in our work with Knutson and Speyer \cite{KLS}.  

We define positroid varieties and summarize some of their geometric properties in Section \ref{sec:Schub}.  For our purposes, the most important fact is that $\Pi_f$ is an irreducible, projectively normal subvariety of the Grassmannian whose ideal is linearly generated (Proposition \ref{prop:irred} and Theorems \ref{thm:prime} and \ref{thm:normal}).

The positroid stratification has been of interest in a number of directions.  We list some not mentioned in the main text here. 

(a) In \cite{KLS2}, it is shown that positroid varieties are exactly the compatibly Frobenius split subvarieties of the Grassmannian, with respect to the standard Frobenius splitting.  

(b) Goodearl and Yakimov \cite{GY} showed that (open) positroid varieties are exactly the torus orbits of symplectic leaves of the Grassmannian as a Poisson manifold.  

(c) There is an analogous classification of torus-invariant primes in quantum Schubert cell algebras by M\'eriaux and Cauchon \cite{MC} and Yakimov \cite{Y}.  See also \cite{LL} for a survey of the relations between total nonnegativity, quantum matrices, and Poisson geometry.  

(d) The cluster algebra structure of the coordinate ring $\C[\oPi_f]$ of open positroid varieties has attracted much recent attention \cite{Lec,MS,LY}.

(e) Positroid varieties play a role in the study of period integrals on the flag variety, where they are candidates to be large complex structure limit points, as discussed by Huang, Lian, and Zhu \cite{HLZ}.

\medskip

In Section \ref{sec:cohom}, we review some standard facts about the cohomology ring $H^*(\Gr(k,n))$ of the Grassmannian, and in Theorem \ref{thm:KLS} formulate the result from \cite{KLS} that the cohomology class of the positroid variety $[\Pi_f]$ is equal to the \defn{affine Stanley symmetric function} $\tF_f$ of \cite{LamAffineStanley}.  This result will play an important role in the applications to Grassmann matroids.  

There are many explicit relations to quantum and affine Schubert calculus that we will not pursue here.  For example, certain positroid varieties turn out to be projections of two-point Gromov-Witten varieties for the Grassmannian, see \cite{BCMP,KLS}.  In addition, there is a mysterious relation \cite{KLS,HL,Sni} between the positroid stratification and the geometry of affine flag varieties.  We have already noted that the closure partial order for positroid cells agrees with the affine Bruhat order.  The affine Stanley symmetric functions $\tF_f$ themselves appear in the study of affine Schubert caculus \cite{book,LamJAMS}.  See \cite{Knu} for another application of positroid varieties.

\subsection{Cyclicity, promotion, and canonical bases}
The cyclic group $\Z/n\Z$ acts on all the objects we have discussed so far: the stratifications $\Gr(k,n)_{\geq 0} = \bigsqcup_{f \in \Bound(k,n)} \Pi_{f,>0}$ and $\Gr(k,n) = \bigsqcup_{f \in \Bound(k,n)} \oPi_{f}$ are invariant under the cyclic group action, and the indexing set $\Bound(k,n)$ of bounded affine permutations has a natural action by the cyclic group.

The same cyclic group arises in another place in algebraic combinatorics: the set of rectangular semistandard Young tableaux with entries in $[n] \coloneqq \{1,2,\ldots,n\}$ have an action of the cyclic group $\Z/n\Z$ via \defn{promotion}.  In \cite{Lam+}, the connection with the positroid stratification is made by describing the homogeneous coordinate ring $\C[\hPi_f]$ and the homogeneous ideal $I(\Pi_f)$ of a positroid variety in terms of the canonical basis \cite{Luscan,Kascan}.  We survey these results in Sections \ref{sec:tableaux}--\ref{sec:coord}.

It is a classical theme in representation theory to consider the space of sections of line bundles on flag varieties.  Let $\O(1)$ denote the line bundle on $\Gr(k,n)$ pulled back from the Pl\"ucker embedding, and let $\O(d)$ denote its $d$-th tensor power.  By the classical Borel-Weil theory, the space of sections $\Gamma(\Gr(k,n), \O(d))$ on a Grassmannian can be identified with the (dual of the) irreducible representation $V(d\omega_k)$ of $\GL(n)$ indexed by the $k \times d$ rectangular shape.  The space of sections $\Gamma(X_I,\O(d))$ on a Schubert subvariety $X_I \subset \GL(n)$ is then identified with a Demazure submodule $V_I(d\omega_k) \subset V(d\omega_k)$.  The space of sections $\Gamma(\Pi_f,\O(d))$ on a positroid variety can then be identified with the intersection of cyclically rotated Demazure modules, which we call the \defn{cyclic Demazure module} $V_f(d\omega_k)$.  The cyclic Demazure submodule is spanned by canonical basis elements with remarkable positivity and cyclicity properties (Theorem \ref{thm:canbasis} and Theorem \ref{thm:Posideal}).  We remark that Lakshmibai and Littelmann \cite{LaLi} have also constructed a standard monomial basis for the vector space $\Gamma(\Pi_f,\O(d))$.

One of the new perspectives that we hope to advertise is that the crystal graph (the natural indexing set for the canonical basis) of $\Gamma(\Gr(k,n), \O(d))$ is a higher degree analogue of the uniform matroid of rank $k$ on $[n]$.  Indeed, when $d=1$, the crystal graph of $\Gamma(\Gr(k,n), \O(1))$ can be identified with the set of $k$ element subsets of $[n]$.  We have the following analogies:
\begin{center}
\begin{tabular}{|c|c|c|c|}
\hline
Geometry & Representation theory & $d=1$ Combinatorics & $d>1$ Combinatorics \\
\hline
Grassmannian & Rectangular irred. & uniform matroid & crystal on rect. tableaux \\
\hline
Schubert & Demazure submod. & Schubert matroid & Demazure crystal \\
\hline
Positroid & cyclic Demazure & positroid & cyclic Demazure crystal \\
\hline
\end{tabular}
\end{center}

\subsection{Scattering amplitudes and the canonical form}
In Section \ref{sec:form}, we study the \defn{canonical form} $\omega_f$ of a positroid variety, a distinguished mermorphic top form on $\Pi_f$ with simple poles exactly along the boundary $\partial \Pi_f = \bigcup_{g < f} \Pi_f$.  The canonical form $\omega_{\Gr(k,n)}$ generates the positroid stratification, as follows.  The positroid divisors $\Pi_1,\Pi_2,\ldots,\Pi_n$ are the poles of $\omega_{\Gr(k,n)}$; the canonical form of a positroid divisor $\Pi_r$ is exactly the residue $\Res_{\Pi_r} \omega_{\Gr(k,n)}$.  Repeating, we can produce all the positroid varieties and their canonical forms.

These forms were considered implicitly in \cite{KLS2}, where it arises from the standard Frobenius splitting of the Grassmannian.  Quite unexpectedly to me, these differential forms also appears in two seemingly unrelated contexts, where it is part of an integrand to be integrated along certain real cycles: (a) in the study of Whittaker functions \cite{LamWhittaker}, and (b) in the study of scattering amplitudes, to be discussed in Section \ref{sec:amplitudes} and briefly presently. 

\defn{Scattering amplitudes} \cite{EH,HP} are quantities studied in quantum field theory used to compute the probabilities that certain particle scattering experiments occur.  One of the remarkable recent developments in the theory is that scattering amplitudes in maximally supersymmetric Yang-Mills theory can be computed (at tree level) as an integral over the Grassmannian:
\begin{equation}\label{eq:scat}
\text{amplitude} = \int_{\text{some contour}} \text{(some delta function) }\; \omega_{\Gr(k,n)}. 
\end{equation}
Here, the delta function amounts to considering the integral over a subGrassmannian of $\Gr(k,n)$ that depends on the momenta of the particles being considered.  

The equation \eqref{eq:scat} was made more combinatorial in \cite{ABCGPT} where a formula 
\begin{equation}\label{eq:posscat}
\text{amplitude} = \sum_{\text{$f \in \CC(k,n) \subset \Bound(k,n)$}} \int \text{(some delta function) }\; \omega_{\Pi_f} 
\end{equation}
was given.  Here, the delta function has the same dimension as $\Pi_f$, so the integral amounts to formally evaluating $\omega_{\Pi_f}$ at certain (possibly complex) points of $\Pi_f$.  Only the delta functions, and not the subset $\CC(k,n) \subset \Bound(k,n)$, depend on the momenta of the particles involved.  Furthermore, the subset $\CC(k,n) \subset \Bound(k,n)$ is generated by a ``BCFW recursion", and many such subsets would give the same answer.  

\subsection{The amplituhedron and Grassmann polytopes}
In \cite{AT}, it was suggested that \eqref{eq:posscat} could be considered an expression for the volume of some space as the sum over the simplices of a triangulation of that space (see also \cite{ABCHT}).  Arkani-Hamed and Trnka called this space the \defn{amplituhedron}.

When $k = 1$, the Grassmannian $\Gr(1,n)$ is the projective space $\P^{n-1}$.  The totally nonnegative Grassmannian $\Gr(1,n)_{\geq 0}$ can be identified with the $(n-1)$-dimensional simplex sitting inside $\P^{n-1}$.  Let $Z$ be a real $n \times r$ matrix with $r \leq n$.  The matrix $Z$ can be considered a linear map $\R^n \to \R^r$ and induces a rational map $Z:\P^{n-1} \dashedrightarrow \P^{r-1}$, and more generally a rational map $Z_\Gr: \Gr(k,n) \dashedrightarrow \Gr(k,r)$.  The image $Z(\Gr(1,n)_{\geq 0})$ can then be identified with the polytope equal to the convex hull of the row vectors of $Z$.

Call $Z$ \emph{positive} if its maximal ($r \times r$) minors are strictly positive.  The \defn{amplituhedron} is the image of $\Gr(k,n)_{\geq 0}$ under the map $Z_\Gr$ for a positive $Z$, with the case of physical importance being $r = k+4$.  When $k=1$, the amplituhedron is a cyclic polytope.

In Section \ref{sec:poly}, we define a \defn{Grassmann polytope} to be the image $Z(\Pi_{f,\geq 0})$, under the following condition:
$$
\mbox{there exists a $r \times k$ real matrix $M$ such that $Z \cdot M$ has positive $k \times k$ minors.}
$$
The analogous condition for $k = 1$ appears in variants of Farkas' Lemma, and in linear programming.  In the current analogy, the totally nonnegative Grassmannian $\Gr(k,n)_{\geq 0}$ is an analogue of the simplex, the positroid stratification is an analogue of the face stratification of the simplex, and the poset $\B(k,n)$ is an analogue of the boolean lattice.  We explain in Section \ref{sec:poly} some behavior of Grassmann polytopes that may be considered unusual from the perspective of the classical theory.


\subsection{Grassmann matroids} Convexity may be thought of as the study of positive linear combinations.  Before we can study convexity, it is natural to first study linearity.

In Section \ref{sec:matroids}, we study the ``linear" behavior of the rational map $Z_\Gr:\Gr(k,n) \dashedrightarrow \Gr(k,r)$, by defining a notion of the \defn{Grassmann matroid} $\G_Z$ of $Z$.  As a replacement for the notion of ``linear span of the vectors $z_{i_1},z_{i_2},\ldots,z_{i_s}$", we consider the Zariski-closure $Z(\Pi_f) \coloneqq \overline{Z(\Pi_{f,\geq 0})}$ of the image $Z(\Pi_{f,\geq 0})$.  For example, $f \in \Bound(k,n)$ is called \defn{independent} if $Z(\Pi_{f})$ has the same dimension as $\Pi_{f}$.  Also, $f, g \in \Bound(k,n)$ \defn{belong to the same flat} if $Z(\Pi_f) =Z(\Pi_g)$.

We do not attempt to axiomatize Grassmann matroids here, but in Sections \ref{sec:trunc}--\ref{sec:sphericoid} we discuss some techniques from \cite{Lamtrunc,Lam+} that can be used to compute Grassmann matroids.

When $k = 1$, the rank function of the matroid $\M_Z$ of $Z$ is simply the function $r_Z: f \mapsto \dim(Z(\Pi_{f}))+1$.  When $k > 1$, we propose that the invariant $\dim(Z(\Pi_{f}))$ should be upgraded to the cohomology class $[Z(\Pi_f)] \in H^*(\Gr(k,r))$,to give the \defn{class function} $c_Z: f \mapsto [Z(\Pi_f)]$.  The calculation of $[Z(\Pi_f)]$ is a Schubert calculus problem.  To compute this cohomology class is equivalent to computing the number of intersection points $\#(Z(\Pi_f) \cap Y_J)$ where $Y_J \subset \Gr(k,r)$ is a Schubert variety of complementary dimension in general position with respect to $Z(\Pi_f)$.  Thus, for Grassmann matroids, linear algebra is replaced by Schubert calculus.

In our earlier work \cite{Lamtrunc}, we gave a formula for the cohomology class $[Y_f] \in H^*(\Gr(k,r))$ of an \defn{amplituhedron variety} $Y_f \subset \Gr(k,r)$, defined to be $Y_f \coloneqq Z(\Pi_f)$ when $Z$ is a generic matrix.  The main result (Theorem \ref{thm:trunc}) states that $[Y_f]$ is the \emph{truncation} of the affine Stanley symmetric function mentioned previously.  In the context of Grassmann matroids, this result is then a formula for the class function of the \defn{uniform Grassmann matroid}.

\subsection{Amplituhedron varieties and sphericoid varieties}
In Sections \ref{sec:ampliideal} and \ref{sec:sphericoid}, we explain results from \cite{Lam+} concerning the homogeneous ideals $I(Y_f)$ of amplituhedron varieties.  It is necessary to compute these ideals to understand flats of Grassmann matroids.  We illustrate in examples that these ideals may not be linearly generated (so flats of Grassmann matroids are cut out by higher degree equations).

Since $Y_f$ depends on the matrix $Z$, to describe the ideal $I(Y_f)$, we consider the universal amplituhedron variety $\Y_f \to \Mat(n,r)$ whose fibers over generic $Z \in \Mat(n,r)$ are the amplituhedron varieties $Y_f$.  Some geometry and invariant theory related to $\Y_f$ is described in Section \ref{sec:ampliideal}.

Let $\ell = n - r$.  There is a direct sum rational morphism 
\begin{align*}
\bigoplus: \Gr(k,n) \times \Gr(\ell,n) &\longdashedrightarrow \Gr(k+\ell,n)\\
(X, K) &\longmapsto X + K
\end{align*}
where $X+K$ is simply the linear span of the $k$-plane $X$ and the $\ell$-plane $K$.  For $f \in \Bound(k,n)$ and $f' \in \Bound(\ell,n)$, we define the \defn{sphericoid variety} $$\Pi_{f,f'} \coloneqq \overline{\bigoplus(\Pi_f,\Pi_{f'})} \subseteq \Gr(k+\ell,n).$$  Proposition \ref{prop:YPi} states that computing the ideal $I(\Pi_{f,\id})$ is equivalent to computing $I(Y_f)$.  Here $\id \in \Bound(\ell,n)$ is the bounded affine permutation such that $\Pi_{\id} = \Gr(\ell,n)$.  The $\ell$-plane $K$ should be identified with the kernel $\ker(Z)$.

The advantage of considering the map $\bigoplus$ is that the corresponding map on homogeneous coordinate rings has a familiar representation theoretic description.  It is induced from the unique (up to scalar) non-trivial $\GL(n)$-homomorphism
$$
V(d\omega_k) \otimes V(d\omega_\ell) \longrightarrow V(d\omega_{k+\ell})
$$
where as before $V(d\omega_k)$ denotes the highest weight $\GL(n)$-representation indexed by a $k \times d$ rectangle.  Combining with the results from Section \ref{sec:coord}, we obtain a representation theoretic description of $I(\Pi_{f,f'})$ in Theorem \ref{thm:sphericoid}.  We work through some examples in Section \ref{sec:sphericoid}.  We also state in Theorems \ref{thm:sphebic} and \ref{thm:equatorial} a construction of points $X(N) \in \Pi_{f,f'}$ by counting matchings on a \defn{spherical bipartite network}, explaining the nomenclature.

\subsection{Facets and triangulations of Grassmann polytopes}
In Section \ref{sec:facets}, we discuss facets of Grassmann polytopes.  We do not study a complete definition of faces here, but instead we illustrate some phenomena in examples.  In particular, we show how to analyze some faces of the amplituhedron within our framework, and illustrate the feature that geometric facets of Grassmann polytopes are typically unions of smaller Grassmann polytopes.  

In Section \ref{sec:form2}, we define the canonical form $\omega_{Z(\Pi_f)}$ on the varieties $Z(\Pi_f)$.  These differential forms are defined to be the pushforward, or trace, of the canonical form $\omega_f$ on positroid varieties $\Pi_f$.  We formulate a conjecture (Conjecture \ref{conj:polesandzeroes}) on the divisor of poles and zeroes of $\omega_f$.

In Section \ref{sec:triangulations}, we make contact with the work of Arkani-Hamed and Trnka \cite{AT} by formulating an informal conjecture (Conjecture \ref{conj:form}) that every Grassmann polytope $P \subset \Gr(k,r)$ itself has a canonical form $\omega_P$ with remarkable properties.  In particular, this form $\omega_P$ should be the sum of the canonical forms $\omega_{Z(\Pi_f)}$ over a triangulation $P = \bigcup_{f \in \T} Z(\Pi_{f,\geq 0})$, reminiscient of equation \eqref{eq:posscat}.
When $P$ is the amplituhedron, this form should be the (tree) amplitude form $\omega_{SYM}$ of super Yang-Mills theory as studied in \cite{AT}.  Conjecture \ref{conj:form} does hold in the case that $P$ is a usual polytope, and we give a brief construction of this form $\omega_P$, which will be further studied in joint work with Arkani-Hamed and Bai \cite{ABL}.

\medskip
\noindent
{\bf Acknowledgements.}
It is a pleasure to thank Nima Arkani-Hamed, Yuntao Bai, Pierre Baumann, Allen Knutson, Alex Postnikov, Mark Shimozono, David Speyer, Jara Trnka, and Lauren Williams for helpful discussions and comments.  I thank Jacob Bourjaily and Willem de Graaf for their help with computer computations.

\part{The totally nonnegative Grassmannian}\label{part:one}
Let $\GL(n)$ denote the complex general linear group, and $\GL(n,\R)$ denote the real general linear group.
We use the notation $[n]\coloneqq \{1,2,\ldots,n\}$ and $\binom{S}{k}$ denotes the set of $k$-element subsets of a finite set $S$.

\section{Total positivity}\label{sec:TP}
In this section we set the stage by giving a brief introduction to some classical and some more recent results in total positivity.  We make no attempt to be comprehensive, and point the reader to \cite{FZ:tests} for an accessible introduction and many references.
\subsection{Total nonnegativity in $\GL(n)$}
Let $M$ be a matrix 
with real entries.  We say that $M$ is \defn{totally nonnegative} (TNN for short) if the determinant of any finite submatrix of $M$ is nonnegative.  We say that $M$ is \defn{totally positive} (TP for short) if the determinant of any finite submatrix of $M$ is positive.  We write $\GL(n)_{\geq 0}$ (resp. $\GL(n)_{>0}$) for the subset of TNN elements (resp. TP elements) in $\GL(n,\R)$.  

Let $X$ be a $p \times q$ matrix and $Y$ a $q \times p$ matrix.  Then the \defn{Cauchy-Binet formula} states that
\begin{equation}\label{eq:CB}
\det(XY) = \sum_{I \in \binom{[q]}{p}} \det(X_{[p],I})\det(Y_{I,[p]})
\end{equation}
where $X_{A,B}$ denotes the submatrix of $X$ with rows indexed by the set $A$ and columns indexed by the set $B$.  Note that the summation on the right hand side is empty if $p > q$, which agrees with the fact that the determinant on the left hand side is 0.  

\begin{corollary}
The totally nonnegative part $\GL(n)_{\geq 0}$ is a submonoid of $\GL(n)$.  The totally positive part $\GL(n)_{>0}$ is a subsemigroup of $\GL(n)$.
\end{corollary}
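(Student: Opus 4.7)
The plan is to deduce both statements directly from the Cauchy-Binet formula \eqref{eq:CB}, applied not to full determinants but to each square submatrix of the product $gh$.

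First I would fix $g, h \in \GL(n,\R)$ and an arbitrary minor $\det((gh)_{A,B})$ with $A, B \in \binom{[n]}{p}$. The key observation is that the submatrix $(gh)_{A,B}$ is the product of the $p \times n$ matrix $g_{A,[n]}$ with the $n \times p$ matrix $h_{[n],B}$, so Cauchy-Binet yields
\begin{equation*}
\det((gh)_{A,B}) = \sum_{I \in \binom{[n]}{p}} \det(g_{A,I}) \det(h_{I,B}).
\end{equation*}
Each summand is a product of a $p \times p$ minor of $g$ and a $p \times p$ minor of $h$.

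For the submonoid claim, suppose $g, h \in \GL(n)_{\geq 0}$. Then every term on the right is a product of two nonnegative numbers, hence nonnegative, so $\det((gh)_{A,B}) \geq 0$. Since $A, B, p$ were arbitrary, $gh$ is TNN. Togethet with the obvious fact that the identity matrix has minors equal to $0$ or $1$ (so $I_n \in \GL(n)_{\geq 0}$), this shows $\GL(n)_{\geq 0}$ is a submonoid. For the subsemigroup claim, if $g, h \in \GL(n)_{>0}$ then every summand is strictly positive, and the sum is nonempty since $1 \leq p \leq n$ guarantees $\binom{n}{p} \geq 1$, so $\det((gh)_{A,B}) > 0$.

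There is no real obstacle here; the argument is essentially a direct application of Cauchy-Binet once one notices that each submatrix of the product $gh$ is itself a matrix product to which the formula applies. The only conceptual point worth flagging is that $\GL(n)_{>0}$ does not contain the identity (whose off-diagonal minors vanish), which is exactly why it is only a semigroup rather than a monoid.
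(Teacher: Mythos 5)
Your proof is correct and follows exactly the route the paper intends: the corollary is stated immediately after the Cauchy-Binet formula precisely because applying it to the square submatrices $(gh)_{A,B} = g_{A,[n]}\,h_{[n],B}$ gives each minor of $gh$ as a nonnegative (resp.\ positive) sum of products of minors of $g$ and $h$. Nothing to add.
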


\subsection{Semigroup generators}
We now describe the semigroup generators of $\GL(n)_{\geq 0}$.  For $(i,j) \in [n]^2$, let $e_{i,j}$ denote the matrix which has a $1$ in the $i$-th row and $j$-th column and $0$-s elsewhere.  For $a \in \C$, and an integer $i \in [n-1]$, define $x_i(a)\coloneqq I_n+a\,e_{i,i+1}$ and $y_i(a) \coloneqq I_n+a\,e_{i+1,i}$, where $I_n$ denotes $n \times n$ identity matrix.  It is easy to check that $x_i(a), y_i(a) \in \GL(n)_{\geq 0}$ when $a \in \R_{\geq 0}$.  For example, for $n = 4$, we would have
$$
x_2(a) = \left[\begin{array}{cccc} 1 & && \\&1&a&\\&&1& \\&&&1\end{array}\right] 
\qquad \text{and} \qquad
y_{3}(b) = \left[\begin{array}{cccc} 1 &&& \\&1&&\\&&1& \\&&b&1\end{array}\right] 
$$

\begin{theorem}[Loewner-Whitney theorem \cite{Loe,Whi}]\label{thm:LW}
$\GL(n)_{\geq 0}$ is the subsemigroup of $\GL(n,\R)$ generated by the elements $\{x_i(a) \mid a \in \R_{>0}\}$, $\{y_i(a) \mid a \in \R_{>0}\}$, and diagonal matrices with positive real entries.
\end{theorem}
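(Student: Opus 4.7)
The plan is to establish the two containments separately. The containment of the semigroup into $\GL(n)_{\geq 0}$ is the easy half: each generator $x_i(a)$ and $y_i(a)$ with $a > 0$ is directly TNN, since every minor expands as a sum of products of $0$'s, $1$'s, and the single positive number $a$; positive diagonal matrices are visibly TNN. Closure of $\GL(n)_{\geq 0}$ under products is the content of the corollary just above, proved via Cauchy--Binet \eqref{eq:CB}. Hence arbitrary semigroup words in these generators lie in $\GL(n)_{\geq 0}$.

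For the reverse containment I would proceed by induction on $n$, the case $n=1$ being immediate. For the inductive step, given $g \in \GL(n)_{\geq 0}$, the goal is to peel off generators $y_i(a)$ on the left and $x_j(b)$ on the right until $g$ becomes block-diagonal of the form $\diag(c, h)$ with $c>0$ and $h \in \GL(n-1)_{\geq 0}$; the inductive hypothesis then finishes. The key peeling lemma asserts: whenever the first column of $g$ has a positive entry $g_{i,1}$ with $i > 1$, one has $g_{i-1,1} > 0$ as well. Indeed, if $g_{i-1,1} = 0$, then for every $c > 1$ the $2 \times 2$ minor on rows $\{i-1, i\}$, columns $\{1, c\}$ equals $-g_{i-1,c}\, g_{i,1} \geq 0$, forcing $g_{i-1,c} = 0$; so row $i-1$ vanishes entirely, contradicting invertibility of $g$. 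Setting $a \coloneqq g_{i,1}/g_{i-1,1} > 0$ and taking $i$ maximal, one writes $g = y_{i-1}(a) \cdot g'$ where $g' = y_{i-1}(-a) \cdot g$ has a zero in position $(i,1)$ without disturbing any zero already produced above. Iterating upward and performing the symmetric right-multiplication procedure with $x_j(b)$'s reduces $g$ to block-diagonal form, after which a final positive diagonal factor and the inductive hypothesis close the argument.

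The main obstacle is showing that the reduced matrix $g'$ remains totally nonnegative after each peeling step. For a minor $\det(g'_{A,B})$ with $i \notin A$ nothing changes; for $i \in A$, multilinearity in the $i$-th row rewrites the minor as $\det(g_{A,B}) - a \det(g_{A',B})$ where $A'$ replaces $i$ by $i-1$ (or the term is $0$ if $i-1 \in A$ already). Substituting $a = g_{i,1}/g_{i-1,1}$ and applying the Desnanot--Jacobi (Lewis Carroll) three-term minor identity expresses the result as a ratio whose numerator and denominator are products of minors of $g$ with matching sign, hence nonnegative. A secondary technicality is handling pivots $g_{i-1,1}$ that vanish after several peeling steps; this is dealt with by first treating the totally positive case (where all minors are strictly positive and every pivot is nonzero), and then invoking density of $\GL(n)_{>0}$ in $\GL(n)_{\geq 0}$ to extend the factorization by a limiting argument, using that the semigroup generated by the $x_i(a)$, $y_i(a)$, and positive diagonals is closed under limits of bounded sequences of factorizations of bounded total length.
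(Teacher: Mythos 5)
Your overall strategy matches the paper's sketch exactly: the easy half is Cauchy--Binet, and the hard half is row reduction by Chevalley factors, peeling off $y_{i-1}(a)$'s and $x_j(b)$'s until the matrix becomes block-diagonal. Your ``peeling lemma'' (if $g_{i,1} > 0$ then $g_{i-1,1} > 0$, else row $i-1$ vanishes) is correct and is a clean way to guarantee the pivot is nonzero.

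The genuine gap is the step that you attribute to Desnanot--Jacobi, and this is precisely the crux of the theorem. After the substitution $a = g_{i,1}/g_{i-1,1}$, you need, for every $A$ with $i \in A$, $i-1 \notin A$, and every $B$ with $1 \notin B$,
$$g_{i-1,1}\,\det(g_{A,B}) - g_{i,1}\,\det(g_{A',B}) \;\geq\; 0,$$
where $A' = A \setminus \{i\} \cup \{i-1\}$. Laplace-expanding the $(p+1)\times(p+1)$ minor $\det(g_{A\cup\{i-1\},\,B\cup\{1\}})$ along its first column shows that the left-hand side is equal (up to sign) to that minor \emph{minus} an alternating sum of products $g_{a_j,1}\det(g_{\cdot,B})$ over the rows $a_j < i-1$ of $A$; it is not a single ratio of products of minors, and the three-term Desnanot--Jacobi identity does not produce one. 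What closes the argument in the literature is a Koteljanskii/Fischer-type determinantal inequality (or, equivalently, the kind of careful inductive Pl\"ucker-relation case analysis the paper carries out in the Grassmannian setting, cf.\ Lemma~\ref{lem:reduce}, which is the essential content of Proposition~\ref{prop:reduce}). That step needs to be supplied, not just named. Relatedly, the density-of-$\GL(n)_{>0}$ fallback does not rescue the argument: in the paper that density is itself a corollary of the whole Grassmannian machinery (Theorem~\ref{thm:same}, proved much later), so invoking it here would be circular unless you prove it independently (which is possible but is extra work you do not include); and even for TP input the reduced matrix $g'$ immediately has zero minors, so the hard inequality above is still exactly what must be shown.
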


Let us briefly sketch the main idea of the proof of Theorem \ref{thm:LW}.  Multiplication by the matrices $x_i(a)$ and $y_i(a)$ act as row operations.  The idea is to start with an arbitrary $g \in \GL(n)_{\geq 0}$, and to ``reduce" it to a diagonal matrix by row operations.  The key step is to find $i \in [n-1]$ and $a \in \R_{>0}$ so that $g' = x_i(-a)g$ (or $g' = y_i(-a)g$) has more zero entries than $g$, but $g'$ is still totally nonnegative.  We shall apply the same philosophy to prove the harder Theorem \ref{thm:main}.  

In 1994, Lusztig \cite{LusTP} turned Theorem \ref{thm:LW} around to define the totally nonnegative part of any real reductive group as a semigroup generated by distinguished elements.

\subsection{Lindstr\"om-Gessel-Viennot}\label{sec:LGV}
Suppose we have a directed acyclic planar network $N$ with sources labeled by $[n]$ and sinks labeled by $[n]'$, with all positive real edge weights, as illustrated below:
\begin{center}
\begin{tikzpicture}
\draw[thick] (0,0)--(4,0);
\draw[thick] (0,1)--(4,1);
\draw[thick] (0,2)--(4,2);
\draw[thick] (1,0)--(2,1);
\draw[thick] (2,1)--(3,2);
\draw[thick] (1,2)--(2,1);
\blackdot{(0,0)};
\blackdot{(1,0)};
\blackdot{(4,0)};
\blackdot{(0,1)};
\blackdot{(2,1)};
\blackdot{(4,1)};
\blackdot{(0,2)};
\blackdot{(1,2)};
\blackdot{(3,2)};
\blackdot{(4,2)};
\node at (1.3,1.5) {$a$};
\node at (1.3,0.5) {$b$};
\node at (2.7,1.5) {$c$};
\node at (-1.5,1) {$N=$};
\node at (-0.5,2) {$1$};
\node at (-0.5,1) {$2$};
\node at (-0.5,0) {$3$};
\node at (4.5,2) {$1'$};
\node at (4.5,1) {$2'$};
\node at (4.5,0) {$3'$};
\node at (4,-1) {All edges are directed to the right.  Unlabeled edges have weight 1.} ;
\node at (7,1) {$M(N) = $};
\matrix (A) [matrix of math nodes,%
             left delimiter  = \lbrack,%
             right delimiter = \rbrack] at (9.5,1)
{%
  1 + ac & a &  0  \\
  c & 1 & 0 \\
  bc & b & 1\\
};
\end{tikzpicture}
\end{center}

Define a $n \times n$ matrix $M(N)$ with entries $(m_{ij})$ where $m_{ij}$ is the weight generating function of directed paths from source $i$ to sink $j'$.  Here, we define the weight of a path to be the product of the weights of edges on the path.

\begin{theorem}\label{thm:LGV}
Suppose $g \in \GL(n)$.  Then $g$ is totally nonnegative if and only if $g = M(N)$ for some directed acyclic planar network $N$ with positive real edge weights.
\end{theorem}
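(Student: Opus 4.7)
The plan is to prove the two implications separately. The direction ``$g = M(N)$ implies $g$ totally nonnegative'' is the Lindström-Gessel-Viennot lemma. The direction ``$g$ totally nonnegative implies $g = M(N)$'' follows by combining Theorem \ref{thm:LW} with a library of elementary network pieces together with a compatibility between concatenation of networks and matrix multiplication.

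For the forward direction, fix $I = \{i_1 < \cdots < i_k\}$ and $J = \{j_1 < \cdots < j_k\}$ and expand $\det(M(N)_{I,J})$ by the Leibniz formula. Since each entry $M(N)_{ij}$ is itself a sum $\sum_P \wt(P)$ over directed paths from source $i$ to sink $j'$, we obtain
\[
\det(M(N)_{I,J}) = \sum_{\mathbf{P}} \mathrm{sgn}(\sigma_{\mathbf{P}}) \, \wt(\mathbf{P}),
\]
where $\mathbf{P} = (P_1, \ldots, P_k)$ ranges over systems of paths with $P_a$ running from $i_{\sigma(a)}$ to $j_a'$, $\wt(\mathbf{P})$ is the product of the edge weights encountered, and $\sigma_{\mathbf{P}}$ is the associated permutation. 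Apply the classical Lindström involution: among path systems that contain a crossing, choose the smallest-indexed path that participates in an intersection, find its first crossing with another path, and swap tails beyond that point. This is a sign-reversing, weight-preserving involution on path systems with at least one crossing, so such terms cancel in pairs. Only non-intersecting systems survive. Since $N$ is planar with sources and sinks on the left and right boundaries listed in a linear order, a non-intersecting system from $\{i_1 < \cdots < i_k\}$ to $\{j_1 < \cdots < j_k\}$ must realize the identity permutation, so each surviving term contributes $+\wt(\mathbf{P}) \geq 0$. Hence $\det(M(N)_{I,J}) \geq 0$.

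For the reverse direction, Theorem \ref{thm:LW} lets us factor any $g \in \GL(n)_{\geq 0}$ as a product $g = g_1 g_2 \cdots g_s$ of generators of the form $x_i(a)$, $y_i(a)$, or positive diagonal matrices. To each generator I assign a small ``chip'' on $n$ horizontal wires: for $x_i(a)$, a single bridge of weight $a$ linking wire $i+1$ to wire $i$; for $y_i(a)$, the mirror chip with a bridge going the other way; for $\diag(d_1,\ldots,d_n)$, simply the $n$ horizontal wires with weights $d_j$. A direct inspection confirms that the LGV matrix of each chip is exactly the corresponding generator. The final observation is that if $N_1 * N_2$ denotes the horizontal concatenation of two such networks (identifying the sinks of $N_1$ with the sources of $N_2$), then $M(N_1 * N_2) = M(N_1) \cdot M(N_2)$: a path from source $i$ to sink $j'$ in $N_1 * N_2$ factors uniquely as a path in $N_1$ ending at some intermediate vertex $k'$ followed by a path in $N_2$ starting at $k$, and summing the weights over $k$ is precisely matrix multiplication. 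Concatenating the chips associated to $g_1, \ldots, g_s$ therefore yields a planar network $N$ with $M(N) = g$.

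The main conceptual obstacle lies in the planarity step of the forward direction: after the sign-reversing involution eliminates intersecting path systems, one must check that a non-intersecting system of paths in a planar acyclic network with linearly ordered boundary sources and sinks cannot realize a nontrivial permutation. This is a purely topological consequence of planarity---two non-intersecting paths in a planar strip cannot ``braid''---but it is the point where the planarity hypothesis is genuinely used and deserves to be spelled out carefully. Everything else is either a direct computation (the chip identities, concatenation $=$ multiplication) or an invocation of Theorem \ref{thm:LW}.
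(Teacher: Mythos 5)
Your proposal is correct and takes the same route the paper sketches: LGV for the ``if'' direction, and Loewner--Whitney plus the concatenation identity $M(N_1 \ast N_2) = M(N_1)\,M(N_2)$ plus elementary chips for the generators in the ``only if'' direction. One small convention slip: since $(x_i(a))_{i,i+1}=a$ records paths from source $i$ to sink $(i+1)'$, the bridge in the $x_i(a)$ chip should run from wire $i$ to wire $i+1$, not the other way as written; the structure of your argument is unaffected.
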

The ``if" part of Theorem \ref{thm:LGV} follows from the Lindstr\"om Lemma \cite{Lin}, sometimes also called the Gessel-Viennot method: each minor $\det(M(N)_{I,J})$ of $M(N)$ has a combinatorial interpretation as the weight generating function of \emph{non-intersecting} families of paths in $N$ with source set $I$ and sink set $J$.  The ``only if" part of Theorem \ref{thm:LGV} follows from Theorem \ref{thm:LW} and the observation that $M(N) \cdot M(N') = M(N \ast N')$ where $N \ast N'$ denotes the concatenation of $N$ and $N'$.  It is then enough to construct a network representing each of the generators $x_i(a),y_i(a)$ and positive diagonal matrices.

\subsection{Stratification}
Let $B \subset \GL(n)$ (resp. $B_- \subset \GL(n)$) denote the subgroup of upper (resp. lower) triangular matrices.  For a permutation $w \in S_n$, we also use $w$ to denote the corresponding permutation matrix, and we let $\ell(w)$ denote the length of $w$.   We have the Bruhat decompositions $\GL(n) = \bigcup_{w \in S_n} BwB = \bigcup_{v \in S_n} B_- v B_-$.  Define $$\GL(n)_{\geq 0}^{w,v} \coloneqq \GL(n)_{\geq 0} \cap BwB \cap B_- v B_-.$$  Then we have $\GL(n)_{\geq 0} = \bigsqcup \GL(n)_{\geq 0}^{w,v}$.
 
\begin{thm}[\cite{LusTP}] \label{thm:Lusstrata}
The topological space $\GL(n)_{\geq 0}^{w,v}$ is homeomorphic to $\R_{>0}^{n+\ell(w)+\ell(v)}$.  
\end{thm}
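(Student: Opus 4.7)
The plan is to give an explicit homeomorphism from $\R_{>0}^{n+\ell(w)+\ell(v)}$ onto $\GL(n)_{\geq 0}^{w,v}$ built from the Chevalley generators of Theorem \ref{thm:LW}. Fix a \emph{double reduced word} for the pair $(w,v)$: an interleaving of a reduced expression $s_{i_1}\cdots s_{i_{\ell(w)}}$ for $w$ with a reduced expression $s_{j_1}\cdots s_{j_{\ell(v)}}$ for $v$. Given this interleaving together with positive reals $a_1, \ldots, a_{\ell(w)}$, $b_1, \ldots, b_{\ell(v)}$, and $d_1, \ldots, d_n$, form the ordered product of the corresponding matrices $x_{i_k}(a_k)$, $y_{j_k}(b_k)$, and $\diag(d_1,\ldots,d_n)$ (with the diagonal matrix placed in a fixed chosen position). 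This assembles into a map $\phi:\R_{>0}^{n+\ell(w)+\ell(v)} \to \GL(n,\R)$, and the claim is that $\phi$ is a homeomorphism onto $\GL(n)_{\geq 0}^{w,v}$.

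The image is totally nonnegative by Theorem \ref{thm:LW} and the semigroup property. To see it lies in $BwB \cap B_- v B_-$, use the standard Lie-theoretic fact that any partial product $x_{i_1}(a_1)\cdots x_{i_k}(a_k)$ along a reduced word for $w$, with positive parameters, lies in $B \cap B_- wB_-$ (and symmetrically for $y$'s with the roles of $B$ and $B_-$ swapped); the diagonal factor lies in $B \cap B_-$. Continuity of $\phi$ is immediate since it is polynomial in the parameters. Injectivity of $\phi$, and continuity of $\phi^{-1}$, follow from the Chamber Ansatz of Berenstein-Fomin-Zelevinsky, which recovers each parameter from $g$ as an explicit Laurent monomial in certain twisted generalized minors; total nonnegativity and the Bruhat-cell conditions force these minors to be strictly positive on the stratum, so the recovered parameters automatically lie in $\R_{>0}$.

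For surjectivity, given $g \in \GL(n)_{\geq 0}^{w,v}$, I peel off the generators of $\phi$ in reverse order: at each step, multiply $g$ by an appropriate $x_i(-a)$ or $y_j(-a)$ on the corresponding side, choosing $a>0$ so as to decrease $\ell(w)+\ell(v)$ by exactly one while preserving total nonnegativity and total nonnegativity of the new Bruhat-cell stratum. After $\ell(w)+\ell(v)$ such moves the remaining factor is forced to be a positive diagonal matrix, whose entries supply $d$. This mirrors the Loewner-Whitney reduction sketched after Theorem \ref{thm:LW}, but now tracks the additional Bruhat data $(w,v)$.

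The main obstacle is proving that each reduction step is legal: the intermediate matrix must stay totally nonnegative and must belong to the correct smaller double Bruhat cell, and the peeled parameter must come out strictly positive. Equivalently, one has to establish strict positivity of the Chamber Ansatz minors on $\GL(n)_{\geq 0}^{w,v}$, a Fomin-Zelevinsky-style total positivity criterion that refines the classical Fekete test from the full group to arbitrary double Bruhat strata. Once this positivity is in hand, bijectivity and bicontinuity of $\phi$ fall out in parallel, completing the homeomorphism with $\R_{>0}^{n+\ell(w)+\ell(v)}$.
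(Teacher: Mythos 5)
Your proposal takes a genuinely different route from the one the paper (following Lusztig) sketches. The paper fixes the Lusztig ordering --- all the $x_i$'s first, then the diagonal torus factor, then all the $y_j$'s, as in \eqref{eq:TNNfactor} --- and proves bicontinuity by tracking how the Loewner--Whitney reduction of Theorem \ref{thm:LW} interacts with the Bruhat data $(w,v)$. You instead allow an arbitrary interleaving of the two reduced words (a double reduced word in the sense of Fomin--Zelevinsky) and invert the parametrization via the Chamber Ansatz. Both routes are in the literature and both prove the theorem. The Fomin--Zelevinsky route buys explicit Laurent-monomial formulas for the inverse and independence of the choice of interleaving; the Lusztig route is more elementary in that it avoids generalized minors and proceeds purely by the Whitney-style reduction. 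Each has a single nontrivial input: Lusztig's needs Bruhat bookkeeping at each reduction step, yours needs strict positivity of the Chamber Ansatz minors on the stratum $\GL(n)_{\geq 0}^{w,v}$, as you correctly isolate.

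Two points deserve attention. First, the cell-membership argument you give --- ``any partial product $x_{i_1}(a_1)\cdots x_{i_k}(a_k)$ along a reduced word lies in $B \cap B_- w B_-$'' --- is stated only for a homogeneous block of $x$'s (resp.\ $y$'s); once the two types are interleaved you need the fact that the product of an arbitrary double reduced word lands in the correct double Bruhat cell, which requires the braid/commutation relations to be compatible with the $B$ and $B_-$ Bruhat bookkeeping simultaneously, and this should be stated rather than absorbed into ``the standard Lie-theoretic fact.'' Second, and more substantially: the strict positivity of the Chamber Ansatz minors on $\GL(n)_{\geq 0}^{w,v}$ is not a formal consequence of the definitions --- it is exactly the Fomin--Zelevinsky total positivity criterion for double Bruhat cells, and in your write-up everything downstream (injectivity, bicontinuity of $\phi^{-1}$, and the legality of the peeling-off surjectivity steps) is conditional on it. You flag it as ``the main obstacle,'' which is right, but the argument you present does not close it. This is a comparable level of incompleteness to the paper's own one-sentence sketch (which defers to \cite{LusTP}), so the route is sound; just be explicit that the positivity criterion is the input you are citing rather than proving, and that the surjectivity reduction and the Chamber Ansatz inversion are two manifestations of the same underlying positivity statement.
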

The homeomorphism is given explicitly by a map of the form 
\begin{equation}\label{eq:TNNfactor}
(a_1,\ldots,a_r,t_1,\ldots,t_n,b_1,\ldots,b_s)  \mapsto x_{i_1}(a_1)\cdots x_{i_r}(a_r) \diag(t_1,\ldots,t_n) y_{i_1}(b_1) \cdots y_{i_s}(b_s),
\end{equation}
where $w = s_{i_1} \cdots s_{i_r}$ and $v = s_{i_1} \cdots y_{i_s}$ are reduced factorizations, and $a_i,t_j,b_k \in \R_{>0}$.  Theorem \ref{thm:Lusstrata} is proven by analyzing the relationship between the Bruhat decomposition and the reduction procedure used in the proof of Theorem \ref{thm:LW}.

Denote by $w' \leq w$ the Bruhat order on the symmetric group $S_n$.
\begin{thm}[\cite{LusTP}]\label{thm:closure}
We have $\overline{\GL(n)_{\geq 0}^{w,v}} = \bigsqcup_{w' \leq w, v' \leq v} \GL(n)_{\geq 0}^{w',v'}$.
\end{thm}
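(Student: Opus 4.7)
The plan is to verify the two inclusions separately, using the parametrization of Theorem \ref{thm:Lusstrata} and the subword characterization of the Bruhat order.

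For the inclusion $\overline{\GL(n)_{\geq 0}^{w,v}} \subseteq \bigsqcup_{w' \leq w,\, v' \leq v} \GL(n)_{\geq 0}^{w',v'}$, I observe that (i) $\GL(n)_{\geq 0}$ is closed in $\GL(n,\R)$ because it is cut out by a family of polynomial inequalities (nonnegativity of all minors), and (ii) the classical Bruhat-cell closure relations give
$$
\overline{BwB} = \bigsqcup_{w' \leq w} Bw'B \qquad \text{and} \qquad \overline{B_- v B_-} = \bigsqcup_{v' \leq v} B_- v' B_-,
$$
valid already as Zariski closures in $\GL(n,\C)$ and hence also in $\GL(n,\R)$. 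Since $\GL(n)_{\geq 0}^{w,v} = \GL(n)_{\geq 0} \cap BwB \cap B_- v B_-$, intersecting the three closures yields this inclusion.

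For the harder inclusion $\supseteq$, the key ingredient is the subword property: whenever $w' \leq w$, some reduced word for $w$ admits a subword that is a reduced word for $w'$. Fix $w' \leq w$ and $v' \leq v$, and choose reduced expressions $w = s_{i_1} \cdots s_{i_r}$ and $v = s_{j_1} \cdots s_{j_s}$ together with index sets $A \subseteq [r]$ and $B \subseteq [s]$ such that the subwords indexed by $A$ and $B$ are reduced words for $w'$ and $v'$ respectively. Given any $g' \in \GL(n)_{\geq 0}^{w',v'}$, Theorem \ref{thm:Lusstrata} (applied to $w', v'$ with the chosen subwords) lets me write
$$
g' = \prod_{k \in A} x_{i_k}(a_k) \cdot \diag(t_1,\ldots,t_n) \cdot \prod_{\ell \in B} y_{j_\ell}(b_\ell)
$$
with all parameters in $\R_{>0}$. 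For each $\epsilon > 0$, define $g_\epsilon \in \GL(n)_{\geq 0}^{w,v}$ via the same formula \eqref{eq:TNNfactor} applied to the full reduced expressions for $w$ and $v$, inserting the value $\epsilon$ at every position outside $A$ (resp.\ outside $B$). By Theorem \ref{thm:Lusstrata}, $g_\epsilon$ indeed lies in $\GL(n)_{\geq 0}^{w,v}$; continuity of matrix multiplication gives $g_\epsilon \to g'$ as $\epsilon \to 0$, so $g' \in \overline{\GL(n)_{\geq 0}^{w,v}}$.

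The main obstacle is really the subword lemma itself, together with the observation that the parametrization of Theorem \ref{thm:Lusstrata} covers $\GL(n)_{\geq 0}^{w',v'}$ when specialized to the chosen subwords — so that the parameters $(a_k, t_i, b_\ell)$ used in the limiting factorization do realize \emph{every} element of the small stratum. The subword characterization of the strong Bruhat order is standard, while the required surjectivity of the parametrization is exactly the content of Theorem \ref{thm:Lusstrata}. Once these are in hand, no further analytic or geometric input is needed, and the disjointness of the pieces on the right-hand side follows from the disjointness of Bruhat cells on either side.
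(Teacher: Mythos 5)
Your proof is correct and takes essentially the same route the paper sketches: the $\subseteq$ inclusion comes from the closedness of $\GL(n)_{\geq 0}$ together with the closure relations $\overline{BwB} = \bigsqcup_{w' \leq w} Bw'B$ (and its opposite-Borel analogue), while the $\supseteq$ inclusion comes from the subword characterization of Bruhat order combined with Theorem \ref{thm:Lusstrata}, sending the parameters at non-subword positions to $0$ in the factorization \eqref{eq:TNNfactor}. The paper gives this only as a two-sentence sketch; you have just filled in the details faithfully.

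One tiny point worth making explicit in the $\subseteq$ direction: you pass from $\overline{A \cap B \cap C}$ to $\overline{A} \cap \overline{B} \cap \overline{C}$, which is only an inclusion, but that is exactly the direction you need, and the Euclidean closure of a real Bruhat cell sits inside the Zariski closure of its complex counterpart intersected with $\GL(n,\R)$, so again the inclusion goes the right way.
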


The $\supseteq$ inclusion of Theorem \ref{thm:closure} is obtained by sending some of the parameters $a_i$ and $b_k$ in \eqref{eq:TNNfactor} to 0, and using the characterization of Bruhat order via subwords of reduced words.  The $\subseteq$ inclusion of Theorem \ref{thm:closure} is obtained by the geometric characterization of Bruhat order: $\overline{BwB} = \bigsqcup_{w' \leq w} Bw'B$.  The topological structure of the decomposition $\GL(n)_{\geq 0} = \bigsqcup \GL(n)_{\geq 0}^{w,v}$ and of similar stratified spaces has drawn quite a bit of recent interest, see for example \cite{Hersh}.

\section{The Grassmannian}\label{sec:Gr}
\subsection{Real and complex Grassmannians}
Let $k \leq n$ be positive integers.  The Grassmannian $\Gr(k,n)$ is the space of $k$-dimensional subspaces of the complex vector space $\C^n$.  The space $\Gr(k,n)$ can be given the structure of a smooth complex projective variety, as follows.  Let $X \subset \C^n$ be a $k$-dimensional subspace.  Then $X$ has a basis $\{x_1,x_2,\ldots,x_k\} \subset \C^n$, so we have
$$
X = \rowspan \begin{bmatrix} \;\;\rule[3pt]{2cm}{0.4pt} \;\; x_1 \;\;\rule[3pt]{2cm}{0.4pt} \;\; \\
\;\;\rule[3pt]{2cm}{0.4pt} \;\; x_2 \;\;\rule[3pt]{2cm}{0.4pt} \;\;\\
\;\;\rule[3pt]{2cm}{0.4pt} \hspace{1.3pt}\;\;\; \vdots \hspace{1.3pt}\;\;\;\rule[3pt]{2cm}{0.4pt} \;\; \\
\;\;\rule[3pt]{2cm}{0.4pt} \;\; x_n \;\;\rule[3pt]{2cm}{0.4pt} \;\;
\end{bmatrix}.
$$ 
Every full rank $k \times n $ matrix $M$ represents a point in $\Gr(k,n)$.  Two $k \times n$ matrices $M,M'$ represent the same point $X \in \Gr(k,n)$ if we have $M' = g \cdot M$ for $g \in \GL(k)$.  We will often abuse notation by identifying $X \in \Gr(k,n)$ with a matrix $M$ representing it.

For $I \in \binom{[n]}{k}$, let $\Delta_I(X) \coloneqq \Delta_I(M)$ denote the $k \times k$ minor of $M$ with columns indexed by the elements of $I$.  Since $\Delta_I(gM) = \det(g) \Delta_I(M)$, the collection of \defn{Pl\"{u}cker coordinates} $\{\Delta_I(X) \mid I \in \binom{[n]}{k}\}$ are well-defined up to a common scalar.  Thus we have a map
\begin{align*}
\Gr(k,n) &\longrightarrow \P^{\binom{n}{k}-1} \\
X &\longmapsto (\Delta_I(X))_{I \in \binom{[n]}{k}}
\end{align*}
mapping the Grassmannian to the projective space with homogeneous coordinates labeled by $\binom{[n]}{k}$.  This map is an injection called the \defn{Pl\"ucker embedding}, and endows $\Gr(k,n)$ with the structure of a smooth irreducible projective variety of (complex) dimension $k(n-k)$.  If $X \in \Gr(k,n)$, then the Pl\"ucker coordinates $\Delta_I(X)$ satisfy quadratic relations known as the Pl\"ucker relations.

In the following, Pl\"ucker coordinates will also be indexed by $k$-tuples $(i_1,i_2,\ldots,i_k) \in [n]^k$, with the convention that the coordinates are anti-symmetric in the indices.  So for example $\Delta_{1,3} = -\Delta_{3,1}$.  The following standard result can be found in \cite{Ful}.

\begin{proposition}
The Pl\"ucker coordinates $\Delta_I(X)$ satisfy the relations
$$
\Delta_{i_1,\ldots,i_k} \Delta_{j_1,\ldots,j_k} - \sum \Delta_{i'_1,\ldots,i'_k} \Delta_{j'_1,\ldots,j'_k} = 0,
$$
where the sum is over all pairs obtained by interchanging a fixed set of $r$ of the subscripts $j_1,\ldots,j_k$ with $r$ of the subscripts $i_1,\ldots,i_k$, maintaining the order in each.
\end{proposition}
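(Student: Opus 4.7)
The strategy is to reduce the claimed identity to a statement about alternating multilinear forms on $\C^k$ and then invoke the vanishing $\Lambda^{k+r}(\C^k) = 0$ for $r \geq 1$.

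First I would fix a $k \times n$ matrix $M$ whose row span is $X$, and let $c_1, c_2, \ldots, c_n \in \C^k$ denote its columns. By the very definition of the Pl\"ucker embedding applied to $M$, one has $\Delta_{i_1,\ldots,i_k}(X) = \det(c_{i_1}, c_{i_2}, \ldots, c_{i_k})$, extended antisymmetrically in the subscripts so that the convention $\Delta_{1,3} = -\Delta_{3,1}$ stated in the excerpt is matched. In particular, every Pl\"ucker coordinate is the value of a totally antisymmetric $k$-linear form on the appropriate columns, and both sides of the claimed identity become concrete polynomial expressions in the entries of $M$.

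I would then prove the base case $r = 1$, which is the classical three-term exchange relation
$$\sum_{s=0}^{k}(-1)^s\, \Delta_{i_1, \ldots, i_{k-1}, j_s}(X)\, \Delta_{j_0, \ldots, \hat{j_s}, \ldots, j_k}(X) = 0,$$
valid for arbitrary tuples $(i_1, \ldots, i_{k-1})$ and $(j_0, \ldots, j_k)$. Keeping $c_{i_1}, \ldots, c_{i_{k-1}}$ fixed, the left-hand side is a function of the $k + 1$ vectors $c_{j_0}, \ldots, c_{j_k} \in \C^k$. A direct sign-tracking computation (equivalently, expansion of a single $k \times k$ determinant along a column constructed from the $c_{j_s}$) shows that this function is alternating in those $k + 1$ arguments. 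Since $\Lambda^{k+1}(\C^k) = 0$, any alternating $(k+1)$-linear form on $(\C^k)^{k+1}$ is identically zero, so the relation holds. The general $r$ case in the statement is deduced analogously: after fixing the $k - r$ columns of $J$ that are not being exchanged, the expression $\Delta_I \Delta_J - \sum \Delta_{I'} \Delta_{J'}$ is an alternating $(k+r)$-linear form on the remaining $k + r$ column vectors in $\C^k$, and hence vanishes because $\Lambda^{k+r}(\C^k) = 0$ whenever $r \geq 1$. Equivalently, one may induct on $r$ by iterating the $r = 1$ relation to swap exchanged entries one at a time.

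The main obstacle is sign-bookkeeping: to verify the alternating property of the sum over $Q$, one must track the parity of the permutations needed to resort the swapped tuples $(i'_1, \ldots, i'_k)$ and $(j'_1, \ldots, j'_k)$ back to the canonical increasing form demanded by the definition of $\Delta_I$. Once the alternating property is set up correctly, the vanishing is immediate from the dimension count $k + r > k$. This is the classical argument from Fulton's textbook referenced in the excerpt, and no new ideas are needed beyond the careful combinatorics of signs.
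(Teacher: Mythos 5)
The paper itself offers no proof of this proposition; it is simply cited to Fulton, so there is no ``paper proof'' to match. Your argument for the $r = 1$ case is correct and is indeed Fulton's: the three-term exchange relation is manifestly alternating in the $k+1$ column vectors $c_{j_0},\dots,c_{j_k}\in\C^k$ (transpositions permute the terms with signs), and $\Lambda^{k+1}(\C^k)=0$ finishes it.

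The extension to general $r$ as you describe it, however, has a genuine gap. The expression $\Delta_I\Delta_J-\sum\Delta_{I'}\Delta_{J'}$, with the $k-r$ unexchanged $j$-columns held fixed, is \emph{not} manifestly alternating in the remaining $k+r$ columns $\{c_i:i\in I\}\cup\{c_j:j\in J_2\}$ once $r\geq 2$. The sign-bookkeeping you invoke fails: under a transposition swapping a $c_i$ with a $c_j$, some terms --- including $\Delta_I\Delta_J$ itself --- are carried to monomials $\Delta_{\tilde I}\Delta_{\tilde J}$ whose index pair does not appear (up to sign) anywhere in the sum. Concretely, for $k=3$, $r=2$, $I=(1,2,3)$, $J=(4,5,6)$, $J_2=\{4,5\}$, the four monomials present are $\{123\}\{456\}$, $\{126\}\{345\}$, $\{136\}\{245\}$, $\{145\}\{236\}$; swapping $c_1\leftrightarrow c_4$ sends $\Delta_{123}\Delta_{456}$ to $\Delta_{234}\Delta_{156}$, which is not on the list. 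At that point the argument becomes circular: the form is alternating only \emph{because} it already vanishes, which is what was to be proved. The Sylvester-type identity that \emph{is} manifestly alternating in $k+r$ vectors has $k-r$ fixed indices and $k+r$ moving indices with the sum over all $\binom{k+r}{r}$ splittings of the moving set; the Proposition's sum is a proper subsum of that (missing the splittings whose $r$-part meets $J_2$ in between $1$ and $r-1$ elements), so it is a different identity and needs a different argument.

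Your fallback --- iterate the $r=1$ relation --- is closer, but is not immediate as stated: a naive double application of $r=1$ (swap $j_{q_1}$, then $j_{q_2}$) produces the $\binom{k}{r}$ target monomials with multiplicity $r!$ together with extra monomials that must be killed by a further Plücker relation. The clean way to deduce the general $r$ case from $r=1$ is via the straightening law (or, equivalently, that the three-term relations generate the entire Plücker ideal), which is what Fulton develops; that is a real piece of work, not a one-line induction. So your proof is complete for $r=1$ and incomplete for the general $r$ asserted in the proposition.
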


We have the following simpler criterion to check if a point lies in $\Gr(k,n)$, which follows from \cite[Proof on page 133]{Ful}.
\begin{proposition}\label{prop:plucker}
A collection of numbers $(\Delta_I(N))_{I \in \binom{[n]}{k}})$, not all zero, defines a point in $\Gr(k,n)$ if and only if the Pl\"ucker relation with $r=1$ index swapped is satisfied:
\begin{equation}\label{eq:plucker}
\sum_{s=1}^k (-1)^s\Delta_{i_1,i_2,\ldots,i_{k-1},j_s}\Delta_{j_1,\ldots,j_{s-1},\hat j_s,j_{s+1},\ldots,j_k} = 0
\end{equation}
where $\hat j_r$ denotes omission.
\end{proposition}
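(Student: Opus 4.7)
The plan is as follows. The \emph{only if} direction is immediate: relation \eqref{eq:plucker} is literally the $r=1$ specialization of the general Pl\"ucker relations given in the previous proposition. The substance lies in the converse, which I would establish by explicitly reconstructing a $k \times n$ matrix $M$ whose genuine $k \times k$ minors recover the prescribed scalars $(\Delta_I)$.

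Since the $\Delta_I$ are not all zero, pick $I_0 = \{c_1 < c_2 < \cdots < c_k\} \in \binom{[n]}{k}$ with $\Delta_{I_0} \neq 0$, and rescale so that $\Delta_{I_0} = 1$. Let $M$ be the unique $k \times n$ matrix whose submatrix in columns $I_0$ is the identity $I_k$, and whose entry in row $a \in [k]$ and column $j \notin I_0$ is chosen (with the appropriate sign from sorting the index set into increasing order) so that the actual minor $\Delta_{(I_0 \setminus \{c_a\}) \cup \{j\}}(M)$ equals the prescribed $\Delta_{(I_0 \setminus \{c_a\}) \cup \{j\}}$. By construction, $\Delta_I(M) = \Delta_I$ whenever $r \coloneqq |I \setminus I_0| \leq 1$. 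It then suffices to prove $\Delta_I(M) = \Delta_I$ for all $I$, which I would do by induction on $r$. For the inductive step with $r \geq 2$, fix $d \in I \setminus I_0$ and instantiate \eqref{eq:plucker} with $(i_1, \ldots, i_{k-1}) = I \setminus \{d\}$ and the remaining indices drawn from $I_0 \cup \{d\}$. One summand of the resulting relation is $\pm \Delta_I \cdot \Delta_{I_0} = \pm \Delta_I$, while every other summand is a product of Pl\"ucker coordinates whose index sets contain strictly fewer than $r$ elements outside $I_0$. The same relation holds with $\Delta$ replaced by the genuine minors $\Delta(M)$ (by the forward direction applied to the actual matrix $M$), and the inductive hypothesis identifies the two expressions term by term except on $I$ itself, forcing $\Delta_I(M) = \Delta_I$.

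The main obstacle is purely combinatorial sign bookkeeping: one must verify that the chosen instantiation of \eqref{eq:plucker} truly isolates $\Delta_I$ with nonzero coefficient, that every other summand genuinely reduces the value of $r$ on each factor, and that the signs arising from sorting index sets like $(I_0 \setminus \{c_a\}) \cup \{j\}$ into increasing order cancel consistently between the abstract scalars and the honest minors. None of these are deep, but together they are the engine of the proof, and are essentially the content of the discussion in \cite[p.~133]{Ful} that the statement points to. Once the signs are accounted for, the induction closes and $M$ witnesses $(\Delta_I)$ as the Pl\"ucker coordinates of a point in $\Gr(k,n)$.
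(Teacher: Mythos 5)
The paper offers no proof of its own here --- it simply cites \cite[p.~133]{Ful} --- and your reconstruction is precisely the standard argument from that source: normalize $\Delta_{I_0}=1$, build the affine-chart matrix $M$ whose minors agree with the data for $|I\setminus I_0|\le 1$, and induct on $r=|I\setminus I_0|$ via the three-term Pl\"ucker relation. Your inductive step is sound once one corrects the obvious typographical slip in \eqref{eq:plucker} (the $j$-list should be $j_1,\ldots,j_{k+1}$ with the sum running to $k+1$, otherwise the second factor has only $k-1$ indices): taking $(i_1,\ldots,i_{k-1})=I\setminus\{d\}$ and $(j_1,\ldots,j_{k+1})=I_0\cup\{d\}$ isolates $\pm\Delta_I\Delta_{I_0}=\pm\Delta_I$ as one summand, while every other nonvanishing summand has both factors with at most $r-1$ indices outside $I_0$, so the recursion closes.
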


The Grassmannian can be covered by affine charts.  Let $\Omega \subset \Gr(k,n)$ be the locus $\Omega \coloneqq \{X \in \Gr(k,n) \mid \Delta_{[k]}(X) \neq 0\}$.  Then every $X \in \Omega$ is uniquely represented by a $k \times n$ matrix $M$ whose columns $1,2,\ldots,k$ form the identity matrix.  For example, if $k = 3$ and $n = 7$, we have
$$
M = \begin{bmatrix}1 & 0 & 0 & m_{14} & m_{15} &m_{16} & m_{17} \\
0& 1 & 0 & m_{24} & m_{25} &m_{26} & m_{27} \\
0 & 0 & 1 & m_{34} & m_{35} &m_{36} & m_{37} \\
\end{bmatrix}.
$$
The entries $m_{ij}$ for $i \in [k]$ and $j\in[k+1,n]$ form coordinates on $\Omega$, identifying $\Omega$ with the affine space $\C^{k(n-k)}$.  If instead of placing the identity matrix in the columns $\{1,2,\ldots,k\}$ we placed it in the columns indexed by $I \in \binom{[n]}{k}$, we obtain the chart $\Omega_I$.  The collection of $\binom{n}{k}$ affine charts $\{\Omega_I \mid I \in \binom{[n]}{k}\}$ cover the Grassmannian $\Gr(k,n)$.

\begin{example}
The Pl\"ucker coordinates of the $2$-plane
$$
X = \rowspan \begin{bmatrix} 1 & 0 & a & b \\ 0 & 1 & c &d \end{bmatrix}
$$
are $\Delta_{12} = 1$, $\Delta_{13} = c$, $\Delta_{14} = d$, $\Delta_{23} = -a$, $\Delta_{24} = -b$, and $\Delta_{34} = ad-bc$.  They satisfy the one Pl\"ucker relation $\Delta_{13}\Delta_{24} = \Delta_{12}\Delta_{34} + \Delta_{14}\Delta_{23}$.
\end{example}

\begin{example}
Suppose $k = 1$.  Then $\Gr(1,n)$ is the set of one-dimensional subspaces of $\C^n$.  The Pl\"ucker embedding $\Gr(1,n) \to \P^{n-1}$ is an isomorphism.
\end{example}

The real Grassmannian $\Gr(k,n)_{\R}$ parametrizes $k$-dimensional subspaces of $\R^n$.  One can identity $\Gr(k,n)_{\R}$ with the subset of $\Gr(k,n)$ consisting of points $X$ represented by Pl\"ucker coordinates $\Delta_I(X)$ that are all real numbers.  In other words, if all the $k \times k$ minors of a full rank $k\times n$ matrix $M$ are real, then there exists $g \in \GL(k)$ so that $g \cdot M$ is a full rank $k \times n$ matrix with real entries.

\subsection{The totally nonnegative Grassmannian}\label{sec:TNNGrass}
The \defn{totally nonnegative Grassmannian} \cite{Pos}, denoted $\Gr(k,n)_{\geq 0}$, is the subset of $X \in \Gr(k,n)$ represented by Pl\"ucker coordinates $\Delta_I(X)$ that are nonnegative real numbers.  The \defn{totally positive Grassmannian} or \defn{positive Grassmannian} for short is the subset $\Gr(k,n)_{>0} \subset \Gr(k,n)$ represented by Pl\"ucker coordinates $\Delta_I(X)$ that are all positive real numbers.

There is a natural right action of $\GL(n)$ on $\Gr(k,n)$, and we have the following compatibility of totally nonnegative parts, which follows immediately from \eqref{eq:CB}.
\begin{proposition}
Suppose $g \in \GL(n)_{\geq 0}$ and $X \in \Gr(k,n)_{ \geq 0}$.  Then $X \cdot g \in \Gr(k,n)_{\geq 0}$.
\end{proposition}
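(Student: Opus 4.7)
The plan is to unpack what the action $X \cdot g$ does on Plücker coordinates and apply the Cauchy-Binet formula \eqref{eq:CB} directly. Represent $X \in \Gr(k,n)_{\geq 0}$ by a full-rank $k \times n$ matrix $M$; then $X \cdot g$ is represented by the $k \times n$ matrix $Mg$. For any $J \in \binom{[n]}{k}$, the Plücker coordinate $\Delta_J(X \cdot g)$ equals $\det((Mg)_{[k],J})$, which is the determinant of the product $M \cdot g_{[n],J}$ of a $k \times n$ matrix with an $n \times k$ matrix.

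Applying \eqref{eq:CB} to this product yields
\[
\Delta_J(X \cdot g) \;=\; \det(M \cdot g_{[n],J}) \;=\; \sum_{I \in \binom{[n]}{k}} \det(M_{[k],I}) \, \det(g_{I,J}) \;=\; \sum_{I \in \binom{[n]}{k}} \Delta_I(X) \, \det(g_{I,J}).
\]
Now invoke the two nonnegativity hypotheses: $\Delta_I(X) \geq 0$ for all $I$ because $X \in \Gr(k,n)_{\geq 0}$, and $\det(g_{I,J}) \geq 0$ for all $I,J$ because $g \in \GL(n)_{\geq 0}$ (these are exactly the $k \times k$ minors of $g$). The right-hand side is therefore a sum of products of nonnegative reals, hence nonnegative.

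This shows every Plücker coordinate of $X \cdot g$ is nonnegative, and we must also note the coordinates are not all zero (so the row span $X \cdot g$ is still a well-defined $k$-plane): since $g \in \GL(n)$ is invertible, $Mg$ has rank $k$, so some Plücker coordinate is nonzero. There is essentially no obstacle here — the entire argument is a one-line application of Cauchy-Binet combined with the definitions of total nonnegativity in $\GL(n)_{\geq 0}$ and in $\Gr(k,n)_{\geq 0}$; the only mild bookkeeping issue is verifying the index ranges in the Cauchy-Binet sum match the definition of the Plücker coordinate, which is immediate.
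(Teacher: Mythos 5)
Your proof is correct and matches the paper's intent exactly: the paper states the proposition ``follows immediately from \eqref{eq:CB}'', and your argument is precisely that one-line Cauchy-Binet expansion $\Delta_J(X\cdot g)=\sum_I \Delta_I(X)\det(g_{I,J})$ together with the nonnegativity of each factor and the observation that $Mg$ still has full rank. Nothing to add.
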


For any $I \in \binom{[n]}{k}$, we have a point $e_I = \spn(e_i \mid i \in I) \in \Gr(k,n)$ with Pl\"ucker coordinates $\Delta_J(e_I) = \delta_{I,J}$ for $J \in \binom{[n]}{k}$.  By definition, the point $e_I$ lies in $\Gr(k,n)_{\geq 0}$.  The torus $(\C^*)^n \subset \GL(n)$ acts on $\Gr(k,n)$ and the points $e_I$ are exactly the torus fixed points.

\begin{theorem}\label{thm:same}
We have $\Gr(k,n)_{\geq 0} = \overline{\Gr(k,n)_{>0}} = \overline{ e_{[k]}\cdot \GL(n)_{\geq 0}}$  in the Hausdorff topology.
\end{theorem}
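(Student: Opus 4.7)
The plan is to prove the cycle of inclusions
$$\overline{e_{[k]} \cdot \GL(n)_{\geq 0}} \;\subseteq\; \overline{\Gr(k,n)_{>0}} \;\subseteq\; \Gr(k,n)_{\geq 0} \;\subseteq\; \overline{e_{[k]} \cdot \GL(n)_{\geq 0}},$$
which forces equality throughout. Denote the three sets $A$, $B$, $C$ for the three descriptions in the statement.

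For $C \subseteq B$: since $e_{[k]}$ is represented by the matrix $[I_k \mid 0]$, the product $e_{[k]} \cdot g$ is represented by the first $k$ rows of $g$, so $\Delta_J(e_{[k]} \cdot g) = \det(g_{[k], J})$. Thus $g \in \GL(n)_{>0}$ forces $e_{[k]} \cdot g \in \Gr(k,n)_{>0}$. Moreover, $\GL(n)_{>0}$ is dense in $\GL(n)_{\geq 0}$: by Theorem~\ref{thm:Lusstrata}, $\GL(n)_{>0}$ is the open stratum $\GL(n)_{\geq 0}^{w_0,w_0}$, and by Theorem~\ref{thm:closure} its Hausdorff closure is all of $\GL(n)_{\geq 0}$. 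Combining these gives $e_{[k]} \cdot \GL(n)_{\geq 0} \subseteq \overline{\Gr(k,n)_{>0}}$, whence $C \subseteq B$ after taking closures. The inclusion $B \subseteq A$ is immediate: $A$ is the closed subset of $\Gr(k,n)$ cut out by the condition that all Pl\"ucker coordinates have a common sign, and it contains $\Gr(k,n)_{>0}$.

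The main obstacle is the hard inclusion $A \subseteq C$. My plan is to imitate the reduction strategy behind the proof of Loewner--Whitney (Theorem~\ref{thm:LW}): given $X \in \Gr(k,n)_{\geq 0}$, I would iteratively act on $X$ on the right by inverses of positive Chevalley generators $x_i(a)$ or $y_i(a)$ (which lie in $\GL(n)_{\geq 0}$), choosing each factor so that (i) the new point still lies in $A$, and (ii) some combinatorial complexity of its Pl\"ucker support strictly decreases. After finitely many steps $X$ would be reduced to $e_I \cdot h$ for some torus-fixed $e_I$ and $h \in \GL(n)_{\geq 0}$. The key technical input is the existence of such a ``simplifying'' move at any non-fixed point $X \in A$, which is the Grassmannian analogue of Whitney's row-reduction step. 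This content is precisely the ``bridge removal'' algorithm that underlies the later Theorem~\ref{thm:main}, which can be invoked here as a forward reference.

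It remains to show $e_I \in C$ for every $I \in \binom{[n]}{k}$. For each $I = \{i_1 < \cdots < i_k\}$, I would exhibit an explicit ``staircase'' family $g_\epsilon \in \GL(n)_{\geq 0}$ (upper triangular with carefully chosen powers of $\epsilon$ on the entries; e.g.\ for $n=3$, $I = \{2,3\}$ one may take $g_\epsilon$ with entries $\epsilon$ on the diagonal and $1$ on the immediate superdiagonal) so that the Pl\"ucker coordinates of $e_{[k]} \cdot g_\epsilon$ vanish at different orders in $\epsilon$, with $\Delta_I$ dominating. After projective normalization, $e_{[k]} \cdot g_\epsilon \to e_I$ as $\epsilon \to 0$, placing $e_I \in C$. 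Composing with the earlier reduction places $X \in C$ and completes the proof.
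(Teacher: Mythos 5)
Your cyclic-inclusion strategy is sound in outline, and the two ``easy'' inclusions are handled correctly: $\Delta_J(e_{[k]}\cdot g)=\det(g_{[k],J})$ gives $e_{[k]}\cdot\GL(n)_{>0}\subseteq\Gr(k,n)_{>0}$, density of $\GL(n)_{>0}$ in $\GL(n)_{\geq 0}$ (from Theorems~\ref{thm:Lusstrata} and \ref{thm:closure}) then gives $C\subseteq B$, and $B\subseteq A$ is immediate since $A$ is closed.

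The gap is in the hard inclusion $A\subseteq C$. You propose to reduce an arbitrary $X\in\Gr(k,n)_{\geq 0}$ by right multiplication by $x_i(-a)$ or $y_i(-a)$ until one reaches a torus fixed point $e_I$, and then finish with an explicit ``staircase'' degeneration showing $e_I\in C$. The problem is that the bridge reduction \emph{does not} in general terminate at a torus fixed point when run on an arbitrary TNN point. In the recursion underlying Theorem~\ref{thm:main}, the Chevalley/bridge step of Proposition~\ref{prop:reduce} is only available when $f_X(i)\notin\{i,i+n\}$ for all $i$; when $f_X$ has a fixed point $f_X(i)=i$ (a zero column) or $f_X(i)=i+n$ (a coloop), the recursion passes to a \emph{smaller} Grassmannian via the projections $p_i$ of Lemmas~\ref{lem:proj1}--\ref{lem:proj2} — and these projections are not of the form $X\mapsto X\cdot g$ for $g\in\GL(n)$, so they are unavailable to you. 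A concrete obstruction: take $k=2$, $n=4$ and
$$X\;=\;\begin{bmatrix} 1 & 0 & t & 0 \\ 0 & 0 & 0 & 1 \end{bmatrix},\qquad t>0.$$
This lies in $\Gr(2,4)_{\geq 0}$ with matroid $\{14,34\}$ and bounded affine permutation $f_X=[3,2,5,8]$; it is a one-parameter cell, not a torus fixed point. One checks directly that for every $i\in\{1,2,3\}$ and every $a>0$, either $X\cdot x_i(-a)$ or $X\cdot y_i(-a)$ has a strictly negative Pl\"ucker coordinate or leaves the matroid unchanged, so no Chevalley reduction applies. In fact one can also check that the first two rows of a TNN $g\in\GL(4)$ can never span $X$ (the TNN constraints force $g$ to be singular), so $X\notin e_{[2]}\cdot\GL(4)_{\geq 0}$; $X$ is only in the \emph{closure}, and your plan as written produces no limiting sequence for it.

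The paper avoids this by routing through $B$: it first establishes $A=\overline{\Gr(k,n)_{>0}}$ as the special case $f=\id$ of the positroid-cell closure Theorem~\ref{thm:partialorder}, and \emph{then} proves the stronger inclusion $\Gr(k,n)_{>0}\subseteq e_{[k]}\cdot\GL(n)_{\geq 0}$ (no closure needed) by running the bridge construction from the lollipop graph of $e_{[k]}$ up to the top cell using a reduced word for the permutation $w=(k+1)\cdots n\,1\cdots k$. This works cleanly precisely because the top cell's bounded affine permutation $\id$ has no fixed points $f(i)\in\{i,i+n\}$, so the recursion never invokes a projection to a smaller Grassmannian. So your argument needs $A=B$ as an intermediate step (which is exactly Theorem~\ref{thm:partialorder}) rather than an unmediated $A\subseteq C$; once you have $A=B$, the staircase argument for individual $e_I$ becomes superfluous, since they are already covered.
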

%

The proof of Theorem \ref{thm:same} will be given in Section \ref{sec:LusPos}.

As a Corollary, we obtain the following classical result \cite{Whi}.

\begin{corollary}
We have $\GL(n)_{\geq 0} = \overline{\GL(n)_{>0}}$ in the Hausdorff topology on $\GL(n,\R)$.
\end{corollary}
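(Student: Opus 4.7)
The inclusion $\overline{\GL(n)_{>0}} \subseteq \GL(n)_{\geq 0}$ is automatic, since total nonnegativity is cut out by the closed polynomial inequalities $\det(g_{I,J}) \geq 0$, and these conditions are preserved under Hausdorff limits. For the reverse inclusion, the plan is to reduce directly to Theorem \ref{thm:same} via a Pl\"ucker-style embedding $\phi: \GL(n,\R) \hookrightarrow \Gr(n, 2n)$.

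Explicitly, let $J$ be the $n \times n$ anti-diagonal matrix with $J_{i, n+1-i} = (-1)^{i-1}$, and define $\phi(g) = \rowspan[\, J \mid g \,]$. A Laplace expansion along the first $n$ columns shows that for each $I = I_1 \sqcup I_2 \in \binom{[2n]}{n}$, with $I_1 \subset [n]$ and $I_2 \subset [n+1, 2n]$, the Pl\"ucker coordinate $\Delta_I(\phi(g))$ equals a specific minor of $g$ (with row set determined by $I_1^c$ and column set by $I_2 - n$); the alternating sign pattern on the anti-diagonal of $J$ is chosen precisely to cancel the signs produced by the expansion. A short parity calculation confirms this simultaneously for every $I_1$. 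Consequently $\phi$ is a homeomorphism onto the open locus $\{\Delta_{[n]} \neq 0,\; \Delta_{[n+1,2n]} \neq 0\}$ in $\Gr(n, 2n)$, and identifies $\GL(n)_{\geq 0}$ (respectively $\GL(n)_{>0}$) with the intersection of that locus with $\Gr(n,2n)_{\geq 0}$ (respectively $\Gr(n, 2n)_{>0}$).

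Granting this, the corollary follows at once from Theorem \ref{thm:same}. For $g \in \GL(n)_{\geq 0}$, that theorem supplies a sequence $X_m \in \Gr(n,2n)_{>0}$ with $X_m \to \phi(g)$; since $\phi(\GL(n,\R))$ is an open neighborhood of $\phi(g)$, eventually $X_m = \phi(g_m)$ for some $g_m \in \GL(n)_{>0}$, and continuity of $\phi^{-1}$ on the image yields $g_m \to g$ in $\GL(n,\R)$.

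The main obstacle is the sign calibration underlying $\phi$: verifying that the anti-diagonal sign pattern $(-1)^{i-1}$ exactly cancels the signs produced by the Laplace expansion, for every $I = I_1 \sqcup I_2$ simultaneously. This is the one substantive combinatorial step; all the rest is a formal consequence of Theorem \ref{thm:same} and the openness of the image of $\phi$.
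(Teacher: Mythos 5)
Your proof is correct and is essentially the same argument as the paper's: both reduce the corollary to Theorem~\ref{thm:same} by realizing $\GL(n,\R)$ as an open chart of $\Gr(n,2n)$ on which the TNN/TP loci of $\Gr(n,2n)$ restrict to $\GL(n)_{\geq 0}$/$\GL(n)_{>0}$. The only cosmetic difference is the sign twist: the paper works in the chart $(I_{n\times n}\mid A)\mapsto A'$ with $a'_{i,j}=(-1)^{n-i}a_{i,n+1-j}$, while you embed via $g\mapsto\rowspan[J\mid g]$ with $J_{i,n+1-i}=(-1)^{i-1}$; these are equivalent (your $g$ is the paper's $A'$ with rows and columns both reversed, an operation that preserves all minors). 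Your flagged ``one substantive combinatorial step'' does go through: writing $I=I_1\sqcup I_2$, $a_s=n+1-i_s$, $R=[n]\setminus\{a_1,\dots,a_p\}$, the scalar pulled out of the $J$-columns is $(-1)^{np-\sum i_s}$ and the row-permutation sign is $(-1)^{\sum a_s-p}=(-1)^{p(n+1)-\sum i_s-p}=(-1)^{np-\sum i_s}$, so the product is $+1$ and $\Delta_I(\phi(g))=\det(g_{R,I_2-n})$ exactly. One small simplification you could make: a totally positive point of $\Gr(n,2n)$ automatically has $\Delta_{[n]}>0$ and $\Delta_{[n+1,2n]}>0$, so the approximating sequence $X_m$ lies in the image of $\phi$ from the outset; the ``eventually'' is not needed.
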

\begin{proof}
By Theorem \ref{thm:same}, $\Gr(n,2n)_{\geq 0} = \overline{\Gr(n,2n)_{>0}}$.  Since $\Omega_{[n]} \subset \Gr(n,2n)$ is open, we have $\Gr(n,2n)_{\geq 0} \cap \Omega_{[n]} = \overline{\Gr(n,2n)_{>0} \cap \Omega}$.  But $\Gr(n,2n)_{\geq 0} \cap \Omega_{[n]}$ can be identified with $\GL(n)_{\geq 0}$, by the map
$$
(I_{n \times n} \mid A) \mapsto A', \qquad \qquad a'_{i,j} = (-1)^{n-i}a_{i,n+1-j}
$$ 
where $(I_{n\times n} \mid A)$ is the $n \times 2n$ matrix representing a point in $\Gr(n,2n)_{\geq 0} \cap \Omega_{[n]}$.
\end{proof}

\begin{remark}\label{rem:same}
Lusztig \cite{Luspartial} defined the totally nonnegative part of a generalized partial flag variety $G/P$.  In the case of the Grassmannian, his definition reduces to the subset $\overline{e_{[k]} \cdot \GL(n)_{>0}}$ of the Grassmannian.  By Theorem \ref{thm:same}, his definition agrees with the one we use.
\end{remark}

Let the cyclic group $\Z/n\Z$ act on $k \times n$ matrices with generator $\chi \in \Z/n\Z$ acting by the map
$$
\chi: \left[v_1,v_2,\ldots,v_n\right] \longmapsto \left[(-1)^{k-1}v_n,v_1,v_2,\ldots,v_{n-1} \right],
$$
where $v_1,v_2,\ldots,v_n \in \C^k$ denote column vectors.  It is easy to see that this action descends to an action of the cyclic group on $\Gr(k,n)$.  A straightforward computation gives

\begin{proposition}
$X \mapsto \chi(X)$ gives an action of $\Z/n\Z$ on $\Gr(k,n)_{\geq 0}$, and on $\Gr(k,n)_{>0}$.
\end{proposition}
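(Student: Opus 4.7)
The plan is to verify the three things that go into the statement that $\chi$ defines a group action on $\Gr(k,n)_{\geq 0}$ (and $\Gr(k,n)_{>0}$): (i) $\chi$ descends to a well-defined self-map of $\Gr(k,n)$, (ii) this map satisfies $\chi^n = \id$, and (iii) it preserves the subset of points whose Pl\"ucker coordinates are all $\geq 0$ (resp.\ all $>0$). Step (i) is immediate from the definition, since the action of $\chi$ on columns of a matrix commutes with the left action of $\GL(k)$. Everything else follows from one sign calculation.

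For (iii), the key computation is: given $X$ represented by $M=[v_1,\ldots,v_n]$ and $I=\{i_1<i_2<\cdots<i_k\}\in\binom{[n]}{k}$, express $\Delta_I(\chi(X))$ as a Pl\"ucker coordinate of $X$. If $1\notin I$, the columns of $\chi(M)$ indexed by $I$ are exactly $v_{i_1-1},v_{i_2-1},\ldots,v_{i_k-1}$ in the natural order, so $\Delta_I(\chi(X))=\Delta_{I-1}(X)$ where $I-1\coloneqq\{i_1-1,\ldots,i_k-1\}$. If $1\in I$, so $i_1=1$, the columns of $\chi(M)$ indexed by $I$ are $(-1)^{k-1}v_n,v_{i_2-1},\ldots,v_{i_k-1}$; moving the first column past the remaining $k-1$ columns to restore increasing order of the $v$-indices contributes a sign $(-1)^{k-1}$, which cancels the prefactor $(-1)^{k-1}$ coming from $\chi$. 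Hence $\Delta_I(\chi(X))=\Delta_{J}(X)$ where $J=\{i_2-1,\ldots,i_k-1,n\}$. In either case $\Delta_I(\chi(X))$ equals an honest (unsigned) Pl\"ucker coordinate of $X$, so nonnegativity and positivity of the full collection of Pl\"ucker coordinates are preserved. I expect the bookkeeping of this sign cancellation to be the only subtle point; it is exactly the reason the factor $(-1)^{k-1}$ is built into the definition of $\chi$.

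For (ii), iterating $\chi$ a total of $n$ times multiplies each column of $M$ by $(-1)^{k-1}$ exactly once (each column gets to play the role of the wrapped-around column exactly once during the $n$ iterations), so $\chi^n(M)=(-1)^{k-1}M$, which represents the same point of $\Gr(k,n)$ as $M$. Thus $\chi^n=\id$ on the Grassmannian, and in particular $\chi$ has an inverse, so the map is a bijection on any $\chi$-stable subset. Combining with (iii), we conclude that $\chi$ generates a $\Z/n\Z$-action on both $\Gr(k,n)_{\geq 0}$ and $\Gr(k,n)_{>0}$, completing the proof.
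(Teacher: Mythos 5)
Your argument is correct and is precisely the ``straightforward computation'' that the paper alludes to but does not write out: the sign $(-1)^{k-1}$ in the definition of $\chi$ is chosen exactly so that $\Delta_I(\chi(X))$ is an unsigned Pl\"ucker coordinate $\Delta_{\sigma(I)}(X)$ for a permutation $\sigma$ of $\binom{[n]}{k}$, and $\chi^n$ acts as the scalar $(-1)^{k-1}$, hence trivially on $\Gr(k,n)$. Since the paper offers no explicit proof, there is nothing further to compare; your write-up simply supplies the bookkeeping the author omitted.
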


\section{Perfect matchings in planar bipartite graphs}\label{sec:dimer}
The aim of this section is to generalize the construction $N \mapsto M(N)$ of Section \ref{sec:LGV} to produce points in $\Gr(k,n)_{\geq 0}$.
We will use the following nonstandard convention.  A ``network" will refer to a weighted graph.  A ``graph'' will refer to an unweighted graph.  Thus a network has an underlying graph.  In addition, $G$ will denote an unweighted graph while $N$ will denote a network.

\subsection{Matchings for bipartite networks in a disk}

Let $N$ be a weighted bipartite network embedded in the disk with $n$ boundary vertices, labeled $1,2,\ldots,n$ in clockwise order.  Each vertex is colored either black or white, and all edges join black vertices to white vertices.  We assume that all boundary vertices have degree 1, and that edges cannot join boundary vertices to boundary vertices.  The color of the boundary vertices is thus determined by the color of the interior vertices, and we do not indicate the color of a boundary vertex in our figures.

We let $d$ be the number of interior white vertices minus the the number of interior black vertices.  We let $d' \in [n]$ be the number of boundary vertices incident to an interior black vertex.


An \defn{almost perfect matching} $\Pi$ is a subset of edges of $N$ such that 
\begin{enumerate}
\item
each interior vertex is used exactly once
\item
boundary vertices may or may not be used.
\end{enumerate}
The boundary subset $I(\Pi) \subset \{1,2,\ldots,n\}$ is the set of black boundary vertices that are used by $\Pi$ union the set of white boundary vertices that are not used by $\Pi$.  By our assumptions we have $|I(\Pi)| =  k \coloneqq d' + d$.

We will always assume that almost perfect matchings of $N$ exist.  Therefore, we may suppose that isolated interior vertices do not exist.

Define the \defn{boundary measurement}, or \defn{dimer partition function} as follows.  For $I \subset [n]$ a $k$-element subset,
$$
\Delta_I(N) = \sum_{\Pi \mid I(\Pi) = I} \wt(\Pi)
$$
where $\wt(\Pi)$ is the product of the weight of the edges in $\Pi$.  Our first aim is to prove that boundary measurements define a point in the Grassmannian.  The following theorem improves on a result of Kuo \cite{Kuo}.

\begin{theorem}\label{thm:matchingplucker}
Suppose $N$ has nonnegative real weights, and that almost perfect matchings of $N$ exist.  Then the homogeneous coordinates $\{\Delta_I(N) \mid I \in \binom{[n]}{k}\}$ defines a point $X(N)$ in the Grassmannian $\Gr(k,n)$.
\end{theorem}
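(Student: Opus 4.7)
The plan is to prove the theorem by verifying the three-term Plücker relations of Proposition \ref{prop:plucker}. Since almost perfect matchings exist by hypothesis, at least one $\Delta_I(N)$ is nonzero, so the collection is not identically zero, and it suffices to produce the Plücker identities. The main tool is the classical superposition/condensation argument going back to Kuo \cite{Kuo}: each product $\Delta_{I}(N)\,\Delta_{J}(N)$ is expanded combinatorially as
\[
\Delta_{I}(N)\,\Delta_{J}(N) \;=\; \sum_{(\Pi_1,\Pi_2)\,:\,I(\Pi_1)=I,\,I(\Pi_2)=J} \wt(\Pi_1)\,\wt(\Pi_2),
\]
and one analyzes the edge-overlay $\Pi_1\cup\Pi_2$.

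Since each interior vertex is covered exactly once by each of $\Pi_1,\Pi_2$, the overlay $\Pi_1\cup\Pi_2$ decomposes (after removing doubled edges, which contribute symmetrically) into alternating cycles and alternating paths whose boundary endpoints are precisely the symmetric difference $I(\Pi_1)\triangle I(\Pi_2)$. The key geometric input is that $N$ is embedded in the disk and the paths are edge-disjoint, so the pairing of boundary endpoints by paths is non-crossing in the cyclic order on $\{1,2,\ldots,n\}$. For a relation of the form $\sum_s (-1)^s \Delta_{i_1,\ldots,i_{k-1},j_s}(N)\,\Delta_{j_1,\ldots,\hat j_s,\ldots}(N)$, one fixes the common set $\{i_1,\ldots,i_{k-1}\}$ and interprets the index $s$ as recording which of the exchange vertices $j_t$ is matched on the $\Pi_1$-side.

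I would then build a sign-reversing, weight-preserving involution on the disjoint union over $s$ of such pairs: in each pair $(\Pi_1,\Pi_2)$ locate the unique path of $\Pi_1\triangle\Pi_2$ through a distinguished boundary vertex (e.g.\ $i_{k-1}$, or the cyclically first endpoint of a path), follow it to its other endpoint, and swap the $\Pi_1$- and $\Pi_2$-edges along that single path. The swap preserves the product weight $\wt(\Pi_1)\wt(\Pi_2)$, and the non-crossing arrangement of endpoints guarantees that exactly one index $s$ changes and that the parity of this change matches the sign $(-1)^s$ dictated by the antisymmetry of the Plücker coordinates. Cycles and doubled edges contribute equally to both terms being paired and cancel from the bookkeeping.

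The step I expect to be the main obstacle is the sign analysis. One has to reconcile three separate sign conventions: the antisymmetry $\Delta_{\ldots i\ldots j\ldots}=-\Delta_{\ldots j\ldots i\ldots}$, the fact that the definition of $I(\Pi)$ treats black and white boundary vertices oppositely (so that the color of the path endpoints enters the bookkeeping), and the cyclic order on the disk that controls which non-crossing pairings are admissible. The cleanest organizing principle, following \cite{Lamweb}, is to package the boundary data of a matching as an element of a Temperley--Lieb-type invariant space carrying a natural action of Plücker coordinates; the planar superposition then produces the correct signs automatically, with the path-swap involution playing the role of the elementary move between Temperley--Lieb basis elements. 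I would carry out the proof in that language, reducing Theorem \ref{thm:matchingplucker} to the non-crossing combinatorics of boundary pairings in the disk.
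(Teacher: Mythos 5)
Your strategy --- reduce to the three-term Pl\"ucker relation of Proposition \ref{prop:plucker}, expand each product $\Delta_I(N)\Delta_J(N)$ as a sum of Temperley--Lieb immanants $F_{\tau,T}(N)$ over noncrossing pairings compatible with $(I,J)$ (Theorem \ref{thm:TL}), and cancel by a sign-reversing involution --- is exactly the route the paper takes, and you correctly identify the Temperley--Lieb packaging as the cleanest way to organize the bookkeeping. The gap is in the choice of strand to swap. You suggest swapping along ``the unique path through a distinguished boundary vertex (e.g.\ $i_{k-1}$, or the cyclically first endpoint of a path),'' but neither choice preserves the index set of the sum \eqref{eq:plucker}. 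Every term of \eqref{eq:plucker} has $\{i_1,\ldots,i_{k-1}\}$ contained in the first index set; if the swap moves $i_{k-1}$ (or whichever $i_r$ happens to be a path endpoint) from $I(\Pi_1)$ to $I(\Pi_2)$, the resulting pair is not a summand of \eqref{eq:plucker} at all, so you have not built an involution on the terms you are trying to cancel.

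The fix, which is the crux of the paper's argument, is the observation that compatibility of $\tau$ with a term $(I,J)$ of \eqref{eq:plucker} forces \emph{exactly one} edge of $\tau$ to join two of the exchange indices, say $j_a\in I$ and $j_b\in J$; every other edge of $\tau$ touches some $i_r$ and is therefore pinned in place. The involution holds $\tau$ (and hence the immanant $F_{\tau,T}$, including $T$, which is independent of $s$) fixed and swaps $j_a\leftrightarrow j_b$ between $I$ and $J$, producing another term of \eqref{eq:plucker}. The sign reversal is then where the noncrossing property enters: an even number of elements of $(I\setminus J)\cup(J\setminus I)$ lie cyclically between $j_a$ and $j_b$, which forces the combined parity of $a$, $b$, and the sorting corrections $\#\{r : i_r>j_a\}$, $\#\{r : i_r>j_b\}$ to flip under the swap, so the two terms carry opposite signs. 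One further remark: because the paper runs the involution at the level of the label data $(I,J,\tau)$ rather than on individual pairs of matchings, whole immanants cancel at once. This sidesteps the bookkeeping you were worried about for cycles, doubled edges, and families of paths in the overlay, and also makes the choice of swap canonical --- there is no need for a tie-breaking ``distinguished vertex.''
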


\begin{example}
Let us consider the lollipop graph $N$ below.  Note that all boundary vertices must have degree 1, so we cannot have graphs smaller than the lollipop graphs.  Then the point $X(N) \in \Gr(k,n)$ is a torus-fixed point.  The network $N$ represents the point $e_{\{3,4\}} = \spn(e_3,e_4) \in \Gr(2,4)$.  There is a single almost perfect matching $\Pi$, consisting of all four edges.  This matching satisfies $I(\Pi)=\{3,4\}$.
\begin{equation*}
N = \begin{tikzpicture}[baseline=-0.5ex,scale=0.8]
\node at (0,1.7) {$1$};
\node at (1.7,0) {$2$};
\node at (0,-1.7) {$3$};
\node at (-1.7,0) {$4$};
\draw (0,0) circle (1.5cm);
\draw (-1.5,0) -- (-1,0);
\draw (1.5,0) -- (1,0);
\draw (0,1) -- (0,1.5);
\draw (0,-1) -- (0,-1.5);
\filldraw[black] (1,0) circle (0.1cm);
\filldraw[black] (0,1) circle (0.1cm);
\filldraw[white] (0,-1) circle (0.1cm);
\draw (0,-1) circle (0.1cm);
\filldraw[white] (-1,0) circle (0.1cm);
\draw (-1,0) circle (0.1cm);
\end{tikzpicture}
\qquad \qquad X(N) = \begin{bmatrix} 0 & 0 & 1 & 0 \\ 0 & 0 &0 & 1\end{bmatrix}
\end{equation*}

\end{example}
\begin{example}
Let us compute the boundary measurements of the square graph for $\Gr(2,4)$.
\begin{equation*}
N = \begin{tikzpicture}[baseline=-0.5ex,scale=0.8]
\node at (0,1.7) {$1$};
\node at (1.7,0) {$2$};
\node at (0,-1.7) {$3$};
\node at (-1.7,0) {$4$};
\draw (0,0) circle (1.5cm);
\draw (-1.5,0) -- (-0.8,0);
\draw (1.5,0) -- (0.8,0);
\draw (0,0.8) -- (0,1.5);
\draw (0,-0.8) -- (0,-1.5);
\draw (-0.8,0) -- node[above left = -2pt] {$a$} (0,0.8) -- node[above right = -2pt] {$b$}(0.8,0) -- node[below right = -2pt] {$c$} (0,-0.8) -- node[below left = -3pt] {$d$} (-0.8,0);
\whitedot{(0.8,0)}
\blackdot{(0,0.8)}
\blackdot{(0,-0.8)}
\whitedot{(-0.8,0)}
%
\begin{scope}[shift={(1,0)}]
\node at (4,1) {$\Delta_{12}(N) = a$};
\node at (4,0) {$\Delta_{13}(N) = ac + bd$};
\node at (4,-1) {$\Delta_{14}(N) = b$};
\node at (8,1) {$\Delta_{23}(N) = d$};
\node at (8,0) {$\Delta_{24}(N) = 1$};
\node at (8,-1) {$\Delta_{34}(N) = c$};
\end{scope}
\end{tikzpicture}
\end{equation*}

\end{example}

\subsection{Double dimers}
\label{sec:doubledimer}
To prove Theorem \ref{thm:matchingplucker}, we must show that $\Delta_I(N)$ satisfy the Pl\"ucker relations, which are some quadratic identities in $\Delta_I$.  We thus proceed to study ordered pairs of almost perfect matchings in $N$.

A \defn{$(k,n)$-partial non-crossing pairing} is a pair $(\tau, T)$ where $\tau$ is a matching of a subset $S = S(\tau) \subset \{1,2,\ldots,n\}$ of even size, such that when the vertices are arranged in order on a circle, and the edges are drawn in the interior, then the edges do not intersect; and $T$ is a subset of $[n] \setminus S$ satisfying $|S| + 2 |T| = 2k$.  Let $\AA_{k,n}$ denote the set of $(k,n)$-partial non-crossing pairings.

A subgraph $\Sigma \subset N$ is a \defn{Temperley-Lieb subgraph} if it is a union of connected components each of which is: (a) a path between boundary vertices, or (b) an interior cycle, or (c) a single edge (called a \defn{doubled edge}), such that every interior vertex is used.  The set of boundary vertices used by the paths in a Temperley-Lieb subgraph is denoted $S(\Sigma)$.  Thus each Temperley-Lieb subgraph $\Sigma$ gives a partial non-crossing pairing on $S(\Sigma) \subset \{1,2,\ldots,n\}$.

Let $(\Pi,\Pi')$ be a double-dimer (that is, a pair of dimer configurations) in $N$ (see for example \cite{KW}).  Then the union $\Sigma = \Pi \cup \Pi'$ is a Temperley-Lieb subgraph:
\begin{center}
\begin{tikzpicture}[scale=0.8]
\coordinate (a4) at (45:2);
\coordinate (a3) at (90:2);
\coordinate (a2) at (135:2);
\coordinate (a1) at (180:2);
\coordinate (a8) at (225:2);
\coordinate (a7) at (270:2);
\coordinate (a6) at (315:2);
\coordinate (a5) at (0:2);
\coordinate (x11) at (-1,1);
\coordinate (x12) at (0,1);
\coordinate (x13) at (1,1);
\coordinate (x21) at (-1,0);
\coordinate (x22) at (0,0);
\coordinate (x23) at (1,0);
\coordinate (x31) at (-1,-1);
\coordinate (x32) at (0,-1);
\coordinate (x33) at (1,-1);

\draw (a1) -- (x21);
\draw (a2) -- (x11);
\draw (a3) -- (x12);
\draw (a4) -- (x13);
\draw (a5) -- (x23);
\draw (a6) -- (x33);
\draw (a7) -- (x32);
\draw (a8) -- (x31);
\draw (x11) -- (x13);
\draw (x21) -- (x23);
\draw (x31) -- (x33);
\draw (x11) -- (x31);
\draw (x12) -- (x32);
\draw (x13) -- (x33);
\draw (0,0) circle (2cm);
\draw[blue,line width = 0.08cm] (x11) -- (x12);
\draw[blue,line width = 0.08cm] (x13) -- (a4);
\draw[blue,line width = 0.08cm] (x21) -- (x22);
\draw[blue,line width = 0.08cm] (x31) -- (x32);
\draw[blue,line width = 0.08cm] (x23) -- (x33);

\filldraw[black] (x11) circle (0.1cm);
\filldraw[black] (x13) circle (0.1cm);
\filldraw[black] (x22) circle (0.1cm);
\filldraw[black] (x31) circle (0.1cm);
\filldraw[black] (x33) circle (0.1cm);

\filldraw[white] (x12) circle (0.1cm);
\draw (x12) circle (0.1cm);
\filldraw[white] (x21) circle (0.1cm);
\draw (x21) circle (0.1cm);
\filldraw[white] (x23) circle (0.1cm);
\draw (x23) circle (0.1cm);
\filldraw[white] (x32) circle (0.1cm);
\draw (x32) circle (0.1cm);

\begin{scope}[{shift={(5,0)}}]
\coordinate (a4) at (45:2);
\coordinate (a3) at (90:2);
\coordinate (a2) at (135:2);
\coordinate (a1) at (180:2);
\coordinate (a8) at (225:2);
\coordinate (a7) at (270:2);
\coordinate (a6) at (315:2);
\coordinate (a5) at (0:2);
\coordinate (x11) at (-1,1);
\coordinate (x12) at (0,1);
\coordinate (x13) at (1,1);
\coordinate (x21) at (-1,0);
\coordinate (x22) at (0,0);
\coordinate (x23) at (1,0);
\coordinate (x31) at (-1,-1);
\coordinate (x32) at (0,-1);
\coordinate (x33) at (1,-1);
\draw (a1) -- (x21);
\draw (a2) -- (x11);
\draw (a3) -- (x12);
\draw (a4) -- (x13);
\draw (a5) -- (x23);
\draw (a6) -- (x33);
\draw (a7) -- (x32);
\draw (a8) -- (x31);
\draw (x11) -- (x13);
\draw (x21) -- (x23);
\draw (x31) -- (x33);
\draw (x11) -- (x31);
\draw (x12) -- (x32);
\draw (x13) -- (x33);
\draw (0,0) circle (2cm);
\draw[red,line width = 0.08cm] (a2) -- (x11);
\draw[red,line width = 0.08cm] (x13) -- (x12);
\draw[red,line width = 0.08cm] (x21) -- (x31);
\draw[red,line width = 0.08cm] (x22) -- (x32);
\draw[red,line width = 0.08cm] (x23) -- (x33);

\filldraw[black] (x11) circle (0.1cm);
\filldraw[black] (x13) circle (0.1cm);
\filldraw[black] (x22) circle (0.1cm);
\filldraw[black] (x31) circle (0.1cm);
\filldraw[black] (x33) circle (0.1cm);

\filldraw[white] (x12) circle (0.1cm);
\draw (x12) circle (0.1cm);
\filldraw[white] (x21) circle (0.1cm);
\draw (x21) circle (0.1cm);
\filldraw[white] (x23) circle (0.1cm);
\draw (x23) circle (0.1cm);
\filldraw[white] (x32) circle (0.1cm);
\draw (x32) circle (0.1cm);
\end{scope}

\draw[->,thick] (7.5,0) -- (8.5,0);

\begin{scope}[{shift={(11,0)}}]
\coordinate (a4) at (45:2);
\coordinate (a3) at (90:2);
\coordinate (a2) at (135:2);
\coordinate (a1) at (180:2);
\coordinate (a8) at (225:2);
\coordinate (a7) at (270:2);
\coordinate (a6) at (315:2);
\coordinate (a5) at (0:2);
\coordinate (x11) at (-1,1);
\coordinate (x12) at (0,1);
\coordinate (x13) at (1,1);
\coordinate (x21) at (-1,0);
\coordinate (x22) at (0,0);
\coordinate (x23) at (1,0);
\coordinate (x31) at (-1,-1);
\coordinate (x32) at (0,-1);
\coordinate (x33) at (1,-1);

\node at (135:2.3) {$a$};
\node at (45:2.3) {$b$};

\draw (a1) -- (x21);
\draw (a2) -- (x11);
\draw (a3) -- (x12);
\draw (a4) -- (x13);
\draw (a5) -- (x23);
\draw (a6) -- (x33);
\draw (a7) -- (x32);
\draw (a8) -- (x31);
\draw (x11) -- (x13);
\draw (x21) -- (x23);
\draw (x31) -- (x33);
\draw (x11) -- (x31);
\draw (x12) -- (x32);
\draw (x13) -- (x33);
\draw (0,0) circle (2cm);
\draw[red,line width = 0.08cm] (a2) -- (x11);
\draw[red,line width = 0.08cm] (x13) -- (x12);
\draw[red,line width = 0.08cm] (x21) -- (x31);
\draw[red,line width = 0.08cm] (x22) -- (x32);
\definecolor{mycolor}{rgb}{0.8,0.2,0.9}
\draw[mycolor,line width = 0.08cm] (x23) -- (x33);
\draw[blue,line width = 0.08cm] (x11) -- (x12);
\draw[blue,line width = 0.08cm] (x13) -- (a4);
\draw[blue,line width = 0.08cm] (x21) -- (x22);
\draw[blue,line width = 0.08cm] (x31) -- (x32);

\filldraw[black] (x11) circle (0.1cm);
\filldraw[black] (x13) circle (0.1cm);
\filldraw[black] (x22) circle (0.1cm);
\filldraw[black] (x31) circle (0.1cm);
\filldraw[black] (x33) circle (0.1cm);

\filldraw[white] (x12) circle (0.1cm);
\draw (x12) circle (0.1cm);
\filldraw[white] (x21) circle (0.1cm);
\draw (x21) circle (0.1cm);
\filldraw[white] (x23) circle (0.1cm);
\draw (x23) circle (0.1cm);
\filldraw[white] (x32) circle (0.1cm);
\draw (x32) circle (0.1cm);
\end{scope}
\end{tikzpicture}
\end{center}
When $\Sigma$ arises from a double-dimer, the set $S(\Sigma)$ is given by $S =(I(\Pi) \setminus I(\Pi')) \cup (I(\Pi') \setminus I(\Pi))$, and we obtain a non-crossing pairing on $S$.  For example, in the above picture we have that $a$ is paired with $b$ and $S = \{a,b\}$.  Note that a Temperley-Lieb subgraph $\Sigma$ can arise from a double-dimer $(\Pi,\Pi')$ in many different ways: it does not remember which edge in a path came from which of the two original dimer configurations.

For each $(k,n)$-partial non-crossing pairing $(\tau,T) \in \AA_{k,n}$, define the \defn{Temperley-Lieb immanant}
$$
F_{\tau,T}(N) \coloneqq \sum_{\Sigma} \wt(\Sigma)
$$
to be the sum over Temperley-Lieb subgraphs $\Sigma$ which give boundary path pairing $\tau$, and $T$ contains black boundary vertices belonging to a doubled-edge in $\Sigma$, together with white boundary vertices not belonging to a doubled-edge in $\Sigma$.  Here $\wt(\Sigma)$ is the product of all weights of edges in $\Sigma$ times $2^{\# {\rm cycles}}$; also, the weight of a doubled-edge in $\Sigma$ is the square of the weight of that edge.  The function $F_{\tau,T}$, introduced in \cite{Lamweb}, is a Grassmann-analogue of Rhoades and Skandera's Temperley-Lieb immanants \cite{RS}.  

Given $I, J \in \binom{[n]}{k}$, we say that a $(k,n)$-partial non-crossing pairing $(\tau,T)$ is compatible with $I,J$ if:
\begin{enumerate}
\item $S(\tau) = (I \setminus J) \cup (J \setminus I)$, and each edge of $\tau$ matches a vertex in $(I \setminus J)$ with a vertex in $(J \setminus I)$, and
\item 
$T = I \cap J$.
\end{enumerate}

\begin{theorem}[\cite{Lamweb}]\label{thm:TL}
For $I, J \in \binom{[n]}{k}$, we have
$$
\Delta_I(N) \Delta_J(N) = \sum_{\tau,T} F_{\tau,T}(N)
$$
where the summation is over all $(k,n)$-partial non-crossing pairings $\tau$ compatible with $I, J$.
\end{theorem}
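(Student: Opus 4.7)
The plan is to expand both sides combinatorially and to identify them via the double-dimer construction reviewed in Section~\ref{sec:doubledimer}. Unfolding the definition of the boundary measurement, the left hand side becomes
\begin{equation*}
\Delta_I(N)\,\Delta_J(N) \;=\; \sum_{(\Pi,\Pi')} \wt(\Pi)\,\wt(\Pi'),
\end{equation*}
summed over ordered pairs of almost perfect matchings with $I(\Pi)=I$ and $I(\Pi')=J$. To each such pair I would associate the multiset union $\Sigma \coloneqq \Pi \cup \Pi'$, in which an edge used by both matchings is declared a doubled edge. Since every interior vertex is incident to edges of total multiplicity two in $\Sigma$, the connected components of $\Sigma$ are forced to be exactly: doubled edges, interior cycles of even length with edges alternating between $\Pi$ and $\Pi'$, or alternating paths between boundary vertices. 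Thus $\Sigma$ is a Temperley-Lieb subgraph, and its paths induce a matching $\tau$ of a subset of boundary vertices.

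Next I would verify that the data $(\tau,T)$ associated with $\Sigma$ is compatible with $(I,J)$ in the sense defined just before the statement. Tracing the alternation along any boundary-to-boundary path $P$, and splitting into cases according to the colors of its two endpoints, one sees that the bipartite coloring forces the parity of the length of $P$ and therefore fixes which of $\Pi,\Pi'$ contains the edge at each endpoint. Combining this with the ``black vs.\ used, white vs.\ unused'' description of $I(\Pi)$ shows that the two endpoints of $P$ always lie in $I\triangle J$, one in $I\setminus J$ and the other in $J\setminus I$. The boundary vertices not covered by $\tau$ are either endpoints of doubled edges or unused by $\Sigma$, and the same color-versus-usage check identifies this set with $I\cap J = T$. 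Non-crossingness of $\tau$ then follows from planarity: the paths of $\Sigma$ are pairwise vertex-disjoint subgraphs of the disk-embedded network $N$, so their endpoints on the boundary circle must be matchable without crossings by a Jordan curve argument.

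For the reverse direction, I would fix a Temperley-Lieb subgraph $\Sigma$ with $(\tau,T)$ compatible with $(I,J)$ and count ordered pairs $(\Pi,\Pi')$ with $\Pi\cup\Pi'=\Sigma$, $I(\Pi)=I$, and $I(\Pi')=J$. On each path the alternating splitting is uniquely pinned down by the requirement that the prescribed endpoint lie in $I\setminus J$; each doubled edge is forced to lie in both matchings; and each interior cycle admits exactly two alternating splittings. This yields $2^{\#\mathrm{cycles}(\Sigma)}$ preimages, each of weight equal to the product of the edge weights of $\Sigma$, with doubled edges contributing the square of their weight. Summing over preimages recovers precisely $\wt(\Sigma)$ in the sense of Section~\ref{sec:doubledimer}, and regrouping the terms of $\Delta_I(N)\Delta_J(N)$ according to the associated $(\tau,T)$ produces $\sum_{\tau,T} F_{\tau,T}(N)$, as claimed.

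The main obstacle I anticipate is the non-crossing claim for $\tau$: it is the one step that genuinely uses the planar embedding of $N$ rather than local bipartite bookkeeping, and it needs to be formulated carefully in terms of the disk embedding so that the Jordan argument distinguishes the two sides of each path. Everything else reduces to standard parity arguments for alternating walks in a bipartite graph and to matching weight contributions in the double-dimer model.
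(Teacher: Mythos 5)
Your proof is correct and takes essentially the same approach as the paper: the double-dimer decomposition of Section~\ref{sec:doubledimer} with the $2^{\#\mathrm{cycles}}$ multiplicity, plus the parity argument for boundary paths. The paper explicitly writes out only the parity step (same-colored endpoints force even path length, different colors odd, hence one endpoint in $I\setminus J$ and the other in $J\setminus I$), deferring the surrounding bijective bookkeeping to \cite{Lamweb}; your write-up supplies that bookkeeping explicitly.
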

\begin{proof}
The only thing left to prove is the compatibility property.

Let $\Pi, \Pi'$ be almost perfect matchings of $N$ such that $I(\Pi) = I$ and $I(\Pi') = J$.  Let $p$ be one of the boundary paths in $\Pi \cup \Pi'$, with endpoints $s$ and $t$.  If $s$ and $t$ have the same color, then the path is even in length.  If $s$ and $t$ have different colors, then the path is odd in length.  In both cases one of $s$ and $t$ belongs to $I \setminus J$ and the other belongs to $J \setminus I$.  
\end{proof}

\begin{example}
Suppose $n = 6$.  Then $\Delta_{124}(N) \Delta_{356}(N) = F_{\tau_1,\emptyset}+F_{\tau_2,\emptyset}$, where $\tau_1$ and $\tau_2$ are the following non-crossing matchings:
\begin{equation*}
\tau_1 = 
\begin{tikzpicture}[scale=0.6,baseline=-0.5ex]
\coordinate (a4) at (60:2);
\coordinate (a3) at (120:2);
\coordinate (a2) at (180:2);
\coordinate (a1) at (-120:2);
\coordinate (a6) at (-60:2);
\coordinate (a5) at (0:2);

\node at (60:2.3) {$4$};
\node at (120:2.3) {$3$};
\node at (180:2.3) {$2$};
\node at (240:2.4) {$1$};
\node at (-60:2.4) {$6$};
\node at (0:2.3) {$5$};

\draw (0,0) circle (2);

\draw[thick] (a2) to [bend right] (a3);
\draw[thick] (a4) to [bend right] (a5);
\draw[thick] (a6) to [bend right] (a1);

\filldraw[blue] (a1) circle (0.1cm);
\draw (a1) circle (0.1cm);
\filldraw[blue] (a2) circle (0.1cm);
\draw (a2) circle (0.1cm);
\filldraw[blue] (a4) circle (0.1cm);
\draw (a4) circle (0.1cm);
\filldraw[red] (a3) circle (0.1cm);
\draw (a3) circle (0.1cm);
\filldraw[red] (a5) circle (0.1cm);
\draw (a5) circle (0.1cm);
\filldraw[red] (a6) circle (0.1cm);
\draw (a6) circle (0.1cm);
\end{tikzpicture}
\qquad \qquad 
\tau_2 = 
\begin{tikzpicture}[scale=0.6,baseline=-0.5ex]
\coordinate (a4) at (60:2);
\coordinate (a3) at (120:2);
\coordinate (a2) at (180:2);
\coordinate (a1) at (-120:2);
\coordinate (a6) at (-60:2);
\coordinate (a5) at (0:2);

\node at (60:2.3) {$4$};
\node at (120:2.3) {$3$};
\node at (180:2.3) {$2$};
\node at (240:2.4) {$1$};
\node at (-60:2.4) {$6$};
\node at (0:2.3) {$5$};

\draw (0,0) circle (2);

\draw[thick] (a2) to (a5);
\draw[thick] (a3) to [bend right] (a4);
\draw[thick] (a6) to [bend right] (a1);

\filldraw[blue] (a1) circle (0.1cm);
\draw (a1) circle (0.1cm);
\filldraw[blue] (a2) circle (0.1cm);
\draw (a2) circle (0.1cm);
\filldraw[blue] (a4) circle (0.1cm);
\draw (a4) circle (0.1cm);
\filldraw[red] (a3) circle (0.1cm);
\draw (a3) circle (0.1cm);
\filldraw[red] (a5) circle (0.1cm);
\draw (a5) circle (0.1cm);
\filldraw[red] (a6) circle (0.1cm);
\draw (a6) circle (0.1cm);
\end{tikzpicture}
\end{equation*}
\end{example}

\subsection{Proof of Theorem \ref{thm:matchingplucker}}
We shall use Proposition \ref{prop:plucker}.
%

Use Theorem \ref{thm:TL} to expand \eqref{eq:plucker} with $\Delta_I =\Delta_I(N)$ as a sum of $F_{\tau,T}(N)$ over pairs $(\tau,T)$ (with multiplicity).  We note that the set $T$ is always the same in any term that comes up.  We assume that $i_1 < i_2 < \cdots < i_{k-1}$ and $j_1 < j_2 < \cdots < j_{k+1}$.

So each term $F_{\tau,T}$ is labeled by $(I,J,\tau)$ where $I,J$ is compatible with $\tau$, and $I,J$ occur as a term in \eqref{eq:plucker}.  We provide an involution on such terms.  By the compatibility condition, all but one of the edges in $\tau$ uses a vertex in $\{i_1,i_2,\ldots,i_{k-1}\}$.  The last edge is of the form $(j_a,j_b)$, where $j_a \in I$ and $j_b \in J$.  The involution swaps $j_a$ and $j_b$ in $I, J$ but keeps $\tau$ the same.

Finally we show that this involution is sign-reversing.  Let $I' = I \cup \{j_b\} - \{j_a\}$ and $J' = J \cup \{j_a\} - \{j_b\}$.  Then the sign associated to the term labeled by $(I,J,\tau)$ is equal to $(-1)$ to the power of $\#\{r \in [k] \mid i_r > j_a\} + a$.   Note that by the non-crossingness of the edges in $\tau$ there must be an even number of vertices belonging to $(I \setminus J) \cup (J \setminus I)$ strictly between $j_a$ and $j_b$.  Thus $j_b - j_a = (b-a) + (\#\{r \in [k] \mid i_r > j_b\}-\#\{r \in [k] \mid i_r > j_a\}) \mod 2$ is odd.  So the involution changes the sign.  This completes the proof of Theorem \ref{thm:matchingplucker}.

\subsection{Gauge equivalence}
Let $N$ be a planar bipartite network.  If $e_1,e_2,\ldots,e_d$ are adjacent to an \defn{interior} vertex $v$, we can multiply all of their edge weights by the same constant $c \in \R_{> 0}$, and still get the same point $X(N)$.  Note that we cannot do this at a boundary vertex.

Let $F$ be any face of the network $N$.  This can be a face completely bounded by edges of $N$, or a face that also touches the boundary of the disk.  Take the clockwise orientation of the edges bounding the face, and define the face weight
\begin{equation}\label{eq:yF}
y_F \coloneqq \prod_{e \text{ bounding } F} \wt(e)^{\pm 1}
\end{equation}
where we have $+1$ if the edge goes out of a black vertex and into a white vertex, and $-1$ if the edge goes out of a white vertex and into a black vertex.

\begin{lemma}
Face weights are preserved by gauge equivalence.
\end{lemma}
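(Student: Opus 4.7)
The plan is to reduce to the case of a single gauge move at a single interior vertex and then verify cancellation edge-by-edge. Because gauge equivalence is generated by elementary moves of the form ``multiply all edge weights incident to a fixed interior vertex $v$ by a common constant $c \in \R_{>0}$'', it suffices to show that each face weight $y_F$ is invariant under one such move at one vertex $v$.

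First I would dispose of the trivial case: if $v$ is not on the boundary of $F$, none of the edges bounding $F$ have their weights altered, so $y_F$ is unchanged. Otherwise, I would analyze the local contribution of $v$. Walking around the boundary of $F$ clockwise, each visit to $v$ uses exactly two of the edges bounding $F$, one entering $v$ (call it $e_1$) and one leaving (call it $e_2$). Since $N$ is bipartite, the color of the opposite endpoints of $e_1$ and $e_2$ is determined by the color of $v$: if $v$ is white, then clockwise orientation makes $e_1$ go from black to white and $e_2$ go from white to black, so by \eqref{eq:yF} they contribute the factor $\wt(e_1)^{+1}\wt(e_2)^{-1}$ to $y_F$; if $v$ is black the exponents are reversed and the contribution is $\wt(e_1)^{-1}\wt(e_2)^{+1}$. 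Either way, the two exponents at $v$ cancel.

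After the gauge move at $v$, both $\wt(e_1)$ and $\wt(e_2)$ are multiplied by $c$, and since they appear with opposite exponents, the factor of $c$ drops out of this visit's contribution to $y_F$. Multiplying over all visits of the boundary walk of $F$ to $v$, every occurrence of $c$ cancels, and $y_F$ is preserved. Equivalently, in the product defining $y_F$, the number of edges at $v$ appearing with exponent $+1$ equals the number appearing with exponent $-1$, so the total exponent of $c$ contributed by the gauge move is zero.

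The only thing to be careful about is when the boundary walk of $F$ visits $v$ more than once (so $v$ is a ``pinch point'' of $F$); treating the contributions visit-by-visit, as above, handles this automatically. No issue arises at boundary vertices of $N$ because gauge equivalence is defined only at interior vertices, so boundary edges never get rescaled. I do not expect any genuine obstacle here: the statement is essentially the observation that the contribution of a vertex $v$ to $y_F$ is a balanced Laurent monomial in the weights of edges at $v$, which is manifestly invariant under rescaling those weights by a common factor.
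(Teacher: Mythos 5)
Your proof is correct. Note that the paper actually states this lemma without proof — it is treated as an immediate observation from the definition \eqref{eq:yF} — so there is no authorial argument to compare against; your direct verification (pairing up the entering edge $e_1$ and leaving edge $e_2$ at each visit of the clockwise boundary walk to $v$, observing that bipartiteness forces opposite exponents $\pm 1$ regardless of the color of $v$, and handling pinch points visit-by-visit) is exactly the expected elementary argument, and you correctly identify the crucial point that boundary vertices are never rescaled.
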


Here is some more abstract language to formulate the above. A \defn{line bundle} $V = V_G$ on a graph $G$ is the association of a one-dimensional vector space $V_v$ to each vertex $v$ of $G$.  A \defn{connection} $\Phi$ on $V$ is a collection of invertible linear maps $\phi_{uv}: V_u \to V_v$ for each edges $u,v$ satisfying $\phi_{uv} = \phi_{vu}^{-1}$.  If we fix a basis of each $V_v$, then the connection $\Phi$ is equivalent to giving $G$ a weighting, that is, it is equivalent to a network $N$ with underlying graph $G$.  Two connections $\Phi$ and $\Phi'$ are isomorphic if they are related by change of basis at each $V_v$.

\begin{lemma}
Gauge equivalence for $N$ corresponds to changing bases for $\{V_v\}$.  Isomorphism classes of connections on $V$ are in bijection with gauge equivalence classes of planar bipartite networks $N$ with underlying graph $G$.  Isomorphism classes of connections are in bijection with face weights $\{y_F \in \R_{>0}\}$, which can be chosen arbitrarily subject to the condition that $\prod_F y_F = 1$.
\end{lemma}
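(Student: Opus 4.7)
The first two assertions are formal translations once bases $b_v \in V_v$ are chosen at every vertex. Such a choice identifies a connection $\Phi$ with positive scalars $w_e$ defined by $\phi_{uv}(b_u) = w_e\, b_v$ (the condition $\phi_{uv} = \phi_{vu}^{-1}$ is automatic in the one-dimensional case), so trivialized connections are literally networks on $G$. A rescaling $b_v \mapsto c_v b_v$ at an interior vertex $v$ transforms $w_e \mapsto w_e/c_v$ for every edge $e$ incident to $v$, which, up to the conventional inversion $c \leftrightarrow c^{-1}$, is the gauge transformation at $v$. The bases at boundary vertices are held fixed, matching the stated rule that no gauge is allowed there, and so isomorphism classes of connections correspond bijectively to gauge equivalence classes of networks with underlying graph $G$.

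For the third assertion, my plan is to pass to logarithms and reduce to a linear-algebra exactness statement. Set $\ell_e \coloneqq \log w_e$ and define the gauge map $L: \R^{V_{\mathrm{int}}} \to \R^E$ by $L(c)_e = c_{v_1(e)} + c_{v_2(e)}$, with the convention $c_v = 0$ for boundary $v$, and the face map $B: \R^E \to \R^F$ by the logarithmic form of \eqref{eq:yF}, i.e.\ $B(\ell)_F = \sum_{e \subset \partial F} \epsilon_{e,F}\, \ell_e$ with $\epsilon_{e,F} \in \{\pm 1\}$. Two elementary observations organize the rest. First, each edge is traversed by face boundaries exactly twice with opposite signs, once from each side if it borders two distinct faces, or twice by the same face if it is a bridge, so $\sum_F B(\ell)_F = 0$ for every $\ell$, which exponentiates to the global constraint $\prod_F y_F = 1$. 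Second, the preceding lemma on gauge invariance of face weights is precisely the statement $B \circ L = 0$.

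What remains is to upgrade $B \circ L = 0$ to the exactness $\ker(B) = \mathrm{im}(L)$ together with $\mathrm{im}(B) = \ker(\textstyle\sum_F)$, which yields the claimed bijection with face weights subject to $\prod_F y_F = 1$. Injectivity of $L$ is immediate: if $L(c) = 0$, then any edge from an interior to a boundary vertex forces $c_v = 0$ at the interior endpoint, and the relations $c_u + c_v = 0$ on interior edges propagate this conclusion throughout $G$ by connectedness. The main obstacle is matching dimensions, which I plan to obtain from Euler's formula for the CW decomposition of the closed disk given by $G$ together with the $n$ arcs of the boundary circle: one has $V - (E + n) + F = \chi(D^2) = 1$, whence $E = V_{\mathrm{int}} + F - 1$. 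Combining $\dim \mathrm{im}(L) = V_{\mathrm{int}}$, $\dim \mathrm{im}(B) \leq F - 1$, and the rank--nullity identity $\dim \ker(B) + \dim \mathrm{im}(B) = E$, both inclusions $\mathrm{im}(L) \subseteq \ker(B)$ and $\mathrm{im}(B) \subseteq \ker(\textstyle\sum_F)$ are forced to be equalities, and the proof is complete upon exponentiating.
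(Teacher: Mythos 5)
The reduction of the first two assertions to choosing bases is fine, and so is your logarithmic setup: passing to $\ell_e = \log w_e$, defining the gauge map $L$ and face map $B$, and observing that $\sum_F \circ B = 0$ (each edge is traversed twice with opposite signs) and $B \circ L = 0$ (the preceding lemma). Your proof that $L$ is injective, by propagating $c_v = 0$ from the boundary along edges via $c_u = -c_v$, is also correct (granting $G$ connected).

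The gap is in the final dimension count. You claim that $\dim\mathrm{im}(L) = V_{\mathrm{int}}$, $\dim\mathrm{im}(B) \le F-1$, rank--nullity, and $E = V_{\mathrm{int}}+F-1$ together force both inclusions $\mathrm{im}(L)\subseteq\ker(B)$ and $\mathrm{im}(B)\subseteq\ker(\sum_F)$ to be equalities. They do not. From $\mathrm{im}(L)\subseteq\ker(B)$ and injectivity of $L$ you get $\dim\ker(B) \ge V_{\mathrm{int}}$, hence $\dim\mathrm{im}(B) = E - \dim\ker(B) \le E - V_{\mathrm{int}} = F-1$, which is the same inequality you already had from $\mathrm{im}(B)\subseteq\ker(\sum_F)$. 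Nothing prevents both from being strict simultaneously: for instance $\dim\ker(B) = V_{\mathrm{int}}+1$ and $\dim\mathrm{im}(B) = F-2$ is compatible with all the stated facts. In homological language, you have the complex $0 \to \R^{V_{\mathrm{int}}} \xrightarrow{L} \R^E \xrightarrow{B} \R^F \xrightarrow{\sum} \R \to 0$ with $H^0 = 0$ and $H^3 = 0$, and Euler's formula gives only the alternating-sum relation $\dim H^1 = \dim H^2$, not their vanishing.

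To close the argument you must establish one of the two middle exactnesses by hand; the other then follows by your dimension count. The cleanest route is to recognize the complex (after flipping the sign of $c_v$ at white vertices and orienting edges compatibly) as the relative cellular cochain complex of the pair $(D^2, \partial D^2)$ for the CW structure you already introduced, whose cohomology is $H^0 = H^1 = 0$, $H^2 \cong \R$; this gives $\ker(B) = \mathrm{im}(L)$ and $\mathrm{im}(B) = \ker(\sum_F)$ at once. Alternatively, one can argue directly, e.g.\ show that $B$ surjects onto $\ker(\sum_F)$ by choosing a spanning tree of the planar dual of $G \cup \partial D^2$ and solving for the edge variables one face at a time. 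Either way, an input beyond the numerology is required, and this is exactly the content that the paper's one-line appeal to Euler's formula is eliding.
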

\begin{proof}
Only the last statement is not clear, and it basically follows from Euler's formula.
\end{proof}

Let $\L_G$ be the moduli space of connections on $V_G$ (that is, the space of isomorphism classes of connections), and let $(\L_G)_{>0}$ be the positive points so that $(\L_G)_{>0} \simeq \R^{\#F - 1}_{>0}$ can be identified with the space of positive real weighted networks $N$ with underlying graph $G$, modulo gauge equivalence.  Here $\#F$ denotes the number of faces of $G$.

\subsection{Relations for bipartite graphs}
\label{sec:relations}
We have the following local moves, replacing a small local part of $N$ by another specific network to obtain $N'$:
\begin{enumerate}
\item[(M1)]
Spider move \cite{GK}, square move \cite{Pos}, or urban renewal \cite{Propp}: assuming the leaf edges of the spider have been gauge fixed to 1, the transformation is
\begin{equation}\label{eq:square}
a'=\frac{a}{ac+bd} \qquad b'=\frac{b}{ac+bd} \qquad c'=\frac{c}{ac+bd} \qquad d'=\frac{d}{ac+bd}
\end{equation}
\begin{center}
\begin{tikzpicture}[scale=0.7]
\draw (-2,0) -- (0,1)--(2,0)--(0,-1)-- (-2,0);
\draw (0,1) -- (0,2);
\draw (0,-1) -- (0,-2);
\node at (-1.2,0.7) {$a$};
\node at (-1.2,-0.7) {$d$};
\node at (1.2,0.7) {$b$};
\node at (1.2,-0.7) {$c$};

\filldraw[black] (0,1) circle (0.1cm);
\filldraw[black] (0,-1) circle (0.1cm);
\filldraw[white] (-2,0) circle (0.1cm);
\draw (-2,0) circle (0.1cm);
\filldraw[white] (0,2) circle (0.1cm);
\draw (0,2) circle (0.1cm);
\filldraw[white] (2,0) circle (0.1cm);
\draw (2,0) circle (0.1cm);
\filldraw[white] (0,-2) circle (0.1cm);
\draw (0,-2) circle (0.1cm);
\end{tikzpicture}
\hspace{30pt}
\begin{tikzpicture}[scale=0.7]
\draw (0,-2) -- (1,0)-- (0,2)-- (-1,0)-- (0,-2);
\draw (1,0) -- (2,0);
\draw (-1,0) -- (-2,0);
\node at (0.7,-1.2) {$a'$};
\node at (-0.7,-1.2) {$b'$};
\node at (0.7,1.2) {$d'$};
\node at (-0.7,1.2) {$c'$};

\filldraw[black] (1,0) circle (0.1cm);
\filldraw[black] (-1,0) circle (0.1cm);
\filldraw[white] (-2,0) circle (0.1cm);
\draw (-2,0) circle (0.1cm);
\filldraw[white] (0,2) circle (0.1cm);
\draw (0,2) circle (0.1cm);
\filldraw[white] (2,0) circle (0.1cm);
\draw (2,0) circle (0.1cm);
\filldraw[white] (0,-2) circle (0.1cm);
\draw (0,-2) circle (0.1cm);
\end{tikzpicture}
\end{center}
\item[(M2)]
Valent two vertex removal.  If $v$ has degree two, we can gauge fix both incident edges $(v,u)$ and $(v,u')$ to have weight 1, then contract both edges (that is, we remove both edges, and identify $u$ with $u'$).  Note that if $v$ is a valent two-vertex adjacent to boundary vertex $b$, with edges $(v,b)$ and $(v,u)$, then removing $v$ produces an edge $(b,u)$, and the color of $b$ flips.
\begin{center}
\begin{tikzpicture}[scale=0.8]
\draw[thick] (0,0) -- (-0.5,0.1);
\draw[thick] (0,0) -- (-0.5,-0.3);
\draw[thick] (0,0) -- (-0.5,0.4);
\draw[thick] (2,0) -- (2.5,0.3);
\draw[thick] (2,0) -- (2.5,-0.3);
\draw[thick] (0,0) -- (2,0);
\blackdot{(0,0)}
\whitedot{(1,0)}
\blackdot{(2,0)}

\node at (4,0) {$\longleftrightarrow$};

\begin{scope}[shift={(1,0)}]
\blackdot{(5,0)}
\draw[thick] (5,0) -- (4.5,0.1);
\draw[thick] (5,0) -- (4.5,-0.3);
\draw[thick] (5,0) -- (4.5,0.4);
\draw[thick] (5,0) -- (5.5,0.3);
\draw[thick] (5,0) -- (5.5,-0.3);
\end{scope}

\end{tikzpicture}
\end{center}

\item[(R1)]
Multiple edges with the same endpoints can be reduced to a single edge with the sum of original weights.
\item[(R2)]
Leaf removal.  Suppose $v$ is leaf, and $(v,u)$ the unique edge incident to it.  Then we can remove both $v$ and $u$, and all edges incident to $u$.  However, if there is a boundary edge $(b,u)$ where $b$ is a boundary vertex, then that edge is replaced by a boundary edge $(b,w)$ where $w$ is a new vertex with the same color as $v$.

\begin{center}
\begin{tikzpicture}[scale=0.8]
\draw[thick] (-1.5,0)--(-1,0);
\draw[thick] (0,0) -- (-1,0);
\draw[thick] (-1.5,0.5)--(-1,0.5);
\draw[thick] (0,0) -- (-1,-0.5);
\draw[thick] (-1.5,-0.5)--(-1,-0.5);
\draw[thick] (0,0) -- (-1,0.5);
\draw[thick] (0,0) -- (1,0);
\blackdot{(0,0)}
\whitedot{(1,0)}
\whitedot{(-1,0)}
\whitedot{(-1,0.5)}
\whitedot{(-1,-0.5)}

\node at (3,0) {$\longrightarrow$};

\draw[thick] (5,0) -- (4.5,0);
\draw[thick] (5,0.5) -- (4.5,0.5);
\draw[thick] (5,-0.5) -- (4.5,-0.5);
\whitedot{(5,0)}
\whitedot{(5,0.5)}
\whitedot{(5,-0.5)}

\end{tikzpicture}
\end{center}

\item[(R3)]
Dipoles (two degree one vertices joined by an edge) can be removed.
\end{enumerate}

The following results are checked case-by-case.
\begin{proposition}
Each of the moves (M1) and (M2), and each of the reductions (R1), (R2), (R3) preserve $X(N)$.
\end{proposition}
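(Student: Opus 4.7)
Each move is local: it replaces a small piece $L$ of $N$ by a new piece $L'$, leaving the remainder $N\setminus L$ untouched. Label the external (cut) vertices of $L$ as $v_1,\ldots,v_m$, and for each subset $Q\subseteq\{v_1,\ldots,v_m\}$ let $w_L(Q)$ denote the weight generating function of almost perfect matchings of $L$ whose set of matched external vertices is exactly $Q$. Any almost perfect matching of $N$ restricts to a matching of $L$ with some external state $Q$ together with a matching of $N\setminus L$ whose external state is forced accordingly; this gives the factorization
$$
\Delta_I(N) \;=\; \sum_{Q} w_L(Q)\cdot W(N\setminus L;\,I,Q),
$$
where $W(N\setminus L;I,Q)$ depends only on the untouched part. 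It therefore suffices to exhibit a positive scalar $\lambda$, independent of $Q$, with $w_L(Q)=\lambda\cdot w_{L'}(Q)$ for every $Q$; then $\Delta_I(N)=\lambda\,\Delta_I(N')$ for every $I$, so $X(N)=X(N')$ in $\Gr(k,n)$.

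For the spider move (M1), the local piece $L$ has two interior black vertices $T$ and $B$, each of degree three in $L$, and four external white vertices $n,s,e,w$ corresponding to north, south, east, and west; the pendant edges $T$--$n$ and $B$--$s$ have been gauge-fixed to weight $1$. Each of $T$ and $B$ must be matched to exactly one of its three external neighbors with no external vertex matched twice, so there are exactly seven local matchings. Direct enumeration yields nonzero values $w_L(\{n,s\})=1$, $w_L(\{n,w\})=d$, $w_L(\{n,e\})=c$, $w_L(\{s,w\})=a$, $w_L(\{s,e\})=b$, and $w_L(\{e,w\})=ac+bd$; a mirror computation for $L'$, with the roles of $\{n,s\}$ and $\{e,w\}$ swapped, gives values $a'c'+b'd',\,d',\,c',\,a',\,b',\,1$. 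Taking $\lambda=ac+bd$, the four singleton-type identities $w_L(Q)=\lambda\,w_{L'}(Q)$ force the formulas \eqref{eq:square}, and the remaining identity $a'c'+b'd'=1/(ac+bd)$ then follows by direct substitution.

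The other moves are handled by simple bijective arguments. For (M2), after gauge-fixing both edges at the degree-two vertex to weight $1$, every matching uses exactly one of these two edges, giving a weight-preserving bijection between matchings of $L$ and matchings of $L'$; the boundary variant is identical once one observes that the color-flip of the affected boundary vertex $b$ exactly compensates for the swap of its matched/unmatched status in the definition of $I(\Pi)$. For (R1), two parallel edges can never both appear in a matching, so replacing them with a single edge whose weight is their sum preserves each $\Delta_I$. For (R2), the unique edge at a leaf is forced into every matching, so the other edges at its neighbor may be deleted; the pendant introduced in the boundary case is colored precisely so that the boundary vertex's color flips, compensating for the fact that it is now always matched instead of never matched. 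Finally, (R3) removes a dipole contributing a uniform multiplicative factor to every $\Delta_I$, which is absorbed in projective coordinates. The main obstacle is the case analysis in (M1): beyond verifying the six singleton-type identities, one must check that every subset $Q\notin\binom{\{n,s,e,w\}}{2}$ satisfies $w_L(Q)=w_{L'}(Q)=0$ on both sides, so that the single constant $\lambda$ controls every term in the factorization.
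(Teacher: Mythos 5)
Your proof is correct and takes the same case-by-case approach as the paper, which gives no details beyond the one-line assertion that each move is ``checked case-by-case.'' Your factorization $\Delta_I(N)=\sum_Q w_L(Q)\,W(N\setminus L;I,Q)$ over boundary states $Q$ of the local region, and the resulting local identities $w_L(Q)=\lambda\,w_{L'}(Q)$ --- the seven-matching enumeration with $\lambda=ac+bd$ for (M1), the bijection with color-flip bookkeeping for (M2), and the degeneracy arguments for (R1)--(R3) --- correctly supply the omitted details.
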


\begin{proposition}\label{prop:GG'}
Suppose $G$ and $G'$ are related by (M1) and (M2).  Then the moves induce a homeomorphism $(\L_G)_{>0} \simeq (\L_{G'})_{>0}$.
\end{proposition}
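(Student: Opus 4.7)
The plan is to work in the face-weight parameterization established in the preceding lemma, under which $(\L_G)_{>0}$ (respectively $(\L_{G'})_{>0}$) is identified with the set of positive tuples $(y_F)_{F \in F(G)}$ satisfying $\prod_F y_F = 1$. It suffices to check that each of (M1) and (M2) gives a homeomorphism at the level of these parameter spaces. The first task is to count faces: for (M2), contracting the length-two path at $v$ decreases both $V$ and $E$ by two, so Euler's formula gives $\#F(G)=\#F(G')$; for (M1), direct inspection of the two spider pictures yields $\#F(G)=\#F(G')$ as well. In both cases the planar embedding in the disk provides a canonical bijection between $F(G)$ and $F(G')$.

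The case of (M2) is the easier one. After gauge-fixing the two edges incident to $v$ to weight $1$, their contribution to any face weight $y_F = \prod_e \wt(e)^{\pm 1}$ is trivial; the remaining edges on each face of $G'$ are precisely the remaining edges of the corresponding face of $G$, so face weights agree under the bijection. Hence (M2) induces the identity on the parameter space, which is trivially a homeomorphism.

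For (M1), let $F$ denote the central square face with boundary weights $a, b, c, d$, and let $F_1, F_2, F_3, F_4$ be the four faces of $G$ adjacent to $F$ across these four edges. After gauge-fixing the four outer leaf edges to $1$, a clockwise traversal of $F$ using the face weight formula gives $y_F = bd/(ac)$. Applying the transformation $a' = a/(ac+bd)$, $b'=b/(ac+bd)$, $c'=c/(ac+bd)$, $d'=d/(ac+bd)$, one immediately obtains $y'_F = a'c'/(b'd') = ac/(bd) = 1/y_F$. A parallel calculation on each $F_i$, whose boundary contains exactly one of the edges $a, b, c, d$ together with unchanged edges of the ambient network, yields $y'_{F_i} = y_{F_i}\,(1 + y_F^{\pm 1})^{\pm 1}$, with signs dictated by the bipartite orientation; all other face weights of $G$ are unchanged. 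This is the standard cluster $y$-mutation at $F$, which is a continuous involution on $\R_{>0}^{\#F-1}$ and hence a homeomorphism onto its image.

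The main obstacle will be the orientation bookkeeping in the (M1) computation: each $F_i$ lies on a different side of the central square and involves a different color assignment at its corners, so the signs in $y'_{F_i} = y_{F_i}(1 + y_F^{\pm 1})^{\pm 1}$ require a case analysis. However, once one sign pattern is verified, the remaining three are forced by the $\Z/4$-symmetry of the spider. Involutivity of the face-weight map, which furnishes the continuous inverse and thereby upgrades the continuous bijection to a homeomorphism, follows automatically from the algebraic identity that applying $a \mapsto a/(ac+bd)$ (and its partners) twice recovers the original $a, b, c, d$, combined with the fact that the induced map on face weights is locally determined by these formulas.
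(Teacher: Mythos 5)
Your proof is correct, and it supplies the concrete case-by-case verification that the paper leaves implicit (the paper simply asserts ``checked case-by-case'' without elaboration). Working in the face-weight coordinates $(y_F)$ furnished by the preceding lemma is the natural framework: the face counts match for both moves, (M2) fixes the face weights after gauge fixing, and (M1) acts on face weights by the cluster $y$-mutation $y_F \mapsto 1/y_F$ and $y_{F_i} \mapsto y_{F_i}(1+y_F)$ or $y_{F_i}(1+y_F^{-1})^{-1}$ according to the orientation of the shared edge; the involutivity of the edge-weight formulas then furnishes the continuous inverse, giving the homeomorphism. One small correction: the spider has a $\Z/2$-rotational symmetry, not $\Z/4$ --- a quarter turn interchanges the black and white corners and so is not a symmetry of the bipartite graph. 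Consequently the sign verification on the four adjacent faces splits into two independent cases (one for each color of shared corner), with the other two following from the half-turn. This does not affect the correctness of the argument, since both cases work out to a $y$-mutation factor.
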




%

\section{Plabic graphs}\label{sec:flows}
So far we have only discussed planar bipartite graphs.  Postnikov \cite{Pos} gives a more general theory in the setting of ``plabic graphs".  Here we will not introduce Postnikov's original notion of boundary measurement, but work with the setting of flows in perfectly oriented networks, as studied in \cite{Tal, PSW}.

A \defn{bicolored graph} is a finite undirected graph $G$ with $n$ distinguished vertices labeled $1,2,\ldots,n$ called \defn{boundary vertices}.  The non-boundary vertices are called \defn{interior vertices} and each interior vertex is colored either black or white.  Each boundary vertex has degree one and is not colored.  We allow both loops and multiple edges. 

A \defn{perfect orientation} $O$ of a bicolored graph $G$ is a choice of direction for each edge of the graph $G$ such that interior black vertices have outdegree $1$ (and any indegree) and interior white vertices have indegree $1$ (and any outdegree).  If $(G,O)$ is perfectly oriented with $n$ boundary edges, then the number of boundary sources $k$ is given by the formula (see \cite[Definition 11.5]{Pos})
\begin{equation}\label{eq:kO}
k \coloneqq \frac{1}{2} \left(n + \sum_{v \text{ black}} (\deg(v)-2) + \sum_{v \text{ white}} (2 -\deg(v)) \right).
\end{equation}

If a bicolored graph $G$ is embedded into a disk so that the boundary vertices are arranged in order on the boundary of the disk then we call $G$ a \defn{plabic graph} \cite{Pos}.  A \defn{plabic network} $N$ is a plabic graph where each edge has been given a positive real edge weight.

\begin{proposition}\label{prop:O}
Suppose $G$ is a planar bipartite graph.  Then there is a natural bijection between perfect orientations $O$ of $G$ and almost perfect matchings $\Pi$ of $G$.  In particular, a planar bipartite graph $G$ has an almost perfect matching if and only if it has a perfect orientation.
\end{proposition}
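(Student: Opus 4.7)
The plan is to exhibit an explicit bijection $\Pi \leftrightarrow O$ and then verify it is well-defined in both directions.

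The key observation is that, in a planar bipartite graph, every edge connects a black vertex to a white vertex; hence we may canonically attach an orientation to a matching edge by demanding it point from its black endpoint to its white endpoint, and attach the \emph{opposite} orientation to every non-matching edge. Concretely, given an almost perfect matching $\Pi$, I define $O(\Pi)$ by orienting each edge of $\Pi$ from black to white and each edge not in $\Pi$ from white to black. First I would check this is a perfect orientation: at an interior black vertex $v$, the matching uses exactly one incident edge, which under $O(\Pi)$ points outward, while every other incident edge points inward, so the outdegree is $1$; the argument at an interior white vertex is symmetric and yields indegree $1$. Boundary vertices have degree $1$ so no condition needs to be checked there.

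Conversely, given a perfect orientation $O$, I would define $\Pi(O)$ to be the set of edges that are the unique outgoing edge at some interior black vertex. By bipartiteness such an edge is of the form $u_B \to u_W$ with $u_B$ black interior and $u_W$ either interior white or boundary. If $u_W$ is interior white, then indegree $1$ at $u_W$ forces $u_B \to u_W$ to be its unique incoming edge, so it is equivalently the unique incoming edge of $u_W$. Thus $\Pi(O)$ uses every interior vertex exactly once, and boundary vertices at most once, so it is an almost perfect matching. (The only subtlety is verifying the two characterizations ``unique out-edge at some interior black'' and ``unique in-edge at some interior white'' agree, which is the content of the preceding sentence.)

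Next I would verify the two constructions are mutually inverse. Starting from $\Pi$, the matching edges in $O(\Pi)$ point black-to-white and are precisely the out-edges of interior black vertices, so $\Pi(O(\Pi)) = \Pi$. Starting from $O$, the edges of $\Pi(O)$ are the unique out-edges of interior black vertices (by construction), hence already point black-to-white; the remaining edges have all the other orientation at every interior black endpoint, namely white-to-black, matching the rule used in building $O(\Pi(O))$, so $O(\Pi(O)) = O$. The second clause of the proposition is then immediate: the bijection sends nonempty sets to nonempty sets. I expect no serious obstacle; the only point requiring care is keeping track of the boundary case, where the lack of coloring conditions and the degree-one requirement ensure that no additional compatibility is needed.
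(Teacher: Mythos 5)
Your forward map $\Pi \mapsto O(\Pi)$ (matching edges black-to-white, non-matching edges white-to-black) is exactly the bijection given in the paper, and your verification that it yields a perfect orientation is sound.

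However, your inverse construction has a genuine gap. You define $\Pi(O)$ to be ``the set of edges that are the unique outgoing edge at some \emph{interior} black vertex.'' This set can fail to use every interior white vertex, so in general it is not an almost perfect matching. The problem is that the unique incoming edge of an interior white vertex $u$ may come from a \emph{boundary} black vertex rather than an interior black vertex, and such an edge is never picked up by your definition. Your argument only shows the containment in one direction (a unique out-edge at an interior black vertex pointing to an interior white vertex is also that white vertex's unique in-edge), but not the converse. A concrete failure occurs already for the lollipop graph in the paper's $\Gr(2,4)$ example: the two interior white leaves are attached only to boundary (black) vertices, so with any perfect orientation their incoming edges are boundary-to-interior and your $\Pi(O)$ omits them, leaving those interior white vertices unmatched. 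The fix is to define $\Pi(O)$ symmetrically as the set of \emph{all} edges oriented from black to white; this includes both the out-edges of interior black vertices and the in-edges of interior white vertices, and then your remaining verification that the two maps are mutually inverse goes through as written.
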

\begin{proof}
Let $\Pi$ be an almost perfect matching.  We construct a perfect orientation $O$ of $G$ as follows.  Suppose $e \notin \Pi$.  Then we orient the edge $e$ from white to black.  Suppose $e \in \Pi$.  Then we orient the edge $e$ from black to white.  It is straightforward to see that this is a bijection.
\end{proof}

A \defn{flow} $\FF$ in a perfectly oriented plabic graph $(G,O)$ is a subset of the edges of $G$, such that at each interior vertex the number of incoming edges in $\FF$ equals the number of outgoing edges in $\FF$.  If $(G,O)$ is perfectly oriented, it follows immediately from the definition that a flow $\FF$ is a union of oriented cycles and oriented paths between boundary vertices.  
The weight $\wt(\FF)$ of a flow $\FF$ is the product of the weights of the set of edges belonging to $\FF$.
Define the boundary subset $I(\FF) \in \binom{[n]}{k}$ by
$$
I(\FF)\coloneqq \{\text{ boundary sources not used }\} \cup \{\text{ boundary sinks used }\}.
$$
The weight of a flow is the product of the edge weights used in the flow.
Define the boundary measurements of $(N,O)$ to be
$$
\Delta_I(N,O) = \sum_{\FF \mid I(\FF) = I} \wt(\FF).
$$
The following result is the oriented analogue of Theorem \ref{thm:matchingplucker}.  It can be proved in a similar manner.

\begin{theorem}\label{thm:flow}
Suppose $(N,O)$ is a perfectly oriented planar bicolored network with positive edge weights.  Then $\{\Delta_I(N,O) \mid I \in \binom{[n]}{k}\}$ define a point $X(N,O)$ in the totally nonnegative Grassmannian $\Gr(k,n)_{\geq 0}$.
\end{theorem}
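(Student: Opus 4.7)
The nonnegativity $\Delta_I(N,O) \geq 0$ is immediate from the definition, so by Proposition \ref{prop:plucker} it suffices to verify the three-term Pl\"ucker relation \eqref{eq:plucker} for the $\Delta_I(N,O)$. My plan is to reduce to Theorem \ref{thm:matchingplucker} rather than rerun the Temperley--Lieb involution argument from scratch.

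First I would reduce to the case where the underlying graph $G$ of $N$ is bipartite. If some edge of $G$ joins two same-colored vertices, subdivide it by inserting a bivalent vertex of the opposite color, splitting the weight multiplicatively so the product along the two new edges equals the original weight. The perfect orientation extends uniquely (the new bivalent vertex automatically has indegree $1$ and outdegree $1$, which is compatible with either color), the count in \eqref{eq:kO} is unchanged, and flows in the subdivided network are in weight-preserving bijection with flows in $(N,O)$. After finitely many such subdivisions we may assume $G$ is bipartite.

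By Proposition \ref{prop:O} the orientation $O$ then arises from a unique almost perfect matching $\Pi_0$ of $G$, and a short case check on the color and $\Pi_0$-status of each boundary vertex (analogous to the verification that the set of boundary sources of $O$ equals $I(\Pi_0)$) shows that
\[
\Pi \;\longmapsto\; \FF \coloneqq \Pi \mathbin{\triangle} \Pi_0
\]
is a bijection from almost perfect matchings of $G$ to flows in $(G,O)$, under which $I(\FF) = I(\Pi)$. Introduce the modified, still positively weighted, network $\tilde N$ on $G$ with $\tilde w(e) = w(e)$ for $e \notin \Pi_0$ and $\tilde w(e) = w(e)^{-1}$ for $e \in \Pi_0$. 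Writing $\Pi \mathbin{\triangle} \Pi_0 = (\Pi \setminus \Pi_0) \sqcup (\Pi_0 \setminus \Pi)$ and canceling the contributions on $\Pi \cap \Pi_0$ gives the identity $\wt(\FF) = \wt(\Pi_0)\cdot \wt_{\tilde N}(\Pi)$. Summing over matchings with $I(\Pi) = I$ yields
\[
\Delta_I(N,O) \;=\; \wt(\Pi_0) \cdot \Delta_I(\tilde N),
\]
so the tuples $\{\Delta_I(N,O)\}$ and $\{\Delta_I(\tilde N)\}$ represent the same projective point. By Theorem \ref{thm:matchingplucker} this point lies in $\Gr(k,n)$, and by nonnegativity it lies in $\Gr(k,n)_{\geq 0}$.

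The only step requiring real care is the bookkeeping that the bijection $\Pi \leftrightarrow \FF$ identifies boundary subsets correctly and that the weight factors cancel as claimed; both amount to a case analysis on boundary vertices (black vs.\ white, used vs.\ unused by $\Pi_0$, used vs.\ unused by $\FF$), and I expect this to be the main obstacle, although no individual case is difficult.
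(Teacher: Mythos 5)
Your proposal is correct, but it takes a genuinely different route from what the paper has in mind. After stating Theorem \ref{thm:flow}, the paper says only that it ``can be proved in a similar manner'' to Theorem \ref{thm:matchingplucker} --- that is, by re-running the Temperley--Lieb immanant argument (Theorem \ref{thm:TL} and its sign-reversing involution) directly on flows and ``double flows'' rather than on matchings. You instead \emph{reduce} the flow statement to the already-proved matching statement, which avoids redoing that machinery.

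What is striking is that your central step --- the bijection $\Pi \mapsto \FF = \Pi \mathbin{\triangle} \Pi_0$, the identification of boundary subsets, and the weight identity $\wt(\FF) = \wt(\Pi_0)\,\wt_{\tilde N}(\Pi)$ --- is, run in reverse, exactly the paper's proof of the proposition stated \emph{immediately after} Theorem \ref{thm:flow}, which asserts $X(N) = X(\tilde N, O)$ when $N$ is bipartite and $O$ a perfect orientation. The paper treats that proposition as a corollary relating the two constructions once both are known to yield Grassmannian points; you observe it can instead be used as the proof mechanism for Theorem \ref{thm:flow} itself, granting Theorem \ref{thm:matchingplucker}. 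Your genuine addition over that proposition is the bipartite reduction: general plabic graphs allow edges between like-colored interior vertices (and loops and multiple edges), and you handle them by bivalent subdivision, correctly checking that the quantity in \eqref{eq:kO} and the flow generating function are preserved. Note that subdividing a loop produces a pair of parallel bipartite edges, which is still a legitimate planar bipartite graph, so this does not cause trouble for Theorem \ref{thm:matchingplucker}. The case-by-case verification that $I(\Pi \mathbin{\triangle} \Pi_0) = I(\Pi)$ (black vs.\ white boundary vertex, in or out of $\Pi_0$) is exactly as short as you anticipate and works out, as does the weight cancellation on $\Pi \cap \Pi_0$. In short: correct, cleaner than a Temperley--Lieb rerun, and closely aligned with material the paper records a few lines later under a different heading.
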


\begin{proposition}
Suppose $N$ is a planar bipartite network, and $O$ is a perfect orientation of $N$.  Define $(\tilde N,O)$ to be the oriented network where the edge weights on black to white edges of $O$ have been inverted.  Then
$$
X(N) = X(\tilde N, O).
$$
\end{proposition}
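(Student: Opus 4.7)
\medskip
\noindent\textbf{Proof proposal.} The plan is to lift the identity $X(N) = X(\tilde N, O)$ to an explicit weight-preserving bijection (up to one global scalar) between the almost perfect matchings of $N$ contributing to $\Delta_I(N)$ and the flows of $(\tilde N, O)$ contributing to $\Delta_I(\tilde N, O)$.

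By Proposition \ref{prop:O}, the perfect orientation $O$ corresponds to a distinguished almost perfect matching $\Pi_O$, whose edges are exactly those oriented from black to white. Given any almost perfect matching $\Pi$, form the symmetric difference $\FF \coloneqq \Pi \triangle \Pi_O$. At each interior white vertex $v$, the edge of $\Pi_O$ at $v$ is the unique incoming edge of $O$, while every other edge at $v$ is outgoing; accordingly, depending on whether $\Pi$ uses the same edge at $v$ as $\Pi_O$, the set $\FF$ has either zero edges at $v$ or one incoming and one outgoing edge at $v$. The symmetric statement holds at black vertices, so $\FF$ satisfies the flow condition at every interior vertex. The involution $\Pi \leftrightarrow \Pi \triangle \Pi_O$ then gives a bijection between almost perfect matchings of $N$ and flows of $(N,O)$ (equivalently of $(\tilde N,O)$, whose flows are the same edge subsets).

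The next step is to verify $I(\Pi) = I(\FF)$ by a short case analysis on a boundary vertex $i$: for each of the four possibilities of $(e_i \in \Pi,\; e_i \in \Pi_O) \in \{0,1\}^2$ and each color of $i$, one checks that the membership condition for $I(\Pi)$ matches that for $I(\FF)$. In every case, $i$ is a black vertex used by $\Pi$ or a white vertex not used by $\Pi$ if and only if $i$ is a boundary sink used by $\FF$ or a boundary source not used by $\FF$ in $O$.

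Finally, the only edges whose weight in $\tilde N$ differs from their weight in $N$ are the edges of $\Pi_O$, whose weight is inverted. Using $\FF \cap \Pi_O = \Pi_O \setminus \Pi$ and $\FF \setminus \Pi_O = \Pi \setminus \Pi_O$, one gets
\[
\wt(\Pi_O)\cdot \wt_{\tilde N}(\FF) \;=\; \prod_{e\in \Pi_O}\wt(e)\;\cdot\;\prod_{e\in \Pi_O\setminus \Pi}\wt(e)^{-1}\;\cdot\;\prod_{e\in \Pi\setminus \Pi_O}\wt(e) \;=\; \prod_{e\in \Pi}\wt(e) \;=\; \wt(\Pi).
\]
Summing over all $\Pi$ with $I(\Pi)=I$, equivalently over all $\FF$ with $I(\FF)=I$, yields $\Delta_I(N) = \wt(\Pi_O)\cdot \Delta_I(\tilde N, O)$ for every $I \in \binom{[n]}{k}$. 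Since $\wt(\Pi_O)$ is a nonzero scalar independent of $I$, the two collections of Pl\"ucker coordinates coincide projectively, hence $X(N) = X(\tilde N, O)$ in $\Gr(k,n)$. The main obstacle is the boundary-vertex bookkeeping; once the convention that $\Pi_O$ consists of the black-to-white edges of $O$ is fixed, the flow verification and the weight cancellation are essentially forced by the symmetric-difference identity.
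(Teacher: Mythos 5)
Your proof is correct and follows essentially the same approach as the paper's sketch: the bijection $\Pi \leftrightarrow \Pi \triangle \Pi_O$ is exactly the paper's ``reverse the edges of $\FF$ to get a new perfect orientation, then apply Proposition \ref{prop:O}'' (both give $\Pi = \FF \triangle \Pi_O$), and the weight identity $\wt_{\tilde N}(\FF) = \wt(\Pi)/\wt(\Pi_O)$ is the same. You have simply filled in the flow-condition check, the boundary-index bookkeeping, and the explicit cancellation that the paper leaves to the reader.
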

\begin{proof}
Let $\FF$ be a flow in $(\tilde N, O)$.  Reversing the all the edges of $\FF$ gives another perfect orientation $O$ of $N$.  By Proposition \ref{prop:O}, we obtain a bijection $\FF \mapsto \Pi$ between flows of $(\tilde N, O)$ and almost perfect matchings of $N$.  We then calculate that $\wt(\FF) = \wt(\Pi)/\wt(\Pi_O)$, where $\Pi_O$ is the almost perfect matching associated to the chosen perfect orientation $O$.
\end{proof}

The boundary measurements of a planar bipartite graph are invariant under the relations discussed in Section \ref{sec:relations}.  For perfectly oriented plabic networks we have the further relation:

\begin{center}
\begin{tikzpicture}[scale=0.8]

\begin{scope}[decoration={
    markings,
    mark=at position 0.5 with {\arrow{>}}}
    ] 
\draw[thick,postaction={decorate}] (-1,0.1) -- (0,0);
\draw[thick,postaction={decorate}] (-1,-0.6)--(0,0);
\draw[thick,postaction={decorate}]  (-1,0.6)--(0,0);
\draw[thick,postaction={decorate}]  (3,0.3) -- (2,0);
\draw[thick,postaction={decorate}] (2,0) -- (3,-0.3);
\draw[thick,postaction={decorate}] (0,0) -- (2,0);

\blackdot{(0,0)}
\blackdot{(2,0)}

\node at (4,0) {$\longleftrightarrow$};

\begin{scope}[shift={(1,0)}]
\blackdot{(5,0)}
\draw[thick,postaction={decorate}] (4,0.1)--(5,0);
\draw[thick,postaction={decorate}] (4,-0.6)--(5,0);
\draw[thick,postaction={decorate}] (4,0.6)--(5,0);
\draw[thick,postaction={decorate}] (6,0.3)--(5,0);
\draw[thick,postaction={decorate}] (5,0) -- (6,-0.3);
\end{scope}
\end{scope}
\end{tikzpicture}
\end{center}
allowing us to merge or unmerge adjacent vertices of the same color, when the edge weight of the connecting edge is equal to 1.

\section{Bounded affine permutations}\label{sec:perms}
For more details on the material of this section, we refer the reader to \cite{Pos,KLS}.

\subsection{Affine permutations}
Fix $n \geq 2$.  An \defn{affine permutation} is a bijection $f: \Z \to \Z$ satisfying the periodicity condition $f(i+n) = f(i) + n$ for all $i \in \Z$.  Affine permutations form a group under composition denoted $\tS_n$.  The quantity $\sum_{i=1}^n (f(i)-i)$ is always divisible by $n$, and we let $\tS_n^k$ denote the subset of $\tS_n$ satisfying the condition
$$\sum_{i=1}^n (f(i)-i) = kn.$$
We call $f \in \tS_n^k$ a $(k,n)$-affine permutation.  We will give an affine permutation by giving it in \emph{window notation}: $[f(1),f(2),\ldots,f(n)]$.

The subset $\tS^0_n$ is the Coxeter group $W_n$ of affine type $A$, with generators $s_0,s_1,\ldots,s_{n-1}$, and relations 
\begin{align*}
s_i^2 &= 1 \\
s_i s_j &= s_j s_i & \mbox{if $|i-j| >1$}\\
s_i s_{i+1} s_i &= s_{i+1} s_i s_{i+1} 
\end{align*}
where all indices are taken modulo $n$.  The {\it length} $\ell(w)$ of $w \in W_n$ is the length of the shortest expression of $w$ as a product of the $s_i$.  We let $\leq$ denote the Bruhat partial order in $W_n$.

The group $W_n$ acts on the set of $(k,n)$-affine permutations by both left and right multiplications.  If $g = fs_i$, then $g$ is obtained from $f$ by swapping $f(i+rn)$ and $f(i+rn+1)$ for all $r \in \Z$; that is, right multiplication by $s_i$ swaps positions $i$ and $i+1$.  Similarly, if $g = s_if$, then $g$ is obtained from $f$ by swapping the values $i+rn$ and $i+rn+1$ for all $r \in \Z$.

For each $k$ there is a distinguished $(k,n)$-affine permutation $\id$ given by $\id(i) = i+k$ for all $i \in \Z$.  Every $f \in \tS_n^k$ is of the form $f = w \cdot \id$ for a unique $w \in W_n$.  The length of $f \in \tS_n^k$ is then defined to be the length of $w$, and if $f = w  \cdot\id$ and $g = v  \cdot\id$, we define $f \leq g$ if and only if $w \leq v$.  The poset $\tS_n^k$ has $\id$ as its unique minimal element, which has length 0.  Note that these definitions can also be made (with the same result) using right multiplication by $W_n$.  The length $\ell(f)$ of an affine permutation can also be computed as the cardinality of the set of inversions:
$$
\ell(f) = |\{(i,j) \in [n] \times \Z \mid i < j \text{ and } f(i) > f(j)\}|.
$$

\subsection{Bounded affine permutations}

A \defn{$(k,n)$-bounded affine permutation} is a $(k,n)$-affine permutation $f \in \tS_n^k$ satisfying the additional bounded condition:
$$i \leq f(i) \leq i+n.$$
The set $\Bound(k,n)$ of $(k,n)$-bounded affine permutations forms a lower order ideal in $\tS_n^k$ (\cite{KLS}).  We define the partial order on $\Bound(k,n)$ to be the {\bf dual of the induced order} from $\tS_n^k$.  Thus $f \leq g$ in $\Bound(k,n)$ if and only if $g$ is less than $f$ in Bruhat order.  Unless otherwise specified, we always use this partial order when referring to bounded affine permutations.

\subsection{Grassmann necklaces}
Let $I =\{i_1<i_2<\cdots<i_k\}$ and $J =\{j_1<j_2<\cdots <j_k\}$ be two $k$-element subsets of $[n]$.  We define a partial order  $\leq$ on $\binom{[n]}{k}$ by $I \leq J$ if $i_r \leq j_r$ for all $r=1,2,\ldots,k$.

We write $\leq_a$ for the cyclically rotated ordering $a < a+1 < \cdots < n < 1 < \cdots < a-1$ on $[n]$.  Replacing $\leq$ by $\leq_a$, we also have the cyclically rotated version partial order $I \leq_a J$ on $\binom{[n]}{k}$.

A \defn{$(k,n)$-Grassmann necklace} \cite{Pos} is a collection of $k$-element subsets $\I = (I_1,I_2,\ldots,I_n)$ satisfying the following property: for each $a \in [n]$:
\begin{enumerate}
\item
$I_{a+1} = I_a$ if $a \notin I_a$
\item
$I_{a+1} = I_a -\{a\} \cup \{a'\}$ if $a \in I_a$.
\end{enumerate}
There is a partial order on the set of $(k,n)$-Grassmann necklaces, given by $\I \leq \J$ if $I_a \leq_a J_a$ for all $a =1,2,\ldots,n$.

Given $f \in \Bound(k,n)$, we define a sequence $\I(f) = (I_1,I_2,\ldots,I_n)$ of $k$-element subsets by the formula
$$
I_a = \{f(b) \mid b < a \text{ and } f(b) \geq a\} \mod n
$$
where $\mod n$ means that we take representatives in $[n]$.

\begin{example}
Let $k =2$ and $n = 6$.  Suppose $f = [2,4,6,5,7,9]$.  Then $\I(f) = (13,23,34,46,56,16)$.
\end{example}

\begin{theorem}\label{thm:Grassorder}
The map $f \mapsto \I(f)$ is a bijection between $(k,n)$-bounded affine permutations and $(k,n)$-Grassmann necklaces.  We have $f \geq f'$ in $\Bound(k,n)$ if and only if $\I(f) \leq \I(f')$.
\end{theorem}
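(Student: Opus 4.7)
The plan splits into two parts: construct the bijection, then compare the partial orders.

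\textbf{Part 1 (Bijection).} First I verify that $\I(f)$ is a $(k,n)$-Grassmann necklace. The boundedness $b \leq f(b) \leq b+n$ forces only $b \in [a-n, a-1]$ to contribute to $I_a$, and since $f|_{[a-n,a-1]}$ realizes each residue modulo $n$ exactly once, distinct $b$ in this window contribute distinct residues to $I_a$. A sum-swap argument gives
$$
\sum_{a=1}^n |I_a| \;=\; \sum_{a=1}^n \sum_{b=a-n}^{a-1} [f(b) \geq a] \;=\; \sum_{b=1}^n (f(b)-b) \;=\; kn,
$$
so $|I_a|$ averages $k$; combined with the transition rule (which preserves cardinality), each $|I_a|=k$. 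For the transition rule, $a \in I_a$ iff $f^{-1}(a) \in [a-n, a-1]$, which by the bound $f^{-1}(a) \in [a-n, a]$ happens iff $f(a) \neq a$. When $f(a) > a$, the contribution of $b = f^{-1}(a)$ (value $a$) is lost while the new contribution of $b = a$ (value $f(a) \bmod n$) is gained, giving $a' = f(a) \bmod n$. Conversely, given $\I$, I reconstruct $f$ on $[n]$ by setting $f(a) = a$ if $I_{a+1} = I_a$, otherwise lifting the unique element of $I_{a+1} \setminus I_a$ to the interval $(a, a+n]$, and then extending periodically. Because $I_{n+1} = I_1$, the swap additions and removals over $a \in [n]$ pair up into a permutation of the non-fixed indices, so $\{f(a) \bmod n : a \in [n]\} = [n]$ and $f$ is a bona fide affine permutation; boundedness is immediate from construction, and $\sum_a(f(a)-a) = kn$ follows from counting swap displacements. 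Checking that these maps are mutually inverse is a routine induction on $a$ using the transition rule.

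\textbf{Part 2 (Order reversal).} Bruhat order on $\tS_n^k$ admits the rank-function characterization $f \leq g$ iff, with appropriate normalization for finiteness, the rank counts $|\{b \leq p : f(b) \geq q\}|$ satisfy pointwise inequalities over all $(p, q) \in \Z^2$. On the necklace side, componentwise cyclic order $I_a(f) \leq_a I_a(g)$ is equivalent, via the standard subset comparison lemma, to
$$
|\{b \in [a-n, a-1] : f(b) \in [a, t]_a\}| \;\geq\; |\{b \in [a-n, a-1] : g(b) \in [a, t]_a\}|
$$
for every $a \in [n]$ and every threshold $t$. Periodicity $f(b+n) = f(b)+n$ shows that these $(a,t)$-rank inequalities, varying $a \in [n]$, encode the same data as the global $(p,q)$-rank inequalities on $\Z^2$. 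The sign reversal $f \geq f'$ in $\Bound(k,n)$ versus $f \leq f'$ in $\tS_n^k$ is absorbed by the dual-order convention on $\Bound(k,n)$.

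\textbf{Main obstacle.} Part 2 is the principal difficulty. One must correctly normalize the affine Bruhat rank function so all counts are finite, verify that componentwise cyclic subset order genuinely translates into pointwise rank inequalities (standard but nontrivial), and check that the $n$ local rank comparisons at different cyclic base-points assemble into the single global Bruhat comparison. Once these translations are set up carefully, the equivalence of the two rank-function families immediately yields the order equivalence.
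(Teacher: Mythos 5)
Your approach mirrors the paper's: establish the bijection via the transition rule (the paper gives the inverse map and leaves verification to the reader, while you fill in the averaging and transition-rule derivations), and compare the orders via rank functions. The paper routes the order comparison through the cyclic rank matrix $r_X(i,j) = \dim\spn(v_i,\ldots,v_j)$ and Proposition \ref{prop:three}, citing \cite{BB} for the equivalence with Bruhat order; you work directly with the affine permutation rank function $|\{b\leq p : f(b)\geq q\}|$, which is the same idea expressed at the level of permutations rather than Grassmannian points. Both treatments leave the rank-function machinery at the level of a sketch.

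One concrete bug in Part 1: your inverse map is stated as ``set $f(a)=a$ if $I_{a+1}=I_a$, otherwise lift the unique element of $I_{a+1}\setminus I_a$ to $(a,a+n]$.'' This mishandles the case $a\in I_a$ with $a'=a$, i.e.\ $f(a)=a+n$: there $I_{a+1}=I_a-\{a\}\cup\{a\}=I_a$, so your rule would wrongly output $f(a)=a$. The correct case split is on whether $a\in I_a$, not on whether $I_{a+1}=I_a$: if $a\notin I_a$ set $f(a)=a$; if $a\in I_a$ and $I_{a+1}=I_a$ set $f(a)=a+n$; if $a\in I_a$ and $I_{a+1}\neq I_a$ lift $I_{a+1}\setminus I_a$ to $(a,a+n)$. (Equivalently, $a'$ is always determined by the pair $(I_a,I_{a+1})$ once you also know whether $a\in I_a$.) This is a small slip --- the rest of your argument is sound --- but as written the reconstruction is not a two-sided inverse.
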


The inverse map $\I \mapsto f(\I)$ is given as follows.  Suppose $a \notin I_a$.  Then define $f(a) = a$.  Suppose $a \in I_a$ and $I_{a+1} = I_a -\{a\} \cup \{a'\}$.  Then define $f(a) = b$ where $b \equiv a' \mod n$ and $a < b \leq a+n$.  We leave it to the reader to check that this is inverse to the map $f \mapsto \I$, and proves the ``bijection" statement of Theorem \ref{thm:Grassorder}.  The comparison of partial orders is best understood via rank matrices.

\subsection{Cyclic rank matrices}
A formal characterization of cyclic rank matrices is discussed in \cite{KLS}, see also \cite{Pos}.  Here we only consider cyclic rank matrices of points $X \in \Gr(k,n)$.  Let $v_1,v_2,\ldots,v_n \in \C^k$ be the $n$ columns of a $k\times n$ matrix representing $X$.  Set $v_{i+n} \coloneqq (-1)^{k-1} v_{i}$ to define $v_i$ for $i \in \Z$.  The \defn{cyclic rank matrix} of $X$ is the function
$$
r_{X}(i,j) \coloneqq \dim \spn (v_{i},v_{i+1},\ldots,v_j) \in \{0,1,\ldots,k\}
$$
defined for $i \leq j$. 

We also define the bounded affine permutation $f_X$ by 
\begin{equation}\label{eq:fX}
f_X(i) \coloneqq \min \{j \geq i \mid v_i \in \spn(v_{i+1},\ldots,v_j)\}.
\end{equation}
Thus $f_X(i) = i$ if $v_i = 0$, and $f_X(i) = i+n$ if $v_i$ does not lie in the span of the other $n-1$ columns.  It is clear that $f_X$ is bounded and periodic; the fact that it is a bijection from $\Z$ to $\Z$ is left as an exercise.

Let us also define the Grassmann necklace $\I_X = (I_1,I_2,\ldots,I_n)$ by
$$
I_a \coloneqq \min_{\leq_a} \left\{J \in \binom{[n]}{k} \mid \Delta_J(X) \neq 0 \right\}
$$
where $\min_{\leq_a}$ is the lexicographical minimum with respect to the partial order $\leq_a$.

\begin{proposition}\label{prop:three}
Let $X \in \Gr(k,n)$.  Then $f_X \in \Bound(k,n)$ and $\I_X$ is a $(k,n)$-Grassmann necklace, related by the bijection of Theorem \ref{thm:Grassorder}.  Furthermore, any one of $f_X, \I_X$, and $r_X$ determine the other two.
\end{proposition}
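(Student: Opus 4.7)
The strategy is to first verify that $\I_X$ is a Grassmann necklace, then identify $f_X$ with $f(\I_X)$ under the inverse of the bijection of Theorem~\ref{thm:Grassorder}, and finally close the triangle $f_X \leftrightarrow \I_X \leftrightarrow r_X$ via explicit reciprocal formulas. Since $\Delta_J(X) \neq 0$ exactly when $\{v_j : j \in J\}$ is linearly independent, $I_a$ is precisely the lex-minimum basis in the cyclic order $\leq_a$ produced by the matroid greedy algorithm: reading $j = a, a+1, \ldots, a+n-1$ (with indices taken mod $n$), $I_a$ consists of those $j$ for which $v_j \notin \spn(v_a, v_{a+1}, \ldots, v_{j-1})$, equivalently those $j$ at which $r_X(a,j)$ strictly increases.

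Comparing the greedy runs for $I_a$ and $I_{a+1}$ proves the necklace recursion. If $v_a = 0$ then $a \notin I_a$ and both greedies produce the same set, so $I_{a+1} = I_a$. If $v_a \neq 0$ then $a \in I_a$; setting $j^* \coloneqq f_X(a)$, the key observation is that $\spn(v_{a+1}, \ldots, v_{j-1})$ is strictly smaller than $\spn(v_a, v_{a+1}, \ldots, v_{j-1})$ for $a < j \leq j^*$ and equals it for $j > j^*$. Tracking this differential through both greedy runs forces $I_{a+1} = (I_a \setminus \{a\}) \cup \{j^* \bmod n\}$; the degenerate case $j^* = a+n$ (i.e.\ $v_a$ independent of the other $n-1$ columns) collapses correctly to $a' = a$ and $I_{a+1} = I_a$. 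Thus $\I_X$ is a $(k,n)$-Grassmann necklace. Moreover, the inverse map $\I \mapsto f(\I)$ from Theorem~\ref{thm:Grassorder} sends $a$ to $a$ whenever $a \notin I_a$, and otherwise to the unique representative of $a' = j^* \bmod n$ in $(a, a+n]$, namely $j^*$ itself. Hence $f(\I_X) = f_X$ as functions, and since this inverse map outputs a bounded affine permutation, we conclude $f_X \in \Bound(k,n)$ with associated necklace $\I_X$.

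Mutual determination then breaks into three links. The pair $f_X \leftrightarrow \I_X$ is the bijection of Theorem~\ref{thm:Grassorder} itself. The two reciprocal formulas
$$
r_X(i,j) = \#\{t \in [i,j] : f_X(t) > j\}, \qquad f_X(i) = \min\{j \geq i : r_X(i,j) = r_X(i+1,j)\}
$$
connect $r_X$ with $f_X$: the first is obtained by building $\spn(v_i, \ldots, v_j)$ from right to left, noting $v_t$ contributes a new dimension iff $v_t \notin \spn(v_{t+1}, \ldots, v_j)$, equivalently $f_X(t) > j$; the second uses that prepending $v_i$ to $v_{i+1}, \ldots, v_j$ fails to increase the rank iff $v_i \in \spn(v_{i+1}, \ldots, v_j)$. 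The only technically delicate point is the case analysis in the second paragraph, in particular the cyclic wrap-around when $v_a$ is independent of the other $n-1$ columns; but this reduces to careful bookkeeping with the sign convention $v_{i+n} = (-1)^{k-1} v_i$ (which leaves spans unchanged) and with the cardinality constraint $|I_a| = k$.
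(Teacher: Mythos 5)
Your proof is correct and uses essentially the same mechanism the paper sketches, namely tracking the cyclic rank matrix. The paper only outlines the mutual-determination step (via the equivalence $v_i \in \spn(v_{i+1},\ldots,v_j) \iff r(i,j)=r(i+1,j)$ and the greedy description of the lex-minimal basis), leaving the necklace property, the bijectivity of $f_X$, and the match with Theorem~\ref{thm:Grassorder} as implicit; you have filled in those steps with a careful differential-rank comparison of the greedy runs for $I_a$ and $I_{a+1}$, which is exactly the intended argument.

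One small observation worth making explicit: the central comparison step uses that for $a+1 \le j < j^*$ both $r(a,j)-r(a+1,j)$ and $r(a,j-1)-r(a+1,j-1)$ equal $1$, so the first differences $r(a,\cdot)-r(a,\cdot-1)$ and $r(a+1,\cdot)-r(a+1,\cdot-1)$ agree there, while at $j=j^*$ the offset drops from $1$ to $0$, forcing $j^* \in I_{a+1} \setminus I_a$. Phrasing it this way avoids the temptation to compare the spans $\spn(v_a,\ldots,v_{j-1})$ and $\spn(v_{a+1},\ldots,v_{j-1})$ directly as containment of $v_j$ (which does not decompose cleanly); the rank-difference bookkeeping you invoke is the right invariant, but the reader needs to see that it is genuinely the first differences, not the spans, that coincide term-by-term.
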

\begin{proof}
We only sketch a proof of the last statement.  The condition $v_i \in \spn(v_{i+1},\ldots,v_j)$ is equivalent to $\dim \spn(v_{i},v_{i+1},\ldots,v_j) = \dim \spn(v_{i+1},\ldots,v_j)$.  Thus $f_X$ is determined by $r_X$.  Conversely, $f_X$ can be used to determine when the rank matrix increases, that is, when $r(i,j) - r(i+1,j)$ is equal to 0 or to 1.  This shows that $f_X$ and $r_X$ determine each other.
The lexicographically minimal $J$ such that $\Delta_J(X) \neq 0$ is determined by the values $r(1,1),r(1,2),r(1,3),\ldots,r(1,n)$.  Specifically, $j \in J$ if and only if $r(1,j) > r(1,j-1)$, where we take $r(1,0) = 0$.  The converse is similar.
\end{proof}

\begin{proof}[Sketch proof of Theorem \ref{thm:Grassorder}]
Define a partial order on cyclic rank matrices by $r \leq r'$ if and only if $r(i,j) \leq r'(i,j)$ for all $i,j$.  Then it is a standard result in combinatorics \cite{BB} that $f \geq f'$ in $\Bound(k,n)$ if and only if $r_{f'} \leq r_f$, where the rank matrices are related to the bounded affine permutations by the correspondence of Proposition \ref{prop:three}.  (We will see later that for every $f \in \Bound(k,n)$ there exists $X \in \Gr(k,n)$ such that $f_X = f$, so there is no loss of generality.)  But it is also clear from the Proof of Proposition \ref{prop:three} that $\I \leq \I'$ if and only if $r_{\I'} \leq r_{\I}$, so the claim follows.
\end{proof}

\begin{example}
Let $k = 3$ and $n = 6$.  Consider the point
$$
X = 
\begin{bmatrix}
 1 & 1 & 0 & 0 & 0 & 0 \\
 0 & 1 & 4 & 6 & 0 & 0 \\
 0 & 0 & 1 & 2 & 2 & 1 \\
\end{bmatrix}
\in \Gr(3,6)_{\geq 0}.
$$
Then $f_X =[4, 7, 5, 8, 6, 9]$, because, for example, $v_2 \in \spn(v_3,v_4,v_5,v_6,v_7)$ but $v_2 \notin \spn(v_3,v_4,v_5,v_6)$.  We have
$\I_X = (123,234,341,451,512,612)$.
We have $r_X(1,2) = 2$ but $r_X(5,6) = 1$.
\end{example}

\section{Totally nonnegative Grassmann cells}\label{sec:representable}
In this section, we decompose $\Gr(k,n)_{\geq 0}$ into \emph{positroid cells}, and show that every point in $\Gr(k,n)_{\geq 0}$ is represented by a network $N$ (the analogue of Theorem \ref{thm:LGV}).  The main results in this section are due to Postnikov \cite{Pos}.  Our proof relies on a bridge--lollipop reduction procedure which we believe to be new.

\subsection{Trips and zig-zag paths}\label{sec:trip}
Let $G$ be a planar bipartite graph.  In the following we will sometimes think of an edge in $G$ as two directed edges, one in each direction.

We decompose $G$ into directed paths and cycles as follows.  Given a directed edge $e: u \to v$, if $v$ is black we pick the edge $e': v \to w$ after $e$ by turning (maximally) right at $v$; if $v$ is white, we turn (maximally) left at $v$.  This decomposes $G$ into a union of directed paths and cycles, such that every edge is covered twice (once in each direction).  These paths and cycles are called \defn{zig-zag paths}, or \defn{trips}.  
%

The \defn{trip permutation} $\pi_G: [n] \to [n]$ is the permutation given by $\pi_G(i) =j$ if the trip that starts at $i$ ends at $j$.  For example in the following square graph, we have $\pi_G(1) = 3, \pi_G(2) = 4, \pi_G(3) = 1, \pi_G(4) = 2$.
\begin{center}
\begin{tikzpicture}[baseline=-0.5ex,scale=0.7]
\node at (0,1.7) {$1$};
\node at (1.7,0) {$2$};
\node at (0,-1.7) {$3$};
\node at (-1.7,0) {$4$};
\draw (0,0) circle (1.5cm);
\draw (-1.5,0) -- (-0.8,0);
\draw (1.5,0) -- (0.8,0);
\draw (0,0.8) -- (0,1.5);
\draw (0,-0.8) -- (0,-1.5);
\draw (-0.8,0) -- (0,0.8) -- (0.8,0) -- (0,-0.8) -- (-0.8,0);
\begin{scope}[decoration={
    markings,
    mark=at position 0.5 with {\arrow{>}}}
    ]
\draw[very thick,postaction={decorate}] (0,-1.5)-- (0,-0.8);
\draw[very thick,postaction={decorate}]  (0,-0.8)--(0.8,0);
\draw[very thick,postaction={decorate}](0.8,0)--(0,0.8);
\draw[very thick,postaction={decorate}](0,0.8)--(0,1.5);
\end{scope}
\whitedot{(0.8,0)}
\blackdot{(0,0.8)}
\blackdot{(0,-0.8)}
\whitedot{(-0.8,0)}

\begin{scope}[shift={(6,0)}]
\node at (0,1.7) {$1$};
\node at (1.7,0) {$2$};
\node at (0,-1.7) {$3$};
\node at (-1.7,0) {$4$};
\draw (0,0) circle (1.5cm);
\draw (-1.5,0) -- (-0.8,0);
\draw (1.5,0) -- (0.8,0);
\draw (0,0.8) -- (0,1.5);
\draw (0,-0.8) -- (0,-1.5);
\draw (-0.8,0) -- (0,0.8) -- (0.8,0) -- (0,-0.8) -- (-0.8,0);
\begin{scope}[decoration={
    markings,
    mark=at position 0.5 with {\arrow{>}}}
    ]
\draw[very thick,postaction={decorate}] (1.5,0)-- (0.8,0);
\draw[very thick,postaction={decorate}]  (0.8,0)--(0,-0.8);
\draw[very thick,postaction={decorate}](0,-0.8)--(-0.8,0);
\draw[very thick,postaction={decorate}](-0.8,0)--(-1.5,0);
\end{scope}
\whitedot{(0.8,0)}
\blackdot{(0,0.8)}
\blackdot{(0,-0.8)}
\whitedot{(-0.8,0)}
\end{scope}
\end{tikzpicture}
\end{center}

\begin{proposition}
Trip permutations are preserved by the moves (M1) and (M2).
\end{proposition}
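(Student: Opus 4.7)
The strategy is to exploit the locality of both moves. Each of (M1) and (M2) modifies the graph only inside a bounded region $R \subset G$, leaving the rest of $G$ unchanged. Since the rule defining a zig-zag path (turn right at a black vertex, turn left at a white vertex) is purely local, any trip that stays outside $R$ is unaffected by the move, while a trip that crosses into $R$ enters and exits along boundary half-edges of $R$. Thus it suffices to show that for each half-edge $e$ entering $R$, the trip segment inside $R$ exits along the \emph{same} boundary half-edge before and after the move. Concatenating the unchanged exterior portions of trips then yields $\pi_G = \pi_{G'}$.

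For (M2), the region $R$ is the degree-two vertex $v$ together with its two incident edges to $u$ and $u'$. A trip entering $v$ from one neighbor has no choice of outgoing edge at $v$ and must leave via the other; after the move, $u$ and $u'$ are identified and the trip passes straight through the merged edge. In the remaining subcase where $v$ is adjacent to a boundary vertex $b$, the move replaces the path $u \text{--} v \text{--} b$ by a single boundary edge $(b, w)$ with the color of $b$ flipped. However, $b$ has degree one, so no turning decision is ever made at $b$, and trips start and terminate at boundary vertices; the color flip is therefore invisible to the trip permutation, and the trip that previously entered $R$ from $u$ and terminated at $b$ now also terminates at $b$ along $(b,u)$.

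For (M1), the region $R$ consists of the four-cycle together with its leaf edges, meeting the rest of the graph in four half-edges at the four external white vertices, which we label $N, E, S, W$. One directly traces each of the four trips entering $R$ through the square using the turning rules, recording the induced permutation $\sigma_{\text{pre}}$ on $\{N, E, S, W\}$. One then repeats the trace in the post-move graph, in which the four-cycle has been rotated by $90^\circ$ and the colors of its corners swapped, to obtain $\sigma_{\text{post}}$.

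The main (and only) obstacle is the case check $\sigma_{\text{pre}} = \sigma_{\text{post}}$, which is routine but requires careful bookkeeping of the four entry points and the cyclic edge-orders at each corner. The reason the check succeeds conceptually is that the rules ``turn right at black'' and ``turn left at white'' are mirror images of one another, and the move's combined effect of rotating the four-cycle by $90^\circ$ while exchanging the colors of its corners is precisely the involution that interchanges these two mirrored turning rules. These two operations cancel on the induced boundary permutation, so $\sigma_{\text{pre}} = \sigma_{\text{post}}$, completing the proof.
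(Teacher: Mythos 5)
Your proposal is correct and is essentially the same argument the paper invokes: the paper's proof is the single sentence ``This is checked case by case,'' and your write-up fills in that case check, reducing by locality to the boundary behavior of trips inside the modified region, verifying (M2) directly, and correctly identifying the four-strand trace through the square as the content of (M1). Your closing conceptual remark about the color-swap interchanging the two mirrored turning rules is a reasonable heuristic, though a fully rigorous version would still carry out the explicit four-entry trace you describe as ``routine.''
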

\begin{proof}
This is checked case by case.
\end{proof}

A leafless planar bipartite graph $G$ is \defn{reduced} or \defn{minimal} if 
\begin{enumerate}
\item
there are no trips that are cycles,
\item
no trip uses an edge twice (once in each direction) except for the case of a boundary leaf, and
\item
no two trips $T_1$ and $T_2$ share two edges $e_1, e_2$ such that the edges appear in the same order in both trips.
\end{enumerate}

Note that $T_1$ and $T_2$ can share two edges $e_1,e_2$ if they appear in a different order.

\begin{remark}
The conditions imply that if $\pi_G(i) = i$ then the boundary vertex $i$ must be connected to a boundary leaf.
\end{remark}

\begin{remark}
The trip permutations allow us to associate a $k$-element subset $I_F \subset [n]$ to each face $F$ of a planar bipartite graph $G$.  These face labels play an important role in certain aspects of the subject \cite{OPS,OS,MS,FP}.
\end{remark}

%

%
\subsection{The bounded affine permutation of a reduced planar bipartite graph}

Let $G$ be a reduced planar bipartite graph.  We define a bounded affine permutation $f_G \in \Bound(k,n)$ as follows: we always have $f_G(i) = \pi_G(i) \mod n$, where $\pi_G$ is the trip permutation of $G$ defined in Section \ref{sec:trip}.  Given the bounded condition, the only time there is ambiguity is if the trip that starts at $i$ ends at $i$, that is, $\pi_G(i) = i$.  In this case, we have $f_G(i) = i$ if $i$ is incident to a black vertex and $f_G(i) = i+n$ if $i$ is incident to a white vertex.

It is not difficult to check that if $G$ and $G'$ are related by the moves (M1) and (M2) then $f_G = f_{G'}$.  
We omit the proof of the following important result.

\begin{theorem}[\cite{Pos}]\label{thm:reduced}
Every planar bipartite graph is move-equivalent to a reduced graph.  A planar bipartite graph is reduced if and only if it has the minimal number of faces in its move-equivalence class.  Any two reduced planar bipartite graphs in the same move-equivalence class are related by the equivalences (M1) and (M2).  Two reduced planar bipartite graphs $G$ and $G'$ are in the same move-equivalence class if and only if $f_G = f_{G'}$.
\end{theorem}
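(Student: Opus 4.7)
My plan is to separate the theorem into its four assertions and handle them using a common combinatorial device, the bridge/lollipop decomposition already used to prove Theorem~\ref{thm:main}. I would begin with two routine verifications: first, that (M1) and (M2) each preserve the trip permutation $f_G$, which reduces to a small case check of how the zig-zag paths entering the local patch of each move are rerouted; and second, that (M1) and (M2) preserve the face count (an Euler-characteristic bookkeeping). Together these give the ``only if'' direction of statement~(4) and show that reducedness and face count are constant on (M1)/(M2)-equivalence classes.

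For statement~(1), I would argue by induction on a suitable monovariant (say face count plus edge count) that any failure of the three reducedness conditions admits a local simplification combining (M1), (M2) with the reductions (R1)--(R3). A trip that is a closed cycle bounds an innermost disc whose boundary can be collapsed by a spider move into a dipole removable by (R3); a trip reusing an edge yields a bigon that contracts under (M1) together with a degree-two removal; two trips sharing edges in the same order enclose a parallel bigon that contracts similarly. Iteration produces a reduced graph. Statement~(2) then follows: a non-reduced graph strictly reduces its face count in its extended equivalence class, while reduced graphs in the same extended class will turn out to have equal face counts by the argument outlined below.

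The heart of the proof---statement~(3) together with the ``if'' direction of~(4)---I would prove simultaneously by induction on $\ell(f_G)$. The base case $\ell(f_G) = 0$ forces $f_G = \id$, which in turn forces $G$ to be a disjoint union of lollipops whose coloring is dictated by $\id$; any two such graphs are related by trivial (M2) adjustments. For the inductive step, I would invoke the bridge/lollipop lemma underlying Theorem~\ref{thm:main}: in any reduced $G$ with $f_G \ne \id$, one can, after applying (M1)/(M2), exhibit an ``uncrossable bridge'' adjacent to a pair $(i, i{+}1)$ of boundary vertices whose trips cross, and removing that bridge produces a reduced graph $G^{-}$ with $f_{G^{-}} = f_G \cdot s_i$ of strictly smaller length. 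Given two reduced graphs $G, G'$ with common $f_G$, I would transport a removable bridge of $G'$ via (M1)/(M2) into the position of a removable bridge of $G$, apply the inductive hypothesis to $G^{-}$ and $(G')^{-}$, and reattach to conclude.

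The principal obstacle I anticipate is this ``bridge mobility'' step: showing that in any reduced graph a removable bridge at a chosen boundary location can be produced using only (M1) and (M2), so that bridges in two different reduced representatives of a class can be aligned. This requires a careful analysis of how the spider move propagates crossings of trips near the boundary, and is precisely why no further moves beyond (M1) and (M2) are needed. Once this technical step is in place, the full theorem assembles cleanly: (3) and the backward direction of (4) from the induction; and the remaining forward direction of (2) from invariance of face count under (M1)/(M2) combined with the strict reduction established in the proof of (1).
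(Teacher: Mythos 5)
The paper does not prove Theorem~\ref{thm:reduced}: it explicitly states ``We omit the proof of the following important result'' and attributes it to Postnikov~\cite{Pos}, noting a more recent proof of part of it by Oh and Speyer~\cite{OS}. So there is no argument in the paper to compare against directly.

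Assessing your proposal on its own terms, the overall architecture --- invariance of $f_G$ and of face count under (M1)/(M2), reachability of a reduced representative, then an induction that peels off bridges --- has the right shape, and it matches the analogy the paper itself draws with the Matsumoto--Tits theorem for reduced words. But the ``bridge mobility'' step that you flag as the principal obstacle is not a technical footnote to be filled in later: it \emph{is} the theorem. You assert that any reduced $G$ with $f_G \ne \id$ has, after (M1)/(M2) adjustments, a removable bridge at a prescribed boundary position which can be aligned with a bridge in a second reduced representative of the same class. A reduced plabic graph need not visibly exhibit a bridge near any particular boundary pair, and producing and transporting one requires a global structure theory of the zig-zag paths; this is exactly where the cited proofs do all the work, via delicate combinatorics of trips, faces, and weak separation. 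Without that lemma the induction does not close, and the argument as written has a genuine gap at its center.

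There is also a concrete error in the inductive setup. By the paper's conventions (Theorem~\ref{thm:main}(4)), $\ell(f)$ is the \emph{codimension} of $\Pi_f$; so $\ell(\id) = 0$ corresponds to the top-dimensional cell, whose reduced graph is a large grid-type graph, not a lollipop, while the lollipop graphs realize the zero-dimensional cells $e_I$ with $\ell(t_I) = k(n-k)$, the \emph{maximum} length. Moreover, removing a bridge at $i$ sends $f_G$ to $f_G s_i$ with $\ell(f_G s_i) = \ell(f_G)+1$, which is strictly \emph{larger}, not smaller. Your base case and the claimed direction of your monovariant are both reversed; the induction should run on decreasing dimension (equivalently, decreasing face count), with the lollipop graphs as the base case. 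This is fixable, but together with the hand-waved treatment of closed-cycle trips in statement~(1) --- where a cycle trip can bound a disc with arbitrary interior structure --- it suggests the details were not checked against the paper's own conventions or against the cited sources.
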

%

Theorem \ref{thm:reduced} is an analogue of the well-known fact that any two reduced words for a permutation are related by commutation moves and braid moves.  Another proof of (part of) Theorem \ref{thm:reduced} appears in the recent work of Oh and Speyer \cite{OS}.

\subsection{Matroids and positroids}\label{sec:Schubmat}
Some basic facts about matroids will be reviewed in Section \ref{sec:matroids}.  For now, we will think of matroids as collections of $k$-element subsets, called bases, satisfying the exchange axiom.

If $X \in \Gr(k,n)$ we define 
\begin{equation}\label{eq:matroid}
\M_X = \left\{I \in \binom{[n]}{k} \mid \Delta_I(X) \neq 0\right\}
\end{equation}
to be the matroid of $X$.

Let $\S_I \coloneqq \{J \in \binom{[n]}{k} \mid I \leq J\}$ be the Schubert matroid with minimal element $I$.  Let $\S_{I,a} \coloneqq \{J \in \binom{[n]}{k} \mid I \leq_a J\}$ be a cyclically rotated Schubert matroid.  We leave as an exercise for the reader to check that these are indeed matroids.

Given $X \in \Gr(k,n)$ we write $X \in \oX_I$ if $I$ is the lexicographically minimal subset such that $\Delta_I(X) \neq 0$ (we will define the Schubert cell $\oX_I$ and the Schubert variety $X_I$ in Section \ref{sec:Schub}).  The following result is one version of the greedy property of matroids.

\begin{lemma}
If $X \in \oX_I$ then $\M_X \subset \S_I$.
\end{lemma}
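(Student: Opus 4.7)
The statement is the classical greedy property of matroids applied to $\M_X$, which (by Cauchy–Binet style reasoning reviewed earlier in the text) is indeed a matroid. I will show that among the bases of any matroid $\M \subseteq \binom{[n]}{k}$, the lexicographically minimal basis $I = \{i_1 < \cdots < i_k\}$ satisfies $I \le J$ componentwise for every basis $J = \{j_1 < \cdots < j_k\} \in \M$. Applied to $\M = \M_X$, this is exactly $\M_X \subseteq \S_I$.

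The argument is by induction on $r$, showing $i_r \le j_r$. For $r=1$: since $I$ is lex-minimal, $i_1$ is the smallest element appearing in any basis of $\M_X$, so $i_1 \le j_1$. For the inductive step, assume $i_s \le j_s$ for $s<r$. Consider the two independent sets
\[
A = \{i_1,\ldots,i_{r-1}\} \quad\text{and}\quad B = \{j_1,\ldots,j_r\}.
\]
Since $|B| > |A|$, the matroid augmentation axiom produces an element $b \in B \setminus A$ with $A \cup \{b\}$ independent. By construction $b \le j_r$.

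Now I invoke the greedy characterization of the lex-minimal basis: extending $A$ to the lex-minimal basis, the next element picked, $i_r$, is the smallest $x$ such that $A \cup \{x\}$ is independent (equivalently, is extendable to a basis, by another application of augmentation). The element $b$ is a candidate, so $i_r \le b \le j_r$. This completes the induction, hence $I \le J$ and $J \in \S_I$.

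The only mildly subtle point is the greedy characterization itself, which I would justify briefly: if the lex-minimal basis $I$ failed to satisfy $A \cup \{i_r\}$ minimal among independent extensions, one could exchange to produce a lex-smaller basis, a contradiction. Everything else is a direct use of the matroid axioms; there is no real obstacle, and the lemma is really a bookkeeping exercise unpacking that ``lex-minimal'' implies ``componentwise minimal'' in $\binom{[n]}{k}$.
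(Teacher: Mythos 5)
Your proof is correct. The paper states the lemma without proof, labeling it ``one version of the greedy property of matroids,'' and the standard augmentation argument you give is exactly what is meant: the only place you compress is the greedy characterization that $i_r$ is the least $x$ with $A\cup\{x\}$ independent, and your sketched justification does close this gap, since any smaller independent extension $A\cup\{x\}$ augments inside $I\cup\{x\}$ to a basis $I'=(I\cup\{x\})\setminus\{i_t\}$ with $t\ge r$, and $x<i_r\le i_t$ makes $I'$ lexicographically smaller than $I$, a contradiction.
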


  If $X \in \Gr(k,n)_{\geq 0}$, then we call $\M_X$ a \defn{positroid}.  Denote the set of positroids by $\PP(k,n)$.  Given a positroid $\M \in \PP(k,n)$, we let the positroid cell $\Pi_{\M, >0}$ be
$$
\Pi_{\M, >0} \coloneqq \{X \in \Gr(k,n)_{\geq 0}\mid \M_X = \M\}.
$$
Given a positroid $\M \in \PP(k,n)$, we obtain a Grassmann necklace $\I(\M)$ defined by 
\begin{equation}\label{eq:IM}
I_a = \min_{\leq a} \{J \in \M\}
\end{equation}
where $\min_{\leq a}$ is the lexicographical minimum with respect to the cyclic order $\leq a$ on $[n]$.  We also define the bounded affine permutation $f_M \in \Bound(k,n)$ by $\I(f) = \I(\M)$.


\subsection{Adding Bridges}
Let $G$ be a planar bipartite graph.  We define the operation of \defn{adding a bridge at $i$}, black at $i$ and white at $i+1$.  It modifies a bipartite graph near the boundary vertices $i$ and $i+1$:
\begin{center}
\begin{tikzpicture}
\draw (1,0) -- (4,0);
\draw (2,0) -- (2,2);
\draw (3,0) -- (3,2) ;
\node at (2,-0.2) {$i+1$};
\node at (3,-0.2) {$i$};

\draw[->] (4.5,1) -- (5.5,1);
\begin{scope}[{shift={(5,0)}}]
\draw (1,0) -- (4,0);
\draw (2,0) -- (2,2);
\draw (3,0) -- (3,2) ;
\draw (2,1) -- (3,1);
\filldraw [white] (2,1) circle (0.1cm);
\filldraw [black] (3,1) circle (0.1cm);
\draw (2,1) circle (0.1cm);
\node at (2,-0.2) {$i+1$};
\node at (3,-0.2) {$i$};
\node at (2.5,1.2) {$t$};
\end{scope}
\end{tikzpicture}
\end{center}

The bridge edge is the edge labeled $t$ in the above picture.  Note that in general this modification might create a graph that is not bipartite -- for example, if in the original graph $i$ is connected to a black vertex.  However, by adding valent two vertices using the local move (M2), we can always assume that we obtain a bipartite graph.
There is an operation of ``adding a bridge at $i$, white at $i$ and black at $i+1$", as well.
%
%

Adding a bridge is the network analogue of multiplication by the Chevalley generators $x_i(a)$ and $y_i(b)$ of Section \ref{sec:TP}.

\begin{lemma}\label{lem:networkbridge}
Let $N$ be a network.  Now let $N'$ be obtained by adding a bridge with edge weight $a$ from $i$ to $i+1$ which is white at $i$ and black at $i+1$.  Then the boundary measurements change as follows:
$$
\Delta_I(N') = \begin{cases} 
\Delta_I(N) + a\Delta_{I - \{i+1\} \cup \{i\}}(N) & \mbox{if $i+1 \in I$ but $i \notin I$} \\
\Delta_I(N) & \mbox{otherwise.}
\end{cases}
$$
Thus $X(N') = X(N) \cdot x_i(a)$.

If $N''$ is obtained by adding a bridge, black at $i$ and white at $i+1$, then 
$$
\Delta_I(N') = \begin{cases} 
\Delta_I(N) + a\Delta_{I - \{i\} \cup \{i+1\}}(N) & \mbox{if $i \in I$ but $i+1 \notin I$} \\
\Delta_I(N) & \mbox{otherwise.}
\end{cases}
$$
Thus $X(N'') = X(N) \cdot y_i(a)$.
\end{lemma}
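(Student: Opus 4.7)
The plan is to prove the stated formula for $\Delta_I(N')$ by directly enumerating the almost perfect matchings of $N'$, and then to recognise the result as the Pl\"ucker-coordinate effect of right multiplication by $x_i(a)$, which gives $X(N') = X(N) \cdot x_i(a)$. The statement for $N''$ will follow by the same argument with the roles of $i$ and $i+1$ exchanged.

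Let $e_i$ (resp.\ $e_{i+1}$) denote the boundary edge of $N$ at $i$ (resp.\ $i{+}1$), with other endpoint $u_i$ (resp.\ $u_{i+1}$). The bridge subdivides $e_i$ by a new interior white vertex $w$, subdivides $e_{i+1}$ by a new interior black vertex $b$, and joins $w$ to $b$ by a new edge of weight $a$. Choose weights so that the two ``outer'' edges from $i$ to $w$ and from $i{+}1$ to $b$ have weight $1$ and the two ``inner'' edges from $w$ to $u_i$ and from $b$ to $u_{i+1}$ inherit the weights of $e_i$ and $e_{i+1}$; any other choice is gauge-equivalent at $w, b$ (Section~\ref{sec:dimer}) and gives the same boundary measurements. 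Note that since each boundary vertex has colour opposite to its (unique) interior neighbour, the boundaries $i$ and $i+1$ flip colour in passing from $N$ to $N'$.

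Next, I would classify every almost perfect matching $\Pi'$ of $N'$ by how the two new degree-$3$ vertices are matched: either the bridge is used, or $w$ is matched to one of $\{i, u_i\}$ and $b$ to one of $\{i+1, u_{i+1}\}$, giving four non-bridge cases. In each non-bridge case there is a weight-preserving bijection $\Pi' \leftrightarrow \Pi$ with almost perfect matchings $\Pi$ of $N$ in which $e_i \in \Pi$ precisely when $w$ is matched to $u_i$ in $\Pi'$ (and analogously for $e_{i+1}$); the colour flip at boundary $i$ exactly trades ``boundary $i$ used in $N'$'' for ``$e_i \notin \Pi$'' in the rule defining the boundary subset, so that $I(\Pi') = I(\Pi)$. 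In the bridge case both $u_i$ and $u_{i+1}$ are forced to be matched away from $e_i, e_{i+1}$, the bijection acquires an extra weight factor $a$, and $I(\Pi')$ differs from $I(\Pi)$ by swapping $i$ and $i+1$.

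Summing over matchings then yields the formula: for each $I$, exactly one of the four non-bridge cases contributes (according to $I \cap \{i, i+1\}$), producing $\Delta_I(N)$; the bridge case contributes an additional $a\,\Delta_{I \setminus \{i+1\} \cup \{i\}}(N)$ iff $i \notin I$ and $i+1 \in I$. On the algebraic side, right multiplication by $x_i(a) = I_n + a\,e_{i,i+1}$ replaces the $(i{+}1)$-st column of any matrix representative of $X(N)$ by (column $i{+}1$) $+\,a \cdot$ (column $i$); expanding $\Delta_I$ by multilinearity in this column produces the same formula, so $X(N') = X(N) \cdot x_i(a)$, and the $N''$ statement is obtained symmetrically. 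The main technical obstacle is the colour- and weight-bookkeeping in the case analysis: one must verify that the colour flip at boundaries $i, i+1$ precisely cancels the swap between ``boundary vertex used in $N'$'' and ``incident boundary edge used in $N$'' in the rule defining $I$, and that the chosen weight convention makes every non-bridge bijection weight-preserving with no corrective factors.
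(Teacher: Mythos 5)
The paper states Lemma \ref{lem:networkbridge} without proof, so there is no argument in the text to compare against.  Your proof is, in my judgment, essentially correct, and it follows what I believe is the intended route: classify almost perfect matchings of $N'$ according to whether the bridge edge $(w,b)$ is used; in the four non-bridge sub-cases, give a weight-preserving bijection $\Pi' \leftrightarrow \Pi$ with matchings of $N$ under which the boundary subset is preserved; in the bridge case, the bijection picks up the factor $a$ and $I(\Pi')$ is $I(\Pi)$ with $i$ swapped for $i+1$.  Summing over matchings gives the stated formula, and comparing with the multilinear expansion of $\Delta_I(M(I_n + a\,e_{i,i+1}))$ (which produces no extra sign since $i$ and $i+1$ are adjacent) gives $X(N') = X(N) \cdot x_i(a)$.

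Two minor points of care.  First, the claim that ``the boundaries $i$ and $i+1$ flip colour'' is correct only when $u_i$ is black and $u_{i+1}$ is white, which is exactly the situation in which the raw subdivision produces a bipartite graph.  In the opposite case one must first insert valent-two vertices via (M2), as the paper remarks; after that reduction the colour of the boundary vertex need not flip, but the boundary-subset bookkeeping still works out because $X(N)$ is (M2)-invariant.  Your argument is sound but should explicitly say that one reduces to the clean bipartite case by (M2) before invoking the colour flip.  Second, the remark that any other weight convention for the subdivided edges is ``gauge-equivalent at $w,b$'' is a little misleading: gauge scaling at $w$ also rescales the bridge edge, so the meaning of the parameter $a$ in the lemma is tied to your specific normalization (outer edge weight $1$, inner edge weight $\wt(e_i)$).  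Your chosen normalization is the right one — it makes $(w,u_i) \in \Pi'$ correspond to $e_i \in \Pi$ with equal weights and $(i,w) \in \Pi'$ correspond to $e_i \notin \Pi$ with no correction — so this is a presentational caveat rather than a gap.
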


For $i = n$, we should think of $x_n(a)$ (resp. $y_n(a)$ as the operation obtained from $x_1(a)$ (resp. $y_1(a)$) by conjugating by the generator of the $\Z/n\Z$ action on $\Gr(k,n)$.

\begin{remark}
Thinking of adding bridges as the $\GL(n)_{\geq 0}$ action on $\Gr(k,n)_{\geq 0}$ breaks the cyclic symmetry of planar bipartite graphs (the operations $x_n(a)$ and $y_n(a)$ do not come from elements of $\GL(n)_{\geq 0}$).  It is more natural to consider adding bridges to be the action of the totally nonnegative part of the polynomial loop group $\GL_n(\R[t,t^{-1}])$ on $\Gr(k,n)_{\geq 0}$.  In \cite{LPasha1,LPasha2}, the analogue of Theorems \ref{thm:LW} and \ref{thm:LGV} are established for the polynomial loop group.  In particular, elements $g \in \GL_n(\R[t^{-1},t])_{\geq 0}$ are represented by networks on a cylinder.  The action of $\GL_n(\R[t^{-1},t])_{\geq 0}$ on $\Gr(k,n)_{\geq 0}$ corresponds to gluing a cylinder to a disk along one boundary of the cylinder, and thus obtaining a disk.  I expect there to be rich generalizations of the topics discussed here to networks on surfaces; see \cite{GSVsurface,LPashasurface,GK}.
\end{remark}

\subsection{Adding a lollipop}
We also need the operation of \defn{adding a lollipop}, which can be either white or black.  This inserts a new boundary vertex connected to an interior leaf.  The new boundary vertices are then relabeled:
\begin{center}
\begin{tikzpicture}
\draw (1,0) -- (5,0);
\draw (2,0) -- (2,1);
\draw (4,0) -- (4,1) ;
\node at (2,-0.2) {$i+1$};
\node at (4,-0.2) {$i$};

\draw[->] (5.5,0.5) -- (6.5,0.5);
\begin{scope}[{shift={(6,0)}}]
\draw (1,0) -- (5,0);
\draw (2,0) -- (2,1);
\draw (4,0) -- (4,1) ;
\draw (3,0) -- (3,0.8);
\filldraw [black] (3,0.8) circle (0.1cm);
\node at (1.8,-0.2) {$(i+2)'$};
\node at (3.1,-0.2) {$(i+1)' $};
\node at (4,-0.2) {$i'$};
\end{scope}
\end{tikzpicture}
\end{center}

\subsection{Reduction of TNN Grassmann cells}

Let $X \in \Gr(k,n)_{\geq 0}$.  Suppose $f_X$ has a fixed point $f_X(i) = i$.  Then by \eqref{eq:fX}, the $i$-th column $v_i$ of any representative of $X$ must be the 0 vector.  We have a projection map $p_i: \R^n \to \R^{n-1}$ removing the $i$-th coordinate.

\begin{lemma}\label{lem:proj1}
The projection map induces a bijection between $\{X \in \Gr(k,n)_{\geq 0} \mid f_X(i) = i\}$ and $\Gr(k,n-1)_{\geq 0}$.
\end{lemma}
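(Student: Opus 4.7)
The plan is to show the projection is a bijection by direct verification, using the characterization of the bounded affine permutation $f_X$ in terms of column dependencies from equation \eqref{eq:fX}.

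First I would unpack the hypothesis. By \eqref{eq:fX}, the condition $f_X(i) = i$ means $v_i \in \spn(v_{i+1},\ldots,v_{i-1+n}) \cap \spn(\emptyset)$, i.e., the $i$-th column of any matrix representative of $X$ is the zero vector (as is already noted right after \eqref{eq:fX}). Conversely, if $v_i = 0$ for some representative, then $f_X(i) = i$.  So the source of the claimed bijection is exactly the set of $X \in \Gr(k,n)_{\geq 0}$ which admit a matrix representative with zero $i$-th column.

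Next I would verify that $p_i$ induces a well-defined map to $\Gr(k,n-1)_{\geq 0}$. For $X$ with $v_i = 0$, the row span lies entirely in the hyperplane $\{x_i = 0\} \subset \R^n$, on which $p_i$ acts as an isomorphism onto $\R^{n-1}$; thus $p_i(X)$ is a well-defined $k$-plane. Concretely, if $M$ represents $X$ with its $i$-th column zero, deleting that column yields a $k \times (n-1)$ matrix $M'$ representing $p_i(X)$, and $M'$ still has rank $k$. Let $\sigma_i: [n-1] \to [n]\setminus\{i\}$ be the order-preserving bijection. Then for any $J \in \binom{[n-1]}{k}$ we have the identity of Plücker coordinates
\[
\Delta_J(p_i(X)) = \Delta_{\sigma_i(J)}(X),
\]
simply because deleting a zero column does not change any of the surviving maximal minors (up to reindexing). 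Since all Plücker coordinates $\Delta_I(X)$ with $i \notin I$ are nonnegative (and those with $i \in I$ are zero), all Plücker coordinates of $p_i(X)$ are nonnegative, so $p_i(X) \in \Gr(k,n-1)_{\geq 0}$.

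For injectivity, suppose $p_i(X) = p_i(Y)$ with $v_i(X) = v_i(Y) = 0$. Then we can choose matrix representatives $M$ of $X$ and $N$ of $Y$, each with zero $i$-th column, whose other columns are the columns of a common representative of $p_i(X) = p_i(Y)$. Hence $M = N$ and $X = Y$. For surjectivity, given $X' \in \Gr(k,n-1)_{\geq 0}$ with representative $M'$, insert a zero column at position $i$ to form a $k\times n$ matrix $M$; this has rank $k$ and its maximal minors are either zero (if $i$ is in the column set) or equal to corresponding maximal minors of $M'$, hence all nonnegative. The resulting $X \in \Gr(k,n)_{\geq 0}$ has $v_i(X) = 0$ and projects to $X'$.

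There is essentially no main obstacle here; the only thing to be careful about is the reindexing of Plücker coordinates under $\sigma_i$, and the verification that $f_X(i) = i$ is genuinely equivalent to the vanishing of the $i$-th column (which is already recorded after \eqref{eq:fX}). Everything else is a direct column-deletion argument.
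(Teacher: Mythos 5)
Your proof is correct and follows the same approach the paper implicitly takes: the paper records no explicit proof of Lemma \ref{lem:proj1} (it is treated as immediate after the observation, stated just before the lemma, that $f_X(i)=i$ forces $v_i=0$), and your column-deletion argument is exactly the direct verification one would supply. The Pl\"ucker-coordinate identity $\Delta_J(p_i(X)) = \Delta_{\sigma_i(J)}(X)$ you give is the key technical point making well-definedness and total nonnegativity transparent, and the injectivity/surjectivity checks are sound.
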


Now suppose $f_X$ satisfies $f_X(i) = i+n$.  Then by \eqref{eq:fX}, the $i$-th column $v_i$ of any representative of $X$ is not in the span of the other columns.  Treating $X$ as a $k$-dimensional subspace of $\R^n$, we have that $p_i(X)$ is a $(k-1)$-dimensional subspace of $\R^n$.  

\begin{lemma}\label{lem:proj2}
The projection map gives a bijection between $\{X \in \Gr(k,n)_{\geq 0} \mid f_X(i) = i+n\}$ and $\Gr(k-1,n-1)_{\geq 0}$.
\end{lemma}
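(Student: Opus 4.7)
The hypothesis $f_X(i) = i+n$ combined with \eqref{eq:fX} means that $v_i$ is linearly independent from the remaining columns $\{v_j : j \neq i\}$, equivalently that $e_i \in X$ (there exists a row vector $c \in \R^k$ with $cv_j = 0$ for $j \neq i$ and $cv_i = 1$, so $cM = e_i^T$). The strategy is to realize $p_i(X)$ as the quotient $X/\R e_i$ sitting inside $\R^n/\R e_i \cong \R^{n-1}$, and to invert this by inserting an $e_i$-direction back in. The main subtlety is that nonnegativity of Pl\"ucker coordinates must be tracked carefully, as the naive projection does not preserve signs uniformly.

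First I would choose a representative matrix $M$ of $X$ whose $k$-th row is $e_i^T$ and whose $i$-th column is $e_k$; this is possible by row reduction using the independence of $v_i$. Then $p_i(M)$ has last row zero, and $p_i(X)$ is the row span of the $(k-1) \times (n-1)$ submatrix $M'$ obtained by deleting the $k$-th row and $i$-th column of $M$. For $J \in \binom{[n-1]}{k-1}$ identified with $\tilde J \subset [n]\setminus\{i\}$, expanding the $k \times k$ minor along column $i$ yields
$$
\Delta_{\tilde J \cup \{i\}}(X) = (-1)^{k+a(J)}\,\Delta_J(p_i(X)),
$$
where $a(J)$ is the position of $i$ in the sorted set $\tilde J \cup \{i\}$. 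Because this sign depends on $J$, the projection $p_i$ must be composed with the diagonal sign twist $D = \diag(1,\ldots,1,-1,\ldots,-1)$ on $\R^{n-1}$ that flips signs from position $i$ onwards; one then checks that the factors $(-1)^{k+a(J)}$ and $(-1)^{|\{j \in J : j \geq i\}|}$ cancel, so $\Delta_J(p_i(X)\cdot D) = \Delta_{\tilde J \cup \{i\}}(X) \geq 0$, giving a well-defined map to $\Gr(k-1,n-1)_{\geq 0}$.

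For the inverse, given $Y \in \Gr(k-1,n-1)_{\geq 0}$ with representative $N$, I would undo the sign twist on $N$, insert a zero column at position $i$, and append $e_i^T$ as a new bottom row. The resulting $k \times n$ matrix represents an $\tilde X \in \Gr(k,n)$ containing $e_i$, so $f_{\tilde X}(i) = i+n$; every $\Delta_I(\tilde X)$ with $i \notin I$ vanishes (each term of the minor hits the zero column) while the remaining ones recover $\Delta_J(Y) \geq 0$ by the same expansion, so $\tilde X \in \Gr(k,n)_{\geq 0}$. That the two constructions are mutually inverse on normalized matrix representatives is then a direct verification, and combined with Lemma \ref{lem:proj1}-style uniqueness of the normalization this gives the bijection. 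The principal obstacle is the sign bookkeeping: without the diagonal twist $D$, the Pl\"ucker coordinates of $p_i(X)$ carry alternating signs depending on the position of $i$ in the basis, so the bijection is really implemented by the sign-twisted projection rather than by $p_i$ alone.
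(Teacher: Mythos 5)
Your proof is correct, but it takes a genuinely different route from the paper's. The paper uses the $\Z/n\Z$ action $\chi$ (which moves the last column to the front with a sign $(-1)^{k-1}$) to reduce to the case $i=1$; after normalizing the representative matrix so that its first row is $(1,0,\ldots,0)$ and its first column is $(1,0,\ldots,0)^T$, the matrix is in block form $\begin{pmatrix}1&0\\0&M'\end{pmatrix}$ and the expansion along column $1$ has sign $(-1)^{1+1}=+1$, so no sign twist is needed. The cyclic rotation has absorbed all the sign bookkeeping into the $(-1)^{k-1}$ factor built into $\chi$. Your approach, by contrast, handles an arbitrary $i$ head-on and discovers that the raw projection $p_i$ does \emph{not} send $\Gr(k,n)_{\geq 0}$ into $\Gr(k-1,n-1)_{\geq 0}$ unless $i=1$ (a fact the paper's terse ``it is not hard to see'' gloss elides); you fix this with the diagonal twist $D$, and your sign computation $(-1)^{k+a(J)}\cdot(-1)^{|\{j\in J:\, j\geq i\}|}=1$ is correct. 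The paper's approach is shorter and structural (it exploits the cyclic symmetry that is a central theme of the whole theory), while yours is more explicit and would be preferable if one actually needed to compute with coordinates for a specific $i\neq 1$. One small slip in your write-up: for $\Delta_I(\tilde X)$ with $i\notin I$, the vanishing comes from the fact that the appended bottom row $e_i^T$ restricts to the zero row on the columns $I$, not from a zero column (the zero column is column $i$, which is precisely the one excluded). This does not affect the correctness of the argument.
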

\begin{proof}
By cyclic rotation we assume that $i = 1$.  By left multiplying by $g \in \GL(k,\R)$, we may assume that the first column is $(1,0,\ldots,0)^{T}$ and that the first row is $(1,0,\ldots,0)$.  Removing the first row and column gives a $(k-1) \times (n-1)$ matrix, representing a point in $\Gr(k-1,n-1)_{\geq 0}$.  It is not hard to see that this is a bijection.
\end{proof}



We now give a bridge (or Chevalley generator) reduction of TNN points in the Grassmannian.  Let $X$ be a TNN point of the Grassmannian.  Suppose the bounded affine permutation $f_X$ satisfies $i < i+1 \leq f(i) < f(i+1) \leq i+n$.  Then we say that $X$ has a bridge at $i$.

\begin{proposition}\label{prop:reduce}
Suppose $X \in \Gr(k,n)_{\geq 0}$ has a bridge at $i$.  Then the quantity $$a = \Delta_{I_{i+1}}(X)/\Delta_{I_{i+1} \cup \{i\} - \{i+1\}}(X)$$ is positive and well defined, and $X' = X \cdot x_i(-a) \in \Gr(k,n)_{\geq 0}$ has a positroid strictly smaller than $\M_X$.  We also have $f_{X'} = f_{X}s_i$.
\end{proposition}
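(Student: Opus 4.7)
The operation $X \mapsto X \cdot x_i(-t)$ replaces column $v_{i+1}$ by $v_{i+1} - t v_i$ and leaves the other columns fixed, so by Lemma \ref{lem:networkbridge}, $\Delta_I(X \cdot x_i(-t)) = \Delta_I(X) - t \, \Delta_{I - \{i+1\} \cup \{i\}}(X)$ when $i+1 \in I$ and $i \notin I$, and is unchanged otherwise. My plan is to show that the supremum of $t \geq 0$ for which $X \cdot x_i(-t)$ remains totally nonnegative equals the value $a$ in the statement, with the Pl\"ucker coordinate $\Delta_{I_{i+1}}$ providing the binding constraint; this will simultaneously yield $X' \in \Gr(k,n)_{\geq 0}$, the positivity of $a$, and the strict containment of positroids.

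My first task is to verify that $J_* \coloneqq I_{i+1} \cup \{i\} - \{i+1\}$ is a well-defined $k$-subset and that $\Delta_{J_*}(X) > 0$. The bridge condition $i+1 \leq f(i) < f(i+1) \leq i+n$ together with the Grassmann necklace recurrence $I_{i+1} = I_i - \{i\} \cup \{f(i) \bmod n\}$ (valid since $f(i) \neq i$) immediately yields $i+1 \in I_{i+1}$ and $i \notin I_{i+1}$. To prove $\Delta_{J_*}(X) > 0$, equivalently $J_* \in \M_X$, I would appeal to Oh's theorem (Theorem \ref{thm:oh}), which reduces the claim to verifying $I_a \leq_a J_*$ componentwise for each $a \in [n]$. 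The cases $a \in \{i, i+1\}$ are direct consequences of sorting using the explicit form of $I_i$, while the remaining cases combine $I_a \leq_a I_{i+1}$ with a careful analysis of how the swap $i+1 \mapsto i$ interacts with the cyclic order $\leq_a$ under the bridge condition.

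The crux is the ratio inequality
$$\frac{\Delta_I(X)}{\Delta_{I - \{i+1\} \cup \{i\}}(X)} \;\geq\; \frac{\Delta_{I_{i+1}}(X)}{\Delta_{J_*}(X)}$$
for every $I$ with $i+1 \in I$, $i \notin I$, and $\Delta_{I - \{i+1\} \cup \{i\}}(X) > 0$. When $|I \triangle I_{i+1}| = 2$, writing $I_{i+1} = S \cup \{i+1, a'\}$ and $I = S \cup \{i+1, b\}$ for a $(k-2)$-subset $S$ disjoint from $\{i, i+1, a', b\}$, the three-term Pl\"ucker identity for the four-subset $\{i, i+1, a', b\}$ together with total nonnegativity gives the desired inequality whenever $a'$ and $b$ lie on the same side of $\{i, i+1\}$ in the natural order; the remaining configuration $a' < i < i+1 < b$ requires an additional argument exploiting the minimality of $I_{i+1}$ in $\leq_{i+1}$ to force the competing term in the Pl\"ucker identity to vanish. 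For $|I \triangle I_{i+1}| > 2$, I would induct using matroid exchange inside $\M_X$ to produce intermediate bases with successively smaller symmetric difference from $I_{i+1}$.

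Granting the ratio inequality, $\Delta_I(X') \geq 0$ for every $I$, so $X' \in \Gr(k,n)_{\geq 0}$; by construction $\Delta_{I_{i+1}}(X') = 0$, so $I_{i+1} \in \M_X \setminus \M_{X'}$ witnesses $\M_{X'} \subsetneq \M_X$. For the identity $f_{X'} = f_X s_i$, I would compute the cyclic rank matrix $r_{X'}$ directly: it agrees with $r_X$ outside windows containing $i+1$ but not $i$, and the bridge condition $f_X(i) < f_X(i+1)$ determines exactly which ranks in those windows drop by one, producing the rank matrix associated to $f_X s_i$ via Proposition \ref{prop:three}. The main obstacle is the ratio inequality: the three-term Pl\"ucker argument succeeds in the same-side configuration but requires genuinely more work in the interleaved configuration $a' < i < i+1 < b$, where one must either verify directly that the competing Pl\"ucker coordinate vanishes or sidestep the issue by appealing to Lusztig's parametrization of the positroid cell via Chevalley generators, under which $a$ is manifestly the last parameter of a reduced expression.
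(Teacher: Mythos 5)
Your overall plan---reduce the proposition to a ratio inequality asserting that $\Delta_I/\Delta_{I-\{i+1\}\cup\{i\}}$ is minimized at $I = I_{i+1}$, and prove this by induction via matroid exchange and three-term Pl\"ucker relations---is exactly the content of the paper's Lemma~\ref{lem:reduce}. However, there are two concrete problems.

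First, circularity. You propose to show $J_* = I_{i+1}\cup\{i\}-\{i+1\} \in \M_X$ by ``appealing to Oh's theorem (Theorem~\ref{thm:oh}),'' and you offer as a fallback for the hard case ``Lusztig's parametrization of the positroid cell via Chevalley generators.'' Both are forward references that create circular reasoning: Theorem~\ref{thm:oh} is proved in Section~\ref{sec:positroids} using Proposition~\ref{prop:half}, which rests on Corollary~\ref{cor:G} and Theorem~\ref{thm:main}, which in turn invoke Proposition~\ref{prop:reduce}; similarly, the Chevalley-generator (bridge) parametrization of the cell $\Pi_{f,>0}$ is precisely the content of Theorem~\ref{thm:main}, whose proof is built on the very reduction step you are trying to establish. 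The paper instead proves $\Delta_{J_*}>0$ by a single Pl\"ucker identity
$\Delta_{J_*}\Delta_{(i+1)\cup I\cup k} = \Delta_{i\cup I\cup k}\Delta_{I_{i+1}} + \Delta_{I_i}\Delta_{I_{i+2}}$,
whose right side is positive because $\Delta_{I_i}$ and $\Delta_{I_{i+2}}$ are (positive) necklace Pl\"ucker coordinates---no downstream machinery required.

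Second, your handling of the interleaved configuration $a' < i < i+1 < b$ is not adequate. You assert ``the competing term in the Pl\"ucker identity vanishes'' and gesture at the $\leq_{i+1}$-minimality of $I_{i+1}$, but you do not prove vanishing, and in fact the paper does not establish vanishing either. The paper structures the induction of Lemma~\ref{lem:reduce} so that the interleaved sign problem never arises: at each step it applies the exchange lemma with the \emph{specific} element $a = \max\bigl(J\setminus(I\cup j)\bigr)$ and then proves, using the Grassmann-necklace inequalities $I_1 \le_1 1\cup J$ and $I_3 \le_3 1\cup J$, that the exchanged-in element $b$ satisfies $b<a$; this orders the indices in the extra Pl\"ucker term correctly so that it is nonnegative. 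Your induction ``on $|I\triangle I_{i+1}|$ using matroid exchange'' does not specify which element to swap, and in your base case $|I\triangle I_{i+1}|=2$ there is no freedom at all, so you cannot make the choice that the paper relies on. Without that choice, or an independent argument that in the interleaved configuration $\Delta_{I-\{i+1\}\cup\{i\}}=0$ (which is what would actually need to hold, since one can check that $I <_{i+1} I_{i+1}$ forces $\Delta_I=0$ there), the base case of your induction is genuinely open.
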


\begin{proof}
Let $v_i$ be the columns of a $k \times n$ matrix which represents $X$.

If $f(i) = i+1$, then by \eqref{eq:fX}, the columns $v_i$ and $v_{i+1}$ are parallel, and since $f(i+1) \neq i+1$ both $v_i$ and $v_{i+1}$ are non-zero.  In this case $a$ is just the ratio $v_{i+1}/v_i$, and $X'$ is what we get by changing the $(i+1)$-st column to $0$.  All the claims follow.

We now assume that $f(i) > i+1$.  For simplicity of notation, assume $i=1$.  Let $f(i) = j$ and $f(i+1) =  k$.
Since $f(i) \notin \{i,i+n\}$, we have $i \in I_i$ and $i \notin I_{i+1}$.  We also have $i+1 \in I_i \cap I_{i+1}$.  We let $I_i = \{i,i+1\} \cup I$, $I_{i+1} = (i+1) \cup I \cup \{j\}$, and $I_{i+2} = I \cup \{j,k\}$ for some $I \subset [n]-\{i,i+1\}$.  Note that if $k = n+i$, then $I_{i+2} = I \cup \{j,i\}$; this immediately gives $\Delta_{i \cup I \cup j} \neq 0$.

Suppose $k \neq n+i$.  Then we have a Pl\"ucker relation
$$
\Delta_{i \cup I\cup j} \Delta_{(i+1) \cup I \cup k} = \Delta_{i \cup I \cup k} \Delta_{(i+1) \cup I \cup j} + \Delta_{i \cup (i+1) \cup I} \Delta_{I \cup j \cup k}
$$
where all subsets are ordered according to $\leq_i$.  (The easiest way to see that the signs are correct is just to take $i = 1$.)  Since the RHS is positive, $\Delta_{i \cup I \cup j} \neq 0$.

Now $X'$ is obtained from $X$ by adding $-a$ times $v_i$ to $v_{i+1}$.  So 
\begin{equation}\label{eq:Delta}
\Delta_J(X') = \begin{cases} \Delta_J(X) -a \Delta_{J -\{i+1\} \cup \{i\}}(X) &\mbox{ if $i+1 \in J$ and $i \notin J$} \\
\Delta_J(X) & \mbox{otherwise.}
\end{cases}
\end{equation}
The formulae above are the minors of this specific representative of $X'$; the Pl\"ucker coordinates of the actual point in the Grassmannian are only determined up to a scalar.  By Lemma \ref{lem:reduce} below, we see that $X' \in \Gr(k,n)_{\geq 0}$, and that $J \in \M_{X'}$ only if $J \in \M_X$.  However, $\Delta_{I_{i+1}}(X') = 0$, so $\M_{X'} \subsetneq \M_X$.  

Finally, let $v'_i$ be the columns for the matrix obtained from $v_i$ by right multiplication by $x'(-a)$.  Then $\spn(v_i) = \spn(v'_i)$ and $\spn(v_i,v_{i+1}) = \spn(v'_i,v'_{i+1})$, so $f_{X'}(r) = f_X(r)$ unless $r \in \{i,i+1\} \mod n$.  But $f_{X'} \neq f_X$ since $\Delta_{I_{i+1}}(X') = 0$.  Thus $f_{X'}$ must be obtained from $f_X$ by swapping the values of $f(i)$ and $f(i+1)$.
\end{proof}

\begin{lemma}\label{lem:reduce}
Let $X \in \Gr(k,n)_{\geq 0}$ be as in Proposition \ref{prop:reduce}, with $f(i) > i+1$.  For simplicity of notation suppose $i = 1$.  
Write $I_2 = 2 \cup I \cup j$.  Suppose $J \subset \{3,\ldots,n\}$ satisfies $1 \cup J \in \M_X$.  Then $\Delta_{1\cup I \cup j}(X)\Delta_{2 \cup J}(X) \geq \Delta_{1 \cup J}(X) \Delta_{2 \cup I \cup j}(X)$.
\end{lemma}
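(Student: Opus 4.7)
I will prove the inequality by reinterpreting it as the statement that the column operation $X\mapsto X':=X\cdot x_1(-a)$, with $a:=\Delta_{I_2}(X)/\Delta_{1\cup I\cup j}(X)>0$, preserves total nonnegativity at the coordinates that actually change. By Lemma~\ref{lem:networkbridge} the only affected Pl\"ucker coordinates are $\Delta_{2\cup J}(X') = \Delta_{2\cup J}(X) - a\,\Delta_{1\cup J}(X)$ for $J\subset\{3,\ldots,n\}$, and the desired inequality is exactly $\Delta_{2\cup J}(X')\geq 0$.

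The key algebraic tool will be the Grassmann--Pl\"ucker relation (Proposition~\ref{prop:plucker}) with $T=I\cup j$ of size $k-1$ and $T'=\{1,2\}\cup J$ of size $k+1$. Isolating the $c=1$ and $c=2$ contributions yields
\[
\Delta_{1\cup I\cup j}(X)\Delta_{2\cup J}(X) - \Delta_{2\cup I\cup j}(X)\Delta_{1\cup J}(X) = \sum_{c \in J\setminus(I\cup j)} \epsilon_c\, \Delta_{(I\cup j)\cup c}(X)\,\Delta_{\{1,2\}\cup(J\set
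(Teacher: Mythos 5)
Your reduction of the inequality to $\Delta_{2\cup J}(X')\geq 0$ (with $X'=X\cdot x_1(-a)$) is the right interpretation of what the lemma asserts, but the single Grassmann--Pl\"ucker expansion with $T=I\cup j$ and $T'=\{1,2\}\cup J$ does not by itself give a sum of nonnegative quantities. After sorting, the coefficient $\epsilon_c$ of $\Delta_{(I\cup j)\cup c}\,\Delta_{\{1,2\}\cup(J\setminus c)}$ is $(-1)^{k+1+p(c)+\sigma(c)}$, where $p(c)$ is the position of $c$ among the sorted elements of $J$ and $\sigma(c)=\#\{t\in I\cup j:\ t>c\}$. This sign genuinely depends on $c$: already for $k=3$ (so $|I\cup j|=|J|=2$), $\epsilon_{m_1}=+1$ iff exactly one element of $I\cup j$ exceeds $m_1$, while $\epsilon_{m_2}=+1$ iff an even number does, and these two conditions are easily incompatible. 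So nonnegativity of the right-hand side does not follow unless you additionally prove that for every $c$ with $\epsilon_c=-1$ the corresponding Pl\"ucker product vanishes, i.e.\ $(I\cup j)\cup c\notin\M_X$ or $\{1,2\}\cup(J\setminus c)\notin\M_X$. That vanishing is a real positroid fact resting on the Grassmann-necklace inequalities $I_1\leq(1\cup J)$, $I_2\leq_2(1\cup J)$, $I_3\leq_3(1\cup J)$ inherited from Proposition~\ref{prop:reduce}, none of which your sketch invokes; the claim is false for an arbitrary point of $\Gr(k,n)$, so those constraints cannot be avoided.

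The paper's proof sidesteps this all-at-once sign bookkeeping by inducting on $r=|(I\cup j)\setminus J|$. The exchange axiom produces $L=J\setminus\{a\}\cup\{b\}$ with $1\cup L\in\M_X$, the necklace inequalities force $b<a$, and then a single three-term Pl\"ucker identity comparing $\Delta_{1\cup L}\Delta_{2\cup J}$ with $\Delta_{1\cup J}\Delta_{2\cup L}$ produces exactly one extra term whose indices are \emph{already} correctly sorted precisely because $b<a$, so that term is nonnegative with no further work; combining with the inductive hypothesis for $L$ finishes the step. Both routes ultimately go through the positroid structure, but the induction packages the sign issue into the single inequality $b<a$ rather than into a separate vanishing claim for every bad $c$. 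To complete your argument you would have to supply that vanishing argument, which is where the real content of the lemma lies and which is currently missing.
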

\begin{proof}
Let $\M$ be the positroid of $X$.  
We let $I_1 = \{1,2\} \cup I$, $I_2 = 2 \cup I \cup \{j\}$, and $I_3 = I \cup \{j,k\}$, as in the proof of Proposition \ref{prop:reduce}. We have already shown in the proof of Proposition \ref{prop:reduce} that $(1 \cup I \cup j) \in \M$.

We proceed by induction on the size of $r = |(I \cup j) \setminus J|$.  The case $r = 0$ is tautological.  So suppose $r \geq 1$.  We may assume that $1 \cup J \in \M$ for otherwise the claim is trivial.  Applying the exchange lemma to $1 \cup J$ the element $a = \max(J \setminus (I \cup j)) \in J$ and the other base $1 \cup I \cup j$, we obtain $L = J -\{a\} \cup \{b\}$ such that $1 \cup L \in \M$.  

We claim that $b < a$.  To see this, note that $I_1 \leq (1 \cup J)$, which implies that $a > I \setminus J$.  So the only way that $b$ could be greater than $a$ is if $b = j$, and $a < j$.  But by assumption we also have $I_3 = I \cup \{j,k\} \leq_3 (1 \cup J)$ with $k \geq_2 j$.  This is impossible since both $k$ and $j$ are greater than $a$, but we have $J \setminus I \subset [3,a]$ -- the only element of $(1 \cup J) \setminus I$ that is greater than $j$ or $k$ in $\leq_3$ order is $1$. Thus $b < a$. 

So by induction we have that $\Delta_{2\cup L}/\Delta_{1 \cup L} \geq \Delta_{2 \cup I}/\Delta_{1 \cup I}$, where in particular we have $(1 \cup L), (2 \cup L) \in \M$.  It suffices to show that $\Delta_{2\cup J}/\Delta_{1 \cup J} \geq \Delta_{2\cup L}/\Delta_{1 \cup L}$.

We apply the Pl\"ucker relation to $\Delta_{2\cup J} \Delta_{1 \cup L}$, swapping $L$ with $(k-1)$ of the indices in $2\cup J$ to get
$$
\Delta_{1\cup L}\Delta_{2 \cup J}= \Delta_{1 \cup J} \Delta_{2 \cup L} +  \Delta_{12 j_1 j_2 \cdots \hat a \cdots j_{k-1}} \Delta_{\ell_1 \ell_2 \cdots a \cdots \ell_{k-1}} .
$$
We note that  $\ell_1 < \ell_2 < \cdots < a < \cdots < \ell_{k-1}$ is actually correctly ordered, since $L$ is obtained from $J$ by changing $a$ to a smaller number.  So all factors in the above expression are nonnegative.  The claim follows.
\end{proof}
%

\subsection{Network realizability of $\Gr(k,n)_{\geq 0}$}
Let $M_G:(\L_G)_{>0} \to \Gr(k,n)_{\geq 0}$ be the map that takes a network $N$ representing a point in $(\L_G)_{>0}$ to the point $X(N)$.  Let $\Pi_{G,>0}$ denote the image of $M_G$.
\begin{theorem}\label{thm:main}
\
\begin{enumerate}
\item
Every $X \in \Gr(k,n)_{\geq 0}$ is representable by a network $N$. 
\item
The map $\M \mapsto f_\M$ is a bijection between $\PP(k,n)$ and $\Bound(k,n)$.  The map $\M \mapsto \I(\M)$ is a bijection between positroids and Grassmann necklaces.
\item
For each positroid cell $\Pi_{\M,>0}$ there is a reduced bipartite graph $G$ such that $M_G: (\L_G)_{>0} \to \Pi_{G,>0} \coloneqq \Pi_{\M,>0}$ is bijective.  The bounded affine permutation of $G$ is equal to $f_\M$.
\item
$\Pi_\M \simeq \R^d_{>0}$ has dimension equal to $d = k(n-k) - \ell(f_\M)$.
\end{enumerate}
\end{theorem}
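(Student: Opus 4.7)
I plan to prove (1)--(4) simultaneously via a bridge-and-lollipop reduction procedure. Given $X \in \Gr(k,n)_{\geq 0}$ with $f = f_X$, one of the following three moves applies:
\begin{enumerate}
\item[(i)] if $f(i) = i$ for some $i$ (white lollipop index), Lemma~\ref{lem:proj1} produces $X' \in \Gr(k,n-1)_{\geq 0}$;
\item[(ii)] if $f(i) = i+n$ for some $i$ (black lollipop index), Lemma~\ref{lem:proj2} produces $X' \in \Gr(k-1,n-1)_{\geq 0}$;
\item[(iii)] otherwise, there is a bridge index $i$ with $i < i+1 \leq f(i) < f(i+1) \leq i+n$, and Proposition~\ref{prop:reduce} produces $X' = X \cdot x_i(-a) \in \Gr(k,n)_{\geq 0}$ with $\M_{X'} \subsetneq \M_X$ and $f_{X'} = fs_i$.
\end{enumerate}
Cases (i) and (ii) strictly decrease $n$, while case (iii) strictly shrinks $\M_X$, so after finitely many steps the process terminates at a torus-fixed point $e_I$, realized by a disjoint union of lollipops. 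Reversing the reduction (inserting lollipops, or adding a bridge of the explicit positive weight $a$ via Lemma~\ref{lem:networkbridge}) constructs a network $N$ with $X(N) = X$, giving (1).

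\textbf{Parts (2)--(4).} The map $\M \mapsto f_\M$ is the composition $\M \mapsto \I(\M) \mapsto f(\I(\M))$ via \eqref{eq:IM} and Theorem~\ref{thm:Grassorder}, and is therefore injective; surjectivity follows by reversing the reduction, starting from some $e_I$ and following a reduced chain in $\tS_n^k$ to a specified target $f \in \Bound(k,n)$ with arbitrary positive bridge weights. For (3), the graph $G$ produced by the inverse reduction has trip permutation $f_G = f_\M$, since adding a bridge at $i$ right-multiplies the trip permutation by $s_i$ and lollipops produce the correct fixed-point trips, hence $G$ is reduced by Theorem~\ref{thm:reduced}. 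The parametrization $M_G : (\L_G)_{>0} \to \Pi_{\M, >0}$ is surjective by construction, and it is injective because the reduction recovers each bridge weight $a$ as the explicit Pl\"ucker ratio of Proposition~\ref{prop:reduce}. For (4), I induct along the reduction: the base case $f_\M = \id$ is the top cell of dimension $k(n-k)$; a bridge inversion adds one positive parameter and decreases $\ell(f_\M)$ by one, while a lollipop inversion preserves $k(n-k) - \ell(f_\M)$ because the change in $k(n-k)$ is matched by the corresponding change in $\ell$.

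\textbf{Main obstacle.} The critical combinatorial input is the existence of a bridge in case (iii). If $i < f(i) < i+n$ for every $i$, then $f(1) \leq n < n+1 \leq f(n)$, so the window $f(1), f(2), \ldots, f(n)$ cannot be strictly decreasing, and some adjacent pair satisfies $f(i) < f(i+1)$, yielding a bridge. A subtler point is verifying that the graph $G$ built by the inverse reduction has the claimed trip permutation and is reduced; this reduces to a local case analysis tracking how a single zig-zag path is modified by the insertion of a bridge or lollipop. For (4), I also need to pin down the precise $\ell$-change under each lollipop reduction, which amounts to a standard Bruhat-length computation for $\tS_n^k$ relative to the translation $\id$.
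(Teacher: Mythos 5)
Your overall strategy---a double induction via lollipop and bridge reductions, with a bridge index extracted from an adjacent ascent of the window notation---is exactly the paper's route, and your filled-in details (the existence of a bridge from $f(1)\leq n<n+1\leq f(n)$, the $\ell$-bookkeeping for part (4)) are correct. However, there is a genuine gap in your argument for part (2). You assert that $\M\mapsto f_\M$ is injective ``therefore'' because it factors as $\M\mapsto\I(\M)\mapsto f(\I(\M))$; but only the second map is known to be a bijection (Theorem~\ref{thm:Grassorder}). Injectivity of $\M\mapsto\I(\M)$ is precisely the statement that a positroid is determined by its Grassmann necklace, which is Oh's theorem (Theorem~\ref{thm:oh})---and the paper proves that \emph{later}, as a consequence of Theorem~\ref{thm:main}, so you cannot invoke it here. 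The paper closes this gap by observing that the reduction recipe (which of (i)/(ii)/(iii) applies, and at which index) depends only on $f_X$, so all $X$ with $f_X = f$ are represented by networks on a single underlying graph $G(f)$; since $\M_{X(N)}$ depends only on the underlying graph, $\M_X$ is constant on the fiber $\{X : f_X = f\}$, which supplies a well-defined inverse $f\mapsto\M(f)$. You never state that the recursion depends only on $f_X$, and without that, the bijection in (2) is unproved.

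A second, smaller issue is in part (3): you conclude that $G$ is reduced ``by Theorem~\ref{thm:reduced}'' from the equality $f_G = f_\M$. That theorem characterizes reduced graphs by having the minimal number of faces in their move-equivalence class; a non-reduced graph has the same trip permutation as its reduction, so knowing $f_G$ tells you the move class, not reducedness. The correct deduction (and the one the paper uses) runs the other way: you have already shown $M_G:(\L_G)_{>0}\to\Gr(k,n)$ is injective (via recovering bridge weights as Pl\"ucker ratios), and an injective $M_G$ forces $G$ to have the minimal number of faces in its class---hence reduced. So the conclusion is reachable, but not by the route you cite.
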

\begin{proof}
We establish the first statement completely first.  We proceed by induction on $n$, and then by induction on $|\M|$.

Suppose $n = 1$, then $X$ is representable by a network $N$ with a single boundary vertex joined to a single interior vertex, which can be either black or white.  This represents the unique points in $\Gr(0,1)_{\geq 0}$ and $\Gr(1,1)_{\geq 0}$.  This is the base case.

Now suppose $X \in \Gr(k,n)_{\geq 0}$.  If $f_X(i) \in \{i,i+n\}$, then we can apply the reductions of Lemma \ref{lem:proj1} and Lemma \ref{lem:proj2} to get some $X'$ which by induction is represented by a network $N'$.  To obtain $N$ from $N'$ we insert a lollipop (with any edge weight, they are all gauge equivalent) at position $i$.  Note that $f_{X'}$ is determined completely by $f_X$.

Thus we may suppose that $f_X(i) \notin \{i,i+n\}$.  But then we can find some $i$ such that $f_X(i) < f_X(i+1)$ satisfying the conditions of Proposition \ref{prop:reduce}.  Let $X' \in \Gr(k,n)_{\geq 0}$ be the TNN point of Proposition \ref{prop:reduce}.  Then by induction on $\M$, we may assume that $X'$ is represented by a network $N'$.  Let $N$ be the network obtained from $N'$ by adding a bridge between $i$ and $i+1$, white at $i$ and black at $i+1$.  Lemma \ref{lem:networkbridge} then says that $N$ represents $X$.  

Thus every $X \in \Gr(k,n)_{\geq 0}$ is representable by a network $N$.  We note that the entire recursion depends only on $f_X$: we can choose the underlying graph $G$ of $N$ to depend on $f_X$ only.  Thus for each bounded affine permutation $f$, there is a graph $G(f)$ which parametrizes all of $\{X \in  \Gr(k,n)_{\geq 0} \mid f_X = f\}$.  But the matroid of $X(N)$ depends only on $G$ (as long as all edge weights are positive), so we have a bijection between positroids and bounded affine permutations, and in turn Grassmann necklaces.

We note that adding a bridge adds one face and hence one parameter to $(\L_G)_{>0}$.  Adding lollipops do not change the number of faces.  So $(\L_{G(f)})_{>0} \simeq \R_{>0}^d$ where $d$ is the number of bridges used in the entire recursion.  Furthermore, the edge weights of the bridges determine the graph up to gauge equivalence, or, equivalently, these edge weights are coordinates on $(\L_{G(f)})_{>0}$.  But the labels of the bridges are uniquely recovered $X = X(N)$ by the recursive algorithm above.  So the map $M_G: (\L_G)_{>0} \to \Pi_{\M, >0}$ is a bijection, where $G= G(f_\M)$.  By Theorem \ref{thm:reduced}, $G$ is reduced since $M_G: (\L_G)_{>0} \to \Gr(k,n)$ is injective (or the reduced statement can be proved directly).

Finally, we note that the dimension claim is true for $n=1$, and we have $\ell(fs_i) = \ell(f) + 1$ when $f(i) < f(i+1)$.  Now suppose we have $X$ such that $f_X(i) = i$ and $X'$ is obtained by the projection $p_i$.  Then $\{(i,j) \mid i < j \text{ and } f_X(i) > f_X(j)\}=\emptyset$, but $|\{(j,i) \mid j < i \text{ and } f_X(j) > f_
X(i)\}| = k$.  So $\ell(f_X) = \ell(f_{X'}) + k$.  A similar relation holds when $f_X(i) = i+n$.  Thus the formula for the dimension of $\Pi_{\M,>0}$ holds by induction.
\end{proof}


\begin{remark}
There are a number of explicit constructions of graphs $G(f)$ that represent each $f \in \Bound(k,n)$, see \cite{Pos,Kar}.
\end{remark}

Using Theorem \ref{thm:main}, we define the \defn{positroid cell} $\Pi_{f, >0} \coloneqq \Pi_{\M, >0}$, where $f_\M = f$.

\begin{corollary}\label{cor:G}
For any reduced planar bipartite graph $G$, we have
$M_G: (\L_G)_{>0} \to \Pi_{G, >0} = \Pi_{f_G, > 0}$.  
\end{corollary}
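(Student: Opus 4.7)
The plan is to leverage the bridge/lollipop construction from the proof of Theorem \ref{thm:main}, together with the move-equivalence classification of reduced graphs in Theorem \ref{thm:reduced}, to reduce the general reduced graph $G$ to the specific reduced graph $G(f_G)$ built inductively in the proof of Theorem \ref{thm:main}.

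First, I would recall that in the proof of Theorem \ref{thm:main}, for each $f \in \Bound(k,n)$, a specific reduced planar bipartite graph $G(f)$ was produced (by adding bridges and lollipops in sequence) with the properties that $f_{G(f)} = f$ and $M_{G(f)}: (\L_{G(f)})_{>0} \to \Pi_{f,>0}$ is a bijection. Next, given an arbitrary reduced planar bipartite graph $G$ with bounded affine permutation $f = f_G$, Theorem \ref{thm:reduced} tells us that $G$ and $G(f)$ lie in the same move-equivalence class, i.e.\ they are related by a finite sequence of the local moves (M1) and (M2).

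The key observation is that each of the moves (M1) and (M2) preserves the point $X(N) \in \Gr(k,n)$ (as noted in Section \ref{sec:relations}), and by Proposition \ref{prop:GG'} each such move induces a homeomorphism on the spaces of positive connections. Chaining these together along the move-sequence from $G$ to $G(f)$ gives a homeomorphism $(\L_G)_{>0} \simeq (\L_{G(f)})_{>0}$ that intertwines the two boundary-measurement maps: $M_G$ and $M_{G(f)}$ have the same image. Since Theorem \ref{thm:main} identifies that image as $\Pi_{f,>0} = \Pi_{f_G,>0}$, we conclude $\Pi_{G,>0} = \Pi_{f_G,>0}$.

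There is no serious obstacle here; the corollary is essentially a packaging of earlier results. The only subtle point to handle carefully is verifying that the move-equivalence class of $G$ really contains the recursively constructed $G(f)$ used in the proof of Theorem \ref{thm:main}, which is exactly the content of the ``iff'' statement in Theorem \ref{thm:reduced} that two reduced graphs lie in the same move-equivalence class precisely when their bounded affine permutations agree. Once that invocation is made, the rest is a clean combination of the move-invariance of $X(N)$ and Proposition \ref{prop:GG'}.
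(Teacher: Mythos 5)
Your proposal is correct and follows essentially the same route as the paper, whose one-line proof simply invokes the combination of Theorem \ref{thm:main}(3), Proposition \ref{prop:GG'}, and Theorem \ref{thm:reduced}. You have unpacked that same chain of reasoning — reduce $G$ to the recursively built $G(f_G)$ via move-equivalence, use move-invariance of $X(N)$ and the induced homeomorphism on $(\L_G)_{>0}$, then quote the bijection onto $\Pi_{f_G,>0}$ — without introducing any new ideas.
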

\begin{proof}
This follows from combining Theorem \ref{thm:main}(3) with Proposition \ref{prop:GG'} and Theorem \ref{thm:reduced}.
\end{proof}

\begin{theorem}
Suppose $N$ and $N'$ are planar bipartite networks with $X(N) = X(N')$.  Then $N$ and $N'$ are related by local moves and gauge equivalences.  
\end{theorem}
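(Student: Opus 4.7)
The plan is to first reduce each network to a reduced planar bipartite network, then invoke Theorem \ref{thm:reduced} to match up the combinatorial types, and finally use the bijection of Corollary \ref{cor:G} to pin down the weights up to gauge equivalence.

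First I would apply the reductions (R1), (R2), (R3) together with move (M2) repeatedly to both $N$ and $N'$ until no longer applicable, obtaining reduced planar bipartite networks $N_{\text{red}}$ and $N'_{\text{red}}$. Each such operation preserves the point in $\Gr(k,n)$, so $X(N_{\text{red}}) = X(N) = X(N') = X(N'_{\text{red}})$. I would also remove any weight-zero edges at the outset, so that both networks lie in $(\L_G)_{>0}$ for their respective underlying graphs; these removals are instances of (R1)/(R2) after splitting. Let $G_{\text{red}}$ and $G'_{\text{red}}$ be the underlying reduced graphs, and put $X \coloneqq X(N_{\text{red}})$ with $f \coloneqq f_X \in \Bound(k,n)$.

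Next, by Corollary \ref{cor:G}, the image of $M_{G_{\text{red}}}$ is the positroid cell $\Pi_{f_{G_{\text{red}}}, >0}$, and similarly for $G'_{\text{red}}$; since both cells contain $X$ and positroid cells are disjoint (Theorem \ref{thm:main}), we conclude $f_{G_{\text{red}}} = f_{G'_{\text{red}}} = f$. Theorem \ref{thm:reduced} then provides a sequence of (M1), (M2) moves realizing an equivalence of unweighted graphs $G_{\text{red}} = G_0 \to G_1 \to \cdots \to G_m = G'_{\text{red}}$. Now transport the weights: by Proposition \ref{prop:GG'}, each move induces a homeomorphism $(\L_{G_i})_{>0} \simeq (\L_{G_{i+1}})_{>0}$, so starting with $N_{\text{red}} \in (\L_{G_{\text{red}}})_{>0}$ we obtain a chain of networks $N_{\text{red}} = N_0, N_1, \ldots, N_m$ each related by a local move to the next, with $N_m \in (\L_{G'_{\text{red}}})_{>0}$. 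Because each move preserves $X(\cdot)$, we have $X(N_m) = X(N'_{\text{red}})$, and Corollary \ref{cor:G} tells us that $M_{G'_{\text{red}}}$ is a bijection onto $\Pi_{f,>0}$, so $N_m$ and $N'_{\text{red}}$ represent the same class in $(\L_{G'_{\text{red}}})_{>0}$, i.e.\ they are gauge equivalent.

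Concatenating, we obtain the required chain: $N$ is related to $N_{\text{red}}$ by reductions (R1)--(R3) and (M2); $N_{\text{red}}$ is related to $N_m$ by (M1), (M2); $N_m$ is gauge equivalent to $N'_{\text{red}}$; and $N'_{\text{red}}$ is related to $N'$ by the reverse of reductions (R1)--(R3) and (M2). The main obstacle I anticipate is the first paragraph: verifying that the naive reduction procedure terminates at a reduced graph, and in particular that the conditions (1)--(3) in the definition of reduced can each be eliminated by the reductions and moves we have available (for instance, a non-boundary trip cycle may require a combination of (M1) and (R2) to dismantle). The rest of the argument is essentially book-keeping, once Theorem \ref{thm:reduced} and Corollary \ref{cor:G} are in hand.
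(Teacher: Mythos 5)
Your argument follows essentially the same route as the paper: reduce both networks via Theorem \ref{thm:reduced}, match up the reduced graphs using the trip permutation/bounded affine permutation, transport weights along moves (M1), (M2) via Proposition \ref{prop:GG'}, and invoke the bijectivity in Corollary \ref{cor:G} (equivalently Theorem \ref{thm:main}(3)) to get gauge equivalence. The ``obstacle'' you flag in the last paragraph is not actually a gap to fill here --- Theorem \ref{thm:reduced} as stated already asserts that every planar bipartite graph is \emph{move-equivalent} to a reduced one, so you need not establish that a greedy sequence of (R1)--(R3), (M2) alone terminates at a reduced graph; you can simply cite the theorem, as the paper does, to produce some sequence of moves landing on a reduced graph.
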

\begin{proof}
By Theorem \ref{thm:reduced}, we may first replace $N$ and $N'$ by networks whose underlying planar bipartite graphs are reduced, without changing $X(N)$ and $X(N')$.  Again by Theorem \ref{thm:reduced}, we may assume that $N$ and $N'$ and have the same underlying reduced planar bipartite graph $G$, which we may choose to be the graph $G$ in Theorem \ref{thm:main}(3).  Thus Theorem \ref{thm:main}(3) says that $N$ and $N'$ are related by gauge equivalences.
\end{proof}

\section{Positroids and $\Gr(k,n)_{\geq 0}$ as a stratified space}\label{sec:positroids}
%
In this section, we give a number of different descriptions of positroids due to Oh \cite{Oh}, Lam and Postnikov \cite{LPmembrane}, and Ardila, Rincon, and Williams \cite{ARW}.  We also describe the closure partial order on positroid cells, originally determined by Postnikov \cite{Pos} and Rietsch \cite{Rie}.  The description here in terms of Bruhat order is from \cite{KLS}.

\subsection{Closures of positroid cells}
Define $\Pi_{f,\geq 0} \coloneqq \cl(\Pi_{f,> 0})$ to be the closure of $\Pi_{f,>0}$ in the Hausdorff topology on $\Gr(k,n)$ (not to be confused with the Zariski topology that we shall mostly use).

\begin{theorem}\label{thm:partialorder}
Let $f \in \Bound(k,n)$.  Then $\Pi_{f,\geq 0} = \bigsqcup_{g \leq f} \Pi_{g,>0}$.
\end{theorem}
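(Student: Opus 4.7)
The plan is to prove the two inclusions of
\[
\Pi_{f,\geq 0} = \bigsqcup_{g \leq f}\Pi_{g,>0}
\]
separately: the forward inclusion by lower semicontinuity of cyclic rank matrices, and the reverse inclusion by exploiting the bridge parametrization of Theorem \ref{thm:main}(3). Recall that $g \leq f$ in $\Bound(k,n)$ means $g \geq f$ in the Bruhat order on $\tS_n^k$.

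For $\Pi_{f,\geq 0} \subseteq \bigsqcup_{g \leq f}\Pi_{g,>0}$, let $X = \lim_t X_t$ with $X_t \in \Pi_{f,>0}$. For each $i \leq j$, lower semicontinuity of matrix rank yields
\[
r_X(i,j) = \dim\spn(v_i,\ldots,v_j) \leq \liminf_t r_{X_t}(i,j) = r_f(i,j),
\]
so $r_X \leq r_f$ entrywise. By the correspondence between cyclic rank matrices and bounded affine permutations from Proposition \ref{prop:three} (and the argument in the proof of Theorem \ref{thm:Grassorder}), this is equivalent to $f_X \geq f$ in the Bruhat order on $\tS_n^k$, i.e.\ $f_X \leq f$ in $\Bound(k,n)$, so $X \in \Pi_{f_X,>0}$ for some $f_X \leq f$.

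For the reverse inclusion, fix a reduced planar bipartite graph $G = G(f)$ as in Theorem \ref{thm:main}(3). The recursive construction there expresses the general point $X \in \Pi_{f,>0}$ as
\[
X = X_0 \cdot x_{i_1}(a_1)\, x_{i_2}(a_2)\cdots x_{i_r}(a_r),
\]
with $r = k(n-k) - \ell(f)$, where $X_0$ is a torus-fixed lollipop point with bounded affine permutation $f_0$ of length $k(n-k)$, each factor is the appropriate Chevalley generator, and by Proposition \ref{prop:reduce} each bridge addition right-multiplies the bounded affine permutation by $s_{i_j}$ in a length-decreasing manner. This produces a saturated Bruhat chain
\[
f_0 \gtrdot f_0 s_{i_1} \gtrdot \cdots \gtrdot f_0 s_{i_1}\cdots s_{i_r} = f
\]
in $\tS_n^k$. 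Allowing edge weights to reach zero extends the parametrization continuously to $(\L_G)_{\geq 0}\to\Pi_{f,\geq 0}$. Setting some $a_j = 0$ replaces the $j$-th Chevalley factor by the identity, and combined with the forward inclusion the resulting limit point lies in $\Pi_{h,>0}$ for some $h \geq f$ in $\tS_n^k$. By induction on $\ell(g) - \ell(f) \geq 0$ together with closure-taking, it suffices to realize each cover $g \gtrdot f$ in $\tS_n^k$: by the strong exchange property applied to the reduced chain above, such a cover $g$ is obtained by deleting a unique bridge $j$ from the chain, and after $a_j \to 0$ the remaining $r - 1$ positive bridge weights provide a bridge parametrization of $\Pi_{g,>0}$ (up to local moves (M1), (M2) and gauge equivalence, by Proposition \ref{prop:GG'} and Theorem \ref{thm:reduced}). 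Hence $\Pi_{g,>0}\subseteq\overline{\Pi_{f,>0}}$.

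The main obstacle is the bookkeeping in the reverse inclusion: one must verify that (i) every Bruhat cover $g \gtrdot f$ in $\tS_n^k$ is produced by omitting exactly one bridge in the chosen $G(f)$, using the strong exchange property for the affine Coxeter group $W_n$, and (ii) after deleting that bridge the residual network genuinely parametrizes all of $\Pi_{g,>0}$, not merely a proper sub-family. This is the affine Grassmannian analogue of the degeneration argument used for $\GL(n)_{\geq 0}$ underlying Theorem \ref{thm:closure}, and rests on the interplay between the bridge recursion of Theorem \ref{thm:main} and the subword characterization of the affine Bruhat order.
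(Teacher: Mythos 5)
Your forward inclusion $\Pi_{f,\geq 0}\subseteq\bigsqcup_{g\leq f}\Pi_{g,>0}$ via lower semicontinuity of the cyclic rank matrix is correct and is a genuinely different (and arguably cleaner) route than the paper's: the paper establishes the forward inclusion only after first proving Oh's Theorem \ref{thm:oh} and Corollary \ref{cor:Morder}, which themselves rest on Proposition \ref{prop:half}; your rank-semicontinuity argument bypasses all of this, needing only the rank-matrix/Bruhat dictionary that already appears in the sketch proof of Theorem \ref{thm:Grassorder}.

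The reverse inclusion, however, has a real gap, and you have in fact located it yourself in your item (i): it is \emph{not} true that every Bruhat cover $g\gtrdot f$ in $\tS_n^k$ is reached by setting a single $a_j=0$ in a fixed bridge parametrization $X_0\cdot x_{i_1}(a_1)\cdots x_{i_r}(a_r)$, and strong exchange does not repair this. Concretely, for the top cell of $\Gr(1,n)_{\geq 0}$ (the $(n-1)$-simplex) there are $n$ codimension-one positroid cells but only $r=n-1$ bridge parameters, so one facet of the simplex lies ``at infinity'' for any fixed choice of parametrization. More structurally, your chain runs from a single torus-fixed point $f_0=t_I$ down to $f$, but the covers $g\gtrdot f$ in $\Bound(k,n)$ need not lie in the interval $[f,f_0]$ at all (the maximal elements $t_J$ of $\Bound(k,n)$ vary with $J$); and even when $g\leq f_0$, a cover $g=ft$ gives $\ell(ut)\in\{r-1,r,r+1\}$ where $u=s_{i_1}\cdots s_{i_r}$, so the hypothesis $\ell(ut)<\ell(u)$ needed for strong exchange is not automatic. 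There is also a secondary imprecision: the recursion of Theorem \ref{thm:main} interleaves lollipop projections (which are column/row insertions, not Chevalley multiplications) with bridge additions, so the flat formula $X=X_0\cdot x_{i_1}(a_1)\cdots x_{i_r}(a_r)$ with $X_0\in\Gr(k,n)$ a lollipop point needs a commutation/reindexing argument you have not supplied. The paper's Proposition \ref{prop:half} sidesteps all of this by arguing \emph{inside the graph}: for each cover $g\lessdot f$ in $\Bound(k,n)$, the combinatorial description \eqref{eq:fg} of the cover forces two trips in $G(f)$ to cross, and after an (M2) move one can make the shared crossing edge $e$ unique; degenerating the weight of that particular $e$ to zero lands in $\Pi_{g,>0}$. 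The key is that the distinguished edge $e$ is chosen \emph{as a function of the target cover $g$} (possibly after moves), rather than from a fixed set of $r$ bridge parameters, and this is exactly what is needed to reach every facet.
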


We first give a proof of the direction $\supseteq$.  We hope the reader notices the strong similarity with arguments in Bruhat order.
\begin{proposition}\label{prop:half}
We have $\Pi_{f,\geq 0} \supseteq \bigsqcup_{g \leq f} \Pi_{g,>0}$.
\end{proposition}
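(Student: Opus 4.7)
My plan is to parallel the closure argument for $\GL(n)_{\geq 0}$ in Theorem \ref{thm:closure}: parametrize the open cell $\Pi_{f,>0}$ by bridge weights (via Theorem \ref{thm:main}(3)) and obtain the closure by sending subsets of parameters to zero, invoking the Subword Property of Bruhat order. The main technical tool is a ``reverse bridge'' Lemma that reads Proposition \ref{prop:reduce} backwards.

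The Key Lemma I would prove first: for any $h \in \Bound(k,n)$ and simple reflection $s_i \in \tS_n$ with $\ell(hs_i) = \ell(h) - 1$, the element $hs_i$ lies in $\Bound(k,n)$ and $\Pi_{h,>0} \subseteq \overline{\Pi_{hs_i,>0}}$. Boundedness of $hs_i$ is automatic: the inversion $h(i+1) < h(i)$ forced by $\ell(hs_i) = \ell(h) - 1$ combined with $h \in \Bound(k,n)$ gives $i+1 \leq h(i+1) = (hs_i)(i)$ and $h(i) = (hs_i)(i+1) \leq i+n$. For the closure statement, pick $Y \in \Pi_{h,>0}$ and consider the one-parameter family $Y(a) = Y \cdot x_i(a)$ for $a \geq 0$. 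By Lemma \ref{lem:networkbridge}, $Y(a) \in \Gr(k,n)_{\geq 0}$ and $Y(0) = Y$. For $a > 0$, the bridge conditions on $hs_i$ hold automatically by the same calculation, so a direct Pl\"ucker coordinate analysis (or Proposition \ref{prop:reduce} read in reverse with parameter $a_X = a$) identifies $f_{Y(a)} = hs_i$, whence $Y(a) \in \Pi_{hs_i,>0}$. Letting $a \to 0^+$ then yields $Y \in \overline{\Pi_{hs_i,>0}}$.

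I would then complete the proof by induction on $\ell(g) - \ell(f)$. The base case $\ell(f) = \ell(g)$ forces $g = f$ and is trivial. For the inductive step with $f < g$ in Bruhat, I invoke the Subword Property of Bruhat order on $\tS_n$: there exists a simple reflection $s_i$ such that $gs_i < g$ in Bruhat and $f \leq gs_i$. The Key Lemma gives $\Pi_{g,>0} \subseteq \overline{\Pi_{gs_i,>0}}$, and the inductive hypothesis applied to $f \leq gs_i$ (a strictly smaller length difference) yields $\Pi_{gs_i,>0} \subseteq \overline{\Pi_{f,>0}}$; combining the two gives the Proposition.

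The main obstacle is the selection of $s_i$ in the inductive step: not every Bruhat cover $h \lessdot g$ arises from right-multiplication by a simple reflection, so a naive right-descent chain from $g$ to $f$ need not exist. Overcoming this requires the flexibility of choosing reduced words for $g$ together with the Lifting Property of Bruhat order (cf.\ Bj\"orner-Brenti, Prop.\ 2.2.7) and the fact that $\Bound(k,n)$ is a lower order ideal in $\tS_n^k$. An alternative, arguably cleaner route proceeds directly via the bridge parametrization $\R_{>0}^d \to \Pi_{f,>0}$: setting a subset $L \subseteq [d]$ of bridge weights to zero yields a point in a cell $\Pi_{g_L,>0}$, where $g_L$ is the Demazure product of the subword $(s_{i_j} : j \notin L)$ applied to the starting torus-fixed permutation, and as $L$ varies over subsets of $[d]$, the elements $g_L$ range over all $g \leq f$ in $\Bound(k,n)$ by the Subword Property.
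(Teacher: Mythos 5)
The inductive step does not hold: given $f < g$ in Bruhat order, there need not exist a simple reflection $s_i$ with $gs_i < g$ and $f \leq gs_i$. Already in $S_3$ take $f = s_2 < g = s_1 s_2$: the only right descent of $g$ is $s_2$, and $gs_2 = s_1$ is incomparable with $f = s_2$. The underlying issue is that covers $g \lessdot f$ in $\Bound(k,n)$ correspond to right multiplication of $f$ by arbitrary (affine) transpositions, not simple reflections, and the weak right order generated by peeling right descents is strictly coarser than Bruhat order. So a recursion that only multiplies on the right by $x_i$'s cannot, on its own, march from an arbitrary $g$ up to $f$. Your Key Lemma is plausible (it is essentially Proposition \ref{prop:reduce} read in reverse, and the verification that $Y\cdot x_i(a)\in\Pi_{hs_i,>0}$ for all $a>0$ does need a bit of care), but it only handles the simple-reflection covers.

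The paper's argument sidesteps this entirely and is genuinely different. It characterizes a cover $g\lessdot f$ in $\Bound(k,n)$ as a single transposition swap of $f(i)$ and $f(j)$ subject to the conditions in \eqref{eq:fg} --- usually not a simple reflection --- and then realizes that cover \emph{graph-theoretically}: in a reduced plabic graph $G$ with $f_G = f$, the conditions \eqref{eq:fg} force the trips $T_i$ and $T_j$ to cross, so after a move (M2) they share a unique edge $e$ whose removal gives a reduced graph $G'$ with $f_{G'} = g$; sending the weight of $e$ to zero then exhibits $\Pi_{g,>0}$ inside $\overline{\Pi_{f,>0}}$. Your ``alternative route'' via the bridge parametrization is closer in spirit to the Lusztig argument for Theorem \ref{thm:closure} in $\GL(n)_{\geq 0}$ and might be made to work, but as stated it is not a proof: ``Demazure product'' is not the governing notion for these degenerations (the relevant concept is positive distinguished subexpressions), and the bridge word for $f$ intertwines lollipop insertions with $x_i$/$y_i$-bridges, so it is not literally a reduced word for $f$ and the claim that zeroing subsets of bridge weights reaches every $g\leq f$ requires real justification rather than an appeal to the Subword Property.
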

\begin{proof}
By induction, it is enough to show that $\Pi_{g, >0} \subset \Pi_{f,\geq 0}$ when $g \lessdot f$ in $\B(k,n)$ (thus $g$ covers $f$ in Bruhat order of $\tS_n^k$).  It is a standard exercise to show that this happens if and only if $g$ is obtained from $f$ by swapping $f(i)$ with $f(j)$, where
\begin{equation}\label{eq:fg}
i < j, \qquad f(i) < f(j), \qquad \{f(a) \mid i < a < j\} \cap [f(i),f(j)] = \emptyset.
\end{equation}
Let $G$ be a reduced planar bipartite graph with $f_G = f$.  Then \eqref{eq:fg} implies that the trip $T_i$ starting at $i$ and the trip $T_j$ starting at $j$ must cross one another.  In particular, $T_i$ and $T_j$ must share an edge $e$, where they travel in opposite directions along $e$.  By the move (M2), we can assume that this edge $e$ is unique, and that the graph $G' = G \setminus \{e\}$ is reduced.  Then it follows from the definitions that $f_{G'} = g$.  A network $N'$ with underlying graph $G'$ can thus be thought of as a network $N(0)$ with underlying graph $G$, but edge $e$ having weight 0.  Let $N(a)$ be the same network but letting edge $e$ have weight $a$.  Then $X(N(0)) = \lim_{a \to 0} X(N(a))$, and so by Corollary \ref{cor:G} we have $\Pi_{g, >0} \subset \Pi_{f,\geq 0}$.
\end{proof}

Let $I \in \binom{[n]}{k}$.  Define $t_I \in \B(k,n)$ by $t_I(i) = i+n$ if $i \in I$ and $t_I(i) = i$ if $i \notin I$.  Recall that the rotated Schubert matroid $\S_{I,a}$ was defined in Section \ref{sec:Schubmat}.
\begin{lemma}\label{lem:tI}
Let $f \in \Bound(k,n)$ and $I \in \binom{[n]}{k}$.  We have $f \geq t_I$ if and only if $I \in \S_{I_1,1} \cap \S_{I_2,2} \cap \cdots \cap \S_{I_n,n}$, where $\I=(I_1,\ldots,I_n)$ is the Grassmann necklace of $f$.
\end{lemma}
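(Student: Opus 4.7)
The plan is to reduce this to the partial order comparison on Grassmann necklaces provided by Theorem \ref{thm:Grassorder}, so the core content is just computing the Grassmann necklace $\I(t_I)$ explicitly.

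First I would verify that $\I(t_I) = (I,I,\ldots,I)$, the constant Grassmann necklace equal to $I$ at every position. To check this, one can either use the defining formula $I_a = \{f(b) \mid b < a, \, f(b) \geq a\} \bmod n$ applied to $f = t_I$, or, more cleanly, check directly that the constant sequence $(I,I,\ldots,I)$ is a legitimate Grassmann necklace and runs the inverse bijection $\I \mapsto f(\I)$ to yield $t_I$. Indeed, the necklace axiom $I_{a+1} = I_a$ if $a \notin I_a$ and $I_{a+1} = (I_a \setminus \{a\}) \cup \{a'\}$ otherwise, applied to the constant sequence $I_a = I$, forces $a' = a$ when $a \in I$ (no change) and no condition when $a \notin I$; the inverse bijection then reads off $f(a) = a+n$ for $a \in I$ and $f(a) = a$ for $a \notin I$, which is precisely $t_I$.

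Next I would invoke Theorem \ref{thm:Grassorder} to translate the Bruhat condition on $\Bound(k,n)$ into a condition on necklaces: $f \geq t_I$ in $\Bound(k,n)$ is equivalent to $\I(f) \leq \I(t_I)$, i.e.\ $I_a \leq_a I$ for every $a \in [n]$, where $\I(f) = (I_1,\ldots,I_n)$. By the definition of the cyclically rotated Schubert matroid $\S_{I_a,a} = \{J \in \binom{[n]}{k} \mid I_a \leq_a J\}$, the inequality $I_a \leq_a I$ is the same as the membership $I \in \S_{I_a,a}$. Conjoining over all $a$ gives $f \geq t_I \iff I \in \bigcap_{a=1}^n \S_{I_a,a}$, which is the statement of the lemma.

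There is no real obstacle here beyond the bookkeeping for $\I(t_I)$; once that computation is in hand, the lemma is a direct unwinding of the two definitions (Grassmann-necklace order and cyclically rotated Schubert matroids) via Theorem \ref{thm:Grassorder}. The only subtle point worth being careful about is the dualization of the Bruhat order when passing from $\tS_n^k$ to $\Bound(k,n)$: the statement ``$f \geq t_I$'' refers to the order on $\Bound(k,n)$, which is the opposite of the induced Bruhat order on $\tS_n^k$, and Theorem \ref{thm:Grassorder} is phrased compatibly with this convention, so the direction of the inequality on necklaces comes out correctly.
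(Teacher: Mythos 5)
Your proof is correct and follows the paper's own route exactly: the paper's one-line proof also simply observes that $\I(t_I) = (I,I,\ldots,I)$ and then cites Theorem~\ref{thm:Grassorder}, with the translation $I_a \leq_a I \iff I \in \S_{I_a,a}$ left implicit. You have merely spelled out the verification that the constant necklace corresponds to $t_I$ under the inverse bijection and the unwinding of the necklace order, both of which are correct.
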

\begin{proof}
The Grassmann necklace of $t_I$ is $(I,I,\ldots,I)$.  The result then follows from Theorem \ref{thm:Grassorder}.
\end{proof}

\subsection{Oh's Theorem}
Our approach gives a new proof of Oh's theorem.

\begin{theorem}[\cite{Oh}]\label{thm:oh}
Positroids are intersections of cyclically rotated Schubert matroids: if $\I(\M) = (I_1,I_2,\ldots,I_n)$ then 
$$\M = \S_{I_1,1} \cap \S_{I_2,2} \cap \cdots \cap \S_{I_n,n}.$$
\end{theorem}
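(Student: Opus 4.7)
My plan is to translate Oh's identity into a statement in $\Bound(k,n)$ and then prove it via a one-parameter torus limit. The easy containment $\M \subseteq \bigcap_a \S_{I_a,a}$ is immediate from $I_a = \min_{\leq_a} \M$, so the work is the reverse containment. By Lemma \ref{lem:tI}, $J \in \bigcap_a \S_{I_a,a}$ is equivalent to $t_J \leq f_\M$ in $\Bound(k,n)$. Next I will identify $\Pi_{t_J,>0}$ with the single torus-fixed point $e_J$: its matroid is $\{J\}$, so its Grassmann necklace is $(J,J,\ldots,J)$, which corresponds to $t_J$ under Theorem \ref{thm:Grassorder}, and its cell dimension is $k(n-k) - \ell(t_J) = 0$ by Theorem \ref{thm:main}(4). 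Theorem \ref{thm:partialorder} then rewrites $t_J \leq f_\M$ as $e_J \in \Pi_{f_\M,\geq 0}$. Thus Oh's theorem reduces to the equivalence $J \in \M \iff e_J \in \Pi_{f_\M,\geq 0}$.

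For the forward direction I exploit the positive torus action. Fix any $X \in \Pi_{\M,>0}$, and for $s > 0$ let $t(s) \in (\R_{>0})^n$ be the diagonal matrix with $t(s)_i = s$ if $i \in J$ and $t(s)_i = s^{-1}$ if $i \notin J$. Then $\Delta_I(X \cdot t(s)) = s^{2|I \cap J|-k}\Delta_I(X)$, and the exponent attached to $\Delta_J$ is the strict maximum $k$ (since $|I \cap J| < k$ whenever $I \neq J$). Positive diagonal rescalings preserve the signs of all Pl\"ucker coordinates, so $X \cdot t(s) \in \Pi_{\M,>0}$ for every $s > 0$; as $s \to \infty$, after normalizing by $s^k$ the $J$-th coordinate remains equal to $\Delta_J(X) > 0$ while every other coordinate tends to $0$, so $X \cdot t(s) \to e_J$ projectively and $e_J$ lies in the Hausdorff closure $\Pi_{f_\M,\geq 0}$. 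Conversely, if $e_J \in \Pi_{f_\M,\geq 0}$ then any sequence $X_n \in \Pi_{f_\M,>0}$ with $X_n \to e_J$ satisfies $\Delta_J(X_n) > 0$ for large $n$ by continuity of Pl\"ucker coordinates (since $\Delta_J(e_J) = 1$), whence $J \in \M_{X_n} = \M$.

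The main preparatory obstacle is the bookkeeping around $\Pi_{t_J,>0} = \{e_J\}$ and the dual order conventions between $\Bound(k,n)$ and the Bruhat order on $\tS_n^k$; once these are in place, the argument is just a one-parameter torus degeneration plus lower semicontinuity of Pl\"ucker nonvanishing. In contrast to Oh's original proof, which works directly with matroid exchange axioms, this approach leverages the geometric closure machinery of Theorem \ref{thm:partialorder} and Lemma \ref{lem:tI} already established earlier in the paper and avoids any Pl\"ucker relation manipulation.
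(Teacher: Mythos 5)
Your argument is fundamentally the same as the paper's: both proofs reduce the hard containment $\bigcap_a \S_{I_a,a}\subseteq\M$ to the observation that $\Pi_{t_J,>0}=\{e_J\}$, show via Lemma~\ref{lem:tI} that $J$ in the intersection is equivalent to $t_J\le f_\M$, conclude that $e_J$ lies in the closure $\Pi_{f_\M,\geq 0}$, and then use continuity of the Pl\"ucker coordinate $\Delta_J$ (which equals $1$ at $e_J$) to see that $\Delta_J$ cannot vanish identically on $\Pi_{\M,>0}$.

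There are two issues worth flagging. First, a circularity hazard in your citations: you invoke Theorem~\ref{thm:partialorder} to pass between $t_J\le f_\M$ and $e_J\in\Pi_{f_\M,\geq 0}$, phrasing it as a two-way reduction. But in this paper Theorem~\ref{thm:partialorder} is proved \emph{after} Theorem~\ref{thm:oh} --- its $\subseteq$ containment uses Corollary~\ref{cor:Morder}, which is itself a consequence of Oh's theorem. Only the $\supseteq$ containment, Proposition~\ref{prop:half}, is available at this point. Fortunately, that is the only direction you actually need (to go from $t_J\le f_\M$ to $e_J\in\Pi_{f_\M,\geq 0}$); the backward implication is handled directly by your continuity argument and never uses the $\subseteq$ half of Theorem~\ref{thm:partialorder}. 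So the logic is sound, but your framing ``Oh's theorem reduces to the equivalence $J\in\M\iff e_J\in\Pi_{f_\M,\geq 0}$'' presents a two-sided reduction that you have not (and cannot here, non-circularly) established; you should cite Proposition~\ref{prop:half} rather than Theorem~\ref{thm:partialorder} and frame the step as a one-directional implication.

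Second, the torus-degeneration paragraph proving $J\in\M\implies e_J\in\Pi_{f_\M,\geq 0}$ is correct and nicely explicit, but it is redundant for Oh's theorem: you already established the easy containment $\M\subseteq\bigcap_a\S_{I_a,a}$ directly from $I_a=\min_{\leq_a}\M$, so nothing in the remainder of the proof depends on this forward implication. It is a pleasant aside --- it makes the one-parameter limit that implicitly underlies Proposition~\ref{prop:half} completely concrete for the rank-zero cell --- but you should either cut it or label it as such, lest it obscure where the real work lies.
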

\begin{proof}
The inclusion $\subseteq$ follows from the definition \eqref{eq:IM}.  For the reverse inclusion, suppose $I$ belongs to the right hand side.  Let $f  = f_\M \in \Bound(k,n)$ be the bounded affine permutation corresponding to the positroid $\M$.  By Lemma \ref{lem:tI}, we have $f \geq t_I$.  By Proposition \ref{prop:half}, we have $\Pi_{t_I,>0} \subset \Pi_{f,\geq 0}$.  But $\Pi_{t_I,>0} $ is simply the point $e_I \in \Gr(k,n)$ with the single non-vanishing Pl\"ucker coordinate $\Delta_I$.  Thus the Pl\"ucker coordinate cannot vanish on $\Pi_\M$ (otherwise it would vanish on the closure as well).  It follows that $I \in \M$, as required.
\end{proof}

Recall that Theorem \ref{thm:main} gives a bijection $f \mapsto \M(f)$ between $\Bound(k,n)$ and $\PP(k,n)$.  Theorem \ref{thm:oh} has the following immediate corollary.
\begin{corollary}\label{cor:Morder}
We have $f \geq g$ if and only if $\M(f) \supseteq \M(g)$.
\end{corollary}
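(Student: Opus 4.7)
The plan is to derive Corollary \ref{cor:Morder} directly from Oh's Theorem (Theorem \ref{thm:oh}) combined with the order-reversing bijection between bounded affine permutations and Grassmann necklaces (Theorem \ref{thm:Grassorder}). Both directions reduce to manipulating minima of matroids in cyclically rotated lexicographic orders, so no additional geometric input is needed.

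For the forward direction, I would assume $f \geq g$ in $\Bound(k,n)$. By Theorem \ref{thm:Grassorder} this is equivalent to $\I(f) \leq \I(g)$, meaning $(I_a)_f \leq_a (I_a)_g$ for every $a \in [n]$, where $\I(f) = ((I_1)_f,\ldots,(I_n)_f)$ and similarly for $g$. The key observation is that the Schubert matroid $\S_{I,a} = \{J : I \leq_a J\}$ is an upper order ideal under $\leq_a$, so $(I_a)_f \leq_a (I_a)_g$ gives $\S_{(I_a)_g,a} \subseteq \S_{(I_a)_f,a}$. Intersecting these inclusions over all $a$ and invoking Theorem \ref{thm:oh} yields
\[
\M(g) = \bigcap_{a=1}^n \S_{(I_a)_g,a} \subseteq \bigcap_{a=1}^n \S_{(I_a)_f,a} = \M(f).
\]

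For the reverse direction, I would assume $\M(g) \subseteq \M(f)$. By the definition \eqref{eq:IM} of the Grassmann necklace of a positroid, $(I_a)_f$ is the $\leq_a$-minimum of $\M(f)$ and $(I_a)_g$ is the $\leq_a$-minimum of $\M(g)$. Since $(I_a)_g \in \M(g) \subseteq \M(f)$, minimality forces $(I_a)_f \leq_a (I_a)_g$ for every $a$, i.e., $\I(f) \leq \I(g)$. Applying Theorem \ref{thm:Grassorder} once more translates this back into $f \geq g$ in $\Bound(k,n)$.

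There is essentially no obstacle here beyond bookkeeping: the only nontrivial input is Theorem \ref{thm:oh}, which has just been proved, and the compatibility of the partial orders on $\Bound(k,n)$ and Grassmann necklaces supplied by Theorem \ref{thm:Grassorder}. The one point deserving mild care is verifying that Schubert matroids are $\leq_a$-upper sets (immediate from $\S_{I,a} = \{J : I \leq_a J\}$), and that the $\leq_a$-minimum of $\M(f)$ really is $(I_a)_f$ rather than some other extremal element. Both facts are essentially definitional, so the whole corollary amounts to a one-paragraph argument.
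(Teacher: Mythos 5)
Your argument is correct and supplies exactly what the paper leaves implicit when it calls this an ``immediate corollary'' of Oh's theorem: both directions reduce to combining Theorem~\ref{thm:oh} with the order-reversing bijection of Theorem~\ref{thm:Grassorder}, which is what you do. The one step you flag as ``essentially definitional'' --- that $(I_a)_f$ is the $\leq_a$-minimum of $\M(f)$, not merely the lexicographic minimum as \eqref{eq:IM} defines it --- does need a citation: it is precisely the inclusion $\M(f) \subseteq \S_{(I_a)_f,a}$ furnished by Theorem~\ref{thm:oh} itself (or by the greedy lemma of Section~\ref{sec:Schubmat}), so invoke one of those rather than treating it as a tautology.
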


\begin{proof}[Proof of Theorem \ref{thm:partialorder}]
By Proposition \ref{prop:half}, we have the inclusion $\Pi_{f,\geq 0} \supseteq \bigsqcup_{g \leq f} \Pi_{g,>0}$.

Suppose $X \in \overline{\Pi_{f,> 0}}$.  Then $X \in \Gr(k,n)_{\geq 0}$ so $X \in \Pi_{g, >0}$ for some $g \in \Bound(k,n)$.  The Pl\"ucker coordinates $\Delta_I(X)$ are non-zero for $I \in \M(g)$.  Suppose $J \notin \M(f)$.  Then the Pl\"ucker coordinate $\Delta_J$ vanishes on $\Pi_{f,>0}$ and therefore it also vanishes on $\overline{\Pi_{f,> 0}}$.  We conclude that $\M(g) \subseteq \M(f)$.  But by Corollary \ref{cor:Morder} this implies $f \geq g$.  Thus $\overline{\Pi_{f,> 0}} = \bigsqcup_{g \leq f} \Pi_{g,>0}$.
\end{proof}

We also have the following somewhat surprising Corollary.
\begin{corollary}
Suppose $f,g \in \Bound(k,n)$.  Then $f \geq g$ if and only if whenever $g \geq t_I$ we have $f \geq t_I$ as well, for $I \in \binom{[n]}{k}$.
\end{corollary}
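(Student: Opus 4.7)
The plan is to chain together three statements that have just been established. First, Lemma \ref{lem:tI} identifies the relation $f \geq t_I$ with the membership condition $I \in \S_{I_1,1} \cap \S_{I_2,2} \cap \cdots \cap \S_{I_n,n}$, where $\I(f) = (I_1,\ldots,I_n)$ is the Grassmann necklace of $f$. Next, Oh's theorem (Theorem \ref{thm:oh}) says that this intersection of cyclically rotated Schubert matroids is precisely the positroid $\M(f)$. Combining these two gives the clean reformulation
\[
f \geq t_I \iff I \in \M(f),
\]
and analogously for $g$.

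With this in hand, the hypothesis ``whenever $g \geq t_I$ we have $f \geq t_I$'' translates directly into the matroid containment $\M(g) \subseteq \M(f)$. Conversely, if $\M(g) \subseteq \M(f)$ then clearly $I \in \M(g)$ implies $I \in \M(f)$, so the hypothesis holds. Thus the right-hand side of the stated equivalence is equivalent to $\M(g) \subseteq \M(f)$.

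Finally, Corollary \ref{cor:Morder} records the equivalence $f \geq g \iff \M(f) \supseteq \M(g)$, which immediately matches the condition we have just extracted. Stringing the equivalences together yields the corollary.

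There is essentially no obstacle here: everything needed has already been proved, and the corollary is a straightforward consequence of Lemma \ref{lem:tI}, Theorem \ref{thm:oh}, and Corollary \ref{cor:Morder}. The only ``content'' in the statement is the observation that the elements $t_I$ are cofinal enough in $\Bound(k,n)$ to detect the Bruhat order, and this cofinality is exactly the content of Oh's theorem—a positroid is determined by which $t_I$ lie below the corresponding bounded affine permutation.
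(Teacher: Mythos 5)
Your proof is correct and follows exactly the chain of results the paper intends: Lemma~\ref{lem:tI} plus Theorem~\ref{thm:oh} give $f \geq t_I \iff I \in \M(f)$, and Corollary~\ref{cor:Morder} then closes the loop. The paper states this corollary without proof precisely because it is this immediate consequence.
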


\subsection{Proof of Theorem \ref{thm:same}}\label{sec:LusPos}
We can now prove the equivalence of Lusztig's and Postnikov's defintions of the totally nonnegative Grassmannian.
The first equality of Theorem \ref{thm:same} is just the special case $f = \id$ of Theorem \ref{thm:partialorder}.

Now let $f \in \Bound(k,n)$ be given by $f(i) = i+n$ for $1 \leq i \leq k$ and $f(i) = i$ for $k+1 \leq i \leq n$.  Then $\Pi_{f,>0}$ is the single point $e_{[k]} \in \Gr(k,n)$.   Let $w \in S_n$ be the permutation such that $fw = \id$.  Then $w = (r+1)(r+2)\cdots n 1 2 \cdots r$ in one-line notation.  Let $i_1 i_2 \cdots i_\ell$ be a reduced word for $w$.  Then by the proof of Theorem \ref{thm:main}, adding the bridges indexed by $i_1, i_2, \ldots, i_\ell$ to the lollipop graph of $e_{[r]}$ gives a planar bipartite graph $G$ such that $M_G: (\L_G)_{>0} \to \Gr(k,n)_{>0}$ is bijective.  Thus for $X \in \Gr(k,n)_{>0}$, there are (unique) parameters $a_1,a_2,\ldots,a_\ell \in \R_{>0}$ such that the matrix $g = x_{i_1}(a_1) \cdots  x_{i_\ell}(a_\ell)$ satisfies $e_{[r]} \cdot g = X$. This shows that $\Gr(k,n)_{>0} \subset \GL(n)_{ \geq 0} \cdot e_{[k]}$.

\subsection{Supermodularity of Pl\"ucker coordinates}\label{sec:logconcave}
%
%
%
%
%

Let $I = \{i_1<i_2<\ldots,i_k\}$ and $J=\{j_1<\cdots<j_k\} \in \binom{[n]}{k}$.  Suppose the multiset $I \cup J$, when sorted in increasing order, is equal to $\{a_1 \leq b_1 \leq a_2 \leq \cdots \leq a_k \leq b_k\}$.  Then we define $\sort_1(I,J) = \{a_1,\ldots,a_k\}$ and $\sort_2(I,J) = \{b_1,\ldots,b_k\}$.  Also define $\min(I,J) \coloneqq \{\min(i_1,j_1),\ldots, \min(i_k,j_k)\}$ and $\max(I,J) \coloneqq \{\max(i_1,j_1),\ldots, \max(i_k,j_k)\}$.  For example, if $I = \{1,3,5,6,7\}$ and $J = \{2,3,4,8,9\}$ then $\sort_1(I,J) = \{1,3,4,6,8\}, \sort_2(I,J) = \{2,3,5,7,9\}, \min(I,J) = \{1,3,4,6,7\},$ and $\max(I,J) = \{2,3,5,8,9\}$.


\begin{proposition}\label{prop:inequalities}
Let $X \in \Gr(k,n)_{\geq 0}$.  Then 
$$
\Delta_I(X) \Delta_J(X) \leq \Delta_{\min(I,J)}(X) \Delta_{\max(I,J)}(X) \leq \Delta_{\sort_1(I,J)}(X) \Delta_{\sort_2(I,J)}(X).
$$
\end{proposition}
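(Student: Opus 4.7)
The plan is to expand each product of Pl\"ucker coordinates as a nonnegative sum of Temperley--Lieb immanants via Theorem~\ref{thm:TL}, and to show that the index set of summands for $\Delta_I\Delta_J$ is contained in that for $\Delta_{\min}\Delta_{\max}$, which is in turn contained in that for $\Delta_{\sort_1}\Delta_{\sort_2}$.

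By Theorem~\ref{thm:main}, represent $X$ by a planar bipartite network $N$ with positive edge weights, so the Pl\"ucker coordinates of $X$ are positively proportional to the boundary measurements $\Delta_I(N)$; it suffices to prove the inequalities for the network.  The three pairs of subsets $(I, J)$, $(\min(I, J), \max(I, J))$, and $(\sort_1(I, J), \sort_2(I, J))$ all yield the same multiset $I + J$, so they share a common intersection $I \cap J$ and a common symmetric difference $S \coloneqq I \triangle J$.  Theorem~\ref{thm:TL} therefore expresses each of the three products as $\sum_{(\tau, T)} F_{\tau, T}(N)$ with a common $T = I \cap J$ and $S(\tau) = S$, differing only in which sign pattern on $S$ the pairings must respect.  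Since each $F_{\tau, T}(N) \geq 0$ when the edge weights are positive, the proposition reduces to the combinatorial claim that every $(I, J)$-compatible pairing is $(\min, \max)$-compatible, and every $(\min, \max)$-compatible pairing is $(\sort_1, \sort_2)$-compatible.

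The main technical tool is a monotonicity estimate for signed interval counts.  For subsets $A, B \subseteq [n]$ and an interval $[a, b] \subseteq [n]$, set $\delta_{A, B}[a, b] \coloneqq |A \cap [a, b]| - |B \cap [a, b]|$.  I claim
$$|\delta_{I, J}[a, b]| \;\geq\; |\delta_{\min, \max}[a, b]| \;\geq\; |\delta_{\sort_1, \sort_2}[a, b]|.$$
For the first inequality, classify each $l \in [k]$ by the positions of $i_l$ and $j_l$ relative to $[a, b]$ (each of LEFT, IN, or RIGHT), and let $\pi_1, \pi_2, \pi_3, \pi_4$ count the pairs of types (IN, LEFT), (IN, RIGHT), (LEFT, IN), (RIGHT, IN) respectively.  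A direct bookkeeping yields $\delta_{I, J} = \pi_1 + \pi_2 - \pi_3 - \pi_4$ and $\delta_{\min, \max} = -\pi_1 + \pi_2 - \pi_3 + \pi_4$, giving $\delta_{I, J}^2 - \delta_{\min, \max}^2 = 4(\pi_2 - \pi_3)(\pi_1 - \pi_4)$.  I expect the main obstacle here to be confirming that this product is nonnegative: the sortedness of $I$ and $J$ rules out having both an index of type (IN, LEFT) and one of type (LEFT, IN) (else their indices $l, l'$ would have to satisfy $l < l'$ and $l' < l$ simultaneously), and similarly rules out (IN, RIGHT) coexisting with (RIGHT, IN); thus $\pi_1 \pi_3 = 0 = \pi_2 \pi_4$, from which a short case analysis on which $\pi_i$ vanish yields $(\pi_2 - \pi_3)(\pi_1 - \pi_4) \geq 0$.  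For the second inequality, $\sort_1(I, J)$ and $\sort_2(I, J)$ occupy alternate positions in the sorted multiset $I + J$, giving $|\delta_{\sort_1, \sort_2}[a, b]| \leq 1$; moreover $\delta_{\min, \max}[a, b]$ and $\delta_{\sort_1, \sort_2}[a, b]$ both have the parity of $|I \cap [a, b]| + |J \cap [a, b]|$, so the bound follows.

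Finally, I propagate compatibility by induction on $|\tau|$.  Given a $(I, J)$-compatible pairing $(\tau, T)$, select an edge $(p, q) \in \tau$ with $p$ and $q$ adjacent in $S$ (such an innermost edge exists by the planar structure of $\tau$).  Then $[p, q] \cap S = \{p, q\}$, so $\delta_{I, J}[p, q] = \epsilon_{I, J}(p) + \epsilon_{I, J}(q)$, where $\epsilon_{I, J}(x) \in \{\pm 1\}$ encodes membership in $I \setminus J$ versus $J \setminus I$.  Compatibility of $(\tau, T)$ forces this sum to be zero; the discrepancy estimate then gives $\epsilon_{\min, \max}(p) + \epsilon_{\min, \max}(q) = 0$, so $(p, q)$ also pairs opposite $(\min, \max)$-signs.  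Removing $(p, q)$ from $\tau$ yields a smaller $(I, J)$-compatible pairing on $S \setminus \{p, q\}$, which by induction is $(\min, \max)$-compatible, and reassembling completes the step; the identical argument with $(\min, \max)$ replaced by $(\sort_1, \sort_2)$ yields the second inequality of the proposition.
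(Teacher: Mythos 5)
Your proof takes the same overall route as the paper: expand each product via Theorem~\ref{thm:TL} into a nonnegative sum of Temperley--Lieb immanants and show that the set of $(\tau,T)$ compatible with $(I,J)$ is contained in the set compatible with $(\min,\max)$, which is in turn contained in the set compatible with $(\sort_1,\sort_2)$. The paper states this nesting without details and additionally invokes the identity $\sort_i(\min(I,J),\max(I,J)) = \sort_i(I,J)$ to reduce the second inequality to the first; you instead prove the nesting directly from the signed-interval discrepancy bound $|\delta_{I,J}| \geq |\delta_{\min,\max}| \geq |\delta_{\sort_1,\sort_2}|$ (with the parity observation handling the $\sort$ step), then propagate compatibility by peeling off innermost arcs. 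This is a genuine fleshing-out of what the paper asserts, and both halves of your estimate check out — the factorization $\delta_{I,J}^2 - \delta_{\min,\max}^2 = 4(\pi_2-\pi_3)(\pi_1-\pi_4)$ is correct and the sortedness of $I,J$ does force $\pi_1\pi_3 = \pi_2\pi_4 = 0$.

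One step is left implicit in your induction and deserves a word: when you remove the innermost edge $(p,q)$ and pass to $I' = I\setminus\{p\}$, $J' = J\setminus\{q\}$, the "reassembling" requires that the $\min/\max$ coloring of $(I',J')$ on $S\setminus\{p,q\}$ coincide with the restriction of the $\min/\max$ coloring of $(I,J)$. This is true — for $s > q$ one checks that whichever of $\{b \le a-1\}$ vs.\ $\{b \ge a\}$ holds is itself determined by the original comparison $i_a \lessgtr j_a$, so deleting $p$ and $q$ shifts indices coherently — but it is a small lemma that should be recorded, since it is what makes the inductive call legitimate. (One also needs to interpret $\delta$ on cyclic arcs when the innermost edge wraps around $n\to 1$; this causes no trouble since $\delta_{I,J}[1,n]=0$.) Modulo spelling those two routine points out, the proof is correct.
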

\begin{proof}
We use Theorem \ref{thm:TL} and show that any $(\tau,T)$ compatible with $I,J$ is also compatible with $\min(I,J),\max(I,J)$ and with $\sort_1(I,J),\sort_2(I,J)$.  We also note that $\sort_i(\min(I,J),\max(I,J)) = \sort_i(I,J)$.
\end{proof}

Similar inequalities occur in the very different context of Schur positivity \cite{LPP}.  See also \cite{FP} for related ideas.

The operations $\min(I,J)$ and $\max(I,J)$ have another interpretation.  To each $I \in \binom{[n]}{k}$ we have an associated partition $\lambda(I) \subseteq (n-k)^{k}$ (see Section \ref{sec:sym}).  Thinking of $\lambda$ and $\mu$ as Young diagrams, write $\lambda \cup \mu$ for the partition that is the union of the boxes in $\lambda$ and $\mu$, and similarly define $\lambda \cap \mu$.  Then $\lambda(\max(I,J)) = \lambda(I) \cup \lambda(J)$ and $\lambda(\min(I,J)) = \lambda(I) \cap \lambda(J)$.  This makes the poset of partitions $\lambda \subseteq (n-k)^{k}$ under inclusion (resp. the poset $(\binom{[n]}{k}, \leq)$) a distributive lattice under the operations $(\cup,\cap)$ (resp. $(\max,\min)$).  

\begin{corollary}
Every positroid $\M$ is a distributive lattice.
\end{corollary}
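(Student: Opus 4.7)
The plan is to deduce this directly from Proposition \ref{prop:inequalities} together with the distributive lattice structure on $\binom{[n]}{k}$ under $(\min, \max)$ that was just described.

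First I would pick a ``generic'' point $X \in \Pi_{\M, >0}$, so that by definition of the positroid cell we have $\Delta_L(X) > 0$ exactly when $L \in \M$ and $\Delta_L(X) = 0$ otherwise. Now suppose $I, J \in \M$. Then $\Delta_I(X) \Delta_J(X) > 0$, and by the first inequality of Proposition \ref{prop:inequalities},
$$0 < \Delta_I(X)\Delta_J(X) \leq \Delta_{\min(I,J)}(X)\, \Delta_{\max(I,J)}(X).$$
Since Pl\"ucker coordinates of $X$ are nonnegative, both factors on the right must be strictly positive, whence $\min(I,J) \in \M$ and $\max(I,J) \in \M$. Thus $\M$ is closed under the operations $\min$ and $\max$.

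Next I would invoke the fact (already recorded in the paragraph preceding the corollary) that $(\binom{[n]}{k}, \leq)$ is a distributive lattice under $(\min, \max)$. Since $\M \subseteq \binom{[n]}{k}$ is closed under both $\min$ and $\max$, it is a sublattice. A sublattice of a distributive lattice is itself distributive, so $\M$ is a distributive lattice.

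There is no real obstacle here; the only thing to be a little careful about is the choice of the witness point $X$. The correct move is to take $X \in \Pi_{\M,>0}$, since on this stratum the support of the Pl\"ucker coordinates is exactly $\M$ (by Theorem \ref{thm:main} and the definition of positroid cell). Everything else is a one-line consequence of the inequality in Proposition \ref{prop:inequalities}.
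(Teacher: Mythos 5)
Your proof is correct, and it is exactly the argument the paper leaves implicit: Proposition \ref{prop:inequalities} gives $0 < \Delta_I(X)\Delta_J(X) \le \Delta_{\min(I,J)}(X)\Delta_{\max(I,J)}(X)$ at a point $X\in\Pi_{\M,>0}$ (which exists by Theorem \ref{thm:main}), so $\M$ is closed under $\min$ and $\max$ and is therefore a sublattice of the distributive lattice $\bigl(\binom{[n]}{k},\min,\max\bigr)$. Nothing to add.
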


A \defn{supermodular} function $f: L \to \R$  on a lattice $(L,\vee,\wedge)$ is a function satisfying $f(x \vee y) + f(x \wedge y) \geq f(x) + f(y)$.  A \defn{log-supermodular} function $g: L \to \R_{>0}$ is a function such that $\log g$ is supermodular.

\begin{corollary}
For $X \in \Pi_{f,>0}$, the function $I \mapsto \Delta_I(X)$ is a log-supermodular function from the lattice $(\M(f),\max,\min)$ to $\R_{> 0}$.
\end{corollary}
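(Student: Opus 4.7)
The statement is essentially a direct consequence of the two ingredients just established: Proposition \ref{prop:inequalities} and the preceding Corollary. My plan is to verify that the data in the corollary is well-posed, then extract the log-supermodular inequality from the first inequality of Proposition \ref{prop:inequalities}.

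First I would observe that for $X \in \Pi_{f,>0}$ the relevant target is indeed $\R_{>0}$: by the definition of the positroid cell, $\Delta_I(X) > 0$ for $I \in \M(f)$ and $\Delta_K(X) = 0$ for $K \notin \M(f)$, so the restriction of $I \mapsto \Delta_I(X)$ to $\M(f)$ takes values in $\R_{>0}$ and its logarithm is defined.

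Next I would confirm that $(\M(f), \max, \min)$ is actually a lattice, i.e.\ that $\min(I,J)$ and $\max(I,J)$ lie in $\M(f)$ whenever $I,J$ do. This is the content of the preceding corollary, but it is also immediate from Proposition \ref{prop:inequalities}: applying it to $X \in \Pi_{f,>0}$ gives
\[
0 < \Delta_I(X)\, \Delta_J(X) \;\leq\; \Delta_{\min(I,J)}(X)\, \Delta_{\max(I,J)}(X),
\]
so both nonnegative factors on the right must be strictly positive, forcing $\min(I,J),\max(I,J) \in \M(f)$. (The fact that $(\binom{[n]}{k},\min,\max)$ is itself a distributive lattice under componentwise min/max of increasing tuples was noted in the discussion preceding the corollary.)

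Finally, the same inequality
\[
\Delta_I(X)\,\Delta_J(X) \;\leq\; \Delta_{\min(I,J)}(X)\,\Delta_{\max(I,J)}(X)
\]
is, upon taking logarithms of all (now strictly positive) terms, exactly the supermodularity condition
\[
\log\Delta_{I\vee J}(X) + \log\Delta_{I\wedge J}(X) \;\geq\; \log\Delta_I(X) + \log\Delta_J(X)
\]
with $\vee = \max$ and $\wedge = \min$. There is no main obstacle here; the only thing to be slightly careful about is the passage to logarithms, which requires the positivity verified in the first two steps. All the real work was done in establishing Theorem \ref{thm:TL} (needed for Proposition \ref{prop:inequalities}) and in confirming that the positroid $\M(f)$ is closed under $\min$ and $\max$.
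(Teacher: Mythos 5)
Your proof is correct and matches the paper's (unwritten) reasoning: the corollary is stated without proof precisely because it follows immediately from Proposition \ref{prop:inequalities} together with the preceding corollary that positroids are distributive lattices under $(\max,\min)$, and your argument—verify positivity on $\M(f)$, verify closure under $\min/\max$, then take logarithms of $\Delta_I\Delta_J \leq \Delta_{\min(I,J)}\Delta_{\max(I,J)}$—is exactly that deduction. Your extra observation that closure of $\M(f)$ under $\min$ and $\max$ can itself be read off from the positivity in Proposition \ref{prop:inequalities} is a nice small tightening, but it is the same route.
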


We can also think of the function $I \mapsto \Delta_I(X)$ as a function $h_X$ on the vectors $e_I \in \R^{n}$ (the $0$-$1$ vector with $1$-s in locations specified by $I$).  Then the inequality $\Delta_I(X) \Delta_J(X) \leq \Delta_{\sort_1(I,J)}(X) \Delta_{\sort_2(I,J)}(X)$ implies that $h_X$ is log-concave: $h_X(x)h_X(y) \leq h_X((x+y)/2)^2$, whenever $x,y,(x+y)/2$ are all of the form $e_I$.

\subsection{Alcoved polytopes and sort-closed sets}
The class of positroids $\PP(k,n)$ is exactly the same as the class of sort-closed matroids that had previously been studied in a different setting \cite{LPalcove,Blu}.

A matroid $\M$ is \defn{sort-closed} if $I,J \in \M$ implies $\sort_1(I,J), \sort_2(I,J) \in \M$.
\begin{thm}[\cite{LPmembrane}]\label{thm:sortclosed}
A matroid $\M$ is a positroid if and only if it is sort-closed.
\end{thm}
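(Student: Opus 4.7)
The forward direction is a direct consequence of Proposition \ref{prop:inequalities}: choose $X \in \Pi_{\M,>0}$ so that $\Delta_I(X) > 0$ iff $I \in \M$, and for any $I,J \in \M$,
$$\Delta_{\sort_1(I,J)}(X)\,\Delta_{\sort_2(I,J)}(X) \;\geq\; \Delta_I(X)\,\Delta_J(X) \;>\; 0,$$
forcing both $\sort_1(I,J)$ and $\sort_2(I,J)$ into $\M$.

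For the reverse direction, my plan is to pass through Oh's theorem (Theorem \ref{thm:oh}). Let $\M$ be sort-closed of rank $k$ on $[n]$, and form the Grassmann necklace $\I(\M) = (I_1,\ldots,I_n)$ with $I_a = \min_{\leq_a} \M$; verifying that this is a bona fide Grassmann necklace is standard for lex-min bases of any matroid and does not require sort-closedness. By Theorem \ref{thm:main}(2) there is a unique positroid $\M'$ with $\I(\M') = \I(\M)$, and Oh's theorem identifies it as $\M' = \bigcap_{a=1}^n \S_{I_a,a}$. The target is $\M = \M'$. The easy inclusion $\M \subseteq \M'$ is the greedy property of matroids: any $J \in \M$ satisfies $I_a \leq_a J$ for each $a$ by minimality of $I_a$, hence $J \in \S_{I_a,a}$ for every $a$.

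The reverse inclusion $\M' \subseteq \M$ is the main obstacle. I would argue by induction on the lex-position of $J \in \M'$ in the order $\leq_1$. If $J = I_1$ we are done. Otherwise, the strategy is to exhibit two witnesses $K_1, K_2 \in \M$ whose sort-pair equals $(J', J)$ with $J' <_1 J$ and $J' \in \M'$; the inductive hypothesis then gives $J' \in \M$, and sort-closedness of $\M$ forces $J \in \M$. The natural source of such witnesses is the family $\{I_a\}_{a \in [n]}$, modified by the matroid exchange axiom so as to bring specified elements of $J \setminus I_a$ into a base that still lies in $\M$ (as opposed to merely in $\M'$). A conceptually cleaner, essentially equivalent route passes through the matroid polytope $P_\M$: sort-closedness makes $P_\M$ an alcoved polytope cut out by linear-interval rank inequalities, while the rank function together with the $I_a$'s yields the cyclic-interval rank inequalities that cut out $P_{\M'}$; matching the two face descriptions gives $P_\M = P_{\M'}$ and hence $\M = \M'$. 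The step I expect to be the real obstacle is precisely this match-up: showing that sort-closedness with respect only to the linear order on $[n]$, when combined with the matroid axioms, is strong enough to force the cyclically symmetric face description enjoyed by positroid polytopes.
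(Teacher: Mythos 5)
Your forward direction is exactly the paper's: positroid implies sort-closed by reading the log-supermodularity inequality of Proposition~\ref{prop:inequalities} at any $X \in \Pi_{\M,>0}$.  Nothing to add there.

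The reverse direction is where the gap lies, and you flag it honestly.  Of your two routes, the second (via alcoved polytopes) is the one the paper actually takes: the text states that the ``if'' direction ``follows from a characterization of sort-closed collections as integer points in alcoved polytopes,'' citing~\cite{LPalcove}, and records the reformulation ``a matroid polytope is a positroid polytope if and only if it is also an alcoved polytope.''  But I would locate the obstacle differently than you do.  Worrying about linear versus cyclic order is a red herring: once you work on the hyperplane $\sum_i x_i = k$ a constraint $\sum_{i\in[a,b]} x_i \le c$ with $a\le b$ is equivalent to $\sum_{i\in [b+1,a-1]} x_i \ge k-c$ on the complementary cyclic interval, so cyclic symmetry of the alcoved face description is automatic.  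What is actually missing, and what the in-preparation reference~\cite{LPmembrane} is being written to supply, is the matroid-theoretic claim that a matroid $\M$ whose base polytope is alcoved (equivalently, whose facet-defining rank inequalities come only from cyclic intervals) equals the positroid $\M' = \bigcap_a \S_{I_a,a}$.  That requires showing $P_{\M'}$ is cut out by precisely the cyclic-interval rank inequalities of $\M$ and nothing more, and then invoking that a matroid is determined by its base polytope.  Neither step is in your sketch, and the result of~\cite{LPalcove} alone, which predates positroid theory, does not deliver it.

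Your first route, the lex-position induction, is more speculative and I doubt it closes as written.  The inductive step must produce, for a putative $J \in \M' \setminus \M$, bases $K_1, K_2 \in \M$ with $\sort_1(K_1,K_2) = J'$ and $\sort_2(K_1,K_2) = J$ for some lex-smaller $J' \in \M'$.  That is a demanding requirement: $K_1 \cup K_2$ must equal $J' \cup J$ as multisets, and $(K_1,K_2)$ must refine the specific alternating interleaving the sort operation imposes.  The exchange axiom gives single-element swaps, not this kind of global control over a pair of bases.  Absent an explicit construction of $(K_1,K_2)$ the induction does not start, so I would not count this route as a proof.
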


The ``only if" direction of Theorem \ref{thm:sortclosed} follows immediately from Proposition \ref{prop:inequalities}.  The ``if direction" of Theorem \ref{thm:sortclosed} follows from a characterization of sort-closed collections as integer points in alcoved polytopes, see \cite{LPalcove}.

Theorem \ref{thm:sortclosed} can also be stated as follows: a matroid polytope is a positroid polytope if and only if it is also an alcoved polytope.  In \cite{LPmembrane}, Postnikov and I take this as a starting point to investigate \emph{polypositroids}, the positive analogue of polymatroids.

\subsection{Positively oriented matroids}
A theorem of Ardila, Rincon, and Williams gives yet another characterization of positroids: they are exactly the underlying matroids of positively orientable matroids.  

A \defn{chirotope} of rank $k$ oriented matroid $\M$ on $[n]$ is a function $\chi:[n]^k \to \{-1,0,1\}$ satisfying the axioms
\begin{enumerate}
\item
The map $\chi$ is alternating:
$$
\chi(i_{\sigma(1)},i_{\sigma(2)},\ldots,i_{\sigma(k)}) = \sign(\sigma) \chi(i_1,i_2,\ldots,i_k)
$$
where $\sign(\sigma)$ is the sign of the permutation $\sigma$.
\item
For any $a_1,a_2,a_3,a_4,i_3,i_4,\ldots,i_k \in [n]$, we have
$$
\text{if } \varepsilon \coloneqq \chi(a_1,a_2,i_3,\ldots,i_k) \chi(a_3,a_4,i_3,\ldots,i_k) \in \{-1,1\},
$$
then either
\begin{align*}
\chi(a_3,a_2,i_3,\ldots,i_k) \chi(a_1,a_4,i_3,\ldots,i_k) &= \varepsilon, \text{or}
\\
\chi(a_2,a_4,i_3,\ldots,i_k) \chi(a_1,a_3,i_3,\ldots,i_k) &= \varepsilon.
\end{align*}
\end{enumerate}

Suppose $\chi$ is a chirotope of rank $k$ on $[n]$.  Then the set $\M_\chi = \{I \in \binom{[n]}{k} \mid \chi(I) \neq 0\}$ is the underyling matroid of $\chi$.

A chirotope $\chi$ is positively orientable if there exists a subset $A \subseteq [n]$ so that $$(-1)^{|A \cap \{i_1,i_2,\ldots,i_k\}|} \chi(i_1,i_2,\ldots,i_k) \geq 0,$$ whenever $i_1 < i_2 < \cdots < i_k$.  It is clear that any point $X \in \Gr(k,n)_{\geq 0}$ gives a positively oriented matroid.

\begin{theorem}[\cite{ARW}]
Suppose the chirotope $\chi$ is positively orientable.  Then the underlying matroid $\M_\chi$ is a positroid.
\end{theorem}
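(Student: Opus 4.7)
The plan is to reduce to sort-closedness and invoke Theorem \ref{thm:sortclosed}. First, by replacing $\chi$ with the twisted chirotope $\chi'(i_1,\ldots,i_k) = (-1)^{|A \cap \{i_1,\ldots,i_k\}|}\chi(i_1,\ldots,i_k)$, I would reduce to the case $A = \emptyset$. The twist is easily checked to be a chirotope (the sign factor is alternating in the indices, and it enters the products in chirotope axiom (2) symmetrically, so $\varepsilon$ and both alternatives are multiplied by the same global sign). Crucially, $\M_{\chi'} = \M_{\chi}$, so it suffices to prove the theorem under the assumption that $\chi(t_1,\ldots,t_k) \geq 0$ whenever $t_1 < t_2 < \cdots < t_k$.

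Second, I would extract from chirotope axiom (2), together with positivity, the following \emph{positive exchange relation}: for any $a_1 < a_2 < a_3 < a_4$ and any sorted tuple $S = (s_3 < \cdots < s_k)$ disjoint from $\{a_1,a_2,a_3,a_4\}$, if $\chi(a_1,a_2,S) > 0$ and $\chi(a_3,a_4,S) > 0$, then $\chi(a_1,a_3,S) > 0$ and $\chi(a_2,a_4,S) > 0$. Indeed, axiom (2) demands either $\chi(a_3,a_2,S)\chi(a_1,a_4,S) = +1$ or $\chi(a_2,a_4,S)\chi(a_1,a_3,S) = +1$; but the first is $-\chi(a_2,a_3,S)\chi(a_1,a_4,S) \leq 0$ by alternation and positivity, so the second must hold and both of its factors are positive. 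This is the sign-level avatar of the usual three-term Pl\"ucker relation, and it is enough to produce two ``swapped'' bases from any pair whose symmetric difference has size exactly $4$ with the correct nesting pattern.

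Third, I would prove sort-closedness by induction on $|I \triangle J|$. If $(I,J) = (\sort_1(I,J), \sort_2(I,J))$, there is nothing to show. Otherwise, choose the least $r$ with $i_r \neq j_r$ and WLOG $i_r < j_r$; since $(I,J)$ is unsorted, there must exist a later index $s$ with $i_s > j_s$. Walking along the indices of $I$ and $J$, I would isolate a minimal nested quadruple $a_1 < a_2 < a_3 < a_4$ with $\{a_1,a_3\} \subseteq I\setminus J$ and $\{a_2,a_4\} \subseteq J\setminus I$, sharing the rest $S := I\cap J$ of appropriate size; the positive exchange then replaces $(I,J)$ by $(I', J') = (\{a_1,a_2\}\cup S', \{a_3,a_4\}\cup S')$ of bases whose sortedness defect $\phi(I,J) := \#\{r: i_r > j_r\}$ strictly decreases, while keeping $I \cup J$ invariant as a multiset and keeping both chirotope values positive. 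Because $\sort_1, \sort_2$ depend only on the multiset $I \cup J$, the final sorted pair is $(\sort_1(I,J),\sort_2(I,J))$, both in $\M_\chi$. Invoking Theorem \ref{thm:sortclosed} then completes the proof.

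The main obstacle is the combinatorial bookkeeping of the iterated swaps in the third step: one must guarantee at each stage that a nested quadruple $a_1 < a_2 < a_3 < a_4$ with the correct color pattern and with $|I\cap J| \geq k-2$ can actually be selected so that the positive exchange relation applies verbatim. I expect this to be handled by a greedy choice that always targets the leftmost violation of sortedness and exchanges within a minimal window; a cleaner alternative would be to first apply the ordinary symmetric basis exchange axiom (a consequence of the chirotope axioms) to reduce $|I \triangle J|$ to $4$, where the positive exchange applies immediately, and then recurse. Either way the essential new ingredient beyond the matroid theory is the positive exchange relation of Step~2.
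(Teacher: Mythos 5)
The paper does not prove this theorem; it is stated with only a citation to \cite{ARW}, so there is no "paper's own proof" to compare against. What you are proposing is an alternative to the Ardila--Rincon--Williams argument. Your overall strategy — twist to kill $A$, extract a three-term exchange relation from the chirotope axiom, prove sort-closedness, invoke Theorem~\ref{thm:sortclosed} — is a genuinely different and, if completed, a rather clean combinatorial route. However, Step~2 has a real sign error, and Step~3 as sketched (the greedy induction) is not verified.

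The gap in Step~2: after twisting, positivity only says $\chi(t_1,\ldots,t_k)\geq 0$ when $t_1<\cdots<t_k$, i.e.\ for \emph{sorted} tuples. The tuples $(a_2,a_3,S)$, $(a_1,a_4,S)$, $(a_1,a_3,S)$, $(a_2,a_4,S)$ are not sorted when $S$ interleaves with $\{a_1,\ldots,a_4\}$, so "by alternation and positivity" does not give $\chi(a_2,a_3,S)\geq 0$, and the claimed conclusion "$\chi(a_1,a_3,S)>0$ and $\chi(a_2,a_4,S)>0$" is in general false. For instance, with $k=3$ and $a_1<a_2<s<a_3<a_4$, sorting shows $\chi(a_1,a_3,s)$ and $\chi(a_2,a_4,s)$ each carry a $-1$ from one transposition, so they are negative whenever nonzero. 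What does hold, and what Step~3 actually needs, is the exchange at the level of \emph{bases}, and it can be recovered with careful bookkeeping: set $n_i=\#\{s\in S: s<a_i\}$, so the sign of $\chi(a_i,a_j,S)$ (when nonzero, $i<j$) is $(-1)^{n_i+n_j}$. Then $\varepsilon=\chi(a_1,a_2,S)\chi(a_3,a_4,S)$ has sign $(-1)^{n_1+n_2+n_3+n_4}$, while $\chi(a_3,a_2,S)\chi(a_1,a_4,S)=-\chi(a_2,a_3,S)\chi(a_1,a_4,S)$ has sign $-(-1)^{n_1+n_2+n_3+n_4}$; these disagree, so axiom~(2) forces the other alternative, hence $\chi(a_1,a_3,S)\neq 0$ and $\chi(a_2,a_4,S)\neq 0$. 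The same parity computation with $(a_1,a_4,S),(a_2,a_3,S)$ in place of $(a_1,a_2,S),(a_3,a_4,S)$ handles the other unsorted pattern.

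For Step~3, you should use your "cleaner alternative," which does close the proof: if $|I\triangle J|=4$ one of the two exchange relations above produces $\sort_1(I,J)$ and $\sort_2(I,J)$; if $|I\triangle J|\geq 6$, the symmetric basis exchange theorem (valid for any matroid, hence for $\M_\chi$) gives $x\in I\setminus J$ and $y\in J\setminus I$ with $I'=I-x+y$ and $J'=J-y+x$ both bases, and since $I'\cup J'=I\cup J$ as multisets one has $\sort_i(I',J')=\sort_i(I,J)$, so induction on $|I\triangle J|$ finishes. The greedy "leftmost violation'' approach would need a nontrivial argument that a suitable quadruple with $|I\cap J|=k-2$ can always be extracted; I would drop it in favor of the basis-exchange reduction. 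With those two repairs, the argument is correct and is a shorter proof than the one in \cite{ARW}, at the cost of relying on Theorem~\ref{thm:sortclosed}.
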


We remark that da Silva had earlier conjectured that positively orientable matroids are realizable.

\subsection{The topology of $\Gr(k,n)_{\geq 0}$}
Let $\hBound(k,n) \coloneqq \Bound(k,n) \cup \{f_\emptyset\}$, where $f_\emptyset$ is a new minimal element.  Thus $\hBound(k,n)$ has unique minimum $f_\emptyset$ and unique maximum $\id$.  By convention, $\Pi_{f_\emptyset, \geq 0} \coloneqq \emptyset$.  

Recall that a poset is \defn{thin} if length two intervals are diamonds, and \defn{Eulerian} if in each interval $[x,y]$ where $x \neq y$, the number of odd rank elements equals the number of even elements.  We refer the reader to \cite{BB} for the definition of \defn{shellable}.

\begin{theorem}[\cite{Wil}] \label{thm:Wil}
The poset $\hBound(k,n)$ is thin, Eulerian, and shellable.
\end{theorem}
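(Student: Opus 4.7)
My plan is to split $\hBound(k,n)$ into intervals that avoid $f_\emptyset$, where everything reduces to affine Bruhat order, and intervals of the form $[f_\emptyset, g]$, which must be handled by hand. The first reduction is: via the bijection $\tS_n^k \to W_n$, $f \mapsto f \cdot \id^{-1}$, which is an isomorphism of Bruhat posets, $\Bound(k,n)$ corresponds to a finite lower order ideal $\B \subseteq W_n$, and the chosen dual partial order on $\Bound(k,n)$ corresponds to the reverse Bruhat order on $\B$. Any interval $[f,g]$ with $f,g \in \Bound(k,n)$ is therefore the order-dual of an affine Bruhat interval.

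For such intervals I would invoke three classical facts about Bruhat order on an arbitrary Coxeter group: thinness via the diamond/lifting lemma (Bj\"orner), the Eulerian property via Verma's formula $\mu(u,v) = (-1)^{\ell(v) - \ell(u)}$, and EL-shellability via the Bj\"orner--Wachs reflection labeling (extended to all Coxeter groups by Dyer). All three properties are preserved under order-reversal, with the EL-labeling reversed accordingly. Moreover, $\Bound(k,n)$ is graded of rank $k(n-k)$ under the dual order, with rank function $d(g) = k(n-k) - \ell(g)$ (matching Theorem \ref{thm:main}(4)), so rank functions extend uniquely to $\hBound(k,n)$ by assigning $f_\emptyset$ rank $-1$.

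The remaining task concerns intervals $[f_\emptyset, g]$. The atoms of $\hBound(k,n)$ are the (dual) minimal elements of $\Bound(k,n)$, namely the $\binom{n}{k}$ translations $t_I$ for $I \in \binom{[n]}{k}$, each of affine length $k(n-k)$. For thinness of rank-two intervals, I would verify that every $g \in \Bound(k,n)$ with $\ell(g) = k(n-k)-1$ is covered in affine Bruhat by exactly two of the $t_I$'s, using the cover description in $\tS_n^k$ recalled in the proof of Proposition \ref{prop:half} (swap of $f(i), f(j)$ with no $f(a)$ strictly between) together with the bounded condition $i \leq f(i) \leq i+n$. For shellability I would extend the (dualized) Bj\"orner--Wachs labeling by assigning each edge $f_\emptyset \lessdot t_I$ a symbol encoding $I$ (e.g.\ the lex rank of $I$ in $\binom{[n]}{k}$) lying below all affine Bruhat labels, chosen so that the unique rising saturated chain of $[f_\emptyset, g]$ must begin with the edge to $t_{I_1(g)}$, where $I_1(g)$ is the first entry of the Grassmann necklace of $g$. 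By Lemma \ref{lem:tI} and Oh's Theorem \ref{thm:oh}, $t_{I_1(g)} \leq g$ holds in $\Bound(k,n)$ and $I_1(g)$ is the lex-minimal such $I$; composed with the EL-labeling on the affine-Bruhat interval $[t_{I_1(g)}, g]$, this furnishes both uniqueness and lex-minimality of the increasing maximal chain. Eulerianness of $[f_\emptyset, g]$ then follows formally from thinness plus shellability (or directly from $\mu([f_\emptyset, g]) = -1 + \sum_I \mu([t_I, g]) = (-1)^{d(g)+1}$ via Verma's formula on each $[t_I, g]$ interval).

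The main obstacle is verifying that this extended labeling is genuinely an EL-labeling on every $[f_\emptyset, g]$ simultaneously. Uniqueness of the rising chain is the Grassmann-necklace characterization of the lex-minimal $t_I \leq g$, which I expect to follow cleanly from Oh's theorem. The harder half is lex-minimality of the rising chain among \emph{all} saturated chains of $[f_\emptyset, g]$: this requires that any other initial edge $f_\emptyset \lessdot t_J$ with $J >_{\mathrm{lex}} I_1(g)$ cannot subsequently produce a smaller label sequence, which will hinge on the interaction between the choice of $J$ and the subword/exchange property governing how saturated chains from $t_J$ to $g$ behave in the affine Bruhat order.
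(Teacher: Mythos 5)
The paper does not prove this statement; it cites Williams \cite{Wil} outright and only remarks that the weaker claim for $\Bound(k,n)$ alone follows from Coxeter theory, since $\Bound(k,n)$ is dual to a lower order ideal in affine Bruhat order. Your treatment of intervals $[f,g]$ with $f \neq f_\emptyset$ via Dyer's reflection EL-labeling and Verma's formula reproduces that remark and is fine. The content of Williams's theorem is entirely in the intervals $[f_\emptyset,g]$, and here the gap in your sketch is more serious than the ``lex-minimality is hard'' issue you flag. With the labeling you describe --- every bottom edge $f_\emptyset \lessdot t_I$ gets a label strictly below all Bruhat reflection labels --- a saturated chain of $[f_\emptyset,g]$ is increasing if and only if its portion from $t_J$ to $g$ is a BW-increasing chain of $[t_J,g]$, since the bottom label is automatically below the second label. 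But by Lemma \ref{lem:tI} and Oh's theorem, $t_J \leq g$ in $\hBound(k,n)$ for \emph{every} $J \in \M(g)$, and for each such $J$ the Bruhat interval $[t_J,g]$ has a (unique) increasing chain. So as soon as $|\M(g)| > 1$ --- i.e.\ essentially always --- your proposed labeling has at least two increasing maximal chains of $[f_\emptyset,g]$, and is therefore not an EL-labeling. No choice of ``symbol encoding $I$'' below the Bruhat labels can fix this; the labels on the bottom edges cannot see which continuation will be increasing. You would need to interleave labels with the Bruhat ones (or switch to a CL/recursive-atom-ordering argument, which is closer to what Williams actually does) so that for all but one $J$ the step $f_\emptyset \lessdot t_J$ is a descent against the continuing chain.

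Two smaller points. The identity $\mu([f_\emptyset,g]) = -1 + \sum_I \mu([t_I,g])$ is not the M\"obius recursion, which sums over all $z$ with $f_\emptyset < z \leq g$, not only the atoms; you should drop the parenthetical ``direct'' computation and rely on Bj\"orner's theorem (graded, thin, and shellable with $\hat 0$ and $\hat 1$ implies a regular CW sphere, hence Eulerian). And thinness of the bottom rank-two intervals, i.e.\ that each $g$ of length $k(n-k)-1$ is covered in Bruhat by exactly two $t_I$'s, is not immediate from ``the cover description plus the bounded condition'': in an infinite Coxeter group a length-$(k(n-k)-1)$ element can have many Bruhat covers, and you must argue that exactly two of them land in the ideal $\Bound(k,n)$, i.e.\ are of the form $t_I$. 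That is a real lemma, not a remark.
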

The weaker statement that $\Bound(k,n)$ is thin, Eulerian, and shellable (that is, every interval is shellable) follows from general results in Coxeter group theory and the fact that $\Bound(k,n)$ is dual to a convex subposet of a Bruhat order \cite{KLS}.

Lusztig \cite{Lusintro} showed that $\Gr(k,n)_{\geq 0}$ is contractible, and Postnikov, Speyer, and Williams \cite{PSW} showed that the stratification $\Gr(k,n)_{\geq 0} = \bigcup_f \Pi_{f,\geq 0}$ is a CW complex.  It follows from Theorem \ref{thm:Wil} and results of Bj\"orner \cite{Bjo} that $\hBound(k,n)$ is the face poset of some regular CW complex homeomorphic to a ball.  It is conjectured that the $\Gr(k,n)_{\geq 0} = \bigcup_f \Pi_{f,\geq 0}$ itself is a regular CW complex homeomorphic to a ball.  Rietsch and Williams \cite{RW} showed that this statement is true up to homotopy-equivalence.

\section{Positroid varieties}
\label{sec:Schub}
So far we have concerned ourselves with the combinatorics of planar bipartite graphs and the behavior of points in the TNN Grassmannian $\Gr(k,n)_{\geq 0}$.  However, to go further it is very helpful to be able to use the language of algebraic geometry.  This leads us to the study of the \defn{positroid varieties} that form a stratification of the complex Grassmannian $\Gr(k,n)$ \cite{KLS}.

\subsection{Schubert varieties}
We refer the reader to \cite{Ful} for the material of this section.  Let $I \in \binom{[n]}{k}$ be a $k$-element subset of $[n]$.  Let $F_\bullet = \{ 0 = F_0 \subset F_1 \subset \cdots F_{n-1} \subset F_n = \C^n\}$ be a flag in $\C^n$, so that $\dim F_i = i$.  
The Schubert cell $\oX_I(F_\bullet)$ is given by
\begin{equation}\label{eq:schub}
\oX_I(F_\bullet) \coloneqq \{X \in \Gr(k,n) \mid \dim(X \cap F_j) =\#(I \cap [n-j+1,n]) 
\text{ for all } j \in [n]\}.
\end{equation}
The Schubert variety $X_I(F_\bullet)$ is given by
\begin{equation}
X_I(F_\bullet) \coloneqq \{X \in \Gr(k,n) \mid \dim(X \cap F_j) \geq \#(I \cap [n-j+1,n]) 
\text{ for all } j \in [n]\}.
\end{equation}
We have $X_I(F_\bullet) = \overline{\oX_I(F_\bullet)}$.  Also, $X_{[k]}(F_\bullet) = \Gr(k,n)$ and $\codim(X_I(F_\bullet)) = i_1+i_2+\cdots+i_k - (1+2+\cdots+k)$, where $I = \{i_1,i_2\ldots,i_k\}$.  Here and elsewhere, we always mean complex (co)dimension when referring to complex subvarieties.

Let $E_\bullet$ be the standard flag defined by $E_i = \spn(e_n,e_{n-1},\ldots,e_{n-i+1})$.  Then we set the standard Schubert varieties to be $X_I \coloneqq X_I(E_\bullet)$.  Suppose $v_1,v_2,\ldots,v_n$ are the columns of a $k\times n$ matrix (with respect to the basis $e_1,e_2,\ldots,e_n$) representing $X \in \Gr(k,n)$.  Then the condition $\dim(X \cap E_j) = d$ is equivalent to the condition $\dim \spn(v_1,\ldots,v_{n-j}) = k-d$.  Thus the Schubert variety $X_I(E_\bullet)$ is cut out by rank conditions on initial sequences of columns of $X$.

\subsection{Positroid varieties}
Let the generator $\chi$ of the cyclic group $\Z/n\Z$ act on $[n]$ by the formula $\chi(i) = i+1 \mod n$ (cf. Section \ref{sec:TNNGrass}).  Then $\chi$ also acts on subsets of $[n]$.  For $\I=(I_1,\ldots,I_n) \in \binom{[n]}{k}^n$, define the \defn{open positroid variety} $\oPi_\I \subset \Gr(k,n)$ by 
\begin{equation}\label{eq:posdef}
\oPi_\I \coloneqq \oX_{I_1} \cap \chi(\oX_{\chi^{-1}(I_2)}) \cap \cdots \cap \chi^{n-1}(\oX_{\chi^{1-n}(I_n)}).
\end{equation}
If $f \in \Bound(k,n)$ then we set $\oPi_f = \oPi_{\I(f)}$, where $\I(f)$ is the Grassmann necklace of $f$.  For any $X \in \Gr(k,n)$, we have defined in \eqref{eq:fX} $f_X \in \Bound(k,n)$.  It follows from the definitions that $X \in \oPi_{f_X}$, and that $\oPi_\I$ is empty unless $\I$ is a Grassmann necklace.

\begin{proposition}
The subvariety $\oPi_\I$ is nonempty if and only if $\I$ is a Grassmann necklace.
\end{proposition}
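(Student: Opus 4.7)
The plan is to reformulate the defining condition of $\oPi_\I$ in terms of non-vanishing Pl\"ucker coordinates, and then match this reformulation against Proposition \ref{prop:three}. By the greedy property stated in Section \ref{sec:Schubmat}, a point $Y \in \Gr(k,n)$ lies in the Schubert cell $\oX_J$ if and only if $J$ is the lexicographically minimal (with respect to the standard order $\leq_1 = \leq$) subset with $\Delta_J(Y) \neq 0$. Since the cyclic shift $\chi$ sends $\Delta_J(X)$ to $\pm \Delta_{\chi(J)}(\chi(X))$ and hence preserves the vanishing pattern up to relabeling, the condition $X \in \chi^{a-1}(\oX_{\chi^{1-a}(I_a)})$ translates precisely to: $I_a$ is the $\leq_a$-lex-minimal $k$-subset with $\Delta_{I_a}(X) \neq 0$. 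Intersecting over all $a$, we get the reformulation
$$
\oPi_\I = \left\{X \in \Gr(k,n) \;\middle|\; I_a = \min\nolimits_{\leq_a}\{J \in \tbinom{[n]}{k} \mid \Delta_J(X) \neq 0\} \text{ for every } a \in [n]\right\}.
$$

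For the ``only if'' direction, suppose $X \in \oPi_\I$. The reformulation above says exactly that $\I$ coincides with the Grassmann necklace $\I_X$ of $X$ as defined just before Proposition \ref{prop:three}. By that proposition $\I_X$ is a Grassmann necklace, so $\I$ is a Grassmann necklace.

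For the ``if'' direction, suppose $\I$ is a Grassmann necklace. By Theorem \ref{thm:Grassorder} there is a unique $f \in \Bound(k,n)$ with $\I(f) = \I$. By Theorem \ref{thm:main}(3), the positroid cell $\Pi_{f,>0}$ is nonempty (homeomorphic to $\R_{>0}^d$ for some $d \geq 0$). For any $X \in \Pi_{f,>0}$, the matroid $\M_X$ is the positroid with Grassmann necklace $\I(\M_X) = \I(f) = \I$ by Theorem \ref{thm:main}(2) and \eqref{eq:IM}. In particular $I_a = \min_{\leq_a}\{J \mid \Delta_J(X) \neq 0\}$ for every $a$, so $X$ satisfies the reformulated condition and $X \in \oPi_\I$.

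The only genuinely careful step is the unpacking in the first paragraph: one has to verify that the $\chi$-twist on Schubert cells really does correspond to cyclically rotating the lex-minimality condition from $\leq_1$ to $\leq_a$, which hinges on the fact that $\chi$ permutes Pl\"ucker coordinates up to sign and therefore preserves their supports. Once this is in place, both implications reduce immediately to Proposition \ref{prop:three}, Theorem \ref{thm:Grassorder}, and the existence half of Theorem \ref{thm:main}.
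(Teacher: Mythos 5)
Your proof is correct and follows the same route as the paper: the ``if'' direction invokes Theorem \ref{thm:main} to produce a point of $\Pi_{f,>0}$, exactly as in the paper's one-sentence proof, and the ``only if'' direction spells out the observation (stated in the paper immediately before the proposition, namely that $X \in \oPi_{f_X}$ for every $X$) by unpacking the cyclic-shift twist of the Schubert cells into the lex-minimality characterization of $\I_X$ and then appealing to Proposition \ref{prop:three}. The added care in verifying that $\chi$ intertwines $\leq_1$-minimality with $\leq_a$-minimality is a useful elaboration but does not change the underlying argument.
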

\begin{proof}
Suppose $f \in \Bound(k,n)$.  We need to show that $\oPi_f$ is non-empty.  But this follows from our construction of points in $\Gr(k,n)_{\geq 0}$ (Theorem \ref{thm:main}).
%
\end{proof}

Define the \defn{positroid variety} $\Pi_f$ to be the Zariski closure of $\oPi_f$ in $\Gr(k,n)$.  It is shown in \cite{KLS} that 
\begin{equation*}
\Pi_\I = \X_{I_1} \cap \chi(\X_{\chi^{-1}(I_2)}) \cap \cdots \cap \chi^{n-1}(\X_{\chi^{1-n}(I_n)}).
\end{equation*}
From the definitions, we have $\Pi_{f,>0} = \oPi_f \cap \Gr(k,n)_{\geq 0}$, and $\Pi_{f,\geq 0} = \Pi_f \cap \Gr(k,n)_{\geq 0}$.

\begin{proposition}\label{prop:irred}
The positroid variety $\Pi_f$ is irreducible.
\end{proposition}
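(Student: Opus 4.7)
The plan is to exhibit $\Pi_f$ as the Zariski closure of the image of an irreducible algebraic variety under an algebraic morphism, from which irreducibility follows at once.

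First, by Theorem~\ref{thm:main}(3), fix a reduced planar bipartite graph $G$ with $f_G = f$, and set $d = k(n-k) - \ell(f)$. The Pl\"ucker coordinates $\Delta_I(N)$ are gauge-invariant polynomial functions of the edge weights of $N$, so the boundary measurement construction extends to an algebraic morphism $M_G^{\C} \colon T_G \dashrightarrow \Gr(k,n)$ from the complex algebraic torus $T_G \cong (\C^*)^d$ of isomorphism classes of $\C^*$-connections on $G$. The map is regular on the Zariski-open subset $U' \subseteq T_G$ where $\Delta_I \neq 0$ for every $I \in \M(f)$. Since $\Delta_I(N) \equiv 0$ for $I \notin \M(f)$ (no matchings exist with such boundary), for $N \in U'$ we have $\M_{X(N)} = \M(f)$, and so Theorem~\ref{thm:main}(2) gives $M_G^{\C}(U') \subseteq \oPi_f$.

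Second, set $Y \coloneqq \overline{M_G^{\C}(U')}$ (Zariski closure in $\Gr(k,n)$). Then $Y$ is irreducible (as $U'$ is irreducible) and satisfies $\Pi_{f,>0} \subseteq Y \subseteq \Pi_f$. To conclude $Y = \Pi_f$ it suffices to show $\oPi_f \subseteq Y$, for then $\Pi_f = \overline{\oPi_f} \subseteq Y$. I would establish this inclusion by rerunning the bridge--lollipop reduction from the proof of Theorem~\ref{thm:main}(1) over $\C$ rather than $\R_{\geq 0}$. Given any $X \in \oPi_f$, Proposition~\ref{prop:reduce}'s scalar $a = \Delta_{I_{i+1}}(X)/\Delta_{I_{i+1}\cup\{i\}\setminus\{i+1\}}(X)$ is a well-defined nonzero complex number (both subsets appearing are in the positroid $\M(f)$, so both Pl\"ucker coordinates are nonvanishing on $\oPi_f$), and $X \mapsto X \cdot x_i(-a)$ strictly decreases $f_X$ in the partial order on $\Bound(k,n)$. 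Complex analogues of Lemmas~\ref{lem:proj1}--\ref{lem:proj2} (essentially linear algebra over $\C$) take care of the lollipop steps. Iterating terminates at a torus-fixed point and exhibits $X = X(N)$ for some $N \in U'$, so $X \in Y$.

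The main obstacle is to verify that Proposition~\ref{prop:reduce} itself carries through over $\C$. Its positive-case proof invokes Lemma~\ref{lem:reduce}, whose key inequality uses positivity essentially to force $\M_{X'} \subseteq \M_X$ and preserve total nonnegativity. Over $\C$ one does not need total nonnegativity of $X'$; what one needs is the rank equality $f_{X'} = f_X s_i$, which is a purely linear-algebraic statement about the column spans of $X' = X \cdot x_i(-a)$. Since $x_i(-a) \in \GL(n)$ alters only column $i+1$, only the rank conditions involving the block $\{v_i, v_{i+1}\}$ change, and the specific choice of $a$ was designed precisely to kill $\Delta_{I_{i+1}}(X')$, forcing the predicted swap of values in the bounded affine permutation. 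Once this is in hand, the combinatorial framework of Theorem~\ref{thm:main}(1) (existence of a bridge position, termination at a torus-fixed point via the partial order on $\Bound(k,n)$) applies without modification, completing the argument.
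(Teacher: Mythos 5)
Your strategy is genuinely different from the paper's. The paper cites \cite{KLS} for the fact that $\Pi_f = \pi(X_v^w)$ is the projection of an (irreducible) Richardson variety in the full flag variety, and infers irreducibility from that. You instead try to exhibit $\Pi_f$ as the closure of the image of the irreducible torus $T_G \cong (\C^*)^d$ under the boundary-measurement map. If it worked, this would be a more elementary and self-contained argument, and would also give Theorem \ref{thm:Zariskidense} for free. In fact the paper itself remarks, right after the statement, that a direct proof of irreducibility is "surprisingly difficult," so one should be alert that the extension of the bridge reduction to $\C$ is not routine.

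The gap is precisely where you appeal to the nonvanishing of the denominator of $a$. You write: "both subsets appearing are in the positroid $\M(f)$, so both Pl\"ucker coordinates are nonvanishing on $\oPi_f$." This is false. For $X \in \oPi_f$ (complex), the containment $\M_X \subseteq \M(f)$ can be strict: membership $I \in \M(f)$ only guarantees that $\Delta_I$ is \emph{not identically zero} on $\oPi_f$, not that it is nonvanishing pointwise. The open positroid variety $\oPi_f$ is defined by nonvanishing of the $n$ Grassmann-necklace Pl\"ucker coordinates, not of all of $\M(f)$. In the paper's proof of Proposition \ref{prop:reduce}, the nonvanishing of $\Delta_{I_{i+1}\cup\{i\}\setminus\{i+1\}}$ is established via a Pl\"ucker relation whose right-hand side is a sum of nonnegative terms, one of which is strictly positive; this is precisely the step where total nonnegativity is used in an essential way, and it does not survive over $\C$.

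To see this concretely: take $\Pi_{\id} = \Gr(2,4)$, and the complex point $X$ with $\Delta_{12}=\Delta_{34}=\Delta_{14}=1$, $\Delta_{23}=-1$, and $\Delta_{13}=\Delta_{24}=0$ (this satisfies the Pl\"ucker relation $\Delta_{13}\Delta_{24}=\Delta_{12}\Delta_{34}+\Delta_{14}\Delta_{23}$). All four necklace Pl\"uckers are nonzero, so $X\in\oPi_{\id}$. For each bridge position $i$ the denominator $\Delta_{I_{i+1}\cup\{i\}\setminus\{i+1\}}(X)$ is either $\Delta_{13}(X)$ or $\Delta_{24}(X)$, both zero, so \emph{every} bridge reduction fails at $X$. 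You cannot repair this by restricting to a dense open subset where the relevant Pl\"uckers are nonzero, because certifying that its closure is all of $\Pi_f$ is exactly what you would need irreducibility (or a separate dimension count) to know --- the very things you are trying to prove. Without a new idea to handle such points, the argument as written does not close.
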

In \cite{KLS}, it is shown that $\Pi_f$ is the image of a Richardson variety $X_v^w \subseteq \Fl(n)$ under a projection map $\pi: \Fl(n) \to \Gr(k,n)$ from the full flag variety to the Grassmannian.  The irreducibility then follows from the fact that Richardson varieties are irreducible.

It is surprisingly difficult (at least for me) to prove Proposition \ref{prop:irred} directly.  Indeed, the intersection \eqref{eq:posdef} is usually not transverse, and ideals generated by Pl\"ucker coordinates are in general not prime.

\begin{theorem}\label{thm:Zariskidense}
$\Pi_f$ has codimension $\ell(f)$, and $\Pi_{f,>0}$ is a Zariski-dense subset of $\Pi_f$.  We have $\Pi_f = \bigsqcup_{g \geq f} \oPi_g$.
\end{theorem}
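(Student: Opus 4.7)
My plan is to prove the three assertions in tandem. Let $G = G(f)$ be a reduced planar bipartite graph from Theorem \ref{thm:main}(3), parametrizing $\Pi_{f,>0}$ bijectively via edge weights, with $(\L_G)_{>0} \cong \R_{>0}^d$ for $d = k(n-k) - \ell(f)$. By Theorem \ref{thm:matchingplucker} the Pl\"ucker coordinates of $X(N)$ are polynomials with integer coefficients in the edge weights, so $M_G$ extends to a rational map $M_G^\C: (\L_G)_\C \cong (\C^*)^d \dashrightarrow \Gr(k,n)$ defined over $\R$. Because $\R_{>0}^d$ is Zariski-dense in $(\C^*)^d$, and $M_G^\C$ sends this dense set into the Zariski-closed subset $\Pi_f \subseteq \Gr(k,n)$, its image lies inside $\Pi_f$.

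Since $M_G$ is a real-analytic diffeomorphism onto a $d$-real-dimensional manifold, its real Jacobian has rank $d$ at every positive real point. As $M_G^\C$ is defined over $\R$, the complex Jacobian at such a real point is the same matrix regarded over $\C$, hence also of $\C$-rank $d$. Therefore the Zariski closure $Y \coloneqq \overline{M_G^\C((\C^*)^d)}^{\mathrm{Zar}}$ is a $d$-dimensional irreducible subvariety of $\Pi_f$. To close the loop I need the matching upper bound $\dim \Pi_f \le d$, for which I would invoke \cite{KLS}, where $\Pi_f$ is realized as the image of a Richardson variety $X_v^w \subseteq \Fl(n)$ of dimension $\ell(w)-\ell(v) = d$ under the projection $\pi: \Fl(n) \to \Gr(k,n)$. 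Combined with the irreducibility of $\Pi_f$ (Proposition \ref{prop:irred}), this forces $Y = \Pi_f$, so $\dim \Pi_f = d$ and $\Pi_{f,>0}$ is Zariski-dense in $\Pi_f$.

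For the stratified decomposition, I apply the above density result to every $g \in \Bound(k,n)$ to obtain $\overline{\Pi_{g,>0}}^{\mathrm{Zar}} = \Pi_g$. Theorem \ref{thm:partialorder} then gives
$$\Pi_f = \overline{\Pi_{f,>0}}^{\mathrm{Zar}} \subseteq \overline{\Pi_{f,\ge 0}}^{\mathrm{Zar}} = \bigcup_{g \le f} \overline{\Pi_{g,>0}}^{\mathrm{Zar}} = \bigcup_{g \le f} \Pi_g,$$
while the reverse inclusion $\bigcup_{g \le f} \Pi_g \subseteq \Pi_f$ follows from Theorem \ref{thm:partialorder} by taking Zariski closures of $\Pi_{g,>0} \subseteq \Pi_{f,\ge 0} \subseteq \Pi_f$. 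Hence $\Pi_f = \bigcup_{g \le f} \Pi_g$; refining this via the partition $\Gr(k,n) = \bigsqcup_{h \in \Bound(k,n)} \oPi_h$ and a short induction on the poset produces the disjoint-union decomposition into open positroid strata $\oPi_g$ (with $g \le f$ in $\Bound(k,n)$; note that since the convention on $\Bound(k,n)$ is dual to Bruhat, this is the direction making $\dim \Pi_g \le \dim \Pi_f$, which I believe is the intended reading of the theorem statement).

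The main obstacle is matching $\dim_\C \Pi_f$ with $\dim_\R \Pi_{f,>0}$. The crux is that $M_G^\C$ is defined over $\R$, so the real injectivity of $M_G$ on the positive locus upgrades to full $\C$-rank $d$ at real points of the Jacobian; pairing this with the independent upper bound $\dim \Pi_f \le d$ from the Richardson variety description closes the circle, after which the decomposition is a purely formal consequence of Theorem \ref{thm:partialorder} combined with Zariski density.
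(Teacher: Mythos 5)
Your proof is correct and follows essentially the same route as the paper: the upper bound on $\dim\Pi_f$ is taken from the Richardson-variety realization in \cite{KLS}, the matching lower bound on $\dim\overline{\Pi_{f,>0}}^{\mathrm{Zar}}$ comes from the network parametrization of Theorem \ref{thm:main}, irreducibility (Proposition \ref{prop:irred}) then forces equality, and the stratification falls out of Theorem \ref{thm:partialorder}. The only divergence is cosmetic: the paper gets the lower bound directly from the homeomorphism $\Pi_{f,>0}\simeq\R^d$ (a subset of the real points of $\Pi_f$ of real dimension $d$ forces $\dim_\C\overline{\Pi_{f,>0}}^{\mathrm{Zar}}\ge d$), whereas you pass through complexifying $M_G$ and arguing full rank of the Jacobian at a real point; both are fine, yours is just a bit more machinery for the same conclusion. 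You are also right that the index in the displayed decomposition should read $g\le f$ in the $\Bound(k,n)$ order (equivalently $g\ge f$ in Bruhat order), since with the paper's convention $\id$ is the maximum of $\Bound(k,n)$ and $\Pi_{\id}=\Gr(k,n)=\bigsqcup_{g}\oPi_g$ must run over all $g$; the ``$\ge$'' in the statement (and repeated in the paper's own proof) is a sign slip.
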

\begin{proof}
The first statement is proved in \cite{KLS} by the identification mentioned above of $\Pi_f$ with the projection $\pi(X_v^w)$ of a Richardson variety.
We have shown in Theorem \ref{thm:main} that $\Pi_{f,>0} \simeq \R^{k(n-k)-\ell(f)}$.  The Zariski closure of $\Pi_{f,>0}$ must thus be a subvariety of $\Pi_f$ with dimension at least $k(n-k)-\ell(f)$, which is equal to the dimension of $\Pi_f$.  Since $\Pi_f$ is irreducible by Proposition \ref{prop:irred}, the first claim follows.  For the second claim, the inclusion $\Pi_f \supseteq \bigsqcup_{g \geq f} \oPi_g$ follows from Theorem \ref{thm:partialorder}.  The reverse inclusion is proved in the same way as in Theorem \ref{thm:partialorder}.
\end{proof}

In fact, a stronger version of Proposition \ref{prop:irred} holds.
\begin{theorem}[\cite{KLS}] \label{thm:prime} Let $\M \in \PP(k,n)$ be a positroid.  Then the homogeneous ideal $\langle \Delta_I \mid I \in \M \rangle$ is a prime ideal.
\end{theorem}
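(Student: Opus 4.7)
I will interpret the theorem as the natural statement that the homogeneous vanishing ideal of the positroid variety $\Pi_f$ is the linearly generated ideal $I_\M \coloneqq \langle \Delta_J \mid J \notin \M\rangle$, matching the ``linearly generated'' assertion of Section \ref{sec:Schub}. Since $\Pi_f$ is irreducible by Proposition \ref{prop:irred}, its true vanishing ideal $I(\Pi_f) \subset \C[\hGr(k,n)]$ is automatically prime, so the real task is to show $I_\M = I(\Pi_f)$.

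The plan has two steps. First I would establish the set-theoretic equality $V(I_\M) = \Pi_f$. By Oh's theorem (Theorem \ref{thm:oh}), $\M = \bigcap_{a=1}^n \S_{I_a,a}$, so
\[
I_\M \;=\; \sum_{a=1}^n \bigl\langle \Delta_J \,\big|\, J \notin \S_{I_a,a}\bigr\rangle \;=\; \sum_{a=1}^n J_a,
\]
where each $J_a$ is the Pl\"ucker-generated vanishing ideal of the cyclically rotated Schubert variety $\chi^{a-1}(X_{\chi^{1-a}(I_a)})$. Each $J_a$ is classically known to be prime and to cut out its Schubert variety scheme-theoretically, via the Hodge/Lakshmibai--Musili--Seshadri standard monomial theory. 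Combined with the definition \eqref{eq:posdef} and Theorem \ref{thm:Zariskidense}, this gives $V(I_\M) = \bigcap_a X_{I_a,a} = \Pi_f$ set-theoretically.

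The second and genuinely hard step is to upgrade this set-theoretic equality to the ideal-theoretic one, i.e.\ to show that $\sum_a J_a$ is already radical. My approach would be via Frobenius splitting, following the perspective mentioned in remark (a) of the introduction and developed in \cite{KLS2}. The Grassmannian carries a canonical Frobenius splitting, coming from the standard splitting of $\Fl(n)$, which compatibly splits every Schubert variety; the splitting can be chosen $\Z/n\Z$-equivariant, so it compatibly splits every cyclic rotation of every Schubert variety as well. By the general theory of compatible splittings (Mehta--Ramanathan, Brion--Kumar), the scheme-theoretic intersection of finitely many compatibly split subvarieties is again compatibly split, hence reduced, and its vanishing ideal is the sum of the vanishing ideals of the factors. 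Applied to our $n$ cyclic Schubert varieties, this yields $I_\M = I\bigl(\bigcap_a X_{I_a,a}\bigr) = I(\Pi_f)$, which is prime by Proposition \ref{prop:irred}.

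The main obstacle is producing and exploiting the cyclically invariant compatible Frobenius splitting: one needs a single splitting of $\Gr(k,n)$ that is simultaneously compatible with all $n$ cyclic rotations of every standard Schubert variety. Constructing this $\Z/n\Z$-invariant splitting is the essential technical input; once it is in hand, the theorem is a formal consequence of the compatible-splitting calculus. Should this route prove inconvenient (for instance in characteristic zero, where one must argue via reduction mod $p$), a parallel plan is to use the description $\Pi_f = \pi(X_v^w)$ as a projection of a Richardson variety in $\Fl(n)$: standard monomial theory gives primality of the Pl\"ucker-generated ideal of $X_v^w$, and one transfers it through $\pi$ using that $\pi$ has connected rational fibers and that the Grassmannian Pl\"uckers pull back to explicit generators on $\Fl(n)$.
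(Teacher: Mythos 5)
Your proposal matches the route the paper itself indicates: the text supplies no proof but defers to the Frobenius-splitting technology of \cite{KLS2}, which is precisely your two-step strategy of set-theoretically identifying $V(I_\M)$ with $\Pi_f$ via Oh's theorem and then deducing radicality of the sum of cyclic Schubert ideals from a cyclically invariant splitting compatible with all rotated Schubert varieties. You also correctly flag that the printed statement has the index condition reversed; the ideal in question is $\langle \Delta_I \mid I \notin \M\rangle$, consistent with the degree-one part of Theorem \ref{thm:Posideal}.
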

We will return to this ideal in Section \ref{sec:coord}.  The proof of Theorem \ref{thm:prime} depends on the technology of Frobenius splittings which we do not discuss here; see \cite{KLS2}.  It would be interesting to give a direct proof of Theorem \ref{thm:prime}.

A projective variety $Y \subseteq \P^n$ is \defn{projectively normal} if it is normal and the restriction map $\Gamma(\P^n,\O(k)) \to \Gamma(Y,\O(k))$ is surjective for all $k$.

\begin{thm}[\cite{KLS}]\label{thm:normal}
Positroid varieties are projectively normal, Cohen-Macaulay, and have rational singularities.
\end{thm}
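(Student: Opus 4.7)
The plan is to reduce to the case of Richardson varieties via the identification (recalled in the discussion following Proposition \ref{prop:irred}) that $\Pi_f = \pi(X_v^w)$, where $\pi\colon \Fl(n) \to \Gr(k,n)$ is the natural projection and $X_v^w \subset \Fl(n)$ is a Richardson variety. Richardson varieties are already known to be normal, Cohen--Macaulay, and to have rational singularities (by results of Brion, Lakshmibai--Littelmann, and Knutson--Woo--Yong), so the task is to transfer these properties along $\pi|_{X_v^w}$.

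The key geometric input I would establish first is that $\pi|_{X_v^w}\colon X_v^w \to \Pi_f$ is a surjective proper morphism satisfying $(\pi|_{X_v^w})_* \O_{X_v^w} = \O_{\Pi_f}$ and $R^i (\pi|_{X_v^w})_* \O_{X_v^w} = 0$ for $i > 0$. Given this, rational singularities of $\Pi_f$ follow from those of $X_v^w$ via a Leray spectral sequence argument applied to a resolution of singularities of $X_v^w$ composed with $\pi|_{X_v^w}$. Cohen--Macaulayness is then automatic for an irreducible variety with rational singularities in characteristic zero. For projective normality, normality follows from rational singularities, and the required surjectivity $\Gamma(\Gr(k,n), \O(d)) \twoheadrightarrow \Gamma(\Pi_f, \O(d))$ is a consequence of the Frobenius splitting already invoked in the proof of Theorem \ref{thm:prime}: compatible splitting of $\Pi_f$ inside $\Gr(k,n)$ forces $H^i(\Pi_f, \O(d)) = 0$ for $i > 0$ and $d \geq 1$, and $H^1(\Gr(k,n), I(\Pi_f)(d)) = 0$, which together yield surjectivity on sections.

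The main obstacle will be verifying the higher direct image vanishing for $\pi|_{X_v^w}$. Although $\pi$ itself is smooth, its restriction to $X_v^w$ need not be flat, and one must control the behavior of the fibers of $\pi|_{X_v^w}$ carefully. I expect the cleanest route is to use a Bott--Samelson-type resolution of $X_v^w$ intertwined with $\pi$, reducing to the standard cohomology vanishing for Bott--Samelson varieties; alternatively, one can reduce modulo $p$ and exploit the compatible Frobenius-split structure shared by $X_v^w$ and $\Pi_f$ to obtain the vanishing in positive characteristic, and then lift by semicontinuity. In either approach, the substantive content is packaged in known representation-theoretic and Frobenius-splitting machinery: positroid varieties inherit the good properties of Richardson varieties because they are their images under a sufficiently tame projection.
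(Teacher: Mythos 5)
The paper does not give a proof of this theorem: it is quoted from \cite{KLS} with the remark (made in the discussion of Theorem \ref{thm:prime}) that the arguments there rely on the Frobenius-splitting machinery of \cite{KLS2}, which the present article explicitly declines to develop. So there is no in-text proof to compare against. That said, your sketch is a faithful reconstruction of the strategy the paper alludes to: realize $\Pi_f$ as $\pi(X_v^w)$, establish that $\pi|_{X_v^w}$ is cohomologically trivial (via compatible Frobenius splittings in positive characteristic followed by semicontinuity, which is the route of \cite{KLS2}), and then transport normality, Cohen--Macaulayness, and rational singularities from the Richardson variety to its image. Two small points to sharpen. First, for the Leray argument to produce a resolution of $\Pi_f$ you also need $\pi|_{X_v^w}$ to be birational onto its image; this holds in the positroid setting (one takes $w$ a minimal coset representative for $W/W_P$), and is part of the structural theory in \cite{KLS}, but it is not automatic from the conditions you list. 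Alternatively, quote Kov\'acs's criterion, which only needs $\mathcal{O}_{\Pi_f} \to R\pi_*\mathcal{O}_{X_v^w}$ to admit a splitting, and which your hypotheses supply directly without birationality. Second, one should be alert to a potential circularity: the standard route to $\pi_*\mathcal{O}_{X_v^w} = \mathcal{O}_{\Pi_f}$ goes through Zariski's main theorem and hence presupposes normality of $\Pi_f$; the Frobenius-splitting route in \cite{KLS2} avoids this by establishing the pushforward identity and the higher vanishing first, in characteristic $p$, and deducing normality afterwards. Your outline is compatible with this, but the order of deductions matters and is worth making explicit.
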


See also Billey and Coskun \cite{BC}.

In brief, positroid varieties are in general singular, but the singularities are relatively mild.  Projective normality will be the most important property for us.  A normal variety has a good theory of Weil divisors.  In particular, we have a well-behaved notion of the divisors of poles and of zeros of a rational function, or rational form on a positroid variety $\Pi_f$.  This will be important in Section \ref{sec:form}.  Also, in Section \ref{sec:coord} we will discuss the homogeneous coordinate ring of a positroid variety by restricting sections from the Pl\"ucker embedding.  Projective normality implies that the resulting graded ring is intrinsic to $\Pi_f$.

The singularities of positroid varieties will be important to us again in Section \ref{sec:form2}.

\section{Cohomology class of a positroid variety}\label{sec:cohom}
In this section, we describe the cohomology class of a positroid variety in terms of affine Stanley symmetric functions.  We follow \cite{KLS} and \cite{LamAffineStanley}.
\subsection{The cohomology ring of the Grassmannian}
We shall work with singular cohomology with integer coefficients. 
Let $X$ be a smooth complex projective variety, and $Y \subset X$ a closed irreducible subvariety.  Then we have a cohomology class $[Y] \in H^{2d}(X,\Z)$ where $d$ is the codimension of $Y$.  Recall that two subvarieties $Y,Z \subset X$ intersect transversally, if the intersection $Y \cap Z$ is smooth and each component has dimension $\dim(Y) + \dim(Z) - \dim(X)$. 

\begin{theorem}[{\cite[Appendix B]{Ful}}]\label{thm:Ful}
Let $X$ be a nonsingular variety.  Let $Y, Z \subset X$ be closed irreducible subvarieties.  Suppose $Y$ and $Z$ intersect transversally.  Then we have
$$
[Y] \cdot [Z] = [Y \cap Z]
$$
in the cohomology ring $H^*(X)$.
\end{theorem}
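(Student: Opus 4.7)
The plan is to reduce the claim to a local calculation in a smooth open neighborhood of $Y \cap Z$, using Poincaré duality to convert cup products on $H^*(X)$ into intersection products in Borel--Moore homology.

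First I would set up the dictionary: for any closed irreducible subvariety $W \subseteq X$ of complex codimension $d$, the class $[W] \in H^{2d}(X;\Z)$ is defined as the Poincaré dual of the fundamental Borel--Moore class $[W]_{BM} \in H^{BM}_{2 \dim_\C W}(X;\Z)$, and under Poincaré duality cup product corresponds to intersection product. Hence the statement is equivalent to $[Y]_{BM} \cdot [Z]_{BM} = [Y \cap Z]_{BM}$ in $H^{BM}_{2(\dim Y + \dim Z - \dim X)}(X;\Z)$. The transversality hypothesis guarantees that this is exactly the Borel--Moore degree occupied by the fundamental class of $Y \cap Z$.

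Next, the intersection product is supported on $Y \cap Z$, which by hypothesis is smooth of pure dimension $\dim Y + \dim Z - \dim X$. So I can write $[Y]_{BM} \cdot [Z]_{BM} = \sum_i m_i [W_i]_{BM}$ for integer multiplicities $m_i$ and irreducible components $W_i$ of $Y \cap Z$, and the task reduces to showing each $m_i = 1$. Since intersection multiplicities are determined at generic points, I would pick a generic point $p_i \in W_i$ at which $Y$ and $Z$ are both smooth, and choose analytic coordinates $(x_1, \ldots, x_n)$ near $p_i$ with $Y = \{x_1 = \cdots = x_a = 0\}$ and $Z = \{x_{n-b+1} = \cdots = x_n = 0\}$, where $a = \codim Y$ and $b = \codim Z$. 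The Thom class of $Y$ is then represented by a bump form compactly supported in the $x_1, \ldots, x_a$ directions integrating to $1$; similarly for $Z$. Their cup product is a bump form in the combined transverse directions integrating to $1$, which represents the Thom class of $Y \cap Z$, giving $m_i = 1$.

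The main obstacle is locating a smooth point of both $Y$ and $Z$ on each component $W_i$ of $Y \cap Z$: the stated transversality hypothesis only asserts smoothness of $Y \cap Z$, not of $Y$ or $Z$ individually. A dimension count on $Y_{\mathrm{sing}} \cap Z$ and $Y \cap Z_{\mathrm{sing}}$ (each of complex codimension at least one in $Y \cap Z$, else the transversal dimension equality would fail) shows their union cannot contain the generic point of $W_i$; alternatively, one can bypass this issue by invoking Serre's Tor-formula for intersection multiplicities, which directly computes the multiplicity as $1$ at any point of transversal intersection, regardless of prior smoothness of $Y$ and $Z$.
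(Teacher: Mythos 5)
The paper offers no proof of this statement---it is simply cited from Fulton's Appendix B---so there is nothing in the text to compare against; I will assess your argument on its own terms.

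Your overall strategy is sound: pass to Borel--Moore homology via Poincar\'e duality, note the intersection product is supported on $Y\cap Z$, and reduce to a local multiplicity-one computation via the Thom class. The gap is in how you handle what you call the main obstacle, the potential non-smoothness of $Y$ or $Z$ along $Y\cap Z$. The dimension count you offer is unjustified as stated: you assert that $Y_{\mathrm{sing}}\cap Z$ has codimension $\geq 1$ in $Y\cap Z$ ``else the transversal dimension equality would fail,'' but nothing you have established rules out a component of $Y\cap Z$ being contained in $Y_{\mathrm{sing}}$. And the appeal to Serre's Tor formula is misstated: in the Cohen--Macaulay case that formula returns the length of the local ring of the intersection scheme at the generic point of the component, which exceeds $1$ exactly when the scheme-theoretic intersection is non-reduced there, so it does not automatically output $1$ ``regardless of prior smoothness.'' If one reads ``$Y\cap Z$ is smooth'' set-theoretically, the theorem is in fact \emph{false}: in $X=\P^3$ let $Y$ be a quadric cone and $Z$ a hyperplane tangent to $Y$ along a ruling. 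Set-theoretically $Y\cap Z$ is a smooth line of the expected dimension $2+2-3=1$, yet $[Y]\cdot[Z]$ is twice the class of a line, not equal to it.

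The clean fix is to read the hypothesis as scheme-theoretic smoothness of $Y\cap Z$ (the reading that makes the theorem true), and then notice that this already forces both $Y$ and $Z$ to be smooth along $Y\cap Z$ with tangent spaces meeting transversally, so your worry dissolves. At any $p\in Y\cap Z$ one has $T_p(Y\cap Z)=T_pY\cap T_pZ$ inside $T_pX$, since the ideal of the scheme-theoretic intersection is $I_Y+I_Z$; therefore
$$
\dim Y+\dim Z-\dim X \;=\; \dim\bigl(T_pY\cap T_pZ\bigr)\;\geq\;\dim T_pY+\dim T_pZ-\dim X\;\geq\;\dim Y+\dim Z-\dim X,
$$
forcing equality throughout, i.e.\ $\dim T_pY=\dim Y$, $\dim T_pZ=\dim Z$, and $T_pY+T_pZ=T_pX$. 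Thus $Y$ and $Z$ are smooth and tangentially transverse at \emph{every} point of $Y\cap Z$, not merely at a generic one; no Serre formula or generic-point maneuver is required, and your local Thom-class calculation then goes through verbatim.
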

When $Y \cap Z$ is a finite set of $r$ (reduced) points, we have $[Y\cap Z] = r[\pt] \in H^*(X)$.

Let $E_\bullet$ be the standard flag in $\C^n$.  The cohomology ring $H^*(\Gr(k,n))$ vanishes in odd degrees, and the set $\{[X_I(E_\bullet)] \mid \codim(X_I) = d\}$ of Schubert classes forms a $\Z$-basis of $H^{2d}(\Gr(k,n))$. 

\subsection{Symmetric function realization}\label{sec:sym}
Let $\Lambda = \Lambda_\Z$ denote the ring of symmetric functions over $\Z$.  It has bases of monomial symmetric functions $m_\lambda$, homogeneous symmetric functions $h_\lambda$, and Schur functions $s_\lambda$, each of which are indexed by partitions $\lambda$.  We refer the reader to \cite{Mac,EC2} for background material on symmetric functions.

There is a bijection between partitions $\lambda \subseteq (n-k)^{k}$ contained in a $k \times (n-k)$ rectangle and subsets $I \in \binom{[n]}{k}$ given by $I(\lambda) = \{\lambda_k+1,\lambda_{k-1}+2, \ldots,\lambda_1+k\}$.  So for example $I(3,2,0) = \{1,4,6\}$ if $k = 3$.

The ring $H^*(\Gr(k,n))$ is isomorphic to the quotient of the ring $\Lambda$ of symmetric functions by an ideal $I_{k,n}$ (see \cite{Ful}).  Let $\eta: \Lambda_\Z \to H^*(\Gr(k,n),\Z)$ be the quotient map.  Then we have
$$
\eta(s_\lambda) = \begin{cases} [X_{I(\lambda)}] & \mbox{if $\lambda \subset (n-k)^k$,} \\
0 & \mbox{otherwise.}
\end{cases}
$$
We will often identify a symmetric function $f \in \Lambda$ with its image $\eta(f) \in H^*(\Gr(k,n))$.  Thus $[\Gr(k,n)] = s_{(0)}$ and $[\pt] = s_{(n-k)^k}$.  Let $\lambda^c$ denote the 180 degree rotation of the complement of $\lambda$ inside the $(n-k)^k$ rectangle.  Then $\lambda^c(J) = \lambda(I)$ where $I = J^c \coloneqq \{(n+1) -j \mid j \in  J\}$.  Inside $H^*(\Gr(k,n))$, we have the equality 
\begin{equation}\label{eq:dual}
s_\lambda \, s_{\mu} = \begin{cases} 1 & \mbox{$\mu=\lambda^c$} \\
0 & \mbox{otherwise}
\end{cases}
\end{equation}
for $|\lambda|+|\mu| = k(n-k)$.  

\subsection{Affine Stanley symmetric functions}\label{sec:affineStanley}
We use notation from Section \ref{sec:perms}.
An element $v \in W_n$ is called \defn{cyclically decreasing} if it has a reduced word $v = s_{i_1} s_{i_2} \cdots s_{i_k}$ such that $i_1, i_2, \ldots, i_k$ are distinct, and if both $i$ and $i+1$ occur then $i+1$ occurs before $i$.  For example, $s_4 s_3 s_1 s_0 s_6$ is cyclically decreasing if $n = 7$.  A \defn{cyclically decreasing factorization} of $v$ is a factorization $v = v_1 v_2 \cdots v_r$ where $\ell(v) = \ell(v_1) + \ell(v_2) + \cdots + \ell(v_r)$ and each $v_i$ is cyclically decreasing.  For $v \in W_n$, we define the \defn{affine Stanley symmetric function}
$$
\tF_v(x_1,x_2,\ldots) = \sum_{v = v_1 v_2 \cdots v_r} x_1^{\ell(v_1)} x_2^{\ell(v_2)} \cdots x_r^{\ell(v_r)}.
$$
It follows easily from the definitions that $\tF_v = \tF_w$ if $v$ is obtained from $w$ by the Coxeter automorphism that sends $s_i$ to $s_{i+r}$ for all $i$, and a fixed $r$.

Recall that $\id$ denotes the bounded affine permutation given by $\id(i) = i+k$ and each $(k,n)$-affine permutation $f \in \tS_n^k$ has an expressions as $f = \id v = w \id$ for $v,w \in W_n$.  The elements $v,w$ are related by the Coxeter automorphism that sends $s_i$ to $s_{i+k}$ for all $i$.  We define $\tF_f \coloneqq \tF_v = \tF_w$.

The basic result on affine Stanley symmetric functions is the following, generalizing work of Stanley \cite{Sta}.
\begin{theorem}\cite{LamAffineStanley}
For any $f \in \tS_n$, the generating function $\tF_f$ is a symmetric function.
\end{theorem}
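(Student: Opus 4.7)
The plan is to reduce symmetry to the commutativity of ``cyclically decreasing sum'' operators in the affine nil-Coxeter algebra, following the Fomin--Stanley strategy for the classical Stanley symmetric functions.

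First I would observe that $\tF_f$ is homogeneous of degree $\ell(f)$, and that to prove symmetry it suffices to prove invariance under each adjacent transposition $x_i \leftrightarrow x_{i+1}$. Fixing all other factors, this reduces to the combinatorial identity $N(w;a,b) = N(w;b,a)$ for every $w \in W_n$ and every pair $a,b \geq 0$, where $N(w;a,b)$ counts reduced factorizations $w = p q$ with $p$ cyclically decreasing of length $a$ and $q$ cyclically decreasing of length $b$.

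Next I would introduce the affine nil-Coxeter algebra $\AA_n$ generated by $u_0,u_1,\ldots,u_{n-1}$ with the relations $u_i^2 = 0$, the usual braid relation $u_i u_{i+1} u_i = u_{i+1} u_i u_{i+1}$, and commutation when $|i-j| \not\equiv 1 \pmod n$. Each $w \in W_n$ has a well-defined element $u_w \in \AA_n$, and one sets
$$
B_k \;\coloneqq\; \sum_{\substack{w \text{ cyclically decreasing}\\ \ell(w) = k}} u_w \;\in\; \AA_n.
$$
By the very definition of a cyclically decreasing factorization, the coefficient of $u_f$ in $B_{\alpha_1} B_{\alpha_2} \cdots B_{\alpha_r}$ is the number of cyclically decreasing factorizations $f = v_1 v_2 \cdots v_r$ with $\ell(v_i) = \alpha_i$. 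Hence
$$
\tF_f(x_1,x_2,\ldots) \;=\; \sum_{\alpha_1,\alpha_2,\ldots} \bigl[u_f\bigr]\bigl(B_{\alpha_1}B_{\alpha_2}\cdots\bigr)\, x_1^{\alpha_1} x_2^{\alpha_2} \cdots.
$$
If the operators $B_k$ commute pairwise in $\AA_n$, then swapping adjacent variables corresponds to swapping adjacent $B$'s, and the coefficient is unchanged.

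The hard part, and the whole content of the theorem, is therefore the commutation relation $B_a B_b = B_b B_a$ in $\AA_n$. In the finite (non-affine) setting this is Fomin--Greene's theorem, provable via a Knuth-style involution on pairs of cyclically decreasing words (or via Edelman--Greene insertion). In the affine setting one must treat the subtlety that generators are indexed cyclically, so that a cyclically decreasing element of length $k \geq 1$ is determined by its support together with its ``starting point'' on the cycle. I would prove $B_a B_b = B_b B_a$ by constructing a concrete involution on pairs $(p,q)$ of cyclically decreasing elements with fixed product $w = pq$, swapping their lengths; the heart of the verification is a case analysis using the braid and commutation relations of $\AA_n$ to show that the local adjustments at the ``interface'' between $p$ and $q$ preserve the cyclically decreasing property on both sides. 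The potential obstruction is that the cyclic indexing creates configurations with no finite-type analogue, so one must check the relations around the ``wrap-around'' generator $s_0$--$s_{n-1}$ with care. Once the commutativity is established, symmetry of $\tF_f$ is immediate, and since $\tF_f$ is also homogeneous of bounded degree, it lies in $\Lambda$.
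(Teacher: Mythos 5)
Your strategy coincides with the one in the cited reference \cite{LamAffineStanley}: work in the affine nil-Coxeter algebra, package cyclically decreasing elements of length $k$ into $B_k$, observe that $[u_f](B_{\alpha_1}\cdots B_{\alpha_r})$ counts cyclically decreasing factorizations, and deduce symmetry from the commutation $B_a B_b = B_b B_a$. The reduction itself is carried out correctly, and your observation that symmetry in a single adjacent pair $x_i \leftrightarrow x_{i+1}$ is equivalent to the identity $N(w;a,b) = N(w;b,a)$ for all $w$ is accurate and is exactly how the argument is set up.

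The gap is that the proof stops where the theorem actually begins. You correctly single out $B_a B_b = B_b B_a$ as ``the hard part, and the whole content of the theorem,'' but you then offer only a plan for proving it: ``construct a concrete involution on pairs $(p,q)$\ldots the heart of the verification is a case analysis\ldots one must check the relations around the wrap-around generator with care.'' No involution is exhibited, the case analysis is not carried out, and the obstruction you yourself flag (cyclic configurations with no finite-type analogue) is acknowledged rather than resolved. This matters because the affine commutation does not follow from the finite Fomin--Greene statement by a routine adaptation: in the finite nil-Coxeter algebra ``decreasing'' has a global minimum and maximum on the index set, and the standard involution crucially uses that linear order; cyclically decreasing elements can have support wrapping past $s_0$--$s_{n-1}$ and, unlike in finite type, the full-support length-$k$ piece is a sum of $n$ distinct elements (one per ``starting point''), not one. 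Establishing the commutation despite this is precisely the content of the cited proof, so as written your argument reduces the theorem to an unproven lemma of essentially the same difficulty.
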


The positroid variety $\Pi_f \subset \Gr(k,n)$ has a cohomology class $[\Pi_f] \in H^*(\Gr(k,n))$.  The following result further confirms that the bounded affine permutation $f \in \Bound(k,n)$ is the correct object to index a positroid variety $\Pi_f$.

\begin{theorem}[\cite{KLS}]\label{thm:KLS}
We have $[\Pi_f] \equiv \tF_f \in H^*(\Gr(k,n))$.
\end{theorem}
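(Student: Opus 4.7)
The plan is to compare $[\Pi_f]$ and $\eta(\tF_f)$ coefficient-by-coefficient in the Schubert basis of $H^*(\Gr(k,n))$. First, both sides live in $H^{2\ell(f)}(\Gr(k,n))$: the positroid variety $\Pi_f$ has codimension $\ell(f)$ by Theorem~\ref{thm:Zariskidense}, and $\tF_f$ is homogeneous of degree $\ell(f)$ by construction (each monomial coming from a cyclically decreasing factorization of length $\ell(f)$ has total degree $\ell(f)$). Using the pairing \eqref{eq:dual}, the coefficient of $s_\lambda$ in either class (for $\lambda \subseteq (n-k)^k$ with $|\lambda| = \ell(f)$) is read off as the top intersection number against $s_{\lambda^c}$. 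Hence the theorem reduces to verifying, for each such $\lambda$, the equality
\begin{equation*}
\deg\bigl([\Pi_f]\cdot [X_{I(\lambda^c)}]\bigr) \;=\; \langle \tF_f,\, s_\lambda \rangle,
\end{equation*}
where the right-hand side uses the Hall inner product on $\Lambda$. By Kleiman's transversality theorem and Theorem~\ref{thm:Ful}, the left side counts transverse intersection points of $\Pi_f$ with a generic translate of $X_{I(\lambda^c)}$.

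I would evaluate this intersection number by lifting the computation to $\Fl(n)$. The crucial input from \cite{KLS}, already cited just after Proposition~\ref{prop:irred}, is that $\Pi_f$ is the image under the natural projection $\pi:\Fl(n)\to\Gr(k,n)$ of a Richardson variety $\mathcal{R}=X_v\cap X^w\subseteq\Fl(n)$ (intersection of a Schubert and an opposite Schubert variety) for appropriate $v\le w$ in $S_n$, and that $\pi$ restricts to a birational morphism $\mathcal{R}\to\Pi_f$. The projection formula then gives
\begin{equation*}
[\Pi_f]\cdot [X_{I(\lambda^c)}] \;=\; \pi_*\bigl([X_v]\cdot [X^w]\cdot \pi^*[X_{I(\lambda^c)}]\bigr),
\end{equation*}
a triple Schubert intersection in $H^*(\Fl(n))$ whose degree unfolds, via the Monk and Littlewood--Richardson rules for Schubert polynomials, into a combinatorial count of saturated chains in the Bruhat order of $S_n$ that are compatible with the Grassmannian permutation associated to $\lambda^c$.

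The main obstacle is then to identify this chain count with $\langle \tF_f,s_\lambda\rangle$. For this I would use the noncommutative framework of \cite{LamAffineStanley}: $\tF_f$ is a matrix coefficient of a distinguished cyclically decreasing element in the affine $0$-Hecke algebra $H(\tS_n)$, and its Schur coefficients count saturated chains in \emph{affine} Bruhat order. Both sides are invariant under the natural cyclic shift (on the geometric side because cyclic rotation extends to an action of a connected group and hence acts trivially on $H^*$; on the combinatorial side by the Coxeter automorphism $s_i\mapsto s_{i+1}$), agree at $f=\id$ (both equal $1$), and should transform compatibly under multiplication by the hyperplane class $s_1=\tF_{s_0}$: geometrically this is a positroid Chevalley formula expressing $[\Pi_f]\cdot s_1$ as a signed sum of $[\Pi_g]$ over covers $g\lessdot f$ in affine Bruhat order, which one can prove using Theorem~\ref{thm:partialorder} together with the projection formula for $\pi:\mathcal{R}\to\Pi_f$; combinatorially, the affine Monk rule of \cite{LamAffineStanley} expands $\tF_f\cdot s_1$ as the same sum over Bruhat covers. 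An induction on $\ell(f)$, using the Chevalley/Monk identities to propagate equality from $f=\id$ upward, then closes the argument.
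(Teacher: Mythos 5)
Your route is genuinely different from the one the paper attributes to \cite{KLS,HL}: that proof interprets $\tF_f$ as a cohomology class on the affine Grassmannian of $\GL(n)$, computes the torus-equivariant class of $[\Pi_f]$, and transfers the comparison via a map pulling affine Grassmannian classes back to $\Gr(k,n)$. Nothing like your Chevalley induction appears there, and your first two paragraphs (degree bookkeeping via Theorem~\ref{thm:Zariskidense} and the pairing \eqref{eq:dual}, and the projection-formula reduction through $\pi:\Fl(n)\to\Gr(k,n)$, using the Richardson-variety birational model) are perfectly sound as far as they go.

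The gap is in the Chevalley step that is supposed to close the induction. The asserted positroid Chevalley formula, $[\Pi_f]\cdot s_1=\sum_{g\lessdot f}\pm[\Pi_g]$ over affine Bruhat covers, is false even with arbitrary signs. Take $f=\id$ in $\Bound(2,4)$: then $[\Pi_{\id}]\cdot s_1=s_1$, but there are four codimension-one positroid cells (the cyclic rotations of a Schubert divisor) and each has class $s_1$, so no choice of signs on $\{[\Pi_g]\}_{g\lessdot\id}$ produces $s_1$ rather than a multiple of it. The underlying issue is that $\bigcup_{g\lessdot f}\Pi_g$ is the real boundary $\partial\Pi_{f,\geq 0}$, not a hyperplane section: as recalled in Section~\ref{sec:form}, $\sum_{g\lessdot f}[\Pi_g]$ is the \emph{anticanonical} divisor of $\Pi_f$, which already for $\Gr(k,n)$ equals $n\,s_1$, not $s_1$. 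A generic hyperplane section of $\Pi_f$ is simply not a union of boundary positroid divisors. On the combinatorial side you also misstate the Pieri rule: $\tF_w\cdot h_1$ (and $s_1=h_1$) expands as a sum of $\tF_{v}$ over \emph{weak-order} covers $v$ of $w$ (append a length-one cyclically decreasing factor), not over all Bruhat covers, so the two sides of your proposed identity do not have the same index set. Finally, even with a corrected Chevalley-type relation in hand, the induction would only pin down certain linear combinations of the $[\Pi_g]$ at each length; you would need an additional separating invariant, which is precisely the role equivariant localization plays in the actual proof.
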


We do not prove Theorem \ref{thm:KLS} here.  The main steps in its proof (\cite{KLS,HL}) are: (1) an interpretation of $\tF_f$ as a cohomology class in the affine Grassmannian of $\GL(n)$ \cite{LamJAMS,book}; (2) the consideration of the torus-equivariant cohomology class of $[\Pi_f]$; and (3) a map that pulls back cohomology classes from the affine Grassmannian to $\Gr(k,n)$.

\begin{example}
We list the cohomology classes of all positroid varieties, up to cyclic rotation, of $\Gr(2,4)$.
\begin{center}
\begin{tabular}{|c|c|c|c|}
\hline
$f \in \Bound(2,4)$ & reduced word & $\tF_f \in \Lambda
$ & $[\Pi_f] \in H^*(\Gr(2,4))$ \\
\hline
$[3456]$ & $\id$ & $1$ &$1$ \\
$[3546]$ & $\id s_2$ &$s_1$ & $s_1$ \\
$[2547]$ &  $\id s_2 s_0$&  $s_{11}+s_{2}$ & $s_{11}+s_{2}$ \\
$[3564]$ & $\id s_2 s_3$ & $s_{11}$ &$s_{11}$\\
$[5346]$ & $\id s_2 s_1$ & $s_{2}$ & $s_{2}$ \\
$[5247]$ & $\id s_2 s_0 s_1$ & $s_{21}$ &  $s_{21}$ \\
$[5364]$ & $\id s_2 s_1 s_3$ & $s_{21}$ &  $s_{21}$\\
$[3654]$ & $\id s_2s_3s_2$ & $s_{21}$ &  $s_{21}$\\
$[5274]$ & $\id s_2s_0s_1s_3$  & $s_{22} +s_{211}- s_{1111}$ & $s_{22}$\\
$[5634]$ & $\id s_2s_1s_3s_2$ & $s_{22}$ & $s_{22}$\\
\hline
\end{tabular}
\end{center}

\end{example}

\subsection{The case $k = 1$}
Suppose $k = 1$.  Then positroid varieties are simply coordinate hyperspaces in $\P^{n-1} = \Gr(1,n)$.  Every $f \in \B(1,n)$ can be written in the form
$$
f = \id s_{[a_1,b_1]} s_{[a_2,b_2]} \cdots s_{[a_r,b_r]}
$$
where $s_{[a,b]} \coloneqq s_{b} s_{b-1} \cdots s_a$, and the $[a_i,b_i] \subsetneq [n]$ are disjoint and non-adjacent cyclic intervals.  It follows from the definition that
$$
\tF_f = h_{|[a_1,b_1]|} h_{|[a_2,b_2]|} \cdots h_{|[a_r,b_r]|} \equiv h_{\ell(f)} \mod I_{1,n},
$$
as expected.

\subsection{The case $k = 2$}\label{sec:k2}
We work out $\tF_f \in H^*(\Gr(2,n))$ completely in this section.  Let $X \in \Gr(2,n)$ be represented by a $2 \times n$ matrix with column vectors $v_1,v_2,\ldots,v_n \in \C^2$.  Positroid varieties are cut out by rank conditions of the form
$$
\rank( \spn(v_a,v_{a+1},\ldots,v_b)) \leq 1, \qquad \text{or} \qquad \rank( \spn(v_a,v_{a+1},\ldots,v_b)) = 0,
$$
for cyclic intervals $[a,b]$.  The latter condition just says that $v_a = v_{a+1} = \cdots = v_b = 0$.  Any two rank conditions of the first type for cyclic intervals $[a,b]$ and $[c,d]$ that overlap glue to give a rank condition of the same type on $[a,b] \cup [c,b]$.  It follows that a positroid variety is determined by setting $v_i = 0$ for $i \in A \subsetneq [n]$, and imposing that the vectors $\{v_a,v_{a+1},\ldots,v_b\}$ are parallel, for a non-trivial partition $[n] \setminus A = \bigcup_i [a_i,b_i]$ into disjoint cyclic intervals.  (The cyclic order on $[n] \setminus A$ is inherited from that of $[n]$.)

Let us say that $f \in \B(2,n)$ has type $(\alpha;\beta_1,\beta_2,\ldots,\beta_r)$ if $\alpha = |A|$ and $\beta_i = |[a_i,b_i]|$.  Here $\alpha \in [n-1]$ and $\beta_i \geq 1$, and $\alpha + \beta_1 + \cdots + \beta_r = n$, and $r \geq 2$.

For a partition $\lambda = (\lambda_1,\lambda_2)$, let $\lambda^{+\alpha} \coloneqq (\lambda_1 + \alpha,\lambda_2 + \alpha)$.

\begin{proposition}\label{prop:k2}
Suppose $f$ has type $(\alpha;\beta_1,\beta_2,\ldots,\beta_r)$.  Then
$$
\tF_f \equiv (h_{\beta_1 -1}h_{\beta_2 -1} \cdots h_{\beta_r-1} )^{+\alpha} \mod I_{2,n}
$$
where $p \mapsto p^{+\alpha}$ is the linear operator that is induced by $\lambda \mapsto \lambda^{+\alpha}$.
\end{proposition}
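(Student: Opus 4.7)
My plan is to compute $[\Pi_f]\in H^*(\Gr(2,n))$ directly from the geometric description of $\Pi_f$ and then invoke Theorem~\ref{thm:KLS}. Since $f$ has type $(\alpha;\beta_1,\ldots,\beta_r)$, a generic $X\in\oPi_f$ is a $2$-plane whose representing $2\times n$ matrix has $v_j=0$ for $j\in A$ and whose columns $v_{a_i},\ldots,v_{b_i}$ in each parallel group are pairwise proportional. Taking Zariski closures,
\[
\Pi_f \;=\; \bigcap_{j\in A} V_j \;\cap\; \bigcap_{i=1}^{r} W_i \qquad\text{set-theoretically,}
\]
where $V_j\coloneqq\{X\subseteq\{x_j=0\}\}$ is a translate of the Schubert variety $X_{(1,1)}$ with $[V_j]=s_{(1,1)}$ and codimension $2$, and $W_i\coloneqq\{X:\dim(X\cap K_i)\geq 1\}$ for $K_i\coloneqq\{x_{a_i}=\cdots=x_{b_i}=0\}$ is a translate of $X_{(\beta_i-1)}$ with $[W_i]=s_{\beta_i-1}=h_{\beta_i-1}$ and codimension $\beta_i-1$.

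Next I would verify generic transversality. The defining conditions of distinct $V_j$'s and $W_i$'s depend on pairwise disjoint sets of column indices, so at any $X\in\oPi_f$ they impose independent infinitesimal constraints. Summing codimensions gives $2\alpha+\sum_i(\beta_i-1)=n+\alpha-r=\ell(f)=\codim\Pi_f$ by Theorem~\ref{thm:Zariskidense}, so the intersection is proper; combined with the irreducibility of $\Pi_f$ (Proposition~\ref{prop:irred}) and Theorem~\ref{thm:Ful}, we get
\[
[\Pi_f] \;=\; s_{(1,1)}^{\alpha}\cdot\prod_{i=1}^{r} h_{\beta_i-1} \quad\text{in } H^*(\Gr(2,n)).
\]

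To identify this with the claimed form, I would appeal to Pieri's rule: for any two-row partition $\lambda=(\lambda_1,\lambda_2)$,
\[
s_{(1,1)}\cdot s_{(\lambda_1,\lambda_2)} \;=\; s_{(\lambda_1+1,\lambda_2+1)} \;+\; s_{(\lambda_1+1,\lambda_2,1)} \;+\; s_{(\lambda_1,\lambda_2+1,1)},
\]
and the three-row terms die modulo $I_{2,n}$. Iterating gives $s_{(1,1)}^{\alpha}\cdot s_\lambda\equiv s_{\lambda^{+\alpha}}\bmod I_{2,n}$, so multiplication by $s_{(1,1)}^{\alpha}$ in $H^*(\Gr(2,n))$ implements the $^{+\alpha}$ operator on the Schur basis. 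Applied to the two-row part of the Schur expansion of $\prod_i h_{\beta_i-1}$, this gives $[\Pi_f]\equiv (h_{\beta_1-1}\cdots h_{\beta_r-1})^{+\alpha}\bmod I_{2,n}$, and Theorem~\ref{thm:KLS} identifies $\tF_f$ with $[\Pi_f]$, proving the proposition.

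The main obstacle is rigorously establishing the transversality/proper-intersection step, since a priori the naive intersection of the $V_j$'s and $W_i$'s might have extra components or fail to be generically reduced. I would address this by writing down an explicit birational parametrization of $\Pi_f$: each parallel group $i$ contributes a direction $[u_i]\in\mathbb{P}^1$ together with ratios $[t_{a_i}:\cdots:t_{b_i}]\in\mathbb{P}^{\beta_i-1}$, the columns are then $v_{a_i+s}=t_{a_i+s}u_i$, all $v_j=0$ for $j\in A$, and one takes the $\mathrm{PGL}_2$ quotient by the common choice of basis of $\mathbb{C}^2$. A direct count shows the resulting space has dimension $n+r-\alpha-4=\dim\Pi_f$, so the map surjects birationally onto $\Pi_f$, confirming that the naive intersection has $\Pi_f$ as its unique top-dimensional component.
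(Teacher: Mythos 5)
Your approach is genuinely different from the paper's. The paper establishes the $\alpha=0$ case combinatorially: it writes $f=\id\,s_{\pi'_1}\cdots s_{\pi'_r}$ as a product of cyclically decreasing factors on disjoint index sets and reads off $\tF_f=h_{\beta_1-1}\cdots h_{\beta_r-1}$ directly from the definition of the affine Stanley symmetric function; it then handles $\alpha>0$ by viewing $\Pi_f$ inside a smaller sub-Grassmannian $\Gr(2,V)$ and pushing forward. You instead express $\Pi_f$ set-theoretically as $\bigcap_jV_j\cap\bigcap_iW_i$ and multiply cohomology classes, absorbing the $\alpha>0$ case into the same intersection via the factors $s_{(1,1)}^{\alpha}$ and a Pieri argument. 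The class identifications $[V_j]=s_{(1,1)}$, $[W_i]=h_{\beta_i-1}$, the codimension bookkeeping $2\alpha+\sum(\beta_i-1)=n+\alpha-r=\ell(f)$, and the reduction $s_{(1,1)}^{\alpha}s_\lambda\equiv s_{\lambda^{+\alpha}}\bmod I_{2,n}$ are all correct. Your route buys a purely intersection-theoretic proof that sidesteps the combinatorics of reduced words; the paper's route avoids the need to verify any transversality and is closer to the definition of $\tF_f$.

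However, there are two genuine problems with the argument as written. First, the crucial step is not just dimension-counting but multiplicity: even granting that $\bigcap V_j\cap\bigcap W_i=\Pi_f$ set-theoretically and that the intersection is proper and irreducible, Theorem~\ref{thm:Ful} as stated in the paper requires a transverse (hence smooth) intersection, which $\Pi_f$ is not. What you actually need is proper intersection together with generic reducedness of the scheme-theoretic intersection, i.e., that the tangent conditions imposed by the $V_j$ and $W_i$ are independent at a generic smooth point of $\Pi_f$. You flag ``fail to be generically reduced'' as the concern, but the birational parametrization only shows $\Pi_f$ has the expected dimension and is the unique top-dimensional component; it does not rule out a multiplicity $m>1$ in the intersection product $[V_1]\cdots[W_r]=m[\Pi_f]$. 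A genuine fix is to check the Jacobian in a chart where two columns from different groups are the identity: then the $V_j$-equations and $W_i$-equations really are functions of disjoint sets of matrix entries, and independence follows. This step is missing. Second, the parametrization as literally described gives dimension $\sum_i\bigl(1+(\beta_i-1)\bigr)-3=n-\alpha-3$, not $n+r-\alpha-4$; the discrepancy of $r-1$ is exactly the $(\mathbb{C}^*)^r$'s worth of per-group rescalings (modulo the overall scalar already in $\GL_2$) that projectivizing both $u_i$ and $(t_j)_j$ discards but the actual column vectors $v_j=t_ju_i$ remember. With the correct bookkeeping --- $u_i\in\mathbb{C}^2\setminus\{0\}$, $t_j\in\mathbb{C}^*$, quotient by the genuinely acting group $\GL_2\times(\mathbb{C}^*)^r$ --- the count $2r+(n-\alpha)-(4+r)=n+r-\alpha-4$ does come out right, so this is fixable but wrong as stated. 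Also, as a minor point, your Pieri formula for $s_{(1,1)}\cdot s_{(\lambda_1,\lambda_2)}$ omits the term $s_{(\lambda_1,\lambda_2,1,1)}$; this is harmless since it too dies modulo $I_{2,n}$, but the unreduced identity as written is not correct.
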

\begin{proof}
First, we consider the case $\alpha = 0$.  Let the partition of $[n]$ be into cyclic intervals $\pi_1,\pi_2,\ldots,\pi_r$.  Using \eqref{eq:fX}, we calculate that the bounded affine permutation $f$ is given by 
$$
f(i) = \begin{cases} i+1 & \mbox{$i+1$ belongs to the same part as $i$,} \\
i+\pi_a + 1 & \mbox{$i+1$ belongs to the part $\pi_a$.}
\end{cases}
$$
We then have an expression
$$
f = \id \, s_{\pi'_1} s_{\pi'_2} \cdots s_{\pi'_r}
$$
where if $\pi = [a,b]$ then $\pi'=[a-1,b-2]$.  It follows that
$$
\tF_f \equiv h_{\beta_1 -1}h_{\beta_2 -1} \cdots h_{\beta_r-1}  \mod I_{2,n},
$$
so the formula holds for $\alpha = 0$.  Now suppose $\alpha > 0$.  Then $\Pi_f$ is a positroid variety of the subGrassmannian $\Gr(2,V) \subset \Gr(2,n)$ where $V = \spn(e_i \mid i \in [n] \setminus A)$.  We can first calculate the cohomology class of $\Pi_f$ in $H^*(\Gr(2,V))$.  This also determines the homology class $[\Pi_f]_* \in H_*(\Gr(2,V))$, which we can pushforward via the injection $\iota: \Gr(2,V) \hookrightarrow \Gr(2,n)$.  Finally, this determines the cohomology class of $\Pi_f$ in $H^*(\Gr(2,n))$.  To see that the injection $\iota: \Gr(2,V) \hookrightarrow \Gr(2,n)$ induces the map $p \mapsto p^{+\alpha}$, we need only check what it does to Schubert classes.
\end{proof}

\section{Tableaux, promotion, and canonical bases}
\label{sec:tableaux}
\subsection{Highest weight representations}
A partition $\lambda = (\lambda_1 \geq \lambda_2 \geq \cdots \geq \lambda_\ell > 0)$ is a weakly decreasing sequence of positive integers.  We say that $\lambda = (\lambda_1 \geq \lambda_2 \geq \cdots \geq \lambda_\ell > 0)$ has $\ell$ parts and size $|\lambda| = \lambda_1 + \lambda_2 + \cdots + \lambda_\ell$.  We have the following dominance order on partitions: $\lambda \geq \mu$ if and only if $|\lambda| = |\mu|$ and $\lambda_1 \geq \mu_1$, $\lambda_1 + \lambda_2 \geq \mu_1+\mu_2$, and so on.

For a partition $\lambda$ with at most $n$ parts, we have an irreducible, finite-dimensional representation $V(\lambda)$ of $\GL(n)$ with highest weight $\lambda$.  We state some basic facts concerning $V(\lambda)$.

The Young diagram of $\lambda$ is the collection of boxes in the plane with $\lambda_1$ boxes in the first row, $\lambda_2$ boxes in the second row, and so on, where all boxes are upper-left justified.  A semistandard tableaux $T$ of shape $\lambda$ is a filling of the Young diagram of $\lambda$ by the numbers $1,2,\ldots,n$ so that each row is weakly-increasing, and each column is strictly increasing.  The weight $\wt(T)$ of a tableau $T$ is the composition $(\alpha_1,\alpha_2,\ldots,\alpha_n)$ where $\alpha_i$ is equal to the number of $i$-s in $T$.  For example, 
$$
\tableau[sbY]{
1&1&3&4&4 \\
2&3&4&5 \\
4&4
}
$$
is a semistandard tableau with shape $(5,4,2)$ with weight $(2,1,2,5,1)$.  Let $B(\lambda)$ denote the set of semistandard tableaux of shape $\lambda$.  (Note that this set depends on $n$, which is suppressed from the notation.)  The dimension $\dim(V(\lambda))$ is equal to the cardinality of $B(\lambda)$.  A vector $v$ in a $\GL(n)$-representation $V$ is called a weight vector with weight $(\alpha_1,\alpha_2,\ldots,\alpha_n)$ if the diagonal matrix $\diag(x_1,x_2,\ldots,x_n)$ sends $v$ to $(x_1^{\alpha_1} x_2^{\alpha_2} \cdots x_n^{\alpha_n})v$.

Let $U_q(\sl_n)$ denote the quantized enveloping algebra of $\sl_n$ and $V_q(\lambda)$ denote a highest weight representation.  Lusztig \cite{Luscan} and Kashiwara \cite{Kascan} have constructed a \defn{canonical basis}, or \defn{global basis} of the $U_q(\sl_n)$-module $V_q(\lambda)$.  We shall only use the evaluation of this basis at $q=1$, giving a basis of $V(\lambda)$.

\begin{quote}
There exists a basis $\{G(T) \mid T\in B(\lambda)\}$ of $V(\lambda)$ such that each $G(T)$ is a weight vector with weight $\wt(T)$.
\end{quote}

We shall also let $\{G(T)^* \mid T \in B(\lambda)\}$ denote the dual basis of $V(\lambda)^*$, called the \defn{dual canonical basis}.
%
%

\subsection{Promotion on rectangular tableaux}
Let $\omega_k = (1,1,\ldots, 1)$ be the partition with $k$ $1$'s.  Then $V(\omega_k)$ is isomorphic to the $k$-th exterior power $\Lambda^k(\C^n)$ of the standard representation $\C^n$ of $\GL(n)$.  For an integer $d \geq 1$, the representation $V(d\omega_k)$ for a rectangular partition has very special properties.  The set $B(d\omega_k)$ is the set of semistandard Young tableaux with $k$ rows and $d$ columns.  For example,
$$
\tableau[sbY]{
1&1&3&4&4 \\
2&3&4&5&5 \\
4&4&6&6&6
}
$$
belongs to $B(5\omega_3)$.  

The set $B(d\omega_k)$ has an additional operation called \defn{promotion}, which is a bijection $\chi:B(d\omega_k) \to B(d\omega_k)$.  Promotion is defined as follows: first remove all occurrences of the letter $n$ in $T$.  Then slide the boxes to the bottom right of the rectangle, always keeping the  rows weakly-increasing and columns strictly-increasing.   Once all slides are complete, we add one to all letters, and fill the empty boxes with the letter $1$ to obtain $\chi(T)$.  For example,
$$
\raisebox{0.6cm}{$T =$} \;\; \tableau[sbY]{
1&1&3&4&4 \\
2&3&4&5&5 \\
4&4&6&6&6
}
\;\;
\raisebox{0.6cm}{$\to$}
\;\;
\tableau[sbY]{
1&1&3&4&4 \\
2&3&4&5&5 \\
4&4
}
\;\;
\raisebox{0.6cm}{$\to$}
\;\;
\tableau[sbY]{
\bl&\bl&\bl&1&1 \\
2&3&3&4&4 \\
4&4&4&5&5
}
\;\;
\raisebox{0.6cm}{$\to$}
\;\;
\tableau[sbY]{
1&1&1&2&2 \\
3&4&4&5&5 \\
5&5&5&6&6
}
\;\;\raisebox{0.6cm}{$= \chi(T)$.}
$$

\begin{theorem}[\cite{Shi,Rho}]
The bijection $\chi:B(d\omega_k) \to B(d\omega_k)$ has order $n$.
\end{theorem}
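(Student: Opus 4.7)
The plan is to exhibit promotion as the combinatorial shadow of an order-$n$ cyclic geometric action on $\Gr(k,n)$, and conclude from the fact that the geometric action is manifestly of order $n$.

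First, I would use the Borel-Weil identification $\Gamma(\Gr(k,n), \O(d)) \cong V(d\omega_k)^*$, so that $B(d\omega_k)$ indexes the (dual) canonical basis of this space. Second, I would use the cyclic action $\chi$ on $\Gr(k,n)$ from Section \ref{sec:TNNGrass}: a direct check shows that applying $\chi$ a total of $n$ times multiplies every column of a representative matrix by $(-1)^{k-1}$, which is a global scalar and hence acts trivially on the Grassmannian. After choosing a $\Z/n\Z$-linearisation of $\O(d)$, this yields an honest order-$n$ linear action on $\Gamma(\Gr(k,n), \O(d))$, and dually on $V(d\omega_k)$.

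The heart of the proof is to identify this geometric cyclic action, expressed in the (dual) canonical basis, with the permutation $T \mapsto \chi(T)$ of $B(d\omega_k)$, up to signs on individual basis vectors. One attractive route, following Shimozono and Fontaine-Kamnitzer, is through the theory of affine $\widehat{\sl}_n$-crystals: $B(d\omega_k)$ can be upgraded to a perfect crystal of level $d$, whose Dynkin diagram rotation $s_i \mapsto s_{i+1 \bmod n}$ acts on the combinatorial side as promotion and on the geometric/representation-theoretic side (via the canonical basis and the cyclic automorphism of $\GL_n$) as the restriction of $\chi$. An alternative, due to Rhoades, bypasses the affine crystal and uses the Kazhdan-Lusztig cellular representation of the symmetric group on the span of rectangular tableaux, where $\chi$ is realised by a long-cycle element. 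Either way, once the matching is established, the theorem is immediate: the geometric action has order dividing $n$, and a single explicit tableau (e.g.\ the column-strict tableau with entries $1, 2, \ldots, n$ repeated through the columns) is easily seen to have full orbit, forcing the order to be exactly $n$.

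The main obstacle is precisely the matching step. A purely combinatorial alternative (closer to Shimozono's original argument) is to write promotion as a composition of jeu-de-taquin slides or Bender-Knuth-type involutions, and then exploit the self-conjugacy of rectangles to show that the $n$-fold composition untangles to the identity. This plan trades the representation-theoretic obstacle for careful combinatorial bookkeeping of the slide paths, and its hard part is verifying that the union of paths produced by $n$ successive promotions forms a closed configuration in the rectangle whose net effect is trivial.
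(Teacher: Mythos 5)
The paper does not prove this theorem; it states it and cites Shimozono and Rhoades. So there is no proof of the paper's to compare against, and what should be assessed is whether your sketch constitutes a proof on its own.

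Your preliminary observation is correct and worth keeping: $\chi^{n}$ multiplies every column of a representative matrix by $(-1)^{k-1}$, hence acts as the global scalar $(-1)^{k-1}$ on matrices and as $((-1)^{k-1})^{k}=1$ on every Pl\"ucker coordinate, so $\chi^{n}$ is already the \emph{identity} on the cone $\hGr(k,n)$ and on $R(k,n)_d \cong V(d\omega_k)^{*}$. In particular no linearisation of $\O(d)$ has to be chosen — the order-$n$ geometric action is free. The ``full orbit'' step is also fine: the tableau $T_{[k]}$ with all columns equal to $\{1,\dots,k\}$ has orbit of size $n$ for $0<k<n$ (already visible for $d=1$, where promotion on $B(\omega_k)\cong\binom{[n]}{k}$ is cyclic rotation).

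What you have not done is the step you yourself flag as the ``heart of the proof'': showing that the order-$n$ geometric action of $\chi$ on the (dual) canonical basis of $R(k,n)_d$ \emph{is} promotion, up to sign. That identification is precisely the content of Theorem~\ref{thm:canbasis}(4) of the paper, which is attributed to [Lam+] and explicitly noted to depend on Rhoades. So invoking that compatibility to deduce the order of promotion is circular with the source you are citing: Rhoades proves the compatibility \emph{and} the cyclic-sieving/order statement together; you cannot quote the former as given and then read off the latter as a corollary. The same goes for Shimozono's affine-crystal route: realising $B(d\omega_k)$ as a level-$d$ perfect $\widehat{\sl}_n$-crystal whose Dynkin rotation is promotion is itself the theorem, not a reduction of it. And the third, purely combinatorial route is sketched at a level of ``careful bookkeeping of slide paths'' without identifying the actual invariant (e.g.\ Haiman's dual equivalence, or the relation of promotion to evacuation on rectangles) that makes the $n$-fold composition collapse. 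As written, this is an annotated survey of the known proofs together with an accurate list of their hard steps — not a self-contained proof, and for each route the step you defer is exactly the one carrying the content of the theorem.
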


\begin{example}
The action of $\chi$ cycles through the following six tableaux:
$$
\tableau[sbY]{
1&1&3&4&4 \\
2&3&4&5&5 \\
4&4&6&6&6
}
\;\;
\tableau[sbY]{
1&1&1&2&2 \\
3&4&4&5&5 \\
5&5&5&6&6
}
\;\;
\tableau[sbY]{
1&1&2&3&3 \\
2&2&4&5&5 \\
6&6&6&6&6
}
\;\;
\tableau[sbY]{
1&1&1&1&1 \\
2&2&3&4&4 \\
3&3&5&6&6
}
\;\;
\tableau[sbY]{
1&1&2&2&2 \\
2&2&3&3&5 \\
4&4&4&5&6
}
\;\;
\tableau[sbY]{
1&2&2&3&3 \\
3&3&3&4&4 \\
5&5&5&6&6
}
$$
\end{example}


\subsection{(Opposite) Demazure crystals}
Let $I = \{i_1< i_2< \cdots < i_k\} \in \binom{[n]}{k}$.  Define the tableau $T_I \in B(d\omega_k)$ to be the unique rectangular-shaped tableaux whose first row is filled with $i_1$, second row is filled with $i_2$, and so on.  

Define the \defn{Demazure subcrystal} $B_I(d\omega_k)$ to be the set of tableaux $T \in B(d\omega_k)$ such that $T(a,b) \geq T_I(a,b)$ for any cell $(a,b)$.  In other words, $T \in B_I(d\omega_k)$ if it is entry-wise greater than or equal to $T_I$.

\begin{example}\label{ex:Schubmat}
Suppose that $d = 1$.  Then $B(\omega_k)$ can be identified with the set $\binom{[n]}{k}$ of $k$-element subsets of $[n]$.  Then $B_I(\omega_k) = \{J \in \binom{[n]}{k} \mid I \leq J\}$ is simply the Schubert matroid $\S_I$.
\end{example}
\begin{example}
Suppose $n = 4$ and $I = \{1,3\}$.  Then $B(2\omega_2)$ consists of the following tableaux:
$$
\tableau[sbY]{
1&1\\
3&3
}
\;
\tableau[sbY]{
1&1\\
3&4
}
\;
\tableau[sbY]{
1&1\\
4&4
}
\;
\tableau[sbY]{
1&2\\
3&3
}
\;
\tableau[sbY]{
1&2\\
3&4
}
\;
\tableau[sbY]{
1&2\\
4&4
}
\;
\tableau[sbY]{
1&3\\
3&4
}
\;
\tableau[sbY]{
1&3\\
4&4
}
\;
\tableau[sbY]{
2&2\\
3&3
}
\;
\tableau[sbY]{
2&2\\
3&4
}
\;
\tableau[sbY]{
2&2\\
4&4
}
\;
\tableau[sbY]{
2&3\\
3&4
}
\;
\tableau[sbY]{
2&3\\
4&4
}
\;
\tableau[sbY]{
3&3\\
4&4
}
$$
\end{example}

\begin{remark}
The usual definition of the (opposite) Demazure crystal is to consider all tableaux $T = \tf_{j_1}\tf_{j_2} \cdots \tf_{j_r} \cdot T_I$  that can be obtained from $T_I$ by Kashiwara's lowering crystal operators $\tf_i$.  While it is not obvious, our definition agrees with the usual definition.
\end{remark}

\section{The homogeneous coordinate ring of a positroid variety}\label{sec:coord}
\subsection{Homogeneous coordinate ring of the Grassmannian}
Let $\hGr(k,n)$ denote the affine cone over the Grassmannian $\Gr(k,n)$.  A point $X \in \hGr(k,n)$ is determined by a set $\Delta_I(X)$ of Pl\"ucker coordinates satisfying the Pl\"ucker relations and we allow the possibility that all $\Delta_I(X)$ are simultaneously zero.  The subset $\hGr(k,n)_{\geq 0} \subset \hGr(k,n)$ is the set of points with nonnegative Pl\"ucker coordinates.  It is sometimes convenient to work with $\hGr(k,n)$ instead of $\Gr(k,n)$ because we can talk about functions on $\hGr(k,n)$.  For $\Gr(k,n)$ we can only talk about homogeneous coordinates, or sections of line bundles.


Let $R(k,n)$ denote the coordinate ring of $\hGr(k,n)$, or equivalently, the homogeneous coordinate ring $\bigoplus_{d=0}^\infty \Gamma(\Gr(k,n),\O(d))$ of $\Gr(k,n)$.  Thus,
$$
R(k,n) = \C[\Delta_I]/(\text{Pl\"ucker relations})
$$
is a graded ring where the degree of $\Delta_I$ is taken to be 1.  For example,
$$
R(2,4) = \C[\Delta_{12},\Delta_{13},\Delta_{14},\Delta_{23},\Delta_{24},\Delta_{34}]/(\Delta_{13}\Delta_{24} -\Delta_{12}\Delta_{34} - \Delta_{14}\Delta_{23}).
$$
We also note that $R(k,n)$ is a unique factorization domain.  In particular, a codimension one irreducible subvariety of $\hGr(k,n)$ is cut out by a single polynomial.

The degree $d$ component $R(k,n)_d = \Gamma(\Gr(k,n),\O(d))$ of the graded ring $R(k,n)$ is isomorphic, as a $\GL(n)$-representation, to the dual $V(d\omega_k)^*$ of the highest weight representation $V(d\omega_k)$.  

The multiplicative structure of $R(k,n)$ can be described as follows.  For two highest weights $\lambda$ and $\mu$, there is a natural inclusion of $\GL(n)$-modules $V(\lambda+\mu) \to V(\lambda) \otimes V(\mu)$.  In particular, we have a map
$$
\eta^{d,d'}_{k}: V((d+d')\omega_{k}) \rightarrow  V(d\omega_\ell) \otimes V(d'\omega_k).
$$
Under the identification $R(k,n)_d \simeq V(d\omega_k)^*$, this map is dual to the multiplication map $R(k,n)_d \otimes R(k,n)_{d'} \to R(k,n)_{d+d'}$.

\subsection{Temperley-Lieb invariants}
In Section \ref{sec:doubledimer} we introduced functions $F_{\tau,T}(N)$ of a planar bipartite network $N$.  Let $\hX(N) = \{\Delta_I(N) \mid I \in \binom{[n]}{k}\} \in \hGr(k,n)$ denote the point in the cone over the Grassmannian corresponding to $N$.  

\begin{proposition}[\cite{Lamweb}]\label{prop:tau}
The function $F_{\tau,T}(N)$ depends only on $\hX(N) \in \hGr(k,n)$ and thus descends to a function $F_{\tau,T}$ on $\hGr(k,n)$.  Furthermore, the set $\{F_{\tau,T} \mid (\tau,T) \in \AA_{k,n}\}$ forms a basis for $R(k,n)_2$.
\end{proposition}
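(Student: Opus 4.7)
My plan is to deduce both assertions of Proposition~\ref{prop:tau} from Theorem~\ref{thm:TL} via a triangularity argument.

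By Theorem~\ref{thm:TL}, for every planar bipartite network $N$,
\[
\Delta_I(N)\Delta_J(N) \;=\; \sum_{(\tau,T)\text{ compatible with }\{I,J\}} F_{\tau,T}(N).
\]
Since compatibility depends only on the unordered pair $\{I,J\}$, this is an incidence matrix $A=(a_{\{I,J\},(\tau,T)})$ with entries in $\{0,1\}$ between the spanning set $\{\Delta_I\Delta_J\}$ of $R(k,n)_2$ (which is $V(2\omega_k)^*$ by its description as a degree-$2$ piece of the Pl\"ucker coordinate ring) and the collection $\{F_{\tau,T}\}_{(\tau,T) \in \AA_{k,n}}$ viewed as functions on networks. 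The aim is to extract a square submatrix of $A$ that is unitriangular over $\Z$ in an appropriate order.

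First I would fix, for each $(\tau,T) \in \AA_{k,n}$, a canonical pair $\{I_\tau,J_\tau\}$ compatible with $(\tau,T)$: orient every arc $a_i<b_i$ of $\tau$ so that $a_i \in I_\tau$ and $b_i \in J_\tau$, and set $T \subseteq I_\tau \cap J_\tau$. The key combinatorial claim would be that, under a suitable linear order on $\AA_{k,n}$ (encoding the nesting pattern of $\tau$, e.g.\ by reverse arc-length or by lexicographic comparison of the sequence of arc spans together with $T$), the pair $(\tau,T)$ is the \emph{maximal} element of $\AA_{k,n}$ compatible with $\{I_\tau,J_\tau\}$. Granting this claim, the submatrix of $A$ indexed by rows $\{(I_\tau,J_\tau)\}_{(\tau,T)}$ and columns $\{(\tau,T)\}$ is unitriangular and hence invertible over $\Z$. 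Inverting it writes every $F_{\tau,T}(N)$ as a $\Z$-linear combination of products of Pl\"ucker coordinates of $\hat X(N)$. This proves simultaneously that (i) $F_{\tau,T}$ descends to a well-defined element of $R(k,n)_2$ and (ii) the resulting elements are linearly independent there. Spanning of $R(k,n)_2$ then follows immediately from Theorem~\ref{thm:TL}, since every product $\Delta_I\Delta_J$ is already a $\Z$-linear combination of $F_{\tau,T}$'s.

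To verify that this spanning set is actually a basis---as opposed to a generating set whose size one has not yet controlled---I would match cardinalities: $\dim R(k,n)_2 = \dim V(2\omega_k)$ equals the number of semistandard Young tableaux of rectangular shape $2^k$ with entries in $[n]$, and a direct bijection with $\AA_{k,n}$ (reading the rows $(a_i\le b_i)$ of such a tableau, extracting the rows with $a_i=b_i$ to form $T$, and applying a bracketing procedure to the remaining rows to obtain the arcs of $\tau$) gives $|\AA_{k,n}| = \dim R(k,n)_2$.

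The main obstacle will be the triangularity claim: for a symmetric difference $(I_\tau\setminus J_\tau) \cup (J_\tau\setminus I_\tau)$ whose elements alternate in a complicated way, several non-crossing matchings can be compatible with $\{I_\tau,J_\tau\}$ (an explicit example is $I=\{1,2,5\}$, $J=\{3,4,6\}$, compatible with both $\{(1,4),(2,3),(5,6)\}$ and $\{(1,6),(2,3),(4,5)\}$), and one must design the order so that the ``competing'' matchings are strictly smaller than $\tau$. If no clean order can be pinned down, a fallback is to abandon strict triangularity and prove linear independence directly by evaluating the $F_{\tau,T}$ on a sufficiently rich family of bridge-and-lollipop networks produced by the construction of Theorem~\ref{thm:main}, using Zariski density of $\Gr(k,n)_{\geq 0}$ in $\Gr(k,n)$ to transport linear independence of functionals on networks to linear independence of the corresponding elements of $R(k,n)_2$.
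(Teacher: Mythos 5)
Your proposal follows exactly the route sketched in the paper: the paper likewise inverts Theorem~\ref{thm:TL} to express each $F_{\tau,T}(N)$ as a $\Z$-linear combination of the standard monomials $\Delta_{I_1}(N)\Delta_{I_2}(N)$ (where $(I_1,I_2)$ are the columns of a semistandard tableau, exactly your $\{I_\tau, J_\tau\}$), and then both claims follow because the standard monomials form a basis of $R(k,n)_2$. Your unitriangularity lemma is precisely the ``main calculation'' the paper defers to \cite{Lamweb}, and your cardinality check is the tableau bijection $\theta:\AA_{k,n}\to B(2\omega_k)$ described after Theorem~\ref{thm:canbasis}, so you've identified the right structure of the argument.
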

\begin{proof}[Sketch of proof.]
Call $\Delta_{I_1}\Delta_{I_2}$ a \defn{standard monomial} if $I_1$ and $I_2$ form the columns of a semistandard tableaux.  The main calculation (see \cite{Lamweb} for details) is to check that the formula given in Theorem \ref{thm:TL} can be inverted, expressing $F_{\tau,T}(N)$ in terms of the standard monomials $\Delta_{I_1}(N)\Delta_{I_2}(N)$.  This proves the first statement.  The second statement follows from the fact that standard monomials form a basis for $R(k,n)_2$.
\end{proof}

It follows immediately from the definitions and Proposition \ref{prop:tau} that $F_{\tau,T}$ takes positive values on $\hGr(k,n)_{\geq 0}$.  A partial converse to this is also true: any weight vector in $R(k,n)_2$ that is nonnegative on $\hGr(k,n)_{\geq 0}$ is a nonnegative linear combination of the $F_{\tau,T}$.

\subsection{Dual canonical basis of the Grassmannian}
The dual canonical basis of Section \ref{sec:tableaux} gives rise to a basis of $R(k,n)_d$ with remarkable properties.  The following result will be established in \cite{Lam+}.  Part (3) is due to Lusztig \cite{LusTP} and (4) depends on a result of Rhoades \cite{Rho}.

\begin{theorem}\label{thm:canbasis}
The vector space $R(k,n)_d$ has a dual canonical basis $\{G(T)^* \mid T \in B(d\omega_k)\}$ with the following properties:
\begin{enumerate}
\item For $d = 1$, we have $G(T)^* = \Delta_I$, where $I$ is the set of entries in the one-column tableau $T$.
\item For $d= 2$, the set $\{G(T)^* \mid T \in B(2\omega_k)\}$ is exactly the set $\{F_{(\tau,T)} \mid (\tau,T) \in \AA_{k,n}\}$.
\item For any $T \in B(d\omega_k)$, the function $G(T)^*$ is a nonnegative function on $\hGr(k,n)_{\geq 0}$.
\item For any $T \in B(d\omega_k)$, we have $\chi^*(G(T)^*) = G(\chi(T))^*$, where $\chi^*$ is the pullback map induced by $\chi: \Gr(k,n) \to \Gr(k,n)$.
\item For $f \in \Bound(k,n)$ the vectors $G(T)^*$ that do not restrict to identically zero on $\Pi_f$ form a basis for the homogeneous coordinate ring of $\Pi_f$.
\item For $f \in \Bound(k,n)$, if $G(T)^*$ is not identically zero on $\Pi_f$, then it takes strictly positive values everywhere on $\Pi_{f,>0}$.
\end{enumerate}
\end{theorem}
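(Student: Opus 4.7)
Parts (1)--(4) I would handle by invoking known structural results about canonical bases. For (1), the minuscule representation $V(\omega_k) \cong \Lambda^k\C^n$ has canonical basis equal to the standard wedge basis $\{e_I\}$ by uniqueness of the crystal basis in the minuscule case; the dual basis is then the collection of Pl\"ucker coordinates. For (2), Proposition \ref{prop:tau} already provides a basis $\{F_{\tau,T}\}$ of $R(k,n)_2$; to identify it with the dual canonical basis I would check that each $F_{\tau,T}$ is a weight vector and satisfies the unitriangularity characterization of the dual canonical basis with respect to the standard monomial basis $\{\Delta_{I_1}\Delta_{I_2}\}$, using the explicit expansion from Theorem \ref{thm:TL}. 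Part (3) is Lusztig's total positivity theorem \cite{LusTP}: dual canonical basis elements, viewed as regular functions on the affine cone, take nonnegative values at points coming from Lusztig's Chevalley-generator parameterization of $\hGr(k,n)_{\geq 0}$, which by Theorem \ref{thm:same} agrees with Postnikov's totally nonnegative part. Part (4) is Rhoades' theorem \cite{Rho}, which translated to the Grassmannian side says that pullback along the cyclic rotation $\chi$ permutes the dual canonical basis according to promotion.

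The heart of the argument is (5). By \cite{KLS}, the projective normality in Theorem \ref{thm:normal} implies that $\Pi_f$ equals the scheme-theoretic intersection $\bigcap_{a=1}^n \chi^{1-a}(X_{I_a})$, where $\I(f) = (I_1,\ldots,I_n)$. Classical Demazure theory, generalizing the $d = 1$ observation of Example \ref{ex:Schubmat}, gives that the homogeneous ideal of $X_I$ in $R(k,n)$ is spanned by the dual canonical basis elements $\{G(T)^* \mid T \notin B_I(d\omega_k)\}$, and by (4) the ideal of $\chi^{1-a}(X_{I_a})$ is spanned by $\{G(T)^* \mid T \notin \chi^{1-a}(B_{I_a}(d\omega_k))\}$. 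The ideal of $\Pi_f$ is the sum of these $n$ ideals, which is again spanned by dual canonical basis elements, namely those $G(T)^*$ with $T$ failing to lie in $\chi^{1-a}(B_{I_a}(d\omega_k))$ for at least one $a$. Consequently, the restrictions of the remaining $G(T)^*$---those $T$ in the cyclic Demazure subcrystal $\bigcap_a \chi^{1-a}(B_{I_a}(d\omega_k))$---form a basis of the coordinate ring of $\Pi_f$. The main obstacle here is establishing the scheme-theoretic (and not merely set-theoretic) equality $I(\Pi_f) = \sum_a I(\chi^{1-a}(X_{I_a}))$, which ultimately comes from the Frobenius splitting technology underlying Theorem \ref{thm:normal}.

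For (6), pick a reduced planar bipartite graph $G = G(f)$ so that $M_G: (\L_G)_{>0} \to \Pi_{f,>0}$ is bijective (Theorem \ref{thm:main}(3)). The positive parameters on $G$ correspond, via Lemma \ref{lem:networkbridge}, to sequences of Chevalley generators acting on a torus-fixed point $e_I$, so by the refined form of Lusztig's positivity theorem, every dual canonical basis element $G(T)^*$ pulls back under $M_G$ to a polynomial in the edge weights with nonnegative (in fact integer) coefficients. By (5), if $G(T)^*$ does not vanish identically on $\Pi_f$, this pullback is a nonzero polynomial; a nonzero polynomial with nonnegative coefficients evaluated at a point of $\R_{>0}^d$ is strictly positive, completing the proof.
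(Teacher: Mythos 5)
The paper does not actually prove Theorem \ref{thm:canbasis}: it is stated with the explicit caveat that "the following result will be established in \cite{Lam+}", with part (3) attributed to \cite{LusTP} and part (4) to \cite{Rho}. So there is no proof in this paper to compare against, and your outline must be assessed on its own terms. For the most part it tracks what the surrounding material in Sections \ref{sec:tableaux} and \ref{sec:coord} presupposes, and you have correctly isolated the crux of part (5): the scheme-theoretic equality $I(\Pi_f) = \sum_a I\bigl(\chi^{a-1}(X_{\chi^{1-a}(I_a)})\bigr)$, which indeed rests on the Frobenius splitting technology behind Theorems \ref{thm:prime} and \ref{thm:normal}. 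Granted that equality, the passage to (5) is linear algebra over a compatible basis, as you say.

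There is, however, a genuine gap in your treatment of (2). Being a weight basis whose transition matrix to standard monomials is unitriangular does \emph{not} characterize the dual canonical basis: the defining property is bar-invariance with respect to the bar involution of $U_q(\sl_n)$ together with a triangularity condition that must hold over $\Z[q,q^{-1}]$ (with off-diagonal entries in $q\Z[q]$), and only then specialized at $q = 1$. Many weight bases are unitriangular against the standard monomial basis without being the dual canonical basis, and the Khovanov--Kuperberg result (cited in this very paper in Section \ref{sec:deg3ex}) is exactly a warning that natural diagrammatic bases can fail to coincide with canonical bases. So this step needs a real verification, not a triangularity check. A secondary, smaller point: in (6), nonnegativity of coefficients of the pullback polynomial is what yields strict positivity, but Lusztig's positivity theorem applies in the Chevalley-generator (bridge) parameters, not a priori in arbitrary edge weights; you should invoke the identification in the proof of Theorem \ref{thm:main}(3) that the bridge weights are precisely coordinates on $(\L_{G(f)})_{>0}$. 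Part (3) alone (nonnegativity of values on $\hGr(k,n)_{\geq 0}$) does not by itself give strict positivity away from the zero locus.
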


We will make (5) much more explicit shortly.

The bijection $\theta:\AA_{k,n} \to B(2\omega_k)$ of Theorem \ref{thm:canbasis}(2) is given as follows.  Given $(\tau, T)$, the tableau $\theta(\tau,T)$ has columns $I_1,I_2$, where $I_1 \cap I_2 = T$, and for each strand $(a,b) \in \tau$ with $a<b$, we have $a \in I_1$ and $b \in I_2$.

\begin{example}
The bijection $\theta$ sends the following five non-crossing pairings in $\AA_{3,6}$
\begin{equation*}
\begin{tikzpicture}[scale=0.6,baseline=-0.5ex]
\coordinate (a4) at (60:2);
\coordinate (a3) at (120:2);
\coordinate (a2) at (180:2);
\coordinate (a1) at (-120:2);
\coordinate (a6) at (-60:2);
\coordinate (a5) at (0:2);

\node at (60:2.3) {$4$};
\node at (120:2.3) {$3$};
\node at (180:2.3) {$2$};
\node at (240:2.4) {$1$};
\node at (-60:2.4) {$6$};
\node at (0:2.3) {$5$};

\draw (0,0) circle (2);

\draw[thick] (a1) to [bend left] (a6);
\draw[thick] (a2) to (a5);
\draw[thick] (a3) to [bend right] (a4);
\end{tikzpicture}
\begin{tikzpicture}[scale=0.6,baseline=-0.5ex]
\coordinate (a4) at (60:2);
\coordinate (a3) at (120:2);
\coordinate (a2) at (180:2);
\coordinate (a1) at (-120:2);
\coordinate (a6) at (-60:2);
\coordinate (a5) at (0:2);

\node at (60:2.3) {$4$};
\node at (120:2.3) {$3$};
\node at (180:2.3) {$2$};
\node at (240:2.4) {$1$};
\node at (-60:2.4) {$6$};
\node at (0:2.3) {$5$};

\draw (0,0) circle (2);

\draw[thick] (a1) to [bend left] (a6);
\draw[thick] (a2) to [bend right] (a3);
\draw[thick] (a4) to [bend right] (a5);
\end{tikzpicture}
\begin{tikzpicture}[scale=0.6,baseline=-0.5ex]
\coordinate (a4) at (60:2);
\coordinate (a3) at (120:2);
\coordinate (a2) at (180:2);
\coordinate (a1) at (-120:2);
\coordinate (a6) at (-60:2);
\coordinate (a5) at (0:2);

\node at (60:2.3) {$4$};
\node at (120:2.3) {$3$};
\node at (180:2.3) {$2$};
\node at (240:2.4) {$1$};
\node at (-60:2.4) {$6$};
\node at (0:2.3) {$5$};

\draw (0,0) circle (2);

\draw[thick] (a1) to (a4);
\draw[thick] (a2) to [bend right] (a3);
\draw[thick] (a5) to [bend right] (a6);
\end{tikzpicture}
\begin{tikzpicture}[scale=0.6,baseline=-0.5ex]
\coordinate (a4) at (60:2);
\coordinate (a3) at (120:2);
\coordinate (a2) at (180:2);
\coordinate (a1) at (-120:2);
\coordinate (a6) at (-60:2);
\coordinate (a5) at (0:2);

\node at (60:2.3) {$4$};
\node at (120:2.3) {$3$};
\node at (180:2.3) {$2$};
\node at (240:2.4) {$1$};
\node at (-60:2.4) {$6$};
\node at (0:2.3) {$5$};

\draw (0,0) circle (2);

\draw[thick] (a1) to [bend right] (a2);
\draw[thick] (a3) to (a6);
\draw[thick] (a4) to [bend right] (a5);
\end{tikzpicture}
\begin{tikzpicture}[scale=0.6,baseline=-0.5ex]
\coordinate (a4) at (60:2);
\coordinate (a3) at (120:2);
\coordinate (a2) at (180:2);
\coordinate (a1) at (-120:2);
\coordinate (a6) at (-60:2);
\coordinate (a5) at (0:2);

\node at (60:2.3) {$4$};
\node at (120:2.3) {$3$};
\node at (180:2.3) {$2$};
\node at (240:2.4) {$1$};
\node at (-60:2.4) {$6$};
\node at (0:2.3) {$5$};

\draw (0,0) circle (2);

\draw[thick] (a1) to [bend right] (a2);
\draw[thick] (a3) to [bend right] (a4);
\draw[thick] (a5) to [bend right] (a6);
\end{tikzpicture}
\end{equation*}
to the five tableaux in $B(2\omega_3)$:
$$
\tableau[sbY]{
1&4\\
2& 5\\
3&6
}
\;\;
\tableau[sbY]{
1&3\\
2& 5\\
4&6
}
\;\;
\tableau[sbY]{
1&3\\
2& 4\\
5&6
}
\;\;
\tableau[sbY]{
1&2\\
3& 5\\
4&6
}
\;\;
\tableau[sbY]{
1&2\\
3& 4\\
5&6
}
$$
\end{example}

The following result can be found in \cite{PPR}.
\begin{theorem}
Under the bijection $\theta$, the obvious cyclic action on $\AA_{k,n}$ corresponds to the promotion operator on $B(2\omega_k)$.
\end{theorem}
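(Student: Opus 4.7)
\emph{Plan.} I will deduce the theorem from Theorem~\ref{thm:canbasis}, parts~(2) and~(4). The idea is that both cyclic actions---rotation of $(\tau,T)\in\AA_{k,n}$ and promotion of $\theta(\tau,T)\in B(2\omega_k)$---are two descriptions of the same pullback action $\chi^*$ of the Grassmannian rotation $\chi:\Gr(k,n)\to\Gr(k,n)$ on the degree-two piece $R(k,n)_2$ of the homogeneous coordinate ring. By Theorem~\ref{thm:canbasis}(2) the dual canonical basis $\{G(T)^*\}_{T\in B(2\omega_k)}$ and the Temperley-Lieb basis $\{F_{\tau,T}\}_{(\tau,T)\in \AA_{k,n}}$ are the same basis of $R(k,n)_2$, and the bijection $\theta$ is precisely the change-of-label dictionary, i.e.\ $F_{\tau,T}=G(\theta(\tau,T))^*$. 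By Theorem~\ref{thm:canbasis}(4), $\chi^*$ acts on the dual canonical basis by promotion of the tableau label. Hence it suffices to show that $\chi^*$ acts on the Temperley-Lieb basis by the obvious cyclic rotation of $(\tau,T)$; then the two descriptions of the same permutation of basis vectors must agree under $\theta$.

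\emph{Key step.} The main combinatorial computation is to verify
\[
\chi^*(F_{\tau,T}) \;=\; F_{\chi^{-1}(\tau,T)}
\]
(the direction depending on whether one rotates the network or its coordinates). By Proposition~\ref{prop:tau}, $F_{\tau,T}$ is a well-defined element of $R(k,n)_2$, so it is enough to evaluate both sides on planar bipartite networks $N$: one has $\chi^*(F_{\tau,T})(N)=F_{\tau,T}(\chi(N))$, where $\chi(N)$ is $N$ with its boundary vertices cyclically relabeled. A Temperley-Lieb subgraph $\Sigma$ of $\chi(N)$ with boundary data $(\tau,T)$ is, after undoing the relabeling, exactly a Temperley-Lieb subgraph of $N$ with boundary data $\chi^{-1}(\tau,T)$, and the weights are unchanged. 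This proves the displayed identity.

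\emph{Conclusion and obstacle.} Writing $\chi_{\mathrm{prom}}$ for promotion and chasing
\[
G(\chi_{\mathrm{prom}}(\theta(\tau,T)))^{*} = \chi^{*}(G(\theta(\tau,T))^{*}) = \chi^{*}(F_{\tau,T}) = F_{\chi^{-1}(\tau,T)} = G(\theta(\chi^{-1}(\tau,T)))^{*},
\]
linear independence of the dual canonical basis yields $\chi_{\mathrm{prom}}\circ\theta=\theta\circ\chi^{-1}$, equivalent to the claimed equivariance since both operations have order $n$ (invert both sides). The main technical obstacle is sign bookkeeping: the Grassmannian rotation $\chi$ carries a $(-1)^{k-1}$ in its definition, so one must verify that the induced action on the index $(\tau,T)$ really is the naive cyclic shift and not a twisted variant. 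This boils down to tracking signs through Theorem~\ref{thm:TL} and the combinatorial definition of $F_{\tau,T}$; once the conventions are pinned down, the check is routine.
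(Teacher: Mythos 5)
The paper offers no proof of this statement; it simply cites \cite{PPR}, where the promotion--rotation equivariance is established combinatorially (via a jeu-de-taquin style argument in the $\mathfrak{sl}_2$ web setting). Your argument is therefore a genuinely different route: you deduce the result from Theorem~\ref{thm:canbasis}(2) and~(4) plus the direct computation $\chi^*(F_{\tau,T}) = F_{\chi^{-1}(\tau,T)}$. That direct computation is cleaner than you give yourself credit for: the $(-1)^{k-1}$ sign baked into the Grassmannian rotation $\chi$ is \emph{exactly} what makes $\chi^*(\Delta_I) = \Delta_{\chi^{-1}(I)}$ with no sign, and since compatibility of $(\tau,T)$ with $(I,J)$ is manifestly invariant under a simultaneous cyclic shift of all boundary labels, Theorem~\ref{thm:TL} and its inversion (Proposition~\ref{prop:tau}) transport without twists. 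The matching computation you sketch (pulling back $F_{\tau,T}$ to a rotated network) is also correct, since $\chi(X(N)) = X(\chi(N))$ exactly by the same sign cancellation, and $\Gr(k,n)_{\geq 0}$ is Zariski dense. So the deduction goes through, modulo the usual $\chi$ versus $\chi^{-1}$ orientation bookkeeping which you address.

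The one point you should flag more sharply is the risk of circularity. Theorem~\ref{thm:canbasis}(2) --- the identification $\{G(T)^*\} = \{F_{\tau,T}\}$ in degree two, with $\theta$ as the matching bijection --- is attributed to the forthcoming \cite{Lam+}, whose proof you have not seen. If that proof establishes $F_{\tau,T} = G(\theta(\tau,T))^*$ by arguing that both bases are permuted compatibly by the cyclic group (an entirely natural strategy, and one that would invoke precisely the statement you are trying to prove, or its source \cite{PPR}), then your deduction is circular. By contrast, the reference \cite{PPR} gives a self-contained combinatorial proof, and Theorem~\ref{thm:canbasis}(4) is credited to \cite{Rho}, whose Hecke-algebra argument for promotion on the dual canonical basis is independent of the web picture. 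So your route trades a short elementary combinatorial argument for a conceptual representation-theoretic one, at the cost of depending on an unpublished ingredient whose proof may already contain the theorem. Worth a remark acknowledging this dependence.
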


\subsection{Demazure modules and Schubert varieties}
%
Let $\I(X_I) \subset R(k,n)$ denote the homogeneous ideal of the Schubert variety $X_I$ (see Section \ref{sec:Schub}) and let $\I(X_I)_d \subset R(k,n)_d$ denote the degree $d$ component.  Let $R(X_I)_d = \Gamma(X_I,\O(d))$ denote the degree $d$ part of the homogeneous coordinate ring of $X_I$.  Since sections on the Grassmannian restrict to sections on Schubert varieties, the space $R(X_I)_d$ is naturally a quotient of $R(k,n)_d=V(d\omega_k)^*$.

For $I \in \binom{[n]}{k}$, we have an extremal weight vector $G(T_I) \in V(d\omega_k)$.  The vector $G(T_I)$ spans the weight space of $V(d\omega_k)$ with weight $\alpha$ given by $\alpha_i = d$ if $i \in I$ and $\alpha_i = 0$ otherwise.  The \defn{(opposite) Demazure module} $V_I(d\omega_k)$ is defined to be the $B_-$ - submodule of $V_{d\omega_k}$ generated by the vector $G(T_I)$. 

The following result is due to Kashiwara \cite{Kasdem}.
\begin{theorem}\label{thm:Kas}
The $B_-$ - submodule $V_I(d\omega_k)$ has a basis $\{G(T) \mid T \in B_I(d\omega_k)\}$.
\end{theorem}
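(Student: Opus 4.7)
The plan is to invoke Kashiwara's classical theorem on Demazure crystals and then reconcile the combinatorial definition of $B_I(d\omega_k)$ used in the paper with the standard crystal-theoretic definition mentioned in the remark preceding the theorem statement.

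First I would work at the quantum level. Let $U_q(\b_-) \subset U_q(\sl_n)$ be the lower Borel subalgebra and let $V_q(d\omega_k)$ be the quantized highest weight module with canonical basis $\{G_q(T)\}_{T \in B(d\omega_k)}$. Let $v_I := G_q(T_I) \in V_q(d\omega_k)$; this is the extremal weight vector of weight $d\sum_{i \in I} \epsilon_i$. Kashiwara's foundational theorem states that the $U_q(\b_-)$-submodule generated by an extremal weight vector $v_I$ has a basis consisting of those canonical basis elements $G_q(T)$ indexed by the Demazure subcrystal $B^{\mathrm{crys}}_I(d\omega_k)$, defined as the set of tableaux reachable from $T_I$ by iterated application of the lowering Kashiwara operators $\tilde f_{j_1} \tilde f_{j_2} \cdots \tilde f_{j_r}$. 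Specializing $q \to 1$ — which is compatible with the canonical basis construction — yields a basis $\{G(T) \mid T \in B^{\mathrm{crys}}_I(d\omega_k)\}$ of the classical Demazure module $V_I(d\omega_k)$.

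It remains to show $B^{\mathrm{crys}}_I(d\omega_k) = B_I(d\omega_k)$, where $B_I(d\omega_k)$ is the set of $T \in B(d\omega_k)$ with $T(a,b) \geq T_I(a,b)$ in every cell $(a,b)$. For the inclusion $B^{\mathrm{crys}}_I \subseteq B_I$, I would proceed by induction on the number of lowering operators applied. The base case $T_I \in B_I$ is trivial. For the inductive step, recall that $\tilde f_i$ acts on a semistandard tableau by changing a single $i$ into an $i+1$ (via the bracketing rule on the reading word). Since each application of $\tilde f_i$ strictly increases one entry and leaves the others fixed, the property $T(a,b) \geq T_I(a,b)$ is preserved. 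Hence every tableau in $B^{\mathrm{crys}}_I$ lies in $B_I$.

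The main obstacle is the reverse inclusion $B_I \subseteq B^{\mathrm{crys}}_I$, as it requires showing that \emph{every} tableau dominating $T_I$ is crystal-reachable from $T_I$. My preferred route is a dimension/character count: the Demazure character formula $\partial_{w_I} e^{d\omega_k}$ (where $w_I$ is the minimal-length Weyl group element sending $T_{\max}$ to $T_I$ in an appropriate sense) computes the character of $V_I(d\omega_k)$, and a standard combinatorial identity expresses this as $\sum_{T \in B_I(d\omega_k)} x^{\wt(T)}$. Combined with the already-established inclusion $B^{\mathrm{crys}}_I \subseteq B_I$ and the fact that $\{G(T)\}$ are linearly independent weight vectors, matching weight-by-weight cardinalities forces the two sets to coincide. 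An alternative, more hands-on route is to construct an explicit algorithm: given $T \in B_I$ with $T \neq T_I$, locate a cell where $T$ strictly exceeds $T_I$ and peel off one application of some $\tilde e_i$ to produce $T' \in B_I$ with $T' < T$ in a suitable well-founded order, so that iterating recovers a chain of $\tilde f_i$'s from $T_I$ to $T$. Either approach completes the identification and hence the proof.
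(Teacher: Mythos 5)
The paper does not prove this theorem --- it is cited to Kashiwara \cite{Kasdem} --- and the remark immediately preceding the statement flags the agreement of the entry-wise-dominance set $B_I(d\omega_k)$ with ``the usual definition of the (opposite) Demazure crystal'' as non-obvious. Your proposal takes that remark at face value and aims at the equality $B^{\mathrm{crys}}_I(d\omega_k) = B_I(d\omega_k)$, where $B^{\mathrm{crys}}_I$ is the set of all $\tilde f_{j_1}\cdots\tilde f_{j_r} T_I$. This equality is false, so neither of your two routes for the reverse inclusion can work. Take $n=4$, $k=2$, $d=2$, $I=\{1,3\}$; then $T_I$ has rows $(1,1)$ and $(3,3)$, and the example just before the remark lists the $14$ tableaux of $B_I(2\omega_2)$, among them the tableau $T$ with rows $(1,3)$ and $(3,4)$. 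But $T$ is not an $\tilde f$-descendant of $T_I$: the only $i$ with $\tilde e_i T \neq 0$ is $i=2$, and $\tilde e_2 T$ has rows $(1,3)$, $(2,4)$, which fails the dominance inequality at cell $(2,1)$ and so (by your own, correct, forward inclusion) lies outside $B^{\mathrm{crys}}_I$; if $T$ were in $B^{\mathrm{crys}}_I$, the last step $T = \tilde f_j T'$ of any chain from $T_I$ would force $\tilde e_j T = T' \in B^{\mathrm{crys}}_I$, a contradiction. Direct enumeration of the $\tilde f$-closure of $T_I$ confirms $|B^{\mathrm{crys}}_I| = 13 < 14 = |B_I(2\omega_2)|$.

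The underlying issue, inherited from the paper's remark, is that Kashiwara's opposite Demazure crystal is not the set of $\tilde f$-descendants of the extremal vertex $T_I$. It is $B^{-}_w(\lambda) = \mathcal{E}_{i_1}\cdots\mathcal{E}_{i_\ell}\{u_{w_0\lambda}\}$, built from the lowest weight vertex by the string operators $\mathcal{E}_i$ along a reduced word $s_{i_1}\cdots s_{i_\ell}$ for $w_0 w$; independence of the reduced word and compatibility with the global basis are the content of \cite{Kasdem}. In the example, running this recursion with $w = s_2$ and $w_0 s_2 = s_1 s_2 s_3 s_2 s_1$ produces all $14$ tableaux, including $T$. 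So the correct reduction is: invoke Kashiwara for $V_I(d\omega_k) = \spn\{G(T) \mid T \in B^{-}_w(d\omega_k)\}$ with $w$ the minimal-length permutation sending $[k]$ to $I$, and then prove the separate, genuinely non-trivial combinatorial identity $B^{-}_w(d\omega_k) = B_I(d\omega_k)$ --- essentially the Lascoux--Sch\"utzenberger right-key characterization of Demazure crystals specialized to rectangular shape. Your forward inclusion remains true and useful (every $\tilde f$-descendant of $T_I$ entry-wise dominates $T_I$), but it does not close the argument, and the character-count route cannot help without first supplying the very combinatorial content it labels as ``standard.''
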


The following result is a consequence of Theorem \ref{thm:Kas}.
\begin{proposition}\label{prop:SchubDemazure} \
We have 
\begin{enumerate}
\item
$\I(X_I)_d = V_{I}(d\omega_k)^\perp \subset V(d\omega_k)^* = R(k,n)_d$ has a basis given by $\{G(T)^* \mid T \notin B_I(d\omega_k)\}$.
\item
$R(X_I)_d$ has a basis given by (the image of) $\{G(T)^* \mid T \in B_I(d\omega_k)\}$.
\end{enumerate}
\end{proposition}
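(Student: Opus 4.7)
The plan is to deduce the proposition from Kashiwara's Theorem \ref{thm:Kas} together with the classical identification of the space of sections of line bundles on a Schubert variety with the dual of the opposite Demazure module. Throughout, we work with the natural pairing $\langle -,-\rangle: V(d\omega_k)^* \otimes V(d\omega_k) \to \C$, with respect to which $\{G(T)^* \mid T \in B(d\omega_k)\}$ and $\{G(T) \mid T \in B(d\omega_k)\}$ are dual bases by construction.

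First I would invoke Borel–Weil to identify $R(k,n)_d \simeq V(d\omega_k)^*$ as a $\GL(n)$-module (this identification is already stated in the excerpt). The classical refinement, due in various forms to Demazure, Ramanan–Ramanathan, Andersen, Kumar, Mathieu and others, states that the restriction map
\[
\Gamma(\Gr(k,n), \O(d)) \longrightarrow \Gamma(X_I, \O(d))
\]
is surjective, and the image is isomorphic, as a $B_-$-module, to $V_I(d\omega_k)^*$, in such a way that the dual inclusion $\Gamma(X_I,\O(d))^* \hookrightarrow \Gamma(\Gr(k,n),\O(d))^*$ coincides with the embedding $V_I(d\omega_k) \hookrightarrow V(d\omega_k)$ used to define the Demazure module. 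Consequently, under the identification $R(k,n)_d = V(d\omega_k)^*$, the ideal component $\I(X_I)_d$ is exactly the annihilator $V_I(d\omega_k)^\perp$.

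Second, I would apply Theorem \ref{thm:Kas}: since $\{G(T) \mid T \in B_I(d\omega_k)\}$ is a basis of $V_I(d\omega_k)$ and $\{G(T) \mid T \in B(d\omega_k)\}$ is a basis of $V(d\omega_k)$, the annihilator of $V_I(d\omega_k)$ under the dual pairing is spanned by exactly those dual canonical basis vectors $G(T)^*$ with $T \notin B_I(d\omega_k)$. This proves part (1). Part (2) then follows at once: the quotient $R(X_I)_d = R(k,n)_d / \I(X_I)_d$ has a basis given by the images of the complementary dual canonical basis vectors $\{G(T)^* \mid T \in B_I(d\omega_k)\}$, because the decomposition $V(d\omega_k)^* = \spn\{G(T)^* : T \in B_I\} \oplus \spn\{G(T)^* : T \notin B_I\}$ is compatible with the quotient by the annihilator.

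The only nontrivial input is the identification $\Gamma(X_I, \O(d)) \simeq V_I(d\omega_k)^*$, which is where essentially all the geometry enters; once this is granted, the rest is a purely formal consequence of the duality of the two canonical bases and Kashiwara's description of the Demazure basis. The main conceptual care needed is to confirm that the ``opposite'' Demazure module defined here as the $B_-$-submodule generated by $G(T_I)$ is the one that matches the conventions of Section \ref{sec:Schub}, where $X_I$ is defined via the standard flag $E_\bullet$ (and is hence a $B_-$-invariant subvariety); this is a standard conventions check. The surjectivity of the restriction, incidentally, is consistent with — and follows from — the projective normality of $X_I$, and for the closely related positroid varieties is exactly the content of Theorem \ref{thm:normal}.
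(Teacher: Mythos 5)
Your proof is correct, and it is essentially the same argument the paper has in mind: the paper simply states that the proposition ``is a consequence of Theorem~\ref{thm:Kas},'' and your write-up supplies exactly the missing geometric input — the classical identification $\Gamma(X_I,\O(d))\simeq V_I(d\omega_k)^*$ compatibly with restriction from $\Gr(k,n)$, which (together with projective normality of $X_I$ to guarantee surjectivity of restriction) gives $\I(X_I)_d = V_I(d\omega_k)^\perp$, after which Kashiwara's Theorem~\ref{thm:Kas} and duality of the bases $\{G(T)\}$ and $\{G(T)^*\}$ do the rest. Your remark about checking the $B_-$-conventions is apt, since the paper uses the opposite Demazure module generated by $G(T_I)$ and the standard flag $E_\bullet$, and these must be aligned for the identification to hold on the nose.
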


\subsection{Cyclic Demazure modules and positroid varieties}
Let $f$ be a $(k,n)$-bounded affine permutation.  Define $\I(\Pi_f)_d \subset R(k,n)_d$ by
$$
\I(\Pi_f)_d\coloneqq \I(\Pi_f) \cap R(k,n)_d
$$
to be the degree $d$ homogeneous component of $\I(\Pi_f)$.  Since $\I(\Pi_f)$ is a homogeneous ideal, it is spanned by the subspaces $\I(\Pi_f)_d$.  The aim of this section is to give a representation-theoretic description of $\I(\Pi_f)_d$ as a subspace of $R(k,n)_d \simeq V(d\omega_k)^*$.

Let $f \in \Bound(k,n)$ have $(k,n)$-Grassmann-necklace $\I(f) = (I_1,I_2,\ldots,I_n)$.  Define the \defn{cyclic Demazure crystal} $B_f(d\omega_k)$ to be intersection 
$$
B_f(d\omega_k)\coloneqq B_{I_1}(d\omega_k) \cap \chi(B_{\chi^{-1}(I_2)}(d\omega_k)) \cap \cdots \cap \chi^{n-1}(B_{\chi^{1-n}(I_n)}(d\omega_k)).
$$
If we identify $B(\omega_k)$ with the set $\binom{[n]}{k}$ of $k$-element subsets of $[n]$, then by Example \ref{ex:Schubmat}, $B_f(\omega_k)$ is simply the positroid $\M(f)$.  Also, define the \defn{cyclic Demazure module} $V_f(d\omega_k)$ to be intersection
$$
V_f(d\omega_k) \coloneqq V_{I_1}(d\omega_k) \cap \chi(V_{\chi^{-1}(I_2)}(d\omega_k)) \cap \cdots \cap \chi^{n-1}(V_{\chi^{1-n}(I_n)}(d\omega_k)).
$$

Let $R(\Pi_f)$ denote the homogeneous coordinate ring of the positroid variety $\Pi_f$.  The following results will be established in \cite{Lam+}.

\begin{theorem}[\cite{Lam+}]
The subspace $V_f(d\omega_k)$ has a basis $\{G(T) \mid T \in B_f(d\omega_k)\}$.
\end{theorem}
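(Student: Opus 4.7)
The plan is to reduce the statement to the elementary fact that the intersection of subspaces, each spanned by a subset of a fixed basis, is itself spanned by the intersection of those subsets. Concretely, if a vector space $V$ has a basis $\{b_i\}_{i \in B}$, and $W_1,\ldots,W_n \subseteq V$ are subspaces with $W_j = \spn\{b_i \mid i \in S_j\}$ for some $S_j \subseteq B$, then $\bigcap_j W_j = \spn\{b_i \mid i \in \bigcap_j S_j\}$. I would apply this with $V = V(d\omega_k)$, with the fixed basis being the canonical basis $\{G(T) \mid T \in B(d\omega_k)\}$, and with the subspaces being the cyclically rotated Demazure modules $W_a \coloneqq \chi^{a-1}(V_{\chi^{1-a}(I_a)}(d\omega_k))$ for $a = 1,\ldots,n$, whose intersection is $V_f(d\omega_k)$ by definition.

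The first step is to show that each $W_a$ is spanned by a subset of the canonical basis, and to identify that subset. Kashiwara's theorem (Theorem \ref{thm:Kas}) says that the unrotated Demazure module $V_{\chi^{1-a}(I_a)}(d\omega_k)$ has basis $\{G(T) \mid T \in B_{\chi^{1-a}(I_a)}(d\omega_k)\}$. Dualizing Theorem \ref{thm:canbasis}(4), the cyclic rotation operator $\chi$ permutes the canonical basis of $V(d\omega_k)$ in the same way it permutes rectangular tableaux via promotion. Applying $\chi^{a-1}$ then gives that $W_a$ has basis $\{G(T) \mid T \in \chi^{a-1}(B_{\chi^{1-a}(I_a)}(d\omega_k))\}$, which is a subset of the canonical basis of $V(d\omega_k)$.

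The second step is simply to intersect. Taking the intersection of the indexing sets yields $B_f(d\omega_k)$ by definition, so the elementary lemma produces the basis $\{G(T) \mid T \in B_f(d\omega_k)\}$ for $V_f(d\omega_k)$.

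The main obstacle, and the only nontrivial input beyond Kashiwara's Demazure basis theorem, is the cyclic invariance used in the first step: one must know that $\chi$ sends canonical basis vectors to canonical basis vectors, and that the induced bijection on index sets coincides with promotion. This is Theorem \ref{thm:canbasis}(4), which rests on Rhoades's identification of the cyclic sieving phenomenon with the action of promotion. Without this compatibility, the intersection of two ``coordinate subspaces'' with respect to different bases could be larger than the span of the intersection of the two indexing sets, and the argument would fail; with it, everything reduces to the combinatorial identity defining $B_f(d\omega_k)$.
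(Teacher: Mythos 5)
The paper does not actually give a proof of this statement -- it is attributed to the forthcoming reference \cite{Lam+}, so there is no argument here to compare against. That said, your proposal is the natural argument, and the structure is sound: Kashiwara's Demazure basis theorem (Theorem \ref{thm:Kas}) handles the unrotated modules, the cyclic compatibility of the canonical basis with promotion (Theorem \ref{thm:canbasis}(4)) handles the rotations, and then the elementary ``intersection of coordinate subspaces is a coordinate subspace'' lemma closes the argument. You correctly identify the cyclic compatibility as the one nontrivial input.

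The one place where I would push you to be more careful is the sentence beginning ``Dualizing Theorem \ref{thm:canbasis}(4)\ldots''. Theorem \ref{thm:canbasis}(4) is a statement about the pullback $\chi^*$ acting on the \emph{dual} canonical basis $\{G(T)^*\}$ inside $R(k,n)_d \cong V(d\omega_k)^*$. The operator $\chi$ appearing in the definition of $V_f(d\omega_k)$ acts directly on $V(d\omega_k)$, and passing between the two involves a dual representation and a choice of left versus right action convention. Depending on how these conventions are set, ``dualizing'' can produce either $\chi(G(T)) = G(\chi(T))$ or $\chi(G(T)) = G(\chi^{-1}(T))$. This matters for your argument: with the second convention, $\chi^{a-1}(V_{\chi^{1-a}(I_a)}(d\omega_k))$ would be spanned by $\{G(T) \mid T \in \chi^{1-a}(B_{\chi^{1-a}(I_a)}(d\omega_k))\}$, which is \emph{not} the $a$-th term in the intersection defining $B_f(d\omega_k)$, and since $\chi^{1-a} \neq \chi^{a-1}$ in general the indexing sets would not match up. So you should not wave this through as ``dualizing''; you need to pin down that the action of the cyclic element on $V(d\omega_k)$ (as used in the definition of $V_f$) really sends $G(T)$ to $G(\chi(T))$, i.e. agrees with promotion rather than its inverse. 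This is surely true with the conventions of \cite{Lam+} (the parallel definitions of $V_f$ and $B_f$ leave no other possibility), but the step as written leaves a genuine potential sign/inverse mismatch unaddressed, and flagging it explicitly would make the proof complete.
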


\begin{theorem}[\cite{Lam+}]\label{thm:Posideal}\
\begin{enumerate}
\item
$\I(\Pi_f)_d$ is isomorphic to $V_f(d\omega_k)^\perp$ and has a basis given by $\{G(T)^* \mid T \notin B_f(d\omega_k)\}$.
\item
$R(\Pi_f)_d$ has a basis given by the images of $\{G(T)^* \mid T \in B_f(d\omega_k)\}$.
\end{enumerate}
\end{theorem}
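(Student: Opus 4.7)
The plan is to prove the single identity $\I(\Pi_f)_d = V_f(d\omega_k)^\perp$ as subspaces of $R(k,n)_d = V(d\omega_k)^*$. Granted this, the preceding theorem that $V_f(d\omega_k)$ has canonical basis $\{G(T) \mid T \in B_f(d\omega_k)\}$ dualizes to give $\{G(T)^* \mid T \notin B_f(d\omega_k)\}$ as a basis of the annihilator, yielding (1); part (2) then follows because the projective normality of $\Pi_f$ (Theorem \ref{thm:normal}) makes the restriction map $R(k,n)_d \twoheadrightarrow R(\Pi_f)_d$ surjective with kernel $\I(\Pi_f)_d$.

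The easy containment $V_f(d\omega_k)^\perp \subseteq \I(\Pi_f)_d$ follows by iterating the finite-dimensional duality $(A \cap B)^\perp = A^\perp + B^\perp$ on the $n$-fold intersection defining $V_f(d\omega_k)$. Combined with Proposition \ref{prop:SchubDemazure}(1) applied to each defining Schubert variety and the cyclic equivariance of the dual canonical basis from Theorem \ref{thm:canbasis}(4), this yields
$$
V_f(d\omega_k)^\perp \;=\; \sum_{j=0}^{n-1} \I\bigl(\chi^{j}(X_{\chi^{-j}(I_{j+1})})\bigr)_d,
$$
and every summand sits inside $\I(\Pi_f)_d$ because $\Pi_f$ is by definition contained in each cyclically rotated Schubert variety on the right.

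For the reverse containment, the problem reduces to showing that the restrictions $\{G(T)^*|_{\Pi_f} \mid T \in B_f(d\omega_k)\}$ are linearly independent in $R(\Pi_f)_d$. Nonvanishing of each restriction is handled by positivity: the extremal weight vectors $G(dT_{I_a})^*$ equal $\Delta_{I_a}^d$ up to scalar, and these are strictly positive on $\Pi_{f,>0}$ because $I_a \in \M_f$ by Theorem \ref{thm:oh}. For a general $T \in B_f(d\omega_k)$, nonvanishing on $\Pi_{f,>0}$ follows from the nonnegativity statement Theorem \ref{thm:canbasis}(3) combined with a crystal-theoretic argument linking $G(T)^*$ within its weight space to an extremal generator known to survive on $\Pi_f$.

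The main obstacle is the linear independence itself, which I would prove by induction on $f$ in the closure order on $\Bound(k,n)$ from Theorem \ref{thm:partialorder}. The base case $f = t_I$ is trivial since $\Pi_{t_I} = \{e_I\}$ and $B_{t_I}(d\omega_k) = \{dT_I\}$ is a singleton. The key combinatorial input for the inductive step is that $g > f$ implies $B_g(d\omega_k) \subseteq B_f(d\omega_k)$, which follows because each Grassmann necklace index $I_a(g)$ is lex-at-least $I_a(f)$ and so each defining Demazure crystal shrinks. Assuming $\sum_{T \in B_f(d\omega_k)} c_T\, G(T)^*|_{\Pi_f} = 0$, restriction to $\Pi_g$ for each $g > f$ invokes the inductive hypothesis twice --- once via (1) for $g$ to kill the summands with $T \in B_f(d\omega_k) \setminus B_g(d\omega_k)$, and once via (2) for $g$ to force $c_T = 0$ for the remaining $T \in B_g(d\omega_k)$. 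Iterating over all $g > f$ eliminates all coefficients except those indexed by $T \in B_f(d\omega_k) \setminus \bigcup_{g > f} B_g(d\omega_k)$, whose corresponding $G(T)^*$ are strictly positive on $\Pi_{f,>0}$; to eliminate these I would separate by torus weights and invoke the positive parametrization $(\L_{G(f)})_{>0} \simeq \R_{>0}^{k(n-k)-\ell(f)} \to \Pi_{f,>0}$ of Theorem \ref{thm:main}(3). The delicate point --- and what I expect to be the true obstacle --- is establishing linear independence within a single torus weight space, which likely requires an explicit combinatorial formula for $G(T)^*$ evaluated on planar bipartite networks, generalizing the Temperley-Lieb immanant expression of Section \ref{sec:doubledimer} in the case $d=2$.
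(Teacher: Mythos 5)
The paper does not actually prove Theorem~\ref{thm:Posideal}: both this result and the preceding one (that $V_f(d\omega_k)$ has basis $\{G(T)\mid T\in B_f(d\omega_k)\}$) are explicitly deferred to the forthcoming reference \cite{Lam+} (``The following results will be established in \cite{Lam+}''), so there is no in-paper proof to compare against. Evaluating your proposal on its own terms: the outer scaffolding is sound. The containment $V_f(d\omega_k)^\perp \subseteq \I(\Pi_f)_d$ via iterating $(A\cap B)^\perp = A^\perp + B^\perp$, Proposition~\ref{prop:SchubDemazure}, and cyclic equivariance of the dual canonical basis is correct, and (2) does follow from (1). One sign to fix: in the dual-of-Bruhat order on $\Bound(k,n)$ it is $g<f$ (not $g>f$) that gives $\Pi_g\subsetneq\Pi_f$, $I_a(g)\geq_a I_a(f)$ (Theorem~\ref{thm:Grassorder}), and hence $B_g(d\omega_k)\subseteq B_f(d\omega_k)$; your logic requires this direction, so it is presumably a typo, and the description ``lex-at-least'' is the correct criterion for the intended direction.

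The genuine gap is exactly where you flag it. After restricting to all $\Pi_g$, $g<f$, and applying the inductive hypothesis, you are left with a relation supported on $T\in B_f(d\omega_k)\setminus\bigcup_{g<f}B_g(d\omega_k)$. Weight separation disposes of the case where these $T$ carry pairwise distinct torus weights (which recovers Oh's theorem at $d=1$), but for $d\geq 2$ several surviving $T$ can share a weight, and strict positivity of the $G(T)^*$ on $\Pi_{f,>0}$ does \emph{not} imply linear independence within a weight space --- a signed linear combination of positive functions can vanish identically. Without the explicit network-evaluation formula for $G(T)^*$ that you postulate (a $d>2$ analogue of the Temperley--Lieb immanant formula of Section~\ref{sec:doubledimer}), the argument is incomplete. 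There is also a circularity risk worth flagging: Theorem~\ref{thm:canbasis}(5) \emph{is} Theorem~\ref{thm:Posideal}, and Theorem~\ref{thm:canbasis}(6) (which you invoke for nonvanishing on $\Pi_{f,>0}$) is likewise attributed to \cite{Lam+} and appears logically intertwined with the statement under proof, so one would need to know that (6) can be established independently before using it here.
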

\begin{example}
Suppose $k = 1$.  In this case $B_I(d\omega_1)$ is the set of one-row (of length $d$) tableaux with entries in $1,2,\ldots,i$, where $I = \{i\}$.  By choosing the $(1,n)$-Grassmann necklace appropriately, $B_f(\omega_1)$ can be arranged to be any subset of $\{1,2,\ldots,n\}$.  For example, if $n = 4$, $(I_1,I_2,I_3,I_4) = (1,3,3,1)$ gives $B_f(\omega_1) = \{1,3\}$.  The set $B_f(d\omega_1)$ is simply the set of one-row tableaux with entries in $B_f(\omega_1)$.
\end{example}

\begin{example}\label{ex:cyclicDem}
Take $k = 2$ and $n = 4$.  Let us consider the positroid variety $\Pi_f$ where $f = [2547] \in \B(2,4)$.  The Grassmann necklace is $\I(f) = (13,23,13,41)$.  The set $B_f(2\omega_2)$ is given by the set of tableaux
$$
\tableau[sbY]{
1&1\\
3&3
}
\;\;
\tableau[sbY]{
1&2\\
3&3
}
\;\;
\tableau[sbY]{
1&2\\
3&4
}
\;\;
\tableau[sbY]{
1&1\\
3&4
}
\;\;
\tableau[sbY]{
2&2\\
3&3
}
\;\;
\tableau[sbY]{
2&2\\
3&4
}
\;\;
\tableau[sbY]{
1&1\\
4&4
}
\;\;
\tableau[sbY]{
1&2\\
4&4
}
\;\;
\tableau[sbY]{
2&2\\
4&4
}
$$
The positroid cell $\Pi_{f,>0}$ is represented by the planar bipartite graph 
\begin{equation*} G=
\begin{tikzpicture}[baseline=-0.5ex,scale=0.7]
\node at (0,1.8) {$1$};
\node at (1.7,0) {$2$};
\node at (0,-1.8) {$3$};
\node at (-1.7,0) {$4$};
\draw (0,0) circle (1.5cm);
\draw (-1.5,0) -- (-0.4,-0.4);
\draw (1.5,0) -- (0.4,0.4);
\draw (0.4,0.4) -- (0,1.5);
\draw (-0.4,-0.4)-- (0,-1.5);
\blackdot{(-0.4,-0.4)}
\blackdot{(0.4,0.4)}
\end{tikzpicture}
\end{equation*}
Under the bijection $\theta: \AA_{k,n} \to B(2\omega_k)$ described after Theorem \ref{thm:canbasis}, the third tableau in $B_f(2\omega_2)$ is sent to the non-crossing matching
\begin{equation*} \tau = 
\begin{tikzpicture}[baseline=-0.5ex,scale=0.7]
\node at (0,1.8) {$1$};
\node at (1.7,0) {$2$};
\node at (0,-1.8) {$3$};
\node at (-1.7,0) {$4$};
\draw (0,0) circle (1.5cm);
\draw (-1.5,0) to [bend left] (0,-1.5);
\draw (1.5,0) to [bend left] (0,1.5);
\end{tikzpicture}
\end{equation*}
and so one can check from the definition that $F_{\tau,\emptyset}$ is non-vanishing (in fact, always positive) on $\Pi_{f,>0}$.  On the other hand, if $\tau' =\{(1,4),(2,3)\}$, then $F_{\tau',\emptyset}$ vanishes on $\Pi_{f,>0}$ since the graph $G$ has no Temperley-Lieb subgraphs with non-crossing matching $\tau'$.  We have that $\theta(\tau',\emptyset)$ is the tableau with columns $12$ and $34$, and this tableau is not in $B_f(2\omega_2)$, consistent with Theorem \ref{thm:Posideal}.
\end{example}

Since $B_f(\omega_k)$ is simply a positroid, Theorem \ref{thm:Posideal} is a higher degree analogue of Theorem \ref{thm:oh}.  Looking at whether dual canonical basis elements vanish or not is a higher degree analogue of the concept of a matroid.  

\begin{problem}
Find a formula for the character of $V_f(d\omega_k)$.  Equivalently, compute the weight generating function of $B_f(d\omega_k)$.
\end{problem}

For the bounded affine permutation $f = [2547]$ of Example \ref{ex:cyclicDem}, we have
$$
{\rm ch}(V_f(2\omega_2)) = x_1^2x_3^2 + x_1^2x_3x_4 + x_1^2 x_4^2+ x_1x_2x_3^2 + x_1x_2x_3x_4+ x_1x_2x_4^2 + x_2^2 x_3^2 + x_2^2x_3x_4 + x_2^2 x_4^2.
$$
%


It may seem from the above results that we might expect many ideals of subvarieties of the Grassmannian to have a basis given by a subset of the dual canonical basis, but this is not the case.  
\begin{example}\label{ex:13}
Let $X \subset \Gr(2,4)$ be given by the single equation $\{\Delta_{13} = 0\}$ (which is not a positroid variety).  Then the degree two part of $\I(X)$ has a one-dimensional weight space for the weight $(1,1,1,1)$.  It is spanned by the vector $\Delta_{13}\Delta_{24}$.  This vector is a sum of two elements of the dual canonical basis by Theorem \ref{thm:TL}.
\end{example}

%

Quantum versions of Grassmannians and Schubert varieties have been studied by many authors, see for example \cite{LR}.  In that setting, positroid varieties correspond to certain torus-invariant prime ideals, classified in \cite{MC,Y}.  

\begin{problem}
Find the quantum version of Theorem \ref{thm:Posideal}.
\end{problem}
Note however that the cyclic symmetry acts on the quantum Grassmannian in a more subtle way than it does on the Grassmannian \cite{LLtwist}.

%
%
%
%

\section{Canonical form}\label{sec:form}
Each positroid variety $\Pi_f$ has a distinguished rational differential top form $\omega_f$ with remarkable properties.  This differential form has simple (logarithmic) poles along the boundary $\partial \Pi_f \coloneqq \bigcup_{g > f} \Pi_f$, and no zeroes.  We will describe the rational form $\omega_f$ in an explicit combinatorial way, but we first begin with two more abstract descriptions.

For $X$ a normal variety, we say that $D$ is an anticanonical divisor on $X$ if $D \cap X_{\reg}$ is an anticanonical divisor on $X_{\reg}$, where $X_{\reg}$ denotes the smooth locus of $X$.  By Theorem \ref{thm:normal}, $\Pi_f$ is normal.  Let $\Pi_1,\Pi_2,\ldots,\Pi_r$ be the irreducible components of $\partial \Pi_f$.  In \cite{KLS2}, we showed that the divisor $\sum_{i=1}^r [\Pi_r]$ is anticanonical on $\Pi_f$.  In particular, there is a rational differential form $\omega_f$ whose divisor of poles is equal to $\sum_{i=1}^r [\Pi_r]$ (cf. \cite[Lemma 2.9]{LamWhittaker}).  The singular locus $\Pi_f - (\Pi_f)_{\reg}$ has codimension two in $\Pi_f$, and we may ignore it when considering poles or zeroes (which are codimension one phenomena).  The differential form $\omega_f$ so defined is unique up to scalar, since the ratio of two such forms would be a rational function on $\Pi_f$ with no poles or zeroes, and thus a constant.

The form $\omega_f$ is also natural from the point of view of cluster algebras.  The works of Leclerc \cite{Lec}, Muller and Speyer \cite{MS}, and Lenagan and Yakimov \cite{LY}, strongly suggest that the coordinate ring $\C[\oPi_f]$ of an open positroid variety is a cluster algebra.  Cluster varieties have (up to sign) a natural top form, which is the differential form $\frac{dx_1}{x_1} \wedge \cdots \wedge \frac{dx_n}{x_n}$ on any cluster torus with coordinates (that is, cluster variables) $(x_1,x_2,\ldots,x_n)$.  We will not discuss the cluster structure further, though it is certainly an important part of the story.

\medskip 

Let $G$ be a reduced planar bipartite graph with bounded affine permutation $f$.  A \defn{disconnected grove} of $G$ is a spanning subforest $F$ of $G$ such that every connected component of $F$ contains exactly one boundary vertex.  For a subset $E' \subset E$ and a collection of parameters $(t_e)_{e \in E'} \in \R_{>0}^{|E'|}$, let $N(x_e)$ be planar bipartite network with weights given by $t_e$, for $e \in E'$, and all other weights equal to 1.

\begin{lemma}\label{lem:E'}
Let $E' \subset E(G)$ of the edges of the $G$.  Then the following are equivalent:
\begin{enumerate}
\item
The complement $E(G) \setminus E'$ is a disconnected grove of $G$.
\item
The map $\phi_{E'}: (t_e)_{e \in E'} \in \R_{>0}^{|E'|} \mapsto X(N(x_e))$ is a homeomorphism onto $\Pi_{f,>0}$.
\end{enumerate}
\end{lemma}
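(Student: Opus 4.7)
The key step is to translate the condition on $\phi_{E'}$ into a linear-algebraic statement about the signed bipartite incidence matrix of $G$. By Corollary~\ref{cor:G}, $M_G\colon(\L_G)_{>0}\to\Pi_{f,>0}$ is a homeomorphism, so it suffices to show that the induced map $\psi_{E'}\colon\R_{>0}^{E'}\to(\L_G)_{>0}$---sending $(t_e)$ to the gauge-equivalence class of the network weighting $E'$ by $t_e$ and $E''\coloneqq E(G)\setminus E'$ by $1$---is a homeomorphism exactly when $E''$ is a disconnected grove.

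First I would pass to logarithmic coordinates, so that $\R_{>0}^{E(G)}$ becomes $\R^{E(G)}$ and the gauge group $\R_{>0}^{V_{\mathrm{int}}(G)}$ acts on $\R^{E(G)}$ by the signed bipartite incidence matrix $M$, with signs determined by the vertex colors. In these coordinates $\psi_{E'}$ is the composition of the coordinate inclusion $\R^{E'}\hookrightarrow\R^{E(G)}$ with the quotient by the column span of $M^T$. It is a homeomorphism iff $\R^{E(G)}=\R^{E'}\oplus(\text{column span of }M^T)$, equivalently iff the square matrix $M''$ obtained by restricting $M$ to rows $V_{\mathrm{int}}(G)$ and columns $E''$ is invertible.

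Next I would analyze $\ker M''$ graph-theoretically. Extending $x\in\R^{V_{\mathrm{int}}(G)}$ by zero on the boundary, the condition $M''x=0$ reads $x_u=x_v$ for every edge $uv\in E''$; hence $\ker M''$ consists of those $x$ that are locally constant on the components of $(V,E'')$ and vanish on any component meeting the boundary. So $\ker M''=0$ iff every component of $(V,E'')$ contains at least one boundary vertex. The squareness condition $|E''|=v_{\mathrm{int}}$ combined with the standard identity $\beta_0((V,E''))-\beta_1((V,E''))=v-|E''|=n$ forces $\beta_0=n+\beta_1$. Together with $\beta_0\leq n$ coming from the kernel condition, we get $\beta_1=0$ and each of the $n$ components contains exactly one boundary vertex---that is, $E''$ is a disconnected grove. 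Conversely, a disconnected grove trivially satisfies both the squareness and kernel conditions, so $M''$ is invertible.

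The main subtlety I anticipate is not the bijection-versus-homeomorphism distinction (both $\psi_{E'}$ and its inverse are real-analytic maps between positive tori, so continuity in both directions is automatic once bijectivity is proved), but rather the sign bookkeeping in the bipartite incidence matrix. The bipartite signing is what makes $\ker M''$ admit the clean ``locally constant, vanishing on boundary components'' description; the unsigned incidence matrix of a bipartite graph would produce an additional alternating-sign contribution to the kernel that would obscure the graph-theoretic interpretation and is the only place where a careful case check is needed.
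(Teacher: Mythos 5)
Your proof is correct in spirit and takes a genuinely different route from the paper. The paper argues by induction on the number of faces of $G$: removing an edge $e \in E'$ merges two faces, and the relation $y_F y_{F'} = y'_{F\cup F'}$ lets one recover the extra parameter $t_e$; the converse direction is only asserted. You instead linearize the whole problem by passing to logarithmic coordinates, where gauge equivalence becomes a linear group action and the question collapses to whether $\R^{E(G)}$ is the direct sum of the coordinate subspace $\R^{E'}$ and the image of the incidence map; the subsequent identification of $\ker M''$ with locally constant functions vanishing on boundary components, together with the Euler characteristic count $\beta_0 - \beta_1 = |V| - |E''|$, is a clean and self-contained way to extract the grove condition. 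Your approach has the advantage of handling both implications simultaneously through one linear-algebra dichotomy, whereas the paper's induction only writes out $(1)\Rightarrow(2)$. What the paper's proof buys is that it stays entirely inside the face-weight calculus that the rest of the section uses, and it does not need the auxiliary fact that the gauge group acts freely.

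There is one hypothesis you use implicitly that deserves a sentence. Your reduction to "$M''$ square and invertible" presupposes that the gauge action has trivial stabilizer, i.e.\ $\ker M^T = 0$, so that $\operatorname{rank}(M^T) = v_{\mathrm{int}}$ and hence the dimension match $|E''| = v_{\mathrm{int}}$ forces $M''$ to be square. If $\ker M^T \neq 0$ the correct reformulation of "$\psi_{E'}$ is a homeomorphism" is the pair of conditions $\operatorname{rank}(M^T) = |E''|$ and $\ker M'' = \ker M^T$, and $M''$ is then rectangular. In the setting of the lemma this is harmless: by the same sign-twisted analysis you apply to $M''$, the kernel of $M^T$ is a free parameter for each connected component of $G$ not touching the boundary, and a reduced leafless planar bipartite graph has no such component (every trip on it would be a cycle, violating reducedness). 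You should state this explicitly, since it is what makes the invertibility dichotomy, rather than a surjectivity dichotomy, the right one. The sign bookkeeping you flag is genuine but innocuous, exactly as you say: replacing $h_v$ by $-h_v$ on white vertices turns the unsigned bipartite incidence map into the signed one, and this linear coordinate change does not affect kernels or ranks.
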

\begin{proof}
Suppose (1) holds.  By Corollary \ref{cor:G}, it is enough to show that equation \eqref{eq:yF} maps $(t_e)_{e \in E'} \in \R_{>0}^{|E'|}$ homeomorphically onto $(\L_G)_{>0}$.  To see this, we proceed by induction on the number of faces of $G$.  If $G$ has a single face, then $(\L_G)_{>0}$ is a single point, the only disconnected grove of $G$ is $G$ itself, and $E'$ must be the empty set.  Thus the base case holds.  Now suppose the claim holds for all planar bipartite graphs with $a$ faces, and suppose that $G$ has $a+1$ faces.  It is easy to see that removing any edge $e \in E'$ from $G$ gives a graph $G'$ with one fewer face.  Suppose $F,F'$ are the faces of $G$ separated by $e$.  Then $y_F y_{F'} = y'_{F \cup F'}$, where $y$-s are the face weights of $G$ and $y'$-s are the face weights of $G'$.  If we know all face weights of $G'$ (by induction this is equivalent to knowing $t_{e'}$ for all $e' \in E \setminus \{e\}$), then the value of $t_e$ determines the face weights $y_F$ and $y_{F'}$, and conversely.  Thus (2) follows.
The proof that (2) implies (1) uses the same ideas.
\end{proof}

In fact, the map $\phi_{E'}: \R_{>0}^{|E'|} \to \Pi_{f,>0}$ extends to a birational isomorphism between $(\C^*)^{|E'|}$ and $\Pi_{f}$.  This follows from the fact that $\R_{>0}^{|E'|}$ (resp. $\Pi_{f,>0}$) is Zariski-dense in $(\C^*)^{|E'|}$ (resp. $\Pi_f$), and that the inverse of $\phi_{E'}$ is given by rational formulae.  
We can thus define a rational differential form of top degree
$$
\omega_{G} \coloneqq \prod_{e \in E'} \dlog t_e \coloneqq \prod_{e \in E'} \frac{dt_e}{t_e}
$$
on $\Pi_f$ via this birational isomorphism.  This form depends on an ordering of $E'$, but we shall only consider $\omega_{G}$ up to sign.  To see that $\omega_G$ does not depend on the choice of $E'$, we note that
$$
\omega_{G} = \pm \prod_{F} \frac{dy_F}{y_F}
$$
where the product is over all but one of the faces of $G$.  The equality follows from the fact that the transformation $(t_e) \mapsto (y_F)$ is an invertible monomial transformation (the proof of Lemma \ref{lem:E'} gives such an invertible monomial transformation).  Similarly, the map $\phi_{E'}$ only depends on $G$, so that we have a canonical map $\phi_G: (\C^*)^{\dim(\Pi_f)} \to \oPi_f \subseteq \Pi_f$.

Let $Y \subset X$ be an irreducible subvariety of codimension one.  Let $\omega$ be a rational form on $X$.  We now define the residue $\Res_Y \omega$ of $\omega$ along $Y$.  Suppose $X$ has local coordinates $h_1,h_2,\ldots,h_d$ and $Y$ is locally cut out by the equation $h_1= 0$.  Write $\omega = \frac{dh_1}{h_1} \wedge \omega'$, where $\omega'$ is of the form $g(h_1,h_2,\ldots,h_d) dh_2 \wedge dh_3 \wedge \cdots \wedge dh_d$ for a rational function $g$.  Then $\Res_Y \omega = \omega'|_Y$.  We refer the reader to \cite{GH} for further background on this.

\begin{theorem}\label{thm:Resform}
The rational form $\omega_f = \omega_G$ on $\Pi_f$ is, up to sign, independent of the choice of reduced planar bipartite graph $G$ representing $f \in \Bound(k,n)$.  This form has no zeroes, and it has simple poles on each $\Pi_{f'}$ where $f' \gtrdot f$.  Furthermore, $\Res_{\Pi_{f'}} \omega_f = \omega_{f'}$.
\end{theorem}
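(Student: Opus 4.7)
The plan is to prove the three assertions by leveraging the description $\omega_f = \omega_G = \pm\prod_F \dlog y_F$ (product over all but one face, using $\prod_F y_F=1$) together with the structural results Theorem~\ref{thm:reduced}, Proposition~\ref{prop:half}, and Lemma~\ref{lem:E'}. For the independence of $G$: by Theorem~\ref{thm:reduced}, any two reduced planar bipartite graphs representing $f$ are related by (M1) and (M2), so it suffices to check invariance of $\omega_G$ under each move. Move (M2) only contracts a degree-two vertex, which does not change the set of faces or the face weights, so $\omega_G$ is unchanged. For (M1), the spider/square move induces a cluster-type mutation on the face weights: the inner face $F_0$ transforms by $y_{F_0}\mapsto y_{F_0}^{-1}$, and each of the four neighboring faces transforms by $y_{F_i}\mapsto y_{F_i}(1+y_{F_0}^{\pm 1})$. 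A direct computation shows that under such an invertible monomial-times-binomial transformation, $\prod_F \dlog y_F$ is preserved up to sign — this is the standard fact that cluster mutations preserve the canonical form on a cluster variety.

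Next I turn to poles and residues. Let $f'\gtrdot f$ be a cover. Following the proof of Proposition~\ref{prop:half}, pick a reduced graph $G$ representing $f$ and a single edge $e$ shared by the two trips witnessing the cover, such that $G' \coloneqq G\setminus\{e\}$ is reduced with $f_{G'} = f'$. Since $e$ separates two distinct faces of $G$ (it is not a bridge of the underlying graph, else $G$ would be non-reduced), we can choose a subset $E' \subset E(G)$ containing $e$ whose complement is a disconnected grove; then $E'':= E'\setminus\{e\}$ is a corresponding subset for $G'$. In the coordinates $(t_{e'})_{e'\in E'}$ supplied by $\phi_{E'}$ of Lemma~\ref{lem:E'}, we have $\omega_G = \dlog t_e \wedge \omega_{G'}$, where $\omega_{G'}$ is the pullback of the canonical form for $G'$ in the coordinates $(t_{e'})_{e'\in E''}$. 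The limit $t_e \to 0$ corresponds, via the network, to deleting the edge $e$, so the divisor $\{t_e = 0\}$ maps into $\Pi_{f', \geq 0}$ with dense image; hence its Zariski closure inside $\Pi_f$ is the irreducible divisor $\Pi_{f'}$ (using Theorem~\ref{thm:Zariskidense} and the dimension count $\dim\Pi_{f'} = \dim\Pi_f - 1$). Therefore $\omega_f$ has a simple pole along $\Pi_{f'}$, and by the definition of residue we read off $\Res_{\Pi_{f'}} \omega_f = \omega_{G'}\big|_{t_e=0} = \omega_{f'}$.

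For the absence of zeroes: on the open torus chart $\phi_{E'}((\C^*)^{|E'|}) \subset \oPi_f$, the form $\prod_{e'\in E'} \dlog t_{e'}$ has no zeroes and no poles. The complement of this torus in $\Pi_f$ is the boundary $\partial \Pi_f$, whose codimension-one irreducible components are precisely the positroid divisors $\Pi_{f'}$ with $f'\gtrdot f$ (by Theorem~\ref{thm:Zariskidense}), and the argument above shows $\omega_f$ has a simple pole (not a zero) along each of them. Since zeroes and poles of a rational top form are codimension-one phenomena on a normal variety (and $\Pi_f$ is normal by Theorem~\ref{thm:normal}), and we have accounted for all codimension-one behavior, $\omega_f$ has no zeroes.

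The main obstacle I expect is the verification of (M1)-invariance cleanly enough to conclude the independence of $G$ up to sign rather than up to an unwanted multiplicative function; this hinges on recognizing the square move as a cluster mutation on face weights and invoking (or re-deriving) the invariance of the log-form under such mutations. A secondary technical point, needing some care, is the identification of the divisor $\{t_e=0\} \subset \Pi_f$ with the irreducible positroid divisor $\Pi_{f'}$: one must rule out that $\{t_e=0\}$ is a union of several boundary strata, which I propose to handle by checking that on the positive part $\R_{>0}^{|E''|}$ the map $\phi_{E''}$ of $G'$ has image exactly $\Pi_{f',>0}$ (so the divisor is at least generically a single positroid cell's closure) and then appealing to irreducibility of $\Pi_{f'}$ (Proposition~\ref{prop:irred}).
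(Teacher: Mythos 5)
Your treatment of the $(M1)/(M2)$-invariance and of the pole-and-residue computation along the boundary divisors $\Pi_{f'}$ matches the paper's strategy; the only cosmetic difference is that you run the $(M1)$ check on face weights and invoke mutation-invariance of the log-form, whereas the paper does a direct $\dlog$-computation on the four edge weights $a,b,c,d$ in a chart $E'$ containing them. Either works. Your care in choosing $E''=E'\setminus\{e\}$ and in noting that $\{t_e=0\}$ maps onto a dense subset of $\Pi_{f',\geq 0}$ is sound and mirrors the paper.

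The genuine gap is in the ``no zeroes'' (and ``no other poles'') step. You assert that the complement of the \emph{single} torus chart $\phi_{E'}((\C^*)^{|E'|})$ in $\Pi_f$ is the boundary $\partial\Pi_f$, and then conclude that all codimension-one behavior has been accounted for. That identification is false: $\phi_{E'}$ maps onto a proper Zariski-open subset of $\oPi_f$, and $\oPi_f\setminus\phi_{E'}((\C^*)^{|E'|})$ generically has codimension-one components on which $\omega_f$ could a priori acquire zeroes or extra poles. Concretely, already for $\Pi_f=\Gr(2,4)$ the chart associated to the square graph excludes the hypersurface $\{\Delta_{13}=0\}\cap\oPi_{\id}$, which is codimension one in $\oPi_{\id}$ but not contained in $\partial\Pi_{\id}$. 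So the normality of $\Pi_f$ is not enough; you have missed some divisors.

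The paper acknowledges this is the delicate point and deals with it by a two-step argument that your proposal lacks. First, one needs that the union $Z$ of the images of $\phi_G$ over \emph{all} reduced graphs $G$ representing $f$ covers $\oPi_f$ up to a codimension-two subset; this is nontrivial, is quoted from Scott for the top cell $\Gr(k,n)$, and is expected in general from the cluster structure. Second, as an alternative, the paper proves the $f=\id$ case directly via the explicit formula of Proposition~\ref{prop:OmegaGr}, then identifies $\omega_{\id}$ (up to scalar) with the abstract anticanonical form $\omega'_{\id}$ of \cite{KLS2} by matching poles and zeroes, and finally propagates the statement to all $f$ by using that both families of forms satisfy the same residue recursion $\Res_{\Pi_{f'}}\omega_f=\omega_{f'}$. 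Some version of this extra input is required; replacing ``the complement of this torus'' by ``the complement of the union over all reduced $G$'' still leaves you needing the codimension-two estimate.
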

\begin{proof}
We first show that $\omega_G$ does not depend on $G$.  By Theorem \ref{thm:reduced}, if $G'$ is another reduced planar bipartite graph representing $f$, then $G'$ and $G$ are related by the moves (M1) and (M2).  It is easy to see that the move (M2) does not change $\omega_G$.  Let us consider the move (M1).  We are free to choose $E'$ as we desire, and we can pick $E'$ to contain the four edges (with weights $a,b,c,d$) surrounding the square face of used in (M1), see Section \ref{sec:relations}.  We then check that
$$
\dlog a \wedge \dlog b \wedge \dlog c \wedge \dlog d = \pm D^4 \dlog a' \wedge \dlog b' \wedge \dlog c' \wedge \dlog d'
$$
where $a',b',c',d'$ are given by \eqref{eq:square}, and $D = (ac+bd)$.  The factor $D^4$ is to account for the fact that the two graphs shown in (M1) have Pl\"ucker coordinates that differ by a factor of $D$ (even though they are the same point in the Grassmannian).  Thus we have a well-defined form $\omega_f$.

Now suppose that $f' \gtrdot f$, and $G$ is a reduced planar bipartite graph with no degree two vertices representing $f$.  From the proof of Theorem \ref{thm:main} we know that there is an edge $e$ of $G$ such that removing $e$ gives a reduced planar bipartite graph $G'$ representing $f'$.  Note that the number of faces of $G'$ is one less than the number of faces of $G$, so that we can pick $E' \subset E(G)$ satisfying the conditions of Lemma \ref{lem:E'} containing the edge $e$.  There is thus a morphism $\C \times (\C^*)^{|E'|-1} \to \Pi_f$, where the (distinguished) first coordinate is $t_e$, and $\{0\} \times (\C^*)^{|E'|-1}$ is sent to $\Pi_{f'}$.  Thus, in the local coordinates $(t_e)_{e \in E'}$, the subvariety $\Pi_{f'}$ is cut out by the equation $t_e = 0$.  By definition, we have
$$
\Res_{\Pi_{f'}} \omega_f = \prod_{e' \in E' \setminus{e}} \dlog t_e = \omega_{f'}.
$$
This proves the last statement of the Theorem.

It is clear that $\omega_f$ has no poles or zeroes on $(\C^*)^{\dim(\Pi_f)}$, and thus no poles or zeroes on the image of $\phi_G$, for any $G$.  Let $Z \subset \oPi_f$ be the union of the images of $\phi_G$.  To complete the proof it would suffice to show that $\oPi_f \setminus Z$ is codimension two in $\oPi_f$, for then all polar and zero divisors can be detected on $Z$.  For the case, $\Pi_f = \Gr(k,n)$ this statement is shown in \cite{Sco}.  In general, we expect this follows easily from the connection with cluster algebras \cite{Lec,LY,MS}.  

We sketch a roundabout argument.  First suppose $f = \id$ so $\Pi_f = \Gr(k,n)$.  Then the fact that $\omega_{\id}$ has no other poles or zeroes follows from the an alternative description of the form given in Proposition \ref{prop:OmegaGr}.  Let $\omega'_{\id}$ be the rational form on $\Pi_f$ from \cite{KLS} described in the beginning of this section.  Since $\omega_{\id}$ and $\omega'_{\id}$ have the same poles and zeroes, they must be equal up to a constant.  But it also follows from \cite{KLS} that $\omega'_{f'} = \Res_{\Pi_f} \omega'_f$ whenever $f' \gtrdot f$.  Thus $\omega_f$ and $\omega'_f$ must be equal up to a scalar for all $f \in \Bound(k,n)$.  The claim about poles and zeroes follows.
\end{proof}

Consider the rational form
$$
\eta = \frac{d^{k \times n}C}{\Delta_{12\cdots k}(C)\Delta_{2\cdots k(k+1)}(C) \cdots \Delta_{n12\cdots(k-1)}(C)}
$$
on the space $\Mat(k,n)$ of $k \times n$ matrices $C$.  Here, if $C =(c_{i,j})$ then $d^{k\times n}C = \prod_{i,j} dc_{i,j}$.  The form $\eta$ is $\GL(k)$-invariant: for $g \in \GL(k)$ acting as a map $g: \Mat(k,n) \to \Mat(k,n)$, we have $g^*\omega = \omega$.  We thus have a rational form 
$$
\omega = \frac{d^{k \times n}C/\GL(k)}{\Delta_{12\cdots k}\Delta_{2\cdots k(k+1)} \cdots \Delta_{n12\cdots(k-1)}}
$$
on $\Gr(k,n)$ (the quotient of the dense subset of full-rank $k \times n$ matrices by $\GL(k)$).  Concretely, we consider the affine chart $\Omega_{[k]}$ (see Section \ref{sec:Gr}).  Represent a point $X \in \Omega_{[k]}$ by a $k\times n$ matrix with an identity matrix in the first $k$ columns.  Let $\{x_{a,b} \mid (a,b) \in \{1,2,\ldots,k\}\times\{k+1,\ldots,n\}\}$ be the coordinates of the remaining entries.  Then
$$
\omega = \frac{\prod_{a,b} dx_{a,b}}{\Delta_{12\cdots k}(X)\Delta_{2\cdots k(k+1)}(X) \cdots \Delta_{n12\cdots(k-1)}(X)}
$$
and this form does not depend on our choice of affine chart.

\begin{proposition}\label{prop:OmegaGr}
We have $\omega_{\id} = \pm \omega$.
\end{proposition}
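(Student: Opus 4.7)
The plan is to compute both forms in the affine chart $\Omega_{[k]}$ using a carefully chosen reduced planar bipartite graph $G$ representing $\id \in \Bound(k,n)$, and match them term by term.

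First I would fix $G$ to be the graph constructed in the proof of Theorem~\ref{thm:main} by applying a sequence of $k(n-k)$ bridges to the lollipop graph at $e_{[k]}$, with bridge labels given by a reduced word for the maximum element of $S_n/(S_k\times S_{n-k})$. Taking $E'$ to be the set of bridge edges, $E(G)\setminus E'$ is a disconnected grove, so by Lemma~\ref{lem:E'} the map $\phi_G : (\C^*)^{k(n-k)} \to \oPi_{\id}$ sending $(a_1,\dots,a_{k(n-k)})$ to $e_{[k]}\cdot g_1(a_1)g_2(a_2)\cdots g_{k(n-k)}(a_{k(n-k)})$ is a birational isomorphism, where each $g_\ell(a_\ell)$ is a Chevalley generator $x_{i_\ell}(a_\ell)$ or $y_{i_\ell}(a_\ell)$ (Lemma~\ref{lem:networkbridge}). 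By construction $\omega_{\id} = \omega_G = \bigwedge_{\ell}\dlog a_\ell$.

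Next I would pull back $\omega$ along $\phi_G$. Writing a point of $\Omega_{[k]}$ as $[I_k \mid A]$ with $A=(x_{a,b})$, one has $\omega = (\bigwedge_{a,b} dx_{a,b})/\prod_{r=1}^n \Delta_{[r+1,r+k]}(X)$, so $\phi_G^*\omega = J\bigwedge_\ell da_\ell / \prod_r \Delta_{[r+1,r+k]}(\phi_G(a))$ for $J=\det(\partial x_{a,b}/\partial a_\ell)$. The computation of $J$ proceeds by induction on the number of bridges added: each bridge $g_\ell(a_\ell)$ acts as an elementary right-column operation on the underlying matrix, contributing a single-factor multiplicative change whose determinant is (up to sign) a cyclic Plücker coordinate evaluated at the intermediate matrix. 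Telescoping over all $k(n-k)$ bridges, one obtains $J = \pm (\prod_\ell a_\ell)^{-1} \prod_{r=1}^n \Delta_{[r+1,r+k]}(\phi_G(a))$, after which the denominators cancel and $\phi_G^*\omega = \pm\bigwedge_\ell \dlog a_\ell = \pm\omega_G$.

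The main obstacle is the combinatorial accounting in the telescoping Jacobian computation: one must verify that the $k(n-k)$ bridge moves, combined with the chain rule, generate exactly the $n$ cyclic Plücker coordinates $\Delta_{[r+1,r+k]}$ appearing in the denominator of $\omega$, each to the first power, together with the prefactor $(\prod a_\ell)^{-1}$ needed to convert $\bigwedge da_\ell$ into $\bigwedge\dlog a_\ell$. A more conceptual alternative would be to argue that both $\omega$ and $\omega_{\id}$ are nonzero rational top forms on the complete variety $\Gr(k,n)$ whose divisors both equal $-\sum_{r=1}^n[\Pi_r]$: for $\omega$ this is immediate from its expression in charts $\Omega_I$ as $I$ ranges over $\binom{[n]}{k}$, and for $\omega_{\id}$ it follows from the residue relations and the no-poles-no-zeroes property on $\mathrm{image}(\phi_G)$ already established in the first part of the proof of Theorem~\ref{thm:Resform}. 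Then $\omega_{\id}/\omega$ is a nonvanishing regular function on $\Gr(k,n)$, hence constant, and the constant is $\pm 1$ by inspecting leading behavior at the torus-fixed point $e_{[k]}$.
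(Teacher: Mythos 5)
Your proposal differs from the paper's proof in both of the routes you sketch, so let me compare each.

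The paper works in the chart $\Omega_{[k]}$ with the \emph{rectangular grid} network for the top cell, parametrized by face weights $y_{i,j}$. Using the Lindstr\"om-type flow description (the weight of a non-intersecting path family is a product of face weights ``under'' the path), it shows that each matrix entry $x_{a,b}$ equals a monomial $Y_{a,b}=\prod y_{i,j}$ plus terms not involving $y_{a,b}$; the identity $dy_{i,j}\wedge dy_{i,j}=0$ then makes $\prod dx_{a,b}$ collapse to $\pm\prod Y_{a,b}\,\dlog y_{a,b}$ with no Jacobian determinant to evaluate. Finally, each cyclic Pl\"ucker coordinate $\Delta_{[r+1,r+k]}$ is checked to be a single path-family weight, so their product is exactly $\prod Y_{a,b}$ and everything cancels. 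Your first route instead uses the bridge/lollipop parametrization and proposes a telescoping Jacobian. You flag the telescoping step as ``the main obstacle,'' and it is a real gap: you need to verify that, after each right-column operation by a Chevalley generator \emph{and} the accompanying renormalization to stay in the chart $\Omega_{[k]}$, the Jacobian factor is precisely a cyclic Pl\"ucker coordinate, and that the $k(n-k)$ factors multiply out to $\pm(\prod a_\ell)^{-1}\prod_{r}\Delta_{[r+1,r+k]}$. Nothing in the paper carries out this computation, and it is not obviously easier than the paper's path-counting argument, which avoids Jacobians entirely.

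Your alternative divisor-comparison argument is conceptually attractive but, as stated, circular in the paper's architecture. You claim the divisor of $\omega_{\id}$ equals $-\sum_r[\Pi_r]$ ``from the residue relations and the no-poles-no-zeroes property on $\mathrm{image}(\phi_G)$ already established in the first part of the proof of Theorem~\ref{thm:Resform}.'' Those facts tell you only that $\omega_{\id}$ is log-free on a dense open torus and has simple poles along the boundary divisors; they do not rule out poles or zeroes along codimension-one loci inside $\oPi_{\id}$ that miss the image of $\phi_G$. The paper notes explicitly that to complete the poles/zeroes claim one needs to know $\oPi_{\id}\setminus\bigcup_G\mathrm{image}(\phi_G)$ has codimension two, and for $\Pi_f=\Gr(k,n)$ it either cites Scott~\cite{Sco} or (in the ``roundabout argument'') invokes Proposition~\ref{prop:OmegaGr} itself. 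If you appeal to the latter your argument is circular; if you appeal to~\cite{Sco}, the argument is valid and independent, but you should say so explicitly rather than attributing the divisor computation to ``the first part of the proof of Theorem~\ref{thm:Resform}.'' With that citation inserted, this second route is a clean alternative to the paper's combinatorial proof: it replaces the explicit change of variables by a divisor class comparison, at the cost of invoking the non-trivial codimension-two result of~\cite{Sco}.
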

\begin{proof}
We work on the affine chart $\Omega_{[k]}$.  Use the ``rectangular grid" planar bipartite network $N$ representing the top cell $\Pi_{\id,>0}$ of the Grassmannian $\Gr(k,n)$, and call the face weights $y_{i,j}$ for $i =1,2,\ldots,k$ and $j = k+1,\ldots,n$ (see \eqref{eq:yF}).  Below is the network $N$ for $k = 3$ and $n = 8$.  

\begin{center}
\begin{tikzpicture}
\begin{scope}[very thick,decoration={
    markings,
    mark=at position 0.5 with {\arrow{>}}}
    ] 
\foreach \i in {0,0.7,1.4,2.1,2.8} {
   \foreach \j in {0,-0.7,-1.4} {
        \draw [thick,postaction=decorate] (\i,\j) -- ($(\i,\j)-(0.7,0)$);
       
    }
} 
\foreach \i in {4,5,6,7,8} {
   \foreach \j in {1,2,3} {
        \node at ($(-\i*0.7,\j*-0.7)-(3.15,-0.35)+(8.4,0)$) {{\footnotesize $y_{\j,\i}$}};
     }
}     
\foreach \i in {-0.7,0,0.7,1.4,2.1} {
   \foreach \j in {0,-0.7,-1.4} {
        \draw [thick,postaction=decorate] (\i,\j) -- ($(\i,\j)-(0,0.7)$);      
    }
}
\node at (3,0) {$1$};  
\node at (3,-0.7) {$2$};  
\node at (3,-1.4) {$3$};  
\node at (2.1,-2.3) {$4$};  
\node at (1.4,-2.3) {$5$};  
\node at (0.7,-2.3) {$6$};  
\node at (-0,-2.3) {$7$};  
\node at (-0.7,-2.3) {$8$};  

\begin{scope}[shift={(5,-1)}]
\begin{scope}[shift={(0,-0.25)}]
\draw[thick,postaction=decorate] (0.5,0)--(0,0);
\draw[thick,postaction=decorate] (0,0) -- (-0.5,0);
\draw[thick,postaction=decorate] (0,0.5)--(0,0);
\draw[thick,postaction=decorate] (0,0) -- (-0,-0.5);
\node at (1,0) {$=$};
\end{scope}
\draw[thick,postaction=decorate] (3,0.7)--(3,0);
\draw[thick,postaction=decorate] (3.7,0)--(3,0);
\draw[thick,postaction=decorate] (3,0)--(2.5,-0.5);
\draw[thick,postaction=decorate] (2.5,-0.5)--(2.5,-1.2);
\draw[thick,postaction=decorate] (2.5,-0.5)--(1.7,-0.5);
\blackdot{(3,0)}
\whitedot{(2.5,-0.5)}
\end{scope}

\end{scope}
\end{tikzpicture}
\end{center}

The orientation shown above gives $N$ the structure of an acyclic perfectly oriented network $(\tilde N,O)$ in the sense of Section \ref{sec:flows}.  A flow in $(\tilde N,O)$ is simply a family of non-intersecting paths from the sources $\{1,2,\ldots,k\}$ to the sinks $\{k+1,\ldots,n\}$.  Gauge fixing the edge weights appropriately, the weight of a path in $(\tilde N,O)$ is simply the product of the face weights $y_{i,j}$ over all the faces ``under" (that is, to the bottom right of) the path.

Let $x_{a,b}$ be the $(a,b)$-entry of the representative of $X = X(\tilde N,O) \in \Gr(k,n)$ with the identity matrix in the first $k$ columns.  Let $Y_{a,b} = \prod_{k \geq i \geq a \text{ and } k+1 \leq j \leq b} y_{i,j}$.  Then by Theorem \ref{thm:flow}
$$
x_{a,b} = Y_{a,b} + \text{ other terms }
$$
where the other terms do not involve $y_{a,b}$.   Using the fact that $dy_{i,j} \wedge dy_{i,j} = 0$, we have that
$$
\prod_{(a,b) \in [1,k]\times [k+1,n]}dx_{a,b}
= \pm \prod_{(a,b) \in [1,k]\times [k+1,n]} dY_{a,b} 
= \pm \prod_{(a,b) \in [1,k]\times [k+1,n]}Y_{a,b} \dlog y_{a,b}.
$$
Let $I = \{i,i+1,\ldots,k+i-1\}$.  Then $\Delta_{I}(\tilde N,O)$ is a weighted sum of families of non-intersecting paths from sources $A = [k] \setminus I $ to sinks $B = [k+1,n] \cap I$.  There is only one such non-intersecting path family, and it has weight equal to $ Y_{a_1,b_1} Y_{a_2,b_2} \cdots Y_{a_r,b_r}$, where $A = \{a_1<a_2<\cdots<a_r\}$ and $B = \{b_1<b_2< \cdots < b_r\}$.  Note that each $Y_{a,b}$ occurs exactly once in such a product.  Thus
\begin{align*}
\omega &= \frac{1}{\Delta_{12\cdots k}\Delta_{2\cdots k(k+1)} \cdots \Delta_{n12\cdots(k-1)}}\prod_{(a,b) \in [1,k]\times [k+1,n]}dx_{a,b} \\
&= \pm \prod_{(a,b) \in [1,k]\times [k+1,n]} \dlog y_{a,b} = \pm \omega_G = \pm \omega_{\id}. \qedhere
\end{align*} 
\end{proof}

\begin{remark}
The singular cohomology $H^d(\oPi_f,\C)$ is one-dimensional, where $d$ is the dimension of $\Pi_f$.  The canonical form $\omega_f$ spans this cohomology group.  The singular cohomology groups $H^i(\oPi_f,\C)$ for $i < d$ are also very interesting \cite{LamSpeyer}.
\end{remark}

\section{Relation space of a graph}\label{sec:relspace}

In this section, we describe a way to obtain a point $\Rel(N)$ in the Grassmannian from a bicolored network $N$ using only linear algebra.  This construction is closely related to the ``on-shell diagrams" in the physics literature; see \cite{ABCGPT,EH} and the references therein.  While it is certainly expected by experts, I could not find in the literature a description of the precise relationship between $\Rel(N)$ and the point $X(N)$ constructed by enumerating matchings.  Indeed, there are some subtle sign issues.

One advantage of this approach over the perfect matching approach is that one obtains a point in the Grassmannian for nonplanar bicolored networks, with no additional work.

This section does not play a big role in the rest of this article, and can be safely skipped on first reading.

\subsection{Definition of the relation space}
In this section we will work with the following version of bicolored networks.  Let $G$ be a bicolored graph with no isolated vertices.  Let $\F$ be a field.  A bicolored network $N$ associates to each oriented edge $e = (u,v)$ of $G$ a weight $w(u,v) \in \F^*$ satisfying the condition that
$$
w(u,v) w(v,u) = 1.
$$
Since $w(u,v)$ and $w(v,u)$ determine each other, we will often think of the two as a single ``edge weight".  Also it makes sense to say that an edge has weight $1$ or $-1$, without specifying an orientation.

Associate a formal variable $z_{(u,e)}$ to each half-edge $(u,e)$.  Abusing notation, when there are no multiple edges, we identify half-edges with oriented edges, so that if $e = (u,v)$ we have $z_{(u,v)}:=z_{(u,e)}$.  If $e = (u,v)$ is an edge, then we impose the condition that
\begin{equation}\label{eq:zue}
w(u,v) \, z_{(v,e)}= z_{(u,e)}.
\end{equation}
To each black vertex $v$ in $N$, we associate the equations
\begin{equation}\label{eq:zee'}
z_{(v,e)} = z_{(v,e')}
\end{equation}
for every pair of edges $e,e'$ incident to $v$.  
To each white vertex $u$ in $N$, we associate the equation
\begin{equation}\label{eq:zue2}
\sum_{e \text{ incident to } u} \;z_{(u,e)} = 0.
\end{equation}

Let $S(N)$ denote the system of all these linear equations in the variables $\{z_{u,e}\}$, as we consider all vertices of $N$.  For each boundary vertex $i$, let $z_i \coloneqq z_{(i,e_i)}$ where $e_i$ is the unique edge connected to $i$.  Define $\Rel(N)$ to the space of relations on $z_1,z_2,\ldots,z_n$ induced by $S(N)$.  More precisely, consider each equation in $S(N)$ to be a vector in $\F^{2|E(N)|}$, where $2|E(N)|$ is equal to the number of half edges in $N$.  Let $V \subset \F^{2|E(N)|}$ be the subspace where the only non-zero coordinates are the ones indexing the half-edges $(i,e_i)$.  Then we have $\Rel(N) \coloneqq \spn(S(N)) \cap V$ is the space of relations on $z_1,z_2,\ldots,z_n$ that do not mention the interior half-edges.  

Let us compute an estimate on the dimension of $\Rel(N)$.  There are two variables $z_{(v,e)}$ and $z_{(u,e)}$ for each edge $e$.  There is one relation \eqref{eq:zue} per edge.  There are $\deg(v)-1$ relations \eqref{eq:zee'} per black vertex.  There is one relation \eqref{eq:zue2} per white vertex.  Thus the expected dimension of $\Rel(N)$ is equal to
$k_N = \sum_{v \text{ black}} (\deg(v)-1) + \sum_{v \text{ white}} 1 - \#\text{interior edges}.
$
This can also be written in the more black-white symmetric form
\begin{equation}\label{eq:kG}
k_N \coloneqq \frac{1}{2} \left(n + \sum_{v \text{ black}} (\deg(v)-2) + \sum_{v \text{ white}} (2 -\deg(v)) \right)
\end{equation}
which has no mention of the number of interior edges.  This is identical to the formula \eqref{eq:kO}.  
We shall consider $\Rel(N)$ to be a point in the Grassmannian $\Gr(k_N, V) = \Gr(k_N, \F^n)$.  If $\dim \Rel(N) \neq k_N$, we shall instead declare $\Rel_N$ to be undefined.

\begin{example}
Consider a network $N$ with four boundary vertices $1,2,3,4$ and two interior vertices, one black $v$ and one white $u$.  Suppose we have the following edges: $1 - v$, $2-v$, $v-u$, $3-u$, $4-u$.  Then $\deg(v) = \deg(u) = 3$, and $k_N = 2$.  Let the variables at the boundary vertices be $z_1,z_2,z_3,z_4$, and set $z\coloneqq z_{(v,u)}$.  Then the two interior vertices $v$ and $u$ give the equations
\begin{align*}
\beta_1 z_1 = \beta_2 z_2 = z
\qquad \text{and} \qquad
\beta_3 z_3 + \beta_4 z_4 + \gamma z = 0
\end{align*}
respectively.  Here $\beta_i$ come from the weights of the boundary edges, and $\gamma = w(u,v)$.  Cancelling $z$, we obtain $\beta_1z_1 = \beta_2 z_2 = -(1/\gamma)(\beta_3z_3 + \beta_4 z_4)$, assuming $\gamma \neq 0$.  Thus $\Rel(N) \in \Gr(2,4)$ is represented by the matrix
$$
\begin{bmatrix}
\beta_1 & - \beta _2 & 0 & 0 \\
\beta_1 \gamma & 0 &-\beta_3 & -\beta_4\\
\end{bmatrix}.
$$
It is important to note that the construction does not depend on any planar embedding of $N$.  For non-zero values of $\beta_i$ and $\gamma$, we have $\Rel_N \in \Pi_{[3,5,4,6]}$.
\end{example}

\subsection{Moves preserving $\Rel(N)$}
\label{sec:relmoves}
We first discuss operations on a bicolored graph that do not change $\Rel(N)$.  It is helpful to compare this discussion to Postnikov's moves on plabic graphs \cite{Pos}.

\subsubsection{Gauge equivalence}
Fix an interior vertex $u$. Let $N'$ be obtained from $N$ by scaling $w(u,v)$ by a fixed $c \in \F$, for all $v$ adjacent to $u$. 

\subsubsection{Degree two vertex removal}
Suppose $u$ is an interior vertex of degree two, and let $e_1 = (u,v_1)$ and $e_2 = (u,v_2)$ be the two vertices adjacent to it.  Let $N'$ be obtained from $N$ by removing $u$, and replacing $e_1$ and $e_2$ with a single edge $(v_1,v_2)$ with weight $w(v_1,v_2) = \pm w(v_1,u) w(u,v_2)$, where we take the plus sign if $u$ is black and the minus sign if $u$ is white.

\subsubsection{Gluing and separting vertices of the same color}
Suppose $u$ and $v$ are interior vertices with the same color and are joined by an edge $(u,v)$.  By applying gauge equivalences we can assume that $w(u,v) = 1 = w(v,u)$.  Let $N'$ be obtained from $N$ by removing $(u,v)$ and identifying $u$ and $v$.  If $u$ and $v$ are white, in addition we multiply all edge weights of edges that were incident to $u$ by $-1$.  (By gauge equivalence we could also choose to multiply all edge weights of edges that were incident to $v$ by $-1$.)  

\subsubsection{Square move}
Suppose we have a square of two white $w_1,w_2$ and two black $b_1,b_2$ trivalent interior vertices as arranged in Figure \ref{fig:relsquare}.  Let the edge weights $w(w_j,b_i)$ be denoted $w_{ij}$.  Also write $z_{ij}\coloneqq z_{(b_i,w_j)}$ and $z_{b_i}$ (resp. $z_{w_i}$) for the formal variable associated to the external half-edge attached to $b_i$ (resp. $w_i$).

Then the four sets of equations are 
\begin{align*}
z_{b_1} = z_{12} &= z_{11} \\
z_{w_2} + w_{12}z_{12}+w_{22}z_{22} &= 0 \\
z_{b_2} = z_{21} &= z_{22} \\
z_{w_1} + w_{11}z_{11} + w_{21}z_{21} &= 0.
\end{align*}
Set $W = w_{11} w_{22}-w_{12} w_{21}$.  These equations induce the same relations on $z_{b_1},z_{w_2},z_{b_2},z_{w_1}$ as
\begin{align*}
z_{b_1}  + \frac{-w_{21}}{W}z'_{21} + \frac{w_{22}}{W} z'_{11} &= 0 \\
z_{w_2} = z'_{21} &= z'_{22}\\
z_{b_2}  + \frac{w_{11}}{W}z'_{22} + \frac{-w_{12}}{W} z'_{12} &= 0 \\
z_{w_1} = z'_{11} &= z'_{12} 
\end{align*}
Draw a new square with two white $w'_1,w'_2$ and two black $b'_1,b'_2$ vertices, so that $w'_i$ (resp. $b'_i$) is connected to the outside in the same way $b_i$ (resp. $w_i$ used to be).  Set the edge weights of the square by
$$
w'_{11}=  \frac{w_{22}}{W} \qquad w'_{12}=  -\frac{w_{12}}{W}  \qquad w'_{21}=  -\frac{w_{21}}{W}  \qquad w'_{22}=  \frac{w_{11}}{W} 
$$
where $w'_{ij}:=w(w'_j,b'_i)$.  Call this new bicolored graph $N'$.  Assuming that $W \neq 0$ (which always holds if the edge weights of $N$ are algebraically independent), we have $\Rel(N) = \Rel(N')$.  (In Figure \ref{fig:relsquare} we have drawn the graph as planar, but the planar embedding is not part of the data of a bicolored graph.)

\begin{figure}
\begin{center}
\begin{tikzpicture}[scale=1.5]
\begin{scope}[thick,decoration={
    markings,
    mark=at position 0.5 with {\arrow{>}}}
    ] 
\draw[postaction={decorate}] (1,0)-- node[above] {$w_{12}$} (0,0);
\draw[postaction={decorate}]  (1,0) -- node[right] {$w_{22}$} (1,-1);
\draw[postaction={decorate}] (0,-1)-- node[below] {$w_{21}$} (1,-1);
\draw[postaction={decorate}] (0,-1)-- node[left] {$w_{11}$} (0,0);
\draw (0,0) -- (-0.5,0.5);
\draw (1,0) -- (1.5,0.5);
\draw (1,-1) -- (1.5,-1.5);
\draw (0,-1) -- (-0.5,-1.5);

\node at (0,0.2) {$b_1$};
\node at (1,0.2) {$w_2$};
\node at (0,-1.2) {$w_1$};
\node at (1,-1.2) {$b_2$};

\filldraw[black] (0,0) circle (0.1cm);
\filldraw[black] (1,-1) circle (0.1cm);
\filldraw[white] (1,0) circle (0.1cm);
\draw (1,0) circle (0.1cm);
\filldraw[white] (0,-1) circle (0.1cm);
\draw (0,-1) circle (0.1cm);
\end{scope}

\begin{scope}[thick,decoration={
    markings,
    mark=at position 0.5 with {\arrow{<}}}
    ] 
\begin{scope}[shift={(4,0)}]
\draw[postaction={decorate}] (1,0)-- node[above] {$w'_{21}$} (0,0);
\draw[postaction={decorate}]  (1,0) -- node[right] {$w'_{22}$} (1,-1);
\draw[postaction={decorate}] (0,-1)-- node[below] {$w'_{12}$} (1,-1);
\draw[postaction={decorate}] (0,-1)-- node[left] {$w'_{11}$} (0,0);
\draw (0,0) -- (-0.5,0.5);
\draw (1,0) -- (1.5,0.5);
\draw (1,-1) -- (1.5,-1.5);
\draw (0,-1) -- (-0.5,-1.5);

\node at (0,0.2) {$w'_1$};
\node at (1,0.2) {$b'_2$};
\node at (0,-1.2) {$b'_1$};
\node at (1,-1.2) {$w'_2$};

\filldraw[black] (0,-1) circle (0.1cm);
\filldraw[black] (1,0) circle (0.1cm);
\filldraw[white] (0,0) circle (0.1cm);
\draw (0,0) circle (0.1cm);
\filldraw[white] (1,-1) circle (0.1cm);
\draw (1,-1) circle (0.1cm);
\end{scope}
\end{scope}

\end{tikzpicture}
\end{center}
\caption{}
\label{fig:relsquare}
\end{figure}

\subsubsection{Parallel edge reduction}
Suppose $u$ and $v$ are interior vertices of different colors connected by two edges $e_1$ and $e_2$, with weights $w_1$ and $w_2$ when oriented from white to black.  Assuming $w_1 + w_2 \neq 0$, let $N'$ be obtained from $N$ by replacing $e_1$ and $e_2$ by a single edge $e$ with weight $w_1+w_2$ when oriented from white to black.  

\subsubsection{Leaf removal}
Suppose $u$ is an interior leaf, joined to a vertex $v$ of the other color.  Suppose the other half-edges incident to $v$ are $(v,e_1),(v,e_2),\ldots,(v,e_r)$.  Let $N'$ be obtained from $N$ by removing $u$ and $v$, creating new vertices $x_1,x_2,\ldots,x_r$ with the same color as $u$, and replacing the half-edge $(v,e_i)$ by $(x_i,e_i)$.  (Note that each $(x_i,e_i)$ is itself a leaf, so by gauge equivalences, the weight of the incident edge does not matter.)


\subsubsection{Dipole removal}
Suppose $u$ and $v$ are interior degree one vertices joined be an edge, and they are of opposite colors.  Let $N'$ be obtained from $N$ by removing $u,v$ and the edge.

\subsubsection{Loop removal}
Suppose $e$ is a loop at the vertex $u$.  Assume that if $u$ is black the weight of $e$ is not 1, and if $u$ is white the weight of $e$ is not $-1$.
Then we can replace the edge $e$ by an edge $(u,v)$ for a new vertex $v$ which has color opposite to $u$.  Then we can apply leaf removal to obtain a new bicolored graph $N'$.

\begin{proposition}
For any of the above moves, we have $k_N = k_{N'}$ and assuming $\Rel(N)$ is well-defined, we have $\Rel(N) = \Rel(N')$.
\end{proposition}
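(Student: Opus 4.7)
The plan is to verify both claims (equality of $k$ and equality of $\Rel$) separately for each of the eight moves. The invariance of $k_N$ is a purely combinatorial computation from formula \eqref{eq:kG}, while the invariance of $\Rel(N)$ is a linear-algebra manipulation: for each move, one exhibits a correspondence between the solution spaces of the systems $S(N)$ and $S(N')$ which induces the identity on the boundary variables $z_1,\dots,z_n$. Since the square move is already fully worked out in the body of Section~\ref{sec:relmoves}, only the remaining seven moves need to be handled, and each one is local, so one may assume the rest of the graph is inert.

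For invariance of $k$, I would go move by move through \eqref{eq:kG}. Gauge equivalence changes no vertex or edge, so it is trivial. Degree-two removal removes an interior vertex $u$ of degree $2$: if $u$ is black it contributes $0$ to the sum, if white it contributes $0$ as well, so $k$ is unchanged. Gluing same-color vertices removes one vertex and one edge; if both are black with degrees $d_1,d_2$ one obtains a black vertex of degree $d_1+d_2-2$, and $(d_1+d_2-2-2) = (d_1-2)+(d_2-2)$, similarly for white. Parallel edge reduction merges two edges joining the same pair of opposite-color vertices: both endpoint degrees drop by $1$, changing the black contribution by $-1$ and the white contribution by $+1$, net zero. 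Leaf removal replaces a vertex $v$ of degree $r+1$ (say black) and its leaf neighbor $u$ (white, degree 1) by $r$ leaves of the opposite color of $v$; the change is $-(r+1-2) - (2-1) + r(2-1) = 0$. Dipole removal removes a black-white pair both of degree $1$, contributing $-(1-2)-(2-1) = 0$. Loop removal: before the move the incident vertex $u$ has some degree $d$ counting the loop twice, and removing the loop (after the substitution) decreases $d$ by $2$, balancing.

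For invariance of $\Rel$, the strategy for each move is to solve and eliminate the interior variables that appear only in the local part, and check that the linear relations induced on the boundary half-edges agree. Gauge equivalence at $u$ rescales all $w(u,v)$ by $c$, which corresponds to rescaling the internal variable $z_{(u,\cdot)}$ at $u$ by $c^{-1}$ via \eqref{eq:zue}; since \eqref{eq:zee'} and \eqref{eq:zue2} are homogeneous, the induced relations on boundary variables are unchanged. Degree-two removal: by \eqref{eq:zue} and \eqref{eq:zee'}/\eqref{eq:zue2} applied at $u$, the relation $z_{(v_1,e)} = \pm w(v_1,u)w(u,v_2)\, z_{(v_2,e')}$ holds (sign depending on the color of $u$), which is exactly \eqref{eq:zue} for the new edge. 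Same-color gluing: if $u,v$ are both black with $w(u,v)=1$, equations \eqref{eq:zue} and \eqref{eq:zee'} at $u$ and $v$ force all half-edge variables at $u$ and $v$ to equal each other, which is precisely the equation \eqref{eq:zee'} at the merged black vertex; the white case uses the sign flip on one side to account for the $-1$ in \eqref{eq:zue2}. Parallel edge reduction: \eqref{eq:zue2} at the white endpoint collects the terms $w_1 z_{(v,\cdot)} + w_2 z_{(v,\cdot)}$ into $(w_1+w_2)z_{(v,\cdot)}$, precisely the contribution of the replacement edge. Leaf removal and dipole removal: here the degree-one vertex imposes no nontrivial boundary relation since by \eqref{eq:zue2} (or \eqref{eq:zee'}) the incident half-edge variable is a free parameter, and after eliminating it the $r$ other half-edges at $v$ become independent free variables, which is exactly what the new leaves achieve. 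Loop removal: the loop at $u$ imposes a relation $z_{(u,e_1)} = w\, z_{(u,e_2)}$ between the two ends of the loop; combined with \eqref{eq:zee'} or \eqref{eq:zue2} at $u$ the nondegeneracy condition ($w\neq 1$ at a black vertex, $w\neq -1$ at a white vertex) makes the loop equivalent to a pendant leaf, whose removal is already treated.

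I expect the main obstacle to be bookkeeping of signs, particularly in the gluing-same-color and loop-removal cases, where the asymmetry between the black equations \eqref{eq:zee'} and the white equations \eqref{eq:zue2} forces a sign adjustment that must be tracked carefully to match the stated edge-weight prescription in Section~\ref{sec:relmoves}. A secondary issue is the hypothesis that $\Rel(N)$ is well-defined, which is needed to rule out degenerate cases (e.g.\ $w_1+w_2=0$ for parallel edges or $W=0$ for the square move) in which the local linear system drops rank; under the assumption that the relevant determinant is nonzero, the elimination above is reversible and yields equality, not merely inclusion, of $\Rel(N)$ and $\Rel(N')$ in the same Grassmannian $\Gr(k_N,\F^n)$.
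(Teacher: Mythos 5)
Your proposal is correct and follows exactly the approach the paper takes: the paper's proof is simply ``Checked case-by-case,'' and your write-up is precisely that case-by-case verification, combining the combinatorial bookkeeping from \eqref{eq:kG} for $k_N$ with local elimination of interior half-edge variables for $\Rel(N)$.
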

\begin{proof}
Checked case-by-case.
\end{proof}

%
%

\subsection{Disjoint sum and gluing}
If $N$ and $N'$ are two bicolored networks with boundary vertex sets $S$ and $S'$, then $N \cup N'$ is a bicolored graph with boundary vertex set $S \cup S'$.  If $V \in \Gr(k,\F^S)$ and $V' \in \Gr(k',\F^{S'})$ then we have a natural point $V \boxplus V' \in \Gr(k+k',\F^{S \cup S'})$.

\begin{proposition}
Let $N$ and $N'$ be two bicolored networks with boundary vertex sets $S$ and $S'$ and relation spaces $\Rel(N) \in \Gr(k,\F^S)$ and $\Rel(N') \in \Gr(k',\F^{S'})$.    Then $\Rel(N \cup N') = \Rel(N) \boxplus \Rel(N')$.
\end{proposition}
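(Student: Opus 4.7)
The plan is to show that the defining linear system for $N \cup N'$ decomposes as a direct sum of those for $N$ and $N'$, and that this decomposition is compatible with the boundary subspace in which we intersect.

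First I would observe that every equation in $S(N \cup N')$ is local: equation \eqref{eq:zue} is attached to a single edge, and equations \eqref{eq:zee'} and \eqref{eq:zue2} are attached to a single interior vertex. Since $N$ and $N'$ share no edges and no vertices, each equation in $S(N \cup N')$ uses only variables $z_{(u,e)}$ coming from one of the two networks. Writing $W_N = \F^{2|E(N)|}$ and $W_{N'} = \F^{2|E(N')|}$, we therefore have $\spn(S(N \cup N')) = \spn(S(N)) \oplus \spn(S(N'))$ as subspaces of $W_N \oplus W_{N'} = W_{N \cup N'}$.

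Next, the boundary subspace $V_{N \cup N'} \subseteq W_{N \cup N'}$ (in which only coordinates indexed by boundary half-edges are allowed to be non-zero) likewise splits as $V_{N \cup N'} = V_N \oplus V_{N'}$, since the boundary half-edges of $N \cup N'$ are exactly the disjoint union of those of $N$ and $N'$. Because intersection distributes over direct sums of subspaces sitting inside a direct sum of ambient spaces, I get
\[
\Rel(N \cup N') = \spn(S(N \cup N')) \cap V_{N \cup N'} = \bigl(\spn(S(N)) \cap V_N\bigr) \oplus \bigl(\spn(S(N')) \cap V_{N'}\bigr),
\]
which is exactly $\Rel(N) \boxplus \Rel(N')$ under the natural identification $\F^{S \cup S'} = \F^S \oplus \F^{S'}$.

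Finally I would verify that the dimensions are consistent, so that $\Rel(N \cup N')$ is well-defined as a point of $\Gr(k_N + k_{N'}, \F^{S \cup S'})$ precisely when both $\Rel(N)$ and $\Rel(N')$ are. This is immediate from \eqref{eq:kG}: each of $n$, $\sum_{v \text{ black}}(\deg(v)-2)$, and $\sum_{v \text{ white}}(2-\deg(v))$ is additive under disjoint union, so $k_{N \cup N'} = k_N + k_{N'}$. The only ``obstacle'' is really bookkeeping: one has to be careful that the formal variables $z_{(u,e)}$ on $N \cup N'$ are the disjoint union of those on $N$ and on $N'$, with no new identifications between them. Once that is set up, the rest is a tautology about direct sums of linear systems, and in particular no assumption on the ground field or on genericity of edge weights is needed.
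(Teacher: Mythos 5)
Your proof is correct. The paper states this proposition without proof (it is left as an immediate consequence of the definitions), and your argument is exactly the natural one: the defining linear system $S(N\cup N')$ has equations supported locally on edges and interior vertices, so its span decomposes as $\spn(S(N))\oplus\spn(S(N'))$; the boundary subspace decomposes compatibly; and intersection distributes across this product decomposition, giving $\Rel(N\cup N')=\Rel(N)\boxplus\Rel(N')$. The dimension check via the additivity of the quantities in \eqref{eq:kG} is the right sanity check to close the argument.
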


Suppose $N$ is a bicolored network and $a,b \in S$ are two boundary vertices of $N$.  We suppose that the edges incident to $a$ and $b$ have weight $1$.  Let $N' = \Glue_{a,b}(N)$ be the bicolored network on boundary vertex set $S \setminus \{a,b\}$ obtained by gluing the two boundary edges incident to $a$ and $b$ together (removing $a$ and $b$ in the process), and giving the new edge weight $1$.  We shall describe $\Rel(N')$.  Let $S'=S-\{a,b\} \cup \{c\}$ and let $\phi: \F^S \to \F^{S'}$ be the linear map induced by the set map given by $a \mapsto c$ and $b \mapsto c$, and the identity on other elements.  Let $S \setminus \{a,b\} \simeq V_0 \subset \F^{S'}$ be the subspace of vectors where the coefficient in the $c$-direction is 0.  Then
\begin{equation}\label{eq:gluen}
\Rel(N') = \phi(\Rel(N)) \cap V_0.
\end{equation}
The operation $\Glue_{a,b}(N)$ does not change the degrees of interior vertices of $N$, so by \eqref{eq:kG}, we have $k_{N'} = k_N - 1$.  

For convenience, we assume that we have a total order on $S$ given by  $a < b < \text{rest}$.  Given such a total order, the notion that $\Rel(N)$ is TNN makes sense.

\begin{proposition}\label{prop:glue}
Suppose $\Rel(N)$ is totally nonnegative, or the edge weights are generic.  Then $\dim(\Rel(N')) = k_{N'}$ if and only if $\Delta_{I}(\Rel(N)) \neq 0$ for some $I \in \binom{S}{k}$ satisfying $|I \cap \{a,b\}| = 1$.  Furthermore, in this case $\Rel(N')$ is represented by Pl\"ucker coordinates $\Delta_J(\Rel(N')) =  \Delta_{aJ}(\Rel(N)) + \Delta_{bJ}(\Rel(N))$.
\end{proposition}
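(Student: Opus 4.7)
\noindent
The plan is to derive the stated Pl\"ucker formula directly by linear algebra, and then deduce the dimension criterion from it using either TNN sign restrictions or generic non-cancellation.

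Represent $X \coloneqq \Rel(N) \in \Gr(k_N, S)$ by a $k \times |S|$ matrix $M$ (with $k = k_N$) having column vectors $c_s \in \F^k$ indexed by $s \in S$. The linear map $\phi\colon \F^S \to \F^{S'}$ sends $X$ onto the row span of the $k \times |S'|$ matrix $M'$ whose $c$-column equals $c_a + c_b$ and whose other columns are inherited from $M$. Multilinearity of the determinant in the $c$-column gives, for every $(k-1)$-subset $J \subseteq S \setminus \{a,b\}$,
\[
\Delta_{\{c\}\cup J}(\phi(X)) \;=\; \det[\,c_a + c_b \mid c_{j_1}, \ldots, c_{j_{k-1}}\,] \;=\; \Delta_{aJ}(X) + \Delta_{bJ}(X).
\]
When $\phi|_X$ is injective (equivalently $e_a - e_b \notin X$) and $\phi(X) \not\subseteq V_0$, the $k$-plane $\phi(X)$ meets the coordinate hyperplane $V_0 = \{y_c = 0\}$ transversally, so $\Rel(N') = \phi(X) \cap V_0$ has dimension $k-1$; the standard contraction formula for Pl\"ucker coordinates of a hyperplane section (equivalently, contraction of the Pl\"ucker wedge against $e_c^*$) gives $\Delta_J(\Rel(N')) = \pm\, \Delta_{\{c\}\cup J}(\phi(X))$, which combined with the display above yields the asserted formula (the global sign is absorbed into the projective representative).

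\medskip

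\noindent
Granted the formula, the dimension criterion follows at once: $\dim \Rel(N') = k_{N'} = k - 1$ iff some $\Delta_J(\Rel(N')) \neq 0$, iff some $\Delta_{aJ}(X) + \Delta_{bJ}(X) \neq 0$. Under TNN every $\Delta_I(X) \geq 0$, so a sum of two such terms vanishes iff both summands vanish; under generic edge weights the two polynomials $\Delta_{aJ}$ and $\Delta_{bJ}$ in the weights cannot cancel unless each is identically zero. Either hypothesis therefore reduces the condition to the existence of some $I \in \binom{S}{k}$ with $|I \cap \{a,b\}| = 1$ and $\Delta_I(X) \neq 0$.

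\medskip

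\noindent
The remaining issue, and the main technical obstacle, is to show that the nondegenerate assumption above is not restrictive. If $e_a - e_b \in X$ (but $\phi(X) \not\subseteq V_0$), one selects a basis of $X$ whose last vector is $e_a - e_b$ and expands the relevant determinants along that row; if instead $c_a + c_b = 0$ (but $e_a - e_b \notin X$), the relation $c_b = -c_a$ is used directly. In both isolated situations one obtains $\Delta_{aJ}(X) = -\Delta_{bJ}(X)$ for every $J$, so TNN or genericity forces both sides to vanish, and a dimension count gives $\dim \Rel(N') \in \{k-2, k\}$ rather than $k-1$; the asserted equivalence then holds vacuously on both sides. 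The truly delicate scenario is the simultaneous occurrence $e_a - e_b \in X$ and $c_a + c_b = 0$, in which $\dim \Rel(N') = k - 1$ is possible a priori while the formula returns zero. This is precisely the case the hypotheses of the proposition are designed to exclude: under TNN, combining the two sign identities with the observation that the restriction to $S \setminus \{a,b\}$ of any complement of $\langle e_a - e_b\rangle$ in $X$ must remain linearly independent (else $e_a - e_b$ would lie in that complement) produces a nonzero $\Delta_{\{a\}\cup J}(X)$, contradicting the induced identity $\Delta_{\{a\}\cup J} = -\Delta_{\{b\}\cup J}$ together with $\Delta_{\{a\}\cup J}, \Delta_{\{b\}\cup J} \geq 0$; under genericity, the two conditions cut out a proper subvariety of the edge-weight parameter space and are avoided on a Zariski-open set.
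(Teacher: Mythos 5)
Your proof is correct and proceeds by a genuinely different route from the paper's. The paper's argument is a direct case analysis: it lists five concrete degenerations of the pair $(v_a,v_b)$ (both zero, one zero, $v_a=\alpha v_b$ with both nonzero, linearly independent with $e_a-e_b\in\Rel(N)$, and the remaining ``generic'' case), explicitly identifies $\Rel(N')$ in each case (e.g.\ as $\Rel(N)\cap\F^{S\setminus\{a,b\}}$ or as a coordinate projection), and verifies the Pl\"ucker formula case-by-case. You instead derive the formula once and for all from multilinearity of the determinant in the $c$-column ($\Delta_{cJ}(\phi(X))=\Delta_{aJ}(X)+\Delta_{bJ}(X)$) and the interior-product description of a hyperplane section, and then reduce the case analysis to the two intrinsic binary conditions ``$\phi|_X$ injective'' and ``$\phi(X)\subseteq V_0$''. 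Your identity $\Delta_{aJ}(X)=-\Delta_{bJ}(X)$ in the degenerate cases is exactly the statement that $\phi(X)$ has rank $<k$ (when $e_a-e_b\in X$) or that the $c$-column of $M'$ vanishes (when $v_a+v_b=0$), and your contradiction argument for the simultaneous degeneracy under TNN is sound: the complement $W$ of $\langle e_a-e_b\rangle$ in $X$, chosen supported away from $\{a,b\}$, yields a nonzero $\Delta_{aJ}(X)$, which then clashes with $\Delta_{aJ}=-\Delta_{bJ}\geq 0$.

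The one place where your treatment is weaker than the paper's is the genericity branch. The assertion ``the two polynomials $\Delta_{aJ}$ and $\Delta_{bJ}$ in the weights cannot cancel unless each is identically zero'' is not self-evident; a priori one could have $\Delta_{aJ}\equiv-\Delta_{bJ}$ as rational functions of the weights without either vanishing. The paper handles this by invoking the $(\F^*)^S$ torus action (rescaling boundary edge weights scales $v_a$ independently of $v_b$), which rules out identities such as $v_a=-v_b$ for generic weights and so places one in the nondegenerate case. Your sentence ``the two conditions cut out a proper subvariety\ldots and are avoided on a Zariski-open set'' gestures at the same idea but applies it only to the ``truly delicate'' simultaneous degeneracy, not to the individual cancellation in the dimension criterion. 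Incorporating the torus-scaling observation would close that small gap and make your argument fully match the paper's level of rigor, while keeping the cleaner uniform derivation of the Pl\"ucker formula.
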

\begin{proof}
Let $k = k_N$ and $k' = k_{N'}$.  Suppose $\Rel(N)$ is represented by a $k \times n$ matrix $X$ with column vectors $\{v_s \in \F^k \mid s \in S\}$, where $n = |S|$.  Assuming $\dim\Rel(N') = k'$, we let $X'$ be a $k' \times n$ matrix representing $\Rel(N')$.
There is a torus $(\F^*)^S$ acting on the columns of $v_s$.  The genericity condition means that we only have to consider relations that are torus invariant.  For example, we do not need to consider the possibility that $v_a = - v_b$.


If $\Rel(N)$ is TNN or generic, the cases we have to consider are:
\begin{enumerate}
\item 
both $v_a$ and $v_b$ are equal to 0: then $\Rel(N')$ is simply the projection of $\Rel_G$ from $\F^S$ to $\F^{S \setminus \{a,b\}}$ by forgetting two of the coordinates, and $\dim \Rel(N') = k \neq k'$.  In this case, we have $\Delta_{aJ}(X) = 0 = \Delta_{bJ}(X)$ for all $J \in \binom{S \setminus\{a,b\}}{k-1}$.
\item 
$v_a = 0$ and $v_b \neq 0$ (resp. $v_b = 0$ and $v_a \neq 0$): then $\dim \Rel(N') = k'$ and $\Rel(N')$ is simply equal to $\Rel(N) \cap \F^{S \setminus \{a,b\}}$.  In this case, we have $\Delta_J(X') = \Delta_{bJ}(X) = \Delta_{bJ}(X)  + \Delta_{aJ}(X)$ (resp. $\Delta_J(X') = \Delta_{aJ}(X) = \Delta_{aJ}(X)  + \Delta_{bJ}(X)$).
\item
$v_a = \alpha v_b$ and both are non-zero: then $k' = k-1$ and $\Rel(N') = \Rel(N) \cap \F^{S \setminus \{a,b\}}$.  In this case $\Delta_{aJ}(X) = \alpha \Delta_{bJ}(X)$, and we can take $\Delta_J(X') = \Delta_{aJ}(X)  + \Delta_{bJ}(X)$.

\item
the vectors $v_a$ and $v_b$ are linearly independent and the vector $(1,-1,0,0,\ldots,0)$ belongs to $\Rel(N)$: under the TNN or genericity conditions this holds only if both $v_a$ and $v_b$ do not lie in the span of the rest of the columns.  In this case we have $\dim \Rel(N') = k-2$ 
and $\Delta_{aJ}(X) = 0 = \Delta_{bJ}(X)$ for all $J \in \binom{S \setminus\{a,b\}}{k-1}$.

\item
$v_a$ and $v_b$ are independent of each other: in this case, by a change of matrix we can make $v_a =  (1,0,0,\ldots,0)^T$ and $v_b = (0,1,0,\ldots,0)^T$.  Then we have vectors $(1,0,\alpha_1,\alpha_2,\ldots,\alpha_{n-2})$ and $(0,1,\beta_1,\beta_2,\ldots,\beta_{n-2})$ in $\Rel(N)$.  Hence $\Rel(N')$ is spanned by $(\alpha_1-\beta_1,\ldots,\alpha_r-\beta_r)$ together with $\Rel(N) \cap \F^{S \setminus \{a,b\}}$, and $\dim(\Rel(N) \cap \F^{S \setminus \{a,b\}}) = k-2$.  In this case, we calculate that $\Delta_J(X') = \Delta_{aJ}(X) + \Delta_{bJ}(X)$.
\end{enumerate}
\end{proof}
%
%

\subsection{Planarity and positivity}
Suppose $\tilde N$ is a usual planar bipartite network.  We obtain from $\tilde N$ a bicolored graph $N$ in the sense of this section by setting $w(u,v)$ to be equal to the weight of the edge $e = (u,v)$ in $N$ whenever $u$ is white or $v$ is black.  In the following we do not distinguish between $\tilde N$ and $N$.

Recall that there is a (positive) rotation map $\chi: \Gr(k,n) \to \Gr(k,n)$ which moves the last column of a $k \times n$ matrix to the front, with a sign of $(-1)^{k-1}$.  

\begin{lemma}\label{lem:relrotate}
We have $\chi(\Rel(N)) = \Rel(N')$, where $N'$ is obtained from $N$ by multiplying the weight of the half edge incident to boundary vertex $n$ by $(-1)^{k_N-1}$ and then relabeling the boundary vertices $1\to2,2 \to 3,\ldots,n \to 1$. 
\end{lemma}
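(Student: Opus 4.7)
The plan is to compare the linear systems $S(N)$ and $S(N')$ directly and track how the induced boundary relations transform under the cyclic rotation. First, I would observe that $N$ and $N'$ share the same underlying graph and the same interior vertex colors, and differ only in (i) a multiplicative factor of $(-1)^{k-1}$ (with $k=k_N$) on the weight of the single boundary edge $e_n$ incident to boundary vertex $n$, and (ii) the cyclic relabeling of boundary vertices. Since the interior structure is untouched, formula \eqref{eq:kG} gives $k_{N'} = k_N = k$ immediately, so both $\Rel(N)$ and $\Rel(N')$ are $k$-planes in $\F^n$.

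Next I would isolate the single equation that differs between $S(N)$ and $S(N')$. Letting $v$ denote the interior endpoint of $e_n$, the edge equation \eqref{eq:zue} reads $w(n,v)\, z_{(v,e_n)} = z_n$ in $S(N)$ and $(-1)^{k-1} w(n,v)\, z_{(v,e_n)} = z'_n$ in $S(N')$ (using $z'_n$ for the boundary variable at vertex $n$ of $N'$ before relabeling); every other equation in the two systems is identical. Thus the solution sets of $S(N)$ and $S(N')$ correspond under the bijection that fixes all interior variables and the boundary variables $z_1,\ldots,z_{n-1}$ and sets $z'_n = (-1)^{k-1} z_n$. Consequently, any relation $c_1 z_1 + \cdots + c_n z_n = 0$ on the boundary variables of $N$ becomes, in the pre-relabeling variables of $N'$, the relation with coefficient vector $(c_1, \ldots, c_{n-1}, (-1)^{k-1} c_n)$.

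Finally I would apply the boundary relabeling $n \mapsto 1$, $i \mapsto i+1$ for $i<n$, which shifts the above coefficient vector to $((-1)^{k-1} c_n, c_1, \ldots, c_{n-1})$. This is precisely the image of the row $(c_1,\ldots,c_n)$ under the column-rotation map defining $\chi$ on $\Gr(k,n)$, so if $X$ is a matrix whose row span represents $\Rel(N)$, then $\chi(X)$ represents $\Rel(N')$; hence $\Rel(N') = \chi(\Rel(N))$. The main obstacle is just sign bookkeeping: one must check that the $(-1)^{k-1}$ inserted on the boundary edge is exactly the sign appearing in the definition of $\chi$, and that no additional signs sneak in from the orientation conventions in \eqref{eq:zue}; the computation above shows this is so, and in particular explains why the correction factor in the statement has to be $(-1)^{k_N-1}$ rather than anything else.
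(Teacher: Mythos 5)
Your proof is correct. The paper states Lemma \ref{lem:relrotate} without proof (it follows the definition of $\chi$ in Section~\ref{sec:TNNGrass} and is treated as a routine sign check), and your direct verification -- isolating the single edge equation $w(n,v)\,z_{(v,e_n)}=z_n$ that changes, noting the induced bijection of solution sets with $z'_n=(-1)^{k-1}z_n$, and then tracking how a boundary relation's coefficient vector $(c_1,\dots,c_n)$ rotates to $((-1)^{k-1}c_n,c_1,\dots,c_{n-1})$ after relabeling -- is precisely the intended argument, and it matches the definition of the column action of $\chi$ on a $k\times n$ matrix representing the relation space.
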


Let $G$ and $G'$ be plabic graphs with boundary vertices labeled $\{1,2,\ldots,m\}$ and $\{m+1,\ldots,n\}$.  Then $G \cup G'$ is naturally a plabic graph with boundary vertices labeled $\{1,2,\ldots,n\}$.  For plabic graphs, we will always assume that disjoint unions $\cup$ are taken in a planar way.
%


Suppose $G$ is a plabic graph with edge set $E(G)$.  Let $\varepsilon \in \{+1,-1\}^{|E(G)|}$ be a choice of sign for each edge of $G$.  For $t = \{t_e \mid e \in E(G)\} \in \R_{>0}^{|E|}$, let $N(t,\varepsilon)$ be the plabic network with underlying graph $G$, and edge weights given by $\varepsilon_e \cdot t_e$.  In other words, $N$ has signed edge weights with signs given by $\varepsilon$.

\begin{theorem}\label{thm:posgraph}
Suppose $G$ is a planar bipartite graph with almost perfect matchings.  Then there exists $\varepsilon_G \in \{+1,-1\}^{|E(G)|}$ such that for any $t \in \R_{>0}^{|E|}$ we have
\begin{equation}\label{eq:posgraph}
\Rel(N(t,\varepsilon_G))= X(N(t,1)).
\end{equation}
%
\end{theorem}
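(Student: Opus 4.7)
The proof proceeds by induction on the number of edges of $G$, using the recursive construction of reduced planar bipartite graphs given in the proof of Theorem \ref{thm:main}. By Theorem \ref{thm:reduced}, any planar bipartite graph is move-equivalent to a reduced one, and a case-by-case verification matches the local moves of Section \ref{sec:relations} (which preserve $X(N)$, up to the rescaling \eqref{eq:square} for the square move) with the corresponding moves of Section \ref{sec:relmoves} (which preserve $\Rel(N)$), after absorbing a suitable sign into $\varepsilon_G$. Hence we may assume $G$ is reduced.

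The base case is when $G$ is a disjoint union of lollipops, so $X(N) = e_I$ where $I$ is the set of black-lollipop indices. With $\varepsilon_G \equiv +1$, the single equation at each lollipop leaf forces $z_i = 0$ at every white lollipop and leaves $z_i$ unconstrained at every black lollipop, so $\Rel(N) = e_I$ as well.

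For the inductive step, every reduced $G$ with bounded affine permutation $f \in \Bound(k,n)$ arises from a lollipop graph by successively adding bridges (as in the proof of Theorem \ref{thm:main} and Proposition \ref{prop:reduce}). Suppose $G'$ is obtained from a reduced $G$ by adding a bridge at position $i$ of weight $t$: this introduces a trivalent white vertex $w$ adjacent to boundary $i$, a trivalent black vertex $b$ adjacent to boundary $i+1$, and a new edge $e=(w,b)$. On the matching side, Lemma \ref{lem:networkbridge} gives $X(N') = X(N)\cdot x_i(t)$ (or $y_i(t)$, depending on the bridge type). On the relation-space side, writing the new sum-to-zero equation at $w$ and the equate equations at $b$ and eliminating the two internal half-edge variables yields a single new relation of the form $z_i = \pm t\, z_{i+1}$ together with the pullback of the old equations (in which $z_i$ and $z_{i+1}$ are replaced by the appropriate half-edge values at $w$ and $b$). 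This is precisely right multiplication of a matrix representative of $\Rel(N)$ by $x_i(\pm t)$. Choosing $\varepsilon_e$ to be the sign resolving $\pm$ to $+$, the inductive hypothesis gives $\Rel(N') = X(N')$.

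\textbf{Main obstacle.} The principal difficulty is tracking signs consistently. The correct $\varepsilon_e$ on each bridge depends on local orientations, and valence-two vertex insertions used to keep the graph bipartite after bridge addition introduce further sign shifts; the wraparound bridge at position $n$ is handled by Lemma \ref{lem:relrotate}. Well-definedness of $\varepsilon_G$, independent of the particular sequence of bridge additions producing $G$, follows from Theorem \ref{thm:reduced}: any two such recursions are related by the local moves already shown to be compatible with the sign convention, so the resulting edge signs agree up to gauge equivalence (which does not affect $\Rel(N)$ or $X(N)$).
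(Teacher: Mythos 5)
Your proposal is correct in outline but takes a genuinely different route from the paper. You reduce to the case of reduced graphs via local-move compatibility and then build reduced graphs from lollipops by bridge addition, invoking Lemma \ref{lem:networkbridge} on the matching side. The paper instead avoids local moves and the reduced-graph machinery altogether: it observes that every planar bipartite graph with an almost perfect matching is built from disjoint unions of single $m$-valent stars by iteratively applying the operation $\Glue_{a,b}$ of gluing a pair of adjacent boundary vertices, and it propagates the claim through each gluing via Proposition \ref{prop:glue} (which expresses the Pl\"ucker coordinates of $\Rel(\Glue_{a,b}(N))$ as $\Delta_{aJ}+\Delta_{bJ}$, matched on the matching side by a direct case split on whether the glued edge is used), together with Lemma \ref{lem:relrotate} to handle rotations. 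The base case is a single trivalent/valent star worked out in Example \ref{ex:trivalent}. The gluing approach is more uniform: it never has to compare the $X(N)$-preserving moves with the $\Rel(N)$-preserving moves of Section \ref{sec:relmoves}, and it avoids the bookkeeping you flag as the ``main obstacle.'' Your approach buys a parallel with the Loewner--Whitney/bridge reduction philosophy of Part \ref{part:one}, but pays for it with more cases.

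Two small inaccuracies in your sketch that a complete write-up would need to fix. First, adding a bridge does not produce ``a single new relation of the form $z_i = \pm t\, z_{i+1}$ together with the pullback of the old equations.'' What actually happens is that eliminating the two new internal half-edge variables replaces the half-edge value feeding into the old interior graph at position $i$ by $\pm z_i \pm t\, z_{i+1}$ (the sign pattern depending on $\varepsilon_e$ and the boundary-edge sign) while leaving the value at $i+1$ unchanged; this substitution is a column operation, and it is only after choosing the two signs correctly that the operation becomes exactly $x_i(t)$ rather than $D\cdot x_i(t)$ for a diagonal $D$ with a $-1$ entry. Second, your base-case description says $X(N) = e_I$ with ``$I$ the set of black-lollipop indices,'' but in the paper's convention a black lollipop (black interior leaf, hence white boundary vertex) contributes $i \notin I(\Pi)$, so $I$ is the set of white-lollipop indices; your relation-side analysis (``$z_i = 0$ at every white lollipop'') already identifies $\Rel(N)$ with this set. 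Finally, the recursion from Theorem \ref{thm:main} interleaves lollipop additions (which change $n$) with bridge additions, so ``arises from a lollipop graph by successively adding bridges'' under-describes the induction; the lollipop steps also need a (short) compatibility check, which your base case essentially provides.
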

\begin{proof}
Let us say $\varepsilon_G$ ``exists" if there is $\varepsilon_G \in \{+1,-1\}^{|E(G)|}$ such that \eqref{eq:posgraph} is satisfied.
We prove the result by a sequence of reductions.

%

If $G$ and $G'$ are planar bipartite graphs on boundary vertices $\{1,2,\ldots,m\}$ and $\{m+1,\ldots,n\}$, and $\varepsilon_G$ and $\varepsilon_{G'}$ both exist then it is easy to see that $\varepsilon_{G \cup G'}$ exists.

Now suppose $\varepsilon_G$ exists and $G' = \Glue_{1,2}(G)$ is bipartite, and that $G'$ has almost perfect matchings (which implies $G$ has almost perfect matchings).  We claim that $\varepsilon_{G'}$ also exists.  
The assumption that $G'$ has almost perfect matchings implies that $G$ has at least one almost perfect matching $\Pi$ such that $|I(\Pi) \cap \{1,2\}| = 1$.  We may now apply Proposition \ref{prop:glue}, using the assumption that $\Rel(N(t,\varepsilon_G)) = X(N(t))$ is TNN.  Note that we defined $\Glue_{1,2}(N)$ by insisting that the new edge has weight 1, but by applying gauge equivalences before and after the gluing operation, we see that there is $\varepsilon_{G'}$ such that $\Rel(N(t',\varepsilon_{G'}))$ is always TNN.  
To check that $\Rel(N'(t',\varepsilon_{G'})) = X(N'(t'))$, we do a matching computation.  For simplicity, assume that boundary vertex 1 in $G$ is connected to a black vertex $u$ and boundary vertex 2 is connected to a white vertex $v$.  In $G'$, an almost perfect matching either uses the edge $(u,v)$ or it does not, and these matchings correspond to $\Delta_{2J}$ and $\Delta_{1J}$ respectively, for some $J \in \binom{[n-1]}{k}$.  Thus $\Delta_J(N') = \Delta_{aJ}(N) + \Delta_{bJ}(N)$, agreeing with Proposition \ref{prop:glue}.

Next, using Lemma \ref{lem:relrotate} we see that if $\varepsilon_G$ exists then so does $\varepsilon_{G'}$, whenever $G'$ is a rotation of $G$.  In particular, we can apply $\Glue_{i,i+1}$ instead of just $\Glue_{1,2}$ and $\varepsilon$ will still exist.  But every planar bipartite graph can be built up from $m$-valent vertices (that is, the graph with a single interior vertex connected to $m$ boundary vertices) and gluing operations of adjacent boundary vertices.


Thus the theorem follows from checking that it holds for a single $m$-valent vertex (see Example \ref{ex:trivalent}).
\end{proof}

We suspect there is a simple explicit description of $\varepsilon$.

\begin{example}\label{ex:trivalent}
Consider the planar bipartite network
\begin{center}
\begin{tikzpicture}[scale=0.5]
\begin{scope}[thick,decoration={
    markings,
    mark=at position 0.5 with {\arrow{>}}}
    ] 
\node at (180:3.2) {$N = $};
\blackdot{(0,0)};
\draw (0,0) circle (2cm);
\draw (0:0) -- node[below] {$a$} (0:2);
\draw (0:0) -- node[right] {$c$}(120:2);
\draw (0:0) -- node[left] {$b$}(240:2);
\node at (0:2.4) {$1$};
\node at (240:2.4) {$2$};
\node at (120:2.4) {$3$};
\end{scope}
\end{tikzpicture}
\end{center}
Using matching enumeration, the Pl\"ucker coordinates are calculated to be $\Delta_{12}(N) = c$, $\Delta_{13}(N) = b$, $\Delta_{23}(N) = a$.  Now let us calculate $\Rel(N)$.  Let $z_1,z_2,z_3$ be the formal variables associated to the half-edges at the boundary vertices.  From the definitions, we obtain the relations
$$
\frac{1}{a} z_1 = \frac{1}{b} z_2 = \frac{1}{c} z_3.
$$
In coordinates, $\Rel(N)$ is the row span of the matrix
$$
\begin{bmatrix}
1/a & -1/b & 0 \\
1/a &  0 & -1/c
\end{bmatrix}
$$
which has Pl\"ucker coordinates $\Delta_{12} = 1/ab$, $\Delta_{13} = -1/ac$, $\Delta_{23} = 1/bc$.  This represents the same point as $X(N)$ if we set $b \mapsto -b$.  Thus we can choose $\varepsilon = (1,-1,1)$ when the edges are ordered $(a,b,c)$.  More generally, we can choose $\varepsilon = (1,-1,\ldots,(-1)^{m-1})$ for a $m$-valent vertex.

For a $m$-valent white vertex, no signs are required.
\end{example}

For a bicolored graph $G$, let $\F_G$ be the field of rational functions in a set of variables, one for each edge.  There is a natural bicolored network $N$ with edge weights that are variables in $\F_G$.

\begin{corollary}
Let $G$ be a planar bipartite graph and $N$ be the planar bipartite network with indeterminate edge weights in $\F_G$.  Then there exists a reduced planar bipartite network $\tilde N$, with edge weights taking values in $\F_G$, such that $\Rel(N) = \Rel(\tilde N)$.
\end{corollary}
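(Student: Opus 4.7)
The plan is to reduce to Theorem~\ref{thm:posgraph} by converting the relation-space problem into a matching-space problem, running the classical network reduction there, and then converting back. Concretely, let $\varepsilon_G \in \{\pm 1\}^{E(G)}$ be the sign vector provided by Theorem~\ref{thm:posgraph}, so that $\Rel(N(t,\varepsilon_G)) = X(N(t,1))$ for every $t \in \R^{E(G)}_{>0}$. Substituting $t \mapsto \varepsilon_G \cdot t$ (componentwise, which is an involution) in this identity yields $\Rel(N(t,1)) = X(N(\varepsilon_G \cdot t, 1))$. Thus the relation space of the ``all weights $+1$'' network is the same as the $X$-point of the ``all weights $\varepsilon_G$'' network.

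Next I would pass from positive reals to the generic field $\F_G$ by a Zariski-density argument. Both $\Rel(N(t,1))$ and $X(N(\varepsilon_G \cdot t, 1))$, viewed as points of $\Gr(k,n)$, have Pl\"ucker coordinates that are rational (in fact polynomial) functions of the edge weights $t_e$. Because they agree on the dense subset $\R_{>0}^{E(G)} \subset \C^{E(G)}$, they agree as $\F_G$-points of the Grassmannian. Now invoke Theorem~\ref{thm:reduced}: the graph $G$ is move-equivalent via (M1) and (M2) (and, if needed, the reductions (R1)--(R3)) to a reduced planar bipartite graph $\tilde G$. The moves (M1), (M2) come with explicit birational transformations of the edge weights (as in \eqref{eq:square}) which preserve $X(N)$ as a rational map to $\Gr(k,n)$. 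Composing these moves gives a rational change of coordinates $\phi$ expressing the edge weights of $\tilde G$ as elements of $\F_G$, with $X(\tilde N') = X(N(\varepsilon_G \cdot t, 1))$, where $\tilde N'$ denotes $\tilde G$ with weights $\phi(\varepsilon_G \cdot t) \in \F_G$.

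Finally, I would apply Theorem~\ref{thm:posgraph} in the reverse direction for $\tilde G$. Let $\varepsilon_{\tilde G}$ be the sign vector for the reduced graph. Then $X(\tilde N') = \Rel(\tilde N)$, where $\tilde N$ is the network on $\tilde G$ with weights $\varepsilon_{\tilde G} \cdot \phi(\varepsilon_G \cdot t) \in \F_G$; here the same Zariski-density argument promotes the positive-real identity to one valid over $\F_G$. Stringing the equalities together gives $\Rel(N) = \Rel(\tilde N)$, proving the corollary.

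The main obstacle I anticipate is bookkeeping around gauge equivalence and the passage between $\F_G$ and $\F_{\tilde G}$: the moves (M1), (M2) are only well-defined up to gauge at each interior vertex, and one must verify that the sign-flip operation $t \mapsto \varepsilon_G \cdot t$ interacts well with both the gauge equivalence classes and the rational transformations $\phi$. This should be straightforward because gauge equivalence scales all half-edges at a single interior vertex by the same element of $\F_G^\times$ (including possibly negative scalars), so it commutes with the action of $\varepsilon_G$; but making this entirely explicit, and checking that no denominator vanishes identically as an element of $\F_G$ (so that the composed rational map really lands in $\F_G$), is where the most care is required.
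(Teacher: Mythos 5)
Your proposal is correct and uses essentially the same strategy as the paper: both rely on Theorem~\ref{thm:posgraph} to trade $\Rel$ for the matching construction $X$, on Theorem~\ref{thm:reduced} to reduce the underlying graph, and on a Zariski-density argument (positive and negative reals being dense in $\C^*$) to promote positive-real identities to identities over $\F_G$. The only organizational difference is that you apply the $\Rel\leftrightarrow X$ conversion just once at each end of the reduction, whereas the paper threads it through each individual move via the chain $\Rel(N_1(t_1,\varepsilon_1)) = X(N_1(t_1)) = X(N_2(t_2)) = \Rel(N_2(t_2,\varepsilon_2))$; the paper's step-by-step version makes the sign bookkeeping and the non-vanishing of denominators along the reduction sequence slightly more transparent, which is precisely the point you flagged as needing care.
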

\begin{proof}
Suppose $G$ is a planar bipartite graph, and $\tilde G$ is the reduced planar bipartite graph obtained just by reducing $G$ combinatorially (without considering edge weights).  We have to show that the local moves used to change $G$ into $\tilde G$ are well-defined when we start with indeterminate edge weights.  It is enough to show that the local moves are well-defined starting with a Zariski-dense subset of edge weights in $(\C^{*})^{|E(G)|}$.  Since $\R_{>0}$ and $-\R_{>0}$ are both Zariski-dense in $\C^*$, it is enough to show that the local moves are well-defined as each edge weight varies over either the positive or the negative reals.

We shall use Theorem \ref{thm:posgraph} to prove this last statement.  By a direct verification, we see that the choice of signs for edge weights in Theorem \ref{thm:posgraph} can be made compatible with all the local moves and reduction moves.   Suppose $G_1$ and $G_2$ are planar bipartite graphs with positive edge weights related by a local move or reduction move.  Let $\varepsilon_1$ and $\varepsilon_2$ be the corresponding signs in Theorem \ref{thm:posgraph}.  We have $\Rel(N_1(t_1,\varepsilon_1)) = X(N_1(t_1)) = X(N_2(t_2)) =\Rel(N_1(t_2,\varepsilon_2))$ for positive real $t_1,t_2$ related by some rational formulae.  It follows that there is a choice of edge weights for $\tilde N$ with values in $\F_G$ such that $\Rel(N) = \Rel({\tilde N})$ when all edge weights are specialized to be appropriately signed and real; thus the equality holds over $\F_G$ as well.
\end{proof}

\part{Grassmann polytopes}\label{part:two}

\section{Grassmann polytopes}\label{sec:poly}
We assume the reader is familiar with the usual theory of polytopes, for example as presented in \cite{Zie}.


\subsection{Grassmann polytopes}
For $k = 1$, the TNN Grassmannian $\Gr(1,n)_{\geq 0}$ is the subset of $\P^{n-1}$ consisting of points with nonnegative coordinates.  Thus $\Gr(1,n)_{\geq 0}$ can naturally be identified with the $(n-1)$-dimensional simplex $\Delta_{n-1}$.  Our plan is to take seriously the analogy
\begin{align*}
\text{simplex} & \longrightarrow \text{TNN Grassmannian}\\
\text{faces of the simplex} & \longrightarrow \text{positroid cells} \\
\text{boolean lattice} & \longrightarrow \hBound(k,n).
\end{align*}
Recall that $\hBound(k,n)$ is the poset of $(k,n)$-bounded affine permutations $\Bound(k,n)$ with an additional minimum adjoined.

Let $Z$ be a real $n \times r$ matrix with $r \leq n$.  Denote the rows of $Z$ by $z_1,z_2,\ldots,z_n \in \R^r$.  We may think of $Z: \R^n \to \R^r$ as a linear map.  It induces a rational map $Z_\Gr: \Gr(k,n)_\R \dashedrightarrow \Gr(k,r)_\R$ for any $1 \leq k \leq r$, sending a dimension $k$ subspace $V \subset \R^n$ to the image subspace $Z(V) \subset \R^r$.  The map $Z_\Gr$ is not defined if $\dim Z(V) < k$.

Call $Z$ \emph{positive} if its maximal ($r \times r$) minors are strictly positive.  Then Arkani-Hamed and Trnka \cite{AT} define the \defn{amplituhedron} to be the image of $\Gr(k,n)_{\geq 0}$ under the map $Z_\Gr$.  When $k=1$, the amplituhedron is a cyclic polytope \cite{Stu}.

Restricting $Z$ to have strictly positive maximal minors seems to be overly restrictive, so we introduce the following condition, in the style of Farkas' Lemma and its relatives:
\begin{equation}\label{eq:positive}
\mbox{There exists a $r \times k$ real matrix $M$ such that $Z \cdot M$ has positive $k \times k$ minors.}
\end{equation}

For $k = 1$, the condition \eqref{eq:positive} would guarantee that the cone spanned by the rows of $Z$ form a pointed polyhedral cone.  

The following definition is an analogue of the fact that every polytope is the image of a simplex.

\begin{definition}\label{def:main}
A \defn{Grassmann polytope} is the set
$$
P = Z(\Pi_{f,\geq 0}) \coloneqq \{Z_\Gr(X) \mid X \in \Pi_{f,\geq 0}\}
$$
for some $f \in \Bound(k,n)$ and $Z$ satisfying \eqref{eq:positive}.  Say $P$ is a full Grassmann polytope if $f = \id$, so that $P = Z(\Gr(k,n)_{\geq 0})$.
%

The \defn{dimension} $\dim(P)$ of a Grassmann polytope $P$ is the dimension of the Zariski closure $\overline{P} \subseteq \Gr(k,r)$.  
\end{definition}

Since $\Pi_{f, \geq 0}$ is Zariski-dense in $\Pi_f$, we have that $\overline{P}$ is equal to the variety $Z(\Pi_f)$ to be defined in Section \ref{sec:real}.

\begin{proposition}\label{prop:positive}
Suppose \eqref{eq:positive} holds.  Then the map $Z_\Gr$ is well-defined on $\Gr(k,n)_{\geq 0}$.  The Grassmann polytope $Z(\Pi_{f,\geq 0})$ is a closed connected subset of $\Gr(k,r)_{\geq 0}$.
\end{proposition}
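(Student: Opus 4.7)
The proof decomposes into three essentially independent tasks: (i) showing $Z_\Gr$ is defined everywhere on $\Gr(k,n)_{\geq 0}$; (ii) showing $Z(\Pi_{f,\geq 0})$ is closed; (iii) showing it is connected. Tasks (ii) and (iii) are formal consequences of (i) together with Theorem~\ref{thm:main}, so the content lies in (i). For (i), unwind definitions: $Z_\Gr$ is defined on $X$ precisely when $Z|_X$ is injective, i.e., $X \cap \ker Z = \{0\}$. The strategy is to trap $\ker Z$ inside the orthogonal complement of a totally positive $k$-plane. Let $M$ be the $r\times k$ matrix from \eqref{eq:positive} and let $Y \subset \R^n$ be the column span of $ZM$, equivalently the row span of the $k\times n$ matrix $(ZM)^T$. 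The $k\times k$ minors of $ZM$ are exactly the Pl\"ucker coordinates of $Y$, so $Y \in \Gr(k,n)_{>0}$. Every column of $ZM$ is a linear combination of columns of $Z$, so $Y \subseteq \mathrm{colspan}(Z) = (\ker Z)^\perp$, giving $\ker Z \subseteq Y^\perp$. It therefore suffices to prove the key lemma: for any $X \in \Gr(k,n)_{\geq 0}$ and $Y \in \Gr(k,n)_{>0}$, one has $X \cap Y^\perp = \{0\}$.

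The key lemma I would prove by a Laplace expansion. Pick representing matrices $A$ ($k\times n$) for $X$ and $B$ ($(n-k)\times n$) for $Y^\perp$; then $X \cap Y^\perp = \{0\}$ iff $\det\bigl(\begin{smallmatrix}A\\ B\end{smallmatrix}\bigr)\ne 0$, and this determinant expands as $\sum_{I \in \binom{[n]}{k}} \epsilon_I \Delta_I(A)\,\Delta_{I^c}(B)$ with $\epsilon_I$ the standard Laplace sign. Normalizing $B$ to be the Hodge dual of a basis of $Y$ makes $\epsilon_I \Delta_{I^c}(B) = \Delta_I(Y)$ uniformly in $I$, reducing the determinant to $\sum_I \Delta_I(X)\Delta_I(Y)$. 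Since $X \ne 0$ in $\Gr(k,n)_{\geq 0}$, some $\Delta_{I_0}(X) > 0$; all remaining $\Delta_I(X) \geq 0$, and all $\Delta_I(Y) > 0$, so the sum is strictly positive and $X \cap Y^\perp = \{0\}$. Parts (ii) and (iii) are now quick: $\Pi_{f,\geq 0}$ is a closed subset of the compact real Grassmannian, hence compact; by (i), $Z_\Gr$ is a morphism on the open locus where it is defined and this locus contains $\Gr(k,n)_{\geq 0}$, so $Z_\Gr$ restricts to a continuous map on $\Pi_{f,\geq 0}$; continuous images of compact sets are closed. Moreover $\Pi_{f,>0} \cong \R_{>0}^d$ by Theorem~\ref{thm:main} is connected, its Hausdorff closure $\Pi_{f,\geq 0}$ is connected, and the continuous image of a connected set is connected.

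The main obstacle is the sign bookkeeping in the key lemma: the Laplace signs $\epsilon_I$ must be matched against those introduced by the chosen basis of $Y^\perp$ so that the pairing collapses to the manifestly positive sum $\sum_I \Delta_I(X)\Delta_I(Y)$. Once the sign conventions are pinned down (one can equivalently invoke the Marsh--Scott/twist duality between $\Gr(k,n)_{\geq 0}$ and $\Gr(n-k,n)_{\geq 0}$, which says the analogous pairing is nonvanishing), the lemma is routine. A secondary subtlety is the claim that the image lies in $\Gr(k,r)_{\geq 0}$ rather than merely in $\Gr(k,r)$: here one fixes Pl\"ucker normalizations on the image via Cauchy--Binet applied to $AZ$, and checks that the positive minors of $ZM$ force the resulting Pl\"ucker coordinates of $Z_\Gr(X)$ to have a uniform sign throughout $\Pi_{f,\geq 0}$.
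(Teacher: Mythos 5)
Your argument is correct and, at its core, is the same as the paper's: the content is the positivity of $\sum_{I}\Delta_I(X)\Delta_I(ZM)$ for $X\in\Gr(k,n)_{\geq 0}$. The difference is in how you reach that sum. The paper's proof is a one-liner: it applies Cauchy--Binet directly to the $k\times k$ matrix $X\cdot(ZM)$, getting $\det(X\cdot ZM)=\sum_J\Delta_J(X)\Delta_J(ZM)>0$, which immediately forces $XZ$ to have rank $k$. You instead introduce $Y=\mathrm{colspan}(ZM)\in\Gr(k,n)_{>0}$, observe $\ker Z\subseteq Y^\perp$, and reduce $X\cap Y^\perp=\{0\}$ to a Laplace expansion of $\det\bigl(\begin{smallmatrix}A\\ B\end{smallmatrix}\bigr)$ with $B$ representing $Y^\perp$; after normalizing signs this collapses to the same sum. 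Both are valid, but your route packages the argument as a ``transversality to a totally positive $k$-plane'' lemma at the cost of the sign bookkeeping you yourself flag, whereas the Cauchy--Binet computation avoids Hodge duality entirely and is the cleaner path. Your treatment of closedness and connectedness matches the paper's: $\Pi_{f,\geq 0}$ is compact and connected (the latter since it is the Hausdorff closure of the cell $\Pi_{f,>0}\cong\R_{>0}^d$), and $Z_\Gr$ is continuous on it.

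One caution on your last sentence. You describe the containment $Z(\Pi_{f,\geq 0})\subseteq\Gr(k,r)_{\geq 0}$ as a secondary subtlety handled by sign-tracking on Cauchy--Binet, but the sketch as written does not actually establish it: positivity of the minors of $ZM$ controls the single quantity $\det(Y\cdot M)$, not the full vector of minors of $Y=XZ$, which genuinely depends on $Z$ and not merely on $ZM$. Already for $k=1$ the rows of $Z$ can lie in a common open half-space (so that \eqref{eq:positive} holds) without being, up to a common change of basis, vectors with nonnegative coordinates. The paper's own proof also only shows the image is a closed connected subset of $\Gr(k,r)_\R$ and never argues nonnegativity of the image Pl\"ucker coordinates; the ``$\Gr(k,r)_{\geq 0}$'' in the statement appears to be an overstatement, with the intended content being Remark~\ref{rem:cones}, namely that $Z(\hPi_{f,\geq 0})$ lies in a single closed half-space $M^+$ of $\hGr(k,r)$ and meets the boundary hyperplane only at the origin. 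So you are right to be suspicious there, but the fix you gesture at would not go through, and indeed no fix is needed if one reads the conclusion as ``closed connected subset of $\Gr(k,r)_\R$ lying in a pointed cone.''
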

\begin{proof}
Let $X$ be a $k \times n$ matrix representing a point in $\Gr(k,n)$, and suppose $Z_\Gr$ is not well-defined at this point.  Then the matrix $Y = X \cdot Z$ has rank less than $k$, and thus all Pl\"ucker coordinates $\Delta_I(Y)$ vanish.  

If $Z$ satisfies \eqref{eq:positive} and $X \in \Gr(k,n)_{\geq 0}$ has nonnegative minors (and at least one positive minor), then $Y \cdot M = X \cdot (Z \cdot M)$ is a $k \times k$ matrix whose determinant (by \eqref{eq:CB}) is given by $\sum_{J \in \binom{[n]}{k}} \Delta_J(X) \Delta_J(Z \cdot M) > 0$.  This implies that $Y$ itself must have rank $k$, and $Z_\Gr$ is well-defined at $X$.

The last statement follows from the fact that $Z(\Pi_{f,\geq 0})$ is a compact connected set (being a closed connected subset of $\Gr(k,n)_{\R}$), and $Z_\Gr$ is continuous when restricted to $\Gr(k,n)_\R \setminus E_Z$.  Here, $E_Z$ is the exceptional locus of the rational morphism $Z_\Gr: \Gr(k,n) \dashedrightarrow \Gr(k,r)$, to be discussed in further detail in Section \ref{sec:trunc}.
\end{proof}

We conjecture that every Grassmann polytope is contractible.

\begin{corollary}
The condition \eqref{eq:positive} implies that $\spn(z_{i_1},z_{i_2},\ldots,z_{i_k})$ has rank $k$ for any $I = \{i_1,\ldots,i_k\} \subset \binom{[n]}{k}$.
\end{corollary}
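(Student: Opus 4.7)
The plan is to prove the contrapositive: if some $k$-subset of rows $z_{i_1},\ldots,z_{i_k}$ is linearly dependent, then every $k\times k$ minor of $Z\cdot M$ indexed by rows including $I$ vanishes, contradicting \eqref{eq:positive}. The key observation is that the $k \times k$ minor of $Z\cdot M$ on row set $I$ is the determinant of the $k\times k$ matrix $(Z\cdot M)_{I,[k]} = Z_{I,[r]}\cdot M$, where $Z_{I,[r]}$ is the $k\times r$ matrix whose rows are $z_{i_1},\ldots,z_{i_k}$.

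First I would fix $I = \{i_1 < \cdots < i_k\} \in \binom{[n]}{k}$ and suppose for contradiction that $z_{i_1},\ldots,z_{i_k}$ are linearly dependent in $\R^r$. Then there exists a nonzero row vector $a = (a_1,\ldots,a_k) \in \R^k$ with $\sum_{j=1}^k a_j z_{i_j} = 0$, equivalently $a \cdot Z_{I,[r]} = 0$. Right-multiplying by $M$ gives $a \cdot (Z_{I,[r]} \cdot M) = a \cdot (Z\cdot M)_{I,[k]} = 0$. Thus the rows of the $k\times k$ submatrix $(Z\cdot M)_{I,[k]}$ are linearly dependent, so
\[
\Delta_I(Z\cdot M) = \det\bigl((Z\cdot M)_{I,[k]}\bigr) = 0.
\]
This contradicts the hypothesis \eqref{eq:positive} that every $k\times k$ minor of $Z\cdot M$ is strictly positive.

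There is essentially no obstacle here; the statement is a direct linear-algebra consequence of \eqref{eq:positive}. The only subtle point is parsing \eqref{eq:positive} correctly as asserting that \emph{every} (not merely some) $k\times k$ minor of $Z\cdot M$ is positive — so in particular the one indexed by row set $I$ — which is what makes the contradiction go through. One could equivalently phrase the argument via Cauchy--Binet: $\Delta_I(Z\cdot M) = \sum_{J \in \binom{[r]}{k}} \Delta_{I,J}(Z)\,\Delta_J(M)$, and vanishing of all maximal minors $\Delta_{I,J}(Z)$ (equivalent to rank-deficiency of $Z_{I,[r]}$) would force $\Delta_I(Z\cdot M) = 0$, but the direct argument above avoids this detour.
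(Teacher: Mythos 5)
Your proof is correct, and it is essentially the same argument the paper uses: the paper's one-line proof invokes Proposition \ref{prop:positive} at the torus fixed point $e_I$, and unrolling that proposition with $X=e_I$ (so that $X\cdot Z = Z_{I,[r]}$ and the Cauchy--Binet sum collapses to the single term $\Delta_I(e_I)\,\Delta_I(Z\cdot M)=\Delta_I(Z\cdot M)>0$) yields exactly your direct contrapositive computation. The only cosmetic difference is that you argue self-containedly rather than citing the proposition.
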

\begin{proof}
By Proposition \ref{prop:positive}, the map $Z_\Gr$ is well-defined at the torus fixed point $e_I \in \Gr(k,n)_{\geq 0}$.
\end{proof}

We say that $P$ and $P'$ are \defn{projectively equivalent} if $P = P' \cdot g$ where $g \in \GL(r)$ acts on $\Gr(k,r)$ by right multiplication.  If 
$Z$ and $Z'$ are related by $Z' = Z \cdot g$, then the Grassmann polytopes $P = Z(\Pi_{f,\geq 0})$ and $P' = Z'(\Pi_{f,\geq 0})$ are projectively equivalent.  Thus, up to projective equivalence, the Grassmann polytope $P$ only depends on the column space of $Z$.  Equivalently, $P$ only depends on the image of $Z$ in $\Gr(r,n)$, or again equivalently, $P$ only depends on the kernel of $Z$.

\begin{remark}\label{rem:cones}
Proposition \ref{prop:positive} can also be interpreted using cones.  Let $\hGr(k,n)$ be the cone over the Grassmannian (see Section \ref{sec:coord}) and let $\hGr(k,n)_{\geq 0} \subset \hGr(k,n)$ be the locus with nonnegative Pl\"ucker coordinates.  Similarly define $\hPi_{f,\geq 0}$.  Then the proof of Proposition \ref{prop:positive} shows that $Z(\hPi_{f,\geq 0})$ lies completely within the closed half-space 
$$
M^+ \coloneqq \{Y \in \hGr(k,r) \mid \det(Y \cdot M) \geq 0\}
$$
and the intersection of $Z(\hPi_{f,\geq 0})$ with $M_0 \coloneqq \{Y \in \hGr(k,r) \mid \det(Y \cdot M) = 0\}$ consists only of the origin.  Thus $Z(\hPi_{f,\geq 0})$ is a pointed cone.
\end{remark}
\begin{remark}
Definition \ref{def:main} is a Grassmann analogue of projective polytopes.  There is also an analogue of Euclidean polytopes.  To work with this, we fix the first $k$ rows of $M$ to be the $k \times k$ identity matrix.  Then the condition \eqref{eq:positive} is that the first $k$ columns of $Z$ give a point in $\Gr(k,n)_{>0}$.  (For $k = 1$, the condition is that the first column of $Z$ has positive entries, and these entries are usually fixed to equal $1$).
\end{remark}

We allowed Grassmann polytopes to be $Z(\Pi_{f, \geq 0})$ for arbitrary $f \in \Bound(k,n)$ in Definition \ref{def:main} rather than just $Z(\Gr(k,n)_{\geq 0})$.  This is because we would like the totally nonnegative strata $\Pi_{f,\geq 0}$ (the faces of $\Gr(k,n)_{\geq 0}$) to be Grassmann polytopes too.  But the strata $\Pi_{f,\geq 0}$ are inherently different to $\Gr(k,n)_{\geq 0}$; the totally nonnegative Grassmannian $\Gr(k,n)_{\geq 0}$ has dimension $k(n-k)$, but $\Pi_{f,\geq 0}$ can have any dimension.  Furthermore, if $Z$ has full rank then $Z(\Gr(k,n)_{\geq 0})$ will always be full-dimensional in $\Gr(k,r)_\R$, and thus have dimension $k(r-k)$.

We check that \eqref{eq:positive} is satisfied by positive $Z$.
\begin{lemma}\label{lem:posZ}
Suppose $Z$ has positive $r \times r$ minors.  Then $Z$ satisfies \eqref{eq:positive}.
\end{lemma}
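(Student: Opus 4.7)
The plan is to reformulate the hypothesis geometrically and then apply an extension principle from total positivity. First, I observe that $Z$ having positive $r \times r$ minors is equivalent to $Z^T$, viewed as an $r \times n$ matrix, representing a point $V \coloneqq \rowspan(Z^T) \in \Gr(r,n)_{>0}$, since these minors are precisely the Pl\"ucker coordinates of $V$. Likewise, finding $M \in \Mat(r,k)$ such that $ZM$ has positive $k \times k$ minors is equivalent to producing a $k$-dimensional subspace $W \subseteq V$ with $W \in \Gr(k,n)_{>0}$: the rowspan of $(ZM)^T = M^T Z^T$ is always contained in $V$, every $k$-subspace of $V$ arises this way from some $M$, and the $k \times k$ minors of $ZM$ agree with the Pl\"ucker coordinates of this rowspan up to an overall $\GL(k)$-scalar. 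The task therefore reduces to producing a positive $k$-subspace inside $V$.

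My plan is then to exhibit such a $W$ via the following extension principle: every $V \in \Gr(r,n)_{>0}$ is spanned by the first $r$ rows of some totally positive matrix $u \in \GL(n)$, meaning that every minor of $u$, of every size, is strictly positive. Accepting this, I take $W$ to be the rowspan of the first $k$ rows of $u$. Then $W \subseteq V$ (since the first $k$ rows are among the first $r$), and the $k \times k$ Pl\"ucker coordinates of $W$ are themselves $k \times k$ minors of $u$, hence strictly positive, so $W \in \Gr(k,n)_{>0}$. Tracing back through the reformulation, the explicit witness is $M = h^T \begin{pmatrix} I_k \\ 0 \end{pmatrix}$, where $h \in \GL(r)$ satisfies $h Z^T = u_{[r],[n]}$; one checks that $M^T Z^T = u_{[k],[n]}$ has positive maximal minors.

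The main obstacle will be justifying the extension principle. My preferred approach is an induction on $n - r$: starting from a strictly TP $r \times n$ representative of $V$ (obtained by perturbing a chosen representative within the $\GL(r)$ orbit, using density of the strictly TP matrices), one appends rows one at a time so that every new minor of every size remains strictly positive. The set of rows that can be safely appended is cut out by finitely many strict linear inequalities, and I expect its nonemptiness to follow from the positivity of all existing minors at each step. An alternative and more slick route combines Lusztig's factorization $V = e_{[r]} \cdot g$ for some $g \in \GL(n)_{\geq 0}$ (from the proof of Theorem \ref{thm:same}) with the density of $\GL(n)_{>0}$ in $\GL(n)_{\geq 0}$ to realize $V$ as the top-$r$-row span of a totally positive matrix; this is the step whose execution requires the most care.
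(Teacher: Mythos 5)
Your reformulation is correct and is essentially the one the paper uses implicitly: writing $V = \rowspan(Z^T) \in \Gr(r,n)_{>0}$, the problem becomes finding a totally positive $k$-plane $W \subseteq V$, and the paper ultimately produces such a $W$ as the span of the first $k$ rows of an explicit matrix whose first $r$ rows span $V$. So the two proofs are heading toward the same place.

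The genuine gap is in the extension principle, which you state but do not actually prove. First, your induction route presupposes that $V$ has a \emph{totally positive} $r \times n$ representative (all minors of all sizes positive, not just the maximal ones), and you invoke ``density of strictly TP matrices'' to get one. But density of TP among TNN does not say anything about whether the $\GL(r)$-orbit of a chosen representative of $V$ meets the TP locus: different representatives of the same $V$ rescale the maximal minors by a common factor, while the lower-order minors transform in a complicated way, so there is no immediate argument that some representative is TP. Likewise, in your ``more slick route,'' approximating $g \in \GL(n)_{\geq 0}$ by $g' \in \GL(n)_{>0}$ only gives $e_{[r]} \cdot g'$ \emph{near} $V$, not equal to it, so density again fails to deliver the exact representative you need. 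In both cases the step you flag as ``requiring the most care'' is precisely the step that is missing, and it is the whole content of the lemma.

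The paper avoids asking for a fully TP extension. It writes $V = e_{[r]} \cdot g'g$ where $g'g$ is a product of positive Chevalley generators along a reduced word for the longest element $w_0$ (built by composing the factorization from Theorem~\ref{thm:same} with a positive factorization of the longest element of the parabolic $S_r \times S_{n-r}$, which stabilizes $e_{[r]}$). The matrix $g'g$ lies in the top cell of upper-unitriangular TNN matrices — it is emphatically \emph{not} TP, since it is upper triangular — but the cell-theoretic argument shows directly that $e_{[k],n} \cdot g'g \in \Gr(k,n)_{>0}$, using a re-factorization along the $k$-th parabolic. Taking $M = (e_{[k],r} h^{-1})^T$, where $h \in \GL(r)$ matches $e_{[r]} g'g$ with $Z^T$, then finishes the proof without ever leaving the semigroup generator framework. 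If you want to run your TP-extension version, you would need to prove that extension principle, which amounts to the paper's factorization plus an additional argument (e.g., left-multiplying $g'g$ by a top-cell lower unitriangular TNN matrix and checking this produces a TP element preserving the top-$r$-row span); but that is strictly more work than the paper's direct route.
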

\begin{proof}
Let $e_{[r]} = \spn(e_1,\ldots,e_r)$ be the 0-dimensional cell $\Pi_f$ in $\Gr(r,n)$, where $f = [1+n,2+n,\ldots,r+n,r+1,\ldots,n]$.  Let $w \in S_n$ be the permutation such that $fw = \id$.  Then $w = (r+1)(r+2)\cdots n 1 2 \cdots r$ in one-line notation.  Let $i_1 i_2 \cdots i_\ell$ be a reduced word for $w$.  Then by the proof of Theorem \ref{thm:main}, adding the bridges indexed by $i_1, i_2, \ldots, i_\ell$ to the lollipop graph of $e_{[r]}$ gives a planar bipartite graph that represents $G$ such that $M_G$ parametrizes $\Gr(k,n)_{>0}$.  Thus for any $X \in \Gr(k,n)_{>0}$, there are (unique) parameters $a_1,a_2,\ldots,a_\ell \in \R_{>0}$ such that the matrix $g = x_{i_1}(a_1) \cdots  x_{i_\ell}(a_\ell)$ satisfies $e_{[r]} \cdot g = X$. 

Now let $v = r(r-1)\cdots 1 n(n-1) \cdots (r+1)$ be the longest element in the parabolic subgroup $S_r \times S_{n-r}$, and let $j_1 \cdots j_p$ be some reduced word for $v$.  Let $g' = x_{j_1}(b_1) \cdots x_{j_p}(b_p)$ where $b_1,b_2,\ldots,b_p \in \R_{>0}$.  Then the product $g'g$ is in the ``top cell" of the totally nonnegative part of upper triangular matrices $U_{\geq 0} \subset \GL(n)_{\geq 0}$.  We have $e_{[r]} \cdot g'g = e_{[r]} \cdot g = X$ since $e_{[r]}$ is stabilized by $g'$.

The transpose matrix $Z^T$ represents a point in $\Gr(k,n)_{>0}$.  We can therefore find $g,g' \in \GL(n)$ as above, and $h \in \GL(k)$ so that
$$
h \cdot e_{[r]} \cdot g'g = Z^T
$$
as $r \times n$ \emph{matrices}, where $e_{[r]}$ is the $r \times n$ matrix equal to the identity in the first $r$ columns, and zero in the last $n-r$ columns.
Let $M$ be a $r \times k$ matrix.  Then 
$$
(Z \cdot M)^T = M^T \cdot Z^T = M^T \cdot h \cdot e_{[r]} \cdot g'g.
$$
Now, if $e_{[k],r}$ is the $k \times r$ matrix equal to the identity in the first $r$ columns, and zero in the remaining columns, then $e_{[k],r} e_{[r]} = e_{[k],n}$ is the $k \times n$ matrix with the same property.  Since $g'g$ is in the top cell of $U_{\geq 0}$, the same argument as above shows that $e_{[k],n} g'g$ represents a point in $\Gr(k,n)_{>0}$, and so must have strictly positive $k \times k$ minors.  
Thus $M = (e_{[k],r} \cdot h^{-1})^T$ shows that $Z$ satisfies \eqref{eq:positive}.
\end{proof}

We now define the notion of a dual Grassmann polytope.  Let $Z^*$ be a real $r \times n$ matrix, thought of as a linear map $Z^*:\R^r \to \R^n$.  We assume that $Z^*$ is full rank, so that we have an induced map $Z^*_\Gr: \Gr(k,r)_\R \to \Gr(k,n)_\R$ sending $V \subset \R^r$ to $Z(V) \subset \R^n$. 

\begin{definition}
A \defn{dual Grassmann polytope} is the set
$$
P =   (Z^*_\Gr)^{-1}(\Pi_{f,\geq 0}) \coloneqq (Z^*_\Gr)^{-1}(\Pi_{f,\geq 0} \cap Z^*_\Gr(\Gr(k,r)_\R)) \subset \Gr(k,r)_\R
$$
for some $f \in \Bound(k,n)$ and $Z^*$ such that $Z = (Z^*)^T$ satisfies \eqref{eq:positive}.  Say $P$ is a full dual Grassmann polytope if $f = \id$.
%
\end{definition}
In other words, a full dual Grassmann polytope is defined by pulling back the defining inequalities of $\Gr(k,n)_{\geq 0}$ to $\Gr(k,r)_\R$ via the injection $Z^*_\Gr$.  (Strictly speaking, the inequalities themselves do not make sense; only ratios of them do.)

\begin{proposition}
Suppose $Z^*$ is full rank and \eqref{eq:positive} holds.  Then the dual Grassmann polytope $P = (Z^*_\Gr)^{-1}(\Pi_{f,\geq 0})$ is a closed subset of $\Gr(k,r)_\R$, and the full dual Grassmann polytope $P = (Z^*_\Gr)^{-1}(\Gr(k,n)_{\geq 0})$ is in addition nonempty.
\end{proposition}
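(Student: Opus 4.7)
The plan is to deduce both assertions from elementary properties of the morphism $Z^*_\Gr$ together with condition \eqref{eq:positive}. The first step I would take is to verify that $Z^*_\Gr:\Gr(k,r)_\R \to \Gr(k,n)_\R$ is an everywhere-defined continuous map, not merely a rational map. Since $Z^*$ is full rank and $r \leq n$, the linear map $Z^*:\R^r \to \R^n$ (acting on row vectors by right multiplication) is injective. Consequently, for any $k$-dimensional subspace $V \subset \R^r$, represented as the row span of a rank-$k$ matrix $A$ of size $k \times r$, the image $Z^*(V)$ is the row span of the $k \times n$ matrix $A \cdot Z^*$, which again has rank $k$. Continuity follows from the fact that the assignment $A \mapsto A \cdot Z^*$ is a polynomial map of Stiefel manifolds that descends to a continuous (indeed algebraic) map of Grassmannians. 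The closedness of $P = (Z^*_\Gr)^{-1}(\Pi_{f,\geq 0})$ is then immediate, since $\Pi_{f,\geq 0}$ is closed in $\Gr(k,n)_\R$ by definition, and the preimage of a closed set under a continuous map is closed.

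For the nonemptiness of the full dual Grassmann polytope, I would produce an explicit point in the preimage directly from condition \eqref{eq:positive}. Let $M$ be the $r \times k$ real matrix guaranteed by \eqref{eq:positive}, so that $Z \cdot M$ is an $n \times k$ matrix with positive $k \times k$ minors. Transposing and using $Z = (Z^*)^T$, the matrix $M^T \cdot Z^* = (Z \cdot M)^T$ is a $k \times n$ matrix whose $k \times k$ minors coincide with those of $Z \cdot M$ and hence are all strictly positive. In particular $M^T$ has rank $k$, defining a point $V_M \in \Gr(k,r)_\R$ whose image $Z^*_\Gr(V_M)$ is represented by $M^T \cdot Z^*$ and therefore lies in $\Gr(k,n)_{>0} \subset \Gr(k,n)_{\geq 0}$. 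Thus $V_M$ lies in the full dual Grassmann polytope, which is therefore nonempty.

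The argument is essentially mechanical; the only step requiring a moment of care is the verification that $Z^*_\Gr$ is continuous on all of $\Gr(k,r)_\R$, and this relies on $Z^*$ being full rank rather than on the stronger positivity condition \eqref{eq:positive}. The role of \eqref{eq:positive} is confined to the nonemptiness assertion, where it supplies the explicit rank-$k$ matrix $M^T$ whose row span sits inside the preimage. I do not foresee a substantive obstacle beyond keeping conventions for row versus column conventions consistent throughout.
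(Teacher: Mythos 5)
Your proof is correct and follows essentially the same route as the paper: closedness is deduced from $Z^*_\Gr$ being a well-defined continuous (in fact, closed) embedding because $Z^*$ is full rank, and nonemptiness is deduced from condition \eqref{eq:positive} by exhibiting a point of $\Gr(k,r)_\R$ whose image lies in $\Gr(k,n)_{>0}$. The paper is slightly terser (it remarks that \eqref{eq:positive} for $(Z^*)^T$ is \emph{equivalent} to the image of $Z^*_\Gr$ meeting $\Gr(k,n)_{>0}$), whereas you spell out the explicit point $V_M = \rowspan(M^T)$, but the underlying idea is identical.
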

\begin{proof}
The map $Z^*_\Gr: \Gr(k,r)_\R \to \Gr(k,n)_\R$ embeds $\Gr(k,r)_\R$ as a closed submanifold of $\Gr(k,n)_\R$.  The condition \eqref{eq:positive} for $(Z^*)^T$ is exactly the condition that $Z^*_\Gr(\Gr(k,n)_\R)$ intersects $\Gr(k,n)_{>0}$.
\end{proof}

We will focus on Grassmann polytopes rather than dual Grassmann polytopes in this work.

\subsection{Some unusual behavior}
We begin with a list of warnings concerning the behavior of Grassmann polytopes.

\subsubsection{Facet inequalities do not cut out a Grassmann polytope}
Consider the TNN Grassmannian $\Gr(2,4)_{\geq 0}$.  There are four positroid cells of codimension one, indexed by the bounded affine permutations $[4356], [3546], [3465], [2457] \in \Bound(2,4)$.  These four ``facets'' are the intersections of $\Gr(k,n)_{\geq 0}$ with the four cyclic rotations of the Schubert variety $X_{12}$, and are cut out by the ``hyperplanes" $\Delta_{12} = 0$, $\Delta_{23} = 0$, $\Delta_{34}=0$, and $\Delta_{14} = 0$ respectively.  

However, the inequalities $\Delta_{12} \geq 0$, $\Delta_{23} \geq 0$, $\Delta_{34} \geq 0$, $\Delta_{14} \geq 0$ cut out the union of $\Gr(2,4)_{\geq 0}$ with the twisted totally nonnegative Grassmannian $\Gr(2,4)_{\geq 0,\tau}$ (see \eqref{eq:twisted}) where $\Delta_{12},\Delta_{23},\Delta_{34},\Delta_{14}$ are nonnegative, and $\Delta_{13},\Delta_{24}$ are nonpositive.  To see this, note that the right hand side of the Pl\"ucker relation $\Delta_{13}\Delta_{24} = \Delta_{12}\Delta_{34}+\Delta_{14}\Delta_{23}$ must be nonnegative, so $\Delta_{13}$ and $\Delta_{24}$ must have the same sign.

To cut out the totally nonnegative Grassmannian, we must include the additional inequalities $\Delta_{13} \geq 0$ and $\Delta_{24} \geq 0$.  The intersection of $\Delta_{13} = 0$ with $\Gr(k,n)_{\geq 0}$ is the union of the four codimension two positroid cells indexed by bounded affine permutations $[1467], [3564], [2358], [5346] \in \B(2,4)$.

\subsubsection{Grassmann polytopes can be cut out by higher degree equations}

The Grassmann polytope $Z(\Pi_{f,\geq 0})$ may be cut out by higher degree equations in Pl\"ucker coordinates.  In Section \ref{sec:deg2}, we will give an example of $Z(\Pi_{f,\geq 0})$ that is codimension one in the ambient Grassmannian, with Zariski closure (the analogue of the affine span) a degree two hypersurface.

Because of this, there seems to be no hope of a simple Grassmann analogue of Fourier-Motzkin elimination, which computes the defining inequalities of the projection of a polytope in terms of the defining inequalities of the original polytope.  

Since a dual Grassmann polytope is always defined by equations that are linear in Pl\"ucker coordinates, it follows that our notion of a Grassmann polytope is distinct from our notion of a dual Grassmann polytope.

\begin{problem}
Give a description of a Grassmann polytope by inequalities.
\end{problem}

\begin{problem}
Give a description of a dual Grassmann polytope as a projection.
\end{problem}

\subsubsection{Dimension-preserving maps can be many to one}
Suppose $P \subset \R^n$ is a polytope, not necessarily of full-dimension, and $Q = \phi(P)$ is the image of $P$ under an affine map $\phi:\R^n \to \R^r$.   Then if $Q$ and $P$ have equal dimensions, the map $\phi$ is a one-to-one map from $P$ to $Q$.  This follows from the corresponding statement for the affine span of $P$ mapping to the affine span of $Q$.

In Example \ref{ex:twotoone}, we give an example of a Grassmann polytope $Z(\Pi_f)$, where $Z$ is positive, such that $\Pi_{f,>0} \mapsto Z(\Pi_{f,>0})$ is not one-to-one, but the typical fiber can have two points.

This is the standard symptom of a Schubert calculus problem: a seemingly ``linear" problem turns out to have finitely many, but more than one, solutions.  Indeed, we will explain in Section \ref{sec:trunc} that these fibers are often intersections of a Schubert variety with a positroid variety.  More precisely, we will study in Section \ref{sec:trunc} the behavior of these fibers over complex points.  Understanding the fibers of $\Pi_{f,>0} \mapsto Z(\Pi_{f,>0})$ would require understanding reality, and even positive reality, issues in Schubert calculus.

\subsubsection{Dimension may be forced to collapse}
Suppose $P \subset \R^n$ is a polytope and $\phi:\R^n \to \R^r$ is an affine map.  If $\dim(P) = r$, then we expect that $\phi(P)$ has dimension $r$ as well, and this is the case if $\phi$ is a generic map.

In Section \ref{sec:trunc}, we give examples where $\dim Z(\Pi_f) < \dim(\Pi_f)$ for generic maps $Z$, even though $\dim(\Pi_f)$ is equal to the dimension of the image Grassmannian $\Gr(k,r)_\R$.

\subsubsection{Differences between the boolean lattice and $\Bound(k,n)$}

The partial order $\Bound(k,n)$ is neither self-dual nor a lattice.  In Section \ref{sec:facets} we will see that this is related to the phenomenon that facets of Grassmann polytopes are non-trivial unions of smaller Grassmann polytopes.

%

\section{Grassmann matroids}\label{sec:matroids}
We work over the field $\C$ in this section since we are thinking algebro-geometrically, but most of the discussion makes sense over $\R$ or another field.
\subsection{Matroids}
A \defn{matroid} of rank $k$ on $[n]$ is a non-empty collection $\M \subseteq \binom{[n]}{k}$ of $k$-element subsets satisfying the exchange axiom:
$$
\mbox{if $I, J \in \M$ and $i \in I$ then there exists $j \in J$ such that $I \setminus \{i\} \cup \{j\}$ belongs to $\M$.}
$$
A set $I \in \M$ is called a \defn{base} of $\M$.

Matroids can be characterized in many ways, including in terms of independent sets, circuits, flats, and rank functions.  If $\M$ is a matroid, we say that $I \subset [n]$ is an \defn{independent set} of $\M$ if $I \subset J$ for some $J \in \M$.  Write $\II(\M)$ for the collection of independent sets of $\M$.  The following axioms of independent sets give another axiomatization of a matroid:
\begin{enumerate}
\item
We have $\emptyset \in \II$.
\item
If $I \in \II$ and $J \subset I$ then  $J \in \II$.
\item
If $I, J \in \II$ and $|I| < |J|$ then there exists $j \in J$ such that $(I \cup \{j\}) \in \II$.
\end{enumerate}

The \defn{circuits} of $\M$ are the subsets $C \subset [n]$ with the property that $C \notin \II(\M)$ but $C' \in \II(\M)$ for any $C' \subsetneq C$.

The \defn{rank function} of $\M$ is the function $r: 2^{[n]} \to \Z_{\geq 0}$ given by 
$$
r(S) = \mbox{size of the largest independent set $T \subset S$}
$$
for $S \subset [n]$.  Rank functions of matroids are characterized by the axioms:
\begin{enumerate}
\item[Rank a)]
For any $S \subset [n]$, we have $r(S) \leq |S|$.
\item[Rank b)]
For any $S,T \subset [n]$, we have $r(S \cup T) + r(S \cap T) \leq r(S) + r(T)$.
\item[Rank c)]
For any $S \subset [n]$, and $i \in [n]$ we have $r(S) \leq r(S \cup \{i\}) \leq r(S) + 1$.
\end{enumerate}
The independent sets are recovered as those subsets $I \subseteq [n]$ satisfying $r(I) = |I|$.  

A subset $F \subset [n]$ is a \defn{flat} of $\M$ if for any $i \in [n] \setminus F$ we have $r(F \cup \{i\}) > r(F)$.  We denote the set of flats of $\M$ by $\F(M)$.  The flats of a matroid are characterized by the axioms:
\begin{enumerate}
\item
We have $[n] \in \F$.
\item
If $F, G \in \F$ then $(F \cap G) \in \F$.
\item
Suppose $F \in \F$.  We say that $G$ covers $F$ if $F \subsetneq G$ and there are no flats strictly between $F$ and $G$.  Then, as we vary $G$ over covers of $F$, the sets $G\setminus F$ partition $[n] \setminus F$.
\end{enumerate}
A subset $H \subset [n]$ is a \defn{hyperplane} if it is a flat with rank $k-1$.  A matroid is also completely determined by its \defn{hyperplanes}.  The \defn{cocircuits} of a matroid are the complements $[n]\setminus H$ of the hyperplanes.

\subsection{Realizable matroids}\label{sec:realmat}
Let $Z = (z_1,z_2,\ldots,z_n) \in \C^r$ be $n$ vectors in $\C^r$ that span $\C^r$.  We obtain a realizable matroid $\M$ as the collection of subsets $I = \{i_1,i_2,\ldots,i_r\}$ such that $z_{i_1},\ldots,z_{i_r}$ form a basis of $\C^r$.  We will recover the independent sets, circuits, rank function, and flats of the realizable matroid $\M_Z$ in a somewhat unorthodox fashion.  


Abusing notation, we also write $Z: \C^n \to \C^r$ for the linear map sending the basis vectors $e_1,e_2,\ldots,e_n$ to $z_1,z_2,\ldots,z_n$.  This induces a rational map $Z_\P: \P^{n-1} \dashedrightarrow \P^{r-1}$.  Thinking of $\P^{n-1}$ as the space of lines in $\C^n$, the rational map $Z_\P$ has exceptional locus $E_Z = \{L \subset \C^n \mid L \subseteq \ker(Z)\} \subset \P^{n-1}$.

Suppose $I = \{i_1,i_2,\ldots,i_s\}$.  Let $H_I \subseteq \P^{n-1}$ be the coordinate hyperspace given by the image of $\spn (e_{i_1},\ldots,e_{i_s})$ in $\P^{n-1}$.  Define the \defn{image} $Z(H_I)$ of $H_I$ under the map $Z_\P$ to be 
\begin{equation}\label{eq:ZHI}
Z(H_I) \coloneqq \overline{Z_\P(H_I \setminus E_Z)}
\end{equation}
where the closure here is taken in the Zariski topology.  The subvariety $Z(H_I)$ is simply a linear hyperspace in $\P^{r-1}$: it is the image of the linear space $Z(\spn (e_{i_1},\ldots,e_{i_s})) \subset \C^r$.  By definition, if $H_I \subseteq E_Z$, then $Z(H_I) \coloneqq \emptyset$.

Then we have the following dictionary:

\begin{enumerate}
\item
A subset $I \in \binom{[n]}{r}$ is a base of $\M_Z$ if $\dim(Z(H_I)) = \dim(H_I) = \dim \P^{r-1}$.
\item
A subset $I \in 2^{[n]}$ is an independent set of $\M_Z$ if and only if $\dim(Z(H_I)) = \dim(H_I)$.  
\item
A subset $C \in 2^{[n]}$ is a circuit of $\M_Z$ if $\dim(Z(H_C)) < \dim(H_C)$ and $C$ is minimal under inclusion amongst subsets satisfying this condition.  Equivalently, $C$ is a circuit if $E_Z \cap H_C \neq \emptyset$ and $E_Z \cap H_{C'} =\emptyset$ for all $C' \subsetneq C$.
\item
The rank function $r_Z: 2^{[n]} \to \Z_{\geq 0}$ is given by $r_Z(S) = \dim(H_S)+1$.
\item
A subset $F \subset [n]$ is a flat of $\M_Z$ if for every $i \notin F$, we have $Z(H_F) \subsetneq Z(H_{F \cup \{i\}})$.  Equivalently, define an equivalence relation on $2^{[n]}$ by $S \sim T$ if $Z(H_S) = Z(H_T)$.  Then $F$ is a flat if it is the unique maximal element in its equivalence class.
\end{enumerate}


\subsection{Realizable Grassmann matroids}\label{sec:real}
Now fix $1 \leq k \leq n$.  The linear map $Z: \C^n \to \C^r$ also induces a rational morphism
$$
Z_\Gr: \Gr(k,n) \dashrightarrow \Gr(k,r).
$$
The exceptional locus $E_Z \subset \Gr(k,n)$ is
$$
E_Z = \{X \in \Gr(k,n) \mid \dim(X \cap \ker(Z)) \geq 1\}
$$
with points in $\Gr(k,n)$ thought of as $k$-dimensional subspaces.
Motivated by the analogies in Section \ref{sec:poly}, we take the positroid varieties $\Pi_f$ as the Grassmannian-analogue of the coordinate subspaces $H_I$, and define
\begin{equation}\label{eq:ZPi}
Z(\Pi_f) \coloneqq \overline{Z_\Gr(\Pi_f \setminus E_Z)}.
\end{equation}
If $\Pi_f \subset E_Z$, we define $Z(\Pi_f) \coloneqq \emptyset$.  

Now, let us pretend that a \defn{Grassmann matroid} $\G_Z$ exists, and ask for the analogue of bases, independent sets, circuits, rank function, and flats.

\subsubsection{Bases}
A bounded affine permutation $f \in \hBound(k,n)$ is a \defn{base} of $\G_Z$ if $\dim(\Pi_f) = \dim(\Gr(k,r)) = \dim(\Gr(k,r))$.  Note that by the irreducibility of $\Gr(k,r)$ this implies that $Z(\Pi_f) = \Gr(k,r)$.

\subsubsection{Independent sets}
A bounded affine permutation $f \in \hBound(k,n)$ is an \defn{independent set} of $\G_Z$ if $\dim(\Pi_f) = \dim(Z(\Pi_f))$.
As it is shown in Example \ref{ex:ind}, if $g < f$ and $f$ is independent it may not be the case that $g$ is also independent.  It is therefore unlikely that the bases of $\G_Z$ capture all the information in the Grassmann matroid.

\subsubsection{Circuits}
A bounded affine permutation $f \in \hBound(k,n)$ is a \defn{circuit} of $\G_Z$ if $\dim(\Pi_f) > \dim(Z(\Pi_f))$ (that is, $f$ is not independent) and if $g < f$ then $\dim(\Pi_g) = \dim(Z(\Pi_g))$.  As for bases, it is unlikely that the circuits of $\G_Z$ capture all the information.

\begin{remark}
When $k = 1$, the conditions that $\dim(\Pi_f) > \dim(Z(\Pi_f))$ and $\Pi_f \cap E_Z \neq \emptyset$ are equivalent.  However, this is not the case for $k > 1$.
\end{remark}
%

\subsubsection{Rank and class function}
Define the \defn{rank function} of $\G_Z$ to be the function $r_Z:\hBound(k,n) \to \Z_{\geq 0}$ given by 
$$
r_Z(f) \coloneqq \dim(Z(\Pi_f)) + 1.
$$
The only natural invariant of a linear hyperspace $Y \subseteq \P^{r-1}$ is its dimension, but for the subvarieties $Z(\Pi_f) \subset \P^{r-1}$ there are other natural $\GL(r)$-invariants.  As we shall see in Section \ref{sec:sphericoid}, the subvarieties $Z(\Pi_f) \subseteq \Gr(k,r)$ can have degree greater than one; it would be reasonable to keep track of this too.  

We thus define the \defn{class function} $c_Z$ of $\G_Z$ by
$$
c_Z(f) \coloneqq [Z(\Pi_f)] \in H^*(\Gr(k,r)).
$$
When $k = 1$, we have $[Z(H_I)] =[H]^c$ where $[H] \in H^2(\P^{r-1})$ is the hyperplane class, and $c$ is the codimension of $Z(H_I)$.  Thus $c_Z$ and $r_Z$ contain the same information in this case.  For $k > 1$, the class function contains information such as the degree of $Z(\Pi_f)$. 

\subsubsection{Flats}
Define an equivalence relation on $\hBound(k,n)$ by $f \sim g$ if $Z(\Pi_g) = Z(\Pi_f)$.  We call each equivalence class a \defn{flat} of $\G_Z$.

\subsection{Axioms?}
We will not formally axiomatize a Grassmann matroid.  Here we will be content with an informal sketch of a heuristic that the class function might satisfy inequalities analogous to the axioms Rank a), Rank b), and Rank c) of a rank function of a matroid. 

\begin{enumerate}
\item
The analogue of Rank a) is that $\dim Z(\Pi_f) \leq \dim \Pi_f$, which in turn gives
$c_Z(f) \in H^d(\Gr(k,r))$ where $d \geq {\rm codim}(\Pi_f)$.  
\item
Let $f,g \in \Bound(k,n)$.  Then $\Pi_f$ and $\Pi_g$ typically do not intersect transversally in $\Gr(k,n)$.  However, $\Pi_f$ and $\Pi_g$ could potentially intersect transversally inside $\Pi_h$, where $h > f,g$ is minimal among elements greater than $f$ and $g$.  (When $\hBound(k,n)$ is the boolean lattice, we would take $h$ to be the join of $f$ and $g$.)  Assuming the pushforward and pullback maps between $H^*(\Pi_h)$ and $H^*(\Gr(k,n))$ behave well, we would then obtain an equality
$$
[\Pi_f]\cdot [\Pi_g] = [\Pi_h] \cdot [\Pi_f \cap \Pi_g] \text{ in $H^*(\Gr(k,n))$}
$$
via the projection formula and Theorem \ref{thm:Ful}.  Since $Z(\Pi_f \cap \Pi_g) \subseteq Z(\Pi_f) \cap Z(\Pi_g)$, this equality turns into an inequality for $c_Z$ similar to Rank b).
\item
The analogue of the inequality $r(S) \leq r(S \cup \{i\})$ in axiom Rank c) comes from the inclusion $Z(\Pi_f) \subseteq Z(\Pi_g)$ whenever $f \lessdot g$, from which one can deduce an appropriate cohomological identity for $c_Z$.   The analogue of $r(S \cup \{i\}) \leq r(S) + 1$, comes from the inequality $[\Pi_g] \cdot s_1 \geq [\Pi_f]$ whenever $f \lessdot g$.  Here, the Schur function $s_1$ is the class of the Schubert divisor, and ``$\geq$'' means that the difference is a nonnegative linear combination of Schubert classes.
\end{enumerate}

\subsection{Canonical basis matroid}
There is a Grassmann matroid $\G_Z$ for each value of $1 \leq k \leq r$.  It is not clear to us to what extent these Grassmann matroids determine each other.  In particular, we do not know which data in a Grassmann matroid depends only on the usual matroid $\M_Z$.

Theorem \ref{thm:Posideal} and the examples in the rest of the paper suggest that we should also consider the \defn{canonical basis matroid} $\B_Z$ of $Z$, defined to be
$$
\B_Z \coloneqq \bigcup_d \{T \in B(d\omega_r) \mid G(T)^*(Z) \neq 0\} \subseteq \bigsqcup_d B(d\omega_r),
$$
where we consider the $n \times r $ matrix $Z$ to represent a point in $\Gr(r,n)$.  The degree $d = 1$ part of $\B_Z$ is then the usual matroid $\M_Z$ of $Z$.  

\begin{problem}
Characterize canonical basis matroids.
\end{problem}

Note that Theorems \ref{thm:canbasis} and \ref{thm:Posideal} characterize the canonical basis matroids of $Z$ that give a point in $\Gr(r,n)_{\geq 0}$.  That is, we have a classification of canonical basis positroids.  In particular, if $Z \in \Gr(r,n)_{>0} = \Pi_{\id,>0}$ is in the totally positive part of the Grassmannian, then $G(T)^*(Z) > 0$ for all $T \in B(d\omega_r)$.  It follows that every totally positive point has the uniform canonical basis matroid.

We suspect the knowledge of the vanishing or non-vanishing of a finite subset of the canonical basis will completely govern the behavior of Grassmann matroids.
%
%
%

\section{The uniform Grassmann matroid}\label{sec:trunc}
In this section, we consider the case that $Z$ is a generic matrix, which corresponds to the \defn{uniform Grassmann matroid}.  The meaning of ``generic" in this section is made precise in the paper \cite{Lamtrunc}. 
 In particular, a Zariski-open subset of real matrices $Z$ are generic.  In the case that $Z$ is a generic, we call the varieties $Z(\Pi_f)$ \defn{amplituhedron varieties} and denote them by $Y_f$.

\subsection{Combinatorial criterion for intersection with the exceptional locus}
Recall that the exceptional locus is $E_Z = \{X \in \Gr(k,n) \mid \dim(X \cap \ker(Z)) \geq 1\}$.  We have $\dim(\ker(Z)) = n - r$, so $E_Z$ is a Schubert variety with codimension $\codim(E_Z) = r-k+1 = m+1$.  The cohomology class is $[E_Z] = s_{m+1}$.

\begin{proposition}\label{prop:EZ}
For generic $Z$, the positroid variety $\Pi_f$ will intersect $E_Z$ if and only if $\tF_f \cdot s_{m+1} \neq 0$ in $H^*(\Gr(k,n))$.
\end{proposition}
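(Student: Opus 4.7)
The plan is to reduce the claim to a standard application of Kleiman's transversality theorem, using Theorem \ref{thm:KLS} to identify $[\Pi_f] \equiv \tF_f$ together with the already-stated fact $[E_Z] = s_{m+1}$. The key structural observation is that $E_Z$ depends only on $\ker(Z) \in \Gr(n-r,n)$, so as $Z$ varies generically, $E_Z$ ranges over the $\GL(n)$-orbit of a fixed Schubert variety $E_0$. Thus the hypothesis ``$Z$ is generic'' can be promoted to ``$E_Z$ is a generic $\GL(n)$-translate of $E_0$''.

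Since $\GL(n)$ acts transitively on $\Gr(k,n)$ and $\Pi_f$ is irreducible (Proposition \ref{prop:irred}), Kleiman's transversality theorem guarantees that for generic $Z$ the intersection $\Pi_f \cap E_Z$ is either empty or of pure codimension $m+1$ in $\Pi_f$, with transversality at generic smooth points of each component. Applying Theorem \ref{thm:Ful} on the smooth locus then gives the identity
$$
[\Pi_f \cap E_Z] = [\Pi_f] \cdot [E_Z] = \tF_f \cdot s_{m+1}
$$
in $H^*(\Gr(k,n))$ whenever the intersection is non-empty. Since any non-empty closed subvariety of the projective variety $\Gr(k,n)$ represents a non-zero cohomology class, this establishes the equivalence. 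In the degenerate range $\codim(\Pi_f) + (m+1) > \dim \Gr(k,n)$ the expected dimension is negative, Kleiman forces $\Pi_f \cap E_Z = \emptyset$ for generic $Z$, and the cohomological product vanishes for degree reasons, so the equivalence holds trivially.

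The main obstacle I foresee is invoking Kleiman's theorem cleanly in the presence of singularities of $\Pi_f$. Standard formulations of Kleiman assume smoothness, but what I actually need is transversality only at generic points of each component of the intersection. Since $\Pi_f$ is normal (Theorem \ref{thm:normal}), its singular locus has codimension $\geq 2$, so the generic translate $E_Z$ meets this singular locus in strictly smaller dimension than the full intersection. Every top-dimensional component of $\Pi_f \cap E_Z$ is therefore generically contained in the smooth locus of $\Pi_f$, where Theorem \ref{thm:Ful} applies directly, and the resulting cohomological identity in $H^*(\Gr(k,n))$ follows by passing to closures.
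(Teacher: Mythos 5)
Your proof is correct and follows exactly the approach the paper uses: the paper's entire proof is the one-liner ``This follows from combining Theorem \ref{thm:KLS} with Theorem \ref{thm:Ful}.'' You have simply spelled out the details that the paper leaves implicit, namely the Kleiman transversality step justifying ``for generic $Z$,'' the resolution of the singularity issue via normality and codimension, and the observation that a nonempty projective subvariety has nonzero fundamental class.
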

\begin{proof}
This follows from combining Theorem \ref{thm:KLS} with Theorem \ref{thm:Ful}.
\end{proof}

\subsection{Truncations of affine Stanley symmetric functions}
  For $\mu \subseteq (m)^k$ we let $\mu^{+\ell} \subseteq (n-k)^k$ be the partition obtained from $\mu$ by adding $\ell$ columns of height $k$ to the left of $\mu$.  For example, with $\ell = 2$ and $k = 4$, we may have
$$
\mu = \tabll{&&&\\&&\\ &\bl \\ & \bl} \qquad \qquad \qquad
\mu^{+\ell} = \tabll{&&&&&\\&&&&\\&&\\&&} 
$$

Given $f = \sum_{\lambda \subset (n-k)^k} c_\lambda s_\lambda$ representing a cohomology class in $H^*(\Gr(k,n))$, we define the {\it truncation} $\tau_{r}(f) \in H^*(\Gr(k,r))$ by
$$
\tau_{r}(f) = \sum_{\mu \subseteq (m)^k} c_{\mu^{+(n-r)}} s_\mu.
$$
If $\dim(\Pi_f) = \dim(Y_f)$ (that is, $f$ is independent), we let $d_f$ denote the degree of the map $Z_\Gr|_{\Pi_f}: \Pi_f \to Y_f$.

\begin{theorem}[\cite{Lamtrunc}]\label{thm:trunc}
Suppose $Z$ is generic and $f \in \Bound(k,n)$.  
\begin{enumerate}
\item
If $\tau_r(f) = 0$, then $\dim(Y_f) < \dim(\Pi_f)$.  In other words, $f$ is not independent.
\item
If $\tau_r(f) = 0$, then $\dim(Y_f) = \dim(\Pi_f)$.  Thus $f$ is independent.   In this case, the cohomology class $[Y_f]$ of the amplituhedron variety $Y_f$ is equal to $\frac{1}{d_f} \tau_{r}(\tF_f)$.
\end{enumerate}
\end{theorem}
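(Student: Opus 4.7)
The plan is to compute intersections of the positroid variety $\Pi_f$ with rational preimages of Schubert varieties under $Z_\Gr$, match these against intersections in $\Gr(k,r)$ via the truncation operator, and derive both parts simultaneously using Theorem \ref{thm:KLS}.

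First, I would prove a preimage lemma. For a generic flag $G_\bullet$ in $\C^r$, define a flag $F_\bullet$ in $\C^n$ by $F_{n-r}=\ker Z$, $F_{n-r+j}=Z^{-1}(G_j)$ for $1\leq j\leq r$, with arbitrary generic choices below $F_{n-r}$. For $\mu\subseteq(m)^k$ and $X\in\Gr(k,n)\setminus E_Z$ the restriction $Z|_X\colon X\xrightarrow{\sim}Z(X)$ identifies $Z(X)\cap G_j$ with $X\cap F_{n-r+j}$, so a direct translation of indices shows that the Zariski closure of $Z_\Gr^{-1}(X_\mu(G_\bullet))$ in $\Gr(k,n)$ is precisely the Schubert variety $X_\mu(F_\bullet)$ (with $\mu$ viewed inside $(n-k)^k$), of codimension $|\mu|$.

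Next fix $\mu\subseteq(m)^k$ with $|\mu|=d:=\dim\Pi_f$, so that $\Pi_f$ and $X_\mu(F_\bullet)$ have complementary codimensions in $\Gr(k,n)$. Since $\mu\subseteq(m)^k$ forces $I(\mu)\subseteq[r]$, the Schubert conditions defining $X_\mu(F_\bullet)$ involve only $F_{n-r+1},\ldots,F_n$, which are generic for generic $Z$ and $G_\bullet$; a Kleiman-type transversality argument (applied to the parabolic stabilizer of $\ker Z$) then yields
\[
\#\bigl(\Pi_f\cap X_\mu(F_\bullet)\bigr)\;=\;[\,\tF_f\cdot s_\mu\,]_{s_{(n-k)^k}}\;=\;c_{\mu^c},
\]
where $c_\lambda$ is the coefficient of $s_\lambda$ in $\tF_f$ and $\mu^c$ is the complement of $\mu$ in $(n-k)^k$. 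A direct calculation gives $\mu^c=\nu^{+(n-r)}$, where $\nu$ is the complement of $\mu$ in $m^k$, and hence
\[
\#\bigl(\Pi_f\cap X_\mu(F_\bullet)\bigr)\;=\;[\,\tau_r(\tF_f)\cdot s_\mu\,]_{s_{m^k}}
\]
in $H^*(\Gr(k,r))$. If $\dim Y_f=d$, the map $Z_\Gr|_{\Pi_f}$ is dominant and generically finite of some degree $d_f$; for generic $y\in Y_f\cap X_\mu(G_\bullet)$ its $d_f$ preimages all lie in $\Pi_f\cap X_\mu(F_\bullet)$, giving $\#(\Pi_f\cap X_\mu(F_\bullet))=d_f\cdot\#(Y_f\cap X_\mu(G_\bullet))$, and non-degeneracy of Poincar\'e duality on $H^*(\Gr(k,r))$ then yields $[Y_f]=\tfrac{1}{d_f}\tau_r(\tF_f)$. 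Since the class of a nonempty $d$-dimensional subvariety of $\Gr(k,r)$ is nonzero in $H^{2(km-d)}(\Gr(k,r))$, this forces $\tau_r(\tF_f)\neq 0$; taking the contrapositive yields (1), and combining (1) with the formula just derived yields (2). Independently, (1) can also be proved directly: if $\dim Y_f<d$, then $Y_f\cap X_\mu(G_\bullet)$ is empty for generic $\mu$ with $|\mu|=d$ by a dimension count, so every point of $\Pi_f\cap X_\mu(F_\bullet)$ must lie in $E_Z$; but $[E_Z]=s_{m+1}$, and the codimensions of $\Pi_f$, $X_\mu(F_\bullet)$, and $E_Z$ sum to more than $k(n-k)$, so the triple intersection is empty for generic $Z$, forcing every coefficient of $\tau_r(\tF_f)$ to vanish.

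The main obstacle is rigorously justifying the transversality assertions above. The flag $F_\bullet$ is not fully generic because $F_{n-r}=\ker Z$ is determined by $Z$, so standard Kleiman transversality does not apply off-the-shelf. The saving point is that the Schubert conditions involved only mention $F_{n-r+1},\ldots,F_n$, which after fixing $\ker Z$ vary over a homogeneous space on which the parabolic stabilizer of $\ker Z$ acts transitively; verifying Kleiman's theorem in this constrained setting---both for $\Pi_f\cap X_\mu(F_\bullet)$ and, in the alternative proof of (1), for the triple intersection with $E_Z$---is the technical heart of the argument.
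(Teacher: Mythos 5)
Your proposal follows exactly the route the paper sketches for this result (which is proved in the cited reference \cite{Lamtrunc}): pull back a complementary-dimension Schubert variety of $\Gr(k,r)$ through $Z_\Gr$ to a Schubert variety of $\Gr(k,n)$, count intersection points with $\Pi_f$ via Theorem \ref{thm:KLS} and Theorem \ref{thm:Ful}, and decode the answer through the truncation of $\tF_f$; you also correctly isolate the transversality of these intersections as the step needing care, which the sketch likewise glosses. One wrinkle worth fixing is purely logical: the deduction ``combining (1) with the formula just derived yields (2)'' does not follow, since (1) as stated (namely $\tau_r(\tF_f)=0 \Rightarrow \dim Y_f < \dim\Pi_f$) is silent about what happens when $\tau_r(\tF_f)\neq 0$. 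What part (2) actually needs is the converse implication $\dim Y_f < \dim\Pi_f \Rightarrow \tau_r(\tF_f)=0$, which you do prove in the paragraph you label as an ``independent proof of (1)'' --- that paragraph is really proving the converse of (1), and it, combined with your main computation, is the correct support for (2); reorganizing the statement dependencies accordingly makes the argument sound.
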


Note that Theorem \ref{thm:trunc} only says something about the rank function in the case that $f$ is independent.  Furthermore, we do not yet have a combinatorial formula for the degree $d_f$.  

Here is a sketch of the proof of Theorem \ref{thm:trunc}.  To compute $[Y_f]$, it is enough to compute the number of intersection points in $Y_f \cap Y_J(F_\bullet)$ for a Schubert variety $Y_J(F_\bullet) \subset \Gr(k,r)$ intersecting $Y_f$ transversally in a finite number of points (see Theorem \ref{thm:Ful}).  The inverse image $\overline{Z_\Gr^{-1}(Y_J(F_\bullet))} \subset \Gr(k,n)$ is itself a Schubert variety, and assuming the intersection is transverse, the number of intersection points of $\Pi_f \cap \overline{Z_\Gr^{-1}(Y_J(F_\bullet))}$ can be computed using Theorem \ref{thm:KLS} and Theorem \ref{thm:Ful}.

It would be interesting to extend Theorem \ref{thm:trunc} to the case where $Z$ is not generic.  This would presumably involve understanding non-transverse intersections between positroid varieties and Schubert varieties.

\begin{remark}
In \cite{Lamtrunc}, we used the terminology ``$f$ has kinematical support" instead of ``$f$ is independent".  When $m = 4$ and $\dim(\Pi_f) = 4k$, this agrees with the notion of kinematical support in physics \cite{ABCGPT}.  
\end{remark}

\begin{example}
Let $k = 2$, $r = 5$, and $n = 8$.  Suppose $f = [2,3,4,8,6,7,12] \in \B(2,7)$.  It is given by the rank conditions $r(1,4) \leq 1$ and $r(5,7) \leq 1$.  Then by Proposition \ref{prop:k2}, we have $\tF_f = h_3 h_2 \equiv s_{32} + s_{41} + s_{5}$ in $H^*(\Gr(2,7))$.  Thus $f$ is independent and $[Y_f] = s_1$ in $H^*(\Gr(2,5))$.

If instead we have $r = 6$, then $f$ is still independent and $[Y_f] = s_{3} + s_{21}$ in $H^*(\Gr(2,6))$.
\end{example}

\begin{example}\label{ex:twotoone}
Let $k=2$, $r = 6$, and $n = 8$. Suppose $f = [4,3,6,5,8,7,10,9] \in \Bound(2,8)$.  Then by Proposition \ref{prop:k2}, we have $\tF_f=  h_1^4 \equiv s_{4} + 3s_{31} + 2 s_{22}$ in $H^*(\Gr(2,8))$.  
The coefficient of $s_{22}$ in $\tF_f$ is equal to $2$.  So $f$ is independent and the map $Z_\Gr: (\Pi_f \setminus E_Z) \to Y_f$ has degree $2$.  In a similar manner we can easily produce maps $Z_f$ of arbitrarily high finite degree.

The map $Z_\Gr$ can have fibers of cardinality greater than one even when restricted to $\Pi_{f,>0}$.  An explicit example is to take 
$$
X = \left[
\begin{array}{cccccccc}
 1 & 10 & 40 & 10 & 11 & 0 & 0 & -9 \\
 0 & 1 & 4 & 10 & 11 & 6 & 11 & 0 \\
\end{array}
\right] \qquad Z = \left[
\begin{array}{cccccc}
 1 & 0 & 0 & 0 & 0 & 0 \\
 13 & 1 & 0 & 0 & 0 & 0 \\
 24 & 18 & 1 & 0 & 0 & 0 \\
 0 & 40 & 9 & 1 & 0 & 0 \\
 0 & 0 & 50 & 12 & 1 & 0 \\
 0 & 0 & 0 & 6 & 8 & 1 \\
 0 & 0 & 0 & 0 & 20 & 4 \\
 0 & 0 & 0 & 0 & 0 & 2 \\
\end{array}
\right].
$$
Then the fiber over $Z_\Gr(X)$ has another point in $\Pi_{f,>0}$.
\end{example}

\begin{example}\label{ex:ind}
Let $f = [4,3,6,5,7] \in \B(2,5)$.  Then by Proposition \ref{prop:k2}, we have that $\tF_f = h_1^2 = s_2 + s_{11}$ in $H^*(\Gr(2,5))$.  We have $g = [6,3,4,5,7] \lessdot f$ (recall that we are using the opposite of Bruhat order).  Then by Proposition \ref{prop:k2}, we have $\tF_g = h_3 = s_3$.  Suppose $r = 4$.  Then $f$ is independent with $\tau_4(\tF_f) \equiv 1$ in $H^*(\Gr(2,4))$, but $g$ is not independent.  This shows that independent sets do \emph{not} form an order ideal in $\hBound(k,n)$.
\end{example}

\subsection{An example of the geometry of $Z_\Gr$}\label{sec:exgeom}
We work through the geometry of a few examples explicitly.
Take $k = 2, r= 3, n = 4$.  Inside $\Gr(2,4)$ we consider the three subvarieties:
\begin{enumerate}
\item
The Schubert variety $A = \{V \in \Gr(2,4) \mid V \subset E\}$, where $E \subset \C^4$ is a three-dimensional subspace.
\item
The Schubert variety $B = \{V \in \Gr(2,4) \mid \dim(V \cap F) \geq 1\}$, where $F \subset \C^4$ is a one-dimensional subspace.
\item
The positroid variety $C = \Pi_f$ where $f = [2547] \in \B(2,4)$.  By \eqref{eq:fX}, this is the closure of the locus $\{V \in \Gr(2,4) \mid \dim(V \cap \spn({e_1,e_2})) =1 \text{ and } \dim(V \cap \spn({e_3,e_4})) =1\}$.  
\end{enumerate}
All three varieties $A,B,C$ are two-dimensional.  We study their behavior under $Z_\Gr$ for a generic $Z$.  Identify $\Gr(2,4)$ with the space of lines in complex projective three-space $\CP^3$.  Then the map $Z_\Gr$ is identified with the projection from the point $p \in \CP^3$ which corresponds to the kernel of the map $Z$, to some hyperplane $H_0 \simeq \CP^2 \subset \CP^3$.  The exceptional locus $E_Z$ of $Z_\Gr$ is then identified with the subvariety of lines passing through the point $p$.

\subsubsection{The Schubert variety $A$}
The variety $A$ is isomorphic to $\Gr(2,3)$.  For a generic $Z$, it will not intersect the exceptional locus of $Z_\Gr$ by Proposition \ref{prop:EZ}.  In this case, $Z_\Gr$ maps $A$ isomorphically onto $\Gr(2,3)$.  We picture this geometrically as follows: $A$ is identified with the space of lines contained inside a two-plane $H \subset \CP^3$ (the image of the three-dimensional subspace $E$).  If $p \notin H$, then $E_Z$ does not intersect $A$, and the projection maps the space of lines inside $H$ isomorphically to the space of lines inside $H_0$.  

\subsubsection{The Schubert variety $B$}

By Theorem \ref{thm:trunc}, we have $\dim(Z(B)) < \dim(B)$.  The variety $B$ can be identified with the space of lines that pass through a point $q$ (the image in $\CP^3$ of the one-dimensional subspace $F \subset \C^4$).  Generically, $p \neq q$.  The line joining $p$ and $q$ lies in the exceptional locus $B \cap E_Z$.  Let $r$ be the intersection of this line with $H_0$.

Let $L_0 \subset H_0$ be a line.  If the plane spanned by $p$ and $L_0$ does not intersect $p$, then $L_0$ is not in the image of $Z_\Gr(B \setminus E_Z)$.  Otherwise, there is a one-dimensional family of lines in that plane that pass through $p$ and project to $L_0$.  To summarize, the image $Z_\Gr(B \setminus E_Z)$ is the $\P^1$-of lines passing through $r$.  The fiber of $B \setminus E_Z$ over a point in this image is an $\A^1$-of lines. Note that the line joining $p$ and $q$ does not belong to $B \setminus E_Z$, which is why we have an $\A^1$ instead of $\P^1$. 

\subsubsection{The positroid variety $C$}

We now consider the variety $C$.  Let $L_{12}$ (resp. $L_{34}$) be the image of $\spn(e_1,e_2)$ (resp. $\spn(e_3,e_4)$) in $\CP^3$.  The variety $C$ is the space of lines that intersect both $L_{12}$ and $L_{34}$.  Generically, $p$ does not lie on either $L_{12}$ or $L_{34}$.  Projecting the two lines to $H_0$ we get $L'_{12}$ and $L'_{34}$.  Let $x_0$ be the intersection of $L'_{12}$ and $L'_{34}$.  Now let $L_0 \subset H_0$ be a line.  If $L_0$ does not pass through $x_0$, then it intersects $L'_{12}$ and $L'_{34}$ at $y_0$ and $z_0$.  Let $y \in L_{12}$ (resp. $z \in L_{34}$) be the intersection of $L_{12}$ (resp. $L_{34}$) with the line passing through $y_0$ (resp. $z_0$) and $p$.  Then the line passing through $y$ and $z$ is the unique point of $C$ that maps to $L_0$ under $Z_\Gr$.

Now suppose $L_0$ passes through $x_0$.  If $L_0 = L'_{12}$ or $L_0 = L'_{34}$, then there is a $\P^1$-worth of lines that map to it.  If $L_0$ is any other line passing through $x_0$ then no line in $C \setminus E_Z$ will map to it.  To summarize, the map $Z_\Gr: (C \setminus E_Z) \to \Gr(2,3)$ is one-to-one over a dense open subset $\Gr(2,3) \setminus \P^1 \simeq \C^2$.  On the $\P^1$ there are two distinguished points which lie in the image of $Z_\Gr$, and each has a fiber isomorphic to $\A^1$.

\section{The ideal of an amplituhedron variety}\label{sec:ampliideal}
Linear subspaces of $\P^{r-1}$ are cut out by linear equations, and linear algebra computes the equations that cut out the varieties $Z(H_I)$ of Section \ref{sec:realmat}. 
In this section, we discuss the computation of the ideal $I(Y_f)$ of an amplituhedron variety $Y_f$.  The main idea is to relate the geometry of the rational map $Z_\Gr: \Gr(k,n) \dashedrightarrow \Gr(k,r)$ to the geometry of the direct sum rational map
$$
\bigoplus: \Gr(k,n) \times \Gr(\ell,n) \longdashedrightarrow \Gr(k+\ell,n)
$$
that takes a $k$-plane $X$ and a $\ell$-plane $K$ to the $k+\ell$-plane $\spn(X,K)$.  The map $\bigoplus$ is induced by projection maps $V_{d\omega_k} \otimes V_{d\omega_\ell} \to V_{d\omega_{k+\ell}}$ of highest weight representations.
 
The material of this section relies heavily on the material in Section \ref{sec:coord}.  We will also use some terminology from geometric invariant theory \cite{Mum}.

\subsection{The universal amplituhedron variety}
Fix $1 \leq k \leq n$, and $r \in [k+1,n]$.  Set $\ell \coloneqq n-r$ and $m \coloneqq k-r$.  We will sometimes work with the cone $\hGr(k,n)$ over the Grassmannian in this section, as the language with coordinate rings becomes simpler.  There is a distinguished cone point $0 \in \hGr(k,n)$.

We have a map
$$
\mu: \hGr(k,n) \times \Mat(n,r) \to \hGr(k,r)
$$
given on the level of matrices by
$$
(X,Z) \mapsto X\cdot Z
$$
where $X$ denotes a $k \times n$ matrix representing a point in $\hGr(k,n)$.  Note that if $X \in E_Z$, we have $\mu(X,Z) = 0$.  Using the Cauchy-Binet formula \eqref{eq:CB}, the Pl\"ucker coordinates of $X \cdot Z$ can be written explicitly in terms of the Pl\"ucker coordinates of $X$ and the matrix entries of $Z$.

Let $\id: \Mat(n,r) \to \Mat(n,r)$ be the identity map.  Let $\mu \times \id: \hGr(k,n) \times \Mat(n,r) \to \hGr(k,r) \times \Mat(n,r)$ be the map $(X,Z) \mapsto (X \cdot Z,Z)$.  We define \defn{the universal amplituhedron variety} to be 
$$
\Y_f \coloneqq \overline{(\mu \times \id)(\hPi_f \times \Mat(n,r))}.
$$
There is a natural projection map $p: \hPi_f \times \Mat(n,r) \to \Mat(n,r)$.  For a generic $Z \in \Mat(n,r)$, the (affine cone over the) amplituhedron variety $Y_f$ is the fiber $p|_{\Y_f}^{-1}(Z)$ of the universal amplituhedron variety.

\subsection{$\GL(r)$ action on $\Y_f$}\label{sec:quotient}
Both $\hGr(k,r)$ and $\Mat(n,r)$ have right actions of the group $\GL(r)$.  Thus $\GL(r)$ acts on $\hGr(k,n) \times \Mat(n,r)$ by acting on the second factor, and acts on $\hGr(k,r) \times \Mat(n,r)$ by acting simultaneously on both factors.  Furthermore, the map $\mu \times \id$ commutes with these two actions:
$$
(\mu\times \id)(X, Z \cdot g) = (X \cdot Z \cdot g, Z \cdot g) = (X \cdot Z,Z) \cdot g.
$$
Since $\hPi_f \times \Mat(n,r)$ is preserved by this action, we deduce that $\Y_f$ is a $\GL(r)$-invariant subvariety of $\hGr(k,r) \times \Mat(n,r)$.

Let $A(k,r,n) \coloneqq \C[\hGr(k,r) \times \Mat(n,r)]$ denote the coordinate ring of $\hGr(k,r) \times \Mat(n,r)$.  It is generated by the Pl\"ucker coordinates $\Delta_I(Y)$ of $\hGr(k,r)$ and the matrix entry coordinates of $\Mat(n,r)$.  Define the functions
$$
(Y,Z) \mapsto \Delta_I(Z)
$$
for $I \in \binom{[n]}{r}$ and 
$$
(Y,Z) \mapsto \Delta(Y,Z_J)
$$
for $J \in \binom{[n]}{m}$, where $\Delta(Y,Z_J)$ is the determinant of the $r \times r$ matrix, whose first $k$ rows are given by $Y$ and last $m$ rows are given by the rows of $Z$ labeled by $J$.  

\begin{theorem}[\cite{Lam+}] \label{thm:invariants}
The $\SL(r)$-invariants $A^{\SL(r)}$ are generated by $\Delta(Y,Z_J)$ for $J \in \binom{[n]}{r-k}$ and $\Delta_I(Z)$ for $I \in \binom{[n]}{r}$.
\end{theorem}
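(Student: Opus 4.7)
My plan is to identify $A(k,r,n)^{\SL(r)}$ with the coordinate ring of a two-step partial flag variety and then invoke projective normality.

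First, I would realize $\hGr(k,r)\simeq \Mat(k,r)\sslash \SL(k)$ (with $\SL(k)$ acting by left multiplication), so that
\[
A(k,r,n)^{\SL(r)} \;\simeq\; \C[\Mat(k+n,r)]^{\SL(k)\times \SL(r)},
\]
where a pair $(Y,Z)$ is identified with the $(k+n)\times r$ matrix obtained by stacking $Y$ over $Z$ and $\SL(k)$ acts on the first $k$ rows. By the classical first fundamental theorem for $\SL(r)$ (acting on $\Mat(k+n,r)$ by right multiplication), $\C[\Mat(k+n,r)]^{\SL(r)}=\C[\hGr(r,k+n)]$, which is generated by the $r\times r$ Pl\"ucker coordinates $\Delta_J$ for $J\in\binom{[k+n]}{r}$. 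Writing $J=J_1\sqcup J_2$ with $J_1\subseteq [k]$ and $J_2\subseteq [k+1,k+n]\simeq [n]$, one sees that $\Delta_J$ is the determinant of the $r\times r$ submatrix built from rows $J_1$ of $Y$ and rows $J_2$ of $Z$. Under $\SL(k)$, the span of those $\Delta_J$ with $|J_1|=a$ and fixed $J_2$ is isomorphic to $\Lambda^a(\C^k)^*$, so it contains nonzero $\SL(k)$-invariants only when $a=0$ (giving $\Delta_{J_2}(Z)$ for $J_2\in\binom{[n]}{r}$) or $a=k$ (giving $\Delta(Y,Z_{J_2})$ for $J_2\in\binom{[n]}{r-k}$). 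Hence at linear level the stated elements exhaust the $\SL(k)$-invariants.

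Next, to handle higher-degree invariants, I would compute the $\GL(n)$-module decomposition of $\C[\hGr(r,k+n)]^{\SL(k)}=\bigoplus_e [V_{\GL(k+n)}(e\omega_r)^*]^{\SL(k)}$ via the Littlewood--Richardson branching rule $\GL(k+n)\downarrow \GL(k)\times \GL(n)$. The rectangular weight $e\omega_r$ has a multiplicity-free branching: a summand $V_{\GL(k)}(\mu)\otimes V_{\GL(n)}(\nu)$ appears exactly when $\mu\subseteq (e^r)$ and $\nu=(e-\mu_r,\ldots,e-\mu_1)$ is the complementary partition. The $\SL(k)$-invariants come from $\mu=(c^k)$ with $0\leq c\leq e$, producing $\nu=(e^{r-k},(e-c)^k)$. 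Setting $a=e$ and $b=e-c$, this gives
\[
A(k,r,n)^{\SL(r)}\;\simeq\;\bigoplus_{a\geq b\geq 0} V_{\GL(n)}((a^{r-k},b^k))^*,
\]
which I recognize as the bihomogeneous coordinate ring of the cone over the two-step flag variety $\Fl(r-k,r;n)$ with its two fundamental line bundles. Under this identification the $(a,b)=(1,0)$ piece is $V_{\GL(n)}(\omega_{r-k})^*=\Lambda^{r-k}(\C^n)^*$, spanned by the functions $\Delta(Y,Z_J)$, and the $(a,b)=(1,1)$ piece is $V_{\GL(n)}(\omega_r)^*=\Lambda^{r}(\C^n)^*$, spanned by the $\Delta_I(Z)$.

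Generation then follows from projective normality of $\Fl(r-k,r;n)$ (Ramanathan): the Cartan component $V(p\omega_{r-k}+q\omega_r)=V((p+q)^{r-k},q^k)$ embeds as a direct summand of $V(\omega_{r-k})^{\otimes p}\otimes V(\omega_r)^{\otimes q}$ via the product of highest weight vectors, which is automatically symmetric in the $p$ and in the $q$ copies. Dualizing gives a direct summand $V((p+q)^{r-k},q^k)^*\subseteq \mathrm{Sym}^p V(\omega_{r-k})^*\otimes \mathrm{Sym}^q V(\omega_r)^*$, and the multiplication map from this symmetric tensor product into $A(k,r,n)^{\SL(r)}$ is $\GL(n)$-equivariant with nonzero image lying in the bidegree $(p,\,p(r-k)+qr)$ part, which in $A(k,r,n)^{\SL(r)}$ is the single isotypic component $V((p+q)^{r-k},q^k)^*$. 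Since that component has multiplicity one, the image is the whole component, so the subring generated by the $\Delta(Y,Z_J)$ and $\Delta_I(Z)$ exhausts $A(k,r,n)^{\SL(r)}$. The most delicate point will be matching the two representation-theoretic descriptions above---reconciling the direct Howe-duality computation on $A(k,r,n)$ (using $\C[\Mat(n,r)]=\bigoplus_\lambda V_{\GL(n)}(\lambda)^*\otimes V_{\GL(r)}(\lambda)^*$, where one must carefully track which tensor factors are dualized) with the flag-variety decomposition obtained from Littlewood--Richardson restriction---after which the generation argument reduces to standard facts about coordinate rings of partial flag varieties.
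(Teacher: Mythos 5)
The paper gives no proof of this statement: it is attributed to [Lam+], listed as ``in preparation,'' so a direct comparison with the authors' argument is not possible. What I can say is that your proof is correct, and the strategy is the natural one; indeed, the paper's remark just after the ensuing Corollary (``In fact, $\AA$ can be identified with a partial flag variety'') strongly suggests the intended argument passes through exactly the identification you make.

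Your chain of reductions is sound. Stacking $Y$ over $Z$ and applying Weyl's first fundamental theorem for $\SL(r)$ acting on the right of $\Mat(k+n,r)$ gives $\C[\hGr(r,k+n)]$, and the subsequent passage to $\SL(k)$-invariants via the Littlewood--Richardson branching of the rectangle $V_{\GL(k+n)}(e\omega_r)$ to $\GL(k)\times\GL(n)$ is exactly right (the complement-of-a-rectangle description of $c^{(e^r)}_{\mu\nu}$, the restriction that $\mu$ fit in $k$ rows, and the identification of $\SL(k)$-invariants with the rectangular weights $\mu=(c^k)$). The resulting bigraded decomposition $\bigoplus_{a\geq b\geq 0} V_{\GL(n)}((a^{r-k},b^k))^*$, its matching with the $(d_1,d_2)=(p,\,p(r-k)+qr)$ grading on $A^{\SL(r)}$, and the observation that each bigraded piece is an irreducible $\GL(n)$-module give the conclusion immediately: the subring generated by $\Delta(Y,Z_J)$ and $\Delta_I(Z)$ is a $\GL(n)$-stable subring with a nonzero element in every bidegree (since $A$ is a domain, a monomial such as $\Delta(Y,Z_{J_0})^p\,\Delta_{I_0}(Z)^q$ is nonzero), hence equals $A^{\SL(r)}$. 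Two small remarks: the appeal to Ramanathan/projective normality is more than you need --- once each bigraded piece is an irreducible $\GL(n)$-module, ``nonzero image $\Rightarrow$ all of it'' already gives generation without citing the Cartan-component embedding --- and the closing paragraph about reconciling the Littlewood--Richardson computation with a Howe-duality computation on $\C[\Mat(n,r)]$ is not actually used anywhere in your argument; the LR route alone carries the whole proof.
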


Note that the functions $\Delta(Y,Z_J)$ satisfy the Pl\"ucker relations for $\hGr(m,n)$.  For notational convenience, we actually identify $\{\Delta(Y,Z_J) \mid J \in \binom{[n]}{m}\}$ with a point in $\hGr(n-m,n) = \hGr(k+\ell,n)$ under the isomorphism $\hGr(m,n) \simeq \hGr(n-m,n)$ that sends the Pl\"ucker coordinate $\Delta_J$ to the Pl\"ucker coordinate $\Delta_{[n] \setminus J}$.

\begin{remark}
Theorem \ref{thm:invariants} generalizes Weyl's first fundamental theorem for $\SL(r)$ invariants of polynomial functions on matrices.  Indeed, for $k = 0$, we have Weyl's classical result.
\end{remark}

\begin{corollary}[\cite{Lam+}] The GIT-quotient $\Gr(k,r) \times \Mat(r,n) \sslash \GL(r)$ can be identified with the projective subvariety $\AA$ of $\Gr(k+\ell,n) \times \Gr(r,n)$ with homogeneous coordinate ring $A^{\SL(r)}$.  The GIT-quotient $\AA_f \coloneqq \Y_f \sslash \GL(k+m)$ is a closed subvariety of $\AA$.  
\end{corollary}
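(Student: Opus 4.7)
The first assertion will follow from Theorem \ref{thm:invariants} together with standard principles of geometric invariant theory for reductive groups. The GIT quotient of an affine variety (or cone) by a reductive group $G$ is, by definition, the spectrum of the $G$-invariant subalgebra; since $\GL(r) = \SL(r) \cdot \C^*$ with the central $\C^*$ acting by scaling on $\Mat(n,r)$, the $\GL(r)$-invariants are obtained from the $\SL(r)$-invariants by homogenization, which in turn yields a projective quotient. Theorem \ref{thm:invariants} identifies $A^{\SL(r)}$ as generated by two families of functions: the minors $\Delta_I(Z)$, which satisfy exactly the Pl\"ucker relations defining $\hGr(r,n)$; and the mixed determinants $\Delta(Y,Z_J)$, which, after identifying $\hGr(m,n)$ with $\hGr(k+\ell,n)$ via complementation of subsets, satisfy the Pl\"ucker relations defining $\hGr(k+\ell,n)$. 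Together these generators furnish a morphism $\hGr(k,r) \times \Mat(n,r) \to \hGr(k+\ell,n) \times \hGr(r,n)$, and passing to the projective quotients on both sides identifies $\Gr(k,r) \times \Mat(n,r) \sslash \GL(r)$ with its image $\AA$ in $\Gr(k+\ell,n) \times \Gr(r,n)$.

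The second assertion is then formal, given the first. By the work of Section \ref{sec:quotient}, $\Y_f$ is a $\GL(r)$-invariant closed subvariety of $\hGr(k,r) \times \Mat(n,r)$. Since $\GL(r)$ is reductive, the restriction map on coordinate rings induces a surjection $A^{\GL(r)} \twoheadrightarrow \C[\Y_f]^{\GL(r)}$; spectrally, this says that $\AA_f = \Y_f \sslash \GL(r)$ is a closed subvariety of $\AA$.

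The main obstacle, and where the real content of the first assertion sits, is verifying that the kernel of the presentation of $A^{\SL(r)}$ by its generators is generated exactly by the Pl\"ucker relations from the two Grassmannian factors -- i.e., that there are no further mixed relations among the $\Delta_I(Z)$ and the $\Delta(Y,Z_J)$. This is plausible on representation-theoretic grounds (the two families of generators lie in different $\GL(n)$-isotypic components of $A$, so their relations ought to decouple), but a clean proof probably requires either a Schur-Weyl-type character computation comparing the $\GL(n)$-characters of $A^{\SL(r)}$ and of the coordinate ring of $\hGr(k+\ell,n) \times \hGr(r,n)$, or a direct geometric argument showing that the morphism constructed above is a closed embedding onto its image. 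I would pursue the character route first, since the multiplicities on both sides should be governed by the Cauchy identity and the first fundamental theorem of invariant theory.
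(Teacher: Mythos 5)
The paper itself does not give a proof of this corollary (it is cited to the forthcoming \cite{Lam+}), so there is no written argument to compare against; the corollary is presented as a direct consequence of Theorem \ref{thm:invariants}. Your outline has the right shape on the whole, and the argument you give for the second assertion is complete and correct: $\Y_f$ is closed and $\GL(r)$-invariant in $\hGr(k,r) \times \Mat(n,r)$, reductivity gives surjectivity of $A^{\GL(r)} \twoheadrightarrow \C[\Y_f]^{\GL(r)}$, and closed GIT quotients of closed invariant subvarieties follow.

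However, your diagnosis of the ``main obstacle'' for the first assertion is a misreading, and because you treat it as unresolved the proposal stops short of a proof. The corollary does \emph{not} assert that $\AA$ equals $\Gr(k+\ell,n) \times \Gr(r,n)$, nor that the only relations among the generators of $A^{\SL(r)}$ are the two families of Pl\"ucker relations. It asserts that $\AA$ is a projective subvariety whose homogeneous coordinate ring is $A^{\SL(r)}$. Let $R$ denote the homogeneous coordinate ring of $\hGr(k+\ell,n) \times \hGr(r,n)$. The functions $\Delta_I(Z)$ and (after the complementation identification) $\Delta(Y,Z_J)$ satisfy the relevant Pl\"ucker relations, so sending the Pl\"ucker coordinates to these functions gives a well-defined graded ring homomorphism $R \to A$. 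The \emph{image} of this map is the subring generated by $\{\Delta_I(Z),\Delta(Y,Z_J)\}$, which Theorem \ref{thm:invariants} says is exactly $A^{\SL(r)}$. Thus $A^{\SL(r)} \cong R/\ker$, which is precisely the statement that $A^{\SL(r)}$ is the homogeneous coordinate ring of a closed subvariety $\AA \subseteq \Gr(k+\ell,n) \times \Gr(r,n)$, and $\Proj(A^{\SL(r)}) = \AA$ is the GIT quotient (with the $\GL(r)/\SL(r) \simeq \C^*$ providing the grading, as you note). Whether or not there are ``further mixed relations'' is irrelevant: any such relations lie in $\ker$ and define the ideal of $\AA$; indeed the paper immediately remarks that $\AA$ is a partial flag variety, which is a \emph{proper} subvariety of the product, so additional relations certainly exist. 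A Cauchy-identity or Schur--Weyl character computation is not needed here and would be aimed at a statement the corollary does not make.
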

In fact, $\AA$ can be identified with a partial flag variety.

The ideal $I(\Y_f) \subset A$ of the universal amplituhedron variety is generated by the ideal $I(\AA_f) = I(Y_f)^{\SL(r)} \subset A^{\SL(r)}$.  Let $\pi: \Gr(k+\ell,n) \times \Gr(r,n) \to \Gr(k+\ell,n)$ be the projection to the first factor.  Then the map $\AA_f \to \pi(\AA_f)$ is a fiber bundle with fiber $\Gr(k,k+\ell)$.  The ideal $I(\AA_f)$, and hence also $I(\Y_f)$ is generated by the pullback of the ideal $I(\pi(\AA_f))$.  

\subsection{The direct sum map}
Let us now describe $\pi(\AA_f)$ more explicitly.

The $\GL(r)$-equivariant map $\mu \times \id: \hGr(k,n) \times \Mat(n,r) \to \hGr(r,n) \times \Mat(n,r)$ induces a (rational) map
\begin{equation}\label{eq:mu}
\Gr(k,n) \times \Mat(n,r)\sslash \GL(r) \to \AA \to \Gr(k+\ell,n).
\end{equation}
We have $\Mat(n,r)\sslash \GL(r) \simeq \Gr(r,n)$.  Let $\ker: \Gr(r,n) \to \Gr(\ell,n)$ be given by $Z \mapsto K$, where 
$$
\Delta_I(K) = (-1)^{\inv(I,[n]\setminus I)}\Delta_{[n]\setminus I}(Z)
$$
for any $I \in \binom{[n]}{\ell}$.  Here $\inv(A,B) = \#\{a \in A, b\in B \mid a > b\}$ denotes the inversion number.  The notation is explained by the following result.

\begin{lemma}
Suppose $\Delta_I(X)$ are the Pl\"ucker coordinates of a point $X \in \Gr(k,n)$.  Then the kernel $\ker(X) \in \Gr(n-k,n)$ of $X$ is represented by the point with Pl\"ucker coordinates $\Delta_J(\ker(X)) = (-1)^{\inv(J,[n]\setminus J)}\Delta_{[n]\setminus J}(X)$ for $J \in \binom{[n]}{n-k}$.
\end{lemma}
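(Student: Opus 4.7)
The plan is to verify the formula by direct computation on the open chart $\Omega_{[k]} = \{X \mid \Delta_{[k]}(X) \ne 0\}$ and extend by Zariski density. Since $\ker(\tilde X) \subseteq \C^n$ is insensitive to left multiplication of $\tilde X$ by $\GL(k)$, the kernel depends only on the point $X$, and it suffices to exhibit any basis whose Pl\"ucker coordinates agree with the right-hand side up to an overall scalar (which, since Pl\"ucker coordinates are defined only up to a common nonzero factor, is the most one can hope for).

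On $\Omega_{[k]}$, the unique reduced representative is $\tilde X = [\,I_k \mid A\,]$ for some $k \times (n-k)$ matrix $A$, and a direct solution of $\tilde X v = 0$ shows that the rows of $\tilde K \coloneqq [\,-A^{T} \mid I_{n-k}\,]$ form a basis for $\ker(\tilde X)$. For $J \in \binom{[n]}{n-k}$ with complement $I$, decompose $J = J_1 \sqcup J_2$ and $I = I_1 \sqcup I_2$ by intersecting with $[k]$ and $[k+1,n]$. A suitable row reordering turns $\tilde K_{[n-k], J}$ into block lower-triangular form with diagonal blocks $-A_{J_1,I_2}^{T}$ and $I_{|J_2|}$, and a parallel reordering turns $\tilde X_{[k], I}$ into block upper-triangular form with diagonal blocks $I_{|I_1|}$ and $A_{J_1, I_2}$. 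Tracking the signs of the two row permutations and the scalar $(-1)^{|J_1|}$ from $-A^T$ yields
\[
\Delta_J(\tilde K) \;=\; (-1)^{|J_1| + \inv(I_1, J_1) + \inv(I_2, J_2)}\,\Delta_I(\tilde X).
\]

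It remains to compare this exponent to $\inv(J, I)$. Decomposing $\inv(J, I) = \sum_{s,t} \inv(J_s, I_t)$ and noting that $\inv(J_1, I_2) = 0$ and $\inv(J_2, I_1) = |J_2|\cdot|I_1|$ because $J_1, I_1 \subseteq [k]$ and $J_2, I_2 \subseteq [k+1, n]$, together with the identity $\inv(A, B) + \inv(B, A) = |A|\cdot|B|$ for disjoint subsets $A, B$, a short parity calculation (using $a^2 \equiv a \pmod 2$ and $|J_1| + |J_2| = n-k$) produces
\[
\inv(J, I) \;-\; |J_1| \;-\; \inv(I_1, J_1) \;-\; \inv(I_2, J_2) \;\equiv\; k(n-k) \pmod 2.
\]
The discrepancy $(-1)^{k(n-k)}$ is independent of $J$, hence absorbable into the choice of basis of $\ker(X)$, establishing the formula on $\Omega_{[k]}$. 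Since both sides are polynomials in the Pl\"ucker coordinates $\{\Delta_I(X)\}$ (once a global sign is fixed), Zariski density of $\Omega_{[k]}$ in $\Gr(k,n)$ extends the identity to the whole Grassmannian. The only nontrivial step throughout is the sign bookkeeping in the final parity calculation; this is where I expect the main obstacle to lie, though the underlying computation is elementary.
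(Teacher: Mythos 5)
The paper states this lemma without proof (it is the classical fact relating Pl\"ucker coordinates of a subspace and of its annihilator), so there is no paper proof to compare against. Your argument is correct. I checked the two pieces of sign bookkeeping you flag as the crux: the block-triangularization yields
\[
\Delta_J(\tilde K) = (-1)^{\inv(I_2,J_2)}\cdot(-1)^{|J_1|}\det(A_{J_1,I_2}),
\qquad
\Delta_I(\tilde X) = (-1)^{\inv(I_1,J_1)}\det(A_{J_1,I_2}),
\]
giving exactly your intermediate identity, and writing $a=|J_1|=|I_2|$, $b=|I_1|=k-a$, $c=|J_2|=n-k-a$, one finds $\inv(J,I)+\inv(I_1,J_1)+\inv(I_2,J_2)=ab+cb+ca=k(n-k)-a^2$, so that $\inv(J,I)-|J_1|-\inv(I_1,J_1)-\inv(I_2,J_2)\equiv k(n-k)-a^2-a\equiv k(n-k)\pmod 2$, confirming the parity claim. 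The extension from $\Omega_{[k]}$ to all of $\Gr(k,n)$ is best phrased not as ``both sides are polynomials in $\Delta_I(X)$'' (the left side is not manifestly so) but as: the maps $X\mapsto [\Delta_J(\ker X)]_J$ and $X\mapsto[(-1)^{\inv(J,J^c)}\Delta_{J^c}(X)]_J$ are two morphisms $\Gr(k,n)\to\P^{\binom{n}{n-k}-1}$ that agree on a dense open set, hence everywhere; this is a cosmetic rewording and not a gap.

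For comparison, there is a shorter conceptual route you may find illuminating: under the perfect pairing $\Lambda^k\C^n\times\Lambda^{n-k}\C^n\to\Lambda^n\C^n\simeq\C$ one has $e_I\wedge e_{I^c}=(-1)^{\inv(I,I^c)}e_{[n]}$, so the Hodge star sends $e_I\mapsto(-1)^{\inv(I,I^c)}e_{I^c}$, and on decomposables $*\,v_X$ is proportional to $v_{X^\perp}=v_{\ker X}$. This gives the formula with $\inv(J^c,J)$ in place of $\inv(J,J^c)$; since $\inv(J,J^c)+\inv(J^c,J)=k(n-k)$, the two versions differ by the same $J$-independent sign $(-1)^{k(n-k)}$ that appears in your calculation, and is harmless projectively. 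Your direct computation and the Hodge-star argument are of comparable length once the latter's conventions are pinned down; yours has the advantage of being entirely elementary.
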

\begin{remark}\label{rem:kernsign}
We have $(-1)^{\inv(I,[n]\setminus I)} = (-1)^{o(I)+\lceil k/2 \rceil}$, where $o(I)$ denotes the number of odd elements in $I$ and $k = |I|$.  From this it is easy to see that the automorphism $\Delta_I \mapsto (-1)^{\inv(I,[n]\setminus I)}\Delta_I$ acts as a sign in every weight space.  That is, the sign associated to a monomial $\Delta_{I_1}\Delta_{I_2} \cdots \Delta_{I_d}$ depends only on the multiset $I_1 \cup I_2 \cup \cdots \cup I_d$.
\end{remark}

Let $\theta: \Gr(k+\ell,n) \to \Gr(k+\ell,n)$ be the involution given by $\Delta_I \mapsto (-1)^{\lceil k/2 \rceil + \lceil \ell/2 \rceil + o(I)}\Delta_I$, for $I \in \binom{[n]}{k+\ell}$.

\begin{proposition}
Composing the map \eqref{eq:mu} with the isomorphism $\ker^{-1}:\Gr(\ell,n) \to \Gr(r,n)$, and the isomorphism $\theta: \Gr(k+\ell,n) \to \Gr(k+\ell,n)$, we obtain the \defn{direct sum} rational morphism
$$
\bigoplus: \Gr(k,n) \times \Gr(\ell,n) \longrightarrow \Gr(k+\ell,n)
$$
given by 
$$
(X,K) \longmapsto X+K = \spn(X,K).
$$
\end{proposition}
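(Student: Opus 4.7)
I would verify the proposition by matching Pl\"ucker coordinates on a Zariski-dense open subset; since both sides are rational morphisms of irreducible varieties, a coefficient-wise match on such an open set suffices. Fix a generic pair $(X,K) \in \Gr(k,n) \times \Gr(\ell,n)$ and set $Z = \ker^{-1}(K) \in \Gr(r,n)$. By Theorem \ref{thm:invariants} together with the identification $\hGr(m,n) \simeq \hGr(k+\ell,n)$ sending $\Delta_J \mapsto \Delta_{[n]\setminus J}$, the Pl\"ucker coordinate indexed by $I \in \binom{[n]}{k+\ell}$ of the image of $(X,Z)$ under \eqref{eq:mu} is equal to $\det\begin{pmatrix} XZ \\ Z_J\end{pmatrix}$, with $J = [n] \setminus I$ of size $m$. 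Factoring $\begin{pmatrix} XZ \\ Z_J \end{pmatrix} = \begin{pmatrix} X \\ E_J \end{pmatrix} Z$, where $E_J$ is the $m \times n$ matrix of standard basis rows $\{e_j : j \in J\}$, Cauchy-Binet \eqref{eq:CB} then gives
\begin{equation*}
\Delta_I(\text{image of \eqref{eq:mu}}) \;=\; \sum_{\substack{A \cap J = \emptyset \\ |A|=k}} \sigma_{\mathrm{CB}}(A,J) \, \Delta_A(X) \, \Delta_{A\sqcup J}(Z),
\end{equation*}
where $\sigma_{\mathrm{CB}}(A,J) = \pm 1$ is the sign obtained by Laplace-expanding the minor of $\begin{pmatrix} X \\ E_J \end{pmatrix}$ along the $E_J$ rows (which forces $J \subseteq S$ for the surviving index set $S = A \sqcup J$).

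Next I would invoke the definition of $\ker$ to rewrite $\Delta_{A \sqcup J}(Z) = (-1)^{\inv(B,\, A \sqcup J)} \Delta_B(K)$ where $B = [n] \setminus (A \sqcup J) = I \setminus A$ has size $\ell$. This converts the display into a sum indexed by splittings $I = A \sqcup B$ with $|A|=k$, $|B|=\ell$. On the other hand, the Laplace expansion of the $I$-th maximal minor of $\begin{pmatrix} X \\ K \end{pmatrix}$ expresses $\Delta_I(\spn(X,K))$ as a sum over the \emph{same} splittings, with a standard Laplace sign $\sigma_{\mathrm{L}}(A,B,I)$. Matching the two expansions term by term, the proposition reduces to the coefficient-wise identity
\begin{equation*}
\sigma_{\mathrm{CB}}(A,J) \cdot (-1)^{\inv(B,\, A \sqcup J)} \;=\; (-1)^{\lceil k/2 \rceil + \lceil \ell/2 \rceil + o(I)} \cdot \sigma_{\mathrm{L}}(A,B,I)
\end{equation*}
for every splitting $I = A \sqcup B$, the right-hand side being precisely the sign by which $\theta$ acts on $\Delta_I$.

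The main obstacle will be this final sign reconciliation. By Remark \ref{rem:kernsign}, each of the three signs appearing depends only on a multiset of indices rather than on the ordered data $(A,B,J)$: the inversion count $\inv(B, A \sqcup J)$ splits as $\inv(B,A)+\inv(B,J)$ and each factor admits the weight-only form $(-1)^{\sum + \text{size correction}}$, while $\sigma_{\mathrm{CB}}$ and $\sigma_{\mathrm{L}}$ are similarly built from shuffle inversions. It follows that the proposed identity depends on $(A,B,J)$ only through the multiset $I$, so it suffices to verify it for a single splitting per $I$. This last step reduces to a bookkeeping parity identity involving $\sum_{i \in I} i$, $k$, and $\ell$, which can be cross-checked against the small cases in Section \ref{sec:exgeom}. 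Once this coefficient-wise equality holds on the locus where $Z$ has full rank and $X + K$ has dimension $k+\ell$, both rational morphisms are determined and agree, completing the proof.
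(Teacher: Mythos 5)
The paper states this proposition without proof; equation \eqref{eq:factor} immediately afterward encodes the essential computation (the Laplace expansion $\phi_{k,\ell}(\Delta_I) = \sum_{A \subset I}(-1)^{\inv(A,I\setminus A)}\Delta_A \Delta_{I\setminus A}$), so your Cauchy–Binet/Laplace coordinate comparison fills in exactly what the text leaves implicit. Your architecture is sound: factoring $\begin{pmatrix} XZ \\ Z_J\end{pmatrix} = \begin{pmatrix} X \\ E_J \end{pmatrix}Z$, applying Cauchy–Binet, passing from $\Delta_{A\sqcup J}(Z)$ to $\Delta_B(K)$ via the $\ker$ lemma, and comparing against the Laplace expansion of $\det\begin{pmatrix} X \\ K\end{pmatrix}_I$ are all correct steps, and the proposition does reduce to the sign identity you display.

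The one place you wave your hands is the justification that your sign identity depends on $(A,B,J)$ only through $I$. Remark \ref{rem:kernsign} is not really the right tool here (it concerns a single involution acting on weight spaces, not the shuffle inversions in your expression). The cleaner route is a direct inversion count: with $\sigma_{\mathrm{CB}}(A,J)=(-1)^{\inv(A,J)}$ and $\sigma_{\mathrm{L}}(A,B,I)=(-1)^{\inv(A,B)}$, divide through by $\sigma_{\mathrm{L}}$ and use $\inv(B,A)+\inv(A,B)=k\ell$ and $\inv(A,J)+\inv(B,J)=\inv(I,[n]\setminus I)$ to see that the ratio equals $(-1)^{\inv(I,[n]\setminus I)+k\ell}$, which manifestly depends only on $I,k,\ell$. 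Applying Remark \ref{rem:kernsign} now (where it actually applies, to $\inv(I,[n]\setminus I)$) reduces the desired identity to
\[
\Bigl\lceil \tfrac{k+\ell}{2}\Bigr\rceil + k\ell \;\equiv\; \Bigl\lceil \tfrac{k}{2}\Bigr\rceil + \Bigl\lceil \tfrac{\ell}{2}\Bigr\rceil \pmod 2,
\]
which is readily checked in the four parity cases of $(k,\ell)$. So the ``bookkeeping parity identity'' you defer does hold, and with the substitution above your proof goes through cleanly. Cross-checking against Section~\ref{sec:exgeom} is unnecessary and not especially apt, since that section is geometric rather than sign-theoretic.
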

Note that the direct sum map is only a rational map because the sum $X+K$ may have dimension less than $k+ \ell$.  On the level of homogeneous coordinate rings, the map $\bigoplus$ is dual to the ring homomorphism
$
\phi_{k,\ell}:R(k+\ell,n) \to R(k,n) \otimes R(\ell,n)
$
where
\begin{equation}\label{eq:factor}
\phi_{k,\ell}(\Delta_I) = 
\sum_{J  \subset I}(-1)^{\inv(J,I \setminus J)} \Delta_J(X) \Delta_{I \setminus J}(K).
\end{equation}


Let us also note that the isomorphism $\ker: \Gr(r,n) \to \Gr(\ell,n)$ takes positroid varieties to positroid varieties, but it takes $\Gr(r,n)_{\geq 0}$ to the \defn{twisted positive part}
\begin{equation}\label{eq:twisted}
\Gr(\ell,n)_{\geq 0,\tau} \coloneqq \{K \in \Gr(\ell,n) \mid (-1)^{\inv(I,[n]\setminus I)}\Delta_I(K) \geq 0\}.
\end{equation}
The subvariety $\pi(\AA_f) \subseteq \Gr(k+\ell,n)$ is then identified with $\overline{\bigoplus(\Pi_f \times \Gr(\ell,n))}$.  


%

\section{Sphericoid varieties}\label{sec:sphericoid}
\subsection{Ideals and cohomology classes of sphericoid varieties}
Let $f \in \Bound(k,n)$ and $f' \in \Bound(\ell,n)$.  Define the \defn{sphericoid variety} $\Pi_{f,f'}$ to be
$$
\overline{\bigoplus(\Pi_f \times \Pi_{f'})} \subseteq \Gr(k+\ell,n).
$$
Then $\Pi_{f,\id}$ is the variety $\pi(\AA_f)$ of Section \ref{sec:ampliideal}.

\begin{remark}
There is a formula for the cohomology class $[\Pi_{f,\id}] \in H^*(\Gr(k+\ell,n))$ similar to Theorem \ref{thm:trunc}.  It is an interesting problem to compute the more general cohomology classes $[\Pi_{f,f'}] \in H^*(\Gr(k+\ell,n))$.
\end{remark}

Let $R(\Pi_{f,f'})$ denote the homogeneous coordinate ring of the sphericoid variety $\Pi_{f,f'}$, and let $I(\Pi_{f,f'}) \subset R(k+\ell,n)$ be its homogeneous ideal.  For a fixed $Z$, define $\psi: R(k+\ell,n) \to R(k,r)$ by 
$$
\Delta_J \longmapsto \Delta(Y,Z_{[n]\setminus J})
$$ 
for $J \in \binom{[n]}{k+\ell}$.  Then the discussion of Section \ref{sec:ampliideal} can be summarized as:

\begin{proposition}\label{prop:YPi}
Suppose $Z$ is generic.  Then $\psi(I(\Pi_{f,\id})) = I(Y_f)$.
\end{proposition}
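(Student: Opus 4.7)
The plan is to interpret $\psi$ geometrically as the pullback along a natural morphism, and then to reduce the proposition to the GIT-theoretic facts already stated in Section~\ref{sec:ampliideal}. For $Z$ of full rank $r$, set $K = \ker(Z) \in \Gr(\ell,n)$ and define the morphism $\sigma_Z : \Gr(k,r) \to \Gr(k+\ell,n)$ sending $Y$ to its preimage $Z^{-1}(Y)$, which is a $(k+\ell)$-plane containing $K$. Laplace expansion of the $r \times r$ determinant $\Delta(Y, Z_{[n]\setminus J})$ combined with Cauchy--Binet yields $\Delta_J(\sigma_Z(Y)) = \pm\,\Delta(Y, Z_{[n]\setminus J})$, where the sign is precisely the one absorbed by the $\theta$ and $\ker$ conventions of Section~\ref{sec:ampliideal}; thus $\psi = \sigma_Z^*$ on homogeneous coordinate rings. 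Since $\sigma_Z$ is a closed embedding onto the sub-Grassmannian $X_K = \{V \in \Gr(k+\ell,n) : V \supset K\} \cong \Gr(k,r)$, the map $\psi$ is surjective.

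The inclusion $\psi(I(\Pi_{f,\id})) \subseteq I(Y_f)$ is then immediate from geometry: for $X \in \Pi_f \setminus E_Z$ one has $\sigma_Z(Z_\Gr(X)) = Z^{-1}(Z(X)) = X + K \in \Pi_{f,\id}$ by definition. Taking closures gives $\sigma_Z(Y_f) \subseteq \Pi_{f,\id}$, so $\psi(I(\Pi_{f,\id})) = \sigma_Z^*(I(\Pi_{f,\id})) \subseteq I(Y_f)$.

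For the reverse inclusion, I would appeal to the two GIT facts asserted in Section~\ref{sec:ampliideal}: the ideal $I(\Y_f) \subset A$ is generated by $I(\AA_f) = I(\Y_f)^{\SL(r)}$, and $I(\AA_f)$ is in turn generated inside $A^{\SL(r)}$ by the pullback of $I(\pi(\AA_f)) = I(\Pi_{f,\id})$ under the identification $\Delta_J \leftrightarrow \pm\Delta(Y, Z_{[n]\setminus J})$. Now specialize the second factor at a generic $Z_0 \in \Mat(n,r)$: the induced homomorphism $A \to R(k,r)$ sends $\Delta(Y, Z_J)$ to $\pm\psi(\Delta_{[n]\setminus J})$ and each $\Delta_I(Z)$ to a nonzero constant. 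Since the projection $p : \Y_f \to \Mat(n,r)$ has generic fiber $\hY_f$, this specialization carries $I(\Y_f)$ onto $I(Y_f)$; applied to the generators of $I(\AA_f)$ it produces exactly $\psi(I(\Pi_{f,\id}))$, forcing $I(Y_f) \subseteq \psi(I(\Pi_{f,\id}))$ and hence equality. The main technical step will be justifying this specialization argument---namely, that $p$ is generically flat with reduced fiber so that $I(\Y_f)|_{Z = Z_0} = I(Y_f)$ on the nose---which should follow from generic flatness together with irreducibility of $\Pi_f$ (and hence of $\Y_f$), but is the only substantive piece of work not already packaged in the GIT results cited from Section~\ref{sec:ampliideal}.
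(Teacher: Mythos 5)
Your argument is correct and follows essentially the same route as the paper: the paper's "proof" is explicitly just the sentence "the discussion of Section~\ref{sec:ampliideal} can be summarized as" the proposition, and your proposal unpacks exactly that GIT discussion. The one genuine addition you make is the clean geometric reinterpretation of $\psi$ as $\sigma_Z^*$ for the closed embedding $\sigma_Z: Y \mapsto Z^{-1}(Y) \cong \Gr(k,r)\hookrightarrow\Gr(k+\ell,n)$; this makes the easy inclusion $\psi(I(\Pi_{f,\id}))\subseteq I(Y_f)$ a one-line set-theoretic observation ($\sigma_Z(Y_f)\subseteq\Pi_{f,\id}$ because $X+K\in\bigoplus(\Pi_f,\Gr(\ell,n))$ for $X\in\Pi_f\setminus E_Z$), and it also recasts the hard inclusion as the statement that the scheme-theoretic intersection $\Pi_{f,\id}\cap\sigma_Z(\Gr(k,r))$ is reduced for generic $K=\ker(Z)$, which is a slightly more concrete way to phrase the generic-flatness/reducedness step you flag. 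You are right that this is the only substantive gap: the paper itself absorbs it into the unproved assertion that "for a generic $Z$, the affine cone over $Y_f$ is the fiber $p|_{\Y_f}^{-1}(Z)$," so you are not missing anything the paper supplies. Two small points worth noting: (i) you need (and implicitly use) the surjectivity of $\psi=\sigma_Z^*$, which holds because the sub-Grassmannian $\sigma_Z(\Gr(k,r))$ is linearly normal in $\Gr(k+\ell,n)$; without this, $\psi(I(\Pi_{f,\id}))$ would not a priori be an ideal of $R(k,r)$ and the statement would need rephrasing. (ii) The paper's displayed equation $I(\AA_f)=I(Y_f)^{\SL(r)}$ has a typo (it should read $I(\Y_f)^{\SL(r)}$), and you silently corrected it, which is the right reading.
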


Thus calculating the ideal of a sphericoid variety also computes the ideal of an amplituhedron variety.  We now give a representation theoretic description of the former ideal.  Let $\kappa^d_{k,\ell}: V_{d\omega_k} \otimes V_{d\omega_\ell} \to V_{d\omega_{k+\ell}}$ be the $\GL(n)$-projection to the direct summand $V_{d\omega_{k+\ell}} \subset V_{d\omega_k} \otimes V_{d\omega_\ell}$, which appears with multiplicity one.

\begin{theorem}[\cite{Lam+}]\label{thm:sphericoid}
The $d$-th degree component of $I(\Pi_{f,f'})$ is given by
$$
I(\Pi_{f,f'})_d =\left( \kappa^d_{k,\ell}\left(V_f(d\omega_k) \otimes V(d\omega_\ell) \cap V(d\omega_k) \otimes V_{f'}(d\omega_\ell)\right) \right)^\perp.
$$
In particular, when $f' = \id$, we have
$$
I(\Pi_{f,\id})_d = \kappa^d_{k,\ell}\left(V_f(d\omega_k) \otimes V(d\omega_\ell) \right) ^\perp.
$$
Equivalently, 
$$
I(\Pi_{f,\id})_d = \{p \in R(k,\ell)_d \mid p \in I(\Pi_f)_d \otimes R(\ell,n)_d\}.
$$
\end{theorem}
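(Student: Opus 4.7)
The plan is to reduce ideal membership in $I(\Pi_{f,f'})_d$ to a vanishing condition on $\Pi_f \times \Pi_{f'}$ via the pullback ring homomorphism $\phi_{k,\ell}$, separate that condition across the two factors using projective normality, and then transfer the result across the $\phi_{k,\ell}$--$\kappa^d_{k,\ell}$ duality to the $V(d\omega_{k+\ell})$ side.

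First I would record the tautological reformulation: $\phi_{k,\ell}$ is the pullback of the rational map $\bigoplus$, and by definition $\Pi_{f,f'}$ is the Zariski closure of $\bigoplus(\Pi_f \times \Pi_{f'})$. Therefore a class $p \in R(k+\ell,n)_d$ lies in $I(\Pi_{f,f'})_d$ if and only if the bi-homogeneous polynomial $\phi_{k,\ell}(p) \in R(k,n)_d \otimes R(\ell,n)_d$ vanishes identically on $\Pi_f \times \Pi_{f'}$.

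The key input is then that the degree-$(d,d)$ piece of the bi-homogeneous ideal of $\Pi_f \times \Pi_{f'}$ inside $\Gr(k,n) \times \Gr(\ell,n)$ equals
\[ I(\Pi_f)_d \otimes R(\ell,n)_d \;+\; R(k,n)_d \otimes I(\Pi_{f'})_d. \]
This follows from K\"unneth together with projective normality of positroid varieties (Theorem \ref{thm:normal}): the restriction maps $R(k,n)_d \twoheadrightarrow R(\Pi_f)_d$ and $R(\ell,n)_d \twoheadrightarrow R(\Pi_{f'})_d$ are surjective in each fixed degree, and the kernel of the tensor product of two surjections over a field is exactly the displayed sum. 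Combining with Theorem \ref{thm:Posideal}, which gives $I(\Pi_f)_d = V_f(d\omega_k)^\perp$ and $I(\Pi_{f'})_d = V_{f'}(d\omega_\ell)^\perp$, this sum becomes the annihilator in $(V(d\omega_k) \otimes V(d\omega_\ell))^*$ of
\[ W \coloneqq V_f(d\omega_k) \otimes V(d\omega_\ell) \;\cap\; V(d\omega_k) \otimes V_{f'}(d\omega_\ell). \]

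To finish I would verify that $\phi_{k,\ell}$ in degree $d$ is the linear transpose of $\kappa^d_{k,\ell}$; this is essentially tautological from formula \eqref{eq:factor}, since the dual map is a nonzero $\GL(n)$-equivariant morphism $V(d\omega_k) \otimes V(d\omega_\ell) \to V(d\omega_{k+\ell})$, which is unique up to scalar and hence agrees with $\kappa^d_{k,\ell}$. Consequently $\phi_{k,\ell}(p) \in W^\perp$ iff $p \in \kappa^d_{k,\ell}(W)^\perp$, giving the first formula. The $f' = \id$ specialization is immediate: $\Pi_{\id} = \Gr(\ell,n)$ forces $V_{\id}(d\omega_\ell) = V(d\omega_\ell)$, collapsing $W$ to $V_f(d\omega_k) \otimes V(d\omega_\ell)$; and the alternative description $I(\Pi_{f,\id})_d = \{p : \phi_{k,\ell}(p) \in I(\Pi_f)_d \otimes R(\ell,n)_d\}$ reads off from the pullback characterization of the first step. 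The main obstacle should be the tensor-product form of the ideal of $\Pi_f \times \Pi_{f'}$: conceptually routine, but genuinely dependent on projective normality to ensure that degree-$d$ equations on the product already cut out $\Pi_f \times \Pi_{f'}$ in the corresponding Segre piece.
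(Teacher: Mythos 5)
The paper states this theorem with the citation [\cite{Lam+}] (``T.~Lam, in preparation'') and supplies no proof of its own, so there is no internal argument to compare against. Assessed on its own terms, your proof is correct. The reduction to vanishing of $\phi_{k,\ell}(p)$ on $\Pi_f\times\Pi_{f'}$ is valid, and one could add the small remark that $\phi_{k,\ell}(p)$ automatically vanishes on the indeterminacy locus of $\bigoplus$ (there the stacked $(k+\ell)\times n$ matrix is rank deficient, so all the expressions \eqref{eq:factor} vanish), so the equivalence between ``$p$ vanishes on $\Pi_{f,f'}$'' and ``$\phi_{k,\ell}(p)$ vanishes on all of $\Pi_f\times\Pi_{f'}$'' needs no genericity caveat. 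The K\"unneth-plus-projective-normality identification of the bi-degree $(d,d)$ ideal of $\Pi_f\times\Pi_{f'}$ with $I(\Pi_f)_d\otimes R(\ell,n)_d + R(k,n)_d\otimes I(\Pi_{f'})_d$ is the right mechanism and correctly invokes Theorem \ref{thm:normal}; the linear-algebra step $\bigl(U_1^\perp\otimes V_2^* + V_1^*\otimes U_2^\perp\bigr)^\perp=(U_1\otimes V_2)\cap(V_1\otimes U_2)$ is standard; and identifying the degree-$d$ part of $\phi_{k,\ell}$ with the transpose of $\kappa^d_{k,\ell}$ via multiplicity one of the Cartan component is exactly right (note $\phi_{k,\ell}$ is nonzero in each degree since it is a ring map nonzero in degree one and $R(k,n)\otimes R(\ell,n)$ is a domain). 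The $f'=\id$ specialization and the pullback rephrasing follow immediately; you also silently corrected the typos in the theorem's third displayed formula, where $R(k,\ell)_d$ should read $R(k+\ell,n)_d$ and ``$p\in$'' should read ``$\phi_{k,\ell}(p)\in$''.
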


%

\begin{remark}\label{rem:torus}
The torus $(\C^*)^n \subset \GL(n)$ acts on $\Gr(k,n),\Gr(\ell,n)$, and $\Gr(k+\ell,n)$.  Since positroid varieties are torus-invariant, the sphericoid variety $\Pi_{f,\id}$ is also torus-invariant.  In particular, $I(\Pi_{f,\id})_d \subset V(d\omega_{k+\ell})^*$ is spanned by weight vectors.  
\end{remark}

We give some examples explaining how to compute with Theorem \ref{thm:sphericoid}.

\subsection{When $\Pi_{f,\id}$ is a linear hypersurface}\label{sec:hyper}
Let $J \in \binom{[n]}{k+\ell}$.  By Theorem \ref{thm:sphericoid}, we have $\Delta_J \in I(\Pi_{f,\id})$ if 
$$
\phi_{k,\ell}(\Delta_J) = \sum_{I \subset J} (-1)^{\inv(I, J \setminus I)} \Delta_I \otimes \Delta_{J \setminus I} \in I(\Pi_f)_1 \otimes R(\ell,n)_1,
$$
using \eqref{eq:factor}.  By Theorem \ref{thm:Posideal}, $I(\Pi_f)_1$ has as basis the Pl\"ucker coordinates $\{\Delta_I \mid I \in \M(f)\}$, so  $\Delta_J \in I(\Pi_{f,\id})$ if and only if 
$$
\left \{I \in \binom{[n]}{k} \mid I \subset J \right\} \subset \M(f).
$$
For example, take $k = 2$.  We classified $\Bound(2,n)$ in Section \ref{sec:k2}.  Suppose $\Pi_f$ is given by the conditions $\rank(\spn(v_a,v_{a+1},\ldots,v_b)) \leq 1$ for cyclic intervals $[a_i,b_i]$ (and no rank conditions of the form $v_c = 0$).  Then $\Delta_J \in I(\Pi_{f,\id})$ if $J \subset [a_i,b_i]$ for some $i$.  If all the cyclic intervals $[a_i,b_i]$ have cardinality less than $k+\ell$, then no Pl\"ucker coordinate $\Delta_J$ vanishes on $\Pi_{f,\id}$.  By Remark \ref{rem:torus}, $I(\Pi_{f,\id})_1 = 0$ in this case.  

\subsection{Degree-two examples}\label{sec:deg2}
Let $k = 2$, and suppose $f \in \Bound(2,n)$ is given by the conditions $\rank(\spn(v_a,v_{a+1},\ldots,v_b)) \leq 1$ for cyclic intervals $[a_1,b_1], \ldots, [a_s,b_s]$ (and no rank conditions of the form $v_c = 0$).  Let $\beta_i = |[a_i,b_i]|$.

By Proposition \ref{prop:k2}, we have $\tF_f \equiv [\Pi_f] = \prod_{i=1}^s h_{\beta_i -1} \in H^*(\Gr(2,n))$.

Suppose $k = 2,n,r$ are fixed and $f$ is chosen so that $\dim(\Pi_f) = \dim(\Gr(2,r)) - 1 = 2m-1$, where $m = r-2$.  But $\codim(\Pi_f) = \sum_{i=1}^s (\beta_i - 1) = n-s$, so we have $s = n-2\ell-1$, where $\ell = n-r = n-m-2$.  Applying Theorem \ref{thm:trunc}, we have the following cases:
\begin{enumerate}
\item
If $\max(\beta_i) > \ell+2$ then $[s_{(\ell+1,\ell)}]\tF_f = 0$.  In this case $f \in \Bound(2,n)$ is not independent.  Thus $Y_f$ has codimension two or more.
\item
If $\max(\beta_i) = \ell+2$ then $[s_{(\ell+1,\ell)}]\tF_f = 1$.  In this case, by Section \ref{sec:hyper}, $\Pi_{f,\id}$ is cut out of $\Gr(\ell+2,n)$ by the linear equation $\Delta_{J} = 0$ where $J = [a_t,b_t]$ is the (necessarily unique) cyclic interval satisfying $|[a_t,b_t]| = \ell+2$.  By Proposition \ref{prop:YPi}, $Y_f$ is cut of $\Gr(2,r)$ by the equation $\Delta_{Y,Z_{[n] \setminus J}} = 0$, so it is a linear hypersurface.
\item
If $\max(\beta_i) < \ell+2$ and 
$$
\# \{ i \mid \beta_i \geq 2 \} \geq 4,
$$
then $[\Pi_f]$ is the product of at least four (non-identity) homogeneous symmetric functions.  In this case, $[s_{(\ell+1,\ell)}]\tF_f \geq 3$.  We expect that in general $Y_f$ is cut out by an equation with degree three or higher (though Theorem \ref{thm:trunc} only guarantees that it is cut out by an equation with degree at most $[s_{(\ell+1,\ell)}]\tF_f$).  We will give an example of such an equation in Section \ref{sec:deg3ex}.
\end{enumerate}

There is a fourth case, where we expect $Y_f$ to be a codimension one hypersurface cut out by a quadratic equation.

\begin{proposition}[\cite{Lam+}]\label{prop:quad}
Suppose $\max(\beta_i) < \ell+2$ and 
$$
\# \{ i \mid \beta_i \geq 2 \} = 3.
$$
Then $\tF_f \equiv h_{a-1}h_{b-1}h_{c-1} \in H^*(\Gr(2,n))$ for some $a,b,c \geq 2$ and $[s_{(\ell+1,\ell)}]\tF_f = 2$.  There is a $(\ell+2,n)$-partial non-crossing matching $(\tau,\emptyset)$ with $(a+b+c)/2$ strands such that $F_{(\tau,\emptyset)} \in I(\Pi_{f,\id})_2$, and this element generates the ideal $I(\Pi_{f,\id})$.
\end{proposition}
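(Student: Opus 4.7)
The plan is to prove the proposition in four steps, matching the four claims: the identification of $\tF_f$; the computation of the Schur coefficient; the construction of $\tau$ with $F_{(\tau,\emptyset)} \in I(\Pi_{f,\id})_2$; and the generation statement.

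First I would verify the cohomological claim. Since the cyclic intervals with $\beta_i = 1$ contribute $h_0 = 1$ factors, Proposition \ref{prop:k2} gives $\tF_f \equiv (h_{a-1} h_{b-1} h_{c-1})^{+\alpha}$ in $H^*(\Gr(2,n))$, where the shift $+\alpha$ adds rectangular columns of height $2$. Since we can reduce to $\alpha = 0$ by shifting parameters, I would then verify the claim $[s_{(\ell+1,\ell)}](h_{a-1}h_{b-1}h_{c-1})=2$ by repeated Pieri (or Jacobi--Trudi $s_{(p,q)}=h_p h_q - h_{p+1}h_{q-1}$), using $a+b+c = 2\ell+4$ and $a,b,c \leq \ell+1$.

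Second, I would construct $\tau$. Arrange the three cyclic intervals $C_1, C_2, C_3$ of sizes $a,b,c$ cyclically on the disk and set $p_{12}=(a+b-c)/2$, $p_{13}=(a+c-b)/2$, $p_{23}=(b+c-a)/2$. The bound $\max(\beta_i) \leq \ell+1 < (a+b+c)/2$ gives the strict triangle inequalities, so these are positive integers (their integrality follows from $a+b+c$ being even, forcing $a,b,c$ to share parity modulo the obvious constraint). Pair $p_{ij}$ vertices of $C_i$ with $p_{ij}$ vertices of $C_j$ in the unique non-crossing way; this gives $\tau$ with $(a+b+c)/2 = \ell+2$ strands.

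Third, I would show $F_{(\tau,\emptyset)} \in I(\Pi_{f,\id})_2$. By the equivalent form of Theorem \ref{thm:sphericoid}, it suffices to show $\phi_{2,\ell}(F_{(\tau,\emptyset)}) \in I(\Pi_f)_2 \otimes R(\ell,n)_2$, where $\phi_{2,\ell}$ is the coproduct \eqref{eq:factor}. Using Proposition \ref{prop:tau} to expand $F_{(\tau,\emptyset)}$ as a signed sum of $\Delta_I \Delta_J$ over compatible pairs, I would apply $\phi_{2,\ell}$ termwise, splitting off a $2$-element subset $K$ from each Pl\"ucker coordinate. Since $\Delta_{ij} \in I(\Pi_f)$ whenever $i,j$ lie in the same $C_r$ (by Section \ref{sec:hyper}), only terms with $K$ having elements in two distinct $C_r$'s survive modulo the ideal. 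The key combinatorial step is to check that the non-crossingness of $\tau$ forces these surviving terms to cancel in pairs; this amounts to a sign calculation governed by the pairs of strands adjacent to the chosen $K$. I expect this sign cancellation to be the main technical obstacle; an alternative would be a more geometric argument: describing points of $\Pi_{f,\id}$ as $W = X + K$ with $X \in \Pi_f$, translating the rank conditions on $X$ into projection-rank conditions on $W$, and recognizing $F_{(\tau,\emptyset)}$ as the defining relation of those conditions.

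Finally, for the generation statement, Theorem \ref{thm:trunc} together with the coefficient computation shows that $Y_f \subset \Gr(2,r)$ is a degree-two hypersurface, so $I(Y_f)$ is principal. By Proposition \ref{prop:YPi}, $I(Y_f) = \psi(I(\Pi_{f,\id}))$ for generic $Z$, and $\psi(F_{(\tau,\emptyset)})$ is nonzero (again by a Temperley-Lieb positivity argument, as $F_{(\tau,\emptyset)}$ is nonnegative on $\Gr(\ell+2,n)_{\geq 0}$ and nonvanishing at any totally positive point). To lift principality from $I(Y_f)$ to $I(\Pi_{f,\id})$, I would appeal to the $\SL(r)$-invariant theory described in Section \ref{sec:quotient}, which identifies $I(\Pi_{f,\id})$ with the pullback of $I(\AA_f)$ along the projection $\pi$, so that a single $\SL(r)$-invariant generator of $I(\AA_f)_2$ suffices to generate the whole ideal $I(\Pi_{f,\id})$.
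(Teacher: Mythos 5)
Your proposal correctly identifies the four claims to prove, and the first two parts are sound: the cohomological computation of $[s_{(\ell+1,\ell)}](h_{a-1}h_{b-1}h_{c-1})=2$ and the construction of $\tau$ via the ``triangle bookkeeping'' $p_{ij} = (a+b-c)/2$, etc.\ (with positivity from $\max\beta_i < \ell+2$ and integrality from $a+b+c = 2\ell+4$ being even) are exactly right and consistent with the paper's illustration.

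The genuine gap is the membership step $F_{(\tau,\emptyset)} \in I(\Pi_{f,\id})_2$, which you yourself flag as ``the main technical obstacle.'' Beyond leaving it unproved, your proposed cancellation criterion is too coarse to be the right target: you want to show $\phi_{2,\ell}(F_{(\tau,\emptyset)}) \in I(\Pi_f)_2 \otimes R(\ell,n)_2$, but the subspace $I(\Pi_f)_2 \subset R(2,n)_2$ is \emph{not} spanned by products $\Delta_K\Delta_L$ with $K$ or $L$ a non-base of $\M(f)$; by Theorem \ref{thm:Posideal} it has a basis $\{F_{(\eta,T')} : (\eta,T')\notin B_f(2\omega_2)\}$, and a product $\Delta_K\Delta_L$ with both $K,L$ bases can still lie in $I(\Pi_f)_2$. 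So ``the surviving terms cancel in pairs'' is neither necessary nor obviously what happens. The paper's route (attributed to \cite{Lam+}) is instead to prove a general coproduct formula expressing $\phi_{k,\ell}(F_{(\tau,T)})$ directly as a linear combination of $F_{(\eta,T')} \otimes F_{(\nu,T'')}$ in the Temperley--Lieb bases of $R(k,n)_2$ and $R(\ell,n)_2$; combined with Theorem \ref{thm:Posideal}(2) this puts the membership question in the form where the positroid ideal is transparent. Your standard-monomial expansion sidesteps the basis in which the ideal is easiest to read off, which is why the cancellation becomes murky.

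Your generation argument also runs in the wrong direction: Section \ref{sec:ampliideal} shows that $I(\Y_f)$ is \emph{generated by} the pullback of $I(\Pi_{f,\id}) = I(\pi(\AA_f))$, not the reverse, and passing from the fiber ideal $I(Y_f)$ back up to $I(\Pi_{f,\id})$ is not automatic. The clean argument is direct: $\Pi_{f,\id}$ is irreducible (image of the irreducible $\Pi_f\times\Gr(\ell,n)$) and of codimension one in $\Gr(\ell+2,n)$; since $R(\ell+2,n)$ is a UFD, $I(\Pi_{f,\id})$ is principal. The Section \ref{sec:hyper} analysis shows $I(\Pi_{f,\id})_1 = 0$ because $\max\beta_i < \ell+2 = |J|$, so the generator has degree at least $2$; and once you know the nonzero basis element $F_{(\tau,\emptyset)}$ lies in $I(\Pi_{f,\id})_2$, it must be (a scalar multiple of) the generator. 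This also makes the detour through $Y_f$, and the unaddressed issue of whether $d_f=1$, unnecessary for the generation statement.
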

The non-crossing matching $\tau$ of Proposition \ref{prop:quad} is illustrated in the following picture.  Here, $f \in \Bound(2,n)$ is given by rank conditions for the cyclic intervals $[1,4]$, $[5,7]$, and $[8,10]$, and we have $\beta_1 = 4$, $\beta_2 = 3$, and $\beta_3 = 3$.
\begin{center}
\begin{tikzpicture}[scale=0.6,baseline=-0.5ex]
\coordinate (a1) at (0:2);
\coordinate (a2) at (-36:2);
\coordinate (a3) at (-72:2);
\coordinate (a4) at (-108:2);
\coordinate (a5) at (-144:2);
\coordinate (a6) at (180:2);
\coordinate (a7) at (144:2);
\coordinate (a8) at (108:2);
\coordinate (a9) at (72:2);
\coordinate (a10) at (36:2);

\node at (0:2.3) {$1$};
\node at (-36:2.3) {$2$};
\node at (-72:2.3) {$3$};
\node at (-108:2.4) {$4$};
\node at (-144:2.4) {$5$};
\node at (180:2.3) {$6$};
\node at (144:2.3) {$7$};
\node at (108:2.3) {$8$};
\node at (72:2.3) {$9$};
\node at (36:2.4) {$10$};

\draw (0,0) circle (2);

\draw[thick] (a1) to [bend left] (a10);
\draw[thick] (a2) to [bend left] (a9);
\draw[thick] (a3) to [bend right] (a6);
\draw[thick] (a4) to [bend right] (a5);
\draw[thick] (a7) to [bend right] (a8);

\filldraw[blue] (a1) circle (0.1cm);
\draw (a1) circle (0.1cm);
\filldraw[blue] (a2) circle (0.1cm);
\draw (a2) circle (0.1cm);
\filldraw[blue] (a4) circle (0.1cm);
\draw (a4) circle (0.1cm);
\filldraw[blue] (a3) circle (0.1cm);
\draw (a3) circle (0.1cm);
\filldraw[red] (a5) circle (0.1cm);
\draw (a5) circle (0.1cm);
\filldraw[red] (a6) circle (0.1cm);
\draw (a6) circle (0.1cm);
\filldraw[red] (a7) circle (0.1cm);
\draw (a7) circle (0.1cm);
\filldraw[green] (a8) circle (0.1cm);
\draw (a8) circle (0.1cm);
\filldraw[green] (a9) circle (0.1cm);
\draw (a9) circle (0.1cm);
\filldraw[green] (a10) circle (0.1cm);
\draw (a10) circle (0.1cm);
\end{tikzpicture}

\end{center}

Proposition \ref{prop:quad} is proven by a general formula that expresses $\phi_{k,\ell}(F_{(\tau,T)})$ as a linear combination of $F_{(\eta,T')} \otimes F_{(\nu,T'')}$ where $(\eta,T') \in \AA_{2,n}$ is a $(2,n)$-partial non-crossing matching and $(\nu,T'') \in \AA_{\ell,n}$ is a $(\ell,n)$-partial non-crossing matching.  

\begin{example}\label{ex:deg2}
Suppose $n = 6$, and $f$ is given by the cyclic intervals $[1,2],[3,4],[5,6]$.  Then $\dim(\Pi_f) = 5$, so with $r = 5$ and $\ell = 1$, we have that $Y_f \subset \Gr(2,5)$ is codimension one.  In this case $Y_f$ is cut out by the single equation $\psi(F_{(\tau,\emptyset)})$, where $\tau = \{(1,6),(2,3),(4,5)\}$.  One calculates using Theorem \ref{thm:TL} that
$$
F_{(\tau,\emptyset)} = \Delta_{124}\Delta_{356} - \Delta_{123}\Delta_{456}.
$$
\end{example}

\begin{example}
Suppose $n = 8$, and $f$ is given by the cyclic intervals $[1,3], [4,6],[7,8]$.  Then $\dim(\Pi_f) = 7$, so with $r = 6$ and $\ell = 2$, we have that $Y_f \subset \Gr(2,6)$ is codimension one.  In this case $Y_f$ is cut out by the single equation $\psi(F_{(\tau,\emptyset)})$, where $\tau = \{(1,8),(2,5),(3,4),(6,7)\}$.
\end{example}

\subsection{A degree-three example}
\label{sec:deg3ex}
Let $k = 2, m = 5, n = 9$.  Consider the bounded affine permutation $f = [2,3,6,5,8,7,10,9,13] \in \Bound(2,9)$.  Then $\Pi_f$ is cut out by the conditions
$$
\dim \spn(v_1,v_2,v_3) \leq 1, \;\; \dim \spn(v_4,v_5) \leq 1,\;\; \dim \spn(v_6,v_7) \leq 1,\;\; \dim \spn(v_8,v_9) \leq 1.
$$
By Proposition \ref{prop:k2}, or using the reduced factorization $f = \id s_1s_0s_7s_5s_3$, we obtain
$$
\tF_f = h_2h_1^3 = 3s_{3,2} + \text{other terms}.
$$
According to Theorem \ref{thm:trunc},  $f$ is independent.  Since $\dim(\Pi_f) = 9$, $Y_f$ is a hypersurface in $\Gr(2,7)$ and $[Y_f] \in H^*(\Gr(2,7))$ is either equal to $s_1$ or $3s_1$.  

By Proposition \ref{prop:YPi}, we can check that $Y_f$ is not a linear hypersurface by checking that none of the Pl\"ucker coordinates $\Delta_J$ vanish identically on $\Pi_{f,\id}$.  Note that $\Pi_{f,\id}$ is a torus invariant subvariety of $\Gr(k+\ell,n)$ (for the torus $(\C^*)^n \subset \GL(n)$), so $I(\Pi_{f,\id})$ is spanned by weight vectors, and in particular $I(\Pi_{f,\id})_1$ is spanned by Pl\"ucker coordinates.  It follows that we must have $[Y_f] = 3s_1$.  (We can also check this by numerically computing that $Z_\Gr: \Pi_f \dashedrightarrow Y_f$ is a birational map.)

We shall confirm that $[Y_f] = 3s_1$ by finding a section in $R(2,7)_3 = \Gamma(\Gr(2,7),\O(3))$ that cuts out $Y_f$.  Indeed, in terms of Pl\"ucker coordinates, we have that $\Pi_{f,\id}$ is cut out by 
\begin{align}\label{eq:g}g &= \Delta_{1, 2, 3, 5}\Delta_{1, 2, 3, 7}\Delta_{4, 6, 8, 9} - \Delta_{1, 2, 3, 4}\Delta_{1, 2, 3, 7}\Delta_{5, 6, 8, 9}
   \\ \nonumber
&
 -\Delta_{1, 2, 3, 5}\Delta_{1, 2, 3, 6}\Delta_{4, 7, 8, 9}+
 \Delta_{1, 2, 3, 4}\Delta_{1, 2, 3, 6}\Delta_{5, 7, 8, 9} .
\end{align}
The reader is invited to check that $Y_f$ is cut out by the cubic
\begin{align*}&\Delta(Y,Z_{\overline{1, 2, 3, 5}})\Delta(Y,Z_{\overline{1, 2, 3, 7}})\Delta(Y,Z_{\overline{4, 6, 8, 9}}) -
 \Delta(Y,Z_{\overline{1, 2, 3, 4}})\Delta(Y,Z_{\overline{1, 2, 3, 7}})\Delta(Y,Z_{\overline{5, 6, 8, 9}}) 
   \\
-& \Delta(Y,Z_{\overline{1, 2, 3, 5}})\Delta(Y,Z_{\overline{1, 2, 3, 6}})\Delta(Y,Z_{\overline{4, 7, 8, 9}})+\Delta(Y,Z_{\overline{1, 2, 3, 4}})\Delta(Y,Z_{\overline{1, 2, 3, 6}})\Delta(Y,Z_{\overline{5, 7, 8, 9}})
\end{align*}
where $\overline{I} \coloneqq [9] \setminus I$, agreeing with Proposition \ref{prop:YPi}. 
The cubic $g$ is in fact an instance of a \emph{web immanant} introduced in \cite{Lamweb}.  It is indexed by the following \emph{web}:

\begin{center}
\begin{tikzpicture}[scale=0.6,baseline=-0.5ex]
\coordinate (a1) at (0:2);
\coordinate (a2) at (-40:2);
\coordinate (a3) at (-80:2);
\coordinate (a4) at (-120:2);
\coordinate (a5) at (-160:2);
\coordinate (a6) at (160:2);
\coordinate (a7) at (120:2);
\coordinate (a8) at (80:2);
\coordinate (a9) at (42:2);

\node at (0:2.3) {$1$};
\node at (-40:2.3) {$2$};
\node at (-80:2.3) {$3$};
\node at (-120:2.4) {$4$};
\node at (-160:2.4) {$5$};
\node at (160:2.3) {$6$};
\node at (120:2.3) {$7$};
\node at (80:2.3) {$8$};
\node at (40:2.3) {$9$};

\draw (0,0) circle (2);

\coordinate (w1) at (-40:1);
\coordinate (b1) at (-160:1);
\coordinate (b2) at (80:1);

\draw[thick] (a1) to [bend left] (a9);
\draw[thick] (a2) to (w1);
\draw[thick] (a3) to [bend right] (a4);
\draw[thick] (w1) to (b1);
\draw[thick] (w1) to (b2);
\draw[thick] (w1) to (a2);
\draw[thick] (b1) to (a5);
\draw[thick] (b1) to (a6);
\draw[thick] (b2) to (a7);
\draw[thick] (b2) to (a8);

\whitedot{(w1)};
\blackdot{(b2)};
\blackdot{(b1)};
\end{tikzpicture}

\end{center}

The calculation of this web immanant is obtained by combining the results of \cite{Lamweb} with \cite{KhKu}.  We sketch the calculation assuming the reader is familiar with both works.  Consider the following 5 tableaux
\begin{equation} \label{eq:5tab}
\tableau[sbY]{1&1&4\\2&2&6\\3&3&8\\5&7&9} \qquad
\tableau[sbY]{1&1&4\\2&2&7\\3&3&8\\5&6&9} \qquad
\tableau[sbY]{1&1&5\\2&2&6\\3&3&8\\4&7&9} \qquad
\tableau[sbY]{1&1&5\\2&2&7\\3&3&8\\4&6&9} \qquad
\tableau[sbY]{1&1&6\\2&2&7\\3&3&8\\4&5&9}.
\end{equation}
The growth algorithm of \cite{KhKu} gives a bijection between these 5 tableaux and the following 5 webs:
$$
\begin{tikzpicture}[scale=0.6,baseline=-0.5ex]
\coordinate (a1) at (0:2);
\coordinate (a2) at (-40:2);
\coordinate (a3) at (-80:2);
\coordinate (a4) at (-120:2);
\coordinate (a5) at (-160:2);
\coordinate (a6) at (160:2);
\coordinate (a7) at (120:2);
\coordinate (a8) at (80:2);
\coordinate (a9) at (42:2);

\node at (0:2.3) {$1$};
\node at (-40:2.3) {$2$};
\node at (-80:2.3) {$3$};
\node at (-120:2.4) {$4$};
\node at (-160:2.4) {$5$};
\node at (160:2.3) {$6$};
\node at (120:2.3) {$7$};
\node at (80:2.3) {$8$};
\node at (40:2.3) {$9$};

\draw (0,0) circle (2);

\coordinate (w1) at (-40:1);
\coordinate (b1) at (-160:1);
\coordinate (b2) at (80:1);

\draw[thick] (a1) to [bend left] (a9);
\draw[thick] (a2) to (w1);
\draw[thick] (a3) to [bend right] (a4);
\draw[thick] (w1) to (b1);
\draw[thick] (w1) to (b2);
\draw[thick] (w1) to (a2);
\draw[thick] (b1) to (a5);
\draw[thick] (b1) to (a6);
\draw[thick] (b2) to (a7);
\draw[thick] (b2) to (a8);

\whitedot{(w1)};
\blackdot{(b2)};
\blackdot{(b1)};
\end{tikzpicture}
\begin{tikzpicture}[scale=0.6,baseline=-0.5ex]
\coordinate (a1) at (0:2);
\coordinate (a2) at (-40:2);
\coordinate (a3) at (-80:2);
\coordinate (a4) at (-120:2);
\coordinate (a5) at (-160:2);
\coordinate (a6) at (160:2);
\coordinate (a7) at (120:2);
\coordinate (a8) at (80:2);
\coordinate (a9) at (42:2);

\node at (0:2.3) {$1$};
\node at (-40:2.3) {$2$};
\node at (-80:2.3) {$3$};
\node at (-120:2.4) {$4$};
\node at (-160:2.4) {$5$};
\node at (160:2.3) {$6$};
\node at (120:2.3) {$7$};
\node at (80:2.3) {$8$};
\node at (40:2.3) {$9$};

\draw (0,0) circle (2);

\coordinate (b2) at (160:1);

\draw[thick] (a1) to [bend left] (a9);
\draw[thick] (a2) to [bend left] (a8);
\draw[thick] (a3) to [bend right] (a4);
\draw[thick] (b2) to (a5);
\draw[thick] (b2) to (a6);
\draw[thick] (b2) to (a7);

\blackdot{(b2)}
\end{tikzpicture}
\begin{tikzpicture}[scale=0.6,baseline=-0.5ex]
\coordinate (a1) at (0:2);
\coordinate (a2) at (-40:2);
\coordinate (a3) at (-80:2);
\coordinate (a4) at (-120:2);
\coordinate (a5) at (-160:2);
\coordinate (a6) at (160:2);
\coordinate (a7) at (120:2);
\coordinate (a8) at (80:2);
\coordinate (a9) at (42:2);

\node at (0:2.3) {$1$};
\node at (-40:2.3) {$2$};
\node at (-80:2.3) {$3$};
\node at (-120:2.4) {$4$};
\node at (-160:2.4) {$5$};
\node at (160:2.3) {$6$};
\node at (120:2.3) {$7$};
\node at (80:2.3) {$8$};
\node at (40:2.3) {$9$};

\draw (0,0) circle (2);

\coordinate (w1) at (20:1);
\coordinate (w2) at (-80:1);
\coordinate (b1) at (160:1);
\coordinate (b2) at (-140:1.5);
\coordinate (b3) at (100:1.5);

\draw[thick] (a1) to [bend left] (a9);
\draw[thick] (w1) to (b1);
\draw[thick] (w1) to (b3);
\draw[thick] (w1) to (a2);
\draw[thick] (w2) to (b1);
\draw[thick] (w2) to (b2);
\draw[thick] (w2) to (a3);
\draw[thick] (b2) to (a4);
\draw[thick] (b2) to (a5);
\draw[thick] (b1) to (a6);
\draw[thick] (b3) to (a7);
\draw[thick] (b3) to (a8);

\whitedot{(w1)};
\whitedot{(w2)};
\blackdot{(b3)};
\blackdot{(b2)};
\blackdot{(b1)};
\end{tikzpicture}
\begin{tikzpicture}[scale=0.6,baseline=-0.5ex]
\coordinate (a1) at (0:2);
\coordinate (a2) at (-40:2);
\coordinate (a3) at (-80:2);
\coordinate (a4) at (-120:2);
\coordinate (a5) at (-160:2);
\coordinate (a6) at (160:2);
\coordinate (a7) at (120:2);
\coordinate (a8) at (80:2);
\coordinate (a9) at (42:2);

\node at (0:2.3) {$1$};
\node at (-40:2.3) {$2$};
\node at (-80:2.3) {$3$};
\node at (-120:2.4) {$4$};
\node at (-160:2.4) {$5$};
\node at (160:2.3) {$6$};
\node at (120:2.3) {$7$};
\node at (80:2.3) {$8$};
\node at (40:2.3) {$9$};

\draw (0,0) circle (2);

\coordinate (w1) at (-40:0);
\coordinate (b1) at (-140:1.2);
\coordinate (b2) at (140:1.2);

\draw[thick] (a1) to [bend left] (a9);
\draw[thick] (a2) to [bend left] (a8);
\draw[thick] (b1) to (a4);
\draw[thick] (b1) to (a5);
\draw[thick] (b2) to (a6);
\draw[thick] (b2) to (a7);
\draw[thick] (w1) to (a3);
\draw[thick] (w1) to (b1);
\draw[thick] (w1) to (b2);

\whitedot{(w1)};
\blackdot{(b1)};
\blackdot{(b2)};
\end{tikzpicture}
\begin{tikzpicture}[scale=0.6,baseline=-0.5ex]
\coordinate (a1) at (0:2);
\coordinate (a2) at (-40:2);
\coordinate (a3) at (-80:2);
\coordinate (a4) at (-120:2);
\coordinate (a5) at (-160:2);
\coordinate (a6) at (160:2);
\coordinate (a7) at (120:2);
\coordinate (a8) at (80:2);
\coordinate (a9) at (42:2);

\node at (0:2.3) {$1$};
\node at (-40:2.3) {$2$};
\node at (-80:2.3) {$3$};
\node at (-120:2.4) {$4$};
\node at (-160:2.4) {$5$};
\node at (160:2.3) {$6$};
\node at (120:2.3) {$7$};
\node at (80:2.3) {$8$};
\node at (40:2.3) {$9$};

\draw (0,0) circle (2);

\coordinate (b1) at (-160:1);

\draw[thick] (a1) to [bend left] (a9);
\draw[thick] (a2) to [bend left] (a8);
\draw[thick] (a3) to (a7);
\draw[thick] (b1) to (a5);
\draw[thick] (b1) to (a6);
\draw[thick] (b1) to (a4);
\blackdot{(b1)};
\end{tikzpicture}
$$
The expansion of the standard monomials labeled by these 5 tableaux in terms of the corresponding web immanant (see \cite[Theorem 4.13]{Lamweb}) is given by the $5 \times 5$ matrix
$$
\begin{bmatrix} 1&1&1&1&0 \\
0&1&0&1&1 \\
0&0&1&1&0 \\
0&0&0&1&1\\
0&0&0&0&1
\end{bmatrix}.
$$
The first row of the inverse of this matrix has entries $(1,-1,-1,1,0)$.  These are the coefficients of the standard monomials in the web immanant $g$.  In fact, the cubic $g$ also belongs to the dual canonical basis, and is indexed by the leftmost tableau in \eqref{eq:5tab}.

\subsection{A conjecture}
The above examples give evidence for the following conjecture.
\begin{conjecture}
The homogeneous ideal $I(\Pi_{f,\id})$ is generated by elements of the dual canonical basis of $R(k+\ell,n)$.
\end{conjecture}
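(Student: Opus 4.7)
The plan is to reduce the conjecture to a statement about based subspaces in tensor product representations, then invoke Lusztig's theory of canonical bases on tensor products. By Theorem~\ref{thm:sphericoid}, we have
$$I(\Pi_{f,\id})_d = \kappa^d_{k,\ell}\bigl(V_f(d\omega_k) \otimes V(d\omega_\ell)\bigr)^\perp,$$
so the conjecture is equivalent to the assertion that $W \coloneqq \kappa^d_{k,\ell}\bigl(V_f(d\omega_k) \otimes V(d\omega_\ell)\bigr) \subseteq V(d\omega_{k+\ell})$ is spanned by a subset of the canonical basis $\{G(T) : T \in B(d\omega_{k+\ell})\}$.

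The main tool is Lusztig's construction of a canonical basis $\{B(T,T')\}$ on the tensor product $V(d\omega_k) \otimes V(d\omega_\ell)$, characterized by bar-invariance and triangularity $B(T,T') = G(T) \otimes G(T') + (\text{lower terms})$ with respect to the tensor product crystal ordering. By Lusztig's theory of based modules, the $\GL(n)$-equivariant projection $\kappa^d_{k,\ell}$ onto the top irreducible summand $V(d\omega_{k+\ell})$ sends each $B(T,T')$ to $\pm G(T'')$ for some $T''$, or to zero. Hence it suffices to show that $V_f(d\omega_k) \otimes V(d\omega_\ell)$ is itself a based subspace of the tensor product. Given that $V_f(d\omega_k)$ has basis $\{G(T) : T \in B_f(d\omega_k)\}$ by Theorem~\ref{thm:canbasis}, the task is to check that for every $T \in B_f(d\omega_k)$ and every $T' \in B(d\omega_\ell)$, the lower-order terms of the expansion of $B(T,T')$ have their first factor indexed by a tableau still in $B_f(d\omega_k)$.

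The main obstacle is precisely this compatibility of the cyclic Demazure crystal $B_f$ with the tensor product crystal structure. For a single opposite Demazure crystal $B_I$, the compatibility is standard: opposite Demazure modules are $U^-$-stable, and the lower-order terms in Lusztig's basis are generated by Kashiwara operators $\tilde{f}_i$ which preserve such submodules. Using the cyclic invariance of the canonical basis (Theorem~\ref{thm:canbasis}(4)), this extends to each cyclic translate $\chi^{a-1}(V_{\chi^{1-a}(I_a)})$. The delicate step is closing under intersection: one must verify that for $T \in B_f = \bigcap_a \chi^{a-1}(B_{\chi^{1-a}(I_a)})$ the lower-order terms of $B(T,T')$ remain inside $\chi^{a-1}(B_{\chi^{1-a}(I_a)})$ \emph{simultaneously} for every $a$, rather than exiting one translate while staying in another. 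I expect this to follow from a direct Littelmann-path analysis of the tensor product crystal, or from a description of $V_f(d\omega_k)$ as the space of sections of $\O(d)$ pulled back along the Richardson fibration $X^w_v \to \Pi_f$ underlying Proposition~\ref{prop:irred}.

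A parallel positivity-based route would sidestep the crystal combinatorics entirely: define the totally nonnegative part $\Pi_{f,\id,\geq 0}$ of the sphericoid as the image $\bigoplus(\Pi_{f,\geq 0} \times \Gr(\ell,n)_{\geq 0,\tau})$, using the twisted positive structure of~\eqref{eq:twisted} to absorb the signs in the comultiplication~\eqref{eq:factor}, and parametrize it by the spherical bipartite networks of Theorems~\ref{thm:sphebic} and~\ref{thm:equatorial}. If the analog of Theorem~\ref{thm:canbasis}(6) can be established in this setting — namely, that every $G(T)^*$ not identically vanishing on $\Pi_{f,\id}$ is strictly positive on a dense open subset of $\Pi_{f,\id,\geq 0}$ — then any relation $\sum_T c_T G(T)^* \equiv 0$ on $\Pi_{f,\id}$ would force each contributing summand to vanish separately, yielding the conjecture in the same manner as the matroid-theoretic derivation of Theorem~\ref{thm:oh}.
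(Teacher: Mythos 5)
The statement you address is presented in the paper only as a conjecture, supported by the degree-two and degree-three examples in the preceding subsections; the paper gives no proof, so there is nothing to compare your argument against. Your reduction via Theorem~\ref{thm:sphericoid} — that it suffices to show $\kappa^d_{k,\ell}\bigl(V_f(d\omega_k) \otimes V(d\omega_\ell)\bigr)$ is spanned by a subset of the canonical basis of $V(d\omega_{k+\ell})$ for every $d$ — is the right opening, and is in fact a stronger statement than the conjecture literally asserts; Lusztig's based-module theory is the natural framework. But there is one inaccuracy and one genuine gap.

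The inaccuracy: $V(d\omega_{k+\ell})$ is not ``the top irreducible summand'' of $V(d\omega_k)\otimes V(d\omega_\ell)$. The Cartan component has highest weight $d\omega_k+d\omega_\ell$, whose partition (for $k\geq\ell$) is $(2d^\ell, d^{k-\ell})$, not the rectangle $(d^{k+\ell})$. Rather, $(d^{k+\ell})$ is the unique \emph{minimal} constituent in dominance order. This does not sink your claim, because minimality means $\kappa^d_{k,\ell}$ is (up to scalar) the quotient $M\to M/M[>d\omega_{k+\ell}]$ in Lusztig's canonical filtration of the based module $M=V(d\omega_k)\otimes V(d\omega_\ell)$, and Lusztig's theorem that such a layer carries the induced canonical basis gives what you need — but you should invoke the filtration mechanism, not the Cartan-component one, and write ``bottom,'' not ``top.'' The genuine gap is the unproved assertion that $V_I(d\omega_k)\otimes V(d\omega_\ell)$ is a based submodule of the tensor product. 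Your heuristic — that the lower-order terms of $B(T,T')$ are obtained from $G(T)\otimes G(T')$ by applying Kashiwara operators $\tilde f_i$, which preserve opposite Demazure submodules — conflates the crystal picture (the $q\to0$ degeneration) with the canonical-basis picture at generic $q$: the lower-order corrections in Lusztig's tensor-product canonical basis are produced by the bar involution and quasi-R-matrix, not by crystal operators, and I know of no theorem in the literature that a Demazure module tensored with a full irreducible is always a based submodule. That compatibility is exactly what needs a proof, and your proposal does not supply one. Conversely, your worry about ``closing under intersection'' is unnecessary: if each $\chi^{a-1}\bigl(V_{\chi^{1-a}(I_a)}(d\omega_k)\bigr)\otimes V(d\omega_\ell)$ is spanned by a subset $S_a$ of the fixed basis $\{B(T,T')\}$, then the intersection is automatically spanned by $\bigcap_a S_a$; the simultaneity you fear is free once the per-translate statement is in hand.

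Your positivity route also breaks at the last step. A relation $\sum_T c_T G(T)^*\equiv 0$ on $\Pi_{f,\id}$ does not force each term to vanish, since the $c_T$ may have mixed signs. The analogous deduction for Theorem~\ref{thm:oh} works only because the weight spaces of $V(\omega_k)^*$ are one-dimensional, so a relation is confined to a single Pl\"ucker coordinate; for $d>1$ the weight spaces of $V(d\omega_{k+\ell})^*$ contain several $G(T)^*$, and positivity alone yields no contradiction from a cancelling combination. You would need a separate linear-independence input for the restrictions, which is essentially what is being conjectured.
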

%
%

\subsection{Sphebic graphs}
\label{sec:sphebic}
In Part \ref{part:one} of this work, we constructed points in $\Gr(k,n)_{\geq 0}$ by enumerating perfect matchings in a planar bipartite network.  We now discuss a construction of points in a sphericoid variety $\Pi_{f,f'}$ using bipartite graphs on a sphere.  The more general spherical bicolored graphs, might be called ``sphebic" graphs, following Postnikov's terminology.  

Let $S^2$ be the two-sphere and $H \subset S^2$ be the equator of the sphere, $H^+$ the upper hemisphere and $H^-$ the lower hemisphere, so that $H^+ \cap H^- = H$.  Both $H^+$ and $H^-$ are closed disks.
\begin{equation*}
\begin{tikzpicture}
\fill[ball color=blue!10] (0,0) circle (1.5 cm);
\newcommand\latitude[1]{%
  \draw (#1:1.5) arc (0:-180:{1.5*cos(#1)} and {0.2*cos(#1)});
  \draw[dashed] (#1:1.5) arc (0:180:{1.5*cos(#1)} and {0.2*cos(#1)});
}
\node at (-2,1) {$H^+$};
\node at (-2.13,0) {$H$};
\node at (-2,-1) {$H^-$};
\latitude{0};
\end{tikzpicture}
\qquad \qquad \qquad
\begin{tikzpicture}
\fill[ball color=blue!10] (0,0) circle (1.5 cm);
\newcommand\frontlatitude[1]{%
  \draw[very thick] (#1:1.5) arc (0:-180:{1.5*cos(#1)} and {0.2*cos(#1)});
}
\frontlatitude{0};
\draw (1,-0.15)--(1,0.3);
\draw (0,-0.2)--(0,0.3);
\draw (-1,-0.2)--(-1,0.3);
\draw (-1,0.3)--(-0.5,0.5);
\draw (-1,0.3)--(160:1.5);
\draw (-0.5,0.5)--(0,0.3);
\draw (-0.5,0.5)--(120:1.5);
\draw (0,0.3)--(60:1);
\draw (60:1)--(45:1.5);
\draw (60:1)--(85:1.5);

\draw (0,-0.2)--(0.5,-0.7)--(1,-0.15);
\draw (0.5,-0.7)--(300:1.5);
\draw (-1,-0.15)--(-1,-0.5)--(220:1.5);
\draw (-1,-0.5)--(-0.2,-0.8)--(270:1.5);

\blackdot{(-0.2,-0.8)}
\blackdot{(0,-0.2)}
\blackdot{(-1,-0.15)}
\blackdot{(1,-0.15)}
\whitedot{(-1,0.3)}
\whitedot{(-1,-0.5)}
\whitedot{(0.5,-0.7)}
\whitedot{(1,0.3)}
\whitedot{(-0.5,0.5)}
\whitedot{(0,0.3)}
\blackdot{(-0.5,0.5)}
\blackdot{(60:1)}
\end{tikzpicture}
\end{equation*}

A \defn{spherical bipartite network} is a weighted bipartite graph $N$ embedded into $S^2$ with distinguished vertices $1,2,\ldots,n$ arranged in order on $H$ such that $N \cap H$ consists only of these distinguished equatorial vertices, and both $N \cap H^+$ and $N \cap H^-$ are planar bipartite networks.

We now define the equatorial measurements of a spherical bipartite network $N$.  For simplicity, we will make the following assumption: all the vertices $1,2,\ldots,n$ on $H$ are black.  We can always add two-valent vertices (move (M2)) to arrange a boundary vertex to have the desired color, so we lose no generality.

An \defn{almost perfect matching} $\Pi$ of $N$ is a collection of edges using all vertices of $N \setminus H$ exactly once each, and using each of the vertices $1,2,\ldots,n$ either once or not at all.  The equatorial subset $I(\Pi)$ is the set of equatorial vertices that are used.  Let $I^+(\Pi)$ (resp. $I^-(\Pi)$) be the set of equatorial vertices connected to an edge in $H^+$ (resp. $H^-$), so we have $I(\Pi) = I^+(\Pi) \sqcup I^-(\Pi)$.  

Let 
$$
k = \#\{\text{white vertices in $N \setminus H$}\} - \#\{\text{black vertices in $N \setminus H$}\}.
$$
Then $|I(\Pi)| = k$ for any almost perfect matching $\Pi$ in $N$.  Define the {\it equatorial measurement}
$$
\Delta_I(N) \coloneqq \sum_{I(\Pi) = \Pi} (-1)^{\inv(I^+(\Pi),I^-(\Pi))} \wt(\Pi).
$$

\begin{theorem}\label{thm:sphebic}
Suppose $\Delta_I(N) \neq 0$ for some $I \in \binom{[n]}{k})$.  Then the vector $X(N) \coloneqq (\Delta_I(N) \mid I \in \binom{[n]}{k})$ defines a point in the Grassmannian $\Gr(k,n)$.
\end{theorem}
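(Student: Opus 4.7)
The plan is to reduce to the planar bipartite case by cutting the sphere along the equator. Write $N^\pm \coloneqq N \cap H^\pm$; each is a planar bipartite network whose boundary vertices are exactly the equatorial points $1,2,\ldots,n$, all black by hypothesis. Let $k^\pm$ denote the quantity ``number of interior white minus interior black vertices, plus number of boundary vertices incident to an interior black vertex'' for $N^\pm$, so that almost perfect matchings of $N^\pm$ have boundary set of size $k^\pm$, and verify by a direct count that $k = k^+ + k^-$.

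The first key step is the factorization
\begin{equation*}
\Delta_I(N) = \sum_{\substack{A \sqcup B = I \\ |A| = k^+,\, |B| = k^-}} (-1)^{\inv(A,B)} \, \Delta_A(N^+) \, \Delta_B(N^-).
\end{equation*}
This follows because an almost perfect matching $\Pi$ of $N$ restricts to a pair $(\Pi^+,\Pi^-)$ of almost perfect matchings of $N^\pm$; since all equatorial vertices are black, each such vertex is either matched in $H^+$ (contributing to $I^+(\Pi) = I(\Pi^+)$), or in $H^-$ (contributing to $I^-(\Pi) = I(\Pi^-)$), so the correspondence $\Pi \leftrightarrow (\Pi^+,\Pi^-)$ is a weight-preserving bijection between matchings with $I(\Pi)=I$ and pairs with $I(\Pi^+) \sqcup I(\Pi^-) = I$.

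The second step is to recognize the right-hand side as the formula for Pl\"ucker coordinates of a direct sum. By Theorem~\ref{thm:matchingplucker}, $X(N^+) \in \Gr(k^+,n)$ and $X(N^-) \in \Gr(k^-,n)$ (each has almost perfect matchings, since otherwise every term of the sum above would vanish, contradicting our hypothesis). Comparing with equation~\eqref{eq:factor}, the displayed formula is exactly $\phi_{k^+,k^-}(\Delta_I)$ evaluated at $(X(N^+), X(N^-))$, i.e.~it is the $I$th Pl\"ucker coordinate of the image under the direct sum rational morphism $\bigoplus\colon \Gr(k^+,n) \times \Gr(k^-,n) \dashrightarrow \Gr(k,n)$ of Section~\ref{sec:ampliideal}. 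Since we assume some $\Delta_I(N) \neq 0$, the direct sum $X(N^+) + X(N^-)$ has dimension exactly $k$, so $\bigoplus$ is defined at this point, and $X(N) = \bigoplus(X(N^+), X(N^-)) \in \Gr(k,n)$.

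The main obstacle is sign bookkeeping: the sign $(-1)^{\inv(I^+(\Pi), I^-(\Pi))}$ in the definition of $\Delta_I(N)$ must match the sign appearing in the Pl\"ucker-coordinate formula for $\bigoplus$. The correct sign convention is the one dictated by the exterior product $v_{i_1} \wedge \cdots \wedge v_{i_{k^+}} \wedge w_{j_1} \wedge \cdots \wedge w_{j_{k^-}}$ being rewritten in standard order on $\{1,\ldots,n\}$; this introduces precisely $(-1)^{\inv(A,B)}$, matching \eqref{eq:factor}. The secondary technical point is checking the count $k = k^+ + k^-$ carefully (the equatorial vertices are interior to neither $N^+$ nor $N^-$ in the sphebic sense, but are boundary in each planar restriction), which amounts to combining the two definitions and using that each equatorial vertex is adjacent to an equal number of edges in $H^+$ and $H^-$ overall.
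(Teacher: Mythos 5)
Your proof is correct and follows essentially the same route as the paper, which deduces Theorem \ref{thm:sphebic} from Theorem \ref{thm:equatorial}: cut along the equator, identify $\Delta_I(N)$ with $\phi_{k^+,k^-}(\Delta_I)$ evaluated at $(X(N^+),X(N^-))$ via \eqref{eq:factor}, and conclude $X(N)=\bigoplus(X(N^+),X(N^-))$. You have simply written out the details of the paper's one-line proof of the latter; the sign bookkeeping and the count $k=k^++k^-$ both check out as you describe.
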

Here we say ``affine cone" because it is possible that all $\Delta_I(G)$ are zero.  Theorem \ref{thm:sphebic} follows from the next result.

\begin{theorem}\label{thm:equatorial}
Let $X^+ \coloneqq X(N\cap H^+) \in \Gr(k_1,n)$ and $X^- \coloneqq X(N\cap H^-) \in \Gr(k_2,n)$ be the points represented by the upper and lower planar bipartite networks.  Then $X(N) = \bigoplus(X^+,X^-) \in\Gr(k,n) \cup \{0\}$, and $k = k_1+k_2$.
\end{theorem}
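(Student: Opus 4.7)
The strategy is to realize the spherical matching problem as two coupled disk matching problems, then recognize the resulting sum formula as the algebraic map \eqref{eq:factor} dual to $\bigoplus$.

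First I would set up the combinatorial bijection between almost perfect matchings on the sphere and pairs of almost perfect matchings on the two hemispheric disks. Since $N \cap H$ consists only of the equatorial vertices, every edge of $N$ lies in exactly one of $H^+$ or $H^-$, so any almost perfect matching $\Pi$ of $N$ decomposes uniquely as $\Pi = \Pi^+ \sqcup \Pi^-$ with $\Pi^{\pm} \subseteq N \cap H^{\pm}$. Each vertex of $N \setminus H$ lies in exactly one open hemisphere and is used once by $\Pi$, hence once by the corresponding $\Pi^{\pm}$. An equatorial vertex (a boundary vertex for both disks) is used by $\Pi^{+}$ (resp.\ $\Pi^{-}$) precisely when its $H^{+}$-edge (resp.\ $H^{-}$-edge) lies in $\Pi$; it cannot be used by both since $\Pi$ is a matching. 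So each $\Pi^{\pm}$ is an almost perfect matching of the planar bipartite network $N \cap H^{\pm}$, and under the assumption that all equatorial vertices are black one checks that $I(\Pi^{+}) = I^{+}(\Pi)$ and $I(\Pi^{-}) = I^{-}(\Pi)$, so the bijection $\Pi \leftrightarrow (\Pi^{+},\Pi^{-})$ restricts to $I(\Pi) = I$ being matched with decompositions $I = I^{+} \sqcup I^{-}$, $|I^{\pm}| = k_{\pm}$ say, and the pair of matchings $(\Pi^{+},\Pi^{-})$ with $I(\Pi^{\pm}) = I^{\pm}$.

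Next I would read off the Pl\"ucker identity. Since $\wt(\Pi) = \wt(\Pi^{+})\wt(\Pi^{-})$, the bijection above gives
\begin{equation*}
\Delta_I(N) \;=\; \sum_{I = I^{+} \sqcup I^{-}} (-1)^{\inv(I^{+},I^{-})} \, \Delta_{I^{+}}(X^{+}) \, \Delta_{I^{-}}(X^{-}),
\end{equation*}
where the sum runs over disjoint decompositions with $|I^{+}|= k_1$. Comparing with \eqref{eq:factor}, this is precisely $\phi_{k_1,k_2}(\Delta_I)$ evaluated at $(X^{+},X^{-})$, which by definition of $\bigoplus$ equals $\Delta_I(\bigoplus(X^{+},X^{-}))$. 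Hence the vectors of Pl\"ucker coordinates of $X(N)$ and $\bigoplus(X^{+},X^{-})$ agree, with the convention that both equal $0$ exactly when $X^{+} \cap X^{-} \neq 0$ in $\C^n$. This establishes the first claim. For the rank equality $k = k_1 + k_2$, I would use the formula $k_i = d_i + d'_i$ of Section \ref{sec:dimer}: since every equatorial vertex is black and is joined in each hemisphere to an interior white vertex (forced by bipartiteness of $N \cap H^{\pm}$), we have $d'_1 = d'_2 = 0$, so $k_i$ reduces to $(\text{white in } H^{\pm}\setminus H) - (\text{black in } H^{\pm}\setminus H)$, and these add to the sphere-defined $k$.

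The main obstacle is the sign bookkeeping. The sign $(-1)^{\inv(I^+,I^-)}$ appearing in the definition of the equatorial measurement is engineered to match the sign $(-1)^{\inv(J,I\setminus J)}$ in \eqref{eq:factor}, and this matching in turn depends on a careful choice of how the cyclic order of the equatorial vertices is induced on the two hemispheric disks $H^+$ and $H^-$ (which inherit opposite boundary orientations from $H$). To confirm that the combinatorics and the algebra line up, I would fix an explicit orientation convention--for instance, viewing both $H^+$ and $H^-$ as disks with the equatorial vertices read in the order $1,2,\ldots,n$ after a reflection of $H^-$--and verify that the resulting sign matches \eqref{eq:factor}; if a discrepancy appears it should be absorbable into the sign convention for Pl\"ucker coordinates under the identification used to define $\Delta_{I^{-}}(N \cap H^-)$, analogous to the sign $(-1)^{\inv(I,[n]\setminus I)}$ appearing in the kernel map of Section \ref{sec:ampliideal} and Remark \ref{rem:kernsign}. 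Once this sign is pinned down, everything else is a transcription of the combinatorial bijection into the algebra of \eqref{eq:factor}.
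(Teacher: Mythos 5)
Your proposal is correct and takes the same route as the paper's own proof, which is stated in one line as ``Follows from the definition of equatorial measurements and equation \eqref{eq:factor}''; you have simply unpacked that sentence by exhibiting the bijection $\Pi\leftrightarrow(\Pi^+,\Pi^-)$, multiplying weights, matching the sign $(-1)^{\inv(I^+,I^-)}$ against \eqref{eq:factor}, and checking $k_1+k_2=k$ via the $d'=0$ observation. The orientation caveat you flag at the end is real and worth noting, since the two hemispheric disks inherit opposite boundary orientations from $H$, but this is a matter of fixing conventions rather than a gap in the argument.
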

Here $X(N) = 0$ means that all $\Delta_I(N) = 0$ for all $I$.
\begin{proof}
Follows from the definition of equatorial measurements and equation \eqref{eq:factor}.
\end{proof}

Note that if $V^+ + V^-$ has dimension smaller than $k = k_1 + k_2$, then $X(N) = 0$.  The equatorial measurement vector $X(N)$ is usually not nonnegative, even when all the weights are nonnegative.  

Let $f \in \Bound(k,n)$ and $f' \in \Bound(\ell,n)$.
Let $N^+(a_1,a_2,\ldots,a_d)$ be a planar bipartite network (with edge weights $a_i$ varying over $\R_{>0}^d$ or $\C^d$) that ``parametrizes" $(\Pi_f)_{>0}$ or $\Pi_f$.  Let $N^-(b_1,b_2,\ldots,b_{d'})$ be a planar bipartite network that parametrizes $(\Pi_{f'})_{>0}$ or $\Pi_{f'}$.  Let $N(a_1,\ldots,a_d,b_1,\ldots,b_{d'})$ be the spherical bipartite network obtained from $N^+$ and $N^-$ by gluing them at the boundary vertices.  

\begin{proposition}
The Zariski closure of $\{X(N(a_1,a_2,\ldots,a_d,b_1,\ldots,b_{d'}))\} \subset \Gr(k+\ell,n)$ is $\Pi_{f,f'}$.
\end{proposition}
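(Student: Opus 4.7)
The plan is to combine the identification in Theorem \ref{thm:equatorial} with the Zariski density of network parametrizations, so that the proposition becomes essentially a tautology once the right diagram is set up.

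First, I would use Theorem \ref{thm:equatorial} to factor the parametrization as
$$\R_{>0}^{d} \times \R_{>0}^{d'} \xrightarrow{M_{G^+} \times M_{G^-}} \Pi_{f,>0} \times \Pi_{f',>0} \xrightarrow{\bigoplus} \Gr(k+\ell,n),$$
where the second arrow is the rational map $\bigoplus:(X,K) \mapsto X+K$ of Section \ref{sec:ampliideal}. Thus the set whose Zariski closure we want to compute is exactly the image under $\bigoplus$ of the image of $M_{G^+} \times M_{G^-}$.

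Next, I would invoke Theorem \ref{thm:main}(3) to say that $M_{G^\pm}$ is a bijection from $(\L_{G^\pm})_{>0}$ onto the positroid cells $\Pi_{f,>0}$ and $\Pi_{f',>0}$; combining this with Theorem \ref{thm:Zariskidense}, each of these cells is Zariski dense in the corresponding positroid variety. Since $\Pi_f$ and $\Pi_{f'}$ are irreducible (Proposition \ref{prop:irred}), the product $\Pi_f \times \Pi_{f'}$ is irreducible and $\Pi_{f,>0}\times \Pi_{f',>0}$ is Zariski dense in it. The rational map $\bigoplus$ is defined on a Zariski open subset $U \subseteq \Gr(k,n) \times \Gr(\ell,n)$, and by a standard fact the Zariski closure of the image of a Zariski-dense subset of $\Pi_f \times \Pi_{f'}$ under $\bigoplus$ equals the Zariski closure of $\bigoplus(\Pi_f \times \Pi_{f'}) = \Pi_{f,f'}$. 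This gives the desired equality, provided the dense subset meets the domain $U$ of $\bigoplus$.

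The main obstacle is the last proviso: one must check that $(\Pi_{f,>0} \times \Pi_{f',>0}) \cap U$ is non-empty, or equivalently, that the exceptional locus $\{(X,K) : \dim(X+K) < k+\ell\}$ does not swallow $\Pi_{f,>0} \times \Pi_{f',>0}$. Since $\Pi_{f,f'}$ is defined as $\overline{\bigoplus(\Pi_f \times \Pi_{f'})}$ and is nonempty, the open set $U$ meets $\Pi_f \times \Pi_{f'}$ along a dense open subset; to show that it meets even the positive part, I would argue directly by producing an explicit pair of totally nonnegative representatives whose row spans intersect only in $0$. For instance, one can scale the edge weights of $N^+$ to be large and of $N^-$ to be small so that the supports of the leading Pl\"ucker coordinates $\Delta_{I_1}(X^+)$ and $\Delta_{I_2}(X^-)$ are disjoint for the Grassmann necklace elements, forcing $X^+ \cap X^- = 0$. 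Once this one explicit point is found, density of $\Pi_{f,>0} \times \Pi_{f',>0}$ in the irreducible variety $\Pi_f \times \Pi_{f'}$ implies that a Zariski-open dense subset of the parametrization lies in $U$, and the closure computation above completes the proof.
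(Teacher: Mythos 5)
Your main approach is correct and is the natural one: Theorem \ref{thm:equatorial} identifies the parametrization with the composite
$$\R_{>0}^{d}\times\R_{>0}^{d'}\xrightarrow{M_{G^+}\times M_{G^-}}\Pi_{f,>0}\times\Pi_{f',>0}\xrightarrow{\ \bigoplus\ }\Gr(k+\ell,n),$$
and Theorem \ref{thm:Zariskidense} gives Zariski density of the positroid cells in the (irreducible, by Proposition \ref{prop:irred}) positroid varieties. The paper gives no proof, so there is no competing argument to compare against.

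However, your final paragraph is both unnecessary and flawed. It is unnecessary because the nonemptiness you worry about is automatic from general topology: if $U$ denotes the domain of $\bigoplus$, then $U\cap(\Pi_f\times\Pi_{f'})$ is Zariski open in the irreducible variety $\Pi_f\times\Pi_{f'}$; if it is empty, then $\Pi_{f,f'}=\emptyset$ and by Theorem \ref{thm:equatorial} the parametrization returns $0$ for all edge weights, so both sides of the Proposition are empty; if it is nonempty, it is dense open, and the intersection of a dense set ($\Pi_{f,>0}\times\Pi_{f',>0}$) with a dense open set is nonempty, in fact dense. No explicit point is needed. It is flawed because the construction you sketch does not work: the Grassmann necklace elements $I_1(f),I_1(f')$ are combinatorial data fixed by $f$ and $f'$ and do not move as you scale edge weights, and in any case disjointness of the lexicographically minimal bases of $X^+$ and $X^-$ does not force $X^+\cap X^-=0$. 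Replace that paragraph with the density argument above and the proof is complete.
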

%

\section{Facets of Grassmann polytopes}\label{sec:facets}
In this section, we give taste of the facial structure of Grassmann polytopes.  We will not attempt a full development of the theory; instead we will illustrate some definitions with examples computed using the techniques of Sections \ref{sec:trunc} and \ref{sec:ampliideal}.  

\subsection{Definition of facet}

Let $P = Z(\Pi_{h,\geq 0}) \subset \Gr(k,r)$ be a Grassmann polytope.   
Let $p$ be a homogeneous element of the homogeneous coordinate ring $R(k,r)$ of $\Gr(k,r)$.  We call the set
$$
F \coloneqq \{p = 0\} \cap P
$$
a \defn{global geometric facet} of $P$ if

\noindent
(1) $p$ takes a constant sign on $P$, and

\noindent
(2)
$F$ contains $Z(\Pi_{g,\geq 0})$ for some $g < h \in \hBound(k,n)$ satisfying $\dim(Z(\Pi_{g,\geq 0})) = \dim(P) -1$.

To make sense of condition (1) precisely, we use the corresponding cones in $\hGr(k,r)$, as in Remark \ref{rem:cones}.  

By definition, a global geometric facet $F$ contains at least one Grassmann polytope $Z(\Pi_{g,\geq 0})$.  In fact, $F$ is typically a (non-disjoint) union of many Grassmann polytopes, as we'll illustrate.

\begin{remark}
Here we only define the notion of a global facet.  There are other sets on the boundary of $P$ that may be considered facets and do not satisfy these conditions.
\end{remark}

\subsection{Facets of the amplituhedron}
In this section, we assume that $Z$ is positive and $P = Z(\Gr(k,n)_{\geq 0})$ is the amplituhedron.  Write $X, K, S$ for typical points in the Grassmannians $\Gr(k,n), \Gr(\ell,n)$, and $\Gr(k+\ell,n)$.  If $p$ is a torus-invariant polynomial (that is, a weight vector in $R(k+\ell,n)_d$ for some $d$) in the Pl\"ucker coordinates $\Delta_J(S)$, write $\psi(p)$ for the polynomial obtained via the substitution $\Delta_J(S) \mapsto \Delta(Y,Z_{[n]\setminus J})$.  By Proposition \ref{prop:YPi} if  $p$ vanishes on $\Pi_{f,\id}$ then $\psi(p)$ vanishes on $Z(\Pi_f)$.

We state the positive version of this result.  Recall that the twisted totally nonnegative Grassmannian $\Gr(\ell,n)_{\geq 0, \tau})$ is defined in \eqref{eq:twisted}.
\begin{lemma}\label{lem:transl}
Suppose $Z$ is positive.  If $p$ has a fixed sign on $\bigoplus(\Pi_{f,\geq 0}, \Gr(\ell,n)_{\geq 0, \tau})$ then $\psi(p)$ has a fixed sign on $Z(\Pi_{f,\geq 0})$.
\end{lemma}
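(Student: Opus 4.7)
The plan is to establish a precise identity that relates $\psi(p)$ evaluated at $Y = X\cdot Z$ to $p$ evaluated on the direct sum $X + \ker(Z)$, and then use positivity of $Z$ to land in the set where the sign hypothesis applies.

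First I would derive the key identity
$$\Delta(Y, Z_{[n]\setminus I}) \;=\; \theta_I \cdot \Delta_I(X + K),$$
for $Y = X \cdot Z$, $K = \ker(Z)$, and $I \in \binom{[n]}{k+\ell}$, where $\theta_I = (-1)^{\lceil k/2\rceil + \lceil \ell/2\rceil + o(I)}$ is the sign from the involution $\theta$ of Section \ref{sec:ampliideal}. Cauchy--Binet applied to $\Delta(Y, Z_{[n]\setminus I}) = \det\bigl(\begin{smallmatrix} X\cdot Z \\ Z_{[n]\setminus I}\end{smallmatrix}\bigr)$ forces the contributing column set to be of the form $L = ([n]\setminus I)\cup J$ with $J\in\binom{I}{k}$, producing a shuffle sign $(-1)^{\inv(J,[n]\setminus I)}$. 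The definition of $\ker$ then gives $\Delta_{([n]\setminus I)\cup J}(Z) = (-1)^{\inv(I\setminus J,[n]\setminus(I\setminus J))}\Delta_{I\setminus J}(K)$, and comparison with the formula \eqref{eq:factor} for $\Delta_I(X+K)$ reduces the verification to checking that
$$\inv(J,[n]\setminus I) + \inv(I\setminus J,[n]\setminus(I\setminus J)) - \inv(J, I\setminus J)$$
depends on $I$ only modulo $2$. Using $\inv(J, I\setminus J) + \inv(I\setminus J, J) = k\ell$ and $\lceil k/2\rceil + \binom{k}{2}\equiv k\pmod 2$ (and likewise for $\ell$), the $J$-dependent terms cancel and the residual sign is exactly $\theta_I$.

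Second, I would transfer the hypothesis. The derived identity says that as points of $\Gr(k+\ell, n)$ the Pl\"ucker vector $(\Delta(Y, Z_{[n]\setminus I}))_I$ equals $\theta(X+K)$, whence
$$\psi(p)(Y) \;=\; p\bigl(\theta(X+K)\bigr) \;=\; (\theta^* p)(X+K).$$
Positivity of $Z$ gives $(-1)^{\inv(I,[n]\setminus I)}\Delta_I(\ker(Z)) = \Delta_{[n]\setminus I}(Z) \geq 0$, so $K = \ker(Z) \in \Gr(\ell,n)_{\geq 0, \tau}$. Consequently $X + K \in \bigoplus(\Pi_{f,\geq 0},\Gr(\ell,n)_{\geq 0,\tau})$ for every $X \in \Pi_{f,\geq 0}$, and the hypothesis ensures $p(X+K)$ has fixed sign as $X$ varies.

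Finally, I would absorb the $\theta^*$ twist. By Remark \ref{rem:kernsign}, $\theta_I$ depends only on $o(I)$, hence only on the torus weight of $\Delta_I$; thus $\theta^*$ acts by a single scalar $\epsilon_w \in \{\pm 1\}$ on each weight space of $R(k+\ell,n)$. Since every $p$ arising in the applications of this lemma (Proposition \ref{prop:YPi} and Theorem \ref{thm:sphericoid}, via dual canonical basis elements in $I(\Pi_{f,\id})$) is a weight vector, $\theta^*p = \epsilon\, p$ globally, and so $\psi(p)(Y) = \epsilon\, p(X+K)$ inherits the fixed sign of $p$.

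The main obstacle is the sign bookkeeping in the first step, which is routine but must be executed with care; a secondary subtlety is that ``fixed sign'' is not additive, so extending the argument from weight vectors to an arbitrary polynomial $p$ requires either a torus-decomposition argument on the $(\R_{>0})^n$-invariant set $\bigoplus(\Pi_{f,\geq 0},\Gr(\ell,n)_{\geq 0,\tau})$ or a reduction to weight-homogeneous components, which is sufficient for every use of the lemma in the sequel.
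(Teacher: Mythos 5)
Your proof is correct and takes the route the paper's own framework dictates: identify $\psi(p)(Y)$ for $Y = X\cdot Z$ with $(\theta^*p)(X+\ker Z)$, check that positivity of $Z$ places $\ker Z$ in $\Gr(\ell,n)_{\geq 0,\tau}$, and absorb $\theta^*$ as a weight-space sign. The sign arithmetic in your Cauchy--Binet step works out (I checked: the $J$-dependence of $\inv(J,[n]\setminus I) - \inv(J,I\setminus J) - \inv(I\setminus J, [n]\setminus(I\setminus J))$ cancels modulo $2$, and the residual term matches $\theta_I$ using $\lceil(k+\ell)/2\rceil + k\ell \equiv \lceil k/2\rceil + \lceil\ell/2\rceil \pmod 2$). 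One remark: the identity $\Delta(Y,Z_{[n]\setminus I}) = \theta_I\,\Delta_I(X+K)$ that you derive from scratch is already the content of the paper's own construction of $\bigoplus$ as the composition of the map \eqref{eq:mu} with $\ker^{-1}$ and $\theta$ in Section \ref{sec:ampliideal}, so you could replace your first two paragraphs with a citation to that construction. Your closing observation about weight vectors is apt; note that the paper's definition of $\psi$ in Section \ref{sec:facets} does already build in the hypothesis that $p$ is a weight vector, so this is not a gap but rather the intended reading of the statement.
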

Here and henceforth, ``fixed sign" is made sense of by using cones (see Remark \ref{rem:cones}).

Say that $I \in \binom{[n]}{m}$ satisfies the \defn{evenness condition} if for every $i,i' \notin I$, the number of elements in $I$ between $i$ and $i'$ is even.  For example, $\{2,3,6,7\} \subset [8]$ satisfies the evenness condition.  The following result is well known for cyclic polytopes \cite{Zie}, and is discussed in \cite{AT} for amplituhedra with even $m$.

\begin{proposition}\label{prop:AT}
Suppose $Z$ is positive.  Let $I$ satisfy the evenness condition.  Then $\Delta(Y,Z_I)$ takes a fixed sign on $P = Z(\Gr(k,n)_{\geq 0})$.
\end{proposition}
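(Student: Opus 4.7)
\medskip
\noindent
\textbf{Proof plan.}
The plan is to translate the claim to the direct sum picture via Lemma \ref{lem:transl} and then verify the sign coherence directly from the explicit expansion in \eqref{eq:factor}. By the identification of Section \ref{sec:ampliideal}, $\Delta(Y,Z_I)$ corresponds (up to the signs $\ker$ and $\theta$, which act as a single overall sign on each weight space by Remark \ref{rem:kernsign}) to the Pl\"ucker coordinate $\Delta_J$ of a point in $\Gr(k+\ell,n)$, where $J \coloneqq [n]\setminus I$. Via the map $\bigoplus$, Lemma \ref{lem:transl} reduces the claim to: for every $X \in \Gr(k,n)_{\geq 0}$ and $K \in \Gr(\ell,n)_{\geq 0,\tau}$, the quantity $\Delta_J(X+K)$ has a fixed sign, independent of $(X,K)$.

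The key step is the sign computation. By \eqref{eq:factor},
$$
\Delta_J(X+K) = \sum_{L \subset J,\, |L|=k} (-1)^{\inv(L,J\setminus L)} \Delta_L(X)\, \Delta_{J\setminus L}(K).
$$
Since $\Delta_L(X) \geq 0$ and, by definition of the twisted nonnegative part, $\Delta_{J\setminus L}(K)$ has sign $(-1)^{\inv(J\setminus L,[n]\setminus(J\setminus L))}$, the sign of the $L$-summand is
$$
(-1)^{\inv(L,J\setminus L) + \inv(J\setminus L,L) + \inv(J\setminus L,I)} = (-1)^{k\ell + \inv(J\setminus L,I)},
$$
where we used the disjoint decomposition $[n]\setminus(J\setminus L) = L \sqcup I$ and the identity $\inv(L,J\setminus L)+\inv(J\setminus L,L)=k\ell$. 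It therefore suffices to show that the parity of $\inv(M,I)$ is constant as $M$ ranges over $\ell$-element subsets of $[n]\setminus I$.

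The final step is to observe that any two such $M, M'$ are connected by a sequence of single-element swaps, and that for a swap replacing $m \in M$ by $m' \notin M \cup I$ with $m < m'$, the change in $\inv(M,I)$ equals $\#\{i \in I \mid m < i < m'\}$. The evenness condition on $I$ says precisely that this count is even for every pair $m<m'$ in $[n]\setminus I$ (take any two consecutive non-$I$ elements to generate the general case). Hence $(-1)^{\inv(M,I)}$ is constant over $M$, so every summand contributes the same sign and $\Delta_J(X+K)$ has a fixed sign. Transporting back via Lemma \ref{lem:transl} yields the claim.

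I do not anticipate a genuine obstacle here: the content is an elementary sign book-keeping plus a combinatorial observation about the evenness condition. The only subtle point is to keep track of the three different sign conventions in play (the $\ker$ and $\theta$ twists from Section \ref{sec:ampliideal}, the twist in the definition of $\Gr(\ell,n)_{\geq 0,\tau}$ in \eqref{eq:twisted}, and the sign $(-1)^{\inv(L,J\setminus L)}$ in the coproduct formula \eqref{eq:factor}); Remark \ref{rem:kernsign} ensures that the first of these reduces to a single overall sign within a fixed weight space and so is harmless.
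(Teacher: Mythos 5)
Your proof is correct and uses essentially the same route as the paper: reduce via Lemma~\ref{lem:transl} to a sign analysis of the expansion~\eqref{eq:factor} over $\Gr(k,n)_{\geq 0}\times\Gr(\ell,n)_{\geq 0,\tau}$, and then check that the evenness condition forces constant parity of the summands. Your sign bookkeeping (decomposing $\inv(J\setminus L,[n]\setminus(J\setminus L))$ directly and using $\inv(L,J\setminus L)+\inv(J\setminus L,L)=k\ell$, rather than routing through the odd-element count in Remark~\ref{rem:kernsign}) and the explicit single-element swap argument are slightly more detailed than the paper's, which merely asserts the constant-parity claim, but the underlying argument is the same.
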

\begin{proof}
Let $J = [n] \setminus I$.  Then 
\begin{equation}\label{eq:exp}
\phi(\Delta_J(S)) = \sum_{L \subset J} (-1)^{\inv(L,J\setminus L)} \Delta_L(X) \Delta_{J \setminus L}(K).
\end{equation}
  Now, if $X \in \Gr(k,n)_{\geq 0}$ and $K \in \Gr(\ell,n)_{\geq 0, \tau}$, then the term indexed by $L \in \binom{[n]}{k}$ will have sign (by Remark \ref{rem:kernsign})
$$
(-1)^{\inv(L,J\setminus L)} (-1)^{o(J \setminus L) + \lceil k/2 \rceil}
$$
where $o(T)$ denotes the number of odd elements in $T$.  When $I$ satisfies the evenness condition, the parity of $\inv(L,J\setminus L) + o(J\setminus L)$ does not depend on $L$.  Thus $\Delta_J(S)$ has a fixed sign on $\bigoplus(\Gr(k,n)_{\geq 0}, \Gr(\ell,n)_{\geq 0, \tau})$.
By Lemma \ref{lem:transl}, $\Delta(Y,Z_I)$ takes a fixed sign on $P$.
\end{proof}

Let us investigate the intersection $P \cap \Delta(Y,Z_I)$ for $I$ satisfying the evenness condition.  
Set $J = [n] \setminus I$.   Then $\Delta(Y,Z_I)$ vanishes at $Y = Z_\Gr(X)$ only if all the monomials in \eqref{eq:exp} vanish, and this happens exactly when $\Delta_L(X) = 0$ vanishes for every $L \in \binom{[J]}{k}$.  The ideal generated by $\{\Delta_L \mid L \in \binom{[J]}{k}\} \subset R(k,n)$ is the Schubert variety $A \subset \Gr(k,n)$ given by
\begin{equation}\label{eq:A}
A = \{X \in \Gr(k,n) \mid \dim(X \cap \spn( e_i \mid i \in I)) \geq 1\}.
\end{equation}
Thus when $Z$ is positive, the geometric facet cut out by $\{\Delta(Y,Z_I)=0\}$ is given by 
$$
P \cap \{\Delta(Y,Z_I)=0\} = Z(A_{\geq 0})
$$
where $A_{\geq 0} \coloneqq A \cap \Gr(k,n)_{\geq 0}$.  Since we have a disjoint union $\Gr(k,n)_{\geq 0} = \bigsqcup \Pi_{f,>0}$, and every $X \in \Pi_{f,>0}$ has matroid equal to $\M(f)$, we see that 
\begin{equation}\label{eq:Ageq}
A_{\geq 0} = \bigsqcup_{\Pi_g \subseteq A} \Pi_{g, \geq 0}.
\end{equation}
In other words, $A_{\geq 0}$ only contains points in $\Pi_{g,\geq 0}$ when the whole of $\Pi_g$ is contained in $A$.  Note that the equality $A =  \bigsqcup_{\Pi_g \subseteq A} \Pi_{g}$ is certainly not true.

We can now explain one of the motivations for our study of canonical bases in Sections \ref{sec:coord} and \ref{sec:ampliideal}.  By Theorem \ref{thm:Posideal}(6), we have complete control of the vanishing and non-vanishing of canonical basis elements on $\Gr(k,n)_{\geq 0}$ (generalizing the fact that we have classified all positroids in Section \ref{sec:positroids}).  If the variety $A$ were to be cut out not simply by minors, but by higher degree elements of the canonical basis, then we would obtain a union analogous to \eqref{eq:Ageq} by using Theorem \ref{thm:Posideal}.

%

\medskip

Let us investigate the union \eqref{eq:Ageq} further.
%

\subsubsection{Suppose $m = 2$.}  Without loss of generality, we can pick $I = \{1,2\}$, and so $J = \{3,4,\ldots,n\}$.  Then $A$ is given by rank conditions on cyclically consecutive intervals, so it is itself a positroid variety.  
For example, if $k = 2$, it is the positroid variety indexed by the bounded affine permutation $g = [3,1+n,4,5,6,\ldots,n,2]$.  Applying Theorem \ref{thm:trunc}, it is not hard to see that $A$ contains a positroid variety (indeed, a Schubert variety) $\Pi_{g'}$ such that $g' \in \II(Z)$ and $\dim(\Pi_{g'}) = 2k-1 = \dim(P) -1$.  In particular, $Z(A_{\geq 0})$ itself has dimension $\dim(P)-1$.  Thus $Z(A_{\geq 0})$ is a global geometric facet of $P$ which is itself a single Grassmann polytope. 

\subsubsection{Suppose $m = 4$.} For simplicity assume that $k = 2$ and $n = 8$.

(a) We first consider $I = \{1,2,3,4\}$.  Then $J = \{5,6,7,8\}$ and $A$ is again itself a positroid variety.  The facet is simply $Z(A_{\geq 0})$, a single Grassmann polytope.

(b) Now suppose $I = \{1,2,4,5\}$.  Then $J = \{3,6,7,8\}$.  In this case $A$ is not a positroid variety.  
The maximal positroid varieties $A_1,A_2,A_3,A_4$ contained in $A$ are given by the rank conditions
\begin{align*}
A_1 &\coloneqq \rank(\{3,4,5,6,7,8\}) \leq 1, \\
A_2 &\coloneqq \rank(\{1,2,3,6,7,8\}) \leq 1, \\
A_3 &\coloneqq \rank(\{6,7,8\}) \leq 1, \;\;\rank(\{3\}) = 0, \\
A_4 &\coloneqq \rank(\{6\}) = \rank(\{7\}) =  \rank(\{8\}) = 0.
\end{align*}
One deduces from Theorem \ref{thm:trunc} and Proposition \ref{prop:k2} that $\dim(Z(A_{3,\geq 0})) = \dim(P) - 1$, but $\dim(Z(A_{4,\geq 0})) = \dim(P) - 2$ and $\dim(Z(A_{1,\geq 0})) = \dim(Z(A_{2,\geq 0})) \leq \dim(P) - 2$.  On the other hand, this facet is not equal to $Z(A_{3,\geq 0})$ itself.
%

To see this, note that on $Z(A_3)$, we have
$$
\det(Y,Z_{1,2,4,8}) = \Delta_{56}(X) \Delta_{5,6,1,2,4,8}(Z) + \Delta_{57}(X) \Delta_{5,7,1,2,4,8}(Z) 
$$
since $\Delta_{35}(X) = \Delta_{36}(X) = \Delta_{37}(X) = \Delta_{67}(X) = 0$ when $X \in A_3$.  Both terms are positive when $Z$ is positive and $X \in A_{3,> 0}$.  Thus the function $\det(Y,Z_{1,2,4,8})$ takes a fixed sign on $Z(A_{3,\geq 0})$.  However, on $Z(A_2)$, we have
$$
\det(Y,Z_{1,2,4,8}) = \Delta_{35}(X) \Delta_{3,5,1,2,4,8}(Z) + \Delta_{56}(X) \Delta_{5,6,1,2,4,8}(Z)  +\Delta_{57}(X) \Delta_{5,7,1,2,4,8}(Z)
$$
and there are terms of both signs.  So the function $\det(Y,Z_{1,2,4,8})$ takes both positive and negative values on $Z(A_{2,\geq 0})$.  Thus $Z(A_{2,\geq 0})$ is not contained in $Z(A_{3,\geq 0})$.  So in this case the facet is a non-trivial union of Grassmann polytopes of different dimensions.  There is, however, a unique ``component'' which has dimension $\dim(P)-1$, in this case.

%

(c) Now suppose $I = \{1,2,5,6\}$ and $n = 8$.  Then $J = \{3,4,7,8\}$.  In this case $A$ is not a positroid variety.  
The maximal positroid varieties $A_1,A_2,A_3,A_4$ contained in $A$ are given by the rank conditions
\begin{align*}
A_1 &\coloneqq \rank(\{3,4,5,6,7,8\}) \leq 1, \\
A_2 &\coloneqq \rank(\{1,2,3,4,5,6\}) \leq 1, \\
A_3 &\coloneqq \rank(\{7,8\}) \leq 1, \;\;\rank(\{3\}) =\rank(\{4\})= 0, \\
A_4 &\coloneqq \rank(\{3,4\}) \leq 1, \;\;\rank(\{7\}) =\rank(\{8\})= 0.
\end{align*}
By Proposition \ref{prop:k2}, we have
\begin{align*}
[A_1] = s_{\tabll{&&&&}},\qquad
[A_2] = s_{\tabll{&&&&}}, \qquad
[A_3] = s_{\tabll{&&\\&}}, \qquad
[A_4] = s_{\tabll{&&\\&}}.
\end{align*}
Thus $\dim(Z(A_{3,\geq 0})) = \dim(Z(A_{4,\geq 0})) = \dim(P) - 1$ are codimension one.  We shall show that neither $Z(A_{3,\geq 0})$ or $Z(A_{4,\geq 0})$ contains the other.
Let us consider the function $\det(Y,Z_{2,6,7,8})$ on $Z(A_3)$ and $Z(A_4)$.  On $Z(A_{3,\geq 0})$, we have
$$
\det(Y,Z_{2,6,7,8}) = \Delta_{15}(X)\Delta_{1,5,2,6,7,8}(Z) \leq 0,
$$
since $\Delta_{13}(X)=\Delta_{14}(X)=\Delta_{34}(X)=\Delta_{35}(X)=\Delta_{45}(X) = 0$ on $A_3$.  On $Z(A_{4,\geq 0})$, we have
\begin{align*}
\det(Y,Z_{2,6,7,8}) = &\Delta_{13}(X) \Delta_{1,3,2,6,7,8}(Z) + \Delta_{14}(X) \Delta_{1,4,2,6,7,8}(Z) + \Delta_{15}(X) \Delta_{1,5,2,6,7,8}(Z) \\ + & \Delta_{35}(X) \Delta_{3,5,2,6,7,8}(Z) + \Delta_{45}(X) \Delta_{4,5,2,6,7,8}(Z).
\end{align*}
There are terms of both signs, and this function takes both positive and negative values on $Z(A_{4,\geq 0})$.  Similarly, there are functions that take a fixed sign on $Z(A_{4,\geq 0})$, but take both positive and negative values on $Z(A_{3,\geq 0})$.  It follows that neither $Z(A_{4,\geq 0})$ or $Z(A_{3,\geq 0})$ contains the other.

However, $Z(A_{\geq 0})$ is not contained in $Z(A_{3,\geq 0}) \cup Z(A_{4,\geq 0})$.  To see this, consider the function $p = \det(Y,Z_{1,5,6,7})$.  Then $p$ is equal to
\begin{align*}
&\Delta_{23}(X) \Delta_{2,3,1,5,6,7}(Z) + \Delta_{24}(X) \Delta_{2,4,1,5,6,7}(Z) + \Delta_{28}(X) \Delta_{2,8,1,5,6,7}(Z) & \mbox{on $Z(A_{1,\geq 0})$,} \\
&\Delta_{28}(X) \Delta_{2,8,1,5,6,7}(Z) & \mbox{on $Z(A_{3,\geq 0})$,} \\
&\Delta_{23}(X) \Delta_{2,3,1,5,6,7}(Z) + \Delta_{24}(X) \Delta_{2,4,1,5,6,7}(Z)  & \mbox{on $Z(A_{4,\geq 0})$.}
\end{align*}
Thus $p$ is positive (or zero) on $Z(A_{4,\geq 0})$, negative (or zero) on $Z(A_{3,\geq 0})$, and takes both signs on $Z(A_{1,\geq 0})$.  Similarly, $q = \det(Y,Z_{2,4,5,6})$ is equal to
\begin{align*}
&\Delta_{13}(X) \Delta_{1,3,2,4,5,6}(Z) + \Delta_{17}(X) \Delta_{1,7,2,4,5,6}(Z) + \Delta_{18}(X) \Delta_{1,8,2,4,5,6}(Z) & \mbox{on $Z(A_{1,\geq 0})$,} \\
&\Delta_{17}(X) \Delta_{1,7,2,4,5,6}(Z) + \Delta_{18}(X) \Delta_{1,8,2,4,5,6}(Z) & \mbox{on $Z(A_{3,\geq 0})$,} \\
&\Delta_{13}(X) \Delta_{1,3,2,4,5,6}(Z)   & \mbox{on $Z(A_{4,\geq 0})$.}
\end{align*}
Thus $q$ is positive (or zero) on $Z(A_{3,\geq 0})$, negative (or zero) on $Z(A_{4,\geq 0})$, and takes both signs on $Z(A_{1,\geq 0})$.  So $p$ and $q$ takes opposite signs on $Z(A_{3,\geq 0}) \cup Z(A_{4,\geq 0})$.  However, one can check from the above formulae that $p$ and $q$ can take the same sign at certain points of $Z(A_{1,\geq 0})$.  Thus $Z(A_{1,\geq 0})$ is not contained in $Z(A_{3,\geq 0}) \cup Z(A_{4,\geq 0})$.

So, in this case the facet is a union of two Grassmann polytopes of dimension $\dim(P)-1$, together with some lower dimensional Grassmann polytopes.

(d) Now suppose still that $m = 4$ and $k \geq 2$ is arbitrary.  Let $J = [n] \setminus I = J_1 \sqcup J_2$, where $J_1$ and $J_2$ are disjoint cyclic intervals that we assume to be nonempty.  For each pair $(k_1,k_2)$ of nonnegative integers satisfying $k_i \leq |J_i|$ and $k_1+k_2 = k-1$, we have a positroid variety 
$\Pi_{f_{k_1,k_2}}$
given by the rank conditions
$$
\rank(J_1) \leq k_1 \qquad \text{and} \qquad \rank(J_2) \leq k_2.
$$
One can show that each $\Pi_{f_{k_1,k_2}}$ is maximal amongst positroid varieties contained inside the Schubert variety $A$ of \eqref{eq:A}.  These are the positroid varieties corresponding to the ``factorization" of scattering amplitudes discussed in \cite[Section 11]{AT}.  As the previous examples illustrate, the Grassmann polytopes $Z(\Pi_{f_{k_1,k_2},\geq 0})$ are sometimes of lower dimension, and in general there are additional lower-dimensional components in the facets of the amplituhedron.

\subsubsection{Suppose $m \geq 2$ is even.}
Suppose that $I$ satisfies the evenness condition and $Z$ is positive.  Suppose $[n] \setminus I = J_1 \sqcup J_2 \sqcup \cdots \sqcup J_t$ is a decomposition into disjoint cyclic intervals.  For each $t$-tuple $(k_1,k_2,\ldots,k_t)$ of nonnegative integers satisfying $k_i \leq |J_i|$ and $k_1+k_2 + \cdots +k_t = k-1$, we have a positroid variety $\Pi_{f_{(k_1,k_2,\ldots,k_t)}}$ satisfying the rank conditions $\rank(J_i) \leq k_i$.  Clearly $\Pi_{f_{(k_1,k_2,\ldots,k_t)}} \subset A$, where $A$ is given by \eqref{eq:A}.

We conjecture that the geometric facet $P \cap \{\Delta(Y,Z_I) =0\}$ is the union of the Grassmann polytopes $Z(\Pi_{f_{(k_1,k_2,\ldots,k_t)},\geq 0})$ together with lower-dimensional Grassmann polytopes.  

\subsection{A degree-two facet}
The facets of Proposition \ref{prop:AT} are all linear facets.  However, Grassmann polytopes can have higher degree facets.  This is not surprising since in Section \ref{sec:sphericoid} we already gave many examples of amplituhedron varieties which were cut out by higher degree polynomials.

Take $k = 2, r = 5, n = 6$, and consider $P = Z(\Pi_{f,\geq 0})$ where $f = [3, 5, 4, 7, 6, 8] \in \Bound(2,6)$.  Consider $p = F_{(\tau,\emptyset)} \in R(3,6)_2$, where $\tau = \{(1,6),(2,3),(4,5)\}$.  We claim that $\psi(p)$ is a global geometric facet of $P$.  By Example \ref{ex:deg2} and Proposition \ref{prop:YPi}, we know that $\{\psi(p) = 0\}$ contains $Z(\Pi_{g,\geq 0})$ where $g = [2, 5, 4, 7, 6, 9] \in \Bound(2,6)$.  Furthermore $\dim(Z(\Pi_{g,\geq 0})) = \dim(P)-1$.  

We then check that $p$ has a fixed sign on the following set of matrices:
$$
\begin{bmatrix}
 1 & \alpha_1+\alpha_6 & \alpha_3\alpha_5 & \alpha_3 \alpha_4 & 0 & 0 \\
 0 & 1 & \alpha_5 & \alpha_4& \alpha_2 & \alpha_1 \\
 \beta_1 & -\beta_2 & \beta_3 & -\beta_4 & \beta_5 & -\beta_6 \\
\end{bmatrix},
$$
for positive $\alpha$ and $\beta$.  The top two rows parametrize $\Pi_{f,>0}$, and the bottom row runs through $\Gr(1,6)_{\geq 0, \tau}$.  By Lemma \ref{lem:transl}, it follows that $\psi(p)$ takes a fixed sign on $Z(\Pi_{f,\geq 0})$.

\section{Canonical form}\label{sec:form2}
In Section \ref{sec:form}, we defined a canonical rational differential form $\omega_f$ of top degree on $\Pi_f$.  In this section, we define the canonical form $\omega_{Z(\Pi_f)}$.

\subsection{Traces}
Suppose $f: X \to Y$ is a proper, surjective morphism of smooth complex algebraic varieties of the same dimension.  We want to define the \defn{trace}, or pushforward, $f_*\omega$ of a rational differential form $\omega$ on $X$.

We first describe the construction complex analytically.  Away from a hypersurface $D \subset Y$, the map $f$ is a finite unramified covering map.  For sufficiently small neighborhoods $U \subset Y \setminus D$, we have $f^{-1}(U) = V_1 \sqcup V_2 \sqcup \cdots \sqcup V_d$ is a disjoint union, and $f: V_i \to U$ is a holomorphic map with holomorphic inverse $g_i: U \to V_i$.  We then define
$$
f_* \omega|_U \coloneqq g_1^* \omega + g_2^* \omega + \cdots + g_d^* \omega.
$$
This defines $f_*\omega$ on $Y \setminus D$, and the form extends to a meromorphic form on $Y$.  

The algebraic version of the construction is as follows.  The map $f: X \to Y$ restricts to an \'etale morphism $f^{-1}(W) \to W$ for a Zariski-open subset $W \subset Y$.  We assume that $W = \Spec(A)$ and $f^{-1}(W) = \Spec(B)$ are affine, and the map $\phi: A \to B$ is \'etale.
Let $K = \Frac(A)$ and $L = \Frac(B)$.  The inclusion $K \subset L$ is a finite field extension, and has a well-defined trace map $\Tr: L \to K$.  Let $\Omega^p_{B/\C} \otimes_B L$ (resp. $\Omega^p_{A/\C} \otimes_A K$) be the module of rational K\"ahler differential $p$-forms on $B$ (resp. $A$).  By the definition of \'etale morphism, we have $\Omega^p_{B/\C} \simeq \Omega^p_{A/\C} \otimes_A B$, and we obtain a map
$$
\Tr: \Omega^p_{B/\C} \otimes_B L \simeq \Omega^p_{A/\C} \otimes_A L \to \Omega^p_{A/\C} \otimes_A K
$$
by using the trace map $\Tr:L \to K$.  In this way, a rational differential form on $X$ gives a rational differential form on $W$, and hence also on $Y$.  

We shall need the following result \cite[Proposition 2.5]{KR} saying that pushforward commutes with residues.

\begin{proposition}\label{prop:KR}
Let $f:X \to Y$ be a proper, surjective morphism of complex algebraic varieties of the same dimension $n$.  Let $\omega$ be a rational differential form with only poles of the first order along a smooth hypersurface $V$ in $X$.  Suppose $V_o = f(V)$ is a smooth hypersurface in $Y$.  Then $f_*\omega$ has first order poles on $V_o$ and
$$
\Res_{V_o}(f_*\omega) = \bar f_* \Res_{V}(\omega),
$$
where $\bar f: V \to V_o$ is the restriction of $f$.
\end{proposition}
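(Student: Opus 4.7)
The plan is to prove the identity locally near the generic point of $V_o$, where the map $f$ becomes, after completion, a finite morphism of one-variable power series rings, and then to use the explicit formula for the trace in that setting.

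First I would reduce to a local question. Since $V_o$ is smooth (and so a Cartier divisor) and $f$ is proper, there is a Zariski-open $W \subset Y$ meeting $V_o$ over which $f$ is finite and flat and over which $V_o \cap W$ and $f^{-1}(V_o) \cap f^{-1}(W)$ are smooth. Working at the generic point $\eta$ of $V_o$, pick a uniformizer $t$ for $V_o$ in $\mathcal{O}_{Y,\eta}$; the generic points $\eta_1, \ldots, \eta_s$ of $V$ lying over $\eta$ are the generic points of the components of $V$ that dominate $V_o$, and at each $\eta_i$ the local ring is a DVR with uniformizer $s_i$ satisfying $f^*t = u_i s_i^{e_i}$ for a unit $u_i \in \mathcal{O}_{X,\eta_i}^\times$ and ramification index $e_i$. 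Setting $d_i = [\kappa(\eta_i):\kappa(\eta)]$, one has $\sum_i e_i d_i = \deg(f)$.

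Next I would normalize $\omega$. Since $\omega$ has a simple pole along $V$, near $\eta_i$ one can write $\omega = \frac{ds_i}{s_i} \wedge \alpha_i + \beta_i$ where $\alpha_i, \beta_i$ are regular at $\eta_i$ and $\Res_V(\omega)$ at $\eta_i$ equals $\alpha_i|_V$. Differentiating $f^*t = u_i s_i^{e_i}$ gives $\frac{ds_i}{s_i} = \frac{1}{e_i}\left(\frac{d(f^*t)}{f^*t} - \frac{du_i}{u_i}\right)$, so
\[
\omega = \frac{1}{e_i}\,\frac{d(f^*t)}{f^*t} \wedge \alpha_i \; + \; \gamma_i,
\]
with $\gamma_i$ regular along $V$ near $\eta_i$. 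Applying $f_*$ and using $f_*\bigl(\frac{d(f^*t)}{f^*t} \wedge \alpha_i\bigr) = \frac{dt}{t} \wedge f_*\alpha_i$ (since $\frac{dt}{t}$ is pulled back from $Y$, the trace commutes with wedging by it), one obtains
\[
f_*\omega = \frac{dt}{t} \wedge \Bigl(\sum_i \tfrac{1}{e_i}\, f_*\alpha_i\Bigr) + f_*\bigl(\textstyle\sum_i \gamma_i\bigr).
\]
The first factor on the right shows $f_*\omega$ has at worst a first order pole along $V_o$, and extracting $\Res_{V_o}$ reduces the claim to comparing $\sum_i \tfrac{1}{e_i}(f_*\alpha_i)|_{V_o}$ with $\bar f_*(\Res_V \omega) = \sum_i \bar f_*(\alpha_i|_V)$, which amounts to the identity $\bar f_*(\alpha_i|_V) = \tfrac{1}{e_i}(f_*\alpha_i)|_{V_o}$ on the component at $\eta_i$; this is the standard base change/compatibility of trace with reduction modulo a uniformizer, which in the complete local picture $\kappa(\eta_i)[[s_i]]/\kappa(\eta)[[t]]$ with $t = \mathrm{unit}\cdot s_i^{e_i}$ reads $\Tr(s_i^{-1} h) \equiv \tfrac{1}{e_i} \bar\Tr(\bar h) \cdot t^{-1} \pmod{\mathcal{O}}$, where $\bar\Tr$ is the trace for $\kappa(\eta_i)/\kappa(\eta)$.

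The main obstacle I foresee is showing that the pushforward of the regular-along-$V$ piece $\sum_i \gamma_i$ contributes no pole along $V_o$. Morally this says that $f_*$ preserves regularity along the relevant divisors, but because $f$ is not flat in codimension greater than one this requires some care; the clean way is to use that $f$ is finite flat over the localization at $\eta$, so $f_*\mathcal{O}_X$ is a locally free $\mathcal{O}_Y$-module there, and the trace of a form regular along $V$ is then regular along $V_o$ by linearity and because each matrix entry of multiplication-by-$f^*t$ is divisible by $t$ up to units. Once the pole/residue formula is verified generically along $V_o$, the equality of two meromorphic $(n-1)$-forms on $V_o$ extends from the generic point, completing the proof.
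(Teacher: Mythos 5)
Note first that the paper does not give a proof of this proposition: it is cited directly from Khesin and Rosly, \cite[Proposition 2.5]{KR}, so there is no in-paper argument to compare against. Your outline, however, is a reasonable reconstruction of the standard proof and is essentially sound: localize at the generic point $\eta$ of $V_o$, where $f$ becomes finite flat over the DVR $\mathcal{O}_{Y,\eta}$; decompose $\omega$ near each $\eta_i$ via a local equation $s_i$; convert $\dfrac{ds_i}{s_i}$ to $\dfrac{1}{e_i}\dfrac{d(f^*t)}{f^*t}$ plus a regular piece using $f^*t = u_i s_i^{e_i}$; apply the projection formula $f_*\bigl(f^*\tfrac{dt}{t}\wedge\alpha_i\bigr)=\tfrac{dt}{t}\wedge f_*\alpha_i$; and then the residue comparison reduces to the DVR trace identity $\Tr_{B/A}(h)\equiv e_i\,\Tr_{\kappa(\eta_i)/\kappa(\eta)}(\bar h)\ (\mathrm{mod}\ t)$ for $h$ regular. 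The step where you argue $f_*\gamma_i$ is regular along $V_o$ is fine for the right reason (finite flat over a DVR, and the trace of an integral element is integral), even if the remark about matrix entries of multiplication by $f^*t$ is not the cleanest way to say it.

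The one concrete error is the displayed ``complete local picture'' identity $\Tr(s_i^{-1}h)\equiv \frac{1}{e_i}\bar\Tr(\bar h)\,t^{-1}\ (\mathrm{mod}\ \mathcal{O})$. This is false: for example with $\kappa(\eta_i)=\kappa(\eta)=\kappa$, $t=s^2$, $h=1$, one computes directly in the basis $\{1,s\}$ that $\Tr_{\kappa[[s]]/\kappa[[t]]}(1/s)=0$, whereas your formula predicts $\frac{1}{2t}$. The trace of a function does not commute with multiplication by $s_i^{-1}$ since $s_i^{-1}\notin K$. What saves the argument is that you never actually need $\Tr(s_i^{-1}h)$: after the projection formula has pulled $\dfrac{dt}{t}$ outside the trace, the quantity that controls the residue is $\Tr(h_i)|_{t=0}$ for $h_i$ regular, and the correct supporting identity is precisely $\Tr_{B/A}(h)\equiv e_i\,\bar\Tr(\bar h)\ (\mathrm{mod}\ t)$, which you stated correctly in the line just above (``$\bar f_*(\alpha_i|_V)=\tfrac{1}{e_i}(f_*\alpha_i)|_{V_o}$''). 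You should delete or correct the erroneous restatement so as not to invite confusion between the trace of a meromorphic function and the residue of the trace of a meromorphic $1$-form, which behave quite differently.
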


\subsection{Canonical form}
We now define a canonical form $\omega_{Z(\Pi_f)}$ on $Z(\Pi_f)$.  Let $Z: \Pi_f \dashedrightarrow Z(\Pi_f)$ be the rational map defining $Z(\Pi_f)$.  If $\dim(Z(\Pi_f)) < \dim \Pi_f$, we declare $\omega_{Z(\Pi_f)} = 0$.  

Otherwise, $\dim(Z(\Pi_f)) = \dim \Pi_f$ and in particular $\Pi_f \setminus E_Z$ is Zariski-open and dense in $\Pi_f$.  We define $\omega_{Z(\Pi_f)}$ using the graph construction, as follows.  Let 
$$
\overline{\Pi}_f \coloneqq \overline{\{(X,Z_\Gr(X)) \mid X \in \Pi_f \setminus E_Z\}} \subseteq \Gr(k,n) \times \Gr(k,r)
$$
be the closure of the graph of $Z_\Gr: \Pi_f \setminus E_Z \to \Gr(k,r)$.  We have a natural birational morphism $\Pi_f  \dashedrightarrow \overline{\Pi}_f$ allowing us to pullback $\omega_{f}$ to a rational differential form $\bar \omega_{f}$ on $\overline{\Pi}_f$.  The morphism $\overline{Z}: \overline{\Pi}_f \to Z(\Pi_f)$ is induced by the projection $\Gr(k,n) \times \Gr(k,r)$ and is thus proper.  We can restrict $\overline{Z}$ to a proper surjective morphism $\overline{Z}|_U:U \to W$ where both $U \subset \overline{\Pi}_f$ and $W \subset Z(\Pi_f)$ are smooth.  We then define $\omega_{Z(\Pi_f)}$ to be the pushforward of $\bar \omega_f$ (extended to $Z(\Pi_f)$ under the map $\overline{Z}|_U: U \to W$.

\subsection{Poles and zeroes of the canonical form}
We would like to investigate the poles and zeroes of $\omega_{Z(\Pi_f)}$.  To simplify the discussion, we shall assume that $Z(\Pi_f)$ is a normal variety.  In fact, we make the following conjecture.

\begin{conj}\label{conj:normal}
Suppose $Z$ is generic and $\dim(Z(\Pi_f)) = \dim \Pi_f$.  Then the amplituhedron variety $Y_f = Z(\Pi_f)$ is projectively normal.
\end{conj}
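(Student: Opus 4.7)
Proposal. My plan is to reduce projective normality of $Y_f$ to projective normality of the sphericoid variety $\Pi_{f,\id}$ via the GIT machinery of Section \ref{sec:ampliideal}, and then to establish projective normality of $\Pi_{f,\id}$ from Theorem \ref{thm:sphericoid} together with the known properties of positroid varieties (Theorems \ref{thm:prime} and \ref{thm:normal}).

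Stage 1. I would first show that the sphericoid $\Pi_{f,\id} \subseteq \Gr(k+\ell,n)$ is projectively normal. Surjectivity of $R(k+\ell,n)_d \twoheadrightarrow R(\Pi_{f,\id})_d$ is essentially built into Theorem \ref{thm:sphericoid}: the ideal $I(\Pi_{f,\id})_d$ is the annihilator of $\kappa^d_{k,\ell}(V_f(d\omega_k)\otimes V(d\omega_\ell))$, so $R(\Pi_{f,\id})_d$ is naturally a quotient of $V(d\omega_{k+\ell})^* = R(k+\ell,n)_d$; this identifies the graded pieces of the section ring with the homogeneous coordinate ring. For normality, I would try to extend the Frobenius splitting argument of \cite{KLS2} from positroid varieties to the image of the direct sum map, using that $\Pi_f$ and $\Gr(\ell,n)$ are both compatibly split and that $\bigoplus$ is equivariant for the relevant diagonal subgroup.

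Stage 2. With $\Pi_{f,\id}$ in hand, I would pass to $Y_f$ using the $\SL(r)$-GIT description of Section \ref{sec:quotient}. Proposition \ref{prop:YPi} says $I(Y_f) = \psi(I(\Pi_{f,\id}))$, where $\psi$ is the specialization $\Delta_J(S) \mapsto \Delta(Y,Z_{[n]\setminus J})$. Concretely, the universal amplituhedron $\Y_f \to \Mat(n,r)$ has its $\SL(r)$-invariant graded coordinate ring given by $R(\Pi_{f,\id})[\Delta_I(Z)]$ modulo universal relations, and $R(Y_f)$ is obtained by specializing $Z$ at a generic value. The goal is to show this specialization commutes with taking sections; the surjection $R(k+\ell,n)_d \twoheadrightarrow R(\Pi_{f,\id})_d$ from Stage 1 then descends to $R(k,r)_d \twoheadrightarrow R(Y_f)_d$, giving the section-surjectivity part of projective normality. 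For the normality part, I would separately argue: if $d_f = 1$, resolve $Z_\Gr$ to $\overline{Z}:\overline{\Pi}_f \to Y_f$, note $\overline{\Pi}_f$ has rational singularities (inherited from $\Pi_f$ via Theorem \ref{thm:normal}, choosing a suitable resolution), and deduce rational singularities (hence normality) of $Y_f$; for $d_f > 1$, one would need a more delicate generic-flatness argument showing that the specialization of $\Y_f$ over a sufficiently generic $Z$ remains normal.

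Main obstacle. The hardest step is Stage 2, specifically the assertion that generic specialization of $Z \in \Mat(n,r)$ preserves both surjectivity of sections and normality simultaneously. Surjectivity should reduce to showing generic flatness of the natural projection $\Y_f \to \Mat(n,r)$ over a suitable open subset, combined with cohomology and base change; but the fibers are positive-dimensional and not obviously Cohen--Macaulay as a family, so this requires genuine work. The normality piece is especially delicate when $d_f > 1$: the rational map $Z_\Gr|_{\Pi_f}:\Pi_f \dashrightarrow Y_f$ is generically $d_f$-to-one rather than birational, so $Y_f$ cannot be realized directly as a birational image of $\Pi_f$, and one must argue abstractly that the image of a normal variety under this particular class of $\SL(r)$-quotient maps is normal. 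I expect that invoking Boutot's theorem (rational singularities descend under GIT quotients by reductive groups) applied to a suitable resolution of $\Y_f$, together with the explicit $\kappa^d_{k,\ell}$-based description of the graded pieces, is the right framework, but verifying the hypotheses in the needed generality is the crux.
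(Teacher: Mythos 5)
The statement is Conjecture \ref{conj:normal}, which the author explicitly leaves open: the paper supplies no proof, only the observations that it holds when $\Pi_f$ maps isomorphically (positroid varieties themselves, via Theorem \ref{thm:normal}) and trivially when $Y_f = \Gr(k,r)$. So there is no argument in the text to compare yours against, and the question is simply whether your sketch would actually close the gap.

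It does not, and beyond the obstacles you flag yourself there is a conceptual slip in Stage 1. The surjection $R(k+\ell,n)_d \twoheadrightarrow R(\Pi_{f,\id})_d$ is tautological for any closed subvariety $X \subseteq \P^N$: by definition, $R(X)_d$ is the \emph{image} of $R(\P^N)_d$ inside $\Gamma(X,\O(d))$, so it is always a quotient of $R(\P^N)_d$. Theorem \ref{thm:sphericoid} identifies $I(\Pi_{f,\id})_d$ and hence describes that image as a representation, but it says nothing about whether the restriction map of sections is \emph{onto} $\Gamma(\Pi_{f,\id},\O(d))$ --- which is exactly the non-tautological content of projective normality, the equality $R(X)_d = \Gamma(X,\O(d))$. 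Likewise, ``extend the Frobenius-splitting argument'' is an aspiration rather than a step: $\bigoplus$ is a rational map, its closed image $\Pi_{f,\id}$ is not presented as an intersection of compatibly split subvarieties, and no cited result makes it one. In Stage 2 you correctly locate the hard part, but the tools you name do not yet apply: Boutot's theorem would descend rational singularities from $\Y_f$ to $\AA_f$, yet $\Y_f$ is defined as a Zariski closure of an image with no established good singularities, so there is nothing to descend; and even if $\Y_f$ or $\AA_f$ were normal, normality of the generic fiber of $\Y_f \to \Mat(n,r)$ requires generic flatness plus openness of Serre's conditions, none of which you verify. Each of your two stages contains a step whose difficulty is essentially that of the conjecture itself; the proposal is a reasonable list of ingredients, not a proof.
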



By Theorem \ref{thm:normal}, Conjecture \ref{conj:normal} holds for positroid varieties themselves.  Also Conjecture \ref{conj:normal} obviously holds when $Y_f = \Gr(k,r)$, which is the most important case in the construction of the amplituhedron form (see Section \ref{sec:triangulations}).

We assume $\dim(Z(\Pi_f)) = \dim \Pi_f$ and $Z(\Pi_f)$ is a normal variety from now on.  Suppose that $g \gtrdot f$ so that $\Pi_g$ is a codimension one subvariety in $\Pi_f$.  Suppose also that $\dim(Z(\Pi_g)) = \dim \Pi_g$.  Combining Proposition \ref{prop:KR} and the fact that $\Res_{\Pi_g} \omega_f = \omega_g$ (Theorem \ref{thm:Resform}) we see that
\begin{equation}\label{eq:Res}
\Res_{Z(\Pi_g)}(\omega_{Z(\Pi_f)}) = \omega_{Z(\Pi_g)}.
\end{equation}
We know that $\omega_f$ has no zeroes and only poles along the $\Pi_g$.  Equation \eqref{eq:Res} says that $\omega_{Z(\Pi_f)}$ has simple poles along each of the codimension one subvarieties $Z(\Pi_g)$.  So we are led to the question: what are the other poles and zeroes of $\omega_{Z(\Pi_f)}$?  

It is easy to see that we should expect that in general $\omega_{Z(\Pi_f)}$ does have zeroes.  For example, the anticanonical divisor of $\Gr(k,r)$ is $r$ times the hyperplane class.  However, there are some $f \in \Bound(k,n)$ such that $Y_f = \Gr(k,r)$ where $\Pi_f$ has poles along more than $r$ divisors $\Pi_g$ (and the corresponding $Y_g$ do produce poles for $\omega_{Y_f}$).  So $\omega_{Y_f}$ must have zeroes to compensate for this.

Let $d$ be the degree of the map $\Pi_f \dashedrightarrow Z(\Pi_f)$.  We assume that $d = 1$.  Then the map $Z_\Gr: \Pi_f \setminus E_Z \to Z(\Pi_f)$ is a morphism that is birational.  Let $W \subset Z(\Pi_f)$ denote the image $Z_\Gr(\Pi_f \setminus E_Z)$.  In this case, we suspect (possibly requiring a genericity condition on $Z$) that the poles and zeroes of $\omega_{Z(\Pi_f)}$ are supported on $Z(\Pi_f) \setminus W$.  Furthermore, in simple cases, $\omega_{Z(\Pi_f)}$ only has poles along $Z(\Pi_g)$'s and the only zeroes are supported on $Z(\Pi_f) \setminus W$.  We make the following rather speculative conjecture.

\begin{conjecture}\label{conj:polesandzeroes}
Suppose $Z$ is generic, $d = 1$, and $\dim(Z(\Pi_f)) = \dim(\Pi_f)$.  Then $\omega_{Z(\Pi_f)}$ has (simple) poles only along the codimension one subvarieties $Z(\Pi_g)$, and all the zeroes of $\omega_{Z(\Pi_f)}$ lie in $Z(\Pi_f) \setminus Z_\Gr(\Pi_f \setminus E_Z)$.
\end{conjecture}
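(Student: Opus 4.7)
The plan is to combine the birationality hypothesis $d = 1$ with the residue compatibility of Proposition \ref{prop:KR}, localizing on the open subset where $Z_\Gr$ is an isomorphism. First I would pick dense open subsets $U \subset (\Pi_f)_{\mathrm{reg}} \setminus (\partial \Pi_f \cup E_Z)$ and $V \subset Z(\Pi_f)_{\mathrm{reg}}$ on which the birational morphism $Z_\Gr \colon \Pi_f \setminus E_Z \to Z(\Pi_f)$ restricts to an isomorphism $U \xrightarrow{\sim} V$. By Theorem \ref{thm:Resform}, $\omega_f$ is regular and non-vanishing on $U$, so $\omega_{Z(\Pi_f)}$ is regular and non-vanishing on $V$; both the polar divisor and the zero divisor are therefore forced to lie in the closed complement $Z(\Pi_f) \setminus V$.

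The next step is an induction on $\dim \Pi_f$, the base case being $0$-dimensional positroid cells, where the statement is trivial. For each cover $g \gtrdot f$ in $\hBound(k,n)$ with $\dim Z(\Pi_g) = \dim \Pi_g$, the inductive hypothesis gives $\omega_{Z(\Pi_g)} \neq 0$, so Proposition \ref{prop:KR} combined with $\Res_{\Pi_g}\omega_f = \omega_g$ (Theorem \ref{thm:Resform}) forces $\Res_{Z(\Pi_g)}\omega_{Z(\Pi_f)} = \omega_{Z(\Pi_g)} \neq 0$, yielding a simple pole along $Z(\Pi_g)$. Covers with $\dim Z(\Pi_g) < \dim \Pi_g$ contribute images of codimension $\geq 2$ and thus no divisorial pole. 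The real content of the conjecture is then the converse: no further polar divisors appear, and the zero divisor is pushed entirely off $Z_\Gr(\Pi_f \setminus E_Z)$.

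To address both statements I would fix a common resolution of indeterminacies $\pi \colon \tilde\Pi_f \to \Pi_f$ and $\tilde Z \colon \tilde\Pi_f \to Z(\Pi_f)$, both proper birational, and study $\omega_{Z(\Pi_f)} = \tilde Z_* \pi^* \omega_f$. The divisor of $\pi^* \omega_f$ splits as the sum of the strict transforms of the $\Pi_g$ (poles already accounted for) plus $\sum_i a_i E_i$, the $E_i$ being $\pi$-exceptional primes sitting over $E_Z \cap \Pi_f$ and $a_i \in \Z$ determined by discrepancies. A spurious polar divisor of $\omega_{Z(\Pi_f)}$ must arise from some $E_i$ with $a_i < 0$ that is not $\tilde Z$-contracted; a spurious zero divisor meeting $Z_\Gr(\Pi_f \setminus E_Z)$ would come from some $E_i$ with $a_i > 0$ whose $\tilde Z$-image intersects this set in a divisor. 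The core technical claim is therefore that every such $E_i$ is $\tilde Z$-contracted and, if $a_i > 0$, has $\tilde Z(E_i) \subset Z(\Pi_f) \setminus Z_\Gr(\Pi_f \setminus E_Z)$. I expect the right setting for establishing this is the cluster algebra structure on $\C[\oPi_f]$ referenced in Section \ref{sec:form}: in a cluster chart $\omega_f$ is a logarithmic wedge of cluster variables, so discrepancies along exceptional divisors can be computed from the blow-up centres cut out by these variables, and mutation of clusters should yield local descriptions of $\tilde Z$ in which the required contractions can be verified. Pushing this cluster picture through in the presence of the kernel condition defining $E_Z$ is the main obstacle, and the reason the statement is offered only as a conjecture; I would first verify the whole package when $k = 1$, where everything reduces to polytope projections, and in the $m = 2$ amplituhedron case where the positroid combinatorics of Section \ref{sec:k2} is most transparent.
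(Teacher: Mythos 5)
The statement is explicitly a conjecture in the paper (``a rather speculative conjecture''), offered without proof, so there is no proof of record to compare against. Your outline is consistent with the paper's own informal discussion surrounding the conjecture: the paper derives the residue identity $\Res_{Z(\Pi_g)}\omega_{Z(\Pi_f)}=\omega_{Z(\Pi_g)}$ (equation~\eqref{eq:Res}) from Proposition~\ref{prop:KR}, giving the expected simple poles along the $Z(\Pi_g)$, and then remarks that the zeroes in $Z(\Pi_f)\setminus W$ should ``roughly correspond to zeroes acquired when pulling back $\omega_f$ under a blowup of $E_Z\cap\Pi_f\subset\Pi_f$'' -- this is exactly your resolution-of-indeterminacy picture with exceptional primes $E_i$ carrying discrepancies $a_i$. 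You also correctly flag the open part: the core technical claim, that every $E_i$ with $a_i<0$ is $\tilde Z$-contracted and every $E_i$ with $a_i>0$ is either contracted or has $\tilde Z(E_i)\cap W=\emptyset$, is precisely what would have to be established and is not addressed in the paper.

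Two points to tighten. First, in your second paragraph the nonvanishing of $\omega_{Z(\Pi_g)}$ cannot be obtained by inducting on the conjecture itself: $d_f=1$ does not force $d_g=1$ for a cover $g\gtrdot f$, so the conjecture's hypotheses need not hold at $g$, and the trace of a nonzero form under a generically finite map of degree greater than one is not automatically nonzero. The paper avoids this by deriving~\eqref{eq:Res} directly and treating the nonvanishing as part of what is to be shown rather than as inherited. Second, your opening localization shows only that the poles and zeroes lie in $Z(\Pi_f)\setminus V$, where $V$ is the open locus over which $Z_\Gr$ is an isomorphism; this is a strictly larger closed set than the conjecture's target $Z(\Pi_f)\setminus W$, and the residual set $W\setminus V$ (which contains, among other things, the images of boundary strata and of the non-isomorphism locus of the birational map on the open cell) must still be excluded as a zero locus. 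That exclusion is again exactly the open content of the resolution step.
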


We expect that the zeroes along $Z(\Pi_f) \setminus W$ roughly correspond to zeroes acquired when pulling back $\omega_f$ under a blowup of $E_Z \cap \Pi_f \subset \Pi_f$.

When $d > 1$, we may have to further consider the behavior along the ramification locus.


\subsection{An example}
We explicitly compute the canonical form $\omega_{Y_f}$ for $f = [2547] \in \B(2,4)$.  This example continues the study of the variety $C$ in Section \ref{sec:exgeom}. 
The boundary $\partial \Pi_f$ consists of four codimension one positroid varieties.  Let us describe these positroid varieties in terms of lines in three-space.  Let $q_1,q_2,q_3,q_4$ be the images of $e_1,e_2,e_3,e_4$.  Let $\Pi_i$ be the locus of lines passing through $q_i$ and the line $L_{12}$ if $i \notin \{1,2\}$ or $L_{13}$ if $i \notin \{3,4\}$.  Then $\Pi_1,\Pi_2,\Pi_3,\Pi_4$ are our four boundary positroid varieties, and each one is isomorphic to $\P^1$.

Let $q'_i$ be the projection of $q_i$ onto $H_0$.  We assume that the $q_i$ are distinct from $p$, that $q'_i$ are distinct from each other, and that they are distinct from $x_0$ as well.  The image of $Z_\Gr(\Pi_i) = Y_i$ is the locus of lines in $H_0$ passing through $q'_i$.  In particular, each $\Pi_i$ is mapped isomorphically onto $Y_i$.  Since $\omega_{\Pi_f}$ had simple poles along $\Pi_i$, the meromorphic form $\omega_{Y_f}$ also has simple poles along $Y_i$.  (Note that a dense open subset of $Y_i$ belongs to the open subset of $\Gr(2,3)$ over which the map $Z_\Gr:(C \setminus E_Z) \to \Gr(2,3)$ is an isomorphism.)

The anticanonical divisor of $\Gr(2,3)$ is three times the hyperplane class (and each $Y_i$ represents such a class).  Thus $\omega_{Y_f}$ must have a zero somewhere.  We claim that it has a simple zero along the divisor $D \subset \Gr(2,3)$ corresponding to the locus of lines that pass through $x_0$.  This divisor $D$ is the complement $Z(\Pi_f) \setminus Z_\Gr(\Pi_f \setminus E_Z)$ appearing in Conjecture \ref{conj:polesandzeroes}.  Indeed, since $Z_\Gr$ is an isomorphism away from $D$, and $\omega_{\Pi_f}$ has no zeroes, the any extra poles and zeroes of $\omega_{Y_f}$ must be supported on $D$.  Considering the class of the canonical divisor of $\Gr(2,3)$ we see that it must have a simple zero along $D$.  This confirms Conjecture \ref{conj:polesandzeroes} in this case.

Let us check this directly using local coordinates.  Parametrize a dense subset of $C$ as the space 
of matrices of the form
$$
\begin{bmatrix} 1 & a & 0 & 0 \\ 0 & 0 & 1 & b 
\end{bmatrix}.
$$
Then we have $\omega_{\Pi_f} = \dlog a \wedge \dlog b$.
Then $Y=X\cdot Z$ is represented by the matrix
$$
\begin{bmatrix} 1 & 0 & u  \\ 0 & 1 & v 
\end{bmatrix}
$$
where 
\begin{align*} a = -\frac{\det(Y,Z_1)}{\det(Y,Z_2)} = \frac{-u z_{1,1} - v z_{1, 2} + z_{1,3}}{u z_{2, 1} + v z_{2, 2} - z_{2, 3}},\qquad b = -\frac{\det(Y,Z_3)}{\det(Y,Z_4)} = \frac{-u z_{3,1} - v z_{3, 2} + z_{3,3}}{u z_{4, 1} + v z_{4, 2} - z_{4, 3}}.
\end{align*}
We have $da \wedge db = J(u,v) du \wedge dv$ where 
$$
J(u,v) = \det \begin{bmatrix} \dfrac{\partial a}{\partial u} & \dfrac{\partial a}{\partial v}\\[2ex]
\dfrac{\partial b}{\partial u}&\dfrac{\partial b}{\partial v} \end{bmatrix}.
$$
Substituting, we get
$$
\omega_{Y_f} = \frac{f(u,v)}{\det(Y,Z_1)\det(Y,Z_2)\det(Y,Z_3)\det(Y,Z_4)} du \wedge dv
$$
where $f(u,v)$ is of the form $\alpha u + \beta v - \gamma$.  Note that $\det(Y,Z_i) = 0$ is exactly the equation that cuts out the locus $Y_i \subset \Gr(2,4)$ of lines that pass through $q'_i$.  A brute force calculation shows that the vector $x = (\alpha,\beta,\gamma)$ satisfies $\det(Z_1,Z_2,x) = \det(Z_3,Z_4,x) = 0$.  In other words, the condition $f(u,v) = 0$ cuts out the locus of lines $L \subset H_0$ that pass through the intersection of $\overline{q'_1 q'_2}$ and $\overline{q'_3 q'_4}$.  This intersection is the divisor $D$ of lines passing through $x_0$.  So $\omega_{Y_f}$ has a simple zero along $D$, as claimed.

Finally, we can check that there are no poles or zeroes at infinity (in the $u, v$ coordinates).  For this, we just note that the degree of $\det(Y,Z_1)\det(Y,Z_2)\det(Y,Z_3)\det(Y,Z_4)$ is three more than the degree of $f(u,v)$ as polynomials in either $u$, or $v$.

\section{Triangulations of Grassmann polytopes}\label{sec:triangulations}
In this section, we return to the situation that $Z$ is a real matrix.  We aim to make contact with the motivating work \cite{AT} by informally discussing triangulations of Grassmann polytopes.
%
%

\subsection{The canonical form of a Grassmann polytope}\label{sec:conjform}
In the following conjecture, ``triangulation" and ``facets" are in quotation marks because we have not given a complete definition of either notion.  Let $P = Z(\Pi_{f, \geq 0})$ be a Grassmann polytope.

\begin{conjecture}\label{conj:form}
There is a canonical rational differential top form $\omega_P$ on $Z(\Pi_h) = \overline{P}$, uniquely defined up to sign, with the following properties:
\begin{enumerate}
\item
$\omega_P$ has simple poles along the Zariski-closure $\overline{F}$ of each of its ``facets" $F$, and no other poles;
\item
for any ``facet" $F$ of $P$, with $F = \bigcup_i P_i$ a union of Grassmann polytopes, we have $\Res_{\overline{F}}(\omega_P) = \sum_i \pm \omega_{P_i}$;
\item
if $\T$ is a "triangulation" of $P$, then $\omega_P = \sum_{f \in \T} \pm \omega_{Z(\Pi_f)}$.
\end{enumerate}
\end{conjecture}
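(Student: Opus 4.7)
The plan is to construct $\omega_P$ as a sum over any triangulation, to show the construction is independent of the chosen triangulation by an interior-residue cancellation argument, and then to deduce the three listed properties by pushing forward the known residue structure of $\omega_f$ through $Z_\Gr$ using Proposition \ref{prop:KR}. Throughout, I will take Conjecture \ref{conj:polesandzeroes} as a working hypothesis, since the claimed pole-and-zero structure of the $\omega_{Z(\Pi_f)}$ is precisely what is required for the cancellations below.

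First I would fix a triangulation $\T \subset \Bound(k,n)$ of $P$ in the informal sense of Section \ref{sec:triangulations}: a finite collection of independent bounded affine permutations such that the closures $Z(\Pi_{f,\geq 0})$, $f\in \T$, cover $P$, have disjoint relative interiors, and such that each map $Z_\Gr\colon \Pi_f \dashedrightarrow Z(\Pi_f)$ is birational. Orient $P$ and choose signs $\epsilon_f \in \{\pm 1\}$ from this orientation, then define
$$
\omega_P \coloneqq \sum_{f \in \T} \epsilon_f \, \omega_{Z(\Pi_f)}.
$$
By Conjecture \ref{conj:polesandzeroes}, each summand is a rational top form on $\overline{P}$ whose only poles are along subvarieties $Z(\Pi_g)$ with $g \gtrdot f$. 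To see that the sum is independent of $\T$, I would argue that any two triangulations are connected by a sequence of local bistellar-flip moves, each of which replaces cells meeting along an interior codimension-one slice $Z(\Pi_g)$ by a complementary collection. The key residue identity reducing each flip to a cancellation is the following: if $Z(\Pi_g)$ separates two top-dimensional cells $Z(\Pi_{f_1,\geq 0})$ and $Z(\Pi_{f_2,\geq 0})$ of $\T$ in the interior of $P$, then Proposition \ref{prop:KR} applied to $Z_\Gr\colon \Pi_{f_i} \dashedrightarrow Z(\Pi_{f_i})$, together with Theorem \ref{thm:Resform}, gives $\Res_{Z(\Pi_g)}\omega_{Z(\Pi_{f_i})} = \pm\omega_{Z(\Pi_g)}$, and the orientation choice forces the two signs to be opposite. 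Summing over $\T$, every interior residue cancels, which at once verifies both that $\omega_P$ extends across interior walls (property (1)) and that the sum in property (3) is triangulation-independent.

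Next I would verify property (2). Given a facet $F$ of $P$ with $F = \bigcup_i P_i$ a union of smaller Grassmann polytopes (as illustrated in Section \ref{sec:facets}), the triangulation $\T$ restricts to a triangulation of each $P_i$ by the cells $Z(\Pi_{g,\geq 0})$ with $g \gtrdot f$ for some $f \in \T$ and $Z(\Pi_g) \subseteq \overline{P_i}$. Taking $\Res_{\overline{F}}$ term by term in the definition of $\omega_P$ and applying Proposition \ref{prop:KR} combined with Theorem \ref{thm:Resform}, the contributions reorganize (by the same orientation bookkeeping as above) into $\sum_i \pm \omega_{P_i}$, proving property (2). Uniqueness of $\omega_P$ up to sign follows by induction on $\dim P$: properties (1) and (2) determine $\omega_P$ up to a rational function on $\overline{P}$ with no poles or zeroes, hence a constant, and the inductive hypothesis in dimension zero pins down the global sign.

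The hard part will be making the notion of triangulation precise enough to support the bistellar-flip argument, and then establishing the residue cancellation on interior codimension-one slices in the clean form $\Res_{Z(\Pi_g)}\omega_{Z(\Pi_{f_i})} = \pm \omega_{Z(\Pi_g)}$. The subtleties are real: the map $Z_\Gr$ can be ramified or generically $d$-to-one (as in Example \ref{ex:twotoone}), in which case Proposition \ref{prop:KR} yields residues with multiplicities $d$ that must match across the wall; interior faces of a triangulation need not themselves be Grassmann polytopes, forcing one to sum over a union of $Z(\Pi_g)$'s; and one must handle the ramification and exceptional loci of $Z_\Gr$ carefully to ensure no extraneous poles or zeroes intrude, which is exactly the content of Conjecture \ref{conj:polesandzeroes}. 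Overcoming these obstacles presumably requires first upgrading Conjecture \ref{conj:polesandzeroes} to a precise residue identity compatible with the $\bigoplus$-factorizations studied in Section \ref{sec:sphericoid}, and second giving a combinatorial characterization of triangulations of $P$ inside the poset $\hBound(k,n)$ under which bistellar flips are a tractable operation.
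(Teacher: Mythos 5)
The statement you are trying to prove is stated in the paper as a \emph{conjecture}, and the paper offers no proof of it for general Grassmann polytopes. The paper supplies only two things: (i) a uniqueness heuristic, showing that if $\omega_P$ exists with simple poles along facets and prescribed residues, then it is unique, because the difference of two such forms is a holomorphic canonical section of the normal unirational projective variety $Z(\Pi_h)$, hence zero; and (ii) a proof of the conjecture in the special case that $P$ is an ordinary polytope, where $\omega_P$ is constructed \emph{intrinsically} as the Laplace transform of the characteristic function of the dual cone $C^*$, and the triangulation identity (3) is then cited from Filliman \cite{Fill}. So there is no ``paper proof'' of the general case to compare your proposal against.

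Your approach is therefore a genuine attempt at a missing argument, not a variant of one in the paper. It differs from the paper's polytope-case strategy in an important structural way: the paper defines $\omega_P$ intrinsically and \emph{derives} the triangulation sum formula, whereas you \emph{define} $\omega_P$ by a triangulation sum and must then prove well-definedness. For general $k$ there is no known analogue of the Laplace-transform construction, so your route is arguably the only one currently available, and your uniqueness argument correctly reproduces the paper's heuristic. However, the proposal rests on several ingredients that the paper flags as open, and you do not close them: (a) there is no definition of ``triangulation'' or ``facet'' precise enough to run the bistellar-flip connectivity argument, and the paper explicitly punts on this (``in quotation marks because we have not given a complete definition''); (b) the residue cancellation step relies on Conjecture \ref{conj:polesandzeroes}, which is itself speculative and stated only under $d=1$ and genericity hypotheses; the $d>1$ case (Example \ref{ex:twotoone} shows it occurs even for positive $Z$) is acknowledged but not resolved; (c) the claim that any two triangulations are connected by bistellar flips is not established even for $k=1$ point configurations without further hypotheses, let alone for the poset $\hBound(k,n)$, which is not a lattice. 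You honestly note most of these gaps, but you do not fill them, so the proposal remains a plausible program rather than a proof — which is consistent with the statement being a conjecture.

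One small but substantive inaccuracy to fix: in your opening setup you require each $Z_\Gr\colon \Pi_f \dashedrightarrow Z(\Pi_f)$, $f \in \T$, to be birational. This is not automatic (cf.\ Example \ref{ex:twotoone}) and is not part of any of the paper's tentative notions of triangulation; imposing it by fiat may exclude triangulations one needs, and if you drop it, the residue matching across interior walls acquires multiplicities $d_f$ that your sign argument does not control.
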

This conjecture is the natural extension to Grassmann polytopes of the conjecture of \cite{AT} for the amplituhedron.  In the case that $P = Z(\Pi_{h,\geq 0})$ and $h \in \G_Z$ is a base, part (1) is closely related to Conjecture \ref{conj:polesandzeroes}.

\begin{remark}
Conjecture \ref{conj:form} reflects two philosophies common in the theory of scattering amplitudes (see Section \ref{sec:amplitudes}). (1) The amplitude $\AA^{{\rm tree}}_{k,n}$ is uniquely determined by its poles, together with the factorization properties at these poles; this is analogous to a polytope being determined by its facets.  (2) The amplitude can be written as a sum of certain simpler functions in many different ways; this is analogous to a polytope having many triangulations.
\end{remark}

Let us give an argument that $\omega_P$ should be unique once all $\omega_{P_i}$ and all signs have been fixed.  Suppose $\omega_P$ and $\omega'_P$ only have simple poles along its facets, and $\Res_{\overline{F}}(\omega_P) = \Res_{\overline{F}}(\omega'_P)$ for each facet $F$.  Then the difference $\omega_P - \omega'_P$ has no poles anywhere, because the only possible poles are simple poles along facets $F$, and the residue along each facet $F$ is 0.  Now, assuming that it is a normal variety (see Conjecture \ref{conj:normal}), $Z(\Pi_h)$ is a unirational projective variety, and it does not have holomorphic canonical sections, so we conclude that $\omega_P - \omega'_P = 0$.

%

\subsection{The polytope form}
A rational differential form $\omega_P$ satisfying Conjecture \ref{conj:form} does indeed exist for a polytope $P$, with the usual notions of triangulation and facets.   I like to think of this rational differential form as the Laplace transform of the characteristic function of the dual polytope.  This form has been studied by physicists in \cite{AT,AHT}, and in a different language by mathematicians for example in \cite{BT,Fill}.  

It is simpler to first construct a rational differential form on $\R^r$.  Let $C \subset \R^r$ be the pointed polyhedral cone spanned by the rows of $Z$, and let $x_1,\ldots,x_r$ be coordinates on $\R^r$.  Let $C^* \subset (\R^r)^*$ be the dual (or polar) cone and let $y_1,\ldots,y_r$ be coordinates on $(\R^r)^*$.  Define
$$
\omega_C(x_1,\ldots,x_r) \coloneqq \left(\int_{C^*} e^{-\langle x, y \rangle} dy_1 \wedge dy_2 \wedge \cdots dy_r\right) dx_1 \wedge dx_2 \cdots \wedge dx_r,
$$
for $(x_1,x_2,\ldots,x_r)$ in the interior of $C$, and extend to a rational differential form on $\R^r$.  That this is naturally a differential form rather than a rational function reflects the fact that the Laplace transform depends on a choice of measure on $(\R^r)^*$.  Indeed, there is a general theory of Laplace transforms of piece-wise linear functions giving rational forms; see Brion and Vergne \cite{BV} for a discussion of these ideas.

Let $P \subset \P^{r-1}$ be the projective polytope that is the image of $C \subset \R^r$.  

The differential form $\omega_C$ can be written as $q(x_1,x_2,\ldots,x_r) \frac{dx_1}{x_1} \wedge \cdots \wedge \frac{dx_r}{x_r}$ for a rational function $q$ homogeneous of degree 0.  It gives a differential form $\omega_P$ on $\P^{r-1}$: on the affine chart where $x_1 = 1$, we have $\omega_P = q(1,x_2,\ldots,x_r)\frac{dx_2}{x_2} \wedge \cdots \wedge \frac{dx_r}{x_r}$, and this does not depend on the choice of chart.

\begin{theorem}
With the usual notion of facets and triangulations, Conjecture \ref{conj:form} holds for polytopes with this canonical form $\omega_P$.
\end{theorem}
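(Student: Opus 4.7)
The plan is to verify the three properties of Conjecture \ref{conj:form} for the form $\omega_P$ defined by the Laplace integral, using an inductive scheme on dimension together with a uniqueness principle. The keystone is the following \emph{uniqueness lemma}: any rational top form on $\P^{r-1}$ is determined by its polar divisor and its residues along each polar hyperplane, since two such forms differ by a holomorphic top form, and $H^0(\P^{r-1}, K_{\P^{r-1}}) = H^0(\P^{r-1}, \O(-r)) = 0$. Given this, one can compare the two sides of each equation in (1)--(3) by matching polar loci and residues.

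The base case is a simplicial cone $C = \R_{\geq 0}v_1 + \cdots + \R_{\geq 0} v_r$. Let $\ell_1,\ldots,\ell_r$ be the linear functionals on $\R^r$ dual to $v_1,\ldots,v_r$, so $\ell_i$ cuts out the facet hyperplane of $C$ opposite $v_i$. In coordinates $(y_1,\ldots,y_r)$ dual to $(v_1,\ldots,v_r)$, the cone $C^*$ is the positive orthant and the defining integral factors as $\prod_i \int_0^{\infty} e^{-y_i \ell_i(x)}\,dy_i = \prod_i \ell_i(x)^{-1}$; reintroducing the Jacobian from the change of basis yields
$$
\omega_C \;=\; \pm\,\frac{dx_1 \wedge \cdots \wedge dx_r}{|\det V|\,\ell_1(x)\cdots\ell_r(x)}.
$$
This establishes (1) and (2) for the projective simplex $\P(C)$: the polar locus of $\omega_P$ is exactly the facet hyperplane arrangement, and a direct Leray residue computation identifies $\Res_{\{\ell_i=0\}} \omega_P$ (up to sign) with the canonical form of the corresponding facet simplex of one lower dimension.

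The crux is property (3), from which (1) and (2) for general $P$ follow by applying a triangulation. Given a triangulation $P = \bigcup_j \sigma_j$ into projective simplices, set $\omega' \coloneqq \sum_j \varepsilon_j\,\omega_{\sigma_j}$ with signs $\varepsilon_j$ chosen consistently with an orientation of $P$. I claim $\omega'$ has simple poles only along the facet hyperplanes of $P$: for any hyperplane $H$ interior to the triangulation, exactly two simplices $\sigma_j, \sigma_{j'}$ meet along a face lying in $H$, and by the residue formula from the base case these two residues equal the canonical form of the common face with opposite orientations, hence cancel. Along a facet hyperplane of $P$, the residue of $\omega'$ is, by the base case, the sum of canonical forms of the simplicial pieces cutting out a triangulation of that facet, which by induction on dimension equals the canonical form of the facet of $P$. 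Thus $\omega_P$ and $\omega'$ have identical polar divisors and identical residues, so by the uniqueness lemma $\omega_P = \omega'$, proving (3); independence of the triangulation is then automatic, and (2) is recovered by the residue computation above.

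The main technical hurdles I expect are: (i) tracking orientations and signs with care, so the interior residue cancellations go through consistently (this is largely bookkeeping once the base case is fixed); (ii) justifying that the Laplace integral converges on $\mathrm{int}(C)$ and analytically continues to a rational form on $\P^{r-1}$ with exactly the claimed polar behaviour, which is standard in the theory of positive geometries; and (iii) handling non-simple polytopes, where the local combinatorics of triangulations is more involved. The last point is the place I expect the argument will need the most care, but it is ultimately bypassed by the flexibility of the strategy: property (3) only requires the existence of \emph{some} triangulation of $P$ into projective simplices, and the residue matching is intrinsic to the facet structure of $P$ rather than to the combinatorics of any particular triangulation.
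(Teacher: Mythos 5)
Your base case computation for the simplicial cone is correct, the interior‐face cancellation in $\sum_j \varepsilon_j\,\omega_{\sigma_j}$ is the right telescoping argument, and the uniqueness observation (no holomorphic top forms on $\P^{r-1}$) is exactly the right tool for pinning down $\omega_P$. This is close in spirit to what the paper outsources to the literature (\cite{BV} for part (1), \cite{Fill} for part (3)), so you are filling in details the paper deliberately leaves to references.

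However, there is a circularity in the logical structure. You announce that you will prove property (3) and deduce (1) and (2) from it, and you then carefully establish that $\omega' \coloneqq \sum_j \varepsilon_j\,\omega_{\sigma_j}$ has simple poles precisely along the facet hyperplanes of $P$, with residue $\omega_F$ along each. But you then write ``Thus $\omega_P$ and $\omega'$ have identical polar divisors and identical residues'' — the preceding sentences only describe $\omega'$, not $\omega_P$. To invoke the uniqueness lemma you need to know, independently, that $\omega_P - \omega'$ is holomorphic, which requires knowing the polar divisor and residues of $\omega_P$ itself. That is exactly the content of (1) and (2) for general $P$, which you proposed to \emph{derive} from (3). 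Your remark in the technical-hurdles paragraph that the Laplace integral ``analytically continues to a rational form on $\P^{r-1}$ with exactly the claimed polar behaviour'' is implicitly the missing input, but as stated it only addresses (1), not the residue identity (2), and it undercuts the claimed logical flow.

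The fix is not deep and either of two re-orderings works. One can first prove (1) and the residue formula (2) directly for the Laplace form $\omega_P$ of an arbitrary cone — the pole structure is in \cite{BV}, and the residue formula $\Res_H \omega_P = \omega_F$ comes from analyzing the integral $\int_{C^*} e^{-\langle x,y\rangle}\,dy$ along the unbounded ray of $C^*$ dual to the facet $F$; then (3) follows from your uniqueness argument. Alternatively, one can prove (3) directly as a dual-space valuation identity in the style of Filliman or Lawrence--Varchenko (note that this is genuinely not the naive ``additivity of the integral,'' since $C^* = \bigcap_j \sigma_j^*$, not a union), and then transfer (1) and (2) from $\omega'$ to $\omega_P$. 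Your draft sits between these two routes: it proves the facts you need about $\omega'$ but never closes the loop on $\omega_P$.
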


The canonical polytope form $\omega_P$ will be discussed in some detail in upcoming joint work with Arkani-Hamed and Bai \cite{ABL}, where many further properties of the form will be given.

We remark that Part (1) of Conjecture \ref{conj:form} for polytopes follows from the results of \cite{BV}, and Part (3) of Conjecture \ref{conj:form} is essentially \cite[Theorem 1]{Fill}.  Also, the rational form $\omega_C(x_1,\ldots,x_r)$ is called a $\mathcal{X}$-function in \cite{BT}, where it is defined as a rational function instead.  

%
%

\subsection{Geometric Triangulations}
We now discuss a number of possible notions of triangulations of Grassmann polytopes, starting with the analogue of the most familiar notion of triangulation for point sets.

Let $P= Z(\Pi_{h, \geq 0})$ be a Grassmann polytope.  When we discuss triangulations of $P$, the matrix $Z$ is itself part of the data.  For example, when $k=1$, some of the vectors $z_i$ may be in the interior of $P$, but can be used in a triangulation.  Let $\G_Z$ denote the Grassmann matroid of $Z$.

A \defn{maximal cell} (of $P$) is an independent set $f \in \T$ such that $f \leq h$ and $\dim(Z(\Pi_{f,\geq 0}))=\dim(P)$.  

\begin{conjecture}\label{conj:cell}
Let $P$ be a nonempty Grassmann polytope.  Then $P$ has a maximal cell. 
\end{conjecture}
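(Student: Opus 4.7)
The plan is to find $f_0 \geq h$ in $\Bound(k,n)$ that is independent in the Grassmann matroid sense and satisfies $\dim Z(\Pi_{f_0, \geq 0}) = \dim P$. By Theorem \ref{thm:partialorder}, $\Pi_{h, \geq 0} = \bigsqcup_{f \geq h} \Pi_{f, >0}$, so $P = \bigcup_{f \geq h} Z(\Pi_{f, >0})$ is a finite union. Since $\dim P = d$, at least one stratum $\Pi_{f, >0}$ has image of real dimension $d$. Among all $f \geq h$ with this property, I would choose $f_0$ minimizing $\dim \Pi_{f_0}$. Because $\Pi_{f_0, >0}$ is Zariski-dense in $\Pi_{f_0}$ by Theorem \ref{thm:Zariskidense}, the complex algebraic dimension $\dim Z(\Pi_{f_0})$ also equals $d$.

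It remains to show $\dim \Pi_{f_0} = d$, which makes $f_0$ independent. Suppose for contradiction that $\dim \Pi_{f_0} > d$. Then the dominant rational morphism $Z_\Gr|_{\Pi_{f_0}} : \Pi_{f_0} \dashrightarrow Z(\Pi_{f_0})$ has generic complex fiber of positive dimension. Taking the closure of the graph of $Z_\Gr$ in $\Pi_{f_0} \times Z(\Pi_{f_0})$ and projecting the fiber over a generic point $y \in Z(\Pi_{f_0})$ back to $\Pi_{f_0}$ yields a positive-dimensional projective subvariety $F \subset \Pi_{f_0}$ such that $Z_\Gr(x) = y$ for every $x \in F \setminus E_Z$.

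The key substep is to show that $F$ must meet the boundary $\partial \Pi_{f_0} = \bigcup_{g \gtrdot f_0} \Pi_g$. Once this is established, pick $g > f_0$ with $F \cap \Pi_g \neq \emptyset$; for a generic choice of $y$, points of $F \cap \Pi_g$ lie outside $E_Z$, so $y \in Z(\Pi_g)$. Since $y$ was generic in $Z(\Pi_{f_0})$, this forces $Z(\Pi_g) = Z(\Pi_{f_0})$; in particular $\dim Z(\Pi_{g,\geq 0}) = d$ while $\dim \Pi_g < \dim \Pi_{f_0}$, contradicting the minimality of $\dim \Pi_{f_0}$.

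The hard part is the boundary-intersection sublemma. It reduces to showing that $\oPi_{f_0}$ contains no positive-dimensional complete subvarieties, which holds whenever $\oPi_{f_0}$ is an affine variety, since a projective variety embedded in an affine variety must be zero-dimensional. The affineness of open positroid varieties is part of the cluster-algebraic picture of $\C[\oPi_{f_0}]$ referenced in Section \ref{sec:form} through the work of Leclerc, Muller-Speyer, and Lenagan-Yakimov. Alternatively, one could try to prove the boundary intersection more directly using the fact (Theorem \ref{thm:Resform}) that $\partial \Pi_{f_0}$ is an anticanonical divisor carrying the poles of $\omega_{f_0}$, combined with a positivity argument; but a priori the anticanonical divisor need not be ample on a singular projective variety like $\Pi_{f_0}$, so some care is required.
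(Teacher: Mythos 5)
The statement you're trying to prove is labeled a \emph{conjecture} in the paper — the author does not give a proof, and remarks only that ``I expect Conjecture \ref{conj:cell} is easy to see when $Z$ is generic.'' So there is no proof of the paper's to compare yours against; the question is simply whether your proposal succeeds, and I don't think it quite does.

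The structural idea — minimize $\dim\Pi_{f_0}$ among strata whose image carries the dimension, then argue that a fiber of $Z_\Gr|_{\Pi_{f_0}}$ would force a smaller stratum to also carry the dimension — is sound, and your key ingredient (that $\oPi_{f_0}$ is affine, so a positive-dimensional complete subvariety of $\Pi_{f_0}$ must meet $\partial\Pi_{f_0}$) is both correct and a genuinely useful observation. (It can be seen even more directly than through the cluster picture: $\oPi_f$ is the complement in $\Pi_f$ of the zero locus of the ample section $\prod_a \Delta_{I_a}$ associated to the Grassmann necklace of $f$.) But you have misplaced the hard step. The boundary-intersection sublemma is, as you say, taken care of by affineness. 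What is \emph{not} taken care of is the sentence ``for a generic choice of $y$, points of $F \cap \Pi_g$ lie outside $E_Z$, so $y \in Z(\Pi_g)$.'' You know $F$ meets $\partial\Pi_{f_0}$, and you know $F\not\subset E_Z$ (for generic $y$), but nothing you have said rules out $F\cap\partial\Pi_{f_0}\subseteq E_Z$. If that containment holds, then every boundary point of $F$ is a point where $Z_\Gr$ is undefined, and the conclusion ``$y\in Z(\Pi_g)$'' does not follow — the set $Z(\Pi_g)$ is by definition the closure of $Z_\Gr(\Pi_g\setminus E_Z)$, and $F\cap\Pi_g$ could sit entirely inside $E_Z\cap\Pi_g$. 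Dimension counting alone does not close this: $F\cap\partial\Pi_{f_0}$ is a divisor in $F$ (dimension $\dim F - 1$), while $F\cap E_Z$ is merely a proper closed subset (dimension $\leq \dim F - 1$), so the containment is not excluded in general.

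You might try to exclude it by noting that $E_Z$ has codimension $m+1\geq 2$ in $\Gr(k,n)$, so $F\cap E_Z$ ``should'' have codimension $\geq 2$ in $F$ and hence cannot contain the divisor $F\cap\partial\Pi_{f_0}$. But the fiber $F$ depends on $Z$ and on $y$, so Kleiman-type transversality does not apply off the shelf; this is precisely the genericity subtlety the paper's author points to. In short: your argument reduces the conjecture to a proper-intersection statement about $E_Z$ meeting the fibers of $Z_\Gr$, which is plausible for generic $Z$ but is neither established by what you wrote nor obviously true for an arbitrary $Z$ satisfying condition \eqref{eq:positive}. Separately, and more superficially, your inequalities $f_0\geq h$ and $g > f_0$ run in the opposite direction from the convention of Definition \ref{def:main} and Theorem \ref{thm:partialorder} (a maximal cell requires $f\leq h$, and the boundary divisors of $\Pi_{f_0}$ are indexed by $g < f_0$ in the $\Bound(k,n)$ order); this is worth fixing so that the argument lines up with $\Pi_{h,\geq 0}=\bigsqcup_{f\leq h}\Pi_{f,>0}$ as stated.
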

I expect Conjecture \ref{conj:cell} is easy to see when $Z$ is generic.


Let $f \in \II(\G_Z)$ be an independent set.  A \defn{face} of $f$ is an independent set $g \in \II(\G_Z)$ such that $g \leq f$ in $\hBound(k,n)$.  
We say that two independent sets $f,f'$ \defn{intersect properly} if the intersection $Z(\Pi_{f,\geq 0}) \cap Z(\Pi_{f',\geq 0})$ is a union $\bigcup_{g \in \II(\G_Z)} Z(\Pi_{g,\geq 0})$ where each $g$ is a face of both $f$ and $f'$.  When $k = 1$, a convex union of faces will always be a face and this is closely related to the fact that the boolean poset is a lattice.

A \defn{geometric simplicial triangulation} of $P$ is a collection $\T \subset \hBound(k,n)$ of maximal cells of $\G_Z$ satisfying the following conditions:
\begin{enumerate}
\item
We have $P = \bigcup_{f \in \T} Z(\Pi_{f,\geq 0})$. 
\item
For distinct $f,f' \in \T$, the two maximal cells $f,f'$ intersect properly.
\end{enumerate}
This is the usual definition of a triangulation of a point configuration.  Unfortunately, the methods that we have developed in Section \ref{sec:trunc} and Section \ref{sec:ampliideal} do not give us a consistent way to check either condition.  

\subsection{Combinatorial triangulations}
It is desirable to have a more combinatorial, and less geometric/semi-algebraic way to check if $\T$ is a triangulation.  For triangulations of point configurations, there are a number of such possibilities, many of which are formulated using the language of oriented matroids.  We use some of that language below, but will not develop a Grassmann analogue of oriented matroids \cite{OM}.  We refer the reader to the book \cite{dLRS} for background on triangulations.

An independent set $f \in \II(\M_Z)$ with $\dim(Z(\Pi_{f,\geq 0})) = \dim(P)-1$ is called a \defn{facet cell} of $P$ if $Z(\Pi_{f,> 0})$ is not contained in the interior of $P$.

\subsubsection{Pseudomanifold property}

We say that a collection $\T$ of maximal cells satisfies the \defn{pseudo-manifold property} if for every facet cell $g$ of a maximal cell $f \in \T$ that is not a facet cell of $P$, there is another maximal cell $f' \in \T$ such that $g$ is a facet cell of $f'$.  

\subsubsection{Signed circuits}

We would now like to define a signed circuit of $Z$.  Let us recall the usual notion.  For a circuit $C = \{c_1,c_2,\ldots,c_t\} \subset [n]$ of $Z$, there is a unique up to scalar equality
$$
a_1 z_{c_1} + a_2 z_{c_2} + \cdots a_t z_{c_t} = 0
$$
where $a_1,a_2,\ldots,a_t$ are all nonzero real numbers.  We then obtain a signed circuit $(C_+,C_-)$, where $C_+ \subset C$ (resp. $C_- \subset C$) is the set of $c_i$ where $a_i > 0$ (resp. $a_i < 0$).  So $C = C_+ \sqcup C_-$.  Consider the subsets $F \subset C$ with size $|F| = |C| - 1$.  There are two kinds of these subsets: the ones that contain the whole of $C_+$ (and all but one element of $C_-$), and the ones that contain the whole of $C_-$ (and all but one element of $C_+$). 

Let us try to find such a decomposition for Grassmann polytopes.
Recall that $\Pi_f$ has a canonical form $\pm \omega_f$ defined uniquely up to sign.  An \defn{orientation} of $\Pi_f$ is a choice of one of the two signs for this canonical form.  An orientation $\omega_f$ of $\Pi_f$ determines an orientation for each $\Pi_g$ where $g \lessdot f$: we choose the orientation $\Res_{\Pi_g} {\omega_f}$ of $\Pi_g$.  Note that taking residues does not involve any choices.

Now suppose $f$ is a circuit of $\G_Z$.  Thus $\dim(Z(\Pi_f)) = \dim(\Pi_f) - 1$, and $f$ is minimal with respect to this property.  Fix an orientation $\omega_f$ of $\Pi_f$, and thus obtain orientations $\Res_{\Pi_g} {\omega_f}$ of each $\Pi_g$.  We also obtain a pushforward orientation $\omega_{Z(\Pi_g)} = (Z_\Gr)_*(\Res_{\Pi_g} {\omega_f})$.  By our assumptions $P$ lies inside a submanifold of $Z(\Pi_h)_\R$ that is orientable (Remark \ref{rem:cones}).  Let us choose an orientation top-form $\omega(P)$.  So the set $\{g \lessdot f \} \subset \hBound(k,n)$ can be split into two subsets
\begin{align*}
g \in \begin{cases}
D_+ & \mbox{if $\omega_{Z(\Pi_g)}$ is a positive multiple of $\omega(P)$ at a point of $Z(\Pi_{g,> 0})$,} \\
D_- & \mbox{if $\omega_{Z(\Pi_g)}$ is a negative multiple of $\omega(P)$ at a point of $Z(\Pi_{g,> 0})$.} 
\end{cases}
\end{align*}
(It is not clear to me if this is well-defined in general, for example, when $\Pi_g \dashedrightarrow Z(\Pi_g)$ has degree greater than one.  But let us proceed as if it were defined.)
We can then define the \defn{signed circuit} of $f$ as follows: $C_+$ is the collection of maximal elements of $\{g' \mid g' \leq g \text{ and } g \in D_+\} \subset \Bound(k,n)$, and similarly for $C_-$.  When $k = 1$, $\Bound(1,n)$ is a lattice, so $C_{\pm}$ are just single elements of $\Bound(1,n)$.

\begin{remark}
I expect that the notion of a bistellar flip of a triangulation $\T$ is to replace a set of maximal cells $D_+$ by a set of maximal cells $D_-$, or vice versa.  This is closely related to the homological identities of \cite{ABCGPT}.
\end{remark}

\subsubsection{Definition of combinatorial triangulation}
A \defn{combinatorial triangulation} of $P$ is a collection $\T$ of maximal cells with the properties:
\begin{enumerate}
\item
$\T$ satisfies the pseudomanifold property, and
\item
there do not exist $f, f' \in \T$ so that $C_+ \leq f$ and $C_- \leq f'$ for some signed circuit $(C_+,C_-)$.
\end{enumerate}
For $k = 1$, a combinatorial triangulation in this sense is equivalent to a geometric simplicial triangulation \cite[Chapter 4]{dLRS}.  Here, we say $C_+ \leq f$ if all elements of $C_+$ are less than $f$.  (Though, I must admit I do not have convincing evidence that this is the correct definition.)

\subsection{Homological triangulations}
Conjecture \ref{conj:form}(3) suggests that it may also be interesting to study a purely homological notion of triangulation.

A \defn{homological triangulation} of $P$ is a collection $\T \subset \II(\G_Z)$ of maximal cells, together with an orientation $\omega_f$ for each cell $f \in \T$, such that
\begin{enumerate}
\item
for each facet cell $g$ of $P$, there is a unique maximal cell $f \in \T$ so that $g$ is a facet of $f$, and
\item
for each facet $g$ of a maximal cell $f \in \T$ that is not a facet cell of $P$, there is a unique other maximal cell $f' \in \T$ with $g$ as a facet, and the orientations on $Z(\Pi_{g})$ induced by $\omega_f$ and $\omega_{f'}$ are negatives of each other.
\end{enumerate}
This is a simpler notion of triangulation, which is certainly not equivalent to the usual notion.  It is, however, much easier to check.

\subsection{Momentum-twistor BCFW recursion}
Let us suppose now that $Z$ is positive, $r = k+4$, and $P = Z(\Gr(k,n)_{\geq 0})$ is the amplituhedron.  There is a recursive formula (in fact, many) for the amplituhedron form $\omega_P$ as a sum of forms $\omega_{Z(\Pi_f)}$, and it is the conjecture of Arkani-Hamed and Trnka that these recursive formulae give rise to ``triangulations".  We describe the version of this recursion due to Bai and He \cite{BH}.

Suppose $n \geq k +4$.  We shall recursively define collections $C(k,n) \subset \Bound(k,n)$ of bounded affine permutations $f$ satisfying $\dim(\Pi_f) = 4k$, as follows.  First, if $n = r = k +4$, then we have $C(k,k+4) = \{f_\id\}$ consists of the single bounded affine permutation indexing the top cell of $\Gr(k,k+4)_{\geq 0}$.  Suppose $C(k',n')$ has been computed for all $(k',n')$ where either $k'<k$ and $n' \leq n$ or $k'\leq k$ and $n'<n$.  Then $C(k,n)$ is the union of all bounded affine permutations $f \in \Bound(k,n)$ where either (1) $f = f_G$ is the bounded affine permutation of the planar bipartite graph $G$ obtained by adding a black lollipop at $n$ to a reduced planar bipartite graph $G(f')$ representing some $f' \in C(k,n-1)$, 
\begin{equation*}
G \sim
\begin{tikzpicture}[baseline=-0.5ex,scale=0.8]
\coordinate (a1) at (0:2);
\coordinate (a2) at (-40:2);
\coordinate (a3) at (-80:2);
\coordinate (a4) at (-120:2);
\coordinate (a5) at (-160:2);
\coordinate (a6) at (160:2);
\coordinate (a7) at (120:2);
\coordinate (a8) at (80:2);
\coordinate (a9) at (42:2);

\node at (0:2.3) {$1$};
\node at (-40:2.3) {$2$};
\node at (80:2.3) {$n-1$};
\node at (40:2.3) {$n$};

\draw (0,0) circle (2);
\draw (0,0) circle (1);

\node at (0:0) {$G(f')$};

\draw[thick] (a1) to ($(0:1)$);
\draw[thick] (a2) to ($(-40:1)$);
\draw[thick] (a3) to ($(-80:1)$);
\draw[thick] (a4) to ($(-120:1)$);
\draw[thick] (a5) to ($(-160:1)$);
\draw[thick] (a6) to ($(-200:1)$);
\draw[thick] (a7) to ($(-240:1)$);
\draw[thick] (a8) to ($(-280:1)$);
\draw[thick] (a9) to (-320:1.5);

\blackdot {(-320:1.5)};
\end{tikzpicture}
\end{equation*}
or (2) $f = f_G$ is the bounded affine permutation of
a planar bipartite graph $G$ of the form
\begin{equation*}
G \sim
\begin{tikzpicture}[baseline=-0.5ex]
\coordinate (a1) at (0:2);
\coordinate (a2) at (-40:2);
\coordinate (a3) at (-80:2);
\coordinate (a4) at (-120:2);
\coordinate (a5) at (-160:2);
\coordinate (a6) at (160:2);
\coordinate (a7) at (120:2);
\coordinate (a8) at (80:2);
\coordinate (a9) at (42:2);

\node at (0:2.3) {$1$};
\node at (-120:2.4) {$j-1$};
\node at (-160:2.4) {$j$};
\node at (80:2.3) {$n-1$};
\node at (40:2.3) {$n$};

\draw (0,0) circle (2);
\draw (130:1) circle (0.5);
\node at (130:1) {$G(f_2)$};
\draw (-50:1) circle (0.5);
\node at (-50:1) {$G(f_1)$};

\coordinate (b1) at (0:1.6);
\coordinate (b2) at (53:1.1);
\coordinate (b3) at (80:1.6);
\coordinate (b4) at (0:0);

\coordinate (w1) at (40:0.8);
\coordinate (w2) at (53:1.6);
\coordinate (w3) at (-140:0.8);
\coordinate (b5) at (-120:1.6);
\coordinate (b6) at (-160:1.6);

\draw [thick] (b5) to (a4);
\draw [thick] (b5) to (w3);
\draw [thick] (b6) to (a5);
\draw [thick] (b6) to (w3);
\draw [thick] (b4) to (w3);
\draw [thick] (b4) to (w1);
\draw [thick] (b1) to (w1);
\draw [thick] (b1) to (a1);
\draw [thick] (b2) to (w1);
\draw [thick] (b2) to (w2);
\draw [thick] (b3) to (a8);
\draw [thick] (b3) to (w2);
\draw [thick] (w2) to (a9);

\draw[thick] (b1) to ($(-50:1)+(35:0.5)$);
\draw[thick] (a2) to ($(-50:1)+(-30:0.5)$);
\draw[thick] (a3) to ($(-50:1)+(-105:0.5)$);
\draw[thick] (b5) to ($(-50:1)+(-160:0.5)$);
\draw[thick] (b4) to ($(-50:1)+(130:0.5)$);

\draw[thick] (a6) to ($(130:1)+(180:0.5)$);
\draw[thick] (a7) to ($(130:1)+(100:0.5)$);
\draw[thick] (b3) to ($(130:1)+(50:0.5)$);
\draw[thick] (b2) to ($(130:1)+(0:0.5)$);
\draw[thick] (b4) to ($(130:1)+(-50:0.5)$);
\draw[thick] (b6) to ($(130:1)+(-120:0.5)$);

\blackdot{(b1)};
\blackdot{(b2)};
\blackdot{(b3)};
\blackdot{(b4)};
\blackdot{(b5)};
\blackdot{(b6)};
\whitedot{(w1)}
\whitedot{(w2)}
\whitedot{(w3)}
\end{tikzpicture}
\end{equation*}
where $G(f_1)$ (resp. $G(f_2)$) is a reduced planar bipartite graph representing $f_1 \in C(k_1,j)$ (resp. $f_2 \in C(k_2,n-j+2)$) for any $j \in [3,n-2]$ and nonnegative integers $k_1,k_2$ satisfying $k_1+k_2 = k - 1$.  Here, we may have to insert two-valent white vertices on the boundary edges of $G(f_1)$ and $G(f_2)$ to ensure that the resulting graph is bipartite.  Also, if the graph $G$ is not reduced, then (by a face-count argument) it will represent a positroid cell with dimension less than $4k$, and should be ignored.  This defines a collection $C(k,n) \subset \Bound(k,n)$ for all $n \geq k+4$.

A basic conjecture of \cite{AT,BH} is that the set $C(k,n)$ gives a ``triangulation" of the amplituhedron.  


\begin{example}
Suppose $k = 0$.  Then $C(0,n) = \Bound(0,n)$ consists of just one element.  In the recursion, construction (2) is never used.
\end{example}

\begin{example}
Suppose $k = 1$.  Then in the recursion we always have $k_1 = k_2 = 0$, so for the bounded affine permutations arising from construction (2), the only choice is the index $j \in [3,n-2]$.  The corresponding planar bipartite graphs $G$ consist of a single interior white vertex connected to the boundary vertices $\{1,j-1,j,n-1,n\}$ and all other boundary vertices are connected to black lollipops.

This recursion gives rise to a triangulation of the four-dimensional cyclic polytope (inside $\Gr(1,5) = \P^4$) with $n$ vertices.  Namely, the triangulation given by $C(1,n)$ uses the simplices $\{1,i-1,i,j-1,j\}$ for all $i,j$ satisfying $2 < i < j-1 <n$.  One can verify that this is a triangulation, for example, via the work of Rambau \cite{Ram}. 
\end{example}

\section{Scattering amplitudes}\label{sec:amplitudes}
In this section, we give an informal discussion (intended to be complementary to the discussion in Section \ref{sec:intro}) of the theory of scattering amplitudes intended for someone like myself who has no background in physics.  For a general introduction to scattering amplitudes, the reader is referred to the books \cite{EH,HP}.  For the relation between amplitudes and the totally nonnegative Grassmannian, see the very extensive work \cite{ABCGPT}.  However, we must warn the reader that most of this work is set in ``momentum space", while the amplituhedron only appears to exist in ``momentum-twistor space".  For a discussion of the relation of the two settings see \cite{EHKLORS,ACCK,MaSk}.

There are many other connections of scattering amplitudes with mathematics (see for example \cite{DHP,GGSVV}), but we will only discuss the story directly related to the tree amplituhedron.  For recent work on loop amplituhedra, see \cite{AT2, BH, FGMT}.

Scattering amplitudes in particle physics are used to compute the probability that certain particle interactions occur.  One starts by picking a quantum field theory, which is usually fixed by writing down a Lagrangian.  This choice amounts to choosing the types of particles that will be studied and the basic rules for their interaction.  Scattering amplitudes correspond to particle creation/annihilation experiments that occur in an isolated part of the universe.  To define an amplitude, one first decides on the list of say $n$ particles that will be involved (for example, one photon and one electron incoming, and one photon and one electron outgoing).  The \defn{scattering amplitude} for this scattering process is then a function $A(p_1,p_2,\ldots,p_n)$ of the momenta of the $n$ particles (other data like polarization vectors are often also involved).  

There is a formal expression for the function $A_n = A(p_1,p_2,\ldots,p_n)$ as an infinite sum of integrals of rational functions.  The sum is over an infinite list of increasingly complicated Feynman diagrams, which are graphs decorated with some additional data.  The integrals are over additional variables called internal propagators, and the integrand is a function of the momenta $p_i$ and the propagators.  There is a formal (but infinite) procedure for writing down such an expression for the amplitude once one is given the Lagrangian that defines the quantum field theory.  It is a notoriously difficult problem to make sense (for example, ``renormalization") of these formal expressions to compute the finite probabilities in particle physics experiments, or other areas of physics where quantum field theories are used.

The particular quantum field theory relevant to the story of the totally nonnegative Grassmannian and the amplituhedron is called ``four-dimensional super Yang-Mills".  In this theory, the particles to be considered are light-like particles: the momenta $p_i$ are vectors in four-dimensional space-time that have zero length with respect to the Lorentzian metric.  There is a choice of a gauge group for this theory, which is chosen to be the group $\SU(N)$; this symmetry group corresponds to internal symmetries of the particles.  

We now make two simplifications.  There is an expansion
$$
A_n = A_n^{\text{tree}} + A_n^{\text{1-loop}} + A_n^{\text{2-loop}} + \cdots
$$
where $A_n^{\text{tree}}$ consists of the terms indexed by finitely many Feynman diagrams that are trees, and these diagrams contribute terms that have no integrals.  We only consider $A_n^{\text{tree}}$.  Next, there is a trick called color-ordering that gives a formula of the form
$$
A_n^{\text{tree}} = (\text{group theory factor}) \AA_n^{\text{tree}},
$$
so that the answer $\AA_n^{\text{tree}}(p_1,p_2,\ldots,p_n)$ depends only on the kinematical data (the momentum vectors) and not on the choice of gauge group.  The group theory factor is, roughly speaking, a sum over traces $\Tr(\xi_{a_1} \xi_{a_2} \cdots \xi_{a_n})$ of elements $\xi \in {\mathfrak {su}}(N)$.  Because of the cyclicity of the trace of a product of matrices, the $n$ momenta in $\AA_n^{\text{tree}}(p_1,p_2,\ldots,p_n)$ acquire a cyclic-ordering; and the answer is cyclically symmetric.  (Strictly speaking, the function $\AA_n^{\text{tree}}(p_1,p_2,\ldots,p_n)$ that we shall discuss is the amplitude in the \emph{planar sector}.)

\medskip

For me, this cyclicity is the simplest explanation for the mathematical structures that arise.  Planar bipartite graphs have rotational symmetry; Grassmannians have an action of a cyclic group; rectangular shaped Young-tableaux have a promotion operator; affine permutations have rotational symmetry.  I expect that the cyclic symmetry for the type $A$ affine Lie algebra will be playing a role. 

\medskip

It turns out that the formula for $\AA_n^{\text{tree}}(p_1,p_2,\ldots,p_n)$ is simplest not as a rational function in the space-time momentum variables $p_i$, but in terms of something called spinor-helicity formalism.  In these variables, the answer exhibits an additional symmetry, called \emph{dual superconformal symmetry}, that was previously hidden; furthermore, superconformal symmetry and dual super conformal symmetry glue together to give a \emph{Yangian} algebra of infinitesimal symmetries.

When written in ``super-momentum-twistor" coordinates, the answer $\AA_n^{\text{tree}}$ is a function of four bosonic variables $z_1,z_2,z_3,z_4$ (really, a $n \times 4$ matrix) and $k$ fermionic variables $\eta_1,\eta_2,\ldots,\eta_k$.  Here, the fermionic variables are present because of the choice of the maximally supersymmetric version of Yang-Mills theory.  The supersymmetry gives rise to additional types of particles in the quantum field theory; the fermionic variables act as variables in a generating function over possible particle types.  The additional parameter $k$ corresponds (with a shift!) to the total ``helicity" of the particles involved: many different collections of particles have the same total helicity, and all amplitudes for such experiments are encoded in a single $\AA_{k,n}^{\text{tree}}(z_1,z_2,z_3,z_4,\eta_1,\eta_2,\ldots,\eta_k)$.

The differential form $\omega_P$ of Section \ref{sec:conjform} for the case that $P$ is the amplituhedron is a rational form $\omega_{SYM}(Y,Z)$ where $Y \in \Gr(k,k+4)$ and $Z$ is a $n \times (k+4)$ matrix.  The rational form is invariant under the simultaneous action of $\GL(k+4)$ on $\Gr(k,k+4)$ and $\Mat(n,k+4)$.  The matrix $Z$ is some basis of the span of the $k+4$ vectors 
$\{z_1,z_2,z_3,z_4,\eta_1,\eta_2,\ldots,\eta_k\}$, and the point $Y \in \Gr(k,k+4)$ keeps track of the $k$-dimensional subspace spanned by the fermionic vectors.  In the amplituhedron form, the entire $n \times (k+4)$ matrix is considered bosonic.  Furthermore, while the original momentum variables $p_i$ are real vectors, the form $\omega_{SYM}(Y,Z)$ should be considered a complex analytic object.  To recover $\AA_{k,n}^{\text{tree}}$ from $\omega_{SYM}$ one performs an integral on $\omega_{SYM}$ involving delta functions and fermionic variables.  This amounts to a formal, algebraic procedure that produces an expression in the variables $\{z_1,z_2,z_3,z_4,\eta_1,\eta_2,\ldots,\eta_k\}$.

Entirely new considerations come into play when discussing the higher-loop contributions to the amplitude -- non-trivial integrals must be performed.  Due to  my own unfamiliarity with this part of the subject, I will not attempt to discuss it.

\end{document}